\theoremstyle{plain}
  \newtheorem{theorem}{Theorem}[section]
  \newtheorem{corollary}[theorem]{Corollary}
  \newtheorem{lemma}[theorem]{Lemma}
  \newtheorem{proposition}[theorem]{Proposition}
\theoremstyle{definition}
  \newtheorem{definition}[theorem]{Definition}
  \newtheorem{ex}[theorem]{Example}
  \newtheorem{problem}[theorem]{Problem}
  \newtheorem{remark}[theorem]{Remark}
  \newenvironment{example}{\begin{ex}}{\qed\end{ex}}
\newcommand{\category}[1]{\mathbf{#1}}
  \newcommand{\Sets}{\operatorname{\mathbf{Sets}}}
  \newcommand{\Spaces}{\category{Spaces}}
  \newcommand{\Map}{\operatorname{Map}}
  \newcommand{\Funct}{\operatorname{Funct}}
\newcommand{\coequalizer}[5]{\xymatrix{
 {\displaystyle #1} 
 \ar@<1ex>[r]^-{#2} \ar@<-1ex>[r]_-{#3}
 & {\displaystyle #4} \ar[r] & {\displaystyle #5}
 }%
}
\newcommand{\equalizer}[5]{
\xymatrix{
 {\displaystyle #1} \ar[r] & {\displaystyle #2}
 \ar@<1ex>[r]^-{#3} \ar@<-1ex>[r]_-{#4}
 & {\displaystyle #5} 
 }%
}
  \newcommand{\Ima}{\operatorname{Im}}
  \DeclareMathOperator*{\colim}{\mathrm{colim}}
  \newcommand{\quotient}[2]{%
  \left(#1\right)
  \hspace{-4pt}\raisebox{-5pt}{$\bigg/$}\hspace{-2pt}\raisebox{-12pt}{$#2$}%
  }
\newcommand{\rset}[2]{%
\left\{#1 \ \left| \ #2\right\}\right.
}
\newcommand{\lset}[2]{%
\left.\left\{#1 \ \right| \ #2\right\}
}
\newcommand{\set}[2]{%
\left\{#1 \,\middle|\, #2\right\}
}
  \newcommand{\rarrow}[1]{\buildrel #1 \over \longrightarrow}
  \newcommand{\smallfrac}[2]{{\textstyle \frac{#1}{#2}}}
  \newcommand{\ev}{{\operatorname{\mathrm{ev}}}}
  \newcommand{\pr}{\operatorname{\mathrm{pr}}}
  \newcommand{\transpose}[1]{\raisebox{1ex}{$\scriptstyle t$}\kern-0.2ex #1}
  \newcommand{\Int}{{\operatorname{\mathrm{Int}}}}
  \newcommand{\R}{\mathbb{R}}
  \newcommand{\bbC}{\mathbb{C}}
  \newcommand{\Q}{\mathbb{Q}}
  \newcommand{\Z}{\mathbb{Z}}
  \newcommand{\Sal}{\operatorname{\mathrm{Sal}}} 
  \newcommand{\Lk}{\mathrm{Lk}} 
  \newcommand{\cyl}{\mathrm{cyl}} 
  \newcommand{\op}{{\operatorname{\mathrm{op}}}} 
  \newcommand{\ad}{\operatorname{\mathrm{ad}}} 
  \newcommand{\sign}{\operatorname{\mathrm{sign}}}
  \newcommand{\inj}{\operatorname{\mathrm{inj}}}
  \newcommand{\GL}{\mathrm{GL}}
  \newcommand{\String}{\mathrm{\String}}
  \newcommand{\St}{\mathrm{St}}
  \newcommand{\Gr}{\mathrm{Gr}}
  \newcommand{\RP}{\R\mathrm{P}}
  \newcommand{\CP}{\bbC\mathrm{P}}
  \newcommand{\Conf}{\mathrm{Conf}}
\newcommand{\sk}{\operatorname{sk}}
\newcommand{\Sd}{\operatorname{\mathrm{Sd}}}
\newcommand{\fixhyperref}{%
\ifnum 42146=\euc"A4A2 \AtBeginDvi{}\else
\AtBeginDvi{}\fi}
\newcommand{\tot}{\mathrm{tot}}
\title{\textbf{Cellular Stratified Spaces}}
\author{Dai Tamaki}
\begin{document}

\maketitle

\begin{center}
 Dedicated to Professor Fred Cohen on the occasion of his 70th
 birthday. 
\end{center}

\begin{abstract}
 The notion of cellular stratified spaces was introduced in
 a joint work of the author with Basabe, Gonz{\'a}lez, and Rudyak
 with the aim of constructing a cellular model of the  
 configuration space of a sphere. Although the original aim was not
 achieved in the project, the notion of cellular stratified spaces turns
 out to be useful, at least, in the study of configuration spaces of graphs.
 In particular, the notion of totally normal cellular stratified spaces
 was used successfully in a joint work with the former students of the
 author \cite{1312.7368} to study the homotopy type of
 configuration spaces of graphs with a small number of vertices.

 Roughly speaking, totally normal cellular stratified spaces correspond
 to acyclic categories in the same way regular cell complexes correspond
 to posets. 

 In this paper, we extend this correspondence by replacing cells by
 stellar cells and acyclic categories by topological acyclic categories.
\end{abstract}

\tableofcontents

\section{Introduction}
\label{cellular_stratified_space_intro}

The author has been interested in configuration spaces since he was a
graduate student at the University of Rochester under the guidance of
Professor Fred Cohen. The interest arose when the author tried to
understand the global structure of homotopy groups of spheres in his
Ph.D.~thesis \cite{TamakiThesis,Tamaki94,Tamaki1994-2}.
One of key aspects is the combinatorial structures underlying in the
configuration spaces of Euclidean spaces.

Recently the author renewed his interest in configuration spaces during
a joint project with Basabe, Gonz{\'a}lez, and Rudyak \cite{1009.1851}
on higher symmetric topological complexities.
The notion of cellular stratified spaces was discovered during the
discussion with them. 

It turns out that cellular stratified spaces have already appeared in
many areas in topology. For example, the stratification on the
complement of a complexified hyperplane arrangement used in the
construction of the Salvetti complex
\cite{Salvetti87,Bjorner-Ziegler92,DeConcini-Salvetti00} 
is one of motivating examples. 



\subsection{Cellular Stratified Spaces Everywhere}
\label{cellular_stratified_space_examples}

Before we state main results, let us take a look at examples of cellular
stratified spaces. We begin with configuration spaces.
The configuration space of $n$ distinct points in a topological space
$X$ is defined by
\begin{eqnarray*}
 \Conf_n(X) & = & \set{(x_1,\ldots,x_n)\in X^n}{x_i\neq x_j \text{ for }
  i\neq j} \\
 & = & X^n\setminus \Delta_{n}(X),
\end{eqnarray*}
where
\[
 \Delta_n(X) = \bigcup_{1\le i<j\le n} \set{(x_1,\ldots,x_n)\in
 X^n}{x_i=x_j}. 
\]

The starting point of this work is the following problem:

\begin{problem}
 \label{main_problem}
 Given a space $X$, construct a combinatorial model for the homotopy
 type of the configuration space $\Conf_k(X)$ of $k$ distinct points in
 $X$. In other words, find a regular cell complex or a simplicial complex
 $C_k(X)$ embedded in $\Conf_k(X)$ as a $\Sigma_k$-equivariant
 deformation retract. 
\end{problem}

Several solutions are known in special cases.

\begin{example}
 \label{Abrams}
 For a finite CW-complex $X$ of dimension $1$, namely a graph, Abrams
 constructed a subspace $C_k^{\mathrm{Abrams}}(X)$ contained in
 $\Conf_k(X)$ in his thesis \cite{AbramsThesis} and proved that there is
 a homotopy equivalence
 \[
  C_k^{\mathrm{Abrams}}(X) \simeq \Conf_k(X)
 \]
 as long as the following two conditions are satisfied:
 \begin{enumerate}
  \item each path connecting vertices in $X$ of valency more than $2$
	has length at least $k+1$, and 
  \item each homotopically essential path connecting a vertex to itself
	has length at least $k+1$.
 \end{enumerate}
 Here a path means a $1$-dimensional subcomplex homeomorphic to a closed
 interval.
\end{example}

\begin{example}
 \label{Salvetti_complex}
 Consider the case $X=\R^n$. For $1\le i<j\le k$, the hyperplane 
 \[
  H_{i,j} = \lset{(x_1,\ldots,x_k)\in \R^k}{x_i=x_j}
 \]
 in $\R^k$ defines a linear subspace $H_{i,j}\otimes\R^n$ in
 $\R^k\otimes\R^n=\underbrace{\R^n\times\cdots\times \R^n}_k=X^k$
 and we have  
 \[
 \Conf_k(\R^n) = \R^k\otimes\R^n \setminus
 \bigcup_{1\le i<j\le k} H_{i,j}\otimes\R^n. 
 \]
 The collection $\lset{H_{i,j}}{1\le i<j\le k}$ is
 called the \emph{braid arrangement} of rank $k-1$ and is denoted by
 $\mathcal{A}_{k-1}$. 

 When $n=2$, the construction due to Salvetti \cite{Salvetti87} gives us
 a regular cell complex $\Sal(\mathcal{A}_{k-1})$ embedded in
 $\Conf_k(\R^2)$ as a $\Sigma_k$-equivariant deformation retract.

 More generally, the construction sketched at the end of
 \cite{Bjorner-Ziegler92} by Bj{\"o}rner and Ziegler and elaborated in
 \cite{DeConcini-Salvetti00} by De Concini and Salvetti gives us a
 regular cell complex $\Sal^{(n)}(\mathcal{A}_{k-1})$ embedded in
 $\Conf_k(\R^n)$ as a $\Sigma_k$-equivariant deformation retract. 

 This construction is a special case of the construction of a
 regular cell complex whose homotopy type represents the complement of
 the subspace arrangement associated with a real hyperplane arrangement.
\end{example}

There are pros and cons in these two constructions. The conditions in
Abrams' theorem require us to subdivide a given
$1$-dimensional CW-complex finely. For example, his construction fails
to give the right homotopy type of the configuration space
$\Conf_2(S^1)$ of two points in $S^1$ when
it is applied to the minimal cell decomposition;
 $S^1 = e^0 \cup e^1$. The minimal regular cell decomposition
 $S^1 = e^0_-\cup e^0_+ \cup e^1_-\cup e^1_+$ is not fine enough,
 either. We need to subdivide $S^1$ into three $1$-cells to use Abrams' model.
\begin{center}
 \begin{tikzpicture}
  \draw (0,0) circle (1cm);
  \draw [fill] (1,0) circle (2pt);
  \draw (-1.3,0) node {$e^1$};
  \draw (1.5,0) node {$e^0$};

  \draw (4,0) circle (1cm);
  \draw [fill] (3,0) circle (2pt);
  \draw [fill] (5,0) circle (2pt);
  \draw (2.7,0) node {$e^0_-$};
  \draw (5.5,0) node {$e^0_+$};
  \draw (4,-1.3) node {$e^1_-$};
  \draw (4,1.3) node {$e^1_+$};

  \draw (8,0) circle (1cm);
  \draw [fill] (9,0) circle (2pt);
  \draw [fill] (7.5,0.86) circle (2pt);
  \draw [fill] (7.5,-0.86) circle (2pt);
  \draw (6.7,0) node {$e^1_1$};
  \draw (8.6,-1.2) node {$e^1_2$};
  \draw (8.6,1.2) node {$e^1_3$};
  \draw (9.5,0) node {$e^0_1$};
  \draw (7.4,1.2) node {$e^0_2$};
  \draw (7.4,-1.2) node {$e^0_3$};
 \end{tikzpicture}
\end{center}
Another problem is that his theorem is restricted to $1$-dimensional
CW-complexes, although the construction of the model itself works for
any cell complex\footnote{Recently higher dimensional cases
appeared in \cite{1009.2935}.}. 

The second construction suggests that we should consider more
general stratifications than cell decompositions. The complex
$\Sal^{(n)}(\mathcal{A}_{k-1})$ is constructed from the combinatorial
structure of the ``cell decomposition'' of $\R^k\otimes\R^n$ defined by
the hyperplanes in the arrangement $\mathcal{A}_{k-1}$ together with the
standard framing in $\R^n$. ``Cells'' in this decomposition are
unbounded regions in $\R^k\otimes\R^n$. Although such a decomposition is
not regarded as a cell decomposition in the usual sense, we may extend
the definition of face posets to such generalized cell
decompositions. And the complex $\Sal^{(n)}(\mathcal{A}_{k-1})$ is
constructed in terms of the combinatorial structure of the face poset of
the ``cell decomposition'' of $\R^k\otimes\R^n$.
The crucial deficiency of the second construction is, however, that it
works only for Euclidean spaces. 

One of the motivations of this paper is to find a common framework for
working with configuration spaces and complements of
arrangements. Although there are many interesting ``parallel theories''
between configuration spaces and arrangements, e.g.\
the Fulton-MacPherson-Axelrod-Singer compactification
\cite{Fulton-MacPherson94, Axelrod-Singer94-2} and the De
Concini-Procesi wonderful model \cite{DeConcini-Procesi95}, there is no
``Salvetti complex'' for configuration spaces in general.
A more concrete motivation is, therefore, to solve Problem
\ref{main_problem} in such a way that it generalizes the Salvetti
complex for the braid arrangement. 

By analyzing the techniques of combinatorial algebraic topology used in
the proof of Salvetti's theorem, the notion of cellular stratified
spaces was introduced in \cite{1009.1851v5}.
It turns out that, in the case of configuration spaces of spheres, which
was the main target of study in the project, it was not easy to use cellular
stratified spaces to construct a combinatorial model. The section for
cellular stratified spaces was removed from the published version
\cite{1009.1851}. 

However, the theory of cellular stratified spaces can be used to study
configuration spaces of graphs, as is done in \cite{1312.7368},
in which the notion of totally normal cellular stratified spaces played
a central role.

\begin{definition}[Definition \ref{total_normality_definition}]
 Let $X$ be a normal cellular stratified space.
 $X$ is called \textit{totally normal} if, for each $n$-cell
 $e_{\lambda}$,  
\begin{enumerate}
 \item there exists a structure of regular cell complex on $S^{n-1}$
       containing $\partial D_{\lambda}$ as a stratified subspace, and 
 \item for any cell $e$ in $\partial D_{\lambda}$, there exists a cell
       $e_{\mu}$ in $\partial e_{\lambda}$ such that
       $e_{\mu}$ and $e$ share the same domain and the characteristic
       map of $e_{\mu}$ factors through $D_{\lambda}$ via the
       characteristic map of $e$:
	\[
	\begin{diagram}
	 \node{\overline{e}} \arrow{e,J} \node{\partial D_{\lambda}}
	 \arrow{e,J} \node{D_{\lambda}} 
	 \arrow{e,t}{\varphi_{\lambda}} \node{X} \\ 	 
	 \node{D} \arrow{n} \arrow{e,=} \node{D_{\mu}.}
	 \arrow{ene,b}{\varphi_{\mu}} 
	\end{diagram}		
	\]
\end{enumerate}
 The category $C(X)$ consisting of cells in $X$ as objects and above maps
 $D_{\mu}\to \overline{e}\to D_{\lambda}$ as morphisms is called the
 \emph{face category} of $X$.
\end{definition}

The following result says that we can always recover the homotopy type
of $X$ from its face poset, if $X$ is totally normal.

\begin{theorem}[Theorem 2.50 in \cite{1312.7368}]
 \label{FMT_theorem}
 For a totally normal cellular stratified space $X$,
 the classifying space $BC(X)$ of the face category
 $C(X)$ can be embedded in $X$ as a strong deformation retract.

 When $X=\R^k\otimes\R^n$ and the stratification is given
 by a real hyperplane arrangement $\mathcal{A}$ in $\R^k$ by the method
 of Bj{\"o}rner-Ziegler and De Concini-Salvetti
 \cite{Bjorner-Ziegler92,DeConcini-Salvetti00}, then 
 $BC(X)$ is homeomorphic to the higher order Salvetti
 complex $\Sal^{(n)}(\mathcal{A})$.
\end{theorem}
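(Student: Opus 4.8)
The plan is to generalize the classical homeomorphism between a regular cell complex and the order complex of its face poset (its barycentric subdivision) to the partial, non-compact setting of totally normal cellular stratified spaces, where the role of the face poset is played by the face category $C(X)$. First I would reduce the construction of the embedding $BC(X)\hookrightarrow X$ to a local, cell-by-cell statement, organized as an induction over the skeletal filtration $X_0\subseteq X_1\subseteq\cdots$ of the cellular stratified space. The base case is the discrete $0$-skeleton, where $BC(X_0)=X_0$. For the inductive step I would analyze a single $n$-cell $e_{\lambda}$ with cellular domain $D_{\lambda}$: by total normality the boundary sphere $S^{n-1}$ carries a regular cell structure containing $\partial D_{\lambda}$, and each face of $e_{\lambda}$ factors compatibly through $D_{\lambda}$ via the diagram in the definition. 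Consequently the subcategory of $C(X)$ consisting of cells in $\overline{e_{\lambda}}$ is, up to the partiality of $D_{\lambda}$, the face poset of a regular cell decomposition of $D^n$, so that its classifying space is the barycentric subdivision of $D_{\lambda}$ and is therefore homeomorphic to $D_{\lambda}$ itself.

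Second, I would assemble these local homeomorphisms into a global map $BC(X)\to X$ by choosing, compatibly across all cells, a barycentric point in the interior of each cellular domain and sending a nondegenerate simplex $e_{\lambda_0}\to\cdots\to e_{\lambda_p}$ of the nerve into the top cell $e_{\lambda_p}$ through the chosen barycenters and the characteristic map $\varphi_{\lambda_p}$. The factorization condition in the definition of total normality is exactly what guarantees that the contributions coming from different cells agree on overlaps, so the map is well defined and continuous; injectivity and the homeomorphism-onto-image property then follow from the local statement together with the colimit description of the skeleta.

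Third, to upgrade the embedding to a strong deformation retraction, I would construct inside each cellular domain $D_{\lambda}$ a radial-type deformation pushing $D_{\lambda}$ onto the image of its barycentric subdivision, and check that these deformations are compatible with the attaching maps so that they glue to a global homotopy $X\times[0,1]\to X$ fixing $BC(X)$. I expect this to be the main obstacle: unlike the classical regular-cell-complex case, the domains $D_{\lambda}$ are only cellular subsets of disks, so the deformation must be defined on the noncompact open strata and controlled near the part of the boundary sphere that does not belong to $\partial D_{\lambda}$. The total normality hypothesis, in particular the existence of the ambient regular cell structure on $S^{n-1}$, is precisely what allows one to define the radial deformation consistently and to verify its continuity at these open ends.

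Finally, for the identification with the higher Salvetti complex when $X=\R^k\otimes\R^n$ with the stratification induced by a real arrangement $\mathcal{A}$, I would compute the face category $C(X)$ explicitly: its objects are the framed cells of the Bj\"orner--Ziegler/De Concini--Salvetti decomposition of $\R^k\otimes\R^n$, and its morphisms are exactly the face relations together with the directional data in $\R^n$. I would then observe that this is the very combinatorial category from which $\Sal^{(n)}(\mathcal{A})$ is built, so that the first part of the theorem yields the homeomorphism $BC(X)\cong\Sal^{(n)}(\mathcal{A})$; alternatively one matches the two cell decompositions directly. The remaining point to check is that the arrangement stratification of $\R^k\otimes\R^n$ is genuinely totally normal, which again reduces to the regularity of the induced cell structures on the relevant boundary spheres.
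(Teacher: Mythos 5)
Your overall strategy coincides with the paper's own: the embedding is constructed chain-by-chain, sending a nondegenerate simplex $e_{\lambda_0}\to\cdots\to e_{\lambda_p}$ of the nerve into the domain $D_{\lambda_p}$ of the top cell through chosen barycenters and the composed lifts (this is the inductive construction of the maps $z_k$ and $i_k=\Phi\circ z_k$ in the proof of Theorem \ref{embedding_Sd}); the strong deformation retraction is built cell by cell and glued using compatibility with the lifts (Theorem \ref{deformation_theorem}, whose key input, your ``radial deformation controlled at the open ends,'' is exactly Lemma \ref{spherical_case}); and the Salvetti identification follows because the Bj{\"o}rner--Ziegler/De Concini--Salvetti stratification is regular, so that $C(X)=P(X)$ is a poset and $BC(X)$ is the order complex by which $\Sal^{(n)}(\mathcal{A})$ is defined (Example \ref{BZDS_stratification}).

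However, your local inductive step contains a genuine error. The full subcategory of $C(X)$ on the cells contained in $\overline{e_{\lambda}}$ is \emph{not} (even ``up to partiality'') the face poset of a regular cell decomposition of $D^n$, and its classifying space is not homeomorphic to $D_{\lambda}$. For the minimal, totally normal, decomposition $S^1=e^0\cup e^1$ this subcategory is all of $C(S^1)$, which has \emph{two} morphisms $e^0\to e^1$ (Example \ref{minimal_cell_decomposition_of_circle2}); it is not a poset, and its classifying space is $S^1$, not $D^1$. The category that total normality identifies with the face poset of the induced regular decomposition of $D_{\lambda}$ is the category of morphisms \emph{into} $e_{\lambda}$, i.e.\ the comma category $C(X)\downarrow e_{\lambda}$, in which distinct lifts $b:D_{\mu}\to D_{\lambda}$ become distinct objects (distinct cells of $\partial D_{\lambda}$); this ``unfolding'' of multiple morphisms is precisely the subtlety that makes $BC(X)$, unlike $BP(X)$, recover the homotopy type. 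Moreover, even for the comma category one gets a homeomorphism onto $D_{\lambda}$ only when the cell is closed; when $D_{\lambda}$ is partial its classifying space merely embeds with image a deformation retract (a single open cell gives a point). Since you invoke the false local statement to deduce injectivity and the homeomorphism-onto-image property of the glued map, that part of the argument fails as written. The repair is what the paper does: work with the nerve-level construction you describe in your second paragraph (which implicitly uses the comma categories correctly) and prove that $i_{X}$ is a closed bijection onto its image by comparing it with the quotient map $\Phi:D(X)=\coprod_{\lambda}D_{\lambda}\to X$, via the identity $\Phi^{-1}(i_{X}(A))=z(p^{-1}(A))$ established in the proof of Theorem \ref{embedding_Sd}.
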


Note that, if $X$ is a regular cell complex, it
is a fundamental fact in combinatorial algebraic topology that the
classifying space $BF(X)$ of the face poset $F(X)$ is the barycentric
subdivision of $X$ and is homeomorphic to $X$.
The above result is a generalization of this well-known fact.

Examples of totally normal cellular stratified spaces include
\begin{itemize}
 \item regular cell complexes,
 \item the Bj{\"o}rner-Ziegler stratification \cite{Bjorner-Ziegler92}
       of Euclidean spaces defined by subspace arrangements,   
 \item graphs regarded as $1$-dimensional cell complexes,
 \item the minimal cell decomposition of $\RP^n$,
 \item Kirillov's PLCW-complexes \cite{1009.4227} satisfying a certain
       regularity condition, and
 \item the geometric realization of $\Delta$-sets.
\end{itemize}

Another source of inspirations for this paper is a preprint
\cite{Cohen-Jones-SegalMorse} of R.~Cohen, J.D.S.~Jones, and
G.B.~Segal. Given a Morse-Smale function $f : M\to \R$ on a smooth
closed manifold $M$, they constructed a topological acyclic category
$C(f)$ and proved that the classifying space $BC(f)$ is homeomorphic to
$M$. Under a weaker assumption, they also proved that $BC(f)$ is
homotopy equivalent to $X$. Their results strongly suggest that a
``topological acyclic category version'' of Theorem \ref{FMT_theorem}
should exist.

\subsection{Statements of Results}

The aim of this paper is to extend these results and develop the
theory of cellular stratified spaces. For this purpose, we first
introduce the notion of cylindrical structures on cellular stratified 
spaces. 

\begin{definition}[Definition \ref{cylindrical_structure_definition}]
 A \emph{cylindrical structure} on a normal cellular stratified space
 $X$ consists of
 \begin{itemize}
  \item a normal stratification on $S^{n-1}$
	containing $\partial D_{\lambda}$ as a stratified subspace for
	each $n$-cell
	$\varphi_{\lambda} : D_{\lambda}\to\overline{e_{\lambda}}$ in
	$X$,  
  \item a stratified space $P_{\mu,\lambda}$ and a morphism of
	stratified spaces 
	\[
	 b_{\mu,\lambda} : P_{\mu,\lambda}\times D_{\mu} \longrightarrow
	\partial D_{\lambda}
	\]
	for each pair of cells
	$e_{\mu} \subset \partial e_{\lambda}$, and
  \item  a morphism of stratified spaces
	\[
	 c_{\lambda_0,\lambda_1,\lambda_2} :
	 P_{\lambda_1,\lambda_2}\times P_{\lambda_0,\lambda_1} 
	 \longrightarrow P_{\lambda_0,\lambda_2}
	\]
	 for each sequence $\overline{e_{\lambda_0}} \subset
	\overline{e_{\lambda_1}} \subset \overline{e_{\lambda_2}}$,
 \end{itemize}
 satisfying certain compatibility and associativity conditions.
 A cellular stratified space equipped with a cylindrical structure is
 called a \emph{cylindrically normal} cellular stratified space.
\end{definition}

Examples of cylindrically normal cellular stratified spaces 
include
\begin{itemize}
 \item totally normal cellular stratified spaces,
 \item PLCW complexes,
 \item the minimal cell decomposition of $\CP^n$, and
 \item the geometric realization of simplicial sets.
\end{itemize}
The cylindrical structure on $\CP^n$ can be defined by using the
``moduli spaces of flows'' of a Morse-Smale function, as is done in
the preprint \cite{Cohen-Jones-SegalMorse} by R.~Cohen, J.D.S~Jones, and
G.B.~Segal mentioned above.
Alternatively we can construct the same 
cylindrical structure by identifying $\CP^n$ with the
Davis-Januszkiewicz construction $M_{\lambda_n}$\footnote{See Example
\ref{moment_angle_complex} for more details.}. This observation suggests
a large class of quasitoric and torus manifolds have cylindrically
normal cell decompositions. 

Given a cylindrically normal cellular stratified space $X$, we 
define a topological acyclic category $C(X)$, called the
cylindrical face category of $X$. Objects are cells in $X$ and the space 
of morphisms from $e_{\mu}$ to $e_{\lambda}$ is defined to be
$P_{\mu,\lambda}$. 

Our first result says that the classifying space $BC(X)$ of $C(X)$ can
be always embedded in $X$.

\begin{theorem}[Theorem \ref{embedding_Sd}]
 \label{main1}
 For any cylindrically normal cellular stratified space $X$, there exists
 an embedding
 \[
  i_{X} : BC(X) \hookrightarrow X
 \]
 which is natural with respect to morphisms of cellular stratified spaces.
 Furthermore, when all cells in $X$ are closed, $i_{X}$ is a
 homeomorphism. 
\end{theorem}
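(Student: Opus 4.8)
The plan is to build $i_X$ from the standard presentation of the classifying space as the geometric realization of the nerve, $BC(X) = \left| N_\bullet C(X)\right|$, mapping each simplex into $X$ through the characteristic map of its top cell. A nondegenerate $n$-simplex of the nerve is a chain $e_{\lambda_0}\subset\overline{e_{\lambda_1}}\subset\cdots\subset\overline{e_{\lambda_n}}$ together with a point $(p_1,\ldots,p_n)\in P_{\lambda_0,\lambda_1}\times\cdots\times P_{\lambda_{n-1},\lambda_n}$, so the corresponding cell of $BC(X)$ is $\Delta^n\times P_{\lambda_0,\lambda_1}\times\cdots\times P_{\lambda_{n-1},\lambda_n}$. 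First I would define, by induction on $n$, a continuous map
\[
\Phi_{\lambda_0\cdots\lambda_n}:\Delta^n\times P_{\lambda_0,\lambda_1}\times\cdots\times P_{\lambda_{n-1},\lambda_n}\longrightarrow D_{\lambda_n}
\]
into the domain of the top cell, using the cone structure $D_{\lambda_n}=\Cone(S^{n-1})$. Writing the barycentric coordinates as $(t_0,\ldots,t_n)$, the coordinate $t_n$ is the radial parameter measured from the center of $D_{\lambda_n}$ (the preimage of the barycenter of $e_{\lambda_n}$), while the rest of the data is pushed into $\partial D_{\lambda_n}$ by the structure map: concretely $\Phi_{\lambda_0\cdots\lambda_n}=(1-t_n)\cdot b_{\lambda_{n-1},\lambda_n}\bigl(p_n,\Phi_{\lambda_0\cdots\lambda_{n-1}}\bigr)$, where the scaling is taken in the cone coordinate, the lower barycentric coordinates are renormalized by $1-t_n$, and $\Phi_{\lambda_0}$ is the constant map at the center of $D_{\lambda_0}$. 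I then set $i_X=\varphi_{\lambda_n}\circ\Phi_{\lambda_0\cdots\lambda_n}$ on this simplex.

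The next step is to verify that these simplexwise maps agree under the face and degeneracy identifications of the nerve, so that $i_X$ descends to $BC(X)$; this is exactly where the compatibility and associativity conditions of the cylindrical structure are consumed. Dropping the top vertex ($t_n=0$) reduces $\Phi$ to $b_{\lambda_{n-1},\lambda_n}(p_n,-)$ on the boundary, and agreement with the face chain through $e_{\lambda_{n-1}}$ is precisely the compatibility condition $\varphi_{\lambda_n}\circ b_{\lambda_{n-1},\lambda_n}(p_n,-)=\varphi_{\lambda_{n-1}}$. Collapsing an interior vertex ($t_i=0$, $0<i<n$) removes $e_{\lambda_i}$ and composes the two adjacent morphisms; here the identity
\[
b_{\lambda_i,\lambda_{i+1}}\bigl(p_{i+1},b_{\lambda_{i-1},\lambda_i}(p_i,-)\bigr)=b_{\lambda_{i-1},\lambda_{i+1}}\bigl(c_{\lambda_{i-1},\lambda_i,\lambda_{i+1}}(p_{i+1},p_i),-\bigr)
\]
supplied by the associativity condition relating the $b$'s and $c$'s is what is needed. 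Dropping the bottom vertex is automatic, since at $t_0=0$ the innermost stage collapses to the center of $D_{\lambda_1}$, which is exactly the base case of the truncated chain, and the degeneracies are handled by the identity normalization $b_{\lambda,\lambda}(\mathrm{id},-)=\mathrm{id}$. I would package all of this as the single statement that $\Phi$ assembles into a map of simplicial spaces, whence continuity of $i_X$ follows from the quotient topology on the realization.

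It then remains to prove injectivity and that $i_X$ is a homeomorphism onto its image, together with naturality and the closed-cell case. For injectivity I would use normality of $X$: a point $i_X(z)$ lies in a unique open cell, the cone coordinates recover the barycentric weights and hence the underlying chain, and injectivity of each characteristic map on its open cell together with the faithfulness of the structure maps $b_{\mu,\lambda}$ recovers the parameters $p_i$, reconstructing $z$. To promote the continuous bijection onto its image to a homeomorphism I would induct over the skeletal filtration of $BC(X)$, showing that the part mapping into a single closed cell $\overline{e_{\lambda}}$ is a homeomorphism onto the cylindrical subdivision of that cell and that these pieces glue along boundaries, with properness on each closed cell giving closedness there. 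Naturality is immediate, since the whole construction is expressed through the structure maps, which morphisms of cellular stratified spaces preserve. Finally, when every cell is closed each $D_{\lambda_n}$ is the full cone on $S^{n-1}$, so the radial sweep fills $D_{\lambda_n}$ once $\partial D_{\lambda_n}$ is covered inductively, yielding surjectivity and hence a homeomorphism.

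I expect the main obstacle to be the passage from a continuous bijection onto its image to a genuine embedding when the cells are not closed. In that case the disks $D_{\lambda}$ are only partial, the image of $i_X$ is a proper core of $X$, and no compactness is available to force closedness; the inverse must be constructed by hand from the stratification, reading the radial parameter off the cone coordinate and locating the boundary contribution through the normal stratification on each $S^{n-1}$. Controlling the continuity of this inverse uniformly across cells of different dimensions, and ensuring the skeletal gluing introduces no collapsing, is the delicate part, whereas the descent over the nerve is by comparison a mechanical consequence of the cylindrical axioms.
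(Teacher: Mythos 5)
Your construction of the map itself is the same as the paper's: induction on the length of a nondegenerate chain, sending the last vertex of $\Delta^n$ to the center of $D_{\lambda_n}$ and coning off the inductively defined map on $\partial D_{\lambda_n}$ via $b_{\lambda_{n-1},\lambda_n}$, with the simplicial identities consumed exactly by the compatibility and associativity axioms. The closed-cell case and naturality are also handled as in the paper. The genuine gap is in the step you yourself flag as delicate: promoting the continuous injection to an embedding when cells are not closed. Your primary route (skeletal induction with properness on each closed cell) fails precisely there, as you admit, and your fallback --- ``construct the inverse by hand'' by reading off cone coordinates --- is a pointwise description, not an argument; continuity of that inverse across infinitely many cells of varying dimension is exactly what needs proof, and nothing in your proposal supplies it.

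The missing idea is to work upstairs in the total domain $D(X)=\coprod_{\lambda}D_{\lambda}$ rather than in $X$. The paper builds the simplexwise maps not merely as maps $\Delta^k\times\prod P_{\lambda_{i},\lambda_{i+1}}\to X$ but as \emph{embeddings} $z_{\bm{e}}$ into the domains $D_{\lambda_k}$, and then uses the fact (Lemma \ref{quotient_of_D(X)}, which requires the CW hypothesis --- a hypothesis your proposal never invokes) that the map $\Phi:D(X)\to X$ assembled from the characteristic maps is a quotient map. The key step is the set-theoretic identity
\[
\Phi^{-1}\left(i_{X}(A)\right)=z\left(p^{-1}(A)\right)
\]
for every $A\subset BC(X)$, where $p$ is the projection from $\coprod_{k}\overline{N}_k(C(X))\times\Delta^k$ and $z=\coprod_{\bm{e}}z_{\bm{e}}$. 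Proving this identity (the nontrivial inclusion traces a point of a fiber of $\Phi$ back into the image of $z$ using cylindrical normality of the stratification on $\partial D_{\lambda}$) immediately makes $i_{X}$ a closed map onto its image, since closedness of $A$ gives closedness of $z(p^{-1}(A))$ in $D(X)$ up to the quotient, with no compactness or properness needed. Without this detour through $D(X)$ and the quotient topology, your argument does not close.
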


When $X$ contains non-closed cells, $i_{X}$ is not a homeomorphism. 
Our second result says that, under a reasonable condition, those
non-closed cells can be collapsed into $BC(X)$.

\begin{theorem}[Theorem \ref{deformation_theorem}] 
 \label{main2}
 For a polyhedral\footnote{Definition
 \ref{polyhedral_normality_definition}} cellular stratified space $X$,
 the image of the embedding $i_{X} : BC(X) \hookrightarrow X$ is a
 strong deformation retract of $X$. The deformation retraction can be
 taken to be natural with respect to morphisms of polyhedral cellular 
 stratified spaces. 
\end{theorem}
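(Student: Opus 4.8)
The plan is to build the deformation retraction one cell at a time, inductively over the skeletal filtration $X^{(0)}\subseteq X^{(1)}\subseteq\cdots$, and then to glue the pieces together using naturality. By Theorem \ref{main1} the embedding $i_X$ already exists and is a homeomorphism onto its image over every \emph{closed} cell, so the only work is to absorb the ``non-closed directions'' of each cell $\varphi_\lambda : D_\lambda\to\overline{e_\lambda}$ whose domain $D_\lambda=\Int D^n\cup\partial D_\lambda$ omits part of the boundary sphere. First I would record, from the construction behind Theorem \ref{main1}, the explicit shape of the target: the intersection $i_X(BC(X))\cap\overline{e_\lambda}$ is the cone $\Cone(b_\lambda, i_X(BC(\partial D_\lambda)))$ from the barycenter $b_\lambda$ over the subdivided present boundary $\partial D_\lambda$. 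The local problem is thus to strong-deformation retract $D_\lambda$ onto this cone.

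Next, the polyhedral hypothesis (Definition \ref{polyhedral_normality_definition}) is what lets me write the homotopy down concretely. I expect it to equip each domain $D_\lambda$ with a piecewise-linear, star-shaped structure about $b_\lambda$ in which $\partial D_\lambda$ is a subcomplex of faces and in which the cylindrical structure maps $b_{\mu,\lambda}$ are piecewise linear. With this in hand I would define, on each $D_\lambda$, a straight-line deformation that fixes $\Cone(b_\lambda,\partial D_\lambda)$ pointwise and transports the interior points lying over the missing part of $\partial D^n$ radially toward the frontier between the present and missing boundary. Concretely, each point flows along a PL trajectory that decreases a fixed linear ``height'' measuring its distance to the present part, so that at time $1$ every point has landed in the cone; over a closed cell this flow is constant, consistent with the homeomorphism clause of Theorem \ref{main1}.

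The real content is the gluing. I would run the induction so that the retraction already built on $X^{(n-1)}$ agrees, after pullback along $\varphi_\lambda$, with the restriction to $\partial D_\lambda$ of the local homotopy on $D_\lambda$; this is exactly where the associativity and compatibility conditions on the maps $b_{\mu,\lambda}$ and $c_{\lambda_0,\lambda_1,\lambda_2}$ enter, since they identify the cone structures and subdivisions on overlapping faces in precisely the way the straight-line homotopies demand. Once this matching is arranged, the local homotopies descend to a single homotopy on $X^{(n)}=X^{(n-1)}\cup_{\coprod\varphi_\lambda}\coprod D_\lambda$, and passing to the colimit over $n$ produces the global strong deformation retraction onto $i_X(BC(X))$. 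Naturality with respect to a morphism $f:X\to Y$ of polyhedral cellular stratified spaces then follows because $f$ respects characteristic maps, barycenters, and the polyhedral cylindrical data, hence intertwines the height functions and the flows; this reduces to the naturality of $i_X$ already supplied by Theorem \ref{main1}.

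The step I expect to be the main obstacle is precisely this gluing, namely verifying that the radial collapses of the missing directions are mutually compatible along shared faces. Unlike the totally normal situation of Theorem \ref{FMT_theorem}, where the face data is discrete and the matching is essentially combinatorial, here the parameter spaces $P_{\mu,\lambda}$ are genuine topological spaces, so the homotopies must vary continuously over them; the delicate point is to choose the height functions and PL flows coherently with \emph{all} composition maps $c_{\lambda_0,\lambda_1,\lambda_2}$ at once, so that choices made on a lower cell do not obstruct the collapse on a higher one. I would handle this by fixing the height functions from the top-dimensional cells downward and leaning on polyhedral normality to guarantee that the resulting piecewise-linear homotopies splice together without any correction term.
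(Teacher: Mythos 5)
There is a genuine gap, and it sits exactly where you predicted the difficulty would be. Your local homotopy is inconsistent with your own gluing requirement: you (correctly) demand that the homotopy on $D_\lambda$ restrict on $\partial D_\lambda$ to the retraction already built on lower skeleta, but the flow you write down \emph{fixes} $\Cone(b_\lambda,\partial D_\lambda)$ pointwise, whereas the lower-skeleton retraction genuinely moves points of $\partial D_\lambda$ (it retracts each boundary cell onto its barycentric subdivision). As written, your straight-line collapse lands $D_\lambda$ at best on the thin cone $\{0\}\ast\partial D_\lambda$, not on the actual target $i_{D_\lambda}(\Sd(D_\lambda))=\Cone\bigl(b_\lambda,\,i(\Sd(\partial D_\lambda))\bigr)$; patching this by concatenating a second stage (the cone on the boundary retraction) breaks the face-compatibility again, since the induced homotopy on a face is then a time-reparametrized version of the one required there. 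The missing ingredient is the paper's Lemma \ref{spherical_case}: for a regular cell decomposition of $S^{n-1}$, a stratified subspace $L\subset S^{n-1}$, and \emph{any} given strong deformation retraction $h$ of $L$ onto $i_{L}(\Sd(L))$, there is a strong deformation retraction of $\Int D^n\cup L$ onto the embedded subdivision \emph{extending} $h$. That extension statement is honest simplicial topology (proved separately, in the appendix of \cite{1312.7368}) and is not delivered by any radial or height-function formula; with it, the inductive step is simply: descend the homotopies $1\times H_\mu$ along $b_{\mu,\lambda}$ to get a homotopy of $\partial D_\lambda$ (this descent is where polyhedrality and Lemma \ref{PL_map_is_extendalbe} are used, since $b_{\mu,\lambda}$ is a homeomorphism only on $P_{\mu,\lambda}\times\Int(D_\mu)$), then extend inward over $D_\lambda$ by Lemma \ref{spherical_case}.

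Your coherence scheme of "fixing the height functions from the top-dimensional cells downward" is also unworkable, and unnecessary. A polyhedral cellular stratified space need not have top-dimensional cells, and a fixed cell $e_\mu$ is in general a face of many (possibly infinitely many) cells $e_\lambda$, each of which would impose its own constraint on the data you choose for $e_\mu$; there is no reason these constraints are simultaneously satisfiable, and showing they are would be a substantial argument you have not supplied. The paper's induction runs strictly upward in cell dimension: $H_\mu$ is constructed first, and the compatibility (\ref{compatibility_for_H}) with every higher cell holds \emph{by construction}, because the boundary homotopy on $D_\lambda$ is defined from the $H_\mu$ rather than being matched against an independently chosen collapse. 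Once the induction is run in that direction and Lemma \ref{spherical_case} is in hand, the assembly over the CW filtration and the naturality statement go through essentially as you describe.
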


It turns out that they still hold when we replace cells by
``star-shaped'' cells. We introduce the notion 
of stellar stratified spaces and show that the functor $BC(-)$
transforms cellular stratified spaces to stellar stratified spaces,
giving us a dualizing operation $D(-)$. With this structure, Theorem
\ref{main1} (Theorem \ref{embedding_Sd}) can be rephrased as follows.

\begin{theorem}[Theorem \ref{Salvetti}]
 For a totally normal stellar stratified space $X$, we have an embedding
 of stellar stratified spaces
 \[
  D(D(X)) \hookrightarrow X,
 \]
 which is an isomorphism when all cells in $X$ are closed. 
\end{theorem}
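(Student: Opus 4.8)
The plan is to recognize the asserted embedding as a restatement of Theorem \ref{main1} (Theorem \ref{embedding_Sd}) once the double dual $D(D(X))$ has been identified, both combinatorially and topologically, with the classifying space $BC(X)$. The first task is to pin down the effect of $D$ on face categories. By construction $D(X)$ has underlying space $BC(X)$, stratified by the dual stellar cells: the cell dual to $e_{\lambda}$ is the union of those open simplices of the nerve of $C(X)$ whose minimal vertex is $e_{\lambda}$, a region which is star-shaped about the barycenter of $e_{\lambda}$ and hence a legitimate stellar cell. Tracking incidences shows that $e_{\mu}$ is a face of $e_{\lambda}$ in $X$ precisely when the dual stellar cell of $e_{\lambda}$ is a face of that of $e_{\mu}$; this yields the key duality lemma
\[
 C(D(X)) \cong C(X)^{\op},
\]
which is exactly what makes $D$ a dualizing operation. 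I would isolate this as a lemma and prove it by direct inspection of the dual stellar cell structure on the classifying space.

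Applying the lemma twice gives $C(D(D(X))) \cong (C(X)^{\op})^{\op} = C(X)$, so $D(D(X))$ and $X$ have isomorphic face categories. On the level of spaces, $D(D(X))$ has underlying space $BC(D(X))$, the classifying space of $C(X)^{\op}$, and the standard homeomorphism induced by order reversal on each simplex identifies this with $BC(X)$. I would then check that this homeomorphism carries the dual-of-dual stellar stratification of $D(D(X))$ onto the stellar stratification of $BC(X)$ whose face category is $C(X)$: concretely, the stellar cell dual to $e_{\lambda}$ in $D(D(X))$ is the closed star of the vertex $e_{\lambda}$ in $BC(X)$. Consequently $D(D(X))$ is isomorphic, as a stellar stratified space, to $BC(X)$ equipped with precisely the stratification that appears as the source of $i_{X}$.

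It then remains to invoke the stellar analogue of Theorem \ref{embedding_Sd}. Total normality of $X$ supplies the cylindrical structure required there, producing a natural embedding $i_{X} : BC(X) \hookrightarrow X$ of stellar stratified spaces which is an isomorphism exactly when every cell of $X$ is closed. Precomposing with the identification $D(D(X)) \cong BC(X)$ established above yields the embedding $D(D(X)) \hookrightarrow X$ together with its promised behavior on closed-cell complexes; naturality is inherited from that of $i_{X}$ and of the duality isomorphisms.

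The main obstacle I anticipate is the duality lemma $C(D(X)) \cong C(X)^{\op}$ together with its compatibility with the dual stellar stratification: one must verify not merely that the two face categories are opposite at the level of objects and morphisms, but that the dual stellar cells genuinely satisfy the axioms of a totally normal stellar stratification on $BC(X)$, that their characteristic maps are the expected star-shaped ones, and that the order-reversal homeomorphism on classifying spaces is an \emph{isomorphism of stellar stratified spaces} rather than a mere homeomorphism of underlying spaces. Once this bookkeeping is in place, the theorem follows formally from Theorem \ref{main1}.
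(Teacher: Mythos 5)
Your proposal follows essentially the same route as the paper's proof of Theorem \ref{Salvetti}. Your duality lemma $C(D(X))\cong C(X)^{\op}$ is the content of the paper's Theorem \ref{stellar_stratification_on_D(X)} (the dual carries the cylindrical structure with $P^{\op}_{\lambda,\mu}=P_{\mu,\lambda}$), and your identification of $D(D(X))$ with $BC(X)$ carrying the ``maximal-vertex'' stratification is exactly how the paper proves the last assertion of Theorem \ref{Salvetti}, via $D_{\lambda}^{(X^{\op})^{\op}}=B(e_{\lambda}\downarrow C(X)^{\op})\cong B(C(X)\downarrow e_{\lambda})$. One structural difference: the paper first defines this intermediate object intrinsically as $\Sal(X)$ (the target-map stratification on $\Sd(X)$, with stellar cells the lower stars), proves the embedding statement for $\Sal(X)$, and only at the end identifies $\Sal(X)=(X^{\op})^{\op}$ under the finiteness/polyhedrality/relative-compactness hypotheses of Proposition \ref{stellar_structure_on_Sd}; this bypasses having to verify that $D(X)$ itself satisfies the hypotheses needed to dualize a second time, which your route must check (harmless for totally normal $X$, where parameter spaces are finite, but it is a step).

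Two points in your write-up need repair. First, your ``concrete'' description of the double-dual cell of $e_{\lambda}$ as the \emph{closed star} of the vertex $e_{\lambda}$ in $BC(X)$ is wrong, and inconsistent with your own (correct) rule: applying ``open simplices with minimal vertex $e_{\lambda}$'' to $C(X)^{\op}$ yields the open simplices whose chains have \emph{maximum} $e_{\lambda}$, whose closure is the image of $B(C(X)\downarrow e_{\lambda})$; the closed star also contains chains passing upward through $e_{\lambda}$, and taking it as a cell would not even give a stratification. Second, and more substantively, the final step is not ``formal from Theorem \ref{main1}'': Theorem \ref{embedding_Sd} produces only a topological embedding $i_{X}:BC(X)\hookrightarrow X$. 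To obtain an embedding of \emph{stellar stratified spaces} one must reopen the inductive construction of $i_{X}$ and check that the maps $z_{\bm{e}}$, glued over the decomposition $\overline{N}_k(C(X))=\coprod_{\lambda}\overline{N}_k(C(X)\downarrow e_{\lambda})$, define cell-wise embeddings $z_{\lambda}:D_{\lambda}^{\Sal(X)}\to D_{\lambda}$ commuting with the characteristic maps and with the cylindrical structure maps $b_{\mu,\lambda}$; this is the paper's Lemma \ref{embedding_stellar_cells}, and it is exactly the ``stellar analogue'' you invoke without proof. So the obstacle is not located only in the duality bookkeeping you flag; it sits equally in upgrading the conclusion of Theorem \ref{main1}.
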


It should be worthwhile noting that, when $X$ is the complement of the
complexification of a real hyperplane arrangement $\mathcal{A}$, $DD(X)$
coincides with the Salvetti complex of $\mathcal{A}$. This fact and
examples of cylindrical structures suggest that we may apply these
theorems to the following problems:
\begin{enumerate}
 \item Construct combinatorial models for configuration spaces and
       apply them to study the homotopy types of configuration spaces. 
 \item Develop a refinement and an extension of the Cohen-Jones-Segal
       Morse theory.  
 \item Reformulate Forman's discrete Morse theory
       \cite{Forman95,Forman98-2} in terms of topological 
       acyclic categories.
\end{enumerate}
We should also be able to apply the results in this paper to other types
of configurations and arrangements. We might be able to apply the
results to toric topology. 

\subsection{Organization}

The article is organized as follows.
\begin{itemize}
 \item \S\ref{stratification_and_cell} is preliminary. We fix notations
       and terminologies for stratified spaces in
       \S\ref{stratified_space}. The definition of cell structures is
       given in \S\ref{cell_structure}.
       Cellular stratified spaces are introduced in
       \S\ref{cellular_stratified_space_definition}. 
       We introduce stellar stratified spaces in
       \S\ref{stellar_stratified_space}. 

 \item We extend the regularity and normality for cell complexes in
       \S\ref{discrete_face_category}. 
       The regularity and normality are extended to cellular stratified
       spaces in \S\ref{regularity_and_normality}. In
       \S\ref{total_normality}, the
       definition and basic properties of totally normal cellular
       stratified spaces are recalled from \cite{1312.7368}.
       \S\ref{total_normality_examples} is devoted to examples of
       totally normal cellular stratifies spaces.

 \item A new structure, called
       cylindrically normal cellular and stellar stratifications, is
       introduced and studied in
       \S\ref{topological_face_category}.
       After defininng cylindrically normal cellular stratified spaces
       in \S\ref{cylindrical_structure}, we impose piecewise-linear
       structure on each cell and introduce polyhedral structures in
       \S\ref{locally_polyhedral}. We review examples of
       cylindrically normal cellular stratified spaces in
       \S\ref{cylindrical_examples}. 

 \item Theorem \ref{main1} and \ref{main2} are proved in \S\ref{BC}.
       
 \item For applications to configuration spaces, we need to understand
       basic operations on cellular stratified spaces, which is the
       subject of \S\ref{basic_operations}.
       We study three kinds of operations; stratified subspaces in
       \S\ref{stratified_subspace}, products in \S\ref{product}, and 
       subdivisions in \S\ref{subdivision_of_cell}. 

 \item As an extension of the barycentric sudivision of regular cell
       complexes, our framework is suitable for discussing duality,
       which is the subject of \S\ref{duality}.
\end{itemize}

\noindent The paper contains three appendices for the convenience of the
reader. 

\begin{itemize}
 \item Understanding the behavior of quotient maps is important in this
       paper, since cell structure maps are required to be quotient
       maps. We summerize important properties of quotient maps in 
       Appendix \ref{quotient_map}. 

 \item In Appendix \ref{PL}, we recall definitions and properties of
       simplicial complexes, simplicial sets, and related structures.

 \item Appendix \ref{topological_category} is a summary on basics of
       topological categories, including their classifying spaces. 
\end{itemize}

\subsection{Acknowledgments}

This work originates in fruitful and enjoyable discussion with
Jes{\'u}s Gonz{\'a}lez on the Google Wave system in 2010. Although
Google terminated 
the development of the system, I would like to thank Google for
providing us with such a useful collaboration tool for free. I would
also like to express my appreciation to the Centro 
di Ricerca Matematica Ennio De Giorgi, Scuola Normale Superiore di 
Pisa, for supporting my participation in the research program
``Configuration Spaces: Geometry, Combinatorics and Topology'' in 2010,
during which a part of this work was done. I would also like to thank
the National University of Signapore for inviting me to the
workshop ``Combinatorial and Toric Topology'' in 2015.

The discussion on the Google Wave has been incorporated into a
joint project with Ibai Basabe and Yuli Rudyak with the aim of applying
cellular stratifies spaces to study the homotopy type of configuration
spaces of spheres. It turns out that configuration spaces of spheres are
much more complicated than we imagined and the sections for cellular
stratified spaces were removed from the published paper
\cite{1009.1851}. Nonetheless the discussion during the project with
them has been essential. I would like to thank Basabe, Gonz{\'a}lez, and
Rudyak, as well as Peter Landweber, who read earlier versions of the
paper and made useful comments.

Before I began the discussion with Gonz{\'a}lez on Google Wave, some 
of the ideas have been already developed during the discussion with
my students. Takamitsu Jinno and Mizuki Furuse worked
on Hom complexes in 2009 and configuration spaces of graphs in 2010,
respectively, in their master's theses. The possibility of
finding a better combinatorial model for configuration spaces was
suggested by their work. Furuse's work was developed further by another
studient Takashi Mukouyama. Their work is contained in a paper
\cite{1312.7368}, 
in which theory of totally normal cellular stratified spaces has been
developed. 
The connection with Cohen-Jones-Segal Morse
theory, which resulted in the current definition of cylindrical
structure, was discovered during the discussion with another former
student, Kohei Tanaka. I am grateful to all these former students.

Mikiya Masuda pointed out my misunderstanding in examples concerning
complex projective spaces in an early draft of this paper. Priyavrat
Deshpande pointed out a mistake in \cite{1312.7368} on subdivisions of
totally normal cellular stratified spaces. The mistake is corrected in
this paper as Proposition \ref{subdivision_of_totally_normal_css}. I
would like to thank both of them.

This work is supported by JSPS KAKENHI Grant Number 23540082 and
15K04870.

\section{Stratifications and Cells}
\label{stratification_and_cell}

This section is preliminary. We introduce the notions of stratified
spaces, cell structures, and cellular stratified spaces.
Although the term ``stratified space'' has been used in singularity
theory since its beginning and is a well-established concept, we 
need our own definition of stratified spaces.

\subsection{Stratified Spaces}
\label{stratified_space}

Before we introduce \emph{cellular} stratified spaces, let us first
recall the notion of stratified spaces in general, whose theory has been
developed in singularity theory. Unfortunately, however, there seems to
be no standard definition of stratified spaces. There are many
non-equivalent definitions in the literature. For this reason, we decided
to examine several books and extract properties for our needs. As our
prototypes, we use definitions in books by Kirwan
\cite{KirwanIntersectionHomology}, Bridson and Haefliger
\cite{Bridson-Haefliger}, Pflaum \cite{Pflaum01}, and Sch{\"u}rmann
\cite{SchuermannBook}.

Here is our reformulation. 

\begin{definition}
 \label{stratification_definition}
 Let $X$ be a topological space and $\Lambda$ be a poset. A
 \emph{stratification} of $X$ indexed by $\Lambda$ is an open continuous 
 map  
 \[
  \pi : X \longrightarrow \Lambda
 \]
 satisfying the condition that,
 for each $\lambda \in \Ima\pi$, $\pi^{-1}(\lambda)$ is connected and
 locally closed\footnote{A subset $A$ of a topological space $X$
 is said to be \emph{locally closed}, if every point $x\in A$ has
 a neighborhood $U$ in $X$ with $A\cap U$ closed in $U$. This
 condition is known to be equivalent to saying that $A$ is an
 intersection of an open and a closed subset of $X$, or $A$ is
 open in $\overline{A}$.}, where $\Lambda$ is topologized by the
 Alexandroff topology\footnote{$D\subset \Lambda$ is closed if and only
 if, for $\lambda\in D$ and $\mu\in\Lambda$, $\mu\le\lambda$ implies
 $\mu\in D$. Or $U\subset\Lambda$ is open if and only if, for
 $\lambda\in U$ and $\mu\in\Lambda$, $\lambda\le\mu$ implies $\mu\in U$.}.

 For simplicity, we put $e_{\lambda}=\pi^{-1}(\lambda)$ and call it a
 \emph{stratum} with index $\lambda$.
\end{definition}

\begin{remark}
 We may safely assume that $\pi$ is surjective. When we define
 morphisms of stratified spaces and stratified subspaces, however, it is
 more convenient not to assume the surjectivity.
\end{remark}

Given a map $\pi : X\to \Lambda$, we have a decomposition of
$X$, i.e. 
\begin{enumerate}
 \item $X = \bigcup_{\lambda\in \Ima \pi} e_{\lambda}$.
 \item For $\lambda,\lambda'\in\Ima\pi$,
       $e_{\lambda}\cap e_{\lambda'}=\emptyset$ if
       $\lambda\neq\lambda'$. 
\end{enumerate}

Thus the image of $\pi$ in the indexing poset $\Lambda$ can be
identified with the set of strata. The definition of stratification can
be rephrased in terms of closures of strata.

\begin{lemma}
 Let $\pi: X\to \Lambda$ be a continuous map from a topological space 
 $X$ to a poset $\Lambda$. Then it is open if and only if 
 the condition
 $e_{\lambda}\subset \overline{e_{\lambda'}}$ is equivalent
 to $\lambda\le \lambda'$ for $\lambda,\lambda'\in\Ima\pi$.
\end{lemma}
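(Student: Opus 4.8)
The plan is to split the claimed equivalence into its two implications and to match each of them to one of the two hypotheses on $\pi$, namely continuity and openness. Throughout I will use the two descriptions of the Alexandroff topology recorded in the footnotes of Definition \ref{stratification_definition}: the closed subsets of $\Lambda$ are exactly the down-sets, so that $\overline{\{\lambda'\}}=\set{\mu\in\Lambda}{\mu\le\lambda'}$, whereas the open subsets are exactly the up-sets, so that $W\subset\Lambda$ is open precisely when $\lambda\in W$ and $\lambda\le\mu$ force $\mu\in W$.

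Assume first that $\pi$ is open; I would then prove the equivalence directly. The implication $e_\lambda\subset\overline{e_{\lambda'}}\Rightarrow\lambda\le\lambda'$ uses only continuity: since $\pi$ is continuous, $\pi(\overline{e_{\lambda'}})\subset\overline{\pi(e_{\lambda'})}=\overline{\{\lambda'\}}=\set{\mu}{\mu\le\lambda'}$, so every point of $\overline{e_{\lambda'}}$ has $\pi$-value $\le\lambda'$; in particular any $x\in e_\lambda\subset\overline{e_{\lambda'}}$ gives $\lambda=\pi(x)\le\lambda'$. For the reverse implication I would use openness: given $\lambda\le\lambda'$, fix $x\in e_\lambda$ and an arbitrary open neighborhood $V$ of $x$. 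Then $\pi(V)$ is an up-set containing $\lambda$, hence contains $\lambda'$, so $V$ meets $e_{\lambda'}$; as $V$ was arbitrary, $x\in\overline{e_{\lambda'}}$ and therefore $e_\lambda\subset\overline{e_{\lambda'}}$.

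For the converse I would assume the equivalence and show that $\pi(V)$ is open for every open $V\subset X$. If $\lambda\in\pi(V)$, choose $x\in V\cap e_\lambda$ and let $\mu\in\Ima\pi$ with $\mu\ge\lambda$; the equivalence gives $e_\lambda\subset\overline{e_\mu}$, so $x\in\overline{e_\mu}$, whence the neighborhood $V$ meets $e_\mu$ and $\mu\in\pi(V)$. Thus $\pi(V)$ is up-closed within $\Ima\pi$. The one genuine subtlety — and the main obstacle to a literal reading of the statement — is the gap between being up-closed in $\Ima\pi$ and being open in $\Lambda$; these coincide exactly when $\Ima\pi$ is open in $\Lambda$, in particular when $\pi$ is surjective, which is the harmless normalization noted in the Remark following Definition \ref{stratification_definition}. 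Under that convention the three steps above combine to yield the stated biconditional.
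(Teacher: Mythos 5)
Your proof is correct, and it takes a genuinely different route from the paper's. The paper's argument runs entirely through the well-known characterization that a continuous map is open if and only if $\overline{\pi^{-1}(B)}=\pi^{-1}(\overline{B})$ for every $B\subset\Lambda$: granting that, the forward direction is immediate because $\overline{e_{\lambda'}}=\pi^{-1}(\overline{\{\lambda'\}})$ and $\lambda\in\overline{\{\lambda'\}}$ just means $\lambda\le\lambda'$, while the converse is proved by checking the inclusion $\pi^{-1}(\overline{B})\subset\overline{\pi^{-1}(B)}$ for an arbitrary subset $B$, using the Alexandroff description of $\overline{B}$. You instead work pointwise with neighborhoods: continuity alone yields $e_{\lambda}\subset\overline{e_{\lambda'}}\Rightarrow\lambda\le\lambda'$ via $\pi(\overline{e_{\lambda'}})\subset\overline{\{\lambda'\}}$; openness yields the reverse implication because any neighborhood of a point of $e_{\lambda}$ has an up-set as image, hence meets $e_{\lambda'}$; and for the converse of the lemma you verify directly that images of open sets are up-sets. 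What your approach buys is that it is self-contained (no appeal to the saturation identity) and it cleanly attributes each implication to continuity versus openness; what the paper's buys is brevity and uniformity, since the identity $\overline{\pi^{-1}(B)}=\pi^{-1}(\overline{B})$ disposes of all subsets $B$ at once.

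The surjectivity subtlety you flag is real, and it is a point where your write-up is more careful than the paper's own proof. As you note, without surjectivity (or at least openness of $\Ima\pi$ in $\Lambda$) the statement is literally false: for $X$ a single point, $\Lambda=\{0<1\}$, and $\pi$ the constant map at $0$, the hypothesis on pairs in $\Ima\pi$ holds trivially, yet $\pi(X)=\{0\}$ is not open in $\Lambda$. The paper's converse silently hits the same issue: for $x\in\pi^{-1}(\overline{B})$ it picks $\lambda\in B$ with $\pi(x)\le\lambda$ and then applies the hypothesis to the pair $(\pi(x),\lambda)$, even though $\lambda$ need not lie in $\Ima\pi$. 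Invoking the normalization from the Remark following Definition \ref{stratification_definition}, as you do, is the correct repair, and it is in fact needed by both arguments.
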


\begin{proof}
 It is well known that a map $\pi$ is an open continuous map if and only
 if $\overline{\pi^{-1}(B)}=\pi^{-1}(\overline{B})$ for any subset
 $B\subset\Lambda$. 
 Thus, when $\pi$ is open continuous,
 $\pi^{-1}(\lambda)\subset\overline{\pi^{-1}(\lambda')}$ if and only if
 $\lambda\in \overline{\{\lambda'\}}$, which is equivalent to saying
 $\lambda\le\lambda'$. 

 Conversely suppose that
 $\pi^{-1}(\lambda)\subset\overline{\pi^{-1}(\lambda')}$ is equivalent
 to $\lambda\le \lambda'$. For a subset
 $B\subset\Lambda$, we have
 $\overline{\pi^{-1}(B)}\subset \pi^{-1}(\overline{B})$ by the
 continuity of $\pi$. For
 $x\in \pi^{-1}(\overline{B})$, $\pi(x)\in \overline{B}$. By the
 definition of the Alexandroff topology, there exists $\lambda\in B$
 such that $\pi(x)\le \lambda$. By assumption, this is equivalent to
 $\pi^{-1}(\pi(x)) \subset \overline{\pi^{-1}(\lambda)}$. Thus
 $x \in \overline{\pi^{-1}(B)}$ and we have shown that
 $\overline{\pi^{-1}(B)}\supset \pi^{-1}(\overline{B})$.
\end{proof}

\begin{definition}
 For a stratification $\pi : X \to \Lambda$, the image $\Ima\pi$  
 is called the \emph{face poset} and is denoted by $P(X,\pi)$
 or simply by $P(X)$.

 When $P(X)$ is finite or countable, $(X,\pi)$ is said to be
 \emph{finite} or \emph{countable}.
\end{definition}

\begin{remark}
 The above structure (without the connectivity of $\pi^{-1}(\lambda)$)
 is called a decomposition in Pflaum's 
 book \cite{Pflaum01}. Pflaum used the notion of set germ to define a
 stratification from local decompositions. Furthermore, Pflaum imposed
 three further conditions:
 \begin{itemize}
  \item If $e_{\mu}\cap\overline{e_{\lambda}}\neq\emptyset$,
	then $e_{\mu} \subset \overline{e_{\lambda}}$.
  \item Each stratum is a smooth manifold. 
  \item The collection $\{e_{\lambda}\}_{\lambda\in\Lambda}$ is locally
	finite in the sense 
	that, for any $x\in X$, there exists a neighborhood $U$ of $x$
	such that $U\cap e_{\lambda}\neq\emptyset$ only for a finite
	number of strata $e_{\lambda}$.
 \end{itemize}
 The first condition corresponds to the normality of cell complexes.
 We would like to separate the
 third condition as one of the conditions for CW stratifications.
 For the second condition, as is remarked in his book, we may 
 replace smooth manifolds by any collection of geometric objects such as
 complex manifolds, real analytic sets, polytopes, and so on. In 
 \S\ref{cellular_stratified_space_definition}, we choose
 the class of spaces equipped with ``cell structures'' and define the
 notion of cellular stratified spaces. As an example of another choice,
 we use ``star-shaped cells'' and define the notion of stellar
 stratified spaces in \S\ref{stellar_stratified_space}.
 We also impose  more structures on cells and introduce notions of
 cubical and polyhedral structures in Definition
 \ref{cubical_structure_definition} 
 and Definition \ref{polyhedral_normality_definition}, respectively. 

 Bridson and Haefliger \cite{Bridson-Haefliger} define stratifications
 by using closed strata. Their strata correspond to closures of strata
 in our definition. Furthermore, they also required
 their stratifications to be normal in the following sense.
\end{remark}

\begin{definition}
 \label{normal_stratification}
 We say a stratum $e_{\lambda}$ in a stratified space $(X,\pi)$
 is \emph{normal} if $e_{\mu}\subset\overline{e_{\lambda}}$
 whenever $e_{\mu}\cap \overline{e_{\lambda}}\neq\emptyset$. 
 When all strata are normal, the stratification $\pi$
 is said to be \emph{normal}.
\end{definition}

It is immediate to verify the following.

\begin{lemma}
 A stratum $e_{\lambda}$ is normal if and only if
 $\partial e_{\lambda} = \overline{e_{\lambda}}\setminus e_{\lambda}$ is
 a union of strata. 
\end{lemma}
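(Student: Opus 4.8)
The plan is to prove both implications directly, the only real ingredient being that the strata $\{e_{\mu}\}_{\mu\in\Ima\pi}$ form a \emph{partition} of $X$: by properties (1) and (2) following Definition \ref{stratification_definition}, every point of $X$ lies in exactly one stratum, so distinct strata are disjoint. The repeated use of this disjointness is what lets me upgrade statements about a single point to statements about an entire stratum.

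For the forward direction, I would assume $e_{\lambda}$ is normal and take an arbitrary point $x\in\partial e_{\lambda}=\overline{e_{\lambda}}\setminus e_{\lambda}$. Let $e_{\mu}$ be the unique stratum with $x\in e_{\mu}$. Since $x\in e_{\mu}\cap\overline{e_{\lambda}}$, the intersection is nonempty, so normality gives $e_{\mu}\subset\overline{e_{\lambda}}$. Because $x\notin e_{\lambda}$ while $x\in e_{\mu}$, disjointness forces $\mu\neq\lambda$, hence $e_{\mu}\cap e_{\lambda}=\emptyset$ and therefore $e_{\mu}\subset\overline{e_{\lambda}}\setminus e_{\lambda}=\partial e_{\lambda}$. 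Thus every point of $\partial e_{\lambda}$ is contained in a stratum lying wholly inside $\partial e_{\lambda}$, which exhibits $\partial e_{\lambda}$ as a union of strata.

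For the backward direction, I would suppose $\partial e_{\lambda}=\bigcup_{\mu\in S}e_{\mu}$ for some index set $S$, and verify normality. The key observation, again from disjointness, is that any stratum meeting $\partial e_{\lambda}$ must coincide with one of the $e_{\nu}$ for $\nu\in S$ and hence be contained in $\partial e_{\lambda}$: if a point of $e_{\mu}$ lies in $\partial e_{\lambda}$, it lies in some $e_{\nu}$ with $\nu\in S$, and disjointness gives $\mu=\nu$. Now let $e_{\mu}$ satisfy $e_{\mu}\cap\overline{e_{\lambda}}\neq\emptyset$. If $\mu=\lambda$ then $e_{\mu}\subset\overline{e_{\lambda}}$ trivially; otherwise choose $x\in e_{\mu}\cap\overline{e_{\lambda}}$, note $x\notin e_{\lambda}$ by disjointness so that $x\in\partial e_{\lambda}$, and conclude $e_{\mu}\subset\partial e_{\lambda}\subset\overline{e_{\lambda}}$ by the observation. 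This is exactly normality of $e_{\lambda}$.

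I do not expect any genuine obstacle here; consistent with the remark preceding the statement, the argument is immediate. The only point requiring any care is the passage, used in both directions, from ``a point of $e_{\mu}$ lies in $\partial e_{\lambda}$'' to ``all of $e_{\mu}$ lies in $\partial e_{\lambda}$'', and this rests entirely on the disjointness of strata rather than on any topological input beyond the definition of closure.
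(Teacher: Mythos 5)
Your proof is correct, and it is precisely the routine verification the paper has in mind: the paper states this lemma without proof (``It is immediate to verify the following''), and your two-directional argument, resting on the fact that the strata $e_{\lambda}=\pi^{-1}(\lambda)$ partition $X$ so that a point of $\partial e_{\lambda}$ determines a unique stratum which must then lie entirely in $\partial e_{\lambda}$, is exactly the intended ``immediate'' check.
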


Another difference between our definition and the one by Bridson and
Haefliger is that they considered intersections of closed strata.

\begin{lemma}
 Let $(X,\pi)$ be a normal stratified space. Then, for any pair
 of strata $e_{\mu}$, $e_{\lambda}$, the intersection
 $\overline{e_{\mu}}\cap\overline{e_{\lambda}}$ is a union of strata.
\end{lemma}

\begin{proof}
 This is obvious, since closures of different strata can intersect
 only on the boundaries, which are unions of strata by the definition of
 normality. 
\end{proof}

The following is a typical example of stratifications we are interested
in. 

\begin{example}
 \label{stratification_by_arrangement}
 Let $S_1=\{-1,0,1\}$ with poset structure $0< \pm 1$. The sign function 
 \[
  \mathrm{sign} : \R\longrightarrow S_1
 \]
 given by 
 \[
  \mathrm{sign}(x) = \begin{cases}
		      +1, & \text{ if } x>0 \\
		      0, & \text{ if } x=0 \\
		      -1, & \text{ if } x<0
		     \end{cases}
 \]
 defines a stratification on $\R$:
 \[
  \R=(-\infty,0)\cup \{0\}\cup (0,\infty).
 \]
 
 This innocent-looking stratification turns out to be one of the most
 important ingredients in the theory of real hyperplane arrangements.
 Let $\mathcal{A}=\{H_1,\ldots, H_k\}$ be a real affine hyperplane
 arrangement in $\R^n$ defined by affine $1$-forms
 $L=\{\ell_1,\ldots,\ell_k\}$. Hyperplanes cut $\R^n$ into convex
 regions that are homeomorphic to the interior of the $n$-disk. Each
 hyperplane $H_i$ is cut into convex regions of dimension $n-1$ by other
 hyperplanes, and so on. 
 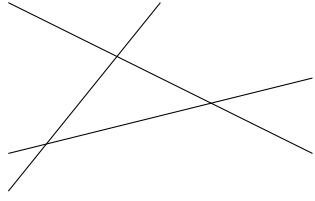
\begin{figure}[ht]
  \begin{center}
   \begin{tikzpicture}
    \draw (0,1) -- (4,-1);
    \draw (0,-1) -- (4,0);
    \draw (0,-1.5) -- (2,1);   
   \end{tikzpicture}
  \end{center}
  \caption{Hyperplanes cut the ambient Euclidean space.}
  \label{three_lines}
 \end{figure}
 These cuttings can be described as a stratification defined by the sign
 function as follows. Define a map
 \[
  \pi_{\mathcal{A}} : \R^n \longrightarrow \Map(L,S_1)
 \]
 by 
 \[
  \pi_{\mathcal{A}}(a)(\ell_i)= \sign(\ell_i(a)).
 \]
 The partial order in $S_1$ induces a partial order on $\Map(L,S_1)$ by
 $\varphi\le \psi$ if and only if $\varphi(\ell_i)\le \psi(\ell_i)$ for
 all $i$. Then $\pi_{\mathcal{A}}$ is the indexing map for the
 stratification of $\R^n$ induced by $\mathcal{A}$.

 There is a standard way to extend the above construction to a
 stratification on $\bbC^n$ by the complexification
 $\mathcal{A}\otimes\bbC=\{H_1\otimes\bbC,\ldots, H_k\otimes\bbC\}$ as is
 studied intensively in the theory of hyperplane arrangements. A good
 reference is the paper \cite{Bjorner-Ziegler92} by Bj{\"o}rner and
 Ziegler. As is sketched at the end of the above paper, the
 stratification of $\R^n$ defined above can be extended to a
 stratification on $\R^n\otimes\R^{\ell}$ as follows:
 Let $\mathcal{A}=\{H_{1},\ldots,H_{k}\}$ and
 $L=\{\ell_1,\ldots,\ell_k\}$ be as above. Then the maps
 \[
  \ell_1\otimes\R^{\ell}, \ldots, \ell_k\otimes\R^{\ell} :
 \R^{n}\otimes\R^{\ell} 
 \longrightarrow \R^{\ell}
 \]
 define a subspace arrangement
 $\mathcal{A}\otimes\R^n=\{H_1\otimes\R^{\ell},\ldots,H_k\otimes\R^{\ell}\}$
 in $\R^{n}\otimes\R^{\ell}$. 

 Let $S_{\ell}=\{0,\pm e_1,\ldots, \pm e_{\ell}\}$ be the poset with
 partial ordering $0<\pm e_1<\cdots< \pm e_{\ell}$.
 Define the $\ell$-th order sign function
 \[
  \sign_{\ell} : \R^{\ell} \longrightarrow S_{\ell}
 \]
 by
 \[
 \sign_{\ell}(\bm{x}) = \begin{cases}
		    \sign(x_{\ell})e_{\ell}, & x_{\ell}\neq 0 \\
		    \sign(x_{\ell-1})e_{\ell-1}, &
			 x_{\ell}=0,x_{\ell-1}\neq 0 \\ 
		    \vdots & \\
		    \sign(x_1)e_1, & x_n=\cdots=x_{2}=0, x_1\neq 0 \\
		    0 & \bm{x}=0.
		   \end{cases}
 \]
 Define a stratification on $\R^n\otimes\R^{\ell}$
 \[
  \pi_{\mathcal{A}\otimes\R^{\ell}} : \R^{n}\otimes\R^{\ell}
 \rarrow{} \Map(L,S_{\ell}) 
 \]
 by
 \[
  \pi_{\mathcal{A}\otimes\R^{\ell}}(a\otimes x)(\ell_i) =
 \sign_{\ell}(\ell_i(a)x). 
 \]
 This is a normal stratification on $\R^n\otimes\R^{\ell}$.
\end{example}

\begin{example}
 \label{stratifications_on_simplex}
 Consider the standard $n$-simplex
 \[
  \Delta^n = \lset{(x_0,\ldots,x_n)\in\R^n}{x_0+\cdots+x_n=1, x_i\ge 0}.
 \]
 Define
 \[
  \pi_n : \Delta^n \longrightarrow 2^{[n]}
 \]
 by
 \[
  \pi_n(x_0,\ldots, x_n) = \lset{i\in [n]}{x_i\neq 0},
 \]
 where $[n]=\{0,\ldots,n\}$ and $2^{[n]}$ is the power set of $[n]$.
 Under the standard poset structure on $2^{[n]}$, $\pi$ is a
 stratification with strata simplices in $\Delta^n$. This is a normal
 stratification. 

 Define another stratification $\pi_{n}^{\max} : \Delta^n\to [n]$ by 
 \[
  \pi_n^{\max}(x_0,\ldots,x_n) = \max\lset{i}{x_i\neq 0},
 \]
 where $[n]$ is equipped with the partial order $0<1<\cdots<n$. The
 resulting decomposition is
 \[
 \Delta^n = (\Delta^n\setminus \Delta^{n-1})\cup
 (\Delta^{n-1}\setminus \Delta^{n-2})\cup 
 \cdots \cup (\Delta^1\setminus \Delta^0) \cup \Delta^0.
 \]
 This is also a normal stratification.
\end{example}

\begin{example}
 \label{stratification_by_group_action}
 Let $G$ be a compact Lie group acting smoothly on a smooth manifold
 $M$. M.~Davis \cite{M.Davis78} defined a stratification on $M$ and on
 the quotient space $M/G$ as follows. The indexing set $I(G)$ is called
 the set of normal orbit types and is defined by
 \[
  I(G) = \set{(\varphi : H\to \GL(V))}{H<G \text{ a closed subgroup},\,
 \varphi  \text{ a representation},\, V^{H}=\{0\}}/_{\sim},
 \]
 where $(\varphi : H\to\GL(V))\sim (\varphi':H'\to\GL(V'))$ if and only
 if there exist an element $g\in G$ and a linear isomorphism
 $f : V\to V'$ such that $H'=gHg^{-1}$ and the diagram
 \[
  \begin{diagram}
   \node{H} \arrow{s,l}{g(-)g^{-1}} \arrow{e,t}{\varphi} \node{\GL(V)}
   \arrow{s,r}{f_*} \\ 
   \node{H'} \arrow{e,b}{\varphi'} \node{\GL(V')}
  \end{diagram}
 \]
 is commutative.

 He defined a map
 \[
  \pi_{M} : M \longrightarrow I(G)
 \]
 by
 \[
  \pi_{M}(x) = \left(G_x, S_x/(S_x)^{G_x}\right),
 \]
 where $G_x$ is the isotropy subgroup at $x$ and
 $S_x = T_xM/T_x(Gx)$. There is a canonical partial order on 
 $I(G)$ and it is proved that $\pi_M$ is a normal stratification
 (Theorem 1.6 in \cite{M.Davis78}). It is also proved that the
 stratification descends to $M/G$.

 In particular, for a torus manifold\footnote{in the sense of
 Hattori-Masuda \cite{Hattori-Masuda03}} or a small cover\footnote{in
 the sense of Davis-Januszkiewicz \cite{Davis-Januszkiewicz91}}
 $M$ of dimension $2n$ or $n$, the quotient $M/T^{n}$ or $M/\Z_2^n$ has
 a canonical normal stratification, respectively. 

 When $M=\CP^n$ or $\RP^n$, the quotient is $\Delta^n$ and the
 stratification by normal orbit types corresponds to the stratification
 $\pi_n$ in Example \ref{stratifications_on_simplex}. We will see the
 other stratification $\pi_n^{\max}$ in Example
 \ref{stratifications_on_simplex} corresponds to  
 the minimal cell decompositions of $\RP^n$ and $\CP^n$ in Example
 \ref{moment_angle_complex}. 

\end{example}

\begin{example}
 The stratification on the quotient $M/G$ has been generalized to the
 notion of manifolds with faces or more
 generally manifolds with corners. See \S6 of Davis' paper
 \cite{M.Davis83}, for example. These are also important examples of
 normal stratifications. 
\end{example}

It is easy to verify that the product of two stratifications is again a
stratification. 

\begin{lemma}
 \label{product_stratification}
 Let $(X,\pi_X)$ and $(Y,\pi_Y)$ be stratified
 spaces. The map 
 \[
  \pi_X\times\pi_Y : X\times Y \longrightarrow P(X)\times P(Y)
 \]
 defines a stratification on $X\times Y$.
\end{lemma}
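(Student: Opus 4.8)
The plan is to check directly that $\pi:=\pi_X\times\pi_Y$ satisfies the three requirements in Definition \ref{stratification_definition}: that $\pi$ is continuous, that $\pi$ is open, and that each nonempty fiber is connected and locally closed. Throughout I equip the product poset $P(X)\times P(Y)$ with the componentwise order, so that $(\lambda,\mu)\le(\lambda',\mu')$ iff $\lambda\le\lambda'$ and $\mu\le\mu'$.

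The first step, and the one deserving care, is to identify the Alexandroff topology on $P(X)\times P(Y)$ with the product of the Alexandroff topologies on $P(X)$ and $P(Y)$. For a poset point $\lambda$ the smallest open set containing it is the up-set $U_{\lambda}=\{\lambda'\mid \lambda\le\lambda'\}$, and in the product poset the smallest open set containing $(\lambda,\mu)$ is precisely $U_{\lambda}\times U_{\mu}$, because the order is componentwise. Since these sets form a basis on either side, the two topologies agree. This is where one must restrict to finite — here binary — products: for infinite products the Alexandroff topology of the product poset is strictly finer than the product topology.

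With this identification in hand, continuity of $\pi$ is immediate from the universal property of the product topology, since $\pi_X$ and $\pi_Y$ are continuous. For openness I use the elementary fact that a product of open maps is open: a basic open set of $X\times Y$ has the form $W\times W'$, and $\pi(W\times W')=\pi_X(W)\times\pi_Y(W')$ is a product of open sets, hence open; an arbitrary open set is a union of such rectangles, so its image is open. Equivalently, one can invoke the earlier characterization of openness of $\pi$ by closures of strata together with $\overline{A\times B}=\overline{A}\times\overline{B}$: then $e_{\lambda}\times e_{\mu}\subset\overline{e_{\lambda'}}\times\overline{e_{\mu'}}$ holds iff $\lambda\le\lambda'$ and $\mu\le\mu'$, i.e.\ iff $(\lambda,\mu)\le(\lambda',\mu')$.

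Finally I examine the fibers. Each fiber factors as $\pi^{-1}(\lambda,\mu)=\pi_X^{-1}(\lambda)\times\pi_Y^{-1}(\mu)=e_{\lambda}\times e_{\mu}$, which is nonempty exactly when $\lambda\in\Ima\pi_X$ and $\mu\in\Ima\pi_Y$, so $\Ima\pi=P(X)\times P(Y)$. Since $e_{\lambda}$ and $e_{\mu}$ are connected, their product is connected; since each is locally closed, writing $e_{\lambda}=U\cap C$ and $e_{\mu}=U'\cap C'$ as an open set intersected with a closed set yields $e_{\lambda}\times e_{\mu}=(U\times U')\cap(C\times C')$, again an open set intersected with a closed set, hence locally closed. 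This verifies all the conditions. The only genuinely non-formal point is the topology-matching of the first step; everything else is a routine transport of standard product constructions.
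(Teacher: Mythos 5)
Your proof is correct and takes essentially the same route as the paper, whose entire proof consists of the single observation that a product of open maps is again open. The additional details you supply --- the identification of the Alexandroff topology on $P(X)\times P(Y)$ with the product of the Alexandroff topologies (valid precisely because the product is finite), and the verification that each fiber $e_{\lambda}\times e_{\mu}$ is connected and locally closed --- are exactly the points the paper leaves implicit, so your write-up is a fleshed-out version of the same argument rather than a different one.
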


\begin{proof}
 The product of open maps is again open.
\end{proof}

The following requirements for morphisms of stratified spaces should be
reasonable. 

\begin{definition}
 \label{morphism_of_stratified_spaces}
 Let $(X,\pi_X)$ and $(Y,\pi_Y)$ be stratified spaces. 
 \begin{itemize}
  \item A \emph{morphism of stratified spaces} is a pair
	$\bm{f}=(f,\underline{f})$ of a continuous map $f : X\to Y$  and
	a map of posets $\underline{f} : P(X)\to\ P(Y)$ 
	making the following diagram commutative:
	\[
	\begin{diagram}
	 \node{X} \arrow{e,t}{f} \arrow{s,l}{\pi_X} \node{Y}
	 \arrow{s,r}{\pi_Y} \\ 
	 \node{P(X)} \arrow{e,b}{\underline{f}} \node{P(Y).}
	\end{diagram}
	\]
  \item When $X=Y$ and $f$ is the identity,
	$\bm{f}=(f,\underline{f})$ is 
	called a \emph{subdivision}. We also say that $(X,\pi_X)$ is a
	subdivision of $(Y,\pi_Y)$ or $(Y,\pi_Y)$ is a \emph{coarsening}
	of $(X,\pi_X)$.
  \item When $f(e_{\lambda}) = e_{\underline{f}(\lambda)}$ for each
	$\lambda$, it is called a \emph{strict morphism}.
  \item When $\bm{f}=(f,\underline{f})$ is a strict morphism of
	stratified spaces and $f$ is an embedding of topological spaces,
	$\bm{f}$ is said to be an \emph{embedding} of $X$ into $Y$.
 \end{itemize}
\end{definition}

For a stratified space $\pi : Y \to \Lambda$ and a continuous map
$f : X \to Y$, the composition $f^*(\pi) = \pi\circ f : X \to \Lambda$
may or may not be a stratification.

\begin{definition}
 \label{induced_stratification_definition}
 Let $\bm{f} : (X,\pi_X) \to (Y,\pi_Y)$ be a morphism of
 stratified spaces. When $\underline{f} : P(X)\to \pi_{Y}(f(X))$ is an
 isomorphism of posets
 \[
  \begin{diagram}
   \node{X} \arrow{s,l}{\pi_X} \arrow{e,t}{f} \node{f(X)}
   \arrow{s,r}{\pi_Y} \arrow{e,J}
   \node{Y} \arrow{s,r}{\pi_Y} \\
   \node{P(X)} \arrow{e,b}{\cong} \node{\pi_Y(f(X))} \arrow{e,J}
   \node{P(Y),} 
  \end{diagram}
 \]
 we say $\pi_X$ is \emph{induced} from $\pi_Y$ via $f$. We sometimes
 denote it by $f^*(\pi_Y)$.
\end{definition}

\begin{example}
 \label{stratification_on_Hopf_bundle}
 Consider the double covering map
 \[
  2 : S^1 \longrightarrow S^1.
 \]
 The minimal cell decomposition $\pi_{\mathrm{min}}: S^1=e^0\cup e^1$ on
 $S^1$ in the range does not induce a stratification on $S^1$ in the
 domain, since the inverse images of 
 strata are not connected. But we have a strict morphism of stratified
 spaces 
 \[
  \begin{diagram}
   \node{S^1} \arrow{s} \arrow{e,t}{2} \node{S^1} \arrow{s} \\
   \node{\{0_{+},0_{-},1_{+},1_{-}\}} \arrow{e} \node{\{0,1\}}
  \end{diagram}
 \]
 if $S^1$ in the domain is equipped with the minimal
 $\Sigma_2$-equivariant cell decomposition
 $S^1=e^0_{-}\cup e^0_{+}\cup e^1_{-}\cup e^1_{+}$.

 Consider the complex analogue of the double covering on $S^1$, i.e.\
 the Hopf bundle 
 \[
  \eta : S^3 \longrightarrow S^2.
 \]
 This is a principal fiber bundle with fiber $S^1$. The minimal cell
 decomposition on $S^2$, $S^2=e^0\cup e^2$, is a stratification. This
 stratification induces a stratification on $S^3$
 \[
  S^3 = \eta^{-1}(e^0)\amalg \eta^{-1}(e^2).
 \]
 Note that we have
 \begin{eqnarray*}
  \eta^{-1}(e^0) & \cong & e^0\times S^1 \\
  \eta^{-1}(e^2) & \cong & e^2\times S^1
 \end{eqnarray*}
 The face posets of these stratifications are isomorphic to the poset
 $\{0<2\}$ and we have a commutative diagram
 \[
  \begin{diagram}
   \node{S^3} \arrow{s} \arrow{e,t}{\eta} \node{S^2} \arrow{s} \\
   \node{\{0,2\}} \arrow{e,=} \node{\{0,2\}.}
  \end{diagram}
 \]

 Note that these two examples can be described as
 \begin{eqnarray*}
  S^1 & = & e^0\times S^0 \amalg e^1\times S^0 \\
  S^3 & = & e^0\times S^1 \amalg e^2\times S^1,
 \end{eqnarray*}
 suggesting the existence of a common framework for handling them
 simultaneously. We propose the notion of cylindrical structures as such  
 in \S\ref{cylindrical_structure}. 
\end{example}

As a special case of embeddings of stratified spaces, we have the
notion of stratified subspaces.

\begin{definition}
 \label{stratified_subspace_definition}
 Let $(X,\pi)$ be a stratified space and $A$ be a subspace of $X$. If
 the restriction $\pi|_{A}$ is a stratification, $(A,\pi|_{A})$ is
 called a \emph{stratified subspace} of $(X,\pi)$.

 When the inclusion $i : A \hookrightarrow X$ is a strict morphism, $A$
 is called a \emph{strict stratified subspace} of $X$.
\end{definition}

\begin{remark}
 We study stratified subspaces in detail in
 \S\ref{stratified_subspace}.
\end{remark}

As is the case of cell complexes, the CW condition is useful.

\begin{definition}
 \label{CW_stratification}
 A stratification $\pi$ on $X$ is said to be \emph{CW} if it
 satisfies the following two conditions:
 \begin{enumerate}
  \item (Closure Finite) For each stratum $e_{\lambda}$,
	$\partial e_{\lambda}$ is covered by a finite number of strata. 
  \item (Weak Topology) $X$ has the weak topology determined by the
	covering $\{\overline{e_{\lambda}}\mid \lambda\in P(X) \}$.
 \end{enumerate}
\end{definition}

 For example, weak topologies are useful when we glue quotient maps.

\begin{lemma}
 \label{weak_topology_and_quotient_map}
 Let $\pi_X : X\to \Lambda$ and $\pi_Y : Y \to \Lambda$ be normal CW
 stratified spaces with $P(X)=P(Y)=\Lambda$ and 
 \[
  \bm{f} : X\longrightarrow Y
 \]
 be a strict morphism of stratified spaces with
 $\underline{f}=1_{\Lambda}$. Let us denote the strata in $X$ and $Y$
 indexed by $\lambda$ by $e^{X}_{\lambda}$ and $e^{Y}_{\lambda}$,
 respectively. 
 Suppose
 \[
  f_{\lambda}=f|_{\overline{e^{Y}_{\lambda}}} :
 \overline{e^{Y}_{\lambda}} \to 
 \overline{e^{X}_{\lambda}}
 \]
 is a quotient map for all
 $\lambda\in\Lambda$. Then $f : X\to Y$ is a quotient map. 
\end{lemma}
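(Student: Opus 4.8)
The plan is to verify the closed-set characterization of quotient maps recorded in Appendix \ref{quotient_map}: a continuous surjection is a quotient map if and only if a subset of the target is closed whenever its preimage is closed. So, after checking surjectivity, I would take an arbitrary $C\subseteq Y$ with $f^{-1}(C)$ closed in $X$ and prove that $C$ is closed in $Y$. (Here I read the restricted maps as $f_{\lambda}=f|_{\overline{e^{X}_{\lambda}}} : \overline{e^{X}_{\lambda}}\to\overline{e^{Y}_{\lambda}}$, the restrictions of $f$ to the closed strata of the source.)

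First I would record two preliminary facts. Surjectivity of $f$ follows from the surjectivity of each quotient map $f_{\lambda}$ together with the covering $Y=\bigcup_{\lambda}\overline{e^{Y}_{\lambda}}$, since then $f(X)\supseteq\bigcup_{\lambda}f_{\lambda}(\overline{e^{X}_{\lambda}})=\bigcup_{\lambda}\overline{e^{Y}_{\lambda}}=Y$. Second, because $\bm{f}$ is a strict morphism with $\underline{f}=1_{\Lambda}$, we have $f(e^{X}_{\lambda})=e^{Y}_{\lambda}$, and hence by continuity $f(\overline{e^{X}_{\lambda}})\subseteq\overline{e^{Y}_{\lambda}}$; this is what makes each $f_{\lambda}$ well defined and, more importantly, supplies the preimage identity used below.

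The core of the argument is a chain of equivalences run one stratum at a time. Since $Y$ carries the weak topology determined by $\{\overline{e^{Y}_{\lambda}}\}$ (the Weak Topology clause of Definition \ref{CW_stratification}), $C$ is closed in $Y$ if and only if $C\cap\overline{e^{Y}_{\lambda}}$ is closed in $\overline{e^{Y}_{\lambda}}$ for every $\lambda$. Because $f_{\lambda}$ is a quotient map, the latter holds if and only if $f_{\lambda}^{-1}(C\cap\overline{e^{Y}_{\lambda}})$ is closed in $\overline{e^{X}_{\lambda}}$. Now the inclusion $f(\overline{e^{X}_{\lambda}})\subseteq\overline{e^{Y}_{\lambda}}$ yields the identity $f_{\lambda}^{-1}(C\cap\overline{e^{Y}_{\lambda}})=f^{-1}(C)\cap\overline{e^{X}_{\lambda}}$, and the right-hand side is closed in $\overline{e^{X}_{\lambda}}$ simply because $f^{-1}(C)$ is closed in $X$ and $\overline{e^{X}_{\lambda}}$ carries the subspace topology. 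Assembling these equivalences over all $\lambda$ shows that $C$ is closed in $Y$, which is the nontrivial direction; the other direction is just the continuity of $f$.

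I expect the only real subtlety to be bookkeeping rather than a genuine obstacle: one must be careful that the weak-topology reduction is applied on the target $Y$, that the quotient property is invoked on the correct space $\overline{e^{Y}_{\lambda}}$, and that the preimage identity $f_{\lambda}^{-1}(C\cap\overline{e^{Y}_{\lambda}})=f^{-1}(C)\cap\overline{e^{X}_{\lambda}}$ genuinely uses strictness of $\bm{f}$ through $f(\overline{e^{X}_{\lambda}})\subseteq\overline{e^{Y}_{\lambda}}$. It is worth noting that the full CW hypothesis on $X$ is not needed for this direction; only the weak topology on $Y$, the stratumwise quotient property, and strictness enter the proof.
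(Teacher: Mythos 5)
Your proof is correct and takes essentially the same route as the paper's: reduce to strata via the weak topology on $Y$, apply the stratumwise quotient hypothesis, and use the preimage identity $f_{\lambda}^{-1}(C\cap\overline{e^{Y}_{\lambda}})=f^{-1}(C)\cap\overline{e^{X}_{\lambda}}$ supplied by strictness — the paper merely runs the same chain with open sets instead of closed sets. Your reading of the restrictions as $f|_{\overline{e^{X}_{\lambda}}}:\overline{e^{X}_{\lambda}}\to\overline{e^{Y}_{\lambda}}$ is what the paper's proof actually uses (the direction in the statement is a typo), and your closing remarks are accurate: the paper cites the weak topology on $X$ and normality, but the former is only used where the subspace topology suffices, and the latter only to upgrade $f(\overline{e^{X}_{\lambda}})\subseteq\overline{e^{Y}_{\lambda}}$ to an equality that the argument never really needs.
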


\begin{proof}
 For a subset $U\subset Y$, suppose $f^{-1}(U)$ is open in $X$. By the
 weak topology condition,
 $f^{-1}(U)\cap\overline{e^{X}_{\lambda}}$ is open 
 in $\overline{e^{X}_{\lambda}}$ for each $\lambda\in\Lambda$. We
 have 
 \[
 f\left(f^{-1}(U)\cap \overline{e^{X}_{\lambda}}\right) =
 U\cap f\left(\overline{e^{X}_{\lambda}}\right) 
 = U\cap \overline{e^{Y}_{\lambda}},
 \]
 since $f$ is a strict morphism and both $X$ and $Y$ are normal. Thus
 \[
  f_{\lambda}^{-1}\left(U\cap \overline{e^{Y}_{\lambda}}\right) =
 f^{-1}\left(f\left(f^{-1}(U)\cap 
 \overline{e^{X}_{\lambda}}\right)\right) = f^{-1}(U)\cap
 \overline{e^{X}_{\lambda}} 
 \]
 is open in $\overline{e^{X}_{\lambda}}$ for each $\lambda$. By
 assumption, $f_{\lambda}$ is a quotient map and $Y$ is CW. And
 $U$ is open in $Y$.
\end{proof}

It is straightforward to verify the
following by using the fact that any topological space has the weak
topology with respect to a locally finite closed covering.

\begin{proposition}
 \label{locally_finite_implies_CW}
 Any locally finite stratified space is CW. 
\end{proposition}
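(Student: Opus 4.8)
The plan is to deduce both defining conditions of a CW stratification directly from local finiteness of the closed covering $\{\overline{e_{\lambda}}\}_{\lambda\in P(X)}$, exactly as the remark following the statement suggests. The one preliminary observation I would record first is purely point-set: if a family of subsets $\{A_{\lambda}\}$ of a space is locally finite, then so is the family of closures $\{\overline{A_{\lambda}}\}$. Indeed, given $x$, choose an open neighborhood $U$ meeting only finitely many $A_{\lambda}$; since $U$ is open, $U\cap\overline{A_{\mu}}\neq\emptyset$ forces $U\cap A_{\mu}\neq\emptyset$, so $U$ meets only the same finitely many $\overline{A_{\mu}}$. Applying this to the strata $\{e_{\lambda}\}$, which are locally finite by hypothesis, shows that $\{\overline{e_{\lambda}}\}$ is locally finite; and since each $\overline{e_{\lambda}}$ is closed and $\bigcup_{\lambda} e_{\lambda}=X$, it is a locally finite closed covering of $X$.

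For the Weak Topology condition I would then simply invoke the standard fact quoted in the remark: any space carries the weak topology determined by a locally finite closed covering. Concretely, a subset $F\subset X$ with $F\cap\overline{e_{\lambda}}$ closed in $\overline{e_{\lambda}}$ for every $\lambda$ is closed in $X$, because $F=\bigcup_{\lambda}(F\cap\overline{e_{\lambda}})$ is then a locally finite union of sets closed in $X$, and such a union is closed. This is precisely condition (2), and it needs nothing beyond the observation of the first paragraph.

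For the Closure Finite condition I would fix a stratum $e_{\lambda}$ and show that $\partial e_{\lambda}=\overline{e_{\lambda}}\setminus e_{\lambda}$ meets only finitely many strata, i.e.\ that $\{\mu\in P(X) : e_{\mu}\cap\partial e_{\lambda}\neq\emptyset\}$ is finite. The idea is to cover $\overline{e_{\lambda}}$ by the neighborhoods furnished by local finiteness, pass to a finite subcover, and note that each neighborhood meets only finitely many strata; then $\overline{e_{\lambda}}$, and hence its subset $\partial e_{\lambda}$, meets only finitely many strata, giving condition (1).

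The step I expect to be the crux is exactly this last passage from the pointwise content of local finiteness to a global finiteness statement about $\partial e_{\lambda}$. The weak-topology half is immediate once the closed covering is known to be locally finite, but the closure-finiteness half genuinely requires that $\overline{e_{\lambda}}$ be exhaustible by finitely many of these neighborhoods; the clean hypothesis making this work is compactness of $\overline{e_{\lambda}}$, which is automatic when the strata are cells (their closures being images of disks) — the situation of interest here. This is the point I would be most careful to state precisely, since a finite subcover is the device that turns local finiteness into the required closure finiteness.
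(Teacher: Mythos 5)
Your preliminary observation and the weak-topology half are correct, and together they constitute essentially the entire argument the paper gives: the paper disposes of this proposition in one sentence, by invoking exactly the fact that any topological space has the weak topology with respect to a locally finite closed covering, and your first paragraph (local finiteness of $\{e_{\lambda}\}$ passing to $\{\overline{e_{\lambda}}\}$) is the point-set step needed to apply that fact. Notably, the paper's justification says nothing whatsoever about the Closure Finite condition.

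The genuine gap is in your closure-finiteness argument, precisely at the spot you flagged as the crux --- but the escape route you propose is not available. Compactness of $\overline{e_{\lambda}}$ is \emph{not} automatic in this paper, even when the strata are cells: the domain of a cell structure is a globular cell (Definition \ref{globular_definition}), i.e.\ any subset of $D^{n}$ containing $\Int(D^{n})$, so closures of cells can be non-compact ($\R^{n}$ stratified as a single $n$-cell, or the unbounded chambers of an arrangement in Example \ref{stratification_by_arrangement}, are central examples here); moreover the proposition is stated for arbitrary stratified spaces, whose strata need not be cells at all. Worse, without such a compactness hypothesis the implication you are trying to prove is actually false: the paper's own Example \ref{powdered_ball2}, $Y=\set{(x,y)\in\R^2}{y>0}\cup\Z\times\{0\}$, is locally finite (every point has a neighborhood meeting at most two strata), yet $\partial e^{2}=\Z\times\{0\}$ cannot be covered by finitely many strata, so condition (1) of Definition \ref{CW_stratification} fails. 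So no argument can close your gap in the stated generality; what local finiteness alone yields is the weak-topology condition, and closure finiteness requires an extra hypothesis such as compactness of the closures of strata, exactly as your finite-subcover argument demands. In pinpointing that step you have in fact exposed a defect in the proposition itself (and in the paper's one-line proof, which silently proves only half of the claim).
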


\begin{corollary}
 \label{subdivision_of_CW}
 Let $(X,\pi)$ be a CW stratified space and $(X,\pi')$ be a
 subdivision. If each stratum $e_{\lambda}$  in $(X,\pi)$ is subdivided
 into a finite number of strata in $(X,\pi')$, then $(X,\pi')$ is CW.
\end{corollary}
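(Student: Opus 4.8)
The plan is to verify the two defining conditions of a CW stratification (Definition \ref{CW_stratification}) directly for $(X,\pi')$, deducing each from the corresponding property of $(X,\pi)$ together with the finiteness hypothesis. First I would record what the subdivision gives us. Since $\bm{f}=(\id,\underline{f}):(X,\pi')\to(X,\pi)$ is a morphism of stratified spaces, commutativity of its defining square means $\pi=\underline{f}\circ\pi'$, so every $\pi'$-stratum $e'_{\mu}$ is contained in the $\pi$-stratum $e_{\underline{f}(\mu)}$, and each $\pi$-stratum $e_{\lambda}$ is the union of the $\pi'$-strata $e'_{\mu}$ with $\underline{f}(\mu)=\lambda$. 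By hypothesis this union is finite for every $\lambda$.

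The key preliminary step is to show that, for each $\pi$-stratum $e_{\lambda}$, the closure $\overline{e_{\lambda}}$ is covered by finitely many $\pi'$-strata. Indeed $\overline{e_{\lambda}}=e_{\lambda}\cup\partial e_{\lambda}$; the stratum $e_{\lambda}$ is a finite union of $\pi'$-strata by the finiteness hypothesis, while closure finiteness of $(X,\pi)$ supplies finitely many $\pi$-strata $e_{\lambda_1},\ldots,e_{\lambda_k}$ covering $\partial e_{\lambda}$, each of which is again a finite union of $\pi'$-strata. Hence $\overline{e_{\lambda}}$ lies in a finite union of $\pi'$-strata, and in particular only finitely many $\pi'$-strata meet $\overline{e_{\lambda}}$.

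Closure finiteness for $(X,\pi')$ is then immediate: for a $\pi'$-stratum $e'_{\mu}$ with $\lambda=\underline{f}(\mu)$ we have $\partial e'_{\mu}\subseteq\overline{e'_{\mu}}\subseteq\overline{e_{\lambda}}$, and the right-hand side is covered by finitely many $\pi'$-strata by the previous step, so $\partial e'_{\mu}$ is too. For the weak topology condition I would reduce to the weak topology of $(X,\pi)$. Suppose $A\subseteq X$ satisfies $A\cap\overline{e'_{\mu}}$ closed in $\overline{e'_{\mu}}$ for every $\mu$; it suffices to prove $A\cap\overline{e_{\lambda}}$ is closed in $\overline{e_{\lambda}}$ for every $\lambda$, since then the weak topology of $(X,\pi)$ forces $A$ closed in $X$. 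Letting $e'_{\mu_1},\ldots,e'_{\mu_m}$ be the finitely many $\pi'$-strata contained in $\overline{e_{\lambda}}$, we have $\overline{e_{\lambda}}\subseteq\bigcup_{j}e'_{\mu_j}\subseteq\bigcup_{j}\overline{e'_{\mu_j}}$, so the sets $\overline{e'_{\mu_j}}\cap\overline{e_{\lambda}}$ form a finite \emph{closed} covering of $\overline{e_{\lambda}}$. A subset of a space carrying a finite closed covering is closed once each of its intersections with the members is closed, and $A\cap\overline{e'_{\mu_j}}\cap\overline{e_{\lambda}}$ is closed in $\overline{e'_{\mu_j}}\cap\overline{e_{\lambda}}$ by the standing hypothesis on $A$. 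Thus $A\cap\overline{e_{\lambda}}$ is closed, completing the reduction.

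The arguments are essentially bookkeeping once the preliminary finiteness of the $\pi'$-strata meeting each $\overline{e_{\lambda}}$ is in hand. The main obstacle is organizing the weak topology step correctly: one must translate between the covering of $X$ by the closures $\overline{e'_{\mu}}$ and the covering by the closures $\overline{e_{\lambda}}$, and in particular verify that the finitely many $\pi'$-strata contained in $\overline{e_{\lambda}}$ furnish an honest finite closed covering of $\overline{e_{\lambda}}$ through their closures, so that the elementary fact about finite closed coverings applies. Note that no normality of either stratification is required.
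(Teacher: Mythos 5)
Your outline---closure finiteness from a finite covering of each $\overline{e_{\lambda}}$, then the weak topology of $(X,\pi')$ by transitivity through finite closed coverings of the closures $\overline{e_{\lambda}}$---is the same as the paper's, and both your preliminary step and your closure-finiteness argument are correct. However, the weak topology step as written contains a false assertion. You let $e'_{\mu_1},\ldots,e'_{\mu_m}$ be the $\pi'$-strata \emph{contained in} $\overline{e_{\lambda}}$ and claim $\overline{e_{\lambda}}\subseteq\bigcup_{j}e'_{\mu_j}$. Without normality this fails: a point of $\partial e_{\lambda}$ lies in some $\pi'$-stratum, but that stratum may merely meet $\overline{e_{\lambda}}$ without being contained in it. The paper's own non-normal example in \S\ref{regularity_and_normality} (an open $2$-disk glued to the boundary of a $2$-simplex at the midpoint $p$ of an edge) already defeats the claim for the trivial subdivision $\pi'=\pi$: there $\overline{e^2}=e^2\cup\{p\}$, the only stratum contained in $\overline{e^2}$ is $e^2$ itself, and $p$ lies in an open edge that is not contained in $\overline{e^2}$. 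The inclusion you assert is exactly what normality of $(X,\pi')$ would provide, so it is precisely here---contrary to your closing remark---that the absence of normality bites.

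The gap is local and admits two immediate repairs, after either of which the rest of your argument goes through verbatim. Either take the $e'_{\mu_j}$ to be the finitely many $\pi'$-strata \emph{meeting} $\overline{e_{\lambda}}$ (these do cover it, since every point lies in its own stratum, and they are finitely many by your preliminary step together with the disjointness of strata); or, as the paper does, take the strata subdividing $e_{\lambda}$ itself, i.e.\ those $e'_{\mu}$ with $\underline{f}(\mu)=\lambda$: since $e_{\lambda}=\bigcup_{\underline{f}(\mu)=\lambda}e'_{\mu}$ is a \emph{finite} union, taking closures gives $\overline{e_{\lambda}}=\bigcup_{\underline{f}(\mu)=\lambda}\overline{e'_{\mu}}$, and since each $\overline{e'_{\mu}}\subseteq\overline{e_{\lambda}}$, these closures themselves form a finite closed covering of $\overline{e_{\lambda}}$. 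With the second repair your preliminary step is needed only for closure finiteness, and your proof becomes essentially identical to the one in the paper.
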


\begin{proof}
 For each cell $e_{\lambda}$ in $(X,\pi)$, $\overline{e_{\lambda}}$ has
 the weak topology with respect to the covering
 \[
 \overline{e_{\lambda}} = \bigcup_{\lambda'\in P(X,\pi'),
 e_{\lambda'}\subset e_{\lambda}}  \overline{e_{\lambda'}}
 \]
 because of the finiteness assumption. Thus $X$ has the weak topology
 with respect to the covering 
 \[
 X = \bigcup_{\lambda'\in P(X,\pi')} \overline{e_{\lambda'}} =
 \bigcup_{\lambda\in P(X,\pi)} \left(\bigcup_{\lambda'\in P(X,\pi'),
 e_{\lambda'}\subset e_{\lambda}}  \overline{e_{\lambda'}}\right).
 \] 
 The closure finiteness condition also follows from the finiteness of
 the subdivision of each stratum.
\end{proof}

As is the case of CW complexes, metrizability implies local finiteness.   

\begin{lemma}
 \label{metrizable_implies_locally_finite}
 Any metrizable CW stratified space is locally finite.
\end{lemma}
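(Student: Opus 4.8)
The plan is to mimic the classical proof that a metrizable CW complex is locally finite, transporting it from cells to strata. Since a metrizable space is both Hausdorff and first countable, these are the only two consequences of metrizability I would use, so the argument is really that \emph{any first countable Hausdorff CW stratified space is locally finite}. The strategy is contrapositive: assuming $X$ fails to be locally finite, I would produce a sequence converging to a point while simultaneously lying in a closed set not containing that point, which is absurd.

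The one consequence of closure finiteness that drives everything is that, for each stratum $e_{\lambda}$, the closure $\overline{e_{\lambda}} = e_{\lambda}\cup\partial e_{\lambda}$ meets only finitely many strata. Indeed, $\partial e_{\lambda}$ is contained in a union of finitely many strata $e_{\lambda_1},\dots,e_{\lambda_k}$, and since the strata partition $X$, any stratum meeting $\overline{e_{\lambda}}$ must be one of $e_{\lambda},e_{\lambda_1},\dots,e_{\lambda_k}$. I would state this at the outset.

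Now suppose $X$ is not locally finite, so there is a point $x_0$ every neighborhood of which meets infinitely many strata. Using first countability, fix a decreasing neighborhood basis $U_1\supseteq U_2\supseteq\cdots$ at $x_0$. I would then choose inductively pairwise distinct strata $e_{\mu_n}$, each distinct from the stratum $e_{\pi(x_0)}$ containing $x_0$, together with points $y_n\in U_n\cap e_{\mu_n}$; this is possible because $U_n$ meets infinitely many strata, so the finitely many exclusions $\mu_1,\dots,\mu_{n-1},\pi(x_0)$ still leave infinitely many admissible choices. By construction $y_n\neq x_0$ for all $n$, and $y_n\to x_0$ since the $U_n$ form a neighborhood basis. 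Setting $B=\{y_n\mid n\geq 1\}$, the first paragraph shows that each $\overline{e_{\lambda}}$ meets only finitely many strata, and since the $\mu_n$ are distinct it therefore contains $y_n$ for only finitely many $n$; thus $B\cap\overline{e_{\lambda}}$ is finite, hence closed in $\overline{e_{\lambda}}$ because $X$ is Hausdorff. By the weak topology condition $B$ is closed in $X$, so $X\setminus B$ is an open neighborhood of $x_0$ (as $x_0\notin B$), contradicting $y_n\to x_0$ with every $y_n\in B$.

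I expect the only delicate point to be the inductive extraction of the $y_n$: one must check that ``every neighborhood meets infinitely many strata'' is exactly what keeps a fresh stratum available at each stage, and that the excluded stratum $e_{\pi(x_0)}$ guarantees $y_n\neq x_0$ so that $x_0\notin B$. Everything else is routine bookkeeping in which closure finiteness controls $B\cap\overline{e_{\lambda}}$, the weak topology upgrades local closedness to global closedness of $B$, and Hausdorffness supplies that finite sets are closed.
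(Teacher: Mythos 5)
Your proof is correct and takes essentially the same route as the paper's: choose points $y_n$ from strata inside shrinking neighborhoods of $x_0$, observe that the set $B=\{y_n\}$ is closed by closure finiteness together with the weak topology, and contradict $y_n\to x_0$ with $x_0\notin B$. You are in fact slightly more careful than the paper, whose proof only demands $U_n\cap e_n\neq\emptyset$ and $x\notin e_n$ without stating that the strata $e_n$ be pairwise distinct --- a condition your induction makes explicit and which is genuinely needed for $B\cap\overline{e_{\lambda}}$ to be finite.
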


\begin{proof}
 This fact is well known for CW complexes. The same argument can be used
 to prove this generalized statement. We give a proof for the
 convenience of the reader.

 If  $X$ is not locally finite, there exists a point $x\in X$ such that,
 for any open neighborhood $U$ of $x$, $U$ intersects with infinitely
 many strata. For each $n$, let $U_n$ be the $\frac{1}{n}$-neighborhood
 of $x$ and choose a stratum $e_n$ with $U_n\cap e_n\neq\emptyset$ and 
 $x\not\in e_n$. Choose $x_n\in U_n\cap e_n$. Then the set
 $A=\{x_n\}_{n=1,2,\ldots}$ is 
 closed by the CW conditions. This contradicts to the fact
 that $x\not\in A$ and $x\in \overline{A}$. Thus $X$ is
 locally finite.
\end{proof}


\subsection{Cells}
\label{cell_structure}

We would like to define a cellular stratification on a topological space
$X$ as a stratification on $X$ whose strata are ``cells''. As we have
seen in Example \ref{stratification_by_arrangement}, we would like to
regard chambers and faces of a real hyperplane arrangement as ``cells'',
suggesting the need of non-closed cells.

\begin{definition}
 \label{globular_definition}
 A \emph{globular $n$-cell} is a subset $D$ of $D^n$ containing
 $\Int(D^n)$. We call $D\cap \partial D^n$ the \emph{boundary} of $D$
 and denote it by $\partial D$. The number $n$ is called the
 \emph{globular dimension} of $D$.
\end{definition}

\begin{remark}
 We introduce another dimension, called the stellar dimension, for a
 more general class of subsets of $D^n$ in
 \S\ref{stellar_stratified_space}. 
\end{remark}

We use the following definition of cell structures.

\begin{definition}
 \label{cell_structure_definition}
 Let $X$ be a topological space. For a non-negative integer $n$, an
 \emph{$n$-cell structure} on a subspace $e\subset X$ is a pair
 $(D,\varphi)$ of a globular $n$-cell $D$ and a continuous map 
 \[
  \varphi : D \longrightarrow X
 \]
 satisfying the following conditions:
 \begin{enumerate}
  \item $\varphi(D)=\overline{e}$ and $\varphi : D \to \overline{e}$ is
	a quotient map. 
  \item The restriction $\varphi|_{\Int(D^n)} : \Int(D^n)\to e$ is a
	homeomorphism. 
 \end{enumerate}

 For simplicity, we denote an $n$-cell structure $(D,\varphi)$ on $e$
 by $e$ when there is no risk of confusion.
 The map $\varphi$ is called the \emph{cell structure map} or the
 \emph{characteristic map} of $e$ and 
 $D$ is called the \emph{domain} of $e$. The number $n$ is called the
 (globular) \emph{dimension} of $e$. 
\end{definition}

\begin{example}
 \label{open_disk}
 The open $n$-disk $\Int(D^n)$ is a globular $n$-cell.
 The standard homeomorphism
 \[
  \Int (D^n) \rarrow{\cong} \R^n
 \]
 defines an $n$-cell structure on $\R^n$. The domain is $\Int (D^n)$.
\end{example}

\begin{example}
 \label{nonreduced_cell}
 Let $X= \Int(D^2) \cup \{(1,0)\}$.
 The identity map defines a $2$-cell structure on $X$. 

 There is another choice. Let $D = \Int(D^2) \cup S^1_{+}$. The
 deformation retraction 
 of $S^1_{+}$ onto $(1,0)$ can be extended to a continuous map
 \[
  \varphi : D \longrightarrow X
 \]
 whose restriction to $\Int(D^n)$ is a homeomorphism. For example,
 $\varphi$ is given in polar coordinates by
 \[
  \varphi(re^{i\theta}) = \begin{cases}
			 r e^{i(1-r)\theta}, & |\theta|\le
			 \frac{\pi}{2} \\
			 r e^{i\{\theta-(\pi-\theta)r\}}, &
			 \frac{\pi}{2} \le \theta\le \pi \\
			 r e^{i\{\theta+(\pi+\theta)r\}}, & -\pi\le
			 \theta\le -\frac{\pi}{2}.
			\end{cases}
 \]

 \begin{figure}[ht]
 \begin{center}
  \begin{tikzpicture}
   \draw [dotted] (0,0) circle (1cm);
   \draw [blue] (1,0) arc (0:90:1cm);
   \draw [blue] (1,0) arc (0:-90:1cm);
   \draw [fill,blue] (0,1) circle (2pt);
   \draw [fill,white] (0,1) circle (1pt);
   \draw [fill,blue] (0,-1) circle (2pt);
   \draw [fill,white] (0,-1) circle (1pt);
   \draw (0,0) node {$D$};

   \draw [->,green] (0.55,0.95) arc (60:30:1.125cm);
   \draw [->,green] (0.55,-0.95) arc (-60:-30:1.125cm);

   \draw [->] (1.5,0) -- (2.5,0);
   \draw (2,0.3) node {$\varphi$};

   \draw [dotted] (4,0) circle (1cm);
   \draw [fill,blue] (5,0) circle (2pt);
   \draw (4,0) node {$X$};
  \end{tikzpicture}
 \end{center}
  \caption{An exotic cell structure on $\Int(D^2)\cup\{(1,0)\}$}
  \label{exotic_cell_structure}
 \end{figure}
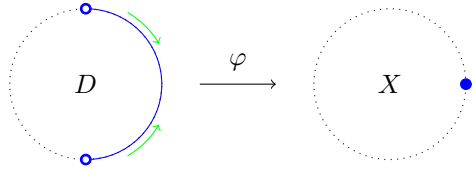
 Note that $\varphi$ is not a quotient map. For example, the image of
 $\{(x,y)\in D \mid x>0\}$ under $\varphi$ is open under the quotient
 topology, but it is not open under the relative topology on $X$. Thus
 this is not a $2$-cell structure on $X$.
\end{example}

In other words, we need to put the quotient topology on $X$ in order for
the map $\varphi$ in the above example to be a cell
structure. Fortunately, the quotient topology is a popular choice in
many practical examples.

\begin{example}
 \label{nonreduced_simplicial_set}
 For any simplicial set $X$, the geometric realization $|X|$ is
 known to be a CW complex, whose cells are in one-to-one correspondence 
 with nondegenerate simplices in $X$. 

 Consider the simplicial set $X=s(\Delta^2)/s(\Delta^1)$, where
 $\Delta^1$  and $\Delta^2$ are regarded as ordered simplicial
 complexes and $s(-)$ is the functor in Example
 \ref{ordered_simplicial_complex_as_simplicial_set} which transforms
 ordered simplicial complexes to simplicial sets. The geometric
 realization $|X|$ is a cell complex consisting of two $0$-cells
 $[0]=[1]$, $[2]$, two $1$-cells $[0,2]$, $[1,2]$, and a
 $2$-cell $[0,1,2]$. The characteristic map for the $2$-cell is given
 by the composition 
 \[
  \psi : D^2 \cong \Delta^2\times\{[0,1,2]\} \hookrightarrow
 \coprod_{i=0}^{\infty}\Delta^n\times X_n \rarrow{} |X|.
 \]
 Thus it is given by collapsing the blue arc in Figure
 \ref{Delta^2/Delta^1}. 
 \begin{figure}[ht]
 \begin{center}
  \begin{tikzpicture}
   \draw (0.5,0.86) arc (60:300:1cm); 
   \draw [blue] (1,0) arc (0:60:1cm);
   \draw [blue] (1,0) arc (0:-60:1cm);

   \draw [fill] (-1,0) circle (2pt);
   \draw [blue,fill] (0.5,0.86) circle (2pt);
   \draw [blue,fill] (0.5,-0.86) circle (2pt);

   \draw [->] (1.75,0) -- (2.25,0);
   \draw (2,0.3) node {$\psi$};
   \draw (4,0) circle (1cm);
   
   \draw [fill] (3,0) circle (2pt);
   \draw [blue,fill] (5,0) circle (2pt);
   \draw (2.7,0) node {$[2]$};
   \draw (5.8,0) node {$[0]=[1]$};
   \draw (4,1.25) node {$[1,2]$};
   \draw (4,-1.25) node {$[0,2]$};
   \draw (4,0) node {$[0,1,2]$};
  \end{tikzpicture}
 \end{center}
  \caption{A $2$-cell structure on $s(\Delta^2)/s(\Delta^1)$.}
  \label{Delta^2/Delta^1}
 \end{figure}
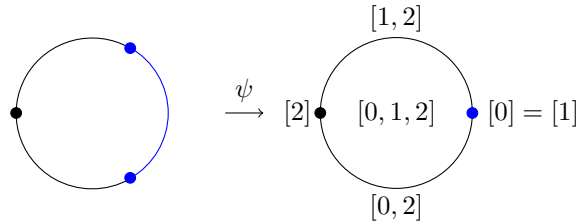
%
%
%
 By definition, $|X|$ is equipped with the quotient topology and $\psi$
 defines a $2$-cell structure.
\end{example}

Cells satisfying one (or both) of the following conditions appear
frequently. 

\begin{definition}
 \label{regular_cell}
 Let $X$ be a topological space and  $e\subset X$ a subspace. An
 $n$-cell structure $(D,\varphi)$ on $e$, or 
 simply an $n$-cell $e$, is said to be
 \begin{itemize}
  \item \emph{closed} if $D=D^n$,
  \item \emph{regular} if $\varphi : D \to \overline{e}$ is a
	homeomorphism.  
 \end{itemize}
\end{definition}

\begin{example}
 Given a cell complex $X$ and its $n$-cell $e$, the characteristic map
 \[
  \varphi : D^n \longrightarrow X
 \]
 defines a closed $n$-cell structure on $e$. When $X$ is a regular cell
 complex, it is a regular $n$-cell structure on $e$.
\end{example}

\begin{example}
 Let $\mathcal{A}$ be an arrangement of a finite number of hyperplanes
 in $\R^n$. Bounded strata in the stratification $\pi_{\mathcal{A}}$ are 
 convex polytopes and they are closed and regular cells.

 We may also define non-closed but regular cell structures on unbounded
 strata as follows. 
 Suppose $\mathcal{A}$ is essential, namely normal vectors
 to the hyperplanes span $\R^n$. Then we may choose a closed ball $B$
 with center at the origin in $\R^n$ which contains all bounded strata.
 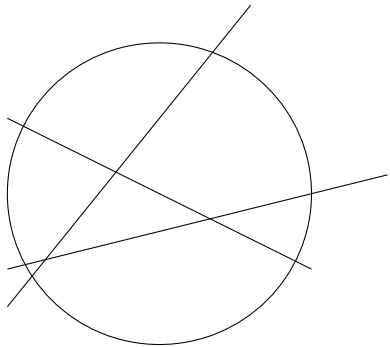
\begin{figure}[ht]
 \begin{center}
  \begin{tikzpicture}
   \draw (0,1) -- (4,-1);
   \draw (0,-1) -- (5,0.25);
   \draw (0,-1.5) -- (3.2,2.5);   
   \draw (2,0) circle (2);
  \end{tikzpicture}
 \end{center}
  \caption{A bounding sphere for bounded strata.}
  \label{bounding_sphere}
 \end{figure}
 Hyperplanes also cut the boundary sphere and define a stratification
 $\pi_{\mathcal{A},B}$ on $B$ whose 
 strata are all closed cells. The inclusion $\Int(B)\hookrightarrow \R^n$
 is a morphism of stratified spaces under which face posets can be
 identified 
 \[
 \begin{diagram}
  \node{\Int (B)} \arrow{s,l}{\pi_{\mathcal{A},B}} \arrow{e,J} \node{\R^n}
  \arrow{s,r}{\pi_{\mathcal{A}}} \\ 
  \node{P(\pi_{\mathcal{A},B}|_{\Int(B)})} \arrow{e,=}
  \node{P(\pi_{\mathcal{A}}).} 
 \end{diagram}
 \]
 For each unbounded stratum $e$ in $\pi_{\mathcal{A}}$, the
 intersection $e\cap \Int(B)$ is a cell in $B$. Let
 $\varphi : D^k\to \overline{e\cap B}$ be a characteristic map for
 $e\cap \Int(B)$ and define
 $D=\varphi^{-1}(\overline{e}\cap \Int(B))$. We can 
 compress the outside of $B$ into $e\cap \Int(B)$ via a homeomorphism
 $\psi : \overline{e} \to \overline{e}\cap \Int(B)$. The composition
 \[
  D \rarrow{\varphi|_{D}} \overline{e}\cap \Int(B) \rarrow{\psi^{-1}}
 \overline{e} 
 \]
 defines a regular cell structure on $e$.
\end{example}

Non-closed cells might have a bad topology.

\begin{example}
 \label{powdered_ball}
 Let 
 \[
  p : D^2\setminus\{(0,1)\} \longrightarrow \{(x,y)\in\R^2\mid y\ge 0\}
 \]
 be the homeomorphism given by extending the stereographic projection
 $S^1\setminus\{(0,1)\} \to \R$. Let
 \[
  X = \set{(x,y)\in\R^2}{y>0} \cup \set{(x,0)}{x\in\Q}
 \]
 and define $D=p^{-1}(X)$.
 \begin{figure}[ht]
 \begin{center}
  \begin{tikzpicture}
   \shade (0,0) circle (1.2cm);
   \draw (0,0) circle (1.2cm);
   \draw (0,1.2) circle (2pt);
   \draw [fill,white] (0,1.2) circle (1pt);
   \draw (0,0) node {$D^2\setminus\{(0,1)\}$};

   \draw [->] (2,0) -- (3,0);
   \draw (2.5,0.3) node {$p$};

   \shade (3.8,-1.2) rectangle (6.2,1.2);
   \draw (5,0) node {$\set{(x,y)}{y\ge 0}$};
   \draw (3.8,-1.2) -- (6.2,-1.2);
  \end{tikzpicture}
 \end{center}
  \caption{A $2$-cell structure for the closed upper half-plane.}
 \end{figure}
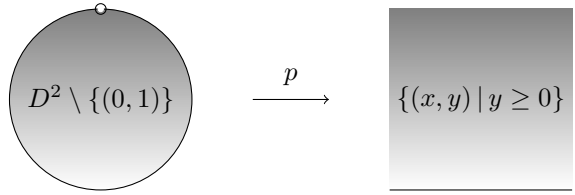
 Then the restriction
 \[
  p|_{D} : D \longrightarrow X
 \]
 defines a $2$-cell structure on $X$.
 \begin{figure}[ht]
 \begin{center}
  \begin{tikzpicture}
   \shade (0,0) circle (1.2cm);
   \draw [dotted] (0,0) circle (1.2cm);
   \draw (0,0) node {$D$};

   \draw [->] (2,0) -- (3,0);
   \draw (2.5,0.3) node {$p|_{D}$};

   \shade (3.8,-1.2) rectangle (6.2,1.2);
   \draw (5,0) node {$X$};
   \draw [dotted] (3.8,-1.2) -- (6.2,-1.2);
  \end{tikzpicture}
 \end{center}
  \caption{A $2$-cell structure for $X$.}
 \end{figure}
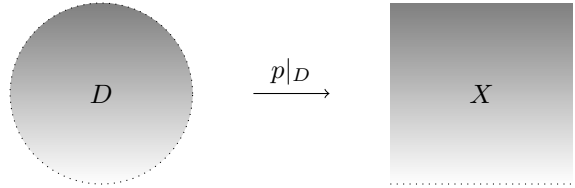
 $X$ and $D$ are not locally
 compact. This example suggests that taking a product of cell structures
 might not be easy because of our requirement for a characteristic map
 to be a quotient map.

 For example, let $Y$ be the quotient of $X$ under the relation
 $(x,0)\sim (x',0)$ if and only if $x-x'\in\Z$. The composition of
 $p|_D$ with the canonical projection
 \[
  \varphi_{Y} : D \rarrow{p|_D} X \longrightarrow Y
 \]
 defines a $2$-cell structure on $Y$. But the product with $p|_D$
 \[
  p|_{D}\times\varphi_Y : D\times D \longrightarrow X\times Y
 \]
 is not a quotient map, since the product
 $\Q\times \Q\to \Q\times(\Q/\Z)$ of the identity map and the 
 quotient map is not a quotient map.

 We need to impose certain conditions to take products of cell
 structures freely. Our solution is to require cellular structures on
 the boundaries of the domains of cells. See 
 \S\ref{cylindrical_structure} for more details. 
\end{example}

\subsection{Cellular Stratifications}
\label{cellular_stratified_space_definition}

So far we have defined the notions of stratified spaces and cell
structures. Now we are ready to define cellular stratified spaces by
combining these two structures.

\begin{definition}
 \label{definition_of_cellular_stratified_space}
 Let $X$ be a Hausdorff space. A \emph{cellular stratification} on
 $X$ is a pair $(\pi,\Phi)$ of a stratification
 $\pi : X \to P(X)$ on $X$ and a collection of cell structures
 $\Phi = \{\varphi_{\lambda} : D_{\lambda}\to \overline{e_{\lambda}}\}_{\lambda\in P(X)}$
 satisfying the condition that, for each $n$-cell $e_{\lambda}$,
 $\partial e_{\lambda}$ is covered by cells of
 dimension less than or equal to $n-1$. 

 A \emph{cellular stratified space} is a triple $(X,\pi,\Phi)$
 where $(\pi,\Phi)$ is a cellular stratification on $X$. As
 usual, we abbreviate it by $(X,\pi)$ or $X$, if there is no
 danger of confusion. 
\end{definition}

\begin{remark}
 The term ``cellular stratified space'' has been already used in the
 study of singularities. See, for example, Sch{\"u}rmann's book
 \cite{SchuermannBook}. We found, however, that his definition is too
 restrictive for our purposes.
\end{remark}

\begin{example}
 Consider the ``topologist's sine curve''
 \[
  S = \left.\left\{(x,\sin\smallfrac{1}{x}) \ \right| \ 0<x\le 1 \right\}.
 \]
 Its closure in $\R^2$ is given by
 \[
  \overline{S} = S\cup \{(0,t) \mid -1\le t\le 1\}.
 \]
 Since $S$ is homeomorphic to the half interval $(0,1]$ via the function
 $\sin\frac{1}{x}$, the decomposition
 \[
  \overline{S} = \{(1,\sin 1)\}\cup \{(0,1)\}\cup \{(0,-1)\}\cup
 \left.\left\{(x,\sin\smallfrac{1}{x}) \ \right| \ 0<x< 1 \right\}
 \cup \{(0,t) \mid -1<t<1\}
 \]
 is a stratification of $\overline{S}$ consisting of five
 strata. Although the stratum $\lset{(x,\sin\frac{1}{x})}{0< x< 1}$ is
 homeomorphic 
 to $\Int(D^1)$, there is no $1$-cell structure on this stratum,
 since there is no way to extend a homeomorphism
 $(0,1] \cong S$ to a continuous map $[0,1] \to \overline{S}$.

 Furthermore this stratification does not satisfy the dimension
 condition in Definition \ref{definition_of_cellular_stratified_space},
 since $\partial S = \{(0,t) \mid -1\le t\le 1\}$.
\end{example}

\begin{remark}
 The above example is borrowed from Pflaum's book \cite{Pflaum01}. He
 describes an even more pathological example. See 1.1.12 on page 18 of
 his book. 
\end{remark}

Here is another non-example.

\begin{example}
 \label{powdered_ball2}
 Consider the $2$-cell structure on 
 $X = \{(x,y)\in\R^2 \mid y>0\} \cup \{(x,0)\mid x\in\Q\}$ defined in
 Example \ref{powdered_ball}. Let $e^2=\{(x,y)\in\R^2 \mid y>0\}$ and
 $e^0_{x}= \{(x,0)\}$ for $x\in \Q$. We have a decomposition
 \[
 X = \bigcup_{x\in\Q} e_x^0\cup e^2
 \]
 and, for each $x\in\Q$, the identification 
 \[
  \psi_x : D^0 \cong \{(x,0)\} \hookrightarrow X
 \]
 defines a $0$-cell structure on $\{(x,0)\}$. But this is not a cellular
 stratified space, since $e_x^0$ is not locally closed.  


 On the other hand, let $Y=\{(x,y)\in \R^2 \mid y>0\}\cup \Z\times\{0\}$
 and consider the decomposition
 \[
  Y = \bigcup_{n\in\Z} e_n^0\cup e^2.
 \]
 Each $e_n^0$ is locally closed in $Y$ and this is a cellular
 stratification on $Y$. Note, however, this is not a CW stratification,
 since we need infinitely many $0$-cells to cover the boundary of
 the $2$-cell $\partial e^2$.
\end{example}

\begin{definition}
 We say a cellular stratification is \emph{CW} if its underlying
 stratification is CW (Definition \ref{CW_stratification}).
\end{definition}


\begin{lemma}
 A CW cellular stratification $(\pi,\Phi)$  on a space $X$ defines and
 is defined by the following structure
 \begin{itemize}
  \item a filtration $X_0\subset X_1 \subset \cdots \subset
	X_{n-1}\subset X_n\subset \cdots$ on $X$,
  \item an $n$-cell structure on each connected component of
	$X_n\setminus X_{n-1}$ 
\end{itemize}
 satisfying the following conditions:
 \begin{enumerate}
  \item $X=\bigcup_{n=0}^{\infty} X_n$.
  \item For each stratum $e$ in $X_n$, $\partial e$ is covered by a
	finite number of strata in $X_{n-1}$.
  \item $X$ has the weak topology determined by the covering
	$\{\overline{e}\mid e\in P(X)\}$.
 \end{enumerate}
\end{lemma}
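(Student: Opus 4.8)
The statement is an equivalence (``defines and is defined by''), so the plan is to give a construction in each direction and then observe that they are mutually inverse. For the first direction, starting from a CW cellular stratification $(\pi,\Phi)$, I would define the skeletal filtration by $X_n=\bigcup_{\dim e_\lambda\le n}e_\lambda$. Using the dimension condition built into Definition \ref{definition_of_cellular_stratified_space} (the boundary of an $n$-cell is covered by cells of dimension $\le n-1$), an induction shows $\overline{e_\lambda}\subset X_{\dim e_\lambda}$, so each $X_n$ is a union of closed cells that contains the boundary of each of its cells; the weak topology condition then forces $X_n$ to be closed in $X$. Condition (1) is immediate since every cell has a finite dimension, condition (2) is exactly the closure finiteness of the CW condition, and condition (3) is the weak topology condition, so nothing new is required for them. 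It then remains only to identify the $n$-cells of $\Phi$ with the connected components of $X_n\setminus X_{n-1}$, after which the given cell structures are the desired ones.

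That identification is the one genuinely topological point. As a set, $X_n\setminus X_{n-1}$ is the disjoint union of the $n$-dimensional strata, and each such $e_\lambda$ is connected because $\varphi_\lambda$ restricts to a homeomorphism $\Int(D^n)\to e_\lambda$. I would then show each $e_\lambda$ is both open and closed in $X_n\setminus X_{n-1}$. Closedness follows from $e_\lambda=\overline{e_\lambda}\cap(X_n\setminus X_{n-1})$, which holds because $\partial e_\lambda\subset X_{n-1}$. For openness I would note that for $\mu\neq\lambda$ both $n$-dimensional one has $e_\lambda\not\subset\overline{e_\mu}$, since otherwise $e_\lambda\subset\partial e_\mu$ would force $\dim e_\lambda\le n-1$; hence the complement $\bigcup_{\mu\neq\lambda}e_\mu$ is the trace on $X_n\setminus X_{n-1}$ of the subcomplex $X_{n-1}\cup\bigcup_{\mu\neq\lambda}\overline{e_\mu}$, which is closed by the weak topology. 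Thus the components of $X_n\setminus X_{n-1}$ are precisely the $n$-cells.

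For the second direction, starting from the filtration-plus-cell-structure data, I would take $P(X)$ to be the set of cells, ordered by $\lambda\le\lambda'\iff e_\lambda\subset\overline{e_{\lambda'}}$, and define $\pi(x)$ to be the cell containing $x$. Reflexivity and transitivity are formal, while antisymmetry again comes from the dimension count supplied by condition (2), since a strict inclusion $e_\lambda\subset\partial e_{\lambda'}$ strictly lowers dimension. To see that $\pi$ is a stratification I would first check continuity: the preimage of a downward-closed $D\subset P(X)$ is $\bigcup_{\lambda\in D}e_\lambda=\bigcup_{\lambda\in D}\overline{e_\lambda}$, a subcomplex, hence closed by (3). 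Openness then follows from the earlier lemma characterizing open continuous maps to a poset, which applies because the order was defined precisely by the closure relation. Connectedness and local closedness of the strata were already verified in the first direction, the dimension condition of Definition \ref{definition_of_cellular_stratified_space} is condition (2), and the two clauses of Definition \ref{CW_stratification} are literally conditions (2) and (3).

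The main obstacle is that everything rests on the standard CW principle that a union of closed cells containing the boundary of each of its own cells is closed in $X$; I would isolate this as the single lemma to prove with care, deducing it from the weak topology (3) together with closure finiteness (2). The point to watch is that, because the cells here may be non-closed (their domains $D_\lambda$ need not be compact), I \emph{cannot} fall back on a compactness argument and must reason through the weak topology throughout. Once this principle is established, both constructions reduce to the routine verifications sketched above, and confirming that the two passages are inverse to one another is then immediate.
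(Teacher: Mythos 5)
Your forward direction is essentially the paper's own proof: the same skeletal filtration $X_n=\bigcup_{\dim e\le n}e$, the same observation that conditions (1)--(3) are restatements of the definitions, and the same strategy of showing each $n$-cell is open and closed in $X_n\setminus X_{n-1}$. You swap the roles of the two tools --- you get closedness from $e_\lambda=\overline{e_\lambda}\cap(X_n\setminus X_{n-1})$ and openness from the weak topology, while the paper gets openness from local closedness and closedness from testing against each $\overline{e'}$ --- and you add an explicit proof that the skeleta are closed, which the paper glosses over. This reorganization is sound, and your openness argument (the complement of $e_\lambda$ is the trace of a closed union of closures) is arguably the more careful one: an open $U$ with $e_\lambda=U\cap\overline{e_\lambda}$ may meet other $n$-cells, so local closedness alone does not cut $e_\lambda$ out of $X_n\setminus X_{n-1}$. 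Your isolated ``subcomplex is closed'' lemma does go through by induction on dimension using closure finiteness, the weak topology, and the fact that $\partial e'$ is closed in $\overline{e'}$, with no compactness needed, exactly as you anticipate.

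The gap is in the converse, which the paper leaves to the reader. Your continuity argument asserts that for a downward-closed $D\subset P(X)$ one has $\bigcup_{\lambda\in D}e_\lambda=\bigcup_{\lambda\in D}\overline{e_\lambda}$. With the order defined by $\mu\le\lambda\iff e_\mu\subset\overline{e_\lambda}$, this equality holds only if each $\partial e_\lambda$ is a \emph{union} of strata, i.e.\ normality in the sense of Definition \ref{normal_stratification}; but condition (2) only says $\partial e_\lambda$ is \emph{covered} by finitely many lower-dimensional strata. Concretely, the paper's own non-normal example ($\Int(D^2)$ glued to the boundary of a $2$-simplex at the midpoint $p$ of an edge) satisfies (1)--(3), yet $\overline{e^2}=e^2\cup\{p\}$ with $p$ interior to a $1$-cell: the downward closure of the $2$-cell in your order is just the $2$-cell itself, whose preimage $e^2$ is not closed, so $\pi$ is not continuous. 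Nor can a different order repair this: putting that edge below the $2$-cell restores continuity but destroys openness, by the paper's characterization of open continuous maps to posets. So the step fails as written. To be fair, the difficulty is inherent in the statement rather than peculiar to your argument --- an open continuous map to a poset forces the closure of every stratum to be a union of strata, so the converse can only hold with a normality-type hypothesis (or a weaker reading of ``stratification'') --- but as a proof of the lemma as stated, the continuity claim is a genuine gap that should be flagged rather than passed over.
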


\begin{proof}
 Suppose $(\pi,\Phi)$ is a CW cellular stratification. Define
 \[
  X_n = \bigcup_{e\in P(X),\dim e\le n} e
 \]
 then we obtain a filtration on $X$ with $X=\bigcup_{n=0}^{\infty} X_n$.
 By definition, the boundary $\partial e$ of each $n$-cell $e$ is
 covered by a finite number of cells of dimension less than or equal to
 $n-1$. Also by definition, $X$ has the weak topology by the covering
 $\set{\overline{e}}{e\in P(X)}$.

 It remains to show that the $n$-cells are connected components of the
 difference $X_n\setminus X_{n-1}$, i.e.\ each $n$-cell $e$ is open and
 closed in 
 $X_n\setminus X_{n-1}$. By the local closedness, $e$ is open in
 $\overline{e}$. Since $\partial e \subset X_{n-1}$, $e$ is open in
 $X_n\setminus X_{n-1}$. For any $n$-cell $e'$, the intersection
 $e\cap \overline{e'}$ in $X_n\setminus X_{n-1}$ is $e\cap e'$ and is
 $\emptyset$ or $e=e'$. And thus it is closed in $\overline{e'}$.

 It is left to the reader to check the converse.
\end{proof}

\begin{example}
 \label{cellular_stratification_by_arrangement}
 The stratification $\pi_{\mathcal{A}}$ of $\R^n$ defined by a
 hyperplane arrangement $\mathcal{A}$ (Example
 \ref{stratification_by_arrangement}) is a CW cellular stratification. 

 The Bj{\"o}rner-Ziegler stratification
 $\pi_{\mathcal{A}\otimes\R^{\ell}}$ on $\R^n\otimes\R^{\ell}$
 is also a CW cellular stratification.
\end{example}

In Definition \ref{morphism_of_stratified_spaces}, we defined morphisms
of stratified spaces as ``stratification-preserving maps''. 
Note that, in our definition of cellular stratified spaces, we include
characteristic maps as defining data.
When we consider maps between cellular stratified spaces, we require
them to be compatible with characteristic maps.

\begin{definition}
 Let $(X,\pi_X,\Phi_X)$ and $(Y,\pi_Y,\Phi_Y)$ be cellular stratified
 spaces. A \emph{morphism of cellular stratified spaces}
 from $(X,\pi_X,\Phi_X)$ to $(Y,\pi_Y,\Phi_Y)$ consists of 
 \begin{itemize}
  \item a morphism $\bm{f} : (X,\pi_X) \to (Y,\pi_Y)$ of stratified
	spaces, and
  \item a family of maps
	\[
	 f_{\lambda} : D_{\lambda} \longrightarrow
	D_{\underline{f}(\lambda)} 
	\]
	indexed by cells
	$\varphi_{\lambda}: D_{\lambda}\to \overline{e_{\lambda}}$ in
	$X$ making the diagrams 
	\[
	 \begin{diagram}
	  \node{X} \arrow{e,t}{f} \node{Y} \\
	  \node{D_{\lambda}} \arrow{n,l}{\varphi_{\lambda}}
	  \arrow{e,b}{f_{\lambda}} \node{D_{\underline{f}(\lambda)}}
	  \arrow{n,r}{\psi_{f(\lambda)}} 
	 \end{diagram}
	\]
	commutative, where
	$\psi_{\underline{f}(\lambda)} : D_{\underline{f}(\lambda)} \to
	\overline{e_{\underline{f}(\lambda)}}$ is the characteristic map
	for $e_{\underline{f}(\lambda)}$. 
 \end{itemize}
 The category of cellular stratified spaces is denoted by
 $\category{CSSpaces}$. 
\end{definition}

\begin{remark}
 When $\bm{f} : (X,\pi_X,\Phi_X)\to (Y,\pi_Y,\Phi_Y)$ is a morphism of
 cellular stratified spaces, the compatibility of $f_{\lambda}$ with
 characteristic maps implies
 $f_{\lambda}(\Int(D_{\lambda}))\subset\Int(D_{\underline{f}(\lambda)})$.
\end{remark}

\begin{remark}
 In algebraic topology, the requirement for maps between cell complexes
 is much weaker. A map $f : X \to Y$ between cell complexes
 is said to be \emph{cellular}, if $f(X_n)\subset Y_n$ for each
 $n$. The author thinks, however, this terminology is misleading.
\end{remark}

\begin{definition}
 A morphism
 $(\bm{f},\{f_{\lambda}\}) : (X,\pi_X,\Phi_X) \to (Y,\pi_Y,\Phi_Y)$ of cellular
 stratified spaces is said to be \emph{strict} if
 $\bm{f} : (X,\pi_X) \to (Y,\pi_Y)$ is a strict morphisms of stratified
 spaces and $f_{\lambda}(0)=0$ for each $\lambda\in P(X,\pi_X)$.
\end{definition}

Once we have morphisms between cellular stratified spaces, we have a
notion of equivariant stratification.

\begin{definition}
 Let $G$ be a group. A cellular stratified space $X$ equipped with a
 monoid morphism\footnote{Or a functor $G \to \category{CSSpaces}$.}
 $G \longrightarrow \category{CSSpaces}(X,X)$
 is called a \emph{$G$-cellular stratified space}.
\end{definition}

In order to study configuration spaces, products and subdivisions
are important. However, the product of two cell structures may not be a
cell structure as we have seen in Example
\ref{powdered_ball}. Subdivisions of cell structures are not easy to
handle, either.
We discuss products and subdivisions of cellular stratified spaces in
\S\ref{product} and \S\ref{subdivision_of_cell}, respectively.

\subsection{Stellar Stratified Spaces}
\label{stellar_stratified_space}

In Definition \ref{cell_structure_definition}, we required the domain
$D$ of an $n$-cell to contain $\Int(D^n)$. As we will see in the proof 
of Theorem \ref{embedding_Sd}, this condition is requiring too
much. Furthermore, the definition of the globular dimension of a cell is
not appropriate when the cell is not closed. In this
section, we introduce stellar cells 
and study stratified spaces whose strata are stellar cells.
Stellar structures also play an essential role in our description of the
classifying space of the face category of cellular stratified spaces. 

Let us first define ``star-shaped cells''.

\begin{definition}
 \label{aster_definition}
 A subset $S$ of $D^N$ is said to be an \emph{aster} if
 $\{0\}\ast\{x\}\subset S$ for any $x\in S$, where $\ast$ is the join
 operation defined by connecting points by line 
 segments\footnote{Definition \ref{join_definition}.}. The subset
 $S\cap \partial D^N$ is called the \emph{boundary} of $S$ and is
 denoted by $\partial S$. The complement $S\setminus\partial S$ of the
 boundary is called the \emph{interior} of $S$ and is denoted by $\Int(S)$.

 We say $S$ is \emph{thin} if $S=\{0\}\ast \partial S$.
\end{definition}

We require the existence of a cellular stratification on the
boundary in order to define the dimension. 

\begin{definition}
 A \emph{stellar cell} is an aster $S$ in $D^N$ for some $N$ 
 such that there exists a cellular stratification on
 $\partial D^N$ containing $\partial S$ as a cellular stratified
 subspace.

 When the (globular) dimension of $\partial S$ is $n-1$, we define the
 \emph{stellar dimension} of $S$ to be $n$ and call $S$ a \emph{stellar
 $n$-cell}.  
\end{definition}

An $n$-cell in the sense of \S\ref{cell_structure} is stellar if its
boundary has a structure of cellular stratified space. However,
the dimension as a stellar cell might be smaller than $n$.

\begin{example}
 Consider the globular $n$-cell $\Int(D^n)$ in Example
 \ref{open_disk}. It is a stellar cell with empty boundary. Thus its
 stellar dimension is $0$.

 By adding three points to the boundary, for example,
 we obtain a globular $n$-cell $D$
 whose stellar dimension is $1$. 

 \begin{figure}[ht]
 \begin{center}
  \begin{tikzpicture}
   \draw [dotted] (0,0) circle (1cm);
   \draw (0,0) node {$\Int(D^n)$};

   \draw [dotted] (3,0) circle (1cm);
   \draw [fill] (2.5,0.86) circle (2pt);
   \draw [fill] (3.5,0.86) circle (2pt);
   \draw [fill] (3,-1) circle (2pt);
   \draw (3,0) node {$D$};
  \end{tikzpicture}
 \end{center}
  \caption{An open disk and an open disk with three points added.}
 \end{figure}
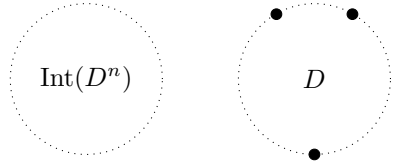

 These two stellar cells are not thin. The first example contains
 $\{0\}$ as a thin stellar cell. The second example contains a graph of
 the shape of $Y$ as a thin stellar cell. See Figure
 \ref{thin_stellar_cells}. 

\begin{figure}[ht]
 \begin{center}
  \begin{tikzpicture}
   \draw [dotted] (0,0) circle (1cm);
   \draw [fill,blue] (0,0) circle (2pt);
   
   \draw [dotted] (3,0) circle (1cm);
   \draw [blue] (3,0) -- (2.5,0.86);
   \draw [blue] (3,0) -- (3.5,0.86);
   \draw [blue] (3,0) -- (3,-1);
   \draw [fill,blue] (3,0) circle (2pt);
   \draw [fill,blue] (2.5,0.86) circle (2pt);
   \draw [fill,blue] (3.5,0.86) circle (2pt);
   \draw [fill,blue] (3,-1) circle (2pt);
  \end{tikzpicture}
 \end{center}
 \caption{Thin stellar cells.}
 \label{thin_stellar_cells}
\end{figure}
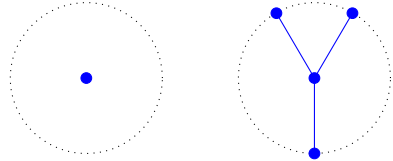
\end{example}

\begin{definition}
 \label{stellar_structure_definition}
 An $n$-\emph{stellar structure} on a subset $e$ of a
 topological space $X$ 
 is a pair $(S,\varphi)$ of a stellar $n$-cell $S$ and a quotient map
 \[
  \varphi : S \longrightarrow X
 \]
 satisfying the following conditions:
 \begin{enumerate}
  \item $\varphi(S)=\overline{e}$.
  \item The restriction of $\varphi$ to $\Int(S)$ is a homeomorphism
	onto $e$. 
 \end{enumerate} 
\end{definition}

By replacing cell structures by stellar structures in the definition of
cellular stratifications, we obtain the notion of stellar
stratifications. 

\begin{definition}
 A \emph{stellar stratification} on a topological space $X$ consists of
 a stratification $(X,\pi)$, and a stellar structure on each
 $e_{\lambda}=\pi^{-1}(\lambda)$ for $\lambda\in P(X)$
 satisfying the condition that 
 for each stellar $n$-cell $e_{\lambda}$,
 $\partial e_{\lambda}$ is covered by stellar cells
 of stellar dimension less than or equal to $n-1$.

 A space equipped with a stellar stratification is called a
 \emph{stellar stratified space}.
\end{definition}

The following is a typical example.

\begin{example}
 \label{graph_as_stellar_complex}
 Consider a finite graph $X$ regarded as a $1$-dimensional cell
 complex. Suppose $X$ is regular as a cell complex. Then we may define
 the barycentric subdivision $\Sd(X)$ of $X$.
 This cell complex can be expressed as 
 a union of open stars of vertices of $X$ and the barycenters of
 $1$-cells in $X$. 
 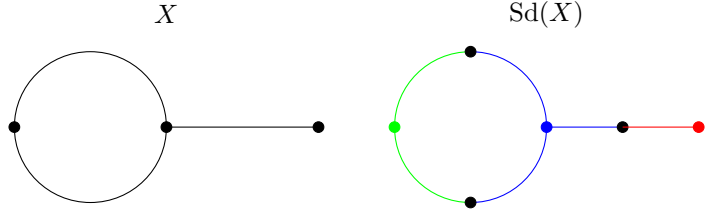
\begin{figure}[ht]
 \begin{center}
  \begin{tikzpicture}
   \draw (0,0) circle (1cm);
   \draw [fill] (1,0) circle (2pt);
   \draw [fill] (-1,0) circle (2pt);
   \draw (1,0) -- (3,0);
   \draw [fill] (3,0) circle (2pt);
   \draw (1,1.5) node {$X$};

   \draw [green] (5,1) arc (90:270:1cm);
   \draw [blue] (5,-1) arc (-90:90:1cm);
   \draw [fill,blue] (6,0) circle (2pt);
   \draw [fill,green] (4,0) circle (2pt);
   \draw [fill] (5,1) circle (2pt);
   \draw [fill] (5,-1) circle (2pt);
   \draw [blue] (6,0) -- (7,0);
   \draw [fill] (7,0) circle (2pt);
   \draw [red] (7,0) -- (8,0);
   \draw [fill,red] (8,0) circle (2pt);
   \draw (6,1.5) node {$\Sd(X)$};
  \end{tikzpicture}
 \end{center}
  \caption{A stellar stratification on $\Sd(X)$.}
 \end{figure}
 And we have a structure of stellar stratified space on $\Sd(X)$.
\end{example}

\begin{remark}
 We will extend the definition of the barycentric subdivision to
 cellular and stellar stratified spaces in
 \S\ref{barycentric_subdivision} and investigate more precise relations
 between stellar stratifications and their barycentric subdivisions.
\end{remark}

The definition of morphisms of stellar stratified spaces should be
obvious.

\begin{definition}
 The category of stellar stratified spaces is denoted by
 $\category{SSSpaces}$. 
\end{definition}

If a stellar stratified space $X$ is CW, we may describe $X$ as a
quotient space of the collection of domains of cell structures.

\begin{lemma}
 \label{quotient_of_D(X)}
 For a CW stellar stratified space $X$ with stellar structure
 $\{\varphi_{\lambda}:D_{\lambda}\to\overline{e_{\lambda}}\}_{\lambda\in\Lambda}$,
 define 
 \[
  D(X) = \coprod_{\lambda\in\Lambda} D_{\lambda}
 \]
 and let $\Phi: D(X) \to X$ be the
 map defined by the stellar structure maps. Then
 $\Phi$ is a quotient map. More explicitly, we have a
 homeomorphism
 \[
  X \cong D(X)/_{\sim_{\Phi}},
 \]
 where the relation $\sim_{\Phi}$ is defined by $x\sim_{\Phi} y$ if and
 only if $\varphi_{\mu}(x) =\varphi_{\lambda}(y)$ for $x\in D_{\mu}$ and
 $y\in D_{\lambda}$.
\end{lemma}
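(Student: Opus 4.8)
The plan is to verify directly that $\Phi$ is a quotient map and then invoke the universal property of quotient maps to identify $X$ with $D(X)/_{\sim_{\Phi}}$. First I would record the two easy points: $\Phi$ is continuous because, by the universal property of the coproduct topology, it suffices that each restriction $\Phi|_{D_{\lambda}}$ be continuous, and this restriction is just $\varphi_{\lambda}$ followed by the inclusion $\overline{e_{\lambda}}\hookrightarrow X$; and $\Phi$ is surjective because the strata cover $X$, so $X=\bigcup_{\lambda}\overline{e_{\lambda}}=\Phi(D(X))$.

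For the quotient-map property, suppose $U\subseteq X$ has $\Phi^{-1}(U)$ open in $D(X)$. In the coproduct topology this means $\Phi^{-1}(U)\cap D_{\lambda}=\varphi_{\lambda}^{-1}(U\cap\overline{e_{\lambda}})$ is open in $D_{\lambda}$ for every $\lambda$. Since each $\varphi_{\lambda}:D_{\lambda}\to\overline{e_{\lambda}}$ is a quotient map (Definition \ref{stellar_structure_definition}), it follows that $U\cap\overline{e_{\lambda}}$ is open in $\overline{e_{\lambda}}$ for every $\lambda$. Now the CW hypothesis is exactly the statement that $X$ carries the weak topology determined by the closed covering $\{\overline{e_{\lambda}}\}_{\lambda\in\Lambda}$ (Definition \ref{CW_stratification}); as in the proof of Lemma \ref{weak_topology_and_quotient_map}, this means a subset of $X$ is open iff its intersection with each $\overline{e_{\lambda}}$ is open there. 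Hence $U$ is open in $X$, so $\Phi$ is a quotient map.

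Finally, a quotient map $\Phi:D(X)\to X$ factors through a homeomorphism
\[
 D(X)/_{\sim_{\Phi}} \rarrow{\cong} X,
\]
where $\sim_{\Phi}$ is the kernel equivalence relation $x\sim_{\Phi}y \iff \Phi(x)=\Phi(y)$ (a standard property of quotient maps; see Appendix \ref{quotient_map}). Because $\Phi|_{D_{\mu}}=\varphi_{\mu}$, this relation is precisely the stated one, $\varphi_{\mu}(x)=\varphi_{\lambda}(y)$ for $x\in D_{\mu}$ and $y\in D_{\lambda}$, which finishes the proof. The only step requiring genuine care is the passage from ``open in each summand $D_{\lambda}$'' to ``open in $X$'': gluing an arbitrary family of quotient maps need not yield a quotient map, and it is exactly the CW (weak topology) hypothesis that rescues the argument. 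Without it the given topology on $X$ may be strictly finer than the weak topology determined by the $\overline{e_{\lambda}}$, and the implication would fail.
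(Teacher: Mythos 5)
Your proof is correct and is essentially the argument the paper intends: the paper itself omits the proof, citing Lemma 2.28 of \cite{1312.7368}, and that argument is exactly your weak-topology one — openness of $\Phi^{-1}(U)\cap D_{\lambda}=\varphi_{\lambda}^{-1}(U\cap\overline{e_{\lambda}})$ plus quotient-ness of each $\varphi_{\lambda}$ gives openness of $U\cap\overline{e_{\lambda}}$, and the CW condition (only its weak-topology half, as you note) upgrades this to openness of $U$ in $X$, after which the homeomorphism $X\cong D(X)/_{\sim_{\Phi}}$ is the standard factorization of a quotient map through its kernel relation. Your closing observation that the CW hypothesis is what makes the gluing of quotient maps legitimate is exactly the right point to flag.
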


This fact is proved for CW cellular stratified spaces as
Lemma 2.28 in \cite{1312.7368}. The proof can be applied to stellar
stratified spaces without a change and is omitted.

\section{Totally Normal Cellular Stratified Spaces}
\label{discrete_face_category}

In order to study their homotopy types, we would like to impose
appropriate ``niceness conditions'' on cellular and stellar stratified
spaces. 

\subsection{Regularity and Normality}
\label{regularity_and_normality}

Regularity and normality are frequently used conditions on
CW complexes. We have already defined normality for stratified spaces
(Definition \ref{normal_stratification}) and regularity for cells
(Definition \ref{regular_cell}).

\begin{definition}
 Let $X$ be a cellular or stellar stratified space. We say $X$ is
 \emph{normal}, if 
 it is normal as a stratified space. We say $X$ is \emph{regular}
 if all cells in $X$ are regular.
\end{definition}

\begin{example}
 The cellular stratification $\pi_{\mathcal{A}\otimes\R^{\ell}}$
 on $\R^{n}\otimes\R^{\ell}$ in Example
 \ref{stratification_by_arrangement} 
 defined by a real arrangement $\mathcal{A}$ in $\R^n$ is regular and
 normal. 
\end{example}

\begin{example}
 Consider the following cellular stratified space
 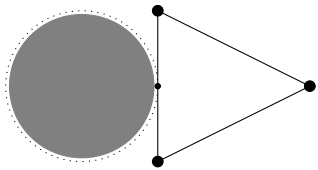
\begin{figure}[ht]
 \begin{center}
  \begin{tikzpicture}
   \draw [dotted] (0,0) circle (1cm);
   \draw [gray,fill] (0,0) circle (0.95cm);
   \draw [fill] (1,0) circle (1pt);
   \draw [fill] (1,1) circle (2pt);
   \draw [fill] (1,-1) circle (2pt);
   \draw [fill] (3,0) circle (2pt);
   \draw (1,1) -- (3,0);
   \draw (1,-1) -- (3,0);
   \draw (1,1) -- (1,-1);
  \end{tikzpicture}
 \end{center}
  \caption{A non-normal regular cellular stratified space.}
 \end{figure}
 obtained by gluing $\Int(D^2)$ to the boundary of a $2$-simplex at the
 middle point of an edge. The domain of the characteristic map of the
 $2$-cell is $\Int(D^2)\cup \{(1,0)\}$, whose boundary is
 mapped into the 
 $1$-skeleton. Thus this is a regular cellular stratified
 space. However, this is not normal.

 If we regard the globular $2$-cell $\Int(D^2)\cup\{(1,0)\}$ as a
 stellar cell, its stellar dimension is $1$ and this stellar structure
 does not satisfy the dimensional requirement. Thus this 
 is not a stellar stratified space.
\end{example}

In the case of CW complexes, regularity always implies normality. See
Theorem 2.1 in Chapter III of
the book \cite{Lundell-Weingram} by Lundell and Weingram. The above
example suggests that the failure of this fact for cellular stratified
spaces is partly due to the ``wrong definition'' of dimensions of
globular cells. The right notion of dimension is the stellar one.
 
Even for stellar stratified spaces, however, regularity does not
necessarily imply normality.

\begin{example}
 By adding an arc to the previous example, we obtain a stratified space
 as follows.
 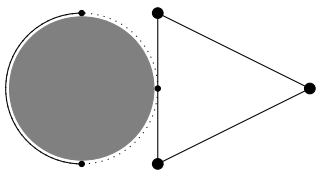
\begin{figure}[ht]
 \begin{center}
  \begin{tikzpicture}
   \draw [dotted] (0,0) circle (1cm);
   \draw [gray,fill] (0,0) circle (0.95cm);
   \draw (0,1) arc (90:270:1cm);
   \draw [fill] (0,1) circle (1pt);
   \draw [fill] (0,-1) circle (1pt);

   \draw [fill] (1,0) circle (1pt);
   \draw [fill] (1,1) circle (2pt);
   \draw [fill] (1,-1) circle (2pt);
   \draw [fill] (3,0) circle (2pt);
   \draw (1,1) -- (3,0);
   \draw (1,-1) -- (3,0);
   \draw (1,1) -- (1,-1);
  \end{tikzpicture}
 \end{center}
  \caption{A non-normal regular stellar stratified space.}
 \end{figure}
 The globular $2$-cell
 $\Int(D^2)\cup\{(1,0)\}\cup \lset{(x,y)\in S^1}{x\le 0}$ is a stellar
 $2$-cell. And this is a regular stellar stratified space. But this is
 not normal.
\end{example}

\begin{remark}
 It seems that, if $X$ is a stellar
 stratified space in which the boundary of each stellar $n$-cell is
 ``pure of dimension $n-1$'' and if $X$ is regular, then $X$ is normal.
\end{remark}

\subsection{Total Normality}
\label{total_normality}

In the case of cellular or stellar stratified spaces, relations among
cells are not 
as rigid as those in cell complexes.

\begin{definition}
 \label{total_normality_definition}
 Let $X$ be a normal cellular stratified space.
 $X$ is called \emph{totally normal} if, for each $n$-cell
 $e_{\lambda}$,  
\begin{enumerate}
 \item there exists a structure of regular cell complex on $S^{n-1}$
       containing $\partial D_{\lambda}$ as a strict stratified subspace
       of $S^{n-1}$ and 
 \item for any cell $e$ in $\partial D_{\lambda}$, there exists a cell
       $e_{\mu}$ in $\partial e_{\lambda}$ such that
       $e_{\mu}$ and $e$ share the same domain and the characteristic
       map $\varphi_{\mu}$ of $e_{\mu}$ factors through $D_{\lambda}$
       via the characteristic map of $e$:
	\[
	\begin{diagram}
	 \node{\overline{e}} \arrow{e,J} \node{\partial D_{\lambda}}
	 \arrow{e,J} \node{D_{\lambda}} 
	 \arrow{e,t}{\varphi_{\lambda}} \node{\overline{e_{\lambda}}}
	 \arrow{e,J} \node{X} \\ 	 
	 \node{D} \arrow{n} \arrow{e,=} \node{D_{\mu}}
	 \arrow{e,b}{\varphi_{\mu}} \node{\overline{e_{\mu}}.}
	 \arrow{ene,J}
	\end{diagram}		
	\]
\end{enumerate}
\end{definition}

The total normality for stellar stratified spaces is defined
analogously. But we need to use cellular subdivisions.

\begin{definition}
 Let $X$ be a normal stellar stratified space. We say $X$ is
 \emph{totally normal} if, for each stellar $n$-cell
 $e_{\lambda}$ with domain $D_{\lambda}\subset D^{N}$,
 \begin{enumerate}
  \item there exist a structure of regular cell complex on $S^{N-1}$
	and a structure of stellar stratified space on
	$\partial D_{\lambda}$ such that each stellar cell in
	$\partial D_{\lambda}$ is a strict stratified subspace of
	$S^{N-1}$, and
  \item for any stellar cell $e$ in $\partial D_{\lambda}$, there exists
	a stellar cell $e_{\mu}$ in $\partial e_{\lambda}$ such that
       $e_{\mu}$ and $e$ share the same domain and the characteristic
       map of $e_{\mu}$ factors through $D_{\lambda}$ via the
       characteristic map of $e$.
 \end{enumerate}

\end{definition}


\begin{lemma}
 Any totally normal cellular stratified space has a structure of a
 totally normal stellar stratified space. 
\end{lemma}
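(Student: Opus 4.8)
The plan is to keep the underlying stratification $(X,\pi)$ and the same collection of cells, reinterpreting each globular cell as a stellar cell without changing anything set-theoretically. First I would observe that every globular $n$-cell $D$ is automatically an aster in the sense of Definition \ref{aster_definition}: since $\Int(D^n)\subseteq D$ contains the origin, and for any $x\in D$ the half-open segment $\{tx\mid 0\le t<1\}$ lies in $\Int(D^n)\subseteq D$ while $x\in D$, the whole segment $\{0\}\ast\{x\}$ is contained in $D$. Hence each domain $D_{\lambda}\subseteq D^n$ is an aster, and condition (1) of total normality (Definition \ref{total_normality_definition}) supplies a regular cell complex structure on $S^{n-1}=\partial D^n$ containing $\partial D_{\lambda}$ as a strict stratified subspace, hence as a cellular stratified subspace. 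This exhibits $D_{\lambda}$ as a stellar cell. Moreover $\Int(D_{\lambda})=D_{\lambda}\setminus\partial D_{\lambda}=\Int(D^n)$, so the original characteristic map $\varphi_{\lambda}\colon D_{\lambda}\to\overline{e_{\lambda}}$ is already a stellar structure map: it is a quotient map onto $\overline{e_{\lambda}}$ and restricts to a homeomorphism on the interior.

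Next I would check that these stellar structures assemble into a stellar stratification, the only nonformal point being the dimensional condition: for each stellar cell $e_{\lambda}$, the boundary $\partial e_{\lambda}$ must be covered by stellar cells of stellar dimension at most $(\text{stellar dim }e_{\lambda})-1$. Write $s(\lambda)$ for the stellar dimension of $e_{\lambda}$, so that $s(\lambda)=\dim(\partial D_{\lambda})+1$, where $\dim$ denotes globular dimension in the regular complex on $S^{n-1}$. Any cell $e\subseteq\partial D_{\lambda}$ has some dimension $d\le\dim(\partial D_{\lambda})=s(\lambda)-1$; being a cell of a regular cell complex, its domain is a closed disk $D^d$, so $\partial D^d=S^{d-1}$. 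By condition (2) there is a cell $e_{\mu}\subset\partial e_{\lambda}$ sharing this domain, whence its stellar dimension is $\dim(S^{d-1})+1=d\le s(\lambda)-1$. Since $\partial e_{\lambda}=\varphi_{\lambda}(\partial D_{\lambda})$, every cell of $\partial e_{\lambda}$ arises from some cell of $\partial D_{\lambda}$ in this way, so the dimensional condition holds.

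Finally I would verify the two total-normality axioms for the resulting stellar structure. For the first, take the ambient disk to be $D^n$ itself: the regular cell complex on $S^{n-1}$ is the one already furnished by cellular total normality, and $\partial D_{\lambda}$ inherits a stellar stratification in which each cell, being a closed regular cell, is trivially a stellar cell and is a strict stratified subspace of $S^{n-1}$. The second axiom is literally the cellular axiom (2) reread with $e$ and $e_{\mu}$ regarded as stellar cells: the shared-domain factorization of $\varphi_{\mu}$ through $D_{\lambda}$ via the characteristic map of $e$ is unchanged.

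The main obstacle is the dimensional bookkeeping in the second step. Because the stellar dimension of a globular $n$-cell can be strictly smaller than $n$ (for instance $\Int(D^n)$ has stellar dimension $0$), one cannot simply transport globular dimensions, and the defining inequality for stellar stratifications must be re-derived from the combinatorics of $\partial D_{\lambda}$. The decisive leverage is that total normality forces each boundary stratum to share its domain with a closed regular cell of $S^{n-1}$, so its stellar dimension is governed by $\dim(\partial D_{\lambda})$ rather than by the ambient $n$; this is exactly what makes the inequality $d\le s(\lambda)-1$ fall out.
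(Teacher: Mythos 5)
Your overall strategy --- reinterpret each globular cell in place as a stellar cell, keep the stratification and the characteristic maps, and transport the two total-normality axioms --- is the natural one (the paper states this lemma without proof, treating it as immediate), and your observation that every globular cell is an aster is correct. However, one step is false as stated: ``being a cell of a regular cell complex, its domain is a closed disk $D^d$.'' The cells of $\partial D_{\lambda}$ are cells of the ambient regular complex on $S^{n-1}$ only as strata; their domains as cells of the stratified \emph{subspace} $\partial D_{\lambda}$ are $D=\psi^{-1}(\overline{e}\cap\partial D_{\lambda})$, and since $\partial D_{\lambda}$ need not be closed in $S^{n-1}$ this is in general a non-closed globular cell. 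The paper's punctured torus (Example \ref{punctured_torus}) shows this concretely: there $\partial D_{1,1}$ consists of four open edges, their domains are the open interval $(-1,1)$, and the corresponding cells $e^0\times e^1$ and $e^1\times e^0$ of $X$ have stellar dimension $0$, not $1$. So your equality ``stellar dimension $=\dim(S^{d-1})+1=d$'' is wrong; what is true, and what your dimension count actually needs, is only the inequality: since $\partial D_{\mu}\subset S^{d-1}$, any cellular stratification of it has cells of globular dimension at most $d-1$ (invariance of domain), hence the stellar dimension of $e_{\mu}$ is at most $d\le s(\lambda)-1$.

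The same false premise undermines your verification of axiom (1): a cell of $\partial D_{\lambda}$ is not ``trivially a stellar cell because it is a closed regular cell.'' To see that it is a stellar cell you must exhibit a cellular stratification of the boundary sphere of its domain containing $\partial D$ as a cellular stratified subspace. This can be done using regularity of the ambient complex: the characteristic map $\psi\colon D^d\to\overline{e}\subset S^{n-1}$ is a homeomorphism, so pull back the cells of $\partial e$ (a union of cells, by normality of regular cell complexes) to obtain a regular cell complex structure on $S^{d-1}$, and note that $\partial D=\psi^{-1}(\partial e\cap\partial D_{\lambda})$ is a union of its open cells because $\partial D_{\lambda}$ is a strict stratified subspace of $S^{n-1}$. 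This repair also supplies a point you left unchecked: axiom (1) of stellar total normality asks for a structure of \emph{stellar stratified space} on $\partial D_{\lambda}$, so its own dimension axiom (each cell of $\partial D_{\lambda}$ has boundary covered by cells of strictly smaller stellar dimension) must be verified; it follows by the same inequality argument as above. With these corrections your proof goes through.
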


The characteristic maps of totally normal stellar stratified spaces
preserve cells.

\begin{lemma}
 \label{characteristic_map_in_totally_normal_space}
 For a cell $e_{\lambda}$ in a totally normal stellar
 stratified space $X$, let
 $\varphi_{\lambda} : D_{\lambda}\to \overline{e_{\lambda}}\subset X$ be
 the characteristic map. Then there exists a structure of stellar
 stratified space on $D_{\lambda}$ under which $\varphi_{\lambda}$ is
 a strict morphism of stellar stratified spaces.
\end{lemma}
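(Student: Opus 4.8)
The plan is to build the stellar stratified structure on $D_\lambda$ in the only reasonable way: take as strata the open top cell $\Int(D_\lambda)$ together with all the stellar cells of $\partial D_\lambda$, where $\partial D_\lambda$ is equipped with the stellar stratification furnished by clause (1) of the definition of total normality. For the top stratum I would take the identity $\id : D_\lambda \to D_\lambda$ as the characteristic map, and for the boundary strata I would retain the stellar structures they already carry inside $\partial D_\lambda$. With these choices $\varphi_\lambda$ will send each stratum of $D_\lambda$ onto a stratum of $\overline{e_\lambda}$, and the commuting squares required of a morphism will be precisely the factorizations in clause (2).

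First I would justify that $\id : D_\lambda \to D_\lambda$ is a genuine stellar $n$-cell structure. Since $e_\lambda$ is a stellar $n$-cell, $D_\lambda$ is an aster in $D^N$ (Definition \ref{aster_definition}) and clause (1) supplies a regular cell complex on $S^{N-1}$ containing $\partial D_\lambda$; a regular cell complex is in particular a cellular stratification, so $D_\lambda$ is itself a stellar cell, of stellar dimension $n$ because its boundary is the same $\partial D_\lambda$. The identity trivially satisfies the requirements of Definition \ref{stellar_structure_definition}.

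Next I would check that the prescribed data really form a stellar stratification. The key observation is that $\partial D_\lambda = D_\lambda \cap S^{N-1}$ is closed in $D_\lambda$, since $S^{N-1}$ is closed in $D^N$; hence every stratum of $\partial D_\lambda$ stays locally closed in $D_\lambda$ and keeps the same closure, so all closure relations inside $\partial D_\lambda$ are unchanged. The stratum $\Int(D_\lambda) = D_\lambda \cap \Int(D^N)$ is open, is star-shaped about $0$ by the aster condition and hence connected, and has closure all of $D_\lambda$ because for $x \in \partial D_\lambda$ the segment $tx$ $(0 \le t < 1)$ lies in $\Int(D_\lambda)$ and tends to $x$; thus its boundary is exactly $\partial D_\lambda$. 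Placing $\Int(D_\lambda)$ above every stratum of $\partial D_\lambda$ therefore produces an open continuous map to the resulting face poset, by the characterization of openness through closure relations. The dimension condition holds because, by clause (2), each boundary stellar cell shares its domain with a cell $e_\mu \subset \partial e_\lambda$ and so has the same stellar dimension, which is $\le n-1$.

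Finally I would establish strictness. By construction $\varphi_\lambda$ carries the top stratum homeomorphically onto $e_\lambda$. For a boundary stellar cell $e$, clause (2) gives $e_\mu \subset \partial e_\lambda$ with common domain $D_\mu$ and $\varphi_\mu = \varphi_\lambda \circ (\text{characteristic map of } e)$; restricting to interiors shows $\varphi_\lambda$ maps $e$ homeomorphically onto $e_\mu$. Hence $\varphi_\lambda$ sends strata onto strata, the induced map of face posets is monotone since $e \subset \overline{e'}$ forces $\varphi_\lambda(e) \subset \overline{\varphi_\lambda(e')}$ by continuity, and taking every domain map to be the shared-domain identity makes the characteristic-map squares commute (this commutativity being exactly clause (2)) with center sent to center. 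Therefore $\varphi_\lambda$ is a strict morphism of stellar stratified spaces. The one place needing care, rather than the conceptual heart, is verifying that adjoining the top open cell yields an honest stratification, specifically the local closedness of the boundary strata in $D_\lambda$; once one records that $\partial D_\lambda$ is closed in $D_\lambda$ everything becomes routine, and strictness then reads off directly from the factorization in clause (2).
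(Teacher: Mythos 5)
Your proposal is correct and follows essentially the same route as the paper's proof: equip $D_{\lambda}$ with the stratification consisting of $\Int(D_{\lambda})$ as top cell together with the stellar cells of $\partial D_{\lambda}$ supplied by clause (1) of total normality, then read off strictness from the factorization in clause (2), which shows each $\varphi_{\lambda}(e'_{\nu})$ is a cell of $X$. The only difference is that you spell out the routine verifications (local closedness of boundary strata, connectedness and density of $\Int(D_{\lambda})$, the dimension condition) that the paper's terse proof leaves implicit.
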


\begin{proof}
 Let $e_{\lambda}$ be a cell in a totally normal stellar stratified
 space $X$ and
 $\varphi_{\lambda} : D_{\lambda}\to \overline{e_{\lambda}}$ the
 characteristic map. Let  
 \[
  \partial D_{\lambda} = \bigcup_{\nu} e'_{\nu}
 \]
 be the stellar stratification in the definition of total normality. We
 have 
 \[
  \partial e_{\lambda} = \varphi_{\lambda}(\partial D_{\lambda}) =
 \bigcup_{\nu} \varphi_{\lambda}(e'_{\nu}).
 \]
 By the definition of total normality, for each $\nu$, there exists a
 cell $e_{\mu'}$ in $\partial e_{\lambda}$ whose characteristic map
 makes the following diagram commutative
 \[
  \begin{diagram}
   \node{\overline{e'_{\nu}}} \arrow{e,J} \node{\partial D_{\lambda}}
   \arrow{e,t}{\varphi_{\lambda}} \node{\partial e_{\lambda}} \\
   \node{D_{\nu}} \arrow{n,l}{\psi_{\nu}} \arrow{e,=} \node{D_{\mu'}}
   \arrow{e,b}{\varphi_{\mu'}} \node{\overline{e_{\mu'}},} \arrow{n,J}
  \end{diagram}
 \]
 where $\psi_{\nu}$ is the characteristic map for $e'_{\nu}$. This
 implies that each $\varphi_{\lambda}(e'_{\nu})$ is a cell in $X$ and
 thus $\varphi_{\lambda}$ is a strict morphism of stellar stratified
 spaces.
\end{proof}

\begin{corollary}
 Let $(\pi,\Phi)$ be a stellar stratification on $X$ satisfying the
 first condition of total normality.

 Then it is totally normal if and only if the following condition is
 satisfied: For each pair $e_{\mu}< e_{\lambda}$, let
 $F(X)(e_{\mu},e_{\lambda})$ be the set
 of all maps
 \[
 b : D_{\mu} \longrightarrow D_{\lambda}
 \]
 making the diagram
 \[
 \begin{diagram}
  \node{D_{\lambda}} \arrow{e,t}{\varphi_{\lambda}}
  \node{\overline{e_{\lambda}}} \\ 
  \node{D_{\mu}} \arrow{n,l}{b} \arrow{e,b}{\varphi_{\mu}}
  \node{\overline{e_{\mu}}} 
  \arrow{n,J} 
 \end{diagram}
 \]
 commutative, where
 $\varphi_{\lambda}$ and $\varphi_{\mu}$ are the
 characteristic maps of $e_{\lambda}$ and $e_{\mu}$, respectively. Then 
 \begin{equation}
  \partial D_{\lambda} = \bigcup_{e_{\mu}<e_{\lambda}}
   \bigcup_{b\in F(X)(e_{\mu},e_{\lambda})} 
   b(\Int(D_{\mu})). 
   \label{cover_of_domain}
 \end{equation}

\end{corollary}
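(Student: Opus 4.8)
The plan is to prove the equivalence by analyzing, for each pair $e_\mu<e_\lambda$ and each $b\in F(X)(e_\mu,e_\lambda)$, the image $b(\Int(D_\mu))$ and comparing it with the stellar strata of $\partial D_\lambda$ supplied by the first condition. Two preliminary observations drive everything. First, $b(\Int(D_\mu))\subseteq\partial D_\lambda$: indeed $\varphi_\lambda\circ b=\varphi_\mu$, so $\varphi_\lambda(b(\Int(D_\mu)))=\varphi_\mu(\Int(D_\mu))=e_\mu$, which is disjoint from $e_\lambda=\varphi_\lambda(\Int(D_\lambda))$; since $\varphi_\lambda$ restricts to a homeomorphism on $\Int(D_\lambda)$, no point of $b(\Int(D_\mu))$ can lie in $\Int(D_\lambda)$, whence $b(\Int(D_\mu))\subseteq D_\lambda\setminus\Int(D_\lambda)=\partial D_\lambda$. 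This gives the inclusion $\supseteq$ in (\ref{cover_of_domain}) for free. Second, the same identity $\varphi_\lambda\circ b=\varphi_\mu$ shows that $b$ is injective on $\Int(D_\mu)$ and that $\varphi_\lambda$ carries $b(\Int(D_\mu))$ homeomorphically onto the open cell $e_\mu$; this will let me recognize images of interiors as genuine open cells.

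For the forward direction (total normality implies (\ref{cover_of_domain})), the first condition provides a stellar stratification $\partial D_\lambda=\bigcup_\nu e'_\nu$ with characteristic maps $\psi_\nu\colon D_\nu\to\overline{e'_\nu}$; write $\iota\colon\partial D_\lambda\hookrightarrow D_\lambda$ for the inclusion. For each $\nu$, the second condition furnishes a cell $e_\mu<e_\lambda$ sharing its domain with $e'_\nu$, i.e. $D_\mu=D_\nu$, and with $\varphi_\mu=\varphi_\lambda\circ\iota\circ\psi_\nu$. Thus $b_\nu:=\iota\circ\psi_\nu$ lies in $F(X)(e_\mu,e_\lambda)$ and $b_\nu(\Int(D_\mu))=\psi_\nu(\Int(D_\nu))=e'_\nu$. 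Taking the union over $\nu$ recovers all of $\partial D_\lambda$, so the right-hand side of (\ref{cover_of_domain}) contains $\partial D_\lambda$; together with $\supseteq$ this yields equality.

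For the reverse direction ((\ref{cover_of_domain}) implies total normality) I again fix a stellar cell $e=e'_\nu$ of $\partial D_\lambda$, with characteristic map $\psi_\nu$, and must produce a cell $e_\mu<e_\lambda$ with $D_\mu=D_\nu$ through which $\varphi_\mu$ factors as $\varphi_\lambda\circ\iota\circ\psi_\nu$. Picking $x_0\in e'_\nu$, the covering equation gives $x_0\in b(\Int(D_\mu))$ for some $e_\mu<e_\lambda$ and $b\in F(X)(e_\mu,e_\lambda)$. The heart of the argument is to show that $e'_\nu$ coincides with this single image $b(\Int(D_\mu))$; granting this, $\psi_\nu$ and $b$ both restrict to bijections of the interiors onto $e'_\nu$, which identifies the domains $D_\nu=D_\mu$ and realizes $\varphi_\mu=\varphi_\lambda\circ\iota\circ\psi_\nu$, i.e. exactly the second condition of total normality (the first being assumed throughout).

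The main obstacle is precisely this matching step: proving that each stellar stratum $e'_\nu$ of $\partial D_\lambda$ is exactly one image $b(\Int(D_\mu))$, equivalently that $\varphi_\lambda$ maps $e'_\nu$ homeomorphically onto a single lower stratum $e_\mu$ whose domain is $D_\nu$. The delicate point is that a priori $\varphi_\lambda$ might collapse or spread a stratum across several lower cells, and the sets $b(\Int(D_\mu))$ need not be open in $\partial D_\lambda$, so connectedness of $\Int(D_\nu)$ (which holds because an aster is star-shaped about $0$, making its interior path-connected) does not by itself confine the image to one cell. I would remove this difficulty by first using the covering equation to write $e'_\nu$ as the disjoint union of the locally closed pieces $e'_\nu\cap\varphi_\lambda^{-1}(e_\mu)$ indexed by the lower cells $e_\mu$ (disjointness following from $e_\mu\cap e_{\mu'}=\emptyset$ for distinct strata), and then showing that the piece of top stellar dimension is relatively open, using that $\varphi_\lambda\circ b=\varphi_\mu$ makes $\varphi_\lambda$ a homeomorphism from that piece onto the open cell $e_\mu$. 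Connectedness then forces $e'_\nu$ to equal a single $b(\Int(D_\mu))$, and the injectivity of $\varphi_\mu$ on $\Int(D_\mu)$ upgrades this set equality to the required identification of domains and characteristic maps.
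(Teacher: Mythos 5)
Your forward direction is sound and follows the paper's own route: total normality promotes the characteristic map $\psi_\nu$ of each boundary stellar cell $e'_\nu$ to a lift $b_\nu=\iota\circ\psi_\nu\in F(X)(e_\mu,e_\lambda)$ with $b_\nu(\Int(D_\nu))=e'_\nu$, and your preliminary observation that every $b(\Int(D_\mu))$ lands in $\partial D_\lambda$ supplies the reverse inclusion that the paper leaves implicit. The converse, however, has genuine gaps. First, the openness-plus-connectedness step fails as stated. The piece $e'_\nu\cap\varphi_\lambda^{-1}(e_\mu)$ is, by the covering hypothesis, equal to $e'_\nu\cap\bigcup_{b}b(\Int(D_\mu))$, a union over \emph{all} lifts $b\in F(X)(e_\mu,e_\lambda)$; the identity $\varphi_\lambda\circ b=\varphi_\mu$ makes $\varphi_\lambda$ a homeomorphism on each individual image $b(\Int(D_\mu))$, but not on their union (already for $S^1=e^0\cup e^1$ the map $\varphi_1$ is two-to-one on $\varphi_1^{-1}(e^0)\cap\partial D^1=\{-1,+1\}$). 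Asserting that $\varphi_\lambda$ is a homeomorphism from the piece onto $e_\mu$ presupposes that the piece meets only one lift-image, which is the very statement you are trying to prove. Moreover, even granting that homeomorphism, it would not make the piece relatively open in $e'_\nu$; and even granting openness of the top-dimensional piece, connectedness of $e'_\nu$ does not force $e'_\nu$ to equal it, since a connected space can be a disjoint union of an open piece and a non-open piece, as in $[0,1]=(0,1]\sqcup\{0\}$. You would need all pieces open, or the distinguished piece closed as well, and neither is established; and even then you would only have confined $e'_\nu$ to the lift-images for a single $\mu$, not to the image of a single $b$.

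Second, the closing ``upgrade'' is not available. A set equality $e'_\nu=b(\Int(D_\mu))$ yields homeomorphisms $\Int(D_\nu)\cong e'_\nu\cong\Int(D_\mu)$, but the second condition of total normality demands that $e_\mu$ and $e'_\nu$ literally share the same domain and that $\varphi_\mu=\varphi_\lambda\circ\iota\circ\psi_\nu$ on the nose. From the set equality you only obtain $\varphi_\lambda\circ\iota\circ\psi_\nu=\varphi_\mu\circ h$ for the self-homeomorphism $h=(\varphi_\mu|_{\Int(D_\mu)})^{-1}\circ\varphi_\lambda\circ\psi_\nu|_{\Int(D_\nu)}$ of interiors, which need not be an identity nor extend to the domains. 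The way around this---and what the paper's one-sentence converse implicitly exploits---is that total normality only requires the \emph{existence} of a suitable stellar stratification on $\partial D_\lambda$: one takes as strata the images $b(\Int(D_\mu))$ themselves, with the lifts $b$ serving as characteristic maps, so that equality of domains and the factorization $\varphi_\mu=\varphi_\lambda\circ b$ hold by construction, and what remains is to check that this decomposition is a legitimate stellar stratification compatible with the first condition. By insisting on matching the pre-given structure $\psi_\nu$, you are attempting to prove a stronger statement than the corollary asserts, and that is exactly where the argument breaks.
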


\begin{proof}
 Suppose $(\pi,\Phi)$ is totally normal.
 For a pair of cells $e_{\mu}<e_{\lambda}$, by Lemma
 \ref{characteristic_map_in_totally_normal_space}, there exists a
 stellar cell
 $e$ in $D_{\lambda}$ such that $\varphi_{\lambda}(e)=e_{\mu}$. By the
 assumption of total normality, the characteristic map
 $\psi : D\to \overline{e}$ of $e$ makes the following diagram
 commutative
 \[
  \begin{diagram}
   \node{\overline{e}} \arrow{e,J} \node{\partial D_{\lambda}}
   \arrow{e,t}{\varphi_{\lambda}}  \node{\partial e_{\lambda}} \\
   \node{D} \arrow{e,=} \arrow{n,l}{\psi} \node{D_{\mu}}
   \arrow{e,b}{\varphi_{\mu}} 
   \node{\overline{e_{\mu}}} \arrow{n,J}
  \end{diagram}
 \]
 The collection of all such characteristic maps $\psi$ is
 $F(X)(e_{\mu},e_{\lambda})$ and we have
 \[
 \partial D_{\lambda} = \bigcup_{e_{\mu}<e_{\lambda}} \bigcup_{b\in
 F(X)(e_{\mu},e_{\lambda})} b(\Int(D_{\mu})). 
 \]

 Conversely, the assumption (\ref{cover_of_domain}) implies that, for
 any stellar cell $e$ in $\partial D_{\lambda}$, there is a
 corresponding cell $e_{\mu}$ in $\partial e_{\lambda}$ whose
 characteristic map makes the required diagram commutative.
\end{proof}

The lifts $b : D_{\mu}\to D_{\lambda}$ of characteristic maps appeared
in the above Corollary play an essential role when we analyze totally
normal stellar stratified spaces. It is easy to see that each $b$ is an
embedding. 

\begin{lemma}
 \label{b_is_an_embedding}
 Let $X$ be a totally normal stellar stratified space. Then
 each 
 $b\in F(X)(e_{\mu},e_{\lambda})$ is an embedding of stellar
 stratified spaces for each pair $e_{\mu}<e_{\lambda}$. 
\end{lemma}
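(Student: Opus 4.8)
The plan is to reduce the statement, via the corollary immediately preceding it, to the claim that a single characteristic map is a homeomorphism, and then to extract injectivity from the regularity built into the first clause of total normality. By that corollary, $F(X)(e_\mu,e_\lambda)$ consists precisely of the maps $b=\iota\circ\psi$, where $e'$ is a stellar cell of $\partial D_\lambda$ with $\varphi_\lambda(e')=e_\mu$, the map $\psi:D_\mu\to\overline{e'}$ is its characteristic map, and $\iota:\overline{e'}\hookrightarrow D_\lambda$ is the inclusion of the closed cell. The inclusion $\iota$ is an embedding, and by Lemma \ref{characteristic_map_in_totally_normal_space} the characteristic map $\psi$ is a strict morphism of stellar stratified spaces, so it carries $\Int(D_\mu)$ homeomorphically onto the open cell $e'$ and sends $\partial D_\mu$ into $\partial e'$. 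Consequently the whole lemma reduces to proving that $\psi$ is a homeomorphism onto $\overline{e'}$, for then $b$ is a composite of an embedding with a strict morphism, hence an embedding of stellar stratified spaces.

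To prove that $\psi$ is a homeomorphism I would use that it is a quotient map, by Definition \ref{stellar_structure_definition}, together with the elementary fact that a bijective quotient map is automatically a homeomorphism (once $\psi$ is injective every subset of $D_\mu$ is saturated, so $\psi$ is open). Thus it suffices to show $\psi$ is injective. Here the first clause of total normality enters: $\overline{e'}$ is a strict stratified subspace of a regular cell complex structure on $S^{N-1}$, so $\overline{e'}$ is itself a regular cell complex, and $\partial e'$ is a union of lower-dimensional regular cells. I would argue by induction on the stellar dimension $n$ of $e'$. The map $\psi$ is already injective on $\Int(D_\mu)$, where it is a homeomorphism onto $e'$, and it carries $\partial D_\mu$ into $\partial e'$, which is disjoint from $e'$; on the closure of each cell of $\partial D_\mu$ the restriction of $\psi$ is the characteristic map of a lower-dimensional cell of the regular complex $\overline{e'}$, hence an embedding by the inductive hypothesis. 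Finally, the openness and pairwise disjointness of the cells of the regular complex $\overline{e'}$ force the images of distinct cells of $D_\mu$ (and of the interior) to be pairwise disjoint, so $\psi$ is globally injective and therefore a homeomorphism.

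The step I expect to be the main obstacle is exactly the injectivity of $\psi$ along the boundary $\partial D_\mu$, and this is precisely where the regularity of the ambient cell complex on $S^{N-1}$ is indispensable: the non-normal examples displayed earlier in this section show that, absent regularity, a characteristic map can identify distinct boundary faces and fail to be injective, so no such embedding statement can hold in general. Given regularity the boundary identification is rigid and the induction is routine, which is what makes the conclusion "easy'' once the target cell $e'$ has been isolated. A minor point to verify en route is that $\Int(D_\mu)$ is dense in $D_\mu$, so that the factorization of $b$ through $\overline{e'}$ is genuinely forced; this holds because $D_\mu$ is an aster and every point lies on a segment emanating from $0$, along which interior points accumulate at the boundary.
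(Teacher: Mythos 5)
Your overall reduction is the same as the paper's, and it is correct: by the Corollary preceding the lemma, every $b\in F(X)(e_{\mu},e_{\lambda})$ is the characteristic map $\psi$ of a stellar cell $e'$ in $\partial D_{\lambda}$ followed by the inclusion $\overline{e'}\hookrightarrow D_{\lambda}$, and strictness comes from Lemma \ref{characteristic_map_in_totally_normal_space}; the paper's entire proof is then the one-line observation that the stratification on $D_{\lambda}$ (inherited from the regular cell complex on $S^{N-1}$) is regular. Where you diverge is in how you use regularity. In this paper ``regular'' is Definition \ref{regular_cell}: the characteristic map $\varphi: D\to\overline{e}$ \emph{is} a homeomorphism. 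So once $b$ is identified with $\iota\circ\psi$ for a cell $e'$ of a regular complex, injectivity of $\psi$ is not something to be proved --- it is the hypothesis. Your quotient-map observation (a bijective quotient map is a homeomorphism) and your entire induction on stellar dimension are therefore solving a non-problem.

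Moreover, read as a free-standing argument from weaker hypotheses, the induction does not close. Its last step asserts that ``openness and pairwise disjointness of the cells of $\overline{e'}$ force the images of distinct cells of $D_{\mu}$ to be pairwise disjoint.'' Disjointness of the cells in the target does not prevent two distinct boundary cells of $D_{\mu}$ from being carried onto the \emph{same} cell of $\partial e'$ --- this is exactly what happens for the non-regular characteristic map $D^1\to S^1=e^0\cup e^1$, where both endpoints hit the one $0$-cell. The only thing that rules this out is injectivity of $\psi$ itself, i.e.\ regularity; so the induction is circular at precisely the point you flag as ``indispensable.'' The fix is simply to delete the induction and invoke Definition \ref{regular_cell} directly: $\psi$ is a homeomorphism onto $\overline{e'}$, $\iota$ is an embedding, and strictness follows from Lemma \ref{characteristic_map_in_totally_normal_space}, so $b$ is an embedding of stellar stratified spaces. (Your closing remark about density of $\Int(D_{\mu})$ in the aster $D_{\mu}$ is correct and is a legitimate point if one wants to see why $b(D_{\mu})\subset\overline{e'}$ is forced.)
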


\begin{proof}
 This follows from the assumption that the cellular stratification on 
 $D_{\lambda}$ is regular.
\end{proof}

\subsection{Examples of Totally Normal Cellular Stratified Spaces}
\label{total_normality_examples}

Let us take a look at some examples. The first example is
borrowed from Kirillov's paper \cite{1009.4227}.

\begin{example}
 Consider $D=\Int(D^2)\cup S^1_+ = e^1\cup e^2$. Define $X$ by folding
 the blue part of $e^1$ according to the directions indicated by the
 blue arrows.
 \begin{figure}[ht]
 \begin{center}
  \begin{tikzpicture}
   \draw [dotted] (0,0) circle (2cm);
   \draw [green] (0,2cm) arc (90:45:2cm);
   \draw [blue] (1.41cm,-1.41cm) arc (-45:45:2cm);
   \draw [green] (0,-2cm) arc (-90:-45:2cm);
   \draw [blue,dotted] (0,0) -- (1.9,0.55);
   \draw [blue,dotted] (0,0) -- (1.9,-0.55);
   \draw [blue,dotted] (0,0) -- (1.41,1.41);
   \draw [blue,dotted] (0,0) -- (1.41,-1.41);
   \draw [blue,->] (1.55,1) -- (1.65,0.8);
   \draw [blue,->] (1.8,-0.1) -- (1.8,0.1);
   \draw [blue,->] (1.65,-0.8) -- (1.55,-1);
   \draw [green,fill] (0,2cm) circle (2pt);
   \draw [white,fill] (0,2cm) circle (1pt);
   \draw [green,fill] (0,-2cm) circle (2pt);
   \draw [white,fill] (0,-2cm) circle (1pt);
   
   \draw [->] (3cm,0) -- (5cm,0);
   \draw (4cm,0.5cm) node {$\varphi$};

   \draw [dotted] (8cm,0) circle (2cm);
   \draw [green] (8cm,-2cm) arc (-90:90:2cm);
   \draw [green,fill] (8cm,2cm) circle (2pt);
   \draw [white,fill] (8cm,2cm) circle (1pt);
   \draw [green,fill] (8cm,-2cm) circle (2pt);
   \draw [white,fill] (8cm,-2cm) circle (1pt);
   \draw [blue] (9.92,-0.55) arc (-16:16:2cm);
   \draw [blue,dotted] (8,0) -- (9.9,-0.55);
   \draw [blue] (8,0) -- (9.9,0.55);
   \draw [blue,dotted] (9.5,-0.45) arc (-15:15:1.5cm);
   \draw [blue,dotted] (9.45,0.45) arc (165:195:1.5cm);
  \end{tikzpicture}
 \end{center}
  \caption{An example from Kirillov's paper.}
 \end{figure}
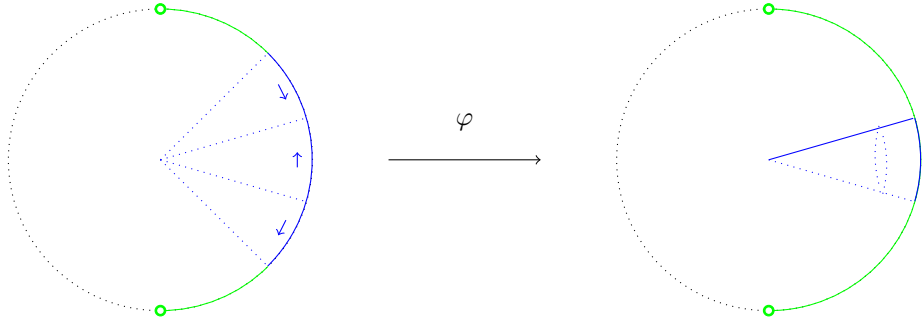
 Note that $\varphi(e^1)$ is homeomorphic to $\Int(D^1)$. Let
 \[
 \psi : \Int(D^1) \longrightarrow e^1 \subset X
 \]
 a homeomorphism. Identifications only occur on $e^1$ and the quotient
 map  
 \[
  \varphi : D \longrightarrow X
 \]
 is a homeomorphism onto its image when restricted to $\Int(D^2)$ and
 thus defines a characteristic map for the $2$-cell. 

 These maps $\psi$ and $\varphi$ define a cellular stratification on
 $X$. However, there is no way to obtain a map $h : \Int(D^1) \to D$
 making the diagram commutative
 \[
 \begin{diagram}
  \node{D} \arrow{e,t}{\varphi} \node{X} \\
  \node{\Int(D^1)} \arrow{n,l,..}{h} \arrow{ne,b}{\psi}
 \end{diagram}
 \]
 and thus this cellular stratification is not totally normal. Note,
 however, that we obtain a totally normal cellular stratification by an
 appropriate  subdivision of $e^1$ and $\varphi(e^1)$.

%
%
\end{example}

\begin{example}
 \label{minimal_cell_decomposition_of_circle}
 Consider the minimal cell decomposition
 \[
  S^1 = e^0\cup e^1 = \{(1,0)\}\cup \left(S^1\setminus\{(1,0)\}\right).
 \]
 The characteristic map for the $1$-cell
 \[
  \varphi_1 : D^1=[-1,1] \longrightarrow S^1
 \]
 is given by $\varphi(t) = (\cos(2\pi t),\sin(2\pi t))$. There are two
 lifts $b_{-1}$ and $b_{1}$ of the characteristic map $\varphi_0$ for the
 $0$-cell. 

 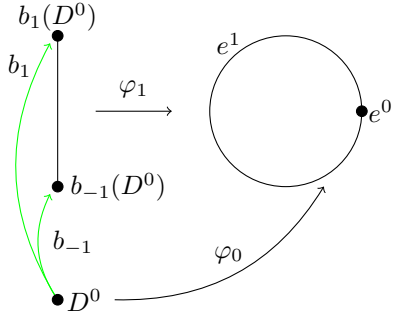
\begin{figure}[ht]
 \begin{center}
  \begin{tikzpicture}
   \draw (0,1) -- (0,-1);
   \draw [fill] (0,1) circle (2pt);
   \draw (0,1.25) node {$b_{1}(D^0)$};
   \draw [fill] (0,-1) circle (2pt);
   \draw (0.8,-1) node {$b_{-1}(D^0)$};

   \draw [->] (0.5,0) -- (1.5,0);
   \draw (1,0.3) node {$\varphi_1$};

   \draw (3,0) circle (1);
   \draw [fill] (4,0) circle (2pt);
   \draw (2.25,0.9) node {$e^1$};
   \draw (4.25,0) node {$e^0$};

   \path [green,->] (0,-2.5) edge [bend left] (-0.1,-1.1);
   \draw (0.2,-1.75) node {$b_{-1}$};
   \path [green,->] (0,-2.5) edge [bend left] (-0.1,0.9);
   \draw (-0.5,0.6) node {$b_{1}$};
   \draw [fill] (0,-2.5) circle (2pt);
   \draw (0.35,-2.5) node {$D^0$};

   \path [->] (0.75,-2.5) edge [bend right] (3.5,-1);
   \draw (2.25,-1.9) node {$\varphi_0$};
  \end{tikzpicture}
 \end{center}
  \caption{The minimal cell decomposition of $S^1$ is totally normal.}
 \end{figure}
%
%
%
%

 Since $\partial D^1= \{-1,1\} = b_{-1}(D^0)\cup b_{1}(D^0)$, this is 
 totally normal.
\end{example}

\begin{example}
 \label{graphs_are_totally_normal}
 More generally, any $1$-dimensional CW-complex is totally normal. In
 fact, $0$-cells are always regular and there are only two types of
 $1$-cells, i.e.\ regular cells and cells whose characteristic maps are
 given by collapsing the boundary of $D^1$ to a point. By the above
 example, all $1$-cells are totally normal.
 This fact allows us to apply results of this paper to configuration
 spaces of graphs \cite{1312.7368}.
\end{example}

\begin{example}
 \label{punctured_torus}
 Consider the punctured torus
 \[
 X = S^1\times S^1 \setminus e^0\times e^0
 = e^1\times e^0\cup e^0\times e^1\cup e^1\times e^1
 \]
 with the stratification induced from the product stratification of the
 minimal cell decomposition of $S^1$.

 A characteristic map for the $2$-cell can be obtained by removing four
 corners from the domain of the product
 \[
 \varphi_1\times\varphi_1 : D^1\times D^1 \setminus
 \{(1,1),(1,-1),(-1,1),(-1,-1)\} \longrightarrow X  
 \]
 of $\varphi_1$ in Example
 \ref{minimal_cell_decomposition_of_circle}. Let us denote the
 characteristic maps for $e^1\times e^0$ and $e^0\times e^1$ by
 $\varphi_{1,0}$ and $\varphi_{0,1}$, respectively.
 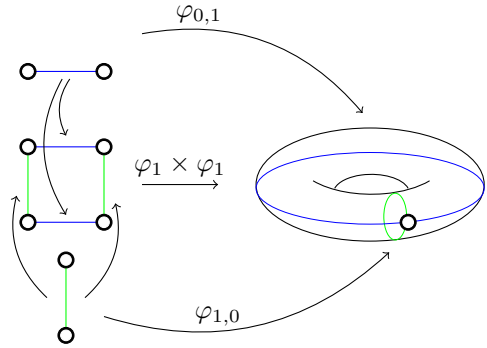
\begin{figure}[ht]
 \begin{center}
  \begin{tikzpicture}
   \draw [green] (-0.5,-0.5) -- (-0.5,0.5);
   \draw [blue] (-0.5,0.5) -- (0.5,0.5);
   \draw [blue] (-0.5,-0.5) -- (0.5,-0.5);
   \draw [green] (0.5,-0.5) -- (0.5,0.5);
   \draw [fill] (-0.5,-0.5) circle (3pt);
   \draw [white,fill] (-0.5,-0.5) circle (2pt);
   \draw [fill] (-0.5,0.5) circle (3pt);
   \draw [white,fill] (-0.5,0.5) circle (2pt);
   \draw [fill] (0.5,-0.5) circle (3pt);
   \draw [white,fill] (0.5,-0.5) circle (2pt);
   \draw [fill] (0.5,0.5) circle (3pt);
   \draw [white,fill] (0.5,0.5) circle (2pt);
   
   \draw [->] (1,0) -- (2,0);
   \draw (1.5,0.25) node{$\varphi_1\times\varphi_1$};

   \draw (4,0) ellipse (1.5cm and 0.75cm);
   \draw (4.5,-0.05) arc (15:165:0.5cm and 0.25cm); 
   \draw (3.25,0.05) arc (220:320:1cm and 0.5cm);
   \draw [green,very thin] (4.33,-0.425) ellipse (0.15cm and 0.31cm);
   \draw [blue,very thin] (4,-0.05) ellipse (1.5cm and 0.475cm);
   \draw [fill] (4.5,-0.5) circle (3pt);
   \draw [white,fill] (4.5,-0.5) circle (2pt);

   \draw [green] (0,-2) -- (0,-1);
   \draw [fill] (0,-2) circle (3pt);
   \draw [white,fill] (0,-2) circle (2pt);
   \draw [fill] (0,-1) circle (3pt);
   \draw [white,fill] (0,-1) circle (2pt);

   \path [->] (0.5,-1.75) edge [bend right] (4.25,-0.9);
   \draw (2,-1.75) node {$\varphi_{1,0}$};

   \path [->] (0.25,-1.5) edge [bend right] (0.65,-0.25);
   \path [->] (-0.25,-1.5) edge [bend left] (-0.65,-0.15);

   \draw [blue] (-0.5,1.5) -- (0.5,1.5);
   \draw [fill] (-0.5,1.5) circle (3pt);
   \draw [white,fill] (-0.5,1.5) circle (2pt);
   \draw [fill] (0.5,1.5) circle (3pt);
   \draw [white,fill] (0.5,1.5) circle (2pt);

   \path [->] (1,2) edge [bend left] (3.9,0.95);
   \draw (1.75,2.25) node {$\varphi_{0,1}$};

   \path [->] (0.05,1.4) edge [bend right] (0,0.65);
   \path [->] (-0.05,1.4) edge [bend right] (0,-0.4);
  \end{tikzpicture}
 \end{center}
  \caption{A totally normal punctured torus.}
 \end{figure}
 The characteristic map of each $1$-cell has two lifts and the images
 cover the boundary of the domain of the $2$-cell. Thus this is totally
 normal. 
\end{example}

\begin{example}
 \label{Delta-set_is_totally_normal}
 Let $X$ be a $\Delta$-set\footnote{Definition
 \ref{Delta-set_definition}.}. Define 
 \[
  \pi_X : \|X\| \longrightarrow \coprod_{n=0}^{\infty} X_n
 \]
 by $\pi_X(x)=\sigma$ if $x$ is represented by
 $(\bm{t},\sigma)\in\Delta^n\times X_n$ with $t_i\neq 0$ for all
 $i\in\{0,\ldots,n\}$. 
 Note that $P(\|X\|)=\coprod_{n=0}^{\infty}X_n$ can be made into a poset
 by defining $\tau\le \sigma$ if and only if there exists a morphism
 $u : [m]\to[n]$ in $\Delta_{\inj}$ with $X(u)(\sigma)=\tau$.

 Then the map $\pi_X$ is a cellular stratification and we have
 \[
 \|X\| = \bigcup_{n=0}^{\infty}\bigcup_{\sigma\in X_n}
 \Int(\Delta^n)\times \{\sigma\}.
 \]
 Let us denote the $n$-cell $\Int(\Delta^n)\times \{\sigma\}$
 corresponding to $\sigma\in X_n$ by $e_{\sigma}$. The characteristic  
 map $\varphi_{\sigma}$ for $e_{\sigma}$ is defined by the composition 
 \[
 \varphi_{\sigma} :  D^n \cong \Delta^n\times\{\sigma\} \hookrightarrow
 \coprod_{n=0}^{\infty} \Delta^n\times X_n \rarrow{} \|X\|.
 \]

 Suppose $\tau<\sigma$ for $\tau \in X_m$ and $\sigma\in X_n$. There
 exists a morphism $u : [m]\to [n]$ with $X(u)(\sigma)=\tau$. With this 
 morphism, we have the following commutative diagram
 \[
  \begin{diagram}
   \node{\Delta^n\times\{\sigma\}} \arrow{e,t}{\varphi_{\sigma}}
   \node{\overline{e_{\sigma}}} \\ 
   \node{\Delta^m\times\{\tau\}} \arrow{e,b}{\varphi_{\tau}}
   \arrow{n,l}{u^{\tau,\sigma}_*} \node{\overline{e_{\tau}},} \arrow{n,J}
  \end{diagram}
 \]
 where
 $u^{\tau,\sigma}_* : \Delta^m\times\{\tau\} \to \Delta^n\times\{\sigma\}$
 is a copy of the affine map $u_* : \Delta^m \to \Delta^n$  induced by
 the inclusion of vertices $u : [m]\to[n]$.
 Hence this cellular stratification on $\|X\|$ is totally normal.

 In particular, for any acyclic category $C$ (with discrete topology),
 the classifying space $BC$ is a totally normal CW complex by Lemma
 \ref{classifying_space_of_acyclic_category}. 
\end{example}

\begin{example}
 \label{PLCW}
 A.~Kirillov, Jr.\ introduced a structure called PLCW-complex in
 \cite{1009.4227}. A PLCW-complex is defined by attaching PL-disks. The
 attaching map of an $n$-cell is required to be a strict morphism of
 cellular stratified spaces under a suitable PLCW decomposition of the
 boundary sphere. 

 Besides the PL requirement in Kirillov's definition, the only
 difference between totally normal CW complexes and PLCW complexes is
 that, in totally normal CW complexes, the boundary sphere of each 
 characteristic map is required to have a regular cell
 decomposition. For example, the cell decomposition of $D^3$ in Figure
 \ref{nontotallynormal_PLCW} 
 is a PLCW complex but is not totally normal.
 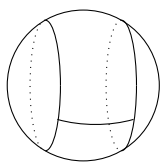
\begin{figure}[ht]
 \begin{center}
  \begin{tikzpicture}
   \draw (0,0) circle (1cm);

   \draw [dotted] (0.5,0.866) arc (90:270:0.2cm and 0.866cm);
   \draw (0.5,-0.866) arc (270:360:0.2cm and 0.866cm);
   \draw (0.7,0) arc (0:90:0.2cm and 0.866cm);

   \draw [dotted] (-0.5,0.866) arc (90:270:0.2cm and 0.866cm);
   \draw (-0.5,-0.866) arc (270:360:0.2cm and 0.866cm);
   \draw (-0.3,0) arc (0:90:0.2cm and 0.866cm);

   \draw (-0.34,-0.45) arc (240:300:1cm and 0.5cm);
  \end{tikzpicture}
 \end{center}
  \caption{A PLCW complex which is not totally normal.}
  \label{nontotallynormal_PLCW}
 \end{figure}
%
%
%
\end{example}

\begin{example}
 Let $X=\R\times\R_{\ge 0}$ with $0$-cells $e^0_n=\{(n,0)\}$ for
 $n\in\Z$, $1$-cells $e^1_n = (n,n+1)\times\{0\}$ for $n\in\Z$, and a
 $2$-cell $e^2= \R\times\R_{>0}$. The characteristic map $\varphi$ of
 the $2$-cell is given by extending the stereographic projection
 $S^1\setminus\{(0,1)\} \to \R$. The domain is
 $D=D^2\setminus\{(0,1)\}$. 
 \begin{figure}[ht]
 \begin{center}
  \begin{tikzpicture}
   \shade (0,0) circle (1cm);
   \draw (0,0) circle (1cm);
   \draw (0,1) circle (2pt);
   \draw [fill,white] (0,1) circle (1pt);
   \draw (0,0) node {$D$};

   \draw [->] (1.5,0) -- (2.5,0);
   \draw (2,0.25) node {$\varphi$};

   \shade (3,-1) rectangle (5,1);
   \draw (4,0) node {$X$};
   \draw (3,-1) -- (5,-1);
   \draw [fill] (3.5,-1) circle (2pt);
   \draw [fill] (4,-1) circle (2pt);
   \draw [fill] (4.5,-1) circle (2pt);
  \end{tikzpicture}
 \end{center}
  \caption{A cellular stratification on the closed upper half-plane with
  $0$-cells $\Z$.}
 \end{figure}
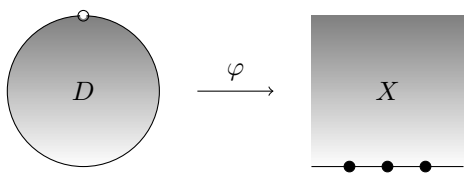
%
%
 This is regular and normal. It even satisfies the second condition in
 Definition \ref{total_normality_definition}. But it is not
 totally normal, since the corresponding stratification on
 $\partial D^2\setminus\{(0,1)\}$ cannot be a strict stratified subspace
 of a regular cell decomposition of $\partial D^2$. 

 Note that this example does not satisfy the closure finiteness
 condition. Hence it is not CW.
\end{example}

\begin{example}
 \label{minimal_cell_decomposition_of_S2}
 Consider the minimal cell decomposition
 \[
  S^2 = e^0\cup e^2.
 \]
 We may choose a regular cell decomposition of $\partial D^2$ and make
 it a stellar stratification. For example, we may use the minimal regular
 cell decomposition 
 $S^1 = e^0_+\cup e^0_{-}\cup e^1_+\cup e^1_{-}$.

 In order to make $S^2$ into totally normal, however, this
 stratification is not fine enough.
 There are infinitely many 
 lifts of the characteristic map of the 
 $0$-cell parametrized by points in $S^1$. See Figure
 \ref{lifts_of_0-cells}. 
 \begin{figure}[ht]
 \begin{center}
  \begin{tikzpicture}
   \draw (0,0) circle (1cm);
   \draw (0,0) node {$D^2$};

   \draw [->] (1.5,0) -- (2.5,0);
   \draw (2,0.25) node {$\varphi_2$};

   \shade [ball color=white] (4,0) circle (1cm);
   \draw (4,0) circle (1cm);
   \draw [dotted] (4,0) ellipse (1cm and 0.5cm);
   \draw (3,0.7) node {$e^2$};
   \draw [fill] (5,0) circle (2pt);
   \draw (5.25,0) node {$e^0$};

   \draw [fill] (0.65,-0.775) circle (2pt);
   \draw (0.5,-0.6) node {$z$};

   \draw [fill] (0,-2.5) circle (2pt);
   \path [green,->] (0.05,-2.45) edge [bend right] (0.65cm,-0.9cm);
   \path [green,->] (0,-2.45) edge [bend left] (-0.15,-1.1);
   \path [green,->] (-0.05,-2.45) edge [bend left] (-0.9cm,-0.6cm);
   \draw (1cm,-1.8cm) node {$b_z$};

   \path [->] (0.5cm,-2.5cm) edge [bend right] (4.5cm,-1cm);
   \draw (3cm,-2.6cm) node {$\varphi_0$};
  \end{tikzpicture}
 \end{center}
  \caption{A minimal cell decomposition of $S^2$.}
  \label{lifts_of_0-cells}
 \end{figure}
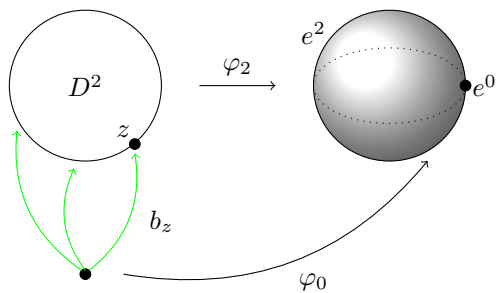
%
%
%
%
%
%
 Although $\partial D^2$ is covered by the images of $b_z$'s
 \[
  \partial D^2 = \bigcup_{z\in S^1} b_{z}(D^0),
 \]
 this is not a cell decomposition of $\partial D^2$. Hence this is not
 totally normal.
\end{example}

\section{Cylindrically Normal Cellular Stratified Spaces}
\label{topological_face_category}

\subsection{Cylindrical Structures}
\label{cylindrical_structure}

In Example \ref{minimal_cell_decomposition_of_S2}, lifts of $\varphi_0$
are parametrized by points in 
$S^1=\partial D^2$. This example suggests that we need to topologize the
set of all lifts. Inspired by the work of
Cohen, Jones, and Segal on Morse theory \cite{Cohen-Jones-SegalMorse}
and this example, we introduce the following definition.

\begin{definition}
 \label{cylindrical_structure_definition}
 A \emph{cylindrical structure} on a normal cellular stratified space
 $(X,\pi)$ consists of
 \begin{itemize}
  \item a normal stratification on $\partial D^{n}$ containing
	$D_{\lambda}$ as a strict stratified subspace for each $n$-cell
	$\varphi_{\lambda} : D_{\lambda}\to \overline{e_{\lambda}}$,
  \item a stratified space $P_{\mu,\lambda}$ and a strict morphism of
	stratified spaces 
	\[
	 b_{\mu,\lambda} : P_{\mu,\lambda}\times D_{\mu} \longrightarrow
	\partial D_{\lambda}
	\]
	for each pair of cells
	$e_{\mu} \subset \partial e_{\lambda}$, and
  \item a strict morphism of stratified spaces
	\[
	 c_{\lambda_0,\lambda_1,\lambda_2} :
	 P_{\lambda_1,\lambda_2}\times P_{\lambda_0,\lambda_1} 
	 \longrightarrow P_{\lambda_0,\lambda_2}
	\]
	 for each sequence $\overline{e_{\lambda_0}} \subset
	\overline{e_{\lambda_1}} \subset \overline{e_{\lambda_2}}$
 \end{itemize}
 satisfying the following conditions:
 \begin{enumerate}
  \item The restriction of $b_{\mu,\lambda}$ to
	$P_{\mu,\lambda}\times\Int(D_{\mu})$ is a homeomorphism onto its
	image. 
  \item The following three types of diagrams are commutative. 
	\[
	 \begin{diagram}
	  \node{D_{\lambda}} \arrow{e,t}{\varphi_{\lambda}} \node{X} \\
	  \node{P_{\mu,\lambda}\times D_{\mu}} \arrow{n,l}{b_{\mu,\lambda}}
	  \arrow{e,t}{\pr_2} 
	  \node{D_{\mu}.} \arrow{n,r}{\varphi_{\mu}}
	 \end{diagram}
	\]
	\[
	 \begin{diagram}
	  \node{P_{\lambda_1,\lambda_2}\times
	  P_{\lambda_0,\lambda_1}\times D_{\lambda_0}}
	  \arrow{s,l}{c_{\lambda_0,\lambda_1,\lambda_2}\times 1} 
	  \arrow{e,t}{1\times b_{\lambda_0,\lambda_1}}
	  \node{P_{\lambda_1,\lambda_2}\times D_{\lambda_1}}
	  \arrow{s,r}{b_{\lambda_1,\lambda_2}} \\ 
	  \node{P_{\lambda_0,\lambda_2}\times D_{\lambda_0}}
	  \arrow{e,b}{b_{\lambda_0,\lambda_2}} \node{D_{\lambda_2}} 
	 \end{diagram}
	\]
	\[
	 \begin{diagram}
	  \node{P_{\lambda_2,\lambda_3}\times
	  P_{\lambda_1,\lambda_2}\times P_{\lambda_0,\lambda_1}}
	  \arrow{e,t}{c\times 1} \arrow{s,l}{1\times c} 
	  \node{P_{\lambda_1,\lambda_3}\times P_{\lambda_0,\lambda_1}}
	  \arrow{s,r}{c} \\ 
	  \node{P_{\lambda_2,\lambda_3}\times P_{\lambda_0,\lambda_2}}
	  \arrow{e,b}{c} 
	  \node{P_{\lambda_0,\lambda_3}.} 
	 \end{diagram}
	\]
  \item We have 
	\[
	 \partial D_{\lambda} = \bigcup_{e_{\mu}\subset \partial
	e_{\lambda}}  b_{\mu,\lambda}(P_{\mu,\lambda}\times \Int(D_{\mu}))
	\]
	as a stratified space.
 \end{enumerate}

 The space $P_{\mu,\lambda}$ is called the \emph{parameter space} for the
 inclusion $e_{\mu}\subset \overline{e_{\lambda}}$. When $\mu=\lambda$,
 we define $P_{\lambda,\lambda}$ to be a single point.
 A stellar stratified space equipped with a cylindrical structure is
 called a \emph{cylindrically normal stellar stratified space}.

 When
 the map $b_{\lambda,\mu}$ is
 an embedding for each pair $e_{\mu}\subset \overline{e_{\lambda}}$,
 the stratification is said to be \emph{strictly cylindrical}. 

 When $X$ is a stellar stratified space, we require that the normal
 stratification on each $\partial D_{\lambda}$ is 
 a coarsening\footnote{Definition \ref{morphism_of_stratified_spaces}}
 of the stratification in the definition of stellar stratified space.

\end{definition}

\begin{remark}
 The author first intended to call such a structure as ``locally
 product-like'' or 
 ``locally trivial'' cellular stratification. But it turns out the term
 ``locally trivial stratification'' is already used in \cite{Pflaum01}
 in a different sense.
\end{remark}

We require morphisms between cylindrically normal cellular or stellar
stratified 
spaces to preserve cylindrical structures.

\begin{definition}
 Let $(X,\pi_X,\Phi_X)$ and $(Y,\pi_Y,\Phi_Y)$ be cylindrically normal
 cellular stratified spaces with cylindrical structures given
 by 
 $\{b^X_{\mu,\lambda} : P^X_{\mu,\lambda}\times D_{\mu}\to D_{\lambda}\}$
 and
 $\{b^Y_{\alpha,\beta} : P^Y_{\alpha,\beta}\times D_{\alpha}\to
 D_{\beta}\}$, respectively.

 A \emph{morphism of cylindrically normal cellular stratified spaces}
 from $(X,\pi_X,\Phi_X)$ to $(Y,\pi_Y,\Phi_Y)$ is a morphism of stellar
 stratified spaces 
 \[
 \bm{f}=(f,\underline{f}) : (X,\pi_X,\Phi_X) \longrightarrow
 (Y,\pi_Y,\Phi_Y) 
 \]
 together with maps
 $f_{\mu,\lambda} : P_{\mu,\lambda}^{X} \to
 P^Y_{\underline{f}(\mu),\underline{f}(\lambda)}$ making the diagram
 \[
  \begin{diagram}
   \node{P^X_{\mu,\lambda}\times D_{\mu}}
   \arrow{e,t}{f_{\mu,\lambda}\times f_{\mu}}
   \arrow{s,l}{b^X_{\mu,\lambda}} 
   \node{P^Y_{\underline{f}(\mu),\underline{f}(\lambda)}\times
   D_{\underline{f}(\mu)}}
   \arrow{s,r}{b^Y_{\underline{f}(\mu),\underline{f}(\lambda)}} \\  
   \node{D_{\lambda}} \arrow{e,b}{f_{\lambda}}
   \node{D_{\underline{f}(\lambda)}.} 
  \end{diagram}
 \]

 Morphisms of cylindrically normal stellar stratified spaces are defined
 analogously. 

 The categories of cylindrically normal cellular and stellar stratified
 spaces are denoted by $\category{CSSpaces}^{\cyl}$ and
 $\category{SSSpaces}^{\cyl}$, respectively. 
\end{definition}

For cylindrically normal cellular/stellar stratified spaces, the
relative-compactness of closures of cells can be easily verified.

\begin{lemma}
 \label{compact_parameter_space}
 Let $X$ be a cylindrically normal cellular (stellar) stratified space
 with parameter spaces $\{P_{\mu,\lambda}\}_{\mu\le\lambda}$. A cell
 $\varphi_{\lambda} : D_{\lambda}\to\overline{e_{\lambda}}$ is
 relatively compact if and only if $P_{\mu,\lambda}$ is compact for each
 $\mu\le\lambda$. 
\end{lemma}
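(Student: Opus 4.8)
The plan is to pass from the cell $e_{\lambda}$ to its domain $D_{\lambda}$ and to recognize each parameter space $P_{\mu,\lambda}$ as a fibre of the characteristic map $\varphi_{\lambda}$. The first step is the reduction: $e_{\lambda}$ is relatively compact (i.e.\ $\overline{e_{\lambda}}$ is compact) if and only if $D_{\lambda}=D^{n}$, i.e.\ $e_{\lambda}$ is a closed cell. One direction is immediate, since $\varphi_{\lambda}\colon D_{\lambda}\to\overline{e_{\lambda}}$ is a continuous surjection, so a compact $D_{\lambda}$ forces $\overline{e_{\lambda}}$ compact. For the converse, recall $\Int(D^{n})\subseteq D_{\lambda}\subseteq D^{n}$, so $D_{\lambda}$ is compact exactly when $D_{\lambda}=D^{n}$. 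Suppose $\overline{e_{\lambda}}$ is compact but some $q\in S^{n-1}\setminus\partial D_{\lambda}$ is omitted, and choose interior points $x_{k}\to q$. Then $F=\{x_{k}\}$ is closed in $D_{\lambda}$ (its only $D^{n}$-limit point $q$ lies outside $D_{\lambda}$), and since $\varphi_{\lambda}|_{\Int(D^{n})}$ is a homeomorphism onto $e_{\lambda}$ we have $\varphi_{\lambda}^{-1}(\varphi_{\lambda}(F))=F$. As $\varphi_{\lambda}$ is a quotient map, $\varphi_{\lambda}(F)$ is closed, hence compact; being infinite it has a limit point $\varphi_{\lambda}(x_{m})\in e_{\lambda}$, and pulling a convergent subsequence back through the interior homeomorphism gives $x_{k}\to x_{m}\in\Int(D^{n})$, contradicting $x_{k}\to q$.

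Granting $D_{\lambda}=D^{n}$, the forward implication of the lemma is clean and needs no finiteness. Fix $\mu\le\lambda$ and a point $v\in\Int(D_{\mu})$. The first commuting square of the cylindrical structure gives $\varphi_{\lambda}\bigl(b_{\mu,\lambda}(p,v)\bigr)=\varphi_{\mu}(v)$ for all $p$, so $b_{\mu,\lambda}(P_{\mu,\lambda}\times\{v\})\subseteq\varphi_{\lambda}^{-1}(\varphi_{\mu}(v))$. Conversely, if $z\in\partial D_{\lambda}$ satisfies $\varphi_{\lambda}(z)=\varphi_{\mu}(v)\in e_{\mu}$, then by the covering condition $z\in b_{\mu',\lambda}(P_{\mu',\lambda}\times\Int(D_{\mu'}))$ for some $\mu'$; disjointness of strata forces $\mu'=\mu$, and injectivity of $\varphi_{\mu}$ on $\Int(D_{\mu})$ then forces $z=b_{\mu,\lambda}(p,v)$. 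Hence $\varphi_{\lambda}^{-1}(\varphi_{\mu}(v))=b_{\mu,\lambda}(P_{\mu,\lambda}\times\{v\})$, and by condition (1) this is homeomorphic to $P_{\mu,\lambda}$. Being the preimage of a point, it is closed in the compact space $D^{n}$, so $P_{\mu,\lambda}$ is compact.

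For the converse I would show, again via the reduction, that compactness of all $P_{\mu,\lambda}$ yields $D_{\lambda}=D^{n}$, working by induction on $\dim e_{\lambda}$ and using $\partial D_{\lambda}=\bigcup_{e_{\mu}\subset\partial e_{\lambda}}b_{\mu,\lambda}(P_{\mu,\lambda}\times\Int(D_{\mu}))$. Assuming every $P_{\mu,\lambda}$ compact, the proper faces $e_{\mu}$ should inherit compact parameter spaces $P_{\nu,\mu}$ through the associativity maps $c_{\nu,\mu,\lambda}$, so that the inductive hypothesis makes each $D_{\mu}=D^{n_{\mu}}$ compact and each $b_{\mu,\lambda}(P_{\mu,\lambda}\times D_{\mu})$ a continuous image of a compact product, hence compact. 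Under closure finiteness the union above is then finite, so $\partial D_{\lambda}$ is a finite union of compact sets, and one concludes $\partial D_{\lambda}=S^{n-1}$, whence $D_{\lambda}=D^{n}$ and $\overline{e_{\lambda}}=\varphi_{\lambda}(D^{n})$ is compact.

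The delicate point, which I expect to be the main obstacle, is exactly this converse. Two inputs are needed that are silent in the forward argument: closure finiteness, so that $\partial D_{\lambda}$ is covered by finitely many pieces and a finite-union-of-compacts argument applies; and the descent of compactness to the faces, namely extracting compactness of $P_{\nu,\mu}$ from that of $P_{\nu,\lambda}$, which requires that $c_{\nu,\mu,\lambda}(p_{0},\,\cdot\,)$ be a closed embedding and therefore uses injectivity of the structure maps $b$ (strict cylindricity) together with the associativity square of the cylindrical structure. Without these the converse genuinely fails, so I would either invoke them explicitly or restrict the statement to the CW, strictly cylindrical setting where the descent is available.
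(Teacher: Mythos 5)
There is a genuine gap, and it is definitional rather than topological: you have read ``relatively compact'' in its usual sense, namely that the closure $\overline{e_{\lambda}}$ is compact, but in this paper the term is fixed by Definition \ref{relatively_compact_cell_definition}: a cell structure $\varphi_{\lambda}:D_{\lambda}\to\overline{e_{\lambda}}$ is \emph{relatively compact} if every fiber $\varphi_{\lambda}^{-1}(y)$, $y\in\overline{e_{\lambda}}$, is compact. The two notions are very different, and under your reading the lemma is simply false: take $X=\R^{n}$ with its single $n$-cell $\varphi:\Int(D^{n})\to\R^{n}$ (Example \ref{open_disk}). There are no proper faces, so the only parameter space is $P_{\lambda,\lambda}$, a single point, yet $\overline{e_{\lambda}}=\R^{n}$ is not compact; under the paper's definition the same example is consistent with the lemma because every fiber is a singleton. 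The extra hypotheses you found yourself forced to add for the converse (closure finiteness, strict cylindricity, induction over faces) are symptoms of this misreading, not genuine requirements, and your preliminary ``reduction'' to $D_{\lambda}=D^{n}$ is an equivalence for the wrong notion and should be discarded.

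The irony is that your middle paragraph \emph{is} the paper's proof. You show, using the first commuting square of the cylindrical structure, the covering condition $\partial D_{\lambda}=\bigcup_{e_{\mu}\subset\partial e_{\lambda}}b_{\mu,\lambda}(P_{\mu,\lambda}\times\Int(D_{\mu}))$, disjointness of strata, and injectivity of $\varphi_{\mu}$ on $\Int(D_{\mu})$, that for $v\in\Int(D_{\mu})$ one has
\[
 \varphi_{\lambda}^{-1}(\varphi_{\mu}(v)) \;=\; b_{\mu,\lambda}\left(P_{\mu,\lambda}\times\{v\}\right) \;\cong\; P_{\mu,\lambda},
\]
the last homeomorphism by condition (1) of Definition \ref{cylindrical_structure_definition}. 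Once ``relatively compact'' is read correctly, this identification is the entire lemma, in both directions at once: fibers over points of $e_{\lambda}$ are singletons, fibers over points of a boundary cell $e_{\mu}$ are copies of $P_{\mu,\lambda}$, and every pair $\mu<\lambda$ with $e_{\mu}\subset\partial e_{\lambda}$ arises this way since $e_{\mu}\subset\overline{e_{\lambda}}$ forces these fibers to be nonempty. Hence all fibers are compact if and only if all $P_{\mu,\lambda}$ are compact, with no passage to closed cells, no induction on dimension, and no CW or strictness assumption.
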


\begin{proof}
 For $y\in \partial e_{\lambda}$, there exists $\mu\le \lambda$ with
 $y\in e_{\mu} \subset \partial e_{\lambda}$. In the commutative diagram
 \[
  \begin{diagram}
   \node{D_{\lambda}} \arrow[2]{e,t}{\varphi_{\lambda}} \node{}
   \node{\overline{e_{\lambda}}} \\
   \node{P_{\mu,\lambda}\times \Int(D_{\mu})} \arrow{e}
   \arrow{n,l}{b_{\mu,\lambda}|_{P_{\mu,\lambda}\times\Int(D_{\mu})}}
   \node{\Int(D_{\mu})} \arrow{e,b}{\varphi_{\mu}|_{\Int(D_{\mu})}}
   \node{e_{\mu},} \arrow{n,J}
  \end{diagram}
 \]
 the restriction $b_{\mu,\lambda}|_{P_{\mu,\lambda}\times\Int(D_{\mu})}$
 is an embedding and $\partial D_{\lambda}$ is covered by the disjoint
 union of such images. Thus we have
 $\varphi_{\lambda}^{-1}(y) \cong P_{\mu,\lambda}\times\{y\}$ and the
 result follows from Corollary \ref{compact_fiber} and the assumption. 
\end{proof}

\begin{corollary}
 Let $X$ be a CW cylindrically normal cellular (stellar) stratified
 space. If all parameter spaces are compact, $X$ is paracompact.
\end{corollary}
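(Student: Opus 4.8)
The plan is to reduce the corollary to the classical fact that CW complexes are paracompact, the hypothesis on the parameter spaces being exactly what makes that classical argument applicable. First I would invoke Lemma \ref{compact_parameter_space}: since every parameter space $P_{\mu,\lambda}$ is assumed compact, each cell $\varphi_{\lambda}:D_{\lambda}\to\overline{e_{\lambda}}$ is relatively compact, i.e.\ every closed cell $\overline{e_{\lambda}}$ is a compact subspace of $X$. Combined with the standing hypotheses, this exhibits $X$ as a Hausdorff space (Definition \ref{definition_of_cellular_stratified_space}) carrying a closure-finite covering $\{\overline{e_{\lambda}}\}_{\lambda\in P(X)}$ by compact subspaces that determines its topology (the weak-topology clause of the CW condition, Definition \ref{CW_stratification}). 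These three features---Hausdorffness, the closure-finite skeletal structure, and compactness of the closed cells---are precisely the ingredients on which the standard proof of paracompactness for CW complexes rests; the fact that our cells are globular or stellar rather than honest closed disks, and that the $\varphi_{\lambda}$ are merely quotient maps, never enters that argument.

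Concretely, I would organise the proof along the skeletal filtration $X_0\subset X_1\subset\cdots$ with $X=\bigcup_{n}X_n$ supplied by the CW structure, noting that each inclusion $X_{n-1}\hookrightarrow X_n$ is a closed embedding and that $X$ carries the colimit topology of this countable tower. The argument then splits into two parts. First, each skeleton $X_n$ is paracompact, which I would prove by induction on $n$: the $0$-skeleton is discrete, hence paracompact, and the passage from $X_{n-1}$ to $X_n$ adjoins $n$-cells whose closures are compact, so one may establish normality and then extend any subordinate partition of unity over the newly attached cells by the Tietze--Urysohn machinery, both being carried out by the same inductive scheme. Second, the colimit of a countable tower of closed embeddings of paracompact Hausdorff spaces is again paracompact, a standard fact about such colimits; this upgrades the paracompactness of the skeleta to that of $X$ itself.

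The main obstacle is the inductive extension step: given a locally finite partition of unity on $X_{n-1}$ subordinate to (the trace of) an open cover, one must extend it across the attached $n$-cells so that the enlarged family remains locally finite on $X_n$. The point is \emph{not} that $X$ is locally finite as a cell complex---it need not be, as an infinite wedge already shows---but that each closed cell $\overline{e_{\lambda}}$ is compact, so it meets the supports of only finitely many members of the existing family, and the extension over it can be performed by Tietze's theorem and then damped down to preserve local finiteness. The compactness furnished by Lemma \ref{compact_parameter_space} is exactly what licenses these finite-at-a-time extensions and excludes the non-compact, non-CW pathologies of Example \ref{powdered_ball} that would otherwise obstruct paracompactness. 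Once this inductive construction is in place, local finiteness on all of $X$ follows from the colimit topology and subordination yields the desired locally finite refinement, so that $X$ is paracompact; since every ingredient is now available verbatim, the conclusion follows as in the classical case (see, e.g., \cite{Lundell-Weingram}).
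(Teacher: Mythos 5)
There is a genuine gap, and it is at the very first step: you have misread what ``relatively compact'' means in this paper. Definition \ref{relatively_compact_cell_definition} declares a cell structure $\varphi_{\lambda}:D_{\lambda}\to\overline{e_{\lambda}}$ to be relatively compact when every \emph{fiber} $\varphi_{\lambda}^{-1}(y)$ is compact; it does \emph{not} say that $\overline{e_{\lambda}}$ is compact, and Lemma \ref{compact_parameter_space} only identifies these fibers with the parameter spaces $P_{\mu,\lambda}$. Compactness of the closed cells simply does not follow from the hypotheses, and is in fact false in the motivating examples: take $X=\R^n$ with the trivial stratification by a single regular $n$-cell, or the stratification of $\R^n$ by a hyperplane arrangement (Example \ref{cellular_stratification_by_arrangement}). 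These are CW, cylindrically normal, all parameter spaces are finite or one-point sets (hence compact), yet the closed cells are unbounded subsets of $\R^n$. Since your entire inductive mechanism rests on that false premise --- ``each closed cell meets the supports of only finitely many members of the existing family,'' and the Tietze extension ``over the compact cell'' --- the extension step collapses exactly where the corollary is supposed to do its work; note also that the domains $D_{\lambda}$ themselves are generally non-compact globular or stellar cells, so there is no compactness anywhere to drive the finite-at-a-time bookkeeping.

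The intended argument, which the placement of the corollary in the paper signals, runs through metrizability rather than compactness of cells: compact parameter spaces give compact fibers (Lemma \ref{compact_parameter_space}), and compact fibers make each closed cell $\overline{e_{\lambda}}$ \emph{metrizable}, hence Hausdorff and paracompact, by Lemma \ref{overlinee_is_metrizable}. One then invokes the CW condition (Definition \ref{CW_stratification}): $X$ carries the weak topology determined by the closure-finite closed cover $\{\overline{e_{\lambda}}\}$, and a Hausdorff space with the weak topology determined by a closure-finite closed cover of paracompact (indeed metrizable) subspaces is paracompact --- this is the Miyazaki--Morita style gluing theorem underlying the classical proof that CW complexes are paracompact, as in \cite{Lundell-Weingram}. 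Your skeletal-filtration scaffolding and the colimit-of-closed-embeddings step can be retained as an organizational device, but the extension step inside it must be powered by paracompactness of the closed cells supplied by Lemma \ref{overlinee_is_metrizable}, not by a compactness they do not have.
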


%
%

\subsection{Polyhedral Cellular Stratified Spaces}
\label{locally_polyhedral}

Although total normality and cylindrical normality play
central roles in our study of cellular and stellar stratified spaces, it
is not easy to prove that a given cellular stratification is totally
normal or cylindrically normal. 

In order to address this problem, we found polyhedral complexes and PL
maps are useful.
It turns out that PL maps also play an important role in our proof of
Theorem \ref{deformation_theorem}. In
this section, we generalize polyhedral complexes and
introduce the notion of polyhedral cellular stratified spaces. 

Let us first recall the definition of polyhedral complexes. See \S2.2.4
of Kozlov's book \cite{KozlovCombinatorialAlgebraicTopology} for
details.  

\begin{definition}
  \label{polyhedral_complex_definition}
 A \emph{Euclidean polyhedral complex} is a subspace $K$ of
 $\R^N$ for some $N$ equipped with a finite family of maps
 \[
 \{\varphi_{i} : F_{i} \longrightarrow K \mid i=1,\ldots,n \}
 \]
 satisfying the conditions that
 \begin{enumerate}
  \item each $F_i$ is a convex polytope;
  \item each $\varphi_i$ is an affine equivalence onto its image;
  \item $K = \bigcup_{i=1}^n \varphi_i(\Int(F_i))$, where $\Int(F_i)$ is
	the relative interior\footnote{The interior in its affine hull.}
	of $F_i$; 
  \item for $i\neq j$, $\varphi_i(F_i)\cap \varphi_j(F_j)$ is a proper
	face of $\varphi_i(F_i)$ and $\varphi_j(F_j)$. 
 \end{enumerate}
 The polytopes $F_i$'s are called \emph{generating polytopes}. 
\end{definition}

Obviously a polyhedral complex is a regular cell complex. By replacing
affine cell structure maps $\varphi_i$ by continuous maps, 
Kozlov defined a more general kind of polyhedral complexes in his
book.
The requirement of being a subspace of $\R^N$ can be removed if we
assume cylindrical normality. We may also remove the condition that
all cells are closed. 
Here is our definition of ``polyhedral'' structure.

\begin{definition}
 \label{polyhedral_normality_definition}
 A \emph{polyhedral stellar stratified space} consists of
 \begin{itemize}
  \item a CW cylindrically normal stellar stratified space
	$X$, 
  \item a family of Euclidean polyhedral complexes
	$\widetilde{F}_{\lambda}$ indexed by $\lambda\in P(X)$ and
  \item a family of homeomorphisms
	$\alpha_{\lambda}:\widetilde{F}_{\lambda}\to\overline{D_{\lambda}}$
	indexed by $\lambda\in P(X)$, where $\overline{D_{\lambda}}$ is
	the closure of the domain stellar cell $D_{\lambda}$ for
	$e_{\lambda}$ in a disk containing $D_{\lambda}$,
 \end{itemize}
 satisfying the following conditions:
 \begin{enumerate}
  \item For each cell $e_{\lambda}$, 
	$\alpha_{\lambda}:\widetilde{F}_{\lambda}\to\overline{D_{\lambda}}$
	is a subdivision of stratified space, where the
	stratification on $\overline{D_{\lambda}}$ is defined by
	the cylindrical structure\footnote{Definition
	\ref{cylindrical_structure_definition}}.  

  \item For each pair $e_{\mu}<e_{\lambda}$, the parameter
	space $P_{\mu,\lambda}$ is a locally cone-like
	space\footnote{Definition
	\ref{locally_cone-like_space_definition}} and  
	the composition 
	\[
	P_{\mu,\lambda}\times F_{\mu}
	\rarrow{1\times\alpha_{\mu}} P_{\mu,\lambda}\times D_{\mu}
	\rarrow{b_{\mu,\lambda}} 
	D_{\lambda} \rarrow{\alpha_{\lambda}^{-1}} F_{\lambda}
	\]
	is a PL map\footnote{Definition \ref{PL_map_definition}}, where
	$F_{\lambda}=\alpha_{\lambda}^{-1}(D_{\lambda})$. 

 \end{enumerate}

 Each $\alpha_{\lambda}$ is called a \emph{polyhedral replacement} of
 the cell structure map of $e_{\lambda}$. The collection
 $A=\{\alpha_{\lambda}\}_{\lambda\in P(X)}$ is called a
 \emph{polyhedral structure} on $X$. 
\end{definition}

\begin{remark}
 When $X$ is cellular, $\overline{D_{\lambda}}= D^{\dim e_{\lambda}}$.
\end{remark}

\begin{remark}
 An analogous structure for cell complexes is introduced by A.~Kirillov,   
 Jr.\ in \cite{1009.4227} as PLCW complexes for a different purpose. 
 $M_{\kappa}$-polyhedral complexes in the book \cite{Bridson-Haefliger}
 by Bridson and Haefliger are also closely related.
\end{remark}

\begin{definition}
 A \emph{morphism of polyhedral stellar stratified spaces} from
 $(X,\pi,\Phi,A)$ to $(X',\pi',\Phi',A')$ consists of 
 a morphism of stellar stratified spaces
 \[
 (\bm{f},\{f_{\lambda}\}) : (X,\pi,\Phi) \longrightarrow
 (X',\pi',\Phi') 
 \] and
 a family of PL maps
 $\tilde{f}_{\lambda} : \widetilde{F}_{\lambda} \to \widetilde{F}'_{\underline{f}(\lambda)}$
 for $\lambda\in P(X)$ that are compatible with
 polyhedral structures.
\end{definition}

Any polyhedral complex is polyhedral. More generally, 
we have the following criterion.

\begin{lemma}
 \label{Euclidean_and_PL_imply_locally_polyhedral}
 Let $X$ be a subspace of $\R^N$ equipped with a structure of
 cylindrically normal CW stellar stratified space whose parameter spaces 
 $P_{\mu,\lambda}$ are locally cone-like spaces. Suppose, for each
 $e_{\lambda}\in P(X)$, there exists a polyhedral complex
 $\widetilde{F}_{\lambda}$ 
 and a homeomorphism 
 $\alpha_{\lambda}:\widetilde{F}_{\lambda} \to D^{\dim e_{\lambda}}$
 such that the composition
 \[
 P_{\mu,\lambda}\times F_{\mu} \rarrow{1\times\alpha_{\mu}}
 P_{\mu,\lambda}\times D_{\mu} \rarrow{b_{\mu,\lambda}}
 D_{\lambda} \rarrow{\varphi_{\lambda}} X \hookrightarrow \R^N 
 \]
 is a PL map, where
 $F_{\lambda}=\alpha_{\lambda}^{-1}(D_{\lambda})$. Suppose further that
 $\alpha_{\lambda} : \widetilde{F}_{\lambda}\to D^{\dim e_{\lambda}}$ is 
 a cellular subdivision, where the cell decomposition on
 $D^{\dim e_{\lambda}}$ is the one in the definition of cylindrical
 structure.  Then the collection  
 $\{\alpha_{\lambda}\}_{\lambda\in P(X)}$ defines a polyhedral
 structure on $X$.
\end{lemma}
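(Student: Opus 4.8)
The plan is to take the supplied family $\{\alpha_\lambda\}_{\lambda\in P(X)}$ as the candidate polyhedral structure and verify directly the two conditions of Definition \ref{polyhedral_normality_definition}. Condition (1) is essentially immediate: each $\alpha_\lambda\colon\widetilde{F}_\lambda\to D^{\dim e_\lambda}$ is assumed to be a cellular subdivision whose target carries exactly the cell decomposition coming from the cylindrical structure (Definition \ref{cylindrical_structure_definition}), so $\alpha_\lambda$ is in particular a subdivision of the stratified space $\overline{D_\lambda}=D^{\dim e_\lambda}$ stratified by that cylindrical structure, which is what condition (1) asks. For condition (2) the local cone-likeness of each $P_{\mu,\lambda}$ is part of the standing hypotheses, so the only genuine point is the PL-ness of the composite
\[
\Psi_{\mu,\lambda}:=\alpha_\lambda^{-1}\circ b_{\mu,\lambda}\circ(1\times\alpha_\mu)\colon P_{\mu,\lambda}\times F_\mu\longrightarrow F_\lambda .
\]

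Next I would reduce this to a statement about the common core $\Theta_{\mu,\lambda}:=b_{\mu,\lambda}\circ(1\times\alpha_\mu)$. Being a cellular subdivision, $\alpha_\lambda$ is a PL homeomorphism onto $D^{\dim e_\lambda}$ with its polyhedral structure, so $\alpha_\lambda^{-1}$ is PL and it suffices to control $\Theta_{\mu,\lambda}$. Here I would bring in the Euclidean hypothesis: the map assumed to be PL is $\varphi_\lambda\circ\Theta_{\mu,\lambda}$, the composition landing in $X\subset\R^N$. The first commutative diagram of Definition \ref{cylindrical_structure_definition} gives $\varphi_\lambda\circ b_{\mu,\lambda}=\varphi_\mu\circ\pr_2$, so this Euclidean composite is in fact $\varphi_\mu\circ\alpha_\mu\circ\pr_2$; in particular it records $\Theta_{\mu,\lambda}$ only after collapsing the parameter direction $P_{\mu,\lambda}$, while $\Theta_{\mu,\lambda}$ itself is injective on $P_{\mu,\lambda}\times\Int(D_\mu)$ by condition (1) of the cylindrical structure.

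This collapsing is exactly the main obstacle: one cannot recover $\Theta_{\mu,\lambda}$ from $\varphi_\lambda\circ\Theta_{\mu,\lambda}$ by inverting $\varphi_\lambda$, because $\varphi_\lambda$ is not injective on $\partial D_\lambda$. My way around this is to argue cellwise over the decomposition $\partial D_\lambda=\bigcup_{\mu}b_{\mu,\lambda}(P_{\mu,\lambda}\times\Int(D_\mu))$ furnished by the cylindrical structure. On each such piece $b_{\mu,\lambda}$ restricts to a homeomorphism, so $\Theta_{\mu,\lambda}$ is injective there; the hypothesis already pins down the $F_\mu$-direction through the PL map $\varphi_\mu\circ\alpha_\mu$, while the local cone structure of $P_{\mu,\lambda}$ together with the polytope structure of $F_\mu$ lets me triangulate $P_{\mu,\lambda}\times F_\mu$ compatibly with the polyhedral complex $\widetilde{F}_\lambda$, and check that $\Psi_{\mu,\lambda}$ is affine on the resulting simplices. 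The associativity and compatibility diagrams of Definition \ref{cylindrical_structure_definition} are then used to guarantee that these cellwise triangulations agree along shared faces, so that the $\Psi_{\mu,\lambda}$ assemble into a genuine polyhedral structure. I expect the delicate step to be precisely this transfer of PL-ness across the non-injective $\varphi_\lambda$: verifying that local cone-likeness supplies enough rigidity in the parameter direction to force honest affineness on a common triangulation, rather than mere piecewise-linearity up to a reparametrising homeomorphism.
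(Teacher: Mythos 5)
Your reduction to the PL-ness of $\Psi_{\mu,\lambda}=\alpha_{\lambda}^{-1}\circ b_{\mu,\lambda}\circ(1\times\alpha_{\mu})$ is correct, and your diagnosis of the obstacle is exactly right: by the first compatibility square of Definition \ref{cylindrical_structure_definition}, the composite that the hypothesis assumes to be PL equals $\varphi_{\mu}\circ\alpha_{\mu}\circ\pr_2$, so it factors through the projection and records nothing about the behaviour of $b_{\mu,\lambda}$ in the $P_{\mu,\lambda}$-direction. You should know, however, that the paper's own proof does not overcome this point — it elides it. The paper observes that $\varphi_{\lambda}\circ\alpha_{\lambda}$ (the case $\mu=\lambda$ of the hypothesis) and $\varphi_{\lambda}\circ b_{\mu,\lambda}\circ(1\times\alpha_{\mu})$ are PL, that $\Psi_{\mu,\lambda}$ is an embedding on $P_{\mu,\lambda}\times\alpha_{\mu}^{-1}(\Int D^{\dim e_{\mu}})$, and then concludes that $\Psi_{\mu,\lambda}$ is PL by Lemma \ref{PL_map_is_extendalbe}. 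That is precisely the inversion across the non-injective $\varphi_{\lambda}$ which you declined to perform: for $\mu<\lambda$ the image of $\Psi_{\mu,\lambda}$ lies over $\partial D_{\lambda}$, where $\varphi_{\lambda}$ collapses the parameter direction, so the commutative diagram does not determine $\Psi_{\mu,\lambda}$ and the appeal to Lemma \ref{PL_map_is_extendalbe} is unjustified. (A minor side point: your claim that $\alpha_{\lambda}$ ``is a PL homeomorphism, so $\alpha_{\lambda}^{-1}$ is PL'' is vacuous rather than wrong, since the disk $D^{\dim e_{\lambda}}$ carries no prior PL structure; the only PL structure on $D_{\lambda}$ is the one transported by $\alpha_{\lambda}$.)

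The genuine gap is your final step, and it cannot be closed from the stated hypotheses; no cellwise triangulation argument will produce the rigidity you hope for. Concretely, take $X=\partial\Delta^{3}\subset\R^{3}$ with the decomposition $e^{0}\cup e^{2}$ ($e^{0}$ a vertex), the cylindrical structure of Example \ref{sphere_is_cylindrically_regular}, and the polyhedral replacement $\alpha_{2}$ of the example following the lemma, arranged so that $P_{0,2}$ is the polygonal boundary circle $\partial\widetilde{F}_{2}$ and $b_{0,2}=\alpha_{2}|_{\partial\widetilde{F}_{2}}$; then all hypotheses hold. Now replace $b_{0,2}$ by $b_{0,2}\circ(h\times 1)$, where $h$ is a non-PL self-homeomorphism of $P_{0,2}$ (for instance quadratic in an affine chart on one edge). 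Every axiom of a cylindrical structure survives, because $\varphi_{\lambda}\circ b_{\mu,\lambda}=\varphi_{\mu}\circ\pr_2$, the covering condition, and the associativity diagrams are all insensitive to $h$; the parameter spaces and the $\alpha_{\lambda}$ are unchanged; and the composite into $\R^{N}$ is literally the same map $\varphi_{\mu}\circ\alpha_{\mu}\circ\pr_2$. Yet now $\Psi_{0,2}=h$, which is not PL. So the hypotheses are invariant under an operation that destroys the conclusion, and neither your argument nor any other can establish condition (2) of Definition \ref{polyhedral_normality_definition} without strengthening them — for example by requiring all cells to be regular, so that $\varphi_{\lambda}\circ\alpha_{\lambda}$ is an injective PL map whose inverse on its polyhedral image is PL and the inversion becomes legitimate (this is the setting of Proposition \ref{locally_polyhedral_implies_totally_normal}), or by assuming outright that the maps $b_{\mu,\lambda}\circ(1\times\alpha_{\mu})$ are PL jointly in both variables with respect to the structures transported by the $\alpha_{\lambda}$.
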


\begin{proof}
 For each pair $e_{\mu}< e_{\lambda}$, define
 $\tilde{b}_{\mu,\lambda} : P_{\mu,\lambda}\times F_{\mu}\to F_{\lambda}$
 to be the composition 
 \[
 \tilde{b}_{\mu,\lambda} : P_{\mu,\lambda} \times F_{\mu}
 \rarrow{1\times \alpha_{\mu}} P_{\mu,\lambda}\times D_{\mu}
 \rarrow{b_{\mu,\lambda}}  
 D_{\lambda} \rarrow{\alpha_{\lambda}^{-1}} F_{\lambda}.
 \]
 Note that, when $\mu=\lambda$, $b_{\mu,\lambda}$ is the
 identity map. Thus the top horizontal composition in the
 following diagram is a PL map:
 \[
  \begin{diagram}
   \node{F_{\lambda}} \arrow{e,t}{\alpha_{\lambda}} \node{D_{\lambda}}
   \arrow{e,t}{\varphi_{\lambda}} \node{X} \arrow[2]{e,J} 
   \node{} \node{\R^N} \\ 
   \node{P_{\mu,\lambda}\times F_{\mu}}
   \arrow{n,l}{\tilde{b}_{\mu,\lambda}}
   \arrow{e,b}{1\times\alpha_{\mu}}
   \node{P_{\mu,\lambda}\times D_{\mu}}
   \arrow{e,b}{b_{\mu,\lambda}} \node{D_{\lambda}}
   \arrow{e,b}{\varphi_{\lambda}} 
   \node{X} \arrow{e,J} \node{\R^N.} \arrow{n,=}
  \end{diagram}
 \]
 The bottom horizontal composition is also a PL map by assumption and
 $\tilde{b}_{\mu,\lambda}$ is an embedding when restricted to
 $P_{\mu,\lambda}\times \alpha_{\mu}^{-1}(\Int D^{\dim e_{\mu}})$. Thus 
 $\tilde{b}_{\mu,\lambda}$ is also PL by Lemma \ref{PL_map_is_extendalbe}.
%
\end{proof}



\begin{example}
 Consider the minimal cell decomposition $S^2=e^0\cup e^2$. We have an
 embedding $f : S^2 \to \R^3$ whose image is the boundary
 $\partial\Delta^{3}$ of the standard $3$-simplex and $f(e^0)$ is a
 vertex of $\partial\Delta^3$. This is a cylindrically normal cellular
 stratified space by Example \ref{sphere_is_cylindrically_regular}. Let
 $P$ be a $2$-dimensional polyhedral complex in $\R^2$ described in
 Figure \ref{local_polyhedral_structure_on_sphere}. 
 \begin{figure}[ht]
 \begin{center}
  \begin{tikzpicture}
   \draw (0,2.25) -- (1.25,0);
   \draw (0,2.25) -- (-1.25,0);
   \draw (-1.25,0) -- (1.25,0);
   \draw (0,1.5) -- (-0.5,0.5);
   \draw (0,1.5) -- (0.5,0.5);
   \draw (-0.5,0.5) -- (-1.25,0);
   \draw (0.5,0.5) -- (1.25,0);
   \draw (-0.5,0.5) -- (0.5,0.5);
   \draw (0,2.25) -- (0,1.5);
  \end{tikzpicture}
 \end{center}
  \caption{A polyhedral structure on $S^2$}
  \label{local_polyhedral_structure_on_sphere}
 \end{figure}
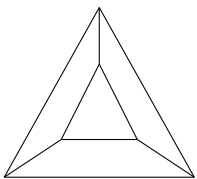
 By collapsing the outer triangle, we obtain a map
 \[
  \psi : P \longrightarrow \partial\Delta^3
 \]
 whose restriction to the interior is a homeomorphism. Let
 $\alpha : P \to D^2$ be a homeomorphism given by a radial
 expansion. Then 
 maps $f$ and $\varphi$ can be chosen in such a way they make the
 following diagram commutative
 \[
  \begin{diagram}
   \node{D^2} \arrow{e,t}{\varphi} \node{S^2}
   \arrow{s,r}{f} \\
   \node{P} \arrow{n,l}{\alpha} \arrow{e,b}{\psi}
   \node{\partial\Delta^3} 
   \arrow{e,J} \node{\R^3.}
  \end{diagram}
 \]
 By Lemma \ref{Euclidean_and_PL_imply_locally_polyhedral}, we obtain a
 structure of polyhedral cellular stratified space on $S^2$ or
 $\partial\Delta^3$. 
\end{example}

\begin{definition}
 A cellular stratified space satisfying the assumption of Lemma
 \ref{Euclidean_and_PL_imply_locally_polyhedral} 
 is called a \emph{Euclidean polyhedral cellular stratified
 space}. 
\end{definition}

Totally normal cellular stratified spaces form an important class of
polyhedral cellular stratified spaces.

\begin{lemma}
 \label{total_normality_implies_local_polyhedrality}
 Any CW totally normal cellular stratified space has a 
 polyhedral structure. 
\end{lemma}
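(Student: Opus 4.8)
The plan is to build the polyhedral structure cell by cell, by induction on dimension, using barycentric subdivisions of the regular cell complexes supplied by total normality. First I record that total normality already determines the underlying cylindrical structure: for $e_\mu\le e_\lambda$ put $P_{\mu,\lambda}=F(X)(e_\mu,e_\lambda)$ with the discrete topology, let $b_{\mu,\lambda}(b,x)=b(x)$, and let $c$ be composition of lifts. The covering identity (\ref{cover_of_domain}) is exactly condition (3) of Definition \ref{cylindrical_structure_definition}, the factorization clause of Definition \ref{total_normality_definition} gives the commuting squares, and each $b\in F(X)(e_\mu,e_\lambda)$ is an embedding by Lemma \ref{b_is_an_embedding}; since $X$ is CW each $P_{\mu,\lambda}$ is a finite discrete set, hence compact and trivially locally cone-like. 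Thus it remains to produce the Euclidean polyhedral complexes $\widetilde F_\lambda$ and homeomorphisms $\alpha_\lambda$ meeting the two conditions of Definition \ref{polyhedral_normality_definition}. Because an abstract totally normal space need not be embedded in any $\R^N$, I cannot invoke Lemma \ref{Euclidean_and_PL_imply_locally_polyhedral} directly and instead construct the structure explicitly.

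Second I carry out the inductive construction. For a $0$-cell the domain is a point, which is its own polyhedral replacement. Assume polyhedral replacements $\alpha_\mu$ have been chosen for all cells of dimension $<n$, coherently in the sense below. Given an $n$-cell $e_\lambda$, total normality supplies a regular cell complex $K_\lambda$ on $S^{n-1}=\partial D^n$ containing $\partial D_\lambda$ as a strict stratified subspace; coning off the top cell yields a regular cell complex $\widehat K_\lambda$ on $\overline{D_\lambda}=D^{n}$ (using the cellular case, where $\overline{D_\lambda}=D^{\dim e_\lambda}$). I then form the order complex of the face poset of $\widehat K_\lambda$, i.e. its barycentric subdivision $\Sd(\widehat K_\lambda)$, realize it affinely as a Euclidean simplicial --- hence polyhedral --- complex $\widetilde F_\lambda$, and take $\alpha_\lambda:\widetilde F_\lambda\to D^n$ to be the standard subdivision homeomorphism. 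Since $\Sd$ refines every cell of $\widehat K_\lambda$, the map $\alpha_\lambda$ is a subdivision of the cylindrical stratification of $D^n$, which is condition (1) of Definition \ref{polyhedral_normality_definition}.

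Third I verify the PL condition (2) together with the compatibility that powers the induction. By the factorization clause of Definition \ref{total_normality_definition}, for each boundary cell $e_\mu<e_\lambda$ the lift $b\in F(X)(e_\mu,e_\lambda)$ identifies $\overline{D_\mu}$ with a closed cell $\overline e$ of $K_\lambda$, carrying the regular complex $\widehat K_\mu$ on $\overline{D_\mu}$ onto the induced cell subdivision of $\overline e$. The compatibility I impose inductively is precisely that the boundary complexes $K_\lambda$ be chosen so that this identification is cellular; naturality of barycentric subdivision for inclusions of subcomplexes and for isomorphisms of regular cell complexes then forces $\Sd(\widehat K_\lambda)$ to restrict on $\overline e$ to a copy of $\Sd(\widehat K_\mu)=\widetilde F_\mu$. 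Consequently the map $\tilde b_{\mu,\lambda}=\alpha_\lambda^{-1}\circ b_{\mu,\lambda}\circ(1\times\alpha_\mu)$ is, on each of the finitely many points of the discrete $P_{\mu,\lambda}$, the inclusion of a simplicial subcomplex, hence a simplicial and therefore PL map; assembling over the finite set $P_{\mu,\lambda}$ keeps it PL. This establishes both conditions of Definition \ref{polyhedral_normality_definition}, so $\{\alpha_\lambda\}_{\lambda\in P(X)}$ is a polyhedral structure.

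The main obstacle is the coherence encoded in the inductive hypothesis: one must choose the regular cell complexes $K_\lambda$ on the boundary spheres, and their barycentric subdivisions, so that the restriction to each boundary cell $\overline e$ agrees with the structure already fixed on the corresponding lower-dimensional domain $\overline{D_\mu}$. Total normality guarantees that the combinatorial data match --- every boundary cell is the image of a genuine face under a lift --- and the naturality of $\Sd$ propagates one consistent choice up the skeletal filtration. Making this simultaneous choice precise, rather than merely the local existence of a polyhedral replacement at each individual cell (which is routine), is the real content of the argument.
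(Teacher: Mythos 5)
Your proposal is correct and follows essentially the same route as the paper: both take the regular cell complexes on the boundary spheres supplied by total normality, pass to barycentric subdivisions realized as finite Euclidean simplicial complexes to get the $\widetilde{F}_{\lambda}$, and then make an inductive, dimension-by-dimension coherent choice so that every lift $b\in F(X)(e_{\mu},e_{\lambda})$ becomes PL (the paper even leaves the cylindrical-normality bookkeeping with finite discrete $P_{\mu,\lambda}$ implicit, which you spell out). The only cosmetic difference is where you place the inductive freedom --- you re-choose the boundary complexes $K_{\lambda}$ and keep $\alpha_{\lambda}$ standard, whereas the paper keeps $\widetilde{F}_{\lambda}$ fixed and inductively adjusts the homeomorphisms $\alpha_{\lambda}$ --- and both treatments leave the same coherence-of-choices step at the same level of detail.
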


\begin{proof}
 Let $X$ be a CW totally normal cellular stratified space. By definition,
 for each cell
 $\varphi_{\lambda} : D_{\lambda} \to \overline{e}_{\lambda}$ in $X$,
 there exists a regular cell decomposition of $D^{\dim e_{\lambda}}$
 containing $D_{\lambda}$ as a cellular stratified subspace. Since the
 barycentric subdivision of a regular cell complex has a structure of
 simplicial complex, $D^{\dim e_{\lambda}}$ can be embedded in a
 Euclidean space as a finite simplicial complex
 $\widetilde{F}_{\lambda}$. By induction on 
 dimensions of cells, we may choose
 homeomorphisms
 $\{\alpha_{\lambda} : \widetilde{F}_{\lambda}\to D^{\dim
 e_{\lambda}}\}_{\lambda\in P(X)}$ in such a way the composition
 \[
 F_{\mu}
 \rarrow{\alpha_{\mu}} D_{\mu}
 \rarrow{b} 
 D_{\lambda} \rarrow{\alpha_{\lambda}^{-1}} F_{\lambda}  
 \]
 is a PL map for each lift $b : D_{\mu}\to D_{\lambda}$ of the
 cell structure map of $e_{\mu}$ for each pair
 $e_{\mu}<e_{\lambda}$. Thus we obtain a polyhedral structure.
\end{proof}

Conversely, the polyhedrality condition provides us with a useful
criterion for a stratification to be totally normal.
The following fact first appeared in \cite{1009.1851v5}.

\begin{proposition}
 \label{locally_polyhedral_implies_totally_normal}
 Let $X$ be a normal CW complex embedded in $\R^N$ for some $N$. Suppose
 for each cell $e\subset X$ with cell structure map $\varphi$, there
 exists a Euclidean polyhedral complex $F$ and a homeomorphism
 $\alpha : F \to D^{\dim e}$ such that the composition
 \[
  F \rarrow{\alpha} D^{\dim e} \rarrow{\varphi} X \subset \R^N
 \]
 is a PL map. Then any regular cellular stratified subspace of $X$ is  
 polyhedral.
\end{proposition}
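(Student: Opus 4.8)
The plan is to reduce the statement to Lemma \ref{Euclidean_and_PL_imply_locally_polyhedral}. Since $A$ is a subspace of $X$ and $X\subset\R^N$, the inclusion $A\hookrightarrow\R^N$ is given, so it suffices to equip $A$ with a cylindrically normal CW stellar stratification whose parameter spaces are locally cone-like, together with Euclidean polyhedral complexes $\widetilde{F}_\lambda$ and homeomorphisms $\alpha_\lambda:\widetilde{F}_\lambda\to D^{\dim e_\lambda}$ for which the composite $P_{\mu,\lambda}\times F_\mu\to P_{\mu,\lambda}\times D_\mu\rarrow{b_{\mu,\lambda}}D_\lambda\rarrow{\varphi_\lambda}X\hookrightarrow\R^N$ is PL and each $\alpha_\lambda$ is a cellular subdivision. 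The hypotheses on $X$ supply the ambient PL data, while the regularity of $A$ will render the cylindrical combinatorics trivial.

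First I would record the cylindrical structure on $A$. Because every cell of $A$ is regular, each characteristic map $\varphi_\lambda:D_\lambda\to\overline{e_\lambda}$ is a homeomorphism; hence for a pair $e_\mu\subset\partial e_\lambda$ there is a \emph{unique} lift $b_{\mu,\lambda}=\varphi_\lambda^{-1}\circ\varphi_\mu:D_\mu\to D_\lambda$, and I set $P_{\mu,\lambda}$ to be a point. The commutativity and associativity diagrams of Definition \ref{cylindrical_structure_definition} then hold automatically, and the covering condition $\partial D_\lambda=\bigcup_{e_\mu\subset\partial e_\lambda}b_{\mu,\lambda}(\Int D_\mu)$ is exactly the statement that $\partial e_\lambda$ is a union of cells of $A$, which holds because $A$, as a cellular stratified subspace of the normal complex $X$, is itself normal. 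One-point parameter spaces are trivially locally cone-like, and each $b_{\mu,\lambda}$ is an embedding (as in Lemma \ref{b_is_an_embedding}, since $\varphi_\lambda$ is a homeomorphism).

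Next I would build the polyhedral replacements. The hypothesis makes every closed cell $\overline{e^X}$ of $X$ a subpolyhedron of $\R^N$, being the image of the PL map $\varphi\circ\alpha$. Using closure finiteness to reduce to finitely many such cells and cutting $\overline{e_\lambda}$ out inside the PL charts, one sees that $\overline{e_\lambda}$ is a subpolyhedron of $\R^N$; since $\varphi_\lambda$ is a homeomorphism, transporting a simplicial (barycentric) subdivision back along $\varphi_\lambda$ yields a polyhedral complex $\widetilde{F}_\lambda$ and a homeomorphism $\alpha_\lambda:\widetilde{F}_\lambda\to D^{\dim e_\lambda}$ with $\varphi_\lambda\circ\alpha_\lambda$ PL into $\R^N$. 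Refining so that the images $b_{\mu,\lambda}(\Int D_\mu)$ form a subcomplex makes $\alpha_\lambda$ a cellular subdivision. With $P_{\mu,\lambda}$ a point, the map to be checked PL collapses to $F_\mu\rarrow{\alpha_\mu}D_\mu\rarrow{b_{\mu,\lambda}}D_\lambda\to\R^N$, i.e.\ the inclusion $\overline{e_\mu}\hookrightarrow\overline{e_\lambda}$ read through the charts, which is PL because $\overline{e_\mu}$ is a subpolyhedron of $\overline{e_\lambda}$. Lemma \ref{Euclidean_and_PL_imply_locally_polyhedral} then delivers the polyhedral structure on $A$.

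The main obstacle is the geometric claim that each $\overline{e_\lambda}$ is genuinely a subpolyhedron of $\R^N$ and that the charts $\alpha_\lambda$ can be chosen \emph{coherently} across faces. I would handle this by downward induction on dimension, exactly as in the proof of Lemma \ref{total_normality_implies_local_polyhedrality}, arranging that $\alpha_\lambda$ restricts on $\partial D_\lambda$ to the previously chosen $\alpha_\mu$; closure finiteness keeps each step finite, and the existence of common refinements in the PL category is what makes the cellular-subdivision compatibility and the PL-ness of the boundary inclusions simultaneously achievable. Verifying this coherence, rather than the combinatorics, is where the real work lies.
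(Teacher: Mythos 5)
Your first two paragraphs are sound and agree with what the paper leaves implicit: $A$ inherits normality from $X$, regularity of the cells of $A$ forces unique lifts $b_{\mu,\lambda}=\varphi_{A,\lambda}^{-1}\circ\varphi_{A,\mu}$ with one-point parameter spaces, and the covering condition follows from normality. The gap is in your third paragraph, where you rebuild the polyhedral charts from the images in $\R^N$. You write ``since $\varphi_\lambda$ is a homeomorphism, transporting a simplicial (barycentric) subdivision back along $\varphi_\lambda$ yields \dots\ a homeomorphism $\alpha_\lambda:\widetilde{F}_\lambda\to D^{\dim e_\lambda}$.'' But only the cells \emph{of $A$} are regular, and their characteristic maps are homeomorphisms from $D_{A,\lambda}=\varphi_\lambda^{-1}(\overline{e_\lambda}\cap A)$ onto $\overline{e_\lambda}\cap A$; the ambient characteristic map $\varphi_\lambda:D^{\dim e_\lambda}\to\overline{e_\lambda}$ of the cell of $X$ is \emph{not} assumed regular. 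Concretely, take $X=S^n$ with its minimal cell decomposition and $A=e^n$ the open top cell: $A$ is a regular cellular stratified subspace, yet $\overline{e^n}=S^n$ is not even homeomorphic to a disk, so no triangulation of the image can be transported back to give a chart on $D^n$. Transporting along $\varphi_{A,\lambda}$ instead only produces a chart on $D_{A,\lambda}$, not on the full disk $D^{\dim e_\lambda}$ that Definition \ref{polyhedral_normality_definition} requires ($\overline{D_\lambda}=D^{\dim e_\lambda}$ in the cellular case); moreover $\overline{e_\lambda}\cap A$ is in general neither closed nor compact in $\R^N$, hence not a Euclidean polyhedral complex in the sense of Definition \ref{polyhedral_complex_definition}.

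The repair is exactly the step you bypassed, and it is the entire content of the paper's proof: do not re-triangulate at all. The hypothesis already provides, for each cell $e$ of $X$, a polyhedral complex $F$ and a homeomorphism $\alpha:F\to D^{\dim e}$ with $\varphi\circ\alpha$ PL into $\R^N$; take $(\widetilde{F}_\lambda,\alpha_\lambda)=(F,\alpha)$ unchanged as the polyhedral replacement of the same cell viewed in $A$, with $F_{A,\lambda}=\alpha^{-1}(D_{A,\lambda})$. Since the parameter spaces are points and $\varphi_\lambda\circ b_{\mu,\lambda}=\varphi_\mu$, every composite that must be checked PL collapses to a restriction of the given PL map $\varphi_\mu\circ\alpha_\mu:F_\mu\to\R^N$, so PL-ness is inherited rather than re-proved. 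In particular, the coherence-by-downward-induction of your final paragraph addresses a difficulty that does not arise: compatibility is measured in the ambient $\R^N$, not chart-by-chart, which is precisely what makes the paper's argument a few lines long. (Your side remark about refining so that the images $b_{\mu,\lambda}(\Int(D_\mu))$ form a subcomplex, so that $\alpha_\lambda$ becomes a cellular subdivision as Lemma \ref{Euclidean_and_PL_imply_locally_polyhedral} requires, is a legitimate point that the paper also glosses over; but it is a refinement of the given charts, not a reconstruction of them.)
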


\begin{proof}
 Let $A$ be a regular cellular stratified subspace of $X$. It suffices
 to find polyhedral replacements of cell structures for $A$. For an
 $n$-cell $e$ in $A$, let $\varphi : D^n\to X$ be the cell structure of
 $e$ in $X$. The cell structure for $e$ in $A$ is, by definition, given
 by
 \[
  \varphi_{A} = \varphi|_{D_A} : D_{A}=\varphi^{-1}(\overline{e}\cap A)
 \longrightarrow A.
 \]
 By assumption, there exists a polyhedral complex $P$ and a
 homeomorphism $\alpha : P\to D^n$ with
 $\varphi\circ\alpha : P\to X\hookrightarrow \R^{N}$ a PL map.
\end{proof}

\begin{example}
 The product cell decomposition of $(S^n)^k$ induced by the minimal cell
 decomposition on $S^n$ is polyhedral.
 In \cite{1009.1851v5}, a subdivision of the product cell decomposition
 containing $\Conf_k(S^n)$ as a stratified subspace
 was defined by using the stratification on $(\R^n)^{\ell}$  associated
 with the braid arrangement $\mathcal{A}_{\ell-1}$ for $1\le \ell\le k$.

 The author expected that the induced stratification on $\Conf_{k}(S^n)$
 to be totally normal. It turned out that the stratification is much
 more complicated than the author imagined.
\end{example}

We propose the following strategy to prove a given
cellular stratification on $X$ is cylindrically normal or totally
normal: 
\begin{enumerate}
 \item Embed $X$ into a cylindrically normal or totally normal cell
       complex $\widetilde{X}$.
 \item Find an appropriate subdivision of $\widetilde{X}$ which includes
       $X$ as a stratified subspace.
\end{enumerate}

In fact, it is easy to prove that, if $X$ is polyhedral, $X$ can
be embedded in a cell complex as a stratified subspace.

%

\begin{lemma}
 Let $X$ be a polyhedral cellular
 stratified space.   
 Given a pair of cells $e_{\mu}\subset\partial e_{\lambda}$, the
 structure map  
 \[
 b_{\mu,\lambda} : P_{\mu,\lambda}\times D_{\mu} \rarrow{}
 D_{\lambda}
 \]
 has a unique extension to the whole disks
 \[
 \overline{b_{\mu,\lambda}} : P_{\mu,\lambda} \times D^{\dim e_{\mu}}
 \longrightarrow D^{\dim e_{\lambda}}.
 \]
\end{lemma}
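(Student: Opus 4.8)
The plan is to transport the problem across the polyhedral replacements $\alpha_{\mu}$ and $\alpha_{\lambda}$ and then invoke the extension property of PL maps. Recall from Definition \ref{polyhedral_normality_definition} that $\alpha_{\mu}:\widetilde{F}_{\mu}\to D^{\dim e_{\mu}}$ and $\alpha_{\lambda}:\widetilde{F}_{\lambda}\to D^{\dim e_{\lambda}}$ are homeomorphisms from polyhedral complexes onto the closed disks, and that the composite
\[
\tilde{b}_{\mu,\lambda} : P_{\mu,\lambda}\times F_{\mu}
\rarrow{1\times\alpha_{\mu}} P_{\mu,\lambda}\times D_{\mu}
\rarrow{b_{\mu,\lambda}} D_{\lambda}
\rarrow{\alpha_{\lambda}^{-1}} F_{\lambda}
\]
is a PL map, where $F_{\mu}=\alpha_{\mu}^{-1}(D_{\mu})$ and $F_{\lambda}=\alpha_{\lambda}^{-1}(D_{\lambda})$. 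Since $\alpha_{\mu}$ and $\alpha_{\lambda}$ are homeomorphisms, constructing $\overline{b_{\mu,\lambda}}$ is equivalent to extending $\tilde{b}_{\mu,\lambda}$ from $P_{\mu,\lambda}\times F_{\mu}$ to all of $P_{\mu,\lambda}\times\widetilde{F}_{\mu}$ with values in $\widetilde{F}_{\lambda}$; the desired map is then $\overline{b_{\mu,\lambda}}=\alpha_{\lambda}\circ E\circ(1\times\alpha_{\mu}^{-1})$, where $E$ denotes the extension. One checks directly that this restricts to $b_{\mu,\lambda}$ on $P_{\mu,\lambda}\times D_{\mu}$.

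Uniqueness is the easy half. Because $D_{\mu}$ is a globular cell it contains $\Int(D^{\dim e_{\mu}})$, so $F_{\mu}=\alpha_{\mu}^{-1}(D_{\mu})$ contains the interior of the disk $\widetilde{F}_{\mu}$ and is therefore dense in $\widetilde{F}_{\mu}$. Consequently $P_{\mu,\lambda}\times\Int(D^{\dim e_{\mu}})$ is dense in $P_{\mu,\lambda}\times D^{\dim e_{\mu}}$, and since $D^{\dim e_{\lambda}}$ is Hausdorff, any two continuous extensions of $b_{\mu,\lambda}$ must agree. This yields uniqueness as soon as existence is established.

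For existence I would appeal to Lemma \ref{PL_map_is_extendalbe}, exactly as in the proof of Lemma \ref{Euclidean_and_PL_imply_locally_polyhedral}. The subcomplex $F_{\mu}$ of the polyhedral complex $\widetilde{F}_{\mu}$ contains the interior of the disk, so its closure in $\widetilde{F}_{\mu}$ is all of $\widetilde{F}_{\mu}$. Since $P_{\mu,\lambda}$ is locally cone-like, there is a triangulation of $P_{\mu,\lambda}\times F_{\mu}$ on which $\tilde{b}_{\mu,\lambda}$ is simplexwise affine, and this triangulation is the restriction of one on the closure $P_{\mu,\lambda}\times\widetilde{F}_{\mu}$. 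Each affine piece extends over its closed simplex, and the extensions glue to a single continuous PL map because they already agree on the dense set $P_{\mu,\lambda}\times F_{\mu}$. This is precisely the situation covered by Lemma \ref{PL_map_is_extendalbe}, which produces the required PL extension $E$ landing in $\widetilde{F}_{\lambda}$.

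The main obstacle is the existence step, specifically verifying that the simplexwise-affine description of $\tilde{b}_{\mu,\lambda}$ extends continuously across those boundary faces of $\widetilde{F}_{\mu}$ that are missing from $F_{\mu}$; this is exactly what Lemma \ref{PL_map_is_extendalbe} is designed to handle, using that $F_{\mu}$ contains the interior of the disk and that $P_{\mu,\lambda}$ is locally cone-like. The parameter factor $P_{\mu,\lambda}$ introduces no essential difficulty beyond bookkeeping, since once a continuous extension exists it is forced to be the unique one by the density argument above.
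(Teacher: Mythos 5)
Your proposal is correct and takes essentially the same route as the paper, whose entire proof consists of the citation ``By Lemma \ref{PL_map_is_extendalbe}.'' You simply make explicit what that citation compresses: transporting across the polyhedral replacements $\alpha_{\mu}$, $\alpha_{\lambda}$ to reduce to extending the PL map $\tilde{b}_{\mu,\lambda}$, and supplying the density-plus-Hausdorff argument for uniqueness, which the paper leaves implicit.
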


\begin{proof}
 By Lemma \ref{PL_map_is_extendalbe}.
\end{proof}

These maps allow us to construct a canonical closure of any
polyhedral cellular stratified space.

\begin{definition}
 \label{cellular_closure_definition}
 Let $X$ be a polyhedral cellular stratified space with  
 cells $\{e_{\lambda}\}_{\lambda\in P(X)}$. Define a
 cell complex $U(X)$ by
 \[
  U(X) = \left.\left(\coprod_{\lambda\in P(X)} D^{\dim
 e_{\lambda}}\right)\right/_{\displaystyle \sim}
 \]
 where the relation $\sim$ is the equivalence relation generated by the 
 following relation: For
 $x\in D^{\dim e_{\lambda}}$ and $y\in D^{\dim\mu}$, $x\sim y$ if
 $e_{\mu}\subset \partial e_{\lambda}$ and and there exists
 $z\in P_{\mu,\lambda}$ such that $\overline{b_{\mu,\lambda}}(z,y)=x$.

 There is a canonical inclusion
 \[
  i : X \hookrightarrow U(X).
 \]
 This space $U(X)$ is called the \emph{cellular closure} of $X$.
\end{definition}

By definition, we have the following.

\begin{lemma}
 \label{cellular_closure}
 When $X$ is an polyhedral cellular
 stratified space, $U(X)$  
 is a cylindrically normal CW complex containing $X$ as a cellular
 stratified subspace. 
\end{lemma}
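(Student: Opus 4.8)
The plan is to produce, in order, (i) a CW structure on $U(X)$ whose open cells are indexed by $P(X)$, (ii) a cylindrical structure on that CW complex assembled from the extended gluing maps $\overline{b_{\mu,\lambda}}$, and (iii) the verification that the canonical inclusion $i : X \hookrightarrow U(X)$ is a strict cellular stratified subspace. First I would name, for each $\lambda\in P(X)$, the composite $\hat{\varphi}_{\lambda} : D^{\dim e_{\lambda}}\hookrightarrow \coprod_{\mu} D^{\dim e_{\mu}}\to U(X)$ and set $\hat{e}_{\lambda}=\hat{\varphi}_{\lambda}(\Int D^{\dim e_{\lambda}})$. Every generating identification $x\sim y$ sends a point $x=\overline{b_{\mu,\lambda}}(z,y)\in\partial D_{\lambda}\subseteq\partial D^{\dim e_{\lambda}}$ of a boundary sphere to a point $y$ of a cell $e_{\mu}\subsetneq e_{\lambda}$; using that the interior restrictions $b_{\mu,\lambda}|_{P_{\mu,\lambda}\times\Int D_{\mu}}$ are homeomorphisms onto disjoint pieces of $\partial D_{\lambda}$ (the first and third conditions of Definition \ref{cylindrical_structure_definition}), one checks that each class of $\sim$ meets $\coprod_{\lambda}\Int D^{\dim e_{\lambda}}$ in exactly one point. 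Hence $\hat{\varphi}_{\lambda}$ restricts to a homeomorphism of $\Int D^{\dim e_{\lambda}}$ onto $\hat{e}_{\lambda}$, and since for a cellular domain $\Int D_{\lambda}=\Int D^{\dim e_{\lambda}}$, these open cells match the open cells $e_{\lambda}$ of $X$ exactly.

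Next I would establish the attaching condition $\hat{\varphi}_{\lambda}(\partial D^{\dim e_{\lambda}})\subseteq\bigcup_{\dim e_{\mu}<\dim e_{\lambda}}\hat{e}_{\mu}$. The normal stratification on $\partial D^{\dim e_{\lambda}}$ furnished by the cylindrical structure has its strata parametrized by the maps $\overline{b_{\mu,\lambda}}$, and the covering condition of Definition \ref{cylindrical_structure_definition}, extended from $\partial D_{\lambda}$ to the full sphere by the PL extension of the preceding lemma, shows that the images $\overline{b_{\mu,\lambda}}(P_{\mu,\lambda}\times\Int D^{\dim e_{\mu}})$ cover all of $\partial D^{\dim e_{\lambda}}$; each such point is identified under $\sim$ with a point of $\Int D^{\dim e_{\mu}}$, i.e.\ lands in $\hat{e}_{\mu}$. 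Closure finiteness of $U(X)$ then follows from that of $X$, since only finitely many $e_{\mu}\subset\partial e_{\lambda}$ occur, and the weak topology is immediate from the quotient definition; together with Hausdorffness (inherited from the closed-disk domains and closure finiteness) this makes $U(X)$ a normal CW complex.

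Then I would equip $U(X)$ with a cylindrical structure reusing the same parameter spaces $P_{\mu,\lambda}$, the same composition maps $c_{\lambda_0,\lambda_1,\lambda_2}$, the already-fixed boundary stratifications, and the extended structure maps $\overline{b_{\mu,\lambda}}$. The three compatibility diagrams of Definition \ref{cylindrical_structure_definition} hold on the dense subsets $P_{\mu,\lambda}\times\Int D_{\mu}$, where they reduce to the diagrams already valid in $X$, so by the uniqueness of PL extensions (Lemma \ref{PL_map_is_extendalbe}) they hold on all of $P_{\mu,\lambda}\times D^{\dim e_{\mu}}$. Because every cell of $U(X)$ is now closed, each $\overline{b_{\mu,\lambda}}$ is an embedding, so $U(X)$ is in fact strictly cylindrically normal, and the covering condition is exactly the boundary covering established in the previous step. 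Finally, $i$ is the identity on underlying points of $X=\bigcup_{\lambda}e_{\lambda}$, is strict with $\underline{i}=\id_{P(X)}$ and $\pi_{U(X)}\circ i=\pi_X$, and restricts on each cell to the inclusion $D_{\lambda}\hookrightarrow D^{\dim e_{\lambda}}$ intertwining $\varphi_{\lambda}$ with $\hat{\varphi}_{\lambda}$, which exhibits $X$ as a cellular stratified subspace.

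I expect the main obstacle to be the third step: transporting the associativity and compatibility diagrams of the cylindrical structure from the PL-dense locus $P\times\Int D$ to the closed disks. Everything there hinges on the uniqueness clause of Lemma \ref{PL_map_is_extendalbe}, and care is needed to ensure the extended maps assemble consistently, so that $\sim$ is genuinely an equivalence relation with the advertised classes rather than a mere agreement on interiors. The second delicate point is the passage from the covering of $\partial D_{\lambda}$, guaranteed directly by the definition, to a covering of the full boundary sphere $\partial D^{\dim e_{\lambda}}$ by the extended maps; this is where the regularity of the boundary cell decomposition and the PL extension must be combined carefully.
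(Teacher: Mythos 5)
The paper offers no argument for this lemma at all --- it is stated as following ``by definition'' from Definition \ref{cellular_closure_definition} and the extension lemma preceding it --- so your instinct to actually verify the claims is sound, and your overall architecture (open cells of $U(X)$ are the interiors of the glued disks, the cylindrical structure is assembled from the $\overline{b_{\mu,\lambda}}$, and the compatibility diagrams are transported from $P_{\mu,\lambda}\times\Int(D_{\mu})$ to the closed disks via the uniqueness clause of Lemma \ref{PL_map_is_extendalbe}) is the natural unwinding of the construction. However, there is a genuine gap at exactly the step you flag as delicate: the claim that the images $\overline{b_{\mu,\lambda}}(P_{\mu,\lambda}\times\Int(D^{\dim e_{\mu}}))$ cover all of $\partial D^{\dim e_{\lambda}}$ does not follow from the covering condition of Definition \ref{cylindrical_structure_definition} together with Lemma \ref{PL_map_is_extendalbe}. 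That lemma extends each $b_{\mu,\lambda}$ individually; it says nothing about what the union of the extended images is. The covering condition concerns only $\partial D_{\lambda}$, and $\partial D_{\lambda}$ need not be dense in the boundary sphere: for $X=\Int(D^2)\cup\{(1,0)\}$ (Example \ref{n_and_0}), which is cylindrically normal, CW, and admits the polyhedral replacements required by Definition \ref{polyhedral_normality_definition}, the boundary $\partial D_{\lambda}$ is the single point $(1,0)$, the extensions add nothing, and $U(X)\cong D^2$ carries only the two ``cells'' $\Int(D^2)$ and $\{(1,0)\}$, which miss $S^1\setminus\{(1,0)\}$ entirely. So the ``each class of $\sim$ meets the union of interiors in exactly one point'' assertion of your first step already fails --- existence, not uniqueness, is the problem. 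To close the gap you must (a) isolate the hypothesis that $\partial D_{\lambda}$ is dense in $\partial D^{\dim e_{\lambda}}$ (this is where the regular decomposition of the sphere and the polyhedral replacements have to be genuinely used, and it is arguably an implicit assumption of the paper), and (b) show that every point of $\overline{\partial D_{\lambda}}\setminus\partial D_{\lambda}$ lies in the image of some extension $\overline{b_{\mu,\lambda}}$, not merely in the closure of the union of the images; this requires a limit argument (closure finiteness to fix $\mu$, plus compactness of $P_{\mu,\lambda}$ or closedness of the extended images), which you never supply.

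A secondary error: ``because every cell of $U(X)$ is now closed, each $\overline{b_{\mu,\lambda}}$ is an embedding, so $U(X)$ is in fact strictly cylindrically normal'' is false. Closedness of cells does not force injectivity of the structure maps: in Example \ref{CP^2} all cells of $\CP^2$ are closed, yet the restriction of $b_{2,4}$ to $S^1\times S^1$ is $(w,z)\mapsto (zw,0)$, which factors through the group multiplication of $S^1$ and is far from injective. Fortunately the lemma claims only cylindrical normality, so you should simply delete the strictness assertion; the rest of your third step (same parameter spaces, same compositions, diagrams transported by uniqueness of PL extensions) is fine once the covering issue above is repaired.
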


\begin{example}
 In the case of the punctured torus in Example \ref{punctured_torus},
 $U(X)$ is obtained by gluing parallel edges in $I^2$ and is homeomorphic
 to $T^2$. In other words, $U(X)$ is obtained by closing the hole in $X$.
\end{example}

The existence of cellular closure implies that cell structures of
a polyhedral cellular stratified space are
bi-quotient\footnote{Definition \ref{bi-quotient_definition}.}.

\begin{corollary}
 \label{characteristic_map_of_locally_polyhedral}
 Let $X$ be a polyhedral cellular stratified space. Then any
 cell structure
 $\varphi_{\lambda} : D_{\lambda}\to \overline{e_{\lambda}}$ is
 bi-quotient. 
\end{corollary}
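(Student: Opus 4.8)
The plan is to reduce the bi-quotient property to the one genuinely compact map available, namely the closed characteristic map of $e_\lambda$ inside the cellular closure $U(X)$ furnished by Lemma \ref{cellular_closure}. That lemma exhibits $X$ as a cellular stratified subspace of the cylindrically normal CW complex $U(X)$, and by the construction in Definition \ref{cellular_closure_definition} the cell $e_\lambda$ is a genuine (closed) cell of $U(X)$ whose characteristic map is the extension $\overline{\varphi_\lambda} : D^{\dim e_\lambda} \to U(X)$ of $\varphi_\lambda$ to the whole closed disk. Since $D^{\dim e_\lambda}$ is compact and $U(X)$ is Hausdorff, $\overline{\varphi_\lambda}$ is a closed map with compact fibres; hence, as recorded among the properties of quotient maps in Appendix \ref{quotient_map}, its corestriction $\overline{\varphi_\lambda} : D^{\dim e_\lambda} \twoheadrightarrow \overline{e_\lambda}^{U(X)}$ onto its image is bi-quotient (a continuous surjection from a compact space onto a Hausdorff space is bi-quotient).

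Next I would present $\varphi_\lambda$ as a base change of $\overline{\varphi_\lambda}$ along the inclusion of a subspace of the target. Writing $\overline{e_\lambda}^X$ and $\overline{e_\lambda}^{U(X)}$ for the closures of $e_\lambda$ in $X$ and in $U(X)$, the subspace relation gives $\overline{e_\lambda}^X = \overline{e_\lambda}^{U(X)} \cap X$. As $\overline{\varphi_\lambda}$ already maps into $\overline{e_\lambda}^{U(X)}$, we obtain
\[
 \overline{\varphi_\lambda}^{-1}\!\left(\overline{e_\lambda}^X\right)
 = \overline{\varphi_\lambda}^{-1}(X),
\]
and by the way $X \hookrightarrow U(X)$ is glued (with $\overline{\varphi_\lambda}$ restricting to $\varphi_\lambda$ on $D_\lambda$) this preimage is exactly the domain $D_\lambda$: the points of $\partial D^{\dim e_\lambda}\setminus\partial D_\lambda$ are precisely those carried into the added part $U(X)\setminus X$. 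Thus $\varphi_\lambda : D_\lambda \to \overline{e_\lambda}^X$ is literally the restriction of $\overline{\varphi_\lambda}$ over the subspace $\overline{e_\lambda}^X \subseteq \overline{e_\lambda}^{U(X)}$.

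Finally I would invoke the pullback-stability of bi-quotient maps: if $f : E \to B$ is bi-quotient and $A \subseteq B$ is any subspace, then $f|_{f^{-1}(A)} : f^{-1}(A) \to A$ is again bi-quotient — this is the characteristic virtue of bi-quotient maps and is one of the facts collected in Appendix \ref{quotient_map}. Applying it to $f = \overline{\varphi_\lambda}$ with $A = \overline{e_\lambda}^X$ shows that $\varphi_\lambda$ is bi-quotient, which is the assertion.

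The main obstacle — and the only point where more than formal manipulation is used — is the identification $\overline{\varphi_\lambda}^{-1}(X) = D_\lambda$, i.e.\ that the extended characteristic map sends the entire filled-in portion of the boundary disk outside $X$. This is exactly where one must appeal to how the cellular closure is assembled, and where polyhedrality is indispensable, since it is polyhedrality that guarantees the extensions $\overline{b_{\mu,\lambda}}$, and hence $\overline{\varphi_\lambda}$ and $U(X)$, exist at all (via Lemma \ref{PL_map_is_extendalbe}); once this identification is in hand, the conclusion is a direct application of the standard stability properties of bi-quotient maps.
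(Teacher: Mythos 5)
Your strategy coincides with the paper's own proof up to the final step: both pass to the cellular closure $U(X)$ of Lemma \ref{cellular_closure}, extend $\varphi_{\lambda}$ to $\widetilde{\varphi_{\lambda}} : D^{\dim e_{\lambda}} \to U(X)$, and note that this extension is proper (compact domain, Hausdorff CW target), hence bi-quotient by Lemma \ref{sufficient_conditions_for_bi-quotient}. The divergence --- and the one genuine gap as written --- is how bi-quotientness is transferred to the restriction $\varphi_{\lambda}$. You invoke the statement that a bi-quotient map $f$ restricts to a bi-quotient map $f|_{f^{-1}(A)} : f^{-1}(A) \to A$ over an \emph{arbitrary} subspace $A$ of the target, and you assert that this is ``one of the facts collected in Appendix \ref{quotient_map}.'' It is not: the appendix records restriction stability only for \emph{hereditarily quotient} maps (Lemma \ref{hereditarily_quotient_map_can_be_restricted}); for bi-quotient maps it records sufficient conditions (Lemma \ref{sufficient_conditions_for_bi-quotient}), product stability, and the Lindel\"{o}f-fiber criterion, but no restriction lemma.

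The fact you need is nevertheless true (bi-quotient maps are hereditarily bi-quotient, a result going back to Michael), and it follows in two lines from Definition \ref{bi-quotient_definition}: given $y \in A$ and a cover of $f^{-1}(y)$ by sets $U_{\alpha}\cap f^{-1}(A)$ with $U_{\alpha}$ open in the domain, apply bi-quotientness of $f$ to $\{U_{\alpha}\}$ and intersect the resulting neighborhood of $y$ with $A$, using the identity $f\left(U\cap f^{-1}(A)\right) = f(U)\cap A$. So your proof is complete once you supply this lemma yourself rather than cite it. The paper instead stays inside its recorded toolkit: it restricts only the \emph{hereditarily quotient} property of $\widetilde{\varphi_{\lambda}}$ (via Lemma \ref{biquotient_is_quotient} and Lemma \ref{hereditarily_quotient_map_can_be_restricted}), and then upgrades $\varphi_{\lambda}$ back to bi-quotient by condition 3 of Lemma \ref{sufficient_conditions_for_bi-quotient}, identifying each fiber $\varphi_{\lambda}^{-1}(y)$ with a parameter space (proof of Lemma \ref{compact_parameter_space}), whose boundary is compact. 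Your route is shorter and avoids this fiber analysis entirely; the paper's route avoids any unrecorded general topology but uses cylindrical normality a second time. Both arguments rest on the same implicit identification $\widetilde{\varphi_{\lambda}}^{-1}\left(\overline{e_{\lambda}}\right) = D_{\lambda}$, which you rightly single out as the place where the construction of $U(X)$ actually enters; the paper glosses over it.
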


\begin{proof}
 For an $n$-cell
 $\varphi_{\lambda} : D_{\lambda}\to \overline{e_{\lambda}}$ in $X$, let
 $\widetilde{\varphi_{\lambda}} : D^n \to U(X)$ the extension. Since
 $\widetilde{\varphi_{\lambda}}$ is proper, it is bi-quotient and hence
 is hereditarily quotient. By Lemma
 \ref{hereditarily_quotient_map_can_be_restricted}, $\varphi_{\lambda}$
 is also hereditarily quotient.

 For $y \in \overline{e_{\lambda}}\subset X$, the fiber
 $\varphi_{\lambda}^{-1}(y)$ can be identified with one of parameter
 spaces by the proof of Lemma \ref{compact_parameter_space},
 which is a cellular stratified subspace of a regular cell decomposition
 of $\partial D^n$. Thus the boundary
 $\partial\varphi_{\lambda}^{-1}(y)$ is compact. The result follows from
 3 in Lemma \ref{sufficient_conditions_for_bi-quotient}.
\end{proof}

\begin{corollary}
 Any polyhedral cellular stratified space is paracompact.
\end{corollary}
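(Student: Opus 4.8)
The plan is to reduce the statement to the cellular closure $U(X)$ of Definition~\ref{cellular_closure_definition}, where paracompactness is already within reach, and then transfer the property back to the subspace $X$. First I would invoke Lemma~\ref{cellular_closure}: $U(X)$ is a cylindrically normal CW complex containing $X$ as a cellular stratified subspace. Every cell of $U(X)$ is the image of a \emph{full} closed disk $D^{\dim e_{\lambda}}$, hence is relatively compact, so Lemma~\ref{compact_parameter_space} forces all parameter spaces of $U(X)$ to be compact. The preceding Corollary, which asserts that a CW cylindrically normal stratified space with compact parameter spaces is paracompact, then applies to $U(X)$ and yields that $U(X)$ is paracompact; alternatively one may simply use the classical fact that every CW complex is paracompact.

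The remaining, and essential, step is to descend from the paracompactness of $U(X)$ to that of $X$. The shortest route is to note that CW complexes are hereditarily paracompact, so that every subspace of $U(X)$, and in particular $X$, is paracompact. If one prefers a more explicit argument, I would instead check that $X$ is \emph{open} in $U(X)$: the cells of $U(X)$ that do not lie in $X$ are exactly the strata of the boundary spheres $\partial D^{\dim e_{\lambda}}$ lying outside the domains $D_{\lambda}$, and one verifies that these assemble into a subcomplex of $U(X)$, so $X$ is the complement of a closed set and is therefore open. Since a CW complex is perfectly normal, the open set $X$ is then an $F_{\sigma}$ in $U(X)$, and an $F_{\sigma}$ subset of a paracompact Hausdorff space is paracompact; hence $X$ is paracompact.

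The main obstacle is concentrated in this transfer step. On the hereditary route the content is the standard but nontrivial point-set fact that CW complexes are hereditarily paracompact; on the open-subspace route the delicate point is confirming that the cells of $U(X)$ outside $X$ genuinely form a subcomplex, i.e.\ that no such boundary cell has a cell of $X$ in its closure. This is exactly where I would exploit that $X$ is a strict cellular stratified subspace of $U(X)$ together with the closure-finiteness of the CW structure, checking the claim skeleton by skeleton. The bi-quotientness of the characteristic maps from Corollary~\ref{characteristic_map_of_locally_polyhedral} is not logically required here, but it provides independent reassurance that each cell of $X$ sits inside $U(X)$ in a controlled way, which is convenient when verifying the subcomplex condition.
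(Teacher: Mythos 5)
The paper states this corollary without proof, but its placement immediately after the construction of the cellular closure $U(X)$ makes clear that your primary route is the intended one: by Lemma \ref{cellular_closure}, $X$ sits inside the CW complex $U(X)$ as a (cellular stratified, in particular topological) subspace; $U(X)$ is paracompact either by the classical theorem on CW complexes or, as you observe, because all its cells are closed, hence relatively compact, so Lemma \ref{compact_parameter_space} makes all parameter spaces compact and the earlier corollary applies; and CW complexes are hereditarily paracompact (they are perfectly normal, so every open subspace is $F_{\sigma}$ and hence paracompact by Michael's theorem, and hereditary paracompactness reduces to the open case). That argument is correct and complete as it stands, and the bi-quotient property of Corollary \ref{characteristic_map_of_locally_polyhedral} is indeed not needed for it.

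Your ``more explicit'' alternative, however, contains a genuine error: $X$ is in general \emph{not} open in $U(X)$, and the cells of $U(X)$ outside $X$ do \emph{not} form a subcomplex. The paper's own Example \ref{n_and_0} is a counterexample: $X=\Int(D^n)\cup\{(1,0)\}$ is CW and totally normal, hence polyhedral by Lemma \ref{total_normality_implies_local_polyhedrality}, and its cellular closure is $U(X)=D^n$, with boundary cells given by a regular cell decomposition of $S^{n-1}$ having $(1,0)$ as a vertex. The cells of $U(X)$ not lying in $X$ are exactly the cells of $S^{n-1}$ other than $\{(1,0)\}$; their union is $S^{n-1}\setminus\{(1,0)\}$, whose closure contains the $0$-cell $\{(1,0)\}\in X$. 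So $U(X)\setminus X$ is not closed and $X$ is not open in $U(X)$: every neighborhood of $(1,0)$ in $D^n$ meets $S^{n-1}\setminus\{(1,0)\}$. This failure is not an edge case to be excluded by a skeleton-by-skeleton check; it occurs precisely when some domain $D_{\lambda}$ meets the boundary sphere in a proper, non-open subset, which is the typical situation the corollary is meant to cover (the punctured torus, where $X$ happens to be open in $U(X)=T^2$, is the exception rather than the rule). So that route cannot be repaired, and you should rely on the hereditary-paracompactness argument: given any subspace $A\subset U(X)$ and an open cover of $A$, extend it to an open cover of an open set $V\supset A$, note $V$ is $F_{\sigma}$ in the perfectly normal space $U(X)$ and hence paracompact, refine there, and restrict back to $A$.
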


\subsection{Examples of Cylindrically Normal Cellular Stratified Spaces}
\label{cylindrical_examples}

Here is a collection of examples and nonexamples of cylindrically normal
cellular stratified spaces.

\begin{example}
 \label{totally_normal_implies_cylindrically_regular}
 A stellar stratified space $X$ is totally normal if and
 only if it is strictly cylindrically normal and each parameter space
 $P_{\mu,\lambda}$ is a finite set (with discrete topology).  

 Consider the cell decomposition of $D^3$ in Example \ref{PLCW}. It is
 easily seen to be cylindrically normal with 
 finite parameter spaces. However, it is not strictly cylindrical, as we
 have seen in Example \ref{PLCW}. In other words, a cylindrically
 normal cell complex with finite parameter spaces is a PLCW complex, if
 it satisfies the PL requirement in the definition of PLCW complexes.
\end{example}

\begin{example}
 \label{sphere_is_cylindrically_regular}
 Consider the minimal cell decomposition of $S^2=\CP^1$ in Example
 \ref{minimal_cell_decomposition_of_S2}.
 The trivial stratification on $\partial D^2$ and the canonical
 inclusion 
 \[
  b_{0,2} : S^1\times D^0 \longrightarrow \partial D^2\subset D^2
 \]
 define a cylindrical structure on $S^2$ with $P_{0,2}=S^1$, for we have
 a commutative diagram
 \[
  \begin{diagram}
   \node{D^2} \arrow{e,t}{\varphi_2} \node{\CP^1} \\
   \node{S^1\times D^0} \arrow{e,b}{\pr_2} \arrow{n,l}{b_{0,2}}
   \node{D^0.} \arrow{n,r}{\varphi_0} 
  \end{diagram}
 \]

 We may also consider $S^2$ as a stellar stratified space, by choosing a
 regular cell decomposition of $\partial D^2$. For example, we may use
 the minimal regular cell decomposition of $\partial D^2$.

 Then we have embeddings
 \begin{eqnarray*}
  b_{0,+} & : & D^0\times D^0 \longrightarrow \partial D^2 \\
  b_{0,-} & : & D^0\times D^0 \longrightarrow \partial D^2 \\
  b_{1,+} & : & D^0\times D^1 \longrightarrow \partial D^2 \\
  b_{1,-} & : & D^0\times D^1 \longrightarrow \partial D^2
 \end{eqnarray*}
 corresponding to cells $e^0_+\cup e^0_{-}\cup e^1_+\cup e^1_{-}$ in
 $\partial D^2$. And we obtain a cylindrical structure as a stellar
 stratified space.
\end{example}

\begin{example}
 \label{CP^2}
 Let us extend the cylindrically normal cell decomposition on
 $S^2=\CP^1$ to $\CP^2$. The minimal cell decomposition of $\CP^2$ is
 given by
 \[
  \CP^2 = S^2\cup e^4 = e^0 \cup e^2\cup e^4.
 \]
 Consider the cell structure map of the $4$-cell
 \[
  \varphi_4=\tilde{\eta} : D^4 \longrightarrow \CP^2
 \]
 whose restriction to the boundary is the Hopf map
 \[
  \eta : S^3 \longrightarrow S^2.
 \]
 This is a fiber bundle with fiber $S^1$ and thus the cell decomposition
 $S^2=e^0\cup e^2$ induces a decomposition
 \[
  S^3 \cong e^0\times S^1 \cup e^2\times S^1,
 \]
 as we have seen in Example \ref{stratification_on_Hopf_bundle}.

 Let
 \begin{eqnarray*}
  \varphi_0 : D^0 \to \overline{e^0} \subset S^2 \\
  \varphi_2 : D^2 \to \overline{e^2} \subset S^2
 \end{eqnarray*}
 be the cell structure maps of $e^0$ and $e^2$, respectively. We have a
 trivialization 
 \[
 t : \varphi_2^*(S^3) \rarrow{\cong} D^2\times S^1.
 \]
 Let
 $b_{2,4} : S^1\times D^2 \to S^3=\partial D^4$ be the composition
 \[
 S^1\times D^2 \rarrow{t^{-1}} \varphi_2^*(S^3)
 \rarrow{\tilde{\varphi}_2} S^3, 
 \]
 then we have the following commutative diagram
 \[
  \begin{diagram}
   \node{S^3} \arrow{e,=} \node{S^3} \arrow{e,t}{\eta} \node{\CP^1} \\
   \node{} \node{\varphi_2^*(S^3)} \arrow{n,l}{\tilde{\varphi}_2}
   \arrow{e} \node{D^2} 
   \arrow{n,r}{\varphi_2} \\ 
   \node{S^1\times D^2} \arrow[2]{n,l}{b_{2,4}} \arrow{ne,t}{t^{-1}}
   \arrow[2]{e,b}{\pr_2} \node{} \node{D^2.} \arrow{n,=}
  \end{diagram}
 \]
 Let $b_{0,4} : S^1\times D^0 \to S^3$ be the inclusion of the fiber
 over $e^0$. Then we have
 \[
  \partial D^4 = b_{0,4}(S^1\times D^0) \cup b_{2,4}(S^1\times D^2).
 \]
 Let $P_{0,2} = P_{2,4} = P_{0,4}=S^1$ and define
 \[
  c_{0,2,4} : P_{2,4}\times P_{0,2} \to P_{0,4}
 \]
 by the multiplication of $S^1$.

 Let us check that the above data define a cylindrical structure on
 $\CP^2=e^0\cup e^2\cup e^4$. It remains to verify the commutativity of
 the diagram
 \[
  \begin{diagram}
   \node{P_{2,4}\times P_{0,2}\times D^0} \arrow{e,t}{1\times b_{0,2}}
   \arrow{s,l}{c\times 1}
   \node{P_{2,4}\times D^2} \arrow{s,r}{b_{2,4}} \\
   \node{P_{0,4}\times D^0} \arrow{e,b}{b_{0,4}} \node{D^4.}
  \end{diagram}
 \]
 In other words, we need to show the restriction of $b_{2,4}$ to
 \[
  b_{2,4}|_{S^1\times S^1} : S^1\times S^1 \rarrow{} S^3=\partial D^4
 \]
 is given by the multiplication of $S^1$ followed by the inclusion of
 the fiber $\eta^{-1}(e^0)$. Recall that the Hopf map $\eta$ is given by
 \[
  \eta(z_1,z_2) = (2|z_1|^2-1,2z_1\bar{z}_2),
 \]
 where we regard
 \begin{eqnarray*}
  S^2 & = & \{(x,z)\in\R\times\bbC \mid x^2+|z|^2=1\} \\
  S^3 & = & \{(z_1,z_2)\in\bbC^2 \mid |z_1|^2+|z_2|^2=1\},
 \end{eqnarray*}
 and, on $U_+=S^2-\{(1,0)\}$, the local trivialization
 \[
 \varphi_{+} : \eta^{-1}(U_+) \longrightarrow U_{+}\times S^1
 \]
 is given by
 \[
 \varphi_{+}(z_1,z_2) = \left(2|z_1|^2-1,2z_1\bar{z}_2,
 \frac{z_2}{|z_2|}\right). 
 \]
 The inverse of $\varphi_{+}$ is given by
 \[
  \varphi_{+}^{-1}(x,z,w) = \left(\frac{zw}{2\sqrt{\frac{1-x}{2}}},
 w\sqrt{\frac{1-x}{2}}\right). 
 \]
 Define
 \[
 \mathrm{wrap} : D^2 \longrightarrow S^2
 \]
 by
 \[
  \mathrm{wrap}(z) = \left(2|z|-1,2\sqrt{|z|(1-|z|)}\frac{z}{|z|}\right);
 \]
 then the restriction of $\mathrm{wrap}$ to $\Int(D^2)$ is a
 homeomorphism onto $S^2\setminus\{(1,0)\}$. The map $b_{2,4}$ is
 defined by the composition
 \[
  S^1\times\Int(D^2) \rarrow{1\times\mathrm{wrap}}
 S^1\times U_+ \cong  U_+\times S^1 \rarrow{\varphi_+^{-1}}
 \eta^{-1}(U_+) \hookrightarrow S^3, 
 \]
 which is given by
 \begin{eqnarray*}
  b_{2,4}(w,z) & = &
   \varphi_{+}^{-1}\left(2|z|-1,2\sqrt{|z|(1-|z|)}\frac{z}{|z|},
		    w\right) \\ 
  & = &
   \left(\frac{2\sqrt{|z|(1-|z|)}\frac{z}{|z|}w}{2\sqrt{\frac{1-(2|z|-1)}{2}}}, 
	 w\sqrt{\frac{1-(2|z|-1)}{2}}\right) \\
  & = & \left(\frac{zw}{\sqrt{|z|}}, w\sqrt{1-|z|}\right).
 \end{eqnarray*}
 From this calculation, we see $b_{2,4}(w,z)\to (zw,0)$ as $|z|\to 1$.

 Thus this is a cylindrically normal cellular stratification.
\end{example}

\begin{example}
 \label{moment_angle_complex}
 There is an alternative way of describing the cylindrical structure in
 the above Example.
 Recall that complex projective spaces are typical examples of
 quasitoric manifolds. Define an action of $T^n=(S^1)^n$ on $\CP^n$ by
 \[
  (t_1,\ldots,t_n)\cdot [z_0,\ldots,z_n] = [z_0,t_1z_1,\ldots,t_nz_n].
 \]
 As we have seen in Example
 \ref{stratification_by_group_action}, this action 
 induces a stratifications on $\CP^n$ which descends to  $\CP^n/T^n$
 \begin{eqnarray*}
  \pi_{\CP^n} & : & \CP^n \longrightarrow I(T^n) \\
  \pi_{\CP^n/T^n} & : & \CP^n/T^n \longrightarrow I(T^n).
 \end{eqnarray*}
 The quotient space $\CP^n/T^n$ is known to be homeomorphic to
 $\Delta^n$ and the stratification $\pi_{\CP^n/T^n}$ can be identified
 with the stratification $\pi_n$ on $\Delta^n$ in Example
 \ref{stratifications_on_simplex}. This stratification, however, does
 not induce the minimal cell decomposition of $\CP^n$.
 The other stratification $\pi_n^{\max}$ on $\Delta^n$ defined in
 Example \ref{stratifications_on_simplex} induces the minimal cell
 decomposition on $\CP^n$ by the composition
 \[
  \CP^n \rarrow{p} \CP^n/T^n \cong \Delta^n \rarrow{\pi_n^{\max}} [n]. 
 \]

 Let us show that this cell decomposition is cylindrically normal. To
 this end, we first rewrite $\CP^n$ by using the construction introduced
 in \cite{Davis-Januszkiewicz91} by Davis and Januszkiewicz. Given a
 simple polytope\footnote{A $d$-dimensional convex polytope is said to
 be \emph{simple} if each vertex is adjacent to exactly $d$ edges.}
 $P$ of dimension $n$ and a function
 $\lambda: \{\text{codimension-1 faces in }P\}\to \bbC^n$ satisfying
 certain conditions, they
 constructed a space $M(\lambda)$ with $T^n$-action. Suppose
 $P=\Delta^n$ and define
 \[
  \lambda_n(C_i) = \begin{cases}
		  (1,\cdots,1), & i=0 \\
		  (\underbrace{0,\cdots,0}_{i-1},1,
		  \underbrace{0,\cdots,0}_{n-i}), & i=1,\cdots,n, 
		 \end{cases}
 \]
 where $C_i$ is the codimension-$1$ face with vertices in $[n]-\{i\}$. 
 In this case, $M(\lambda_n)$ can be described as
 \[
  M(\lambda_n) = (T^n\times \Delta^n)/_{\sim},
 \]
 where the equivalence relation $\sim$ is generated by the
 following relations: Let $p=(p_0,\ldots,p_n)\in \Delta^n$.
 \begin{itemize}
  \item When $p_i=0$ for $1\le i\le n$, 
	\[
	(t_1,\ldots,t_i,\ldots,t_n;p_0,\ldots,p_n) \sim
	(t_1,\ldots,t'_i,\ldots,t_n;p_0,\ldots,p_n) 
	\]
	for any $t_i,t_i'\in S^1$.
  \item When $p_0=0$,
	\[
	 (t_1,\ldots,t_n;0,p_1,\ldots,p_n) \sim (\omega
	t_1,\ldots,\omega t_n;0,p_1,\ldots,p_n)
	\]
	for any $\omega\in S^1$.
 \end{itemize}
 An explicit homeomorphism $p_n : \CP^n \to M(\lambda_n)$ and its
 inverse $q_n$ are given by
 \begin{eqnarray*}
  p_n([z_0:\ldots:z_n]) & = & \begin{cases}
			     \left[\frac{z_1/z_0}{|z_1/z_0|}, \ldots,
			       \frac{z_n/z_0}{|z_n/z_0|};
			       \frac{|z_0|^2}{\sum_{i=0}^n|z_i|^2},
			       \ldots,
			       \frac{|z_n|^2}{\sum_{i=0}^n|z_i|^2}
			       \right] & z_0\neq 0 \\ 
			       \left[\frac{z_1}{|z_1|},\ldots,
			       \frac{z_n}{|z_n|}; 0,
			       \frac{|z_1|^2}{\sum_{i=0}^n|z_i|^2}, 
			       \ldots,
			       \frac{|z_n|^2}{\sum_{i=0}^n|z_i|^2}
			       \right] & z_0=0 
			    \end{cases} \\
  q_n([z_1,\ldots,z_n;x_0,\ldots,x_n]) & = & [\sqrt{x_0},\sqrt{x_1}z_1,
   \ldots, \sqrt{x_n}z_n].
 \end{eqnarray*}

 Under this identification, the minimal cell decomposition on $\CP^n$
 can be described as
 \[
  \CP^n\cong M(\lambda_n) = \bigcup_{i=1}^n 
 \left(T^n/T^{n-i}\times (\Delta^{i}\setminus\Delta^{i-1})\right)/_{\sim}.
 \]
 Regard
 $D^{2n}=\rset{(z_1,\ldots,z_n)\in \bbC^n}{\sum_{i=1}^n|z_i|^2\le 1}$ and
 define a map
 \[
  \varphi_{2n} : D^{2n} \longrightarrow M(\lambda_n)
 \]
 by
 \[
 \varphi_{2n}(z_1,\ldots,z_n) = \left[\frac{z_1}{|z_1|},\ldots,
 \frac{z_n}{|z_n|}; 1-\sum_{i=1}^n |z_i|^2,
 |z_1|^2, \ldots, |z_n|^2\right].  
 \]
 This is a cell structure map for the $2n$-cell. For $m<n$, define
 \[
  b_{2m,2n} : S^1\times D^{2m} \longrightarrow D^{2n}
 \]
 by
 \[
 b_{2m,2n}(\omega,z_1,\ldots,z_{m}) =
 \left(0,\ldots,0,\omega\sqrt{1-\sum_{i=1}^m |z_i|^2}, \omega
 z_1,\ldots, \omega z_{m}\right). 
 \]
 Then each $b_{2m,2n}|_{S^1\times\Int(D^{2m})}$ is a homeomorphism onto
 its image and we have a stratification
 \[
 \partial D^{2n}=S^{2n-1} = \bigcup_{m=0}^{n-1} b_{2m,2n}\left(S^1\times
 \Int(D^{2m})\right). 
 \]
 Furthermore the diagram
 \[
  \begin{diagram}
   \node{} \node{D^{2n}} \arrow{e,t}{\varphi_{2n}} \node{M(\lambda_n)}
   \\ 
   \node{S^1\times D^{2m}} \arrow{e,t}{b_{2m,2n}} \arrow{s,l}{\pr_2}
   \node{S^{2n-1}} \arrow{n,J} \node{} \\    
   \node{D^{2m}} \arrow[2]{e,b}{\varphi_{2m}}
   \node{} \node{M(\lambda_m),} \arrow[2]{n,J}
  \end{diagram}
 \]
 is commutative, where the inclusion
 $M(\lambda_m) \hookrightarrow M(\lambda_n)$ is 
 given by 
 \[
  [t_1,\ldots,t_m;p_0,\ldots,p_m] \longmapsto
 [1,\ldots,1,t_1,\ldots,t_m;0,\ldots, 0, p_0,\ldots,p_m].
 \]
 Now define $P_{2m,2n}=S^1$ for $m<n$. The group structure of $S^1$
 defines a map
 \[
  c_{2\ell,2m,2n} : P_{2m,2n}\times P_{2\ell,2n} \longrightarrow
 P_{2\ell,2n} 
 \]
making the diagram
 \[
  \begin{diagram}
   \node{S^1\times S^1\times D^{2\ell}} \arrow{e,t}{1\times
   b_{2\ell,2m}} \arrow{s,l}{c_{2\ell,2m,2n}\times 1}
   \node{S^1\times D^{2m}} \arrow{s,r}{b_{2m,2n}} \\
   \node{S^1\times D^{2\ell}} \arrow{e,b}{b_{2\ell,2n}} \node{D^{2n}}
  \end{diagram}
 \]
commutative and we have a cylindrical structure.

 More generally, Davis and Januszkiewicz \cite{Davis-Januszkiewicz91}
 proved that any quasitoric manifold $M$ of dimension $2n$ can be
 expressed as $ M \cong M(\lambda)$ for a simple convex polytope $P$ of
 dimension $n$ and a function $\lambda$. The right hand side 
 is a space constructed as a quotient of $T^n\times P$ under an
 equivalence relation analogous to the case of $\CP^n$. Davis and
 Januszkiewicz proved in \S3 of 
 their paper that there is a ``perfect Morse function'' on $M(\lambda)$
 which induces a cell decomposition of $M(\lambda)$ or $M$ with exactly
 $h_i(P)$ cells of dimension $2i$, where $(h_0(P), \ldots, h_n(P))$  is
 the $h$-vector of $P$. It seems very likely that the above construction
 of a cylindrical structure on $\CP^n$ can be extended to quasitoric
 manifolds. 
%
%
\end{example}

\begin{example}
 \label{small_cover}
 In the same paper, Davis and Januszkiewicz introduced the notion of
 small covers as a real 
 analogue of quasitoric manifolds by replacing $S^1$ by $\Z_2$. Small
 covers have many properties in common with quasitoric (or torus)
 manifolds. 

 For example, we have $\RP^n/(\Z_2)^n \cong \Delta^n$ and the
 stratification $\pi_{n}^{\max}$ on $\Delta^n$ induces the minimal cell
 decomposition of $\RP^n$. An argument analogous to the case of $\CP^n$
 can be used to prove that this stratification is totally normal.
\end{example}

%

\begin{example}
 Let $X=D^3$ and consider the cell decomposition 
 \[
  X = e^0_1\cup e^0_2 \cup e^1_1\cup e^1_2 \cup e^2_1\cup e^2_2 \cup e^3
 \]
 given as follows. 
 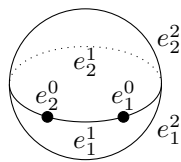
\begin{figure}[ht]
 \begin{center}
  \begin{tikzpicture}
   \draw (0,0) circle (1cm);
   \draw (-1,0) arc (180:360:1cm and 0.5cm); 
   \draw [dotted] (1,0) arc (0:180:1cm and 0.5cm); 

   \draw [fill] (0.5,-0.433) circle (2pt);
   \draw (0.5,-0.15) node {$e^0_1$};
   \draw [fill] (-0.5,-0.433) circle (2pt);
   \draw (-0.5,-0.15) node {$e^0_2$};
   \draw (0,-0.75) node {$e^1_1$};
   \draw (0,0.3) node {$e^1_2$};
   \draw (1.1,0.6) node {$e^2_2$};
   \draw (1.1,-0.6) node {$e^2_1$};
  \end{tikzpicture}
 \end{center}
  \caption{A regular cell decomposition of $S^2$}
 \end{figure}
%
 The interior of $D^3$ is the unique $3$-cell and the boundary
 $S^2$ is cut into two $2$-cells by the equator, which is cut into two
 $1$-cells by two $0$-cells on it.

 Consider the map between $S^2$
 given by collapsing the shaded region in the figure below (the wedge of
 two $2$-disks embedded in $S^2$) ``vertically'' and expanding the
 remaining part of $S^2$ continuously. Extend it to a continuous map
 $\varphi_3 : D^3\to D^3$.
 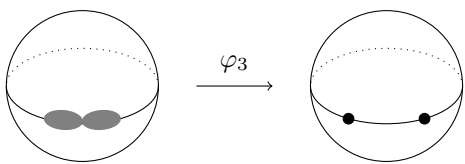
\begin{figure}[ht]
 \begin{center}
  \begin{tikzpicture}
   \draw (-4,0) circle (1cm);
   \draw (-5,0) arc (180:360:1cm and 0.5cm); 
   \draw [dotted] (-3,0) arc (0:180:1cm and 0.5cm); 

   \begin{scope}[xshift=-3.75cm, yshift=-0.45cm]
   \draw [rotate=5,fill,gray] (0,0) ellipse (0.25cm and 0.125cm);
   \end{scope}

   \begin{scope}[xshift=-4.25cm, yshift=-0.45cm]
   \draw [rotate=175,fill,gray] (0,0) ellipse (0.25cm and 0.125cm);
   \end{scope}

   \draw [->] (-2.5,0) -- (-1.5,0);
   \draw (-2,0.3) node {$\varphi_3$};

   \draw (0,0) circle (1cm);
   \draw (-1,0) arc (180:360:1cm and 0.5cm); 
   \draw [dotted] (1,0) arc (0:180:1cm and 0.5cm); 

   \draw [fill] (0.5,-0.433) circle (2pt);
   \draw [fill] (-0.5,-0.433) circle (2pt);
  \end{tikzpicture}
 \end{center}
  \caption{A 3-cell structure which is not cylindrically normal}
 \end{figure}
%
%
%
%
%
 It defines a $3$-cell structure on $e^3$. Let 
 $P_{(1,1),3}=D^1$. We have a continuous map
 \[
 b : P_{(1,1),3}\times D^1 \longrightarrow \partial D^3
 \]
 making the diagram
 \[
  \begin{diagram}
   \node{D^3} \arrow{e,t}{\varphi_3} \node{X} \\
   \node{P_{(1,1),3}\times D^1} \arrow{n,l}{b} \arrow{e} \node{D^1}
   \arrow{n,r}{\varphi_{1,1}} 
  \end{diagram}
 \]
 commutative. However, $b$ is not a homeomorphism when restricted to
 $P_{(1,1),3}\times \Int(D^1)$. And this is not a cylindrical
 structure. 
\end{example}

In Examples \ref{sphere_is_cylindrically_regular} and Example
\ref{CP^2}, the restrictions of cell structure maps to the boundary
spheres are fiber bundles onto their images. These facts seem to be
closely related to the existence of cylindrical structures in these
examples. Of course, there are many cylindrically normal cellular
stratified spaces that do not have such bundle structures. We may be
able to characterize cylindrically normal cellular stratified spaces by
using an appropriate notion of stratified fiber bundles. See
\cite{M.Davis78} for example.

\begin{example}
 Let $X$ be the subspace of the unit $3$-disk $D^3$ obtained by removing
 the interior of the $3$ -disk of radius $\frac{1}{2}$ centered at the
 origin. It has a cell decomposition with two $0$-cells, a $1$-cell, two
 $2$-cells, and a $3$-cell depicted as follows.
 \begin{figure}[ht]
 \begin{center}
  \begin{tikzpicture}
   \draw (0,0) circle (1cm);
   \draw [dotted] (-1,0) arc (180:360:1cm and 0.5cm); 
   \draw [dotted] (1,0) arc (0:180:1cm and 0.5cm); 
   \draw (0,0) circle (2cm);
   \draw [dotted] (-2,0) arc (180:360:2cm and 1cm); 
   \draw [dotted] (2,0) arc (0:180:2cm and 1cm); 

   \draw (1,0) -- (2,0);
   \draw [fill] (1,0) circle (1.5pt);
   \draw [fill] (2,0) circle (1.5pt);

   \draw (1.5,0.3) node {$e^1$};
   \draw (0.7,0) node {$e^0_1$};
   \draw (2.3,0) node {$e^0_2$};
   \draw (-0.5,0.5) node {$e^2_1$};
   \draw (-2.2,0.5) node {$e^2_2$};
   \draw (0,-1.5) node {$e^3$};
  \end{tikzpicture}
 \end{center}
  \caption{A cell decomposition of thick sphere}
 \end{figure}
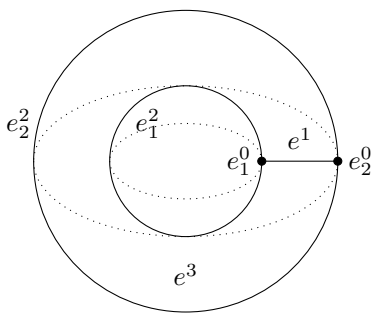
 Cells of dimension at most $1$ are regular. The
 cell structure maps for $2$-cells are given as in Example
 \ref{minimal_cell_decomposition_of_S2}. The restriction of the
 cell structure map $\varphi_3$ for the $3$-cell to $S^2$ is given by
 collapsing a middle part of a sphere to a segment.
 \begin{figure}[ht]
 \begin{center}
  \begin{tikzpicture}
   \draw (0,0) circle (2cm);
   \draw [dotted] (-1,1.73) arc (90:270:0.4cm and 1.73cm);
   \draw (-1,-1.73) arc (270:360:0.4cm and 1.73cm);
   \draw (-0.6,0) arc (0:90:0.4cm and 1.73cm);
   \draw [dotted] (1,1.73) arc (90:270:0.4cm and 1.73cm);
   \draw (1,-1.73) arc (270:360:0.4cm and 1.73cm);
   \draw (1.4,0) arc (0:90:0.4cm and 1.73cm);
   \draw [->] (0,1.8) -- (0,1);
   \draw [->] (0,-1.8) -- (0,-1);

   \draw [->] (2.5,0) -- (3,0);
   
   \draw (4,0) circle (0.5cm);
   \draw [dotted] (4,0) ellipse (0.5cm and 0.25cm);
   \draw (4.5,0) -- (5.5,0);
   \draw (6,0) circle (0.5cm);
   \draw [dotted] (6,0) ellipse (0.5cm and 0.25cm);

   \draw (7,0) node {$\cong$};

   \draw (9.5,0) circle (1cm);
   \draw [dotted] (9.5,0) ellipse (1cm and 0.5cm);
   \draw (9.5,0) circle (2cm);
   \draw [dotted] (9.5,0) ellipse (2cm and 1cm);
   \draw (10.5,0) -- (11.5,0);
  \end{tikzpicture}
 \end{center}
  \caption{A 3-cell structure on thick sphere}
 \end{figure}
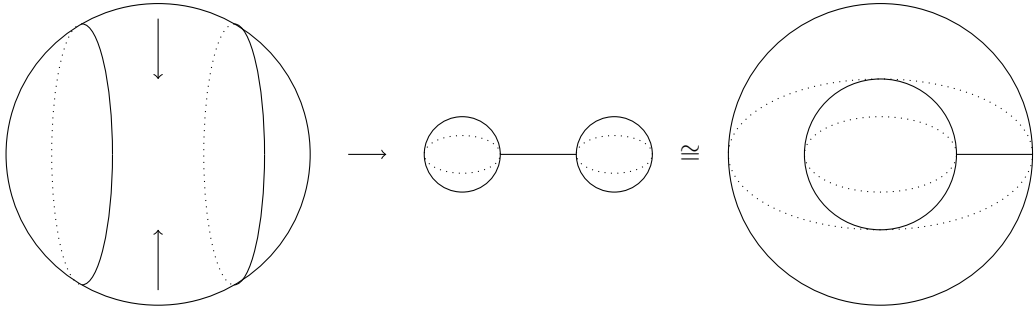

 This cell decomposition is cylindrically normal, because of the
 ``local triviality'' of the middle band. However the restriction
 $\varphi_3|_{S^2}$ is not a fiber bundle onto its image. 
\end{example}

\begin{example}
 We have seen in Example \ref{Delta-set_is_totally_normal} that the
 standard cell decomposition of the geometric realization of a
 $\Delta$-set is totally normal. Let us consider the geometric
 realization of a simplicial set $X$. Define
 \[
  P(X) = \coprod_{n=0}^{\infty} \left(X_n \setminus \bigcup_{i=0}^{n}
 s_i(X_{n-1})\right) 
 \]
 to be the set of nondegenerate simplices. For $\sigma,\tau\in P(X)$,
 define $\tau\le \sigma$ if there exists an injective morphism
 $u : [m]\to [n]$ with $X(u)(\sigma)=\tau$. Define
 \[
  \pi_X : |X| \longrightarrow P(X)
 \]
 analogously to the case of $\Delta$-sets. Then $\pi_X$ is a cellular
 stratification. In other words, cells in $|X|$ are in one-to-one
 correspondence to nondegenerate simplices.

 Suppose $\tau\le\sigma$ in $P(X)$. Then the set
 \[
 \mathcal{P}(\tau,\sigma) = \set{u : [m]\to [n]}{ u \text{ injective and }
 X(u)(\sigma)=\tau} 
 \]
 is nonempty. For $u,v\in\mathcal{P}(\tau,\sigma)$, define $u\le v$ if
 and only if $u(i)\le v(i)$ for all $i\in [m]$. Let us denote the order
 complex of this poset by $P_{\tau,\sigma}$. This is a
 simplicial complex whose simplices are indexed by chains in
 $\mathcal{P}(\tau,\sigma)$ and can be written as
 \[
  P_{\tau,\sigma} = \bigcup_{k=0}^{\infty}\bigcup_{\bm{u}\in
 N_k(\mathcal{P}(\tau,\sigma))} \Delta^k\times\{\bm{u}\}. 
 \]

 For a $k$-chain
 $\bm{u}=\{u_0<\cdots<u_k\} \in N_k(\mathcal{P}(\tau,\sigma))$, define
 a map
 \[
  \beta^{\bm{u}} : [k]\times [m] \longrightarrow [n]
 \]
 by
 \[
  \beta^{\bm{u}}(i,j) = u_i(j).
 \]
 This map induces an affine map
 \[
  b^{\bm{u}} : (\Delta^k\times\{\bm{u}\})\times\Delta^m
 \longrightarrow \Delta^n.  
 \]
 These maps can be glued together to give us a map
 \[
  b_{\tau,\sigma} : P_{\tau,\sigma}\times \Delta^m
 \longrightarrow \Delta^n. 
 \] 

 For $\sigma_0<\sigma_1<\sigma_2$ in $P(X)$, the composition 
 \[
  \mathcal{P}(\sigma_1,\sigma_2)\times\mathcal{P}(\sigma_0,\sigma_1)
 \longrightarrow \mathcal{P}(\sigma_0,\sigma_2) 
 \]
 is a morphism of posets and induces a map
 \[
 c_{\sigma_0,\sigma_1,\sigma_2} : P_{\sigma_1,\sigma_2} \times 
 P_{\sigma_0,\sigma_1} \longrightarrow
 P_{\sigma_0,\sigma_2}. 
 \]

 It is straightforward to check that these maps satisfy the requirements
 of a cylindrical structure. 
 Thus the standard cell decomposition of
 the geometric realization $|X|$ is cylindrically normal.
\end{example}

\section{Topological Face Categories and Their Classifying Spaces}
\label{BC}

Recall that the collection of all cells in a regular cell complex $X$
forms a poset whose order complex is homeomorphic to $X$. We also
have a face poset for any cellular stratified space.
When $X$ is a non-regular cell complex or a cellular stratified space,
however, we cannot expect to recover the homotopy type of $X$ from its
face poset, as we will see in Example
\ref{minimal_cell_decomposition_of_circle2}.

The aim of this section is to show that there is a canonical way to
construct an acyclic topological category\footnote{See Appendix
\ref{topological_category} for basics of topological 
categories.} $C(X)$
from a cylindrically normal cellular stratified space $X$ and that its
classifying space $BC(X)$ has the same homotopy type as $X$ under
appropriate conditions.  

\subsection{Face Categories}
\label{face_category_definition}

There are several ways to construct a category from a cellular or stellar
stratified space. A naive idea is the following.

\begin{definition}
 For a cellular or a stellar stratified space $(X,\pi,\Phi)$ and cells
 $\varphi_{\mu}:D_{\mu}\to \overline{e_{\mu}}$ and
 $\varphi_{\lambda}:D_{\lambda}\to \overline{e_{\lambda}}$ with
 $e_{\mu}\subset\overline{e_{\lambda}}$,   
 define $F(X)(e_{\mu},e_{\lambda})$ to be the set of all maps
 $b:D_{\mu}\to D_{\lambda}$ making the following diagram commutative
 \[
 \begin{diagram}
  \node{D_{\lambda}} \arrow{e,t}{\varphi_{\lambda}}
  \node{\overline{e_{\lambda}}} \\ 
  \node{D_{\mu}} \arrow{n,l}{b} \arrow{e,b}{\varphi_{\mu}}
  \node{\overline{e_{\mu}}.} 
  \arrow{n,J} 
 \end{diagram}
 \]
 The set $F(X)(e_{\mu},e_{\lambda})$ is 
 topologized by the compact-open topology as a subspace of
 $\Map(D_{\mu},D_{\lambda})$.  $F(X)(e_{\mu},e_{\lambda})$ is defined to
 be empty if $e_{\mu}\not\subset\overline{e_{\lambda}}$.

 By defining the set of objects to be cells in $X$ and morphisms from
 $e_{\mu}$ to $e_{\lambda}$ to be $F(X)(e_{\mu},e_{\lambda})$, we obtain 
 a topological category $F(X)$.
 The composition is given by the composition of maps. 
 This topological category $F(X)$ is called the
 \emph{naive face category} of $X$. It is also denoted by $F(X,\pi)$ or 
 $F(X,\pi,\Phi)$.
\end{definition}

\begin{lemma}
 The naive face category $F(X)$ is an acyclic category. When $X$ is
 regular, $F(X)$ is a poset and coincides with the face poset $P(X)$. In  
 particular, when $X$ is a regular cell complex, our
 construction coincides with the classical face poset construction.
\end{lemma}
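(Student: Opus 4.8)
The plan is to establish the three assertions separately, organizing the proof of acyclicity around two conditions: that the only endomorphisms are identities, and that there is no nontrivial ``$2$-cycle'' $e_\mu\to e_\lambda\to e_\mu$. First I would record the compatibility of $F(X)$ with the face poset. If $F(X)(e_\mu,e_\lambda)\neq\emptyset$, then any $b$ in it satisfies $\varphi_\lambda\circ b=\varphi_\mu$ (composed with the inclusion $\overline{e_\mu}\hookrightarrow\overline{e_\lambda}$), so $e_\mu=\varphi_\mu(\Int(D^m))\subseteq\varphi_\lambda(D_\lambda)=\overline{e_\lambda}$ and hence $\mu\le\lambda$ by the lemma identifying $e_\mu\subseteq\overline{e_\lambda}$ with $\mu\le\lambda$. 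Composition is well defined and associative because it is literally composition of maps of domains, with identity $\id_{D_\lambda}$; thus $F(X)$ is a genuine topological category whose morphisms only go upward in $P(X)$. The $2$-cycle condition is then immediate: morphisms $e_\mu\to e_\lambda$ and $e_\lambda\to e_\mu$ force $\mu\le\lambda$ and $\lambda\le\mu$, so $\mu=\lambda$ by antisymmetry of $P(X)$.

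The substantive point is that every endomorphism is the identity, and the key sublemma here is that a characteristic map does not fold the boundary of its domain back onto the open cell, i.e.\ $\varphi_\lambda^{-1}(e_\lambda)=\Int(D^n)$. To prove this I would argue by contradiction: suppose $y\in\partial D_\lambda$ with $\varphi_\lambda(y)\in e_\lambda$, and let $\psi=(\varphi_\lambda|_{\Int(D^n)})^{-1}\colon e_\lambda\to\Int(D^n)$. Taking a net $x_k\to y$ in the dense subset $\Int(D^n)\subseteq D_\lambda$, continuity of $\varphi_\lambda$ and of $\psi$, together with the Hausdorffness of $X$ (hence of $e_\lambda$), give $x_k=\psi(\varphi_\lambda(x_k))\to\psi(\varphi_\lambda(y))\in\Int(D^n)$, contradicting $x_k\to y\in\partial D_\lambda$. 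Granting this, any $b\in F(X)(e_\lambda,e_\lambda)$ satisfies, for $x\in\Int(D^n)$, $\varphi_\lambda(b(x))=\varphi_\lambda(x)\in e_\lambda$, so $b(x)\in\Int(D^n)$ and $b(x)=\psi(\varphi_\lambda(b(x)))=\psi(\varphi_\lambda(x))=x$. Since $\Int(D^n)$ is dense in $D_\lambda$ and $D_\lambda$ is Hausdorff, $b=\id_{D_\lambda}$. Combined with the $2$-cycle condition, this yields acyclicity.

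For the regular case each $\varphi_\lambda\colon D_\lambda\to\overline{e_\lambda}$ is a homeomorphism, so whenever $e_\mu\subseteq\overline{e_\lambda}$ the commuting square forces $b=\varphi_\lambda^{-1}\circ\varphi_\mu$ (with the inclusion understood), which is then the unique element of $F(X)(e_\mu,e_\lambda)$; otherwise that set is empty. Hence there is at most one morphism between any two objects, with exactly one precisely when $\mu\le\lambda$, so $F(X)$ is the poset $P(X)$ viewed as a category. When $X$ is a regular cell complex, $P(X)$ is by definition the classical face poset, giving the final assertion. I expect the only genuine obstacle to be the sublemma $\varphi_\lambda^{-1}(e_\lambda)=\Int(D^n)$; the remaining steps are formal, resting on antisymmetry of the face poset and on the defining commutative squares of the morphisms in $F(X)$.
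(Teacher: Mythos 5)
Your proposal is correct, and its skeleton matches the paper's: exclude $2$-cycles between distinct cells, show endomorphisms are identities, and in the regular case get uniqueness of lifts so that $F(X)$ collapses to $P(X)$. The differences are worth noting. For the $2$-cycle step the paper compares dimensions (a morphism $e\to e'$ forces $\dim e\le\dim e'$ via the dimension axiom of cellular stratified spaces, so mutual morphisms force equal dimension and then $e=e'$), whereas you use that nonempty hom-sets force $e_{\mu}\subseteq\overline{e_{\lambda}}$, hence $\mu\le\lambda$, and conclude by antisymmetry of the face poset; your version is marginally more economical since it never invokes the dimension axiom. More substantively, the paper disposes of endomorphisms with the unproved assertion that ``the compatibility of lifts with cell structure maps'' forces them to be the identity, while you actually supply the missing content: the sublemma $\varphi_{\lambda}^{-1}(e_{\lambda})=\Int(D^n)$ (characteristic maps do not fold $\partial D_{\lambda}$ back onto the open cell), proved by the convergence/uniqueness-of-limits argument, followed by density of $\Int(D^n)$ in $D_{\lambda}$ to get $b=\id$ on all of $D_{\lambda}$. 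That argument is sound; the only cosmetic slip is attributing the needed Hausdorffness to $X$ and $e_{\lambda}$, when what the contradiction actually uses is uniqueness of limits in $D_{\lambda}\subseteq D^n$, which is automatic since $D^n$ is metric (and $X$ is Hausdorff by definition anyway, so nothing breaks). The regular case is handled identically in both: regularity makes $b=\varphi_{\lambda}^{-1}\circ\varphi_{\mu}$ the unique lift, so there is at most one morphism between any two objects.
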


\begin{proof}
 If both $F(X)(e,e')$ and $F(X)(e',e)$ are
 nonempty, we have $\dim e=\dim e'$, since the existence of a morphism
 $e\to e'$ in $F(X)$ implies $\dim e \le \dim e'$.
 The compatibility of lifts with cell structure maps 
 implies that the only case we have a morphism is $e=e'$ and the
 morphism should be the identity. Thus $F(X)$ is acyclic.

 When $X$ is regular, the regularity implies that there is
 at most one morphism between two objects. Hence it is a poset.
\end{proof}

Even when $X$ is not regular, we have the underlying
poset\footnote{Definition \ref{underlying_poset}}, since $F(X)$ is an
acyclic category. Obviously it is isomorphic to the face poset 
$P(X,\pi)= \Ima\pi$.

\begin{example}
 \label{minimal_cell_decomposition_of_circle2}
 Consider the minimal cell decomposition
 \[
  \pi_n : S^n = e^0\cup e^n \rarrow{} \{0<n\}.
 \]

 This is a typical non-regular cell complex.
 The face poset $P(S^n,\pi_n)$ is a totally ordered set of two elements
 and its order complex is homeomorphic to an interval
 $BP(S^n,\pi_n)\cong [0,1]$. The homotopy type of $S^n$ cannot be
 recovered from its face poset.
 
 On the other hand, the face category $F(S^n,\pi_n)$ has more
 information. It has two objects $e^0$ and $e^n$. 
 When $n=1$, as we have seen
 in Example \ref{minimal_cell_decomposition_of_circle}, the set of
 morphisms from $e^0$ to $e^1$ is given by
 \[
  F(S^1,\pi_1)(e^0,e^1)=\{b_1,b_{-1}\}.
 \]
 We also have the identity morphism on each object. The resulting
 category is depicted in Figure \ref{F(S^1)}.
 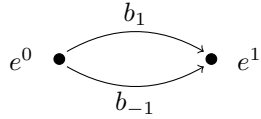
\begin{figure}[ht]
 \begin{center}
  \begin{tikzpicture}
   \draw [fill] (0,0) circle (2pt);
   \draw (-0.5cm,0) node {$e^0$};
   \draw [fill] (2cm,0) circle (2pt);
   \draw (2.5cm,0) node {$e^1$};
   \path [->] (0.1cm,-0.1cm) edge [bend right] (1.9cm,-0.1cm);
   \draw (1cm,-0.6cm) node {$b_{-1}$};
   \path [->] (0.1cm,0.1cm) edge [bend left] (1.9cm,0.1cm);
   \draw (1cm,0.6cm) node {$b_{1}$};
  \end{tikzpicture}
 \end{center}
  \caption{The face category of $S^1=e^0\cup e^1$}
  \label{F(S^1)}
 \end{figure}
 When $n>1$, there are infinitely many morphisms from $e^0$ to $e^n$
 parametrized by $\partial D^n$. And we have a homeomorphism
 \[
  F(S^n,\pi_n)(e^0,e^n) \cong S^{n-1}.
 \]
\end{example}

In the above example of $S^n$, the compact-open topology on the morphism
space $F(S^n,\pi_n)(e^0,e^n)$ can be replaced with a more
understandable topology of $S^{n-1}$. In general, we cannot expect such
simplicity. Under the assumption of cylindrical normality, however,
we may define a smaller face category.

\begin{definition}
 \label{cylindrical_face_category_definition}
 Let $X$ be a cylindrically normal stellar stratified
 space. Define a category $C(X)$ as follows. Objects are
 cells in $X$.
 For each pair $e_{\mu} \subset \overline{e_{\lambda}}$, define
 \[
  C(X)(e_{\mu},e_{\lambda}) = P_{\mu,\lambda}.
 \]
 The composition of morphisms is given by
 \[
  c_{\lambda_0,\lambda_1,\lambda_2} : P_{\lambda_1,\lambda_2}\times
 P_{\lambda_0,\lambda_1} \longrightarrow P_{\lambda_0,\lambda_2}.
 \]
 The category $C(X)$ is called the \emph{cylindrical face
 category} of $X$.
\end{definition}

\begin{lemma}
 \label{face_category_of_CNCSS}
 For any cylindrically normal stellar stratified space $X$,
 its face category $C(X)$ is an acyclic topological category.
 the
 maps $b_{\mu,\lambda}$ induces a continuous functor
 \[
  b : C(X) \longrightarrow F(X),
 \]
 which is natural with respect to morphisms of cylindrically normal
 stellar stratified spaces.

 Furthermore the underlying poset\footnote{Definition
 \ref{underlying_poset}} of $C(X)$ is also 
 $P(X)$ and the diagram
 \[
  \begin{diagram}
   \node{C(X)} \arrow[2]{e,t}{b} \arrow{se} \node{}
   \node{F(X)} \arrow{sw} \\
   \node{} \node{P(X)}
  \end{diagram}
 \]
 is commutative.
\end{lemma}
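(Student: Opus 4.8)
The plan is to verify directly that each structure required of a topological category, and of the functor $b$, is supplied by one of the axioms of a cylindrical structure, while acyclicity falls out of the order-theoretic behaviour of the face poset. First I would check that $C(X)$ is a topological category. Composition is the map $c_{\lambda_0,\lambda_1,\lambda_2}$, which is a (continuous) morphism of stratified spaces, so composition is continuous; associativity is exactly the third commutative diagram in Definition \ref{cylindrical_structure_definition}. The identity at $e_{\lambda}$ is the unique point of $P_{\lambda,\lambda}$, and the unit laws follow from the degenerate cases of that same associativity diagram together with the convention $b_{\lambda,\lambda}=\id$.

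Next, acyclicity. A nonempty morphism space $C(X)(e_{\mu},e_{\lambda})=P_{\mu,\lambda}$ forces $e_{\mu}\subset\overline{e_{\lambda}}$, i.e. $\mu\le\lambda$ in $P(X)$; and if $\mu\ne\lambda$ then $e_{\mu}\subset\partial e_{\lambda}$, so the dimension condition in the definition of a stellar stratified space gives $\dim e_{\mu}<\dim e_{\lambda}$. Hence $C(X)(e_{\mu},e_{\lambda})$ and $C(X)(e_{\lambda},e_{\mu})$ cannot both be nonempty unless $\mu=\lambda$, in which case the morphism space is a single point; so the only endomorphisms are identities and there are no nontrivial isomorphisms. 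The same observation identifies the underlying poset: $e_{\mu}\le e_{\lambda}$ there precisely when $P_{\mu,\lambda}\ne\emptyset$, i.e. when $e_{\mu}\subset\overline{e_{\lambda}}$, which is $P(X)$.

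To construct $b$, I would let it be the identity on objects and, on morphisms, the exponential transpose $P_{\mu,\lambda}\to\Map(D_{\mu},D_{\lambda})$ of $b_{\mu,\lambda}$, sending $p$ to $b_{\mu,\lambda}(p,-)$. The first commutative diagram of Definition \ref{cylindrical_structure_definition} (the one involving $\pr_2$) reads $\varphi_{\lambda}\circ b_{\mu,\lambda}=\varphi_{\mu}\circ\pr_2$, so each $b_{\mu,\lambda}(p,-)$ satisfies $\varphi_{\lambda}\circ b_{\mu,\lambda}(p,-)=\varphi_{\mu}$ and therefore lies in $F(X)(e_{\mu},e_{\lambda})$. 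This transpose is continuous for the compact-open topology, by the standard fact (proved via the tube lemma) that currying a continuous map out of a product is continuous, a fact needing no local compactness hypothesis. Functoriality then reduces to two checks: preservation of identities is immediate from $b_{\lambda,\lambda}=\id$, and preservation of composition is exactly the second commutative diagram $b_{\lambda_1,\lambda_2}\circ(1\times b_{\lambda_0,\lambda_1})=b_{\lambda_0,\lambda_2}\circ(c\times 1)$ read through the transpose. Naturality in a morphism $\bm{f}$ of cylindrically normal stellar stratified spaces follows by transposing the defining compatibility square relating $b^X$, $b^Y$, $f_{\mu,\lambda}$ and $f_{\lambda}$.

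Finally, the triangle commutes for a trivial reason: $b$ is the identity on objects, and both legs to $P(X)$ are the projection of an acyclic category onto its underlying poset, which is determined on objects alone; so $C(X)\to F(X)\to P(X)$ agrees with the projection $C(X)\to P(X)$. The only genuinely topological point in the whole argument is the continuity of the transpose; everything else is a direct translation of the cylindrical-structure axioms into categorical language, so I expect the real work to be careful bookkeeping rather than any single hard estimate.
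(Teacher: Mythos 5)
Your proposal is correct and follows essentially the same route as the paper's proof: the functor $b$ is the continuous adjoint (exponential transpose) of the structure maps $b_{\mu,\lambda}$, which lands in $F(X)(e_{\mu},e_{\lambda})$ by the first cylindrical-structure diagram, with functoriality, naturality, and the commuting triangle read off from the remaining diagrams and the compatibility condition on morphisms. The paper's proof is terser — it leaves the category axioms, the acyclicity check, and the identification of the underlying poset as "immediate" — so your explicit verification of those points (via the dimension condition on boundaries and the single-point spaces $P_{\lambda,\lambda}$) is just a more detailed account of the same argument.
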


\begin{proof}
 The continuity of 
 \[
 b_{\mu,\lambda} : P_{\mu,\lambda}\times D_{\mu} \longrightarrow
 D_{\lambda} 
 \]
 implies the continuity of its adjoint
 \[
  \ad(b_{\mu,\lambda}) : P_{\mu,\lambda} \longrightarrow
 \Map(D_{\mu},D_{\lambda}),
 \]
 which factors through $F(X)(e_{\mu},e_{\lambda})$. It is immediate to
 verify that these maps form a continuous functor
 \[
  b : C(X) \longrightarrow F(X).
 \]
 Morphisms of cylindrically normal stellar stratified spaces are
 required to be compatible with maps $b_{\mu,\lambda}$ and thus the
 functor $b$ is natural with respect to morphisms of cylindrically
 nornal stellar stratified spaces.
 The commutativity of the triangle is obvious from the definition.
\end{proof}

\begin{example}
 \label{face_category_of_CP^2}
 Consider the minimal cell decomposition on $\CP^2$
 \[
  \CP^2 = e^0\cup e^2\cup e^4.
 \]
 It is shown in Example \ref{CP^2} that it has a cylindrical
 structure.  
 The cylindrical face category $C(\CP^2)$ has three objects, $e^0$,
 $e^2$, and $e^4$. We have seen that
 \[
  F(\CP^2)(e^0,e^2) = F(S^2,\pi_2)(e^0,e^2) \cong S^1 = C(\CP^2)(e^0,e^2).
 \]
 Since the attaching map of $e^4$ is the Hopf map
 \[
  \eta : S^3 \longrightarrow \CP^1,
 \]
 we have
 \[
  F(\CP^2)(e^0,e^4) \cong \eta^{-1}(e^0) \cong S^1 = C(\CP^2)(e^0,e^4).
 \]
 By using the local trivialization
 \[
  \eta^{-1}(e^2) \cong e^2\times S^1,
 \]
 we see that $F(\CP^2)(e^2,e^4)$ is the set of sections of the trivial
 bundle 
 \[
  D^2\times S^1 \longrightarrow D^2
 \]
 and thus $F(\CP^2)(e^2,e^4) = \Map(D^2,S^1)$. On the other hand,
 we have
 \[
  C(\CP^2)(e^2,e^4) = S^1
 \]
 by definition. The composition
 \[
 C(\CP^2)(e^2,e^4)\times C(\CP^2)(e^0,e^2) \longrightarrow
 C(\CP^2)(e^0,e^4) 
 \]
 is given by the multiplication of $S^1$, as is shown in Example
 \ref{CP^2}. 

 In general, Example \ref{moment_angle_complex} says that the face
 category $C(\CP^n)$ of the minimal cell decomposition of $\CP^n$ can be
 described as a ``poset enriched by $S^1$''
 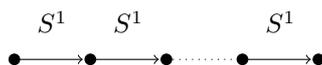
\begin{figure}[ht]
 \begin{center}
  \begin{tikzpicture}
  \draw [fill] (0,0) circle (2pt);
  \draw [->] (0,0) -- (0.9,0);
  \draw (0.5,0.5) node {$S^1$};
  \draw [fill] (1,0) circle (2pt);
  \draw [->](1,0) -- (1.9,0);
  \draw (1.5,0.5) node {$S^1$};
  \draw [fill] (2,0) circle (2pt);
  \draw [dotted] (2,0) -- (3,0);
  \draw [fill] (3,0) circle (2pt);
  \draw [->] (3,0) -- (3.9,0);
  \draw (3.5,0.5) node {$S^1$};
  \draw [fill] (4,0) circle (2pt);
  \end{tikzpicture}
 \end{center}
  \caption{The face category of $\CP^n$}
 \end{figure}
 in the sense that, for any pair of objects $e^{2k}$, $e^{2m}$ ($k<m$),
 the space of morphisms $C(\CP^n)(e^{2k},e^{2m})$ is $S^1$ and the
 composition of morphisms is given by the group structure of $S^1$.
\end{example}

Recall that the order complex of the face poset of a regular cell
complex $X$ is the barycentric subdivision of $X$. With this fact in
mind, we introduce the following notation. 

\begin{definition}
 Let $X$ be a cylindrically normal stellar stratified
 space. Define its 
 \emph{barycentric subdivision} $\Sd(X)$ to be the
 classifying space of the cylindrical face category
 \[
  \Sd(X) = BC(X).
 \]
\end{definition}

\begin{remark}
 There is a notion of barycentric subdivision $\Sd(C)$
 of a small category $C$. A good reference is a paper
 \cite{0707.1718} by del Hoyo. See also Noguchi's papers
 \cite{1004.2547,1104.3630}. We will show that, for a totally
 normal stellar stratified space $X$, there is an isomorphism of
 categories $\Sd(C(X))\cong C(\Sd(X))$ in \S\ref{duality}.
\end{remark}

When $X$ is not a regular cell complex, we usually do not have a 
homeomorphism between $\Sd(X)$ and $X$.

\begin{example}
 Consider $X=\R^n$. This is a regular totally normal cellular
 stratification consisting of a single $n$-cell. The barycentric
 subdivision is a single point.
\end{example}


\begin{example}
 Consider the minimal cell decomposition $\pi_n$ of $S^n$. When
 $n=1$, it is easy to see that $\Sd(S^1,\pi_1)$ is the 
 cell complex in Figure \ref{Sd_of_circle} and is homeomorphic to
 $S^1$. 
 \begin{figure}[ht]
 \begin{center}
  \begin{tikzpicture}
   \draw [fill] (0,0) circle (2pt);
   \draw [fill] (2cm,0) circle (2pt);
   \path (0,0) edge [bend right] (2cm,0);
   \path (0,0) edge [bend left] (2cm,0);
  \end{tikzpicture}
 \end{center}
  \caption{$\Sd(S^1,\pi_1)$}
  \label{Sd_of_circle}
 \end{figure}
 Note that this complex is obtained by subdividing the $1$-cell in
 $\pi_1$ and can be regarded as the barycentric subdivision
 of $\pi_1$.

 When $n>1$, $C(S^n,\pi_n)$ is a topological category with
 nontrivial topology on $C(S^n,\pi_n)(e^0,e^n)$. Since we have a
 homeomorphism 
 \[
  C(S^n,\pi_n)(e^0,e^n) \cong S^{n-1},
 \]
 it is easy to determine $\Sd(S^n,\pi_n)$ and we have 
 \[
  \Sd(S^n,\pi_n) = BC(S^n,\pi_n) \cong
 \Sigma(S^{n-1})\cong S^n.
 \]
 Again we recovered $S^n$.
\end{example}

\begin{example}
 \label{n_and_0}
 Consider $X=\Int D^n\cup \{(1,0)\}$ with the obvious
 stratification.
 \begin{figure}[ht]
 \begin{center}
  \begin{tikzpicture}
   \draw [dotted] (0,0) circle (1cm);
   \draw [fill] (1,0) circle (1pt);
   \draw (0,0) node {$e^n$};
   \draw (1.3,0) node {$e^0$};
  \end{tikzpicture}
 \end{center}
  \caption{$\Int D^n\cup\{(1,0)\}$}
 \end{figure}
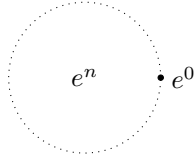

 This is a regular cellular stratification and $\Sd(X)$ is a
 $1$-simplex $[0,1]$. 
 We have an embedding
 \[
  i : [0,1] \longrightarrow X
 \]
 by
 \[
  i(t) = (1-t)(1,0) + t(0,0).
 \] 
 See Figure \ref{image_of_interval}.
 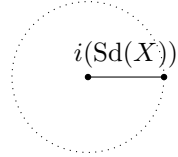
\begin{figure}[ht]
 \begin{center}
  \begin{tikzpicture}
   \draw [dotted] (0,0) circle (1cm);

   \filldraw (1cm,0) circle (1pt);
   \filldraw (0,0) circle (1pt);
   \draw (0,0) -- (1cm,0);
   \draw (0.5,0.3) node {$i(\Sd(X))$};
  \end{tikzpicture}
 \end{center}
  \caption{$i(\Sd(\Int D^2\cup\{(1,0)\}))$}
  \label{image_of_interval}
 \end{figure}
 Obviously $i([0,1])$ is a strong deformation retract of $X$.
\end{example}

\begin{example}
 Consider the punctured torus in Example \ref{punctured_torus}. There is
 a totally normal cellular stratification on
 $X=S^1\times S^1\setminus e^0\times e^0$ induced from the product
 cell decomposition $\pi_1^2$
 \[
  S^1\times S^1 = e^0\times e^0 \cup e^0\times e^1
 \cup e^1\times e^0 \cup e^1\times e^1.
 \]
 Let
 \[
  \varphi_{1,1} : D_{1,1}=[-1,1]^2\setminus\{(-1,-1),(-1,1),(1,-1),(1,1)\}
 \longrightarrow X 
 \]
 be the cell structure map of the $2$-cell in $X$ and
 \begin{eqnarray*}
  \varphi_{0,1} & : & D_{0,1} = (-1,1) \longrightarrow X \\
  \varphi_{1,0} & : & D_{1,0} = (-1,1) \longrightarrow X
 \end{eqnarray*}
 be the cell structure maps for $1$-cells.

 As we have seen in Example \ref{punctured_torus}, there are two ways to
 lift each cell structure map of a $1$-cell and 
 these four lifts cover $\partial D_{1,1}$.
 \[
 \partial D_{1,1} = b_{0,1}'(D_{0,1}) \cup
 b_{0,1}''(D_{0,1}) \cup b_{1,0}'(D_{1,0})\cup
 b_{1,0}''(D_{1,0}). 
 \]
 The cylindrical face category $C(X)$ consists of three
 objects $e^0\times e^1$, $e^1\times e^0$, $e^1\times e^1$. Nontrivial
 morphisms are 
 \begin{eqnarray*}
  C(X)(e^0\times e^1,e^1\times e^1) & = & \{b_{0,1}',
   b_{0,1}''\}, \\
  C(X)(e^1\times e^0,e^1\times e^1) & = & \{b_{1,0}',
   b_{1,0}''\}.
 \end{eqnarray*}

 By allowing multiple edges, we can draw a ``Hasse diagram'' of this
 acyclic category as in Figure \ref{Sd_of_punctured_torus}.

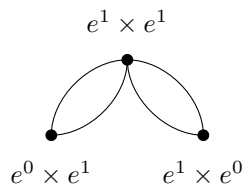
\begin{figure}[ht]
\begin{center}
 \begin{tikzpicture}
  \draw [fill] (0,0) circle (2pt);
  \draw (0,0) .. controls (0,0.5) and (0.5,1) .. (1,1);
  \draw (0,0) .. controls (0.5,0) and (1,0.5) .. (1,1);
  \draw (0,-0.5) node {$e^0\times e^1$};

  \draw [fill] (2,0) circle (2pt);
  \draw (2,0) .. controls (2,0.5) and (1.5,1) .. (1,1);
  \draw (2,0) .. controls (1.5,0) and (1,0.5) .. (1,1);
  \draw (2,-0.5) node {$e^1\times e^0$};

  \draw [fill] (1,1) circle (2pt);
  \draw (1,1.5) node {$e^1\times e^1$};

 \end{tikzpicture}
\end{center}
 \caption{The barycentric subdivision of the punctured torus}
 \label{Sd_of_punctured_torus}
\end{figure}

 Obviously, the classifying space of this category is the wedge of two
 circles,
 \[
  \Sd(X)=BC(X) = S^1\vee S^1.
 \]

 It is easy to find an embedding of $\Sd(X)$ into $X$ by
 using lifts of cell structure maps.
 The images of $0$ under the four maps $b_{0,1}'$,
 $b_{0,1}''$, $b_{1,0}'$, $b_{1,0}''$ constitute four
 points in 
 $\partial D_{1,1}$ that are mapped to a single point under
 $\varphi_{1,1}$. By connecting each of these four points and $(0,0)$ in 
 $D_{1,1}$ by a segment, respectively, we obtain a $1$-dimensional
 stratified space $\widetilde{K}$ in $D_{1,1}$. See Figure
 \ref{thin_subcomplex}. 
 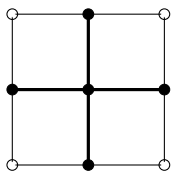
\begin{figure}[ht]
 \begin{center}
  \begin{tikzpicture}
   \draw (0,0) -- (2,0);
   \draw (0,0) -- (0,2);
   \draw (2,0) -- (2,2);
   \draw (0,2) -- (2,2);

   \draw (0,0) circle (2pt);
   \draw [fill,white] (0,0) circle (1pt);
   \draw (2,0) circle (2pt);
   \draw [fill,white] (2,0) circle (1pt);
   \draw (0,2) circle (2pt);
   \draw [fill,white] (0,2) circle (1pt);
   \draw (2,2) circle (2pt);
   \draw [fill,white] (2,2) circle (1pt);

   \draw [fill] (1,0) circle (2pt);
   \draw [fill] (2,1) circle (2pt);
   \draw [fill] (1,2) circle (2pt);
   \draw [fill] (0,1) circle (2pt);
   \draw [fill] (1,1) circle (2pt);

   \draw [very thick] (1,1) -- (1,0);
   \draw [very thick] (1,1) -- (1,2);
   \draw [very thick] (1,1) -- (0,1);
   \draw [very thick] (1,1) -- (2,1);
  \end{tikzpicture}
 \end{center}
  \caption{A thin subcomplex $\widetilde{K}$ in $D_{1,1}$}
  \label{thin_subcomplex}
 \end{figure}

 The complex $\widetilde{K}$ in Figure \ref{thin_subcomplex} corresponds
 to cells in $\Sd(X)$ and we have an embedding
 \[
  \Sd(X) \hookrightarrow X.
 \]
 Figure \ref{thin_subcomplex} can be also used to construct a deformation
 retraction of $X$ onto $\Sd(X)$.
\end{example}

The above examples show that, when a cellular stratification
$\pi$ is totally normal, or more generally, cylindrically
normal, the barycentric subdivision $\Sd(X,\pi)$ is closely
related to $X$. 

The work of Cohen, Jones, and Segal \cite{Cohen-Jones-SegalMorse}
suggests that one should analyze the nerve of the face category of a
cylindrically normal stellar stratified space by using the underlying
poset functor 
\[
 \pi : C(X,\pi) \longrightarrow P(X,\pi).
\]

The following easily verifiable fact will be used in the next section.

\begin{lemma}
 \label{underlying_poset_nerve}
 For a cylindrically normal stellar stratified space $(X,\pi)$,
 consider the induced morphism of simplicial sets\footnote{Here we
 forget the topology on $C(X,\pi)$ temporarily.}
 \[
  N(\pi) : N(C(X,\pi)) \longrightarrow N(P(X,\pi)).
 \]
 For each $n$-chain in the face poset
 $\bm{e}=(e_{\lambda_0},\ldots,e_{\lambda_n})\in N_n(P(X,\pi))$,
 we have 
 \[
 N(\pi)_n^{-1}(\bm{e}) = P_{\lambda_{n-1},\lambda_{n}}\times\cdots\times
 P_{\lambda_{0},\lambda_{1}}.
 \]
 Consequently the space of $n$-chains has the following decomposition
 \[
  N_n(C(X,\pi)) = \coprod_{\bm{e}\in N_n(P(X,\pi))}
 \{\bm{e}\}\times P_{\lambda_{n-1},\lambda_n}\times\cdots\times
 P_{\lambda_{0},\lambda_{1}}. 
 \]
 The space of nondegenerate $n$-chains is, therefore, given by
 \[
  \overline{N}_n(C(X,\pi)) = \coprod_{\bm{e}\in \overline{N}_n(P(X,\pi))}
 \{\bm{e}\}\times P_{\lambda_{n-1},\lambda_n}\times\cdots\times
 P_{\lambda_{0},\lambda_{1}}. 
 \]
\end{lemma}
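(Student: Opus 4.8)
The plan is to unwind the definition of the nerve of a category and exploit two features of the underlying-poset functor $\pi : C(X,\pi)\to P(X,\pi)$: it is the identity on objects, and its target $P(X,\pi)$ is a genuine poset, so it carries at most one morphism between any two objects.

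First I would recall that an $n$-simplex of $N(C(X,\pi))$ is a string of composable morphisms $e_{\lambda_0}\rarrow{f_1} e_{\lambda_1}\rarrow{f_2}\cdots\rarrow{f_n} e_{\lambda_n}$, with each $f_i\in C(X)(e_{\lambda_{i-1}},e_{\lambda_i})=P_{\lambda_{i-1},\lambda_i}$, and that $N(\pi)_n$ sends this string to the chain $\bm{e}=(e_{\lambda_0},\ldots,e_{\lambda_n})$ obtained by applying $\pi$ to each object and each morphism. Because $\pi$ is the identity on objects, fixing the image $\bm{e}$ fixes the sequence of sources and targets; and because $P(X,\pi)$ has a unique morphism realizing each relation $\lambda_{i-1}\le\lambda_i$, there is no further constraint on the $f_i$ beyond lying in the prescribed $P_{\lambda_{i-1},\lambda_i}$. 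Hence a preimage of $\bm{e}$ is exactly a tuple $(f_n,\ldots,f_1)$ with $f_i\in P_{\lambda_{i-1},\lambda_i}$, giving the identification $N(\pi)_n^{-1}(\bm{e})=P_{\lambda_{n-1},\lambda_n}\times\cdots\times P_{\lambda_0,\lambda_1}$, where the reversed order is simply the composition convention built into the maps $c_{\lambda_0,\lambda_1,\lambda_2}$. Ranging over all chains $\bm{e}\in N_n(P(X,\pi))$ partitions $N_n(C(X,\pi))$ into these fibers, which is the stated decomposition; since the object set of $C(X,\pi)$ is discrete, this set-level partition is a decomposition of spaces once the topology is reinstated.

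For the nondegenerate part I would invoke the acyclicity of $C(X,\pi)$ (Lemma \ref{face_category_of_CNCSS}) together with the convention that $P_{\lambda,\lambda}$ is a single point. A string is degenerate precisely when one of its morphisms $f_i$ is an identity. In an acyclic category the only endomorphism of any object is its identity, so $f_i$ is an identity if and only if $\lambda_{i-1}=\lambda_i$; equivalently, when $\lambda_{i-1}<\lambda_i$ strictly no element of $P_{\lambda_{i-1},\lambda_i}$ can be an identity. Therefore a string is nondegenerate if and only if $\lambda_0<\cdots<\lambda_n$, that is, if and only if the underlying chain $\bm{e}$ is nondegenerate in $N(P(X,\pi))$. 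Restricting the decomposition of $N_n(C(X,\pi))$ to these chains yields the final formula for $\overline{N}_n(C(X,\pi))$.

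The argument is essentially bookkeeping, and the only step needing genuine input is the nondegeneracy equivalence: here acyclicity of $C(X,\pi)$ and the singleton identity-parameter spaces $P_{\lambda,\lambda}$ are exactly what guarantee that a string is degenerate upstairs if and only if its image chain is degenerate downstairs, so that $N(\pi)$ carries nondegenerate simplices fiberwise onto nondegenerate chains.
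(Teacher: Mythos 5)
Your proof is correct, and it is precisely the straightforward verification the paper has in mind: the paper states this lemma without proof, calling it ``easily verifiable,'' and your argument — identity-on-objects plus uniqueness of morphisms in the poset gives the fiber description, while acyclicity of $C(X,\pi)$ (Lemma \ref{face_category_of_CNCSS}) together with $P_{\lambda,\lambda}$ being a point gives the degenerate-iff-degenerate correspondence — is exactly the intended bookkeeping. Nothing is missing; in particular you correctly isolated the one step requiring input, namely that a chain upstairs is nondegenerate if and only if its underlying chain of cells is strictly increasing.
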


By using the face category, we may regard a cylindrically normal
stellar stratified space as a functor.

\begin{definition}
 For a cylindrically normal stellar stratified space $X$, define a
 functor
 \[
  D^{X} : C(X) \rarrow{} \Spaces
 \]
 by assigning the domain $D_{\lambda}$ to each cell
 $\varphi_{\lambda}:D_{\lambda}\to \overline{e_{\lambda}}$. For
 a morphism $p\in C(X)(e_{\mu},e_{\lambda})=P_{\mu,\lambda}$, define
 $D^{X}(p)=b_{\mu,\lambda}(p)\in \Map(D_{\mu},D_{\lambda})$.  
\end{definition}

The following is an extension of Proposition 2.47 of \cite{1312.7368}.

\begin{proposition}
 For a CW cylindrically normal stellar stratified space $X$, the functor
 $D^{X}$ is a continuous functor\footnote{See Definition
 \ref{continuous_functor_to_Top} for the 
 definitions of the continuity of a functor to $\category{Spaces}$ and
 its colimit.} and we have a natural homeomorphism
 \[
  \colim_{C(X)} D^{X} \rarrow{\cong} X.
 \]
\end{proposition}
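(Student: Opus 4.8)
The plan is to realize $\colim_{C(X)}D^{X}$ explicitly as a quotient of $\coprod_{\lambda}D_{\lambda}$ and then match that quotient with the presentation of $X$ supplied by Lemma \ref{quotient_of_D(X)}. First I would verify that $D^{X}$ is a continuous functor in the sense of Definition \ref{continuous_functor_to_Top}: the total space is $\coprod_{\lambda}D_{\lambda}=D(X)$, the projection to $\Ob C(X)=P(X)$ is the evident one, and the structure map
\[
 \coprod_{e_{\mu}\subset\overline{e_{\lambda}}} P_{\mu,\lambda}\times D_{\mu}\longrightarrow \coprod_{\lambda}D_{\lambda}
\]
is assembled from the maps $b_{\mu,\lambda}$, each continuous by hypothesis. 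Compatibility with composition is exactly the associativity square for $c_{\lambda_0,\lambda_1,\lambda_2}$ in Definition \ref{cylindrical_structure_definition}, and the unit law holds because $P_{\lambda,\lambda}$ is a point with $b_{\lambda,\lambda}=\id$.

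Next I would compute the colimit as the coequalizer
\[
 \coprod_{e_{\mu}\subset\overline{e_{\lambda}}} P_{\mu,\lambda}\times D_{\mu}\;\rightrightarrows\;\coprod_{\lambda}D_{\lambda},
\]
where the two maps send $(p,x)\in P_{\mu,\lambda}\times D_{\mu}$ to $x\in D_{\mu}$ and to $b_{\mu,\lambda}(p,x)\in D_{\lambda}$, respectively. Thus $\colim_{C(X)}D^{X}$ is $\bigl(\coprod_{\lambda}D_{\lambda}\bigr)/\!\approx$, carrying the quotient topology, where $\approx$ is the equivalence relation generated by $x\approx b_{\mu,\lambda}(p,x)$. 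The remaining task is to identify $\approx$ with the relation $\sim_{\Phi}$ of Lemma \ref{quotient_of_D(X)}.

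The inclusion $\approx\,\subseteq\,\sim_{\Phi}$ is immediate from the compatibility square $\varphi_{\lambda}\circ b_{\mu,\lambda}=\varphi_{\mu}\circ\pr_{2}$, which yields $\varphi_{\mu}(x)=\varphi_{\lambda}(b_{\mu,\lambda}(p,x))$. For the reverse inclusion, suppose $\varphi_{\mu}(x)=\varphi_{\lambda}(y)=:z$ with $x\in D_{\mu}$ and $y\in D_{\lambda}$, and let $e_{\nu}$ be the stratum containing $z$; normality forces $e_{\nu}\subset\overline{e_{\mu}}\cap\overline{e_{\lambda}}$. Using the covering clause (condition 3) together with the injectivity of $\varphi_{\nu}|_{\Int(D_{\nu})}$, I would write $x=b_{\nu,\mu}(q,w)$ for the unique $w\in\Int(D_{\nu})$ with $\varphi_{\nu}(w)=z$, whence $x\approx w$; symmetrically $y\approx w$, so $x\approx y$. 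This is the step I expect to demand the most care, since it requires extracting $w$ from the surjectivity-on-interiors and homeomorphism-on-interiors clauses of Definition \ref{cylindrical_structure_definition} and handling the degenerate cases $\nu=\mu$ and $\mu=\lambda$ (where $x$ already lies in $\Int(D_{\mu})$) separately.

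With $\approx\,=\,\sim_{\Phi}$ in hand, both quotients carry the quotient topology inherited from the single space $\coprod_{\lambda}D_{\lambda}$, so
\[
 \colim_{C(X)}D^{X}\;\cong\;D(X)/\!\sim_{\Phi}\;\cong\;X,
\]
the last homeomorphism being Lemma \ref{quotient_of_D(X)}, where the CW hypothesis is exactly what guarantees that $\Phi$ is a quotient map. Finally, naturality would follow by chasing a morphism $\bm{f}:X\to Y$ through the two coequalizer presentations: the maps $f_{\lambda}:D_{\lambda}\to D_{\underline{f}(\lambda)}$ and $f_{\mu,\lambda}:P_{\mu,\lambda}\to P_{\underline{f}(\mu),\underline{f}(\lambda)}$ intertwine the parallel pairs, so the induced map on colimits agrees with $f$ under the identification above.
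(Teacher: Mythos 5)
Your proposal is correct and follows essentially the same route as the paper: both exhibit $\colim_{C(X)}D^{X}$ and $X$ as quotients of $D(X)=\tot(D^{X})$ and reduce the statement to identifying the coequalizer relation with the relation $\sim_{\Phi}$ of Lemma \ref{quotient_of_D(X)}, whose CW hypothesis makes $\Phi$ a quotient map. The only difference is that the paper omits the verification that the two relations coincide (deferring to Proposition 2.47 of \cite{1312.7368}), whereas you carry it out; your argument via the covering condition of the cylindrical structure and the injectivity of $\varphi_{\nu}|_{\Int(D_{\nu})}$ is precisely that missing step and is sound.
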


\begin{proof}
 The continuity of the functor $D^{X}$ is obvious from the
 definition.

 By definition, $\colim_{C(X)} D^{X}$ is a quotient space of
 $\tot(D^{X})=D(X)$. Let $\sim_{c}$ be the defining equivalence relation
 so that $\colim_{C(X)}D^{X} = D(X)/_{\sim_{c}}$. On the other hand, by
 Lemma \ref{quotient_of_D(X)}, we have a description of $X$ as a
 quotient of $D(X)$
 \[
  X \cong D(X)/_{\sim_{\Phi}},
 \]
 where the relation $\sim_{\Phi}$ is defined by $x\sim_{\Phi} y$ if and
 only if $\varphi_{\mu}(x) =\varphi_{\lambda}(y)$ for $x\in D_{\mu}$ and
 $y\in D_{\lambda}$.

 It remains to verify that $\sim_{c}$ is identical to $\sim_{\Phi}$. The
 proof is essentially the same as that of Proposition 2.47 in
 \cite{1312.7368}. Details are omitted.
\end{proof}

\subsection{Barycentric Subdivisions of Cellular
 Stratified Spaces}
\label{barycentric_subdivision}

As we can see from the examples in \S\ref{face_category_definition}, we
often have an embedding of the barycentric subdivision $\Sd(X)$ in $X$,
even if $X$ is neither a cell complex nor regular. In this section, we first
show that such an embedding always exists for any cylindrically normal
cellular stratified space. And then we show that it often admits a
strong deformation retraction.

In order to describe our embedding, we use
the language of $\Delta$-spaces\footnote{A $\Delta$-space is a
simplicial space without degeneracies. See Definition
\ref{Delta-set_definition}.}. 

\begin{theorem}
 \label{embedding_Sd}
 Let $(X,\pi)$ be a cylindrically normal CW stellar stratified space.
 There exists an embedding
 \[
  i_{X} : \Sd(X,\pi) \hookrightarrow X,
 \]
 which is natural with respect to
 strict morphisms of cylindrically normal stellar stratified spaces.
 Furthermore, when all cells in $X$ are closed, $i_{X}$ is a
 homeomorphism onto $X$. 
\end{theorem}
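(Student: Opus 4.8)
The plan is to build $i_X$ one nondegenerate simplex at a time, using the aster (star-shaped) structure of the domains together with the radial coordinate on the ambient disk, and then to invert the construction stratum by stratum. Using the description of $\overline{N}_\bullet C(X)$ from Lemma \ref{underlying_poset_nerve}, a point of $\Sd(X,\pi)=BC(X)$ is represented by a pair $(\bm{t},\sigma)$ with $\bm{t}=(t_0,\dots,t_n)\in\Delta^n$ and a nondegenerate chain $\sigma=(e_{\lambda_0}\subsetneq\cdots\subsetneq e_{\lambda_n};\,p_1,\dots,p_n)$, where $p_i\in P_{\lambda_{i-1},\lambda_i}$. First I would define $\xi_n\in D_{\lambda_n}$ by the upward recursion $\xi_0=0$ and $\xi_j=\bigl(\tfrac{t_0+\cdots+t_{j-1}}{t_0+\cdots+t_j}\bigr)\,b_{\lambda_{j-1},\lambda_j}(p_j)(\xi_{j-1})$, where $s\cdot z:=sz$ is scalar multiplication in $\R^N$ (legitimate because $\{0\}\ast\{z\}\subset D_{\lambda_j}$ for each $z\in D_{\lambda_j}$), with the convention $\xi_j=0$ when the denominator vanishes; then set $i_X[\bm{t},\sigma]=\varphi_{\lambda_n}(\xi_n)$. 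Geometrically this cones the lift of the lower data toward the barycenter $0$ of the top domain, reproducing the pictures in Examples \ref{minimal_cell_decomposition_of_circle} and \ref{punctured_torus}.

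Next I would check well-definedness under the face identifications of the realization and continuity. For the last face ($t_n=0$) the outer ratio is $1$, so $\xi_n=b_{\lambda_{n-1},\lambda_n}(p_n)(\xi_{n-1})$ and the first ("projection") commuting diagram of Definition \ref{cylindrical_structure_definition} gives $\varphi_{\lambda_n}(\xi_n)=\varphi_{\lambda_{n-1}}(\xi_{n-1})$, matching $d_n\sigma$; for the zeroth face ($t_0=0$) the first ratio is $0$, so $\xi_1=0$ and the construction restarts at the cone point of $D_{\lambda_1}$, matching $d_0\sigma$; for an inner face ($t_i=0$, $0<i<n$) the $i$-th ratio is $1$ and the associativity ("cocycle") diagram yields $b_{\lambda_i,\lambda_{i+1}}(p_{i+1})\circ b_{\lambda_{i-1},\lambda_i}(p_i)=b_{\lambda_{i-1},\lambda_{i+1}}(c(p_{i+1},p_i))$, matching $d_i\sigma$. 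Continuity in $(\bm{t},\bm{p})$ follows because the $b_{\mu,\lambda}$ and scalar multiplication are continuous, and the only possible failure — non-convergence of a ratio when a partial sum tends to $0$ — is harmless, since in that case some higher ratio tends to $0$ and scales the affected $\xi_j$ to the cone point; assembling over simplices and using that $BC(X)$ carries the colimit topology gives a continuous $i_X$ into $X$.

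Then I would prove injectivity and identify the image stratum by radial recovery, using the canonical representative of each point of $BC(X)$ (the unique $(\bm{t},\sigma)$ with $\sigma$ nondegenerate and all $t_j>0$). With all $t_j>0$ one checks $\xi_n\in\Int(D_{\lambda_n})$, so $\varphi_{\lambda_n}(\xi_n)\in e_{\lambda_n}$, and the image determines $\lambda_n$ and $\xi_n=\varphi_{\lambda_n}^{-1}(y)$ (the latter because $\varphi_{\lambda_n}|_{\Int(D_{\lambda_n})}$ is a homeomorphism). Since $\partial D_{\lambda_n}=D_{\lambda_n}\cap S^{N-1}$, the boundary point $\tilde\xi_n=b_{\lambda_{n-1},\lambda_n}(p_n)(\xi_{n-1})$ has norm $1$, whence $t_n=1-|\xi_n|$ and $\tilde\xi_n=\xi_n/|\xi_n|$ are recovered; condition (1) of Definition \ref{cylindrical_structure_definition} (that $b_{\mu,\lambda}$ restricts to a homeomorphism on $P_{\mu,\lambda}\times\Int(D_\mu)$) then recovers $p_n$ and $\xi_{n-1}\in\Int(D_{\lambda_{n-1}})$ uniquely, and I would recurse downward to recover the whole of $(\bm{t},\sigma)$. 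This shows $i_X$ is injective and supplies an explicit stratumwise-continuous candidate inverse. In the closed case, each $D_\lambda=D^{\dim e_\lambda}$ is a closed (thin) aster, so the same radial decomposition applied to an arbitrary $\xi=\varphi_\mu^{-1}(y)$ terminates by dimension and, using condition (3) that $\partial D_\lambda=\bigcup b_{\mu,\lambda}(P_{\mu,\lambda}\times\Int(D_\mu))$, exhibits every $y\in X$ in the image, giving surjectivity and hence a homeomorphism. Naturality with respect to strict morphisms is a diagram chase: a strict morphism induces a functor $C(X)\to C(Y)$ via $\underline{f}$ and the $f_{\mu,\lambda}$, and strictness ($f_\lambda(0)=0$, compatibility of the $f_\lambda$ with the $b_{\mu,\lambda}$) makes the square with $i_X,i_Y$ commute.

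The main obstacle I expect is upgrading the continuous bijection $BC(X)\to i_X(BC(X))$ to a topological embedding in the non-closed, non-compact setting, since the parameter spaces $P_{\mu,\lambda}$ are arbitrary and $BC(X)$ need not be a CW complex. The route I would take is to show the stratumwise inverse described above is globally continuous: it is continuous on the part of the image lying in each open stratum $e_\mu$ (being built from the homeomorphisms $\varphi_\mu^{-1}$, the radial coordinate, and the inverses of the $b_{\mu,\lambda}|_{P\times\Int D_\mu}$), and the pieces agree along faces by the face-compatibility already verified; the CW hypothesis on $X$ (closure finiteness and the weak topology of Definition \ref{CW_stratification}) then guarantees that near any point only finitely many strata are relevant, so these stratumwise inverses glue to a continuous inverse, exactly as in the gluing argument of Lemma \ref{weak_topology_and_quotient_map}. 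This point-set step, rather than the algebra of the cone construction, is where the care is needed.
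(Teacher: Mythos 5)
Your construction of $i_X$ is the same as the paper's, just written in closed form: the recursion $\xi_j=\bigl(\tfrac{t_0+\cdots+t_{j-1}}{t_0+\cdots+t_j}\bigr)b_{\lambda_{j-1},\lambda_j}(p_j)(\xi_{j-1})$ is exactly the inductive cone extension $z_{\bm{e}}(\bm{p},(1-t)\bm{s}+t\bm{v}_k)=(1-t)z_{d_k(\bm{e})}(\bm{p},\bm{s})$ used in the paper, your three face checks are the commutative diagrams (\ref{dj}) and (\ref{dk}), and your radial recovery and closed-case surjectivity match the paper's bijectivity argument. The genuine gap is in your last step, where the continuous bijection onto the image is upgraded to an embedding.

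Two things go wrong there. First, the claim that the CW hypothesis ``guarantees that near any point only finitely many strata are relevant'' is false: closure finiteness plus weak topology is strictly weaker than local finiteness (compare Lemma \ref{metrizable_implies_locally_finite}; an infinite wedge of circles is CW but not locally finite at the wedge point). Second, and decisively, your gluing of stratumwise inverses needs the image $i_X(BC(X))$, in its subspace topology, to carry the weak topology determined by the pieces $i_X(BC(X))\cap\overline{e_{\lambda}}$; but a weak topology passes only to closed or open subspaces (Lemma \ref{subspace_of_CW_is_CW}), and the image of $i_X$ is in general neither. Concretely, let $F\subset S^1$ be an open arc, let $D_{2}=\Int(D^2)\cup F$, and let $X$ be the quotient of $D_{2}$ collapsing $F$ to a point; this is a normal, cylindrically normal CW cellular stratified space with one $0$-cell, one $2$-cell, and $P_{0,2}=F$, $b_{0,2}(p,D^0)=p$. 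Here the part of $\Ima(z)$ lying in $D_{2}$ is the cone $\{0\}\ast F$, whose closure in $D_{2}$ contains the points $(1-s)a$, $0<s<1$, for $a$ an endpoint of the arc; these lie in $\Int(D^2)\subset D_{2}$ but not in $\{0\}\ast F$, so $\Ima(z)$ is not closed in $D(X)$ and hence $i_X(BC(X))$ is not closed in $X$. So the gluing modeled on Lemma \ref{weak_topology_and_quotient_map} does not apply: that lemma glues maps defined on all of a CW stratified space, not on an arbitrary (non-closed, non-open) subspace. The paper sidesteps exactly this trap: instead of proving continuity of an inverse on the badly-topologized image, it proves the saturation identity $\Phi^{-1}(i_X(A))=z(p^{-1}(A))$ for every $A\subset\Sd(X)$ and concludes that $i_X$ is a closed map onto its image, using that $\Phi\colon D(X)\to X$ is a quotient map (Lemma \ref{quotient_of_D(X)}, which is where the CW hypothesis is actually consumed) and that $z$ is an embedding into $D(X)$. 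To repair your proof, replace the gluing step by this identity: your explicit formulas for $z$ and the radial recovery are precisely what is needed to verify it, and closedness is then tested upstairs in $D(X)$ rather than inside the image.
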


The construction of $i_{X}$ and the proof of the fact that $i_{X}$ is an
embedding are essentially the same as those of Proposition 2.51 in
\cite{1312.7368}. We record them for the convenience of the reader.

\begin{definition}
 We construct $i_{X}$ as follows.
 Since the face category $C(X,\pi)=C(X)$ is acyclic,
 $\Sd(X)=BC(X)=\|\overline{N}(C(X))\|$. 
 Thus it suffices to construct a series of maps
 \[
  i_n : \overline{N}_n(C(X))\times\Delta^n \longrightarrow X
 \]
 making the following diagram commutative for all $i$
 \[
  \begin{diagram}
   \node{\overline{N}_n(C(X,\pi))\times\Delta^{n-1}} \arrow{s,l}{d_i\times 1}
   \arrow{e,t}{1\times d^i} 
   \node{\overline{N}_n(C(X,\pi))\times\Delta^n} \arrow{s,r}{i_n} \\ 
   \node{\overline{N}_{n-1}(C(X,\pi))\times\Delta^{n-1}}
   \arrow{e,b}{i_{n-1}} \node{X.}
  \end{diagram}
 \]

 Let us construct $i_n$ by induction on $n$.
 The set $\overline{N}_0(C(X))$ is the set of cells for $\pi$.
 For each cell $e_{\lambda}$ with cell structure map
 $\varphi_{\lambda} : D_{\lambda} \to \overline{e_{\lambda}} \subset X$,
 define
 \[
  i_0(e_{\lambda}) = \varphi_{\lambda}(0)
 \]
 and we obtain a map
 \[
  i_0 : \overline{N}_0(C(X))\cong \overline{N}_0(C(X))\times\Delta^0
 \hookrightarrow X. 
 \]
 Note that this map makes the diagram
 \[
 \begin{diagram}
  \node{\overline{N}_0(C(X,\pi))\times\Delta^0} \arrow{e,t}{i_0}
  \arrow{s,r}{z_0} \node{X} \\ 
  \node{D(X)}
  \arrow{ne,b}{\Phi} 
 \end{diagram}
 \]
 commutative, where $z_0$ is induced by the inclusion of $\Delta^0$ to
 the center of each disk and $D(X)$ and $\Phi$ are defined in 
 Lemma \ref{quotient_of_D(X)}.

 Suppose that we have constructed a map
 \[
  i_{k-1} : \overline{N}_{k-1}(C(X)) \times\Delta^{k-1} \rarrow{}
 X   
 \]
 satisfying the above compatibility conditions and that the restriction
 of $i_{k-1}$ to $\overline{N}_{k-1}(C(X)) \times\Int(\Delta^{k-1})$ is an
 embedding. Suppose, further, that there exists a map
 \[
  z_{k-1} : \overline{N}_{k-1}(C(X))\times\Delta^{k-1} \rarrow{}
 D(X) 
 \]
 making the diagram
 \[
 \begin{diagram}
  \node{\overline{N}_{k-1}(C(X))\times\Delta^{k-1}}
  \arrow{e,t}{i_{k-1}} 
  \arrow{s,r}{z_{k-1}} \node{X} \\ 
  \node{D(X)}
  \arrow{ne,b}{\Phi} 
 \end{diagram}  
 \]
 commutative. We construct an embedding
 \[
  z_k : \overline{N}_k(C(X))\times \Delta^k \longrightarrow D(X)  
 \]
 satisfying the compatibility conditions corresponding to those of $i_k$
 and define $i_k$ to be $\Phi\circ z_k$.

 Under the decomposition in Lemma \ref{underlying_poset_nerve}
 \[
  \overline{N}_k(C(X)) = \coprod_{\bm{e}\in \overline{N}_k(P(X))}
 \{\bm{e}\}\times \overline{N}(\pi)_k^{-1}(\bm{e}), 
 \]
 it suffices to construct a map
 \[
 z_{\bm{e}} : \overline{N}(\pi)_k^{-1}(\bm{e})\times \Delta^k \rarrow{}
 D_{\lambda_k}
 \]
 for each nodegenerate $k$-chain
 $\bm{e}:e_{\lambda_0}<\cdots < e_{\lambda_k}$ in
 $P(X)$.
 The maps $\{z_{\bm{e}}\}$ should satisfy the following conditions:  
 \begin{enumerate}
  \item For each $0\le j< k$, the following diagram is commutative:
	\begin{equation}
	 \begin{diagram}
	  \node{\overline{N}(\pi)_k^{-1}(\bm{e})\times \Delta^k}
	  \arrow[3]{e,t}{z_{\bm{e}}} \node{} \node{}
	  \node{D_{\lambda_k}} \\
	  \node{\overline{N}(\pi)_{k}^{-1}(\bm{e})\times \Delta^{k-1}}
	  \arrow{n,l}{1\times d^j} \arrow[3]{e,b}{d_j\times 1} \node{}
	  \node{} 
	  \node{\overline{N}(\pi)_{k-1}^{-1}(d_j(\bm{e}))\times \Delta^{k-1}.} 
	  \arrow{n,b}{z_{d_j(\bm{e})}} 
	 \end{diagram}
	 \label{dj}
	\end{equation}

  \item When $j=k$, the following diagram is commutative:
	\begin{equation}
	 \begin{diagram}
	  \node{\overline{N}(\pi)_k^{-1}(\bm{e})\times \Delta^k}
	  \arrow{e,t}{z_{\bm{e}}} 
	  \node{D_{\lambda_k}} \\
	  \node{\overline{N}(\pi)_k^{-1}(\bm{e})\times \Delta^{k-1}}
	  \arrow{n,l}{1\times d^k} \arrow{s,=} \node{} \\
	  \node{P_{\lambda_{k-1},\lambda_{k}}\times
	  \overline{N}(\pi)_{k-1}^{-1}(d_k(\bm{e}))\times \Delta^{k-1}}  
	  \arrow{e,b}{1\times z_{d_k(\bm{e})}} 
	  \node{P_{\lambda_{k-1},\lambda_{k}}\times D_{\lambda_{k-1}}.}
	  \arrow[2]{n,r}{b_{\lambda_{k-1},\lambda_k}} 
	 \end{diagram}
	 \label{dk}
	\end{equation}
 \end{enumerate}

 By the inductive assumption, we have an embedding
 \[
  z_{d_k(\bm{e})} : \overline{N}(\pi)_{k-1}^{-1}(d_k(\bm{e}))\times
 \Delta^{k-1} \rarrow{} D_{\lambda_{k-1}}
 \]
 corresponding to the $(k-1)$-chain
 $d_k(\bm{e}) : e_{\lambda_0}<\cdots< e_{\lambda_{k-1}}$. Note that
 \[
 \overline{N}(\pi)_k^{-1}(\bm{e}) = P_{\lambda_{k-1},\lambda_k} \times
 \overline{N}(\pi)_{k-1}^{-1}(d_k(\bm{e})).  
 \]
 Compose with $b_{\lambda_{k-1},\lambda_k}$ and we obtain a map
 \begin{eqnarray*}
  \overline{N}(\pi)_k^{-1}(\bm{e})\times \Delta^{k-1} & = &
   P_{\lambda_{k-1},\lambda_k} \times
   \overline{N}(\pi)_{k-1}^{-1}(d_k(\bm{e}))\times \Delta^{k-1} \\
  & \rarrow{1\times z_{d_k(\bm{e})}} & P_{\lambda_{k-1},\lambda_k} \times
   D_{\lambda_{k-1}} \\
  & \rarrow{b_{\lambda_{k-1},\lambda_k}} & \partial D_{\lambda_k}.
 \end{eqnarray*}
 Since $D_{\lambda_k}$ is an aster\footnote{Definition
 \ref{aster_definition}}, it can be extended to an embedding
 \[
  z_{\bm{e}} : \overline{N}(\pi)_k^{-1}(\bm{e})\times \Delta^k =
 \overline{N}(\pi)_k^{-1}(\bm{e})\times  
 \Delta^{k-1}\ast \bm{v}_k \rarrow{} \partial D_{\lambda_k} \ast 0
 \subset D_{\lambda_k}
 \]
 by 
\[
 z_{\bm{e}}(\bm{p},(1-t)\bm{s}+t \bm{v}_k) =
 (1-t)z_{d_k(\bm{e})}(\bm{p},\bm{s})+t\cdot 0 =
 (1-t)z_{d_k(\bm{e})}(\bm{p},\bm{s}), 
\]
 where $\bm{v}_k = (0,\ldots,0,1)$ is the last vertex in
 $\Delta^k$. Recall that the join operation $\ast$ used in this paper is
 defined by connecting points by line segments\footnote{Definition
 \ref{join_definition}.} and
 is not the same as the join operation used in algebraic topology, for
 example in Milnor's paper \cite{MilnorBG}.

 By definition, $z_{\bm{e}}$ makes the diagram (\ref{dk}) commutative.
 Let us verify that $z_{\bm{e}}$ also makes the diagram (\ref{dj})
 commutative for $0\le j< k$. By the inductive assumption, the
 following diagram is commutative:
 \[
  \begin{diagram}
   \node{\overline{N}(\pi)_{k-1}^{-1}(d_k(\bm{e}))\times \Delta^{k-2}}
   \arrow{s,l}{1\times d^j} \arrow{e,t}{d_j\times 1}
   \node{\overline{N}(\pi)_{k-2}^{-1}(d_jd_k(\bm{e}))\times \Delta^{k-2}} 
   \arrow{s,r}{z_{d_jd_k(\bm{e})}} \arrow{e,=}
   \node{\overline{N}(\pi)_{k-2}^{-1}(d_{k-1}d_j(\bm{e}))\times \Delta^{k-2}} 
   \arrow{sw,b}{z_{d_{k-1}d_j(\bm{e})}} \\ 
   \node{\overline{N}(\pi)_{k-1}^{-1}(d_k(\bm{e}))\times \Delta^{k-1}}
   \arrow{e,b}{z_{d_{k}(\bm{e})}} \node{D_{\lambda_{k-1}}.}
  \end{diagram}
 \]
 Under the identification $\Delta^{k}=\Delta^{k-1}\ast\bm{v}_k$,
 $d^j : \Delta^{k-1}\to\Delta^k$ can be identified with the composition
 \[
  \Delta^{k-2}\ast\bm{v}_{k-1} \cong \Delta^{k-2}\ast\bm{v}_{k}
 \rarrow{d^j\ast 1_{\bm{v}_k}} \Delta^{k-1}\ast\bm{v}_k.
 \]
 On the other hand, since $j<k$, we have
 \begin{eqnarray*}
  \overline{N}(\pi)_k(\bm{e}) & = & P_{\lambda_{k-1},\lambda_k}\times
 \overline{N}(\pi)_{k-1}^{-1}(d_{k}(\bm{e})) \\  
  \overline{N}(\pi)_{k-1}(d_j(\bm{e})) & = & P_{\lambda_{k-1},\lambda_k}\times
 \overline{N}(\pi)_{k-2}^{-1}(d_{k-1}d_{j}(\bm{e})),
 \end{eqnarray*}
 and the face operator
 $d_j : \overline{N}(\pi)_k(\bm{e}) \to \overline{N}(\pi)_{k-1}(d_j(\bm{e}))$
 coincides with the map
\[
 P_{\lambda_{k-1},\lambda_{k}}\times
 \overline{N}(\pi)_{k-1}^{-1}(d_k(\bm{e})) 
 \rarrow{1\times d_j} P_{\lambda_{k-1},\lambda_k}\times
 \overline{N}(\pi)_{k-1}^{-1}(d_{j}d_k(\bm{e})) =
 P_{\lambda_{k-1},\lambda_k}\times 
 \overline{N}(\pi)_{k-1}^{-1}(d_{k-1}d_{j}(\bm{e})). 
\]
Thus we obtain the commutative diagram
\[
 \begin{diagram}
  \node{\overline{N}(\pi)_k^{-1}(\bm{e})\times\Delta^{k-1}}
  \arrow{e,t}{d_j\times 1} \arrow{s,=} 
  \node{\overline{N}(\pi)_{k-1}^{-1}(d_j(\bm{e}))\times\Delta^{k-1}}
  \arrow{s,=} \\ 
  \node{P_{\lambda_{k-1},\lambda_k}\times
  \overline{N}(\pi)_{k-1}^{-1}(d_k(\bm{e}))\times\Delta^{k-1}}
  \arrow{e,t}{1\times d_j\times 1} \arrow{s,l}{1\times 1\times d^j}
  \node{P_{\lambda_{k-1},\lambda_k}\times 
  \overline{N}(\pi)_{k-2}^{-1}(d_{k-1}d_j(\bm{e}))\times \Delta^{k-1}} 
  \arrow{s,r}{1\times \tilde{z}_{d_{k-1}d_j(\bm{e})}} 
  \\ 
 \node{P_{\lambda_{k-1},\lambda_k}\times
  \overline{N}(\pi)_{k-1}^{-1}(d_k(\bm{e})) 
  \times \Delta^{k}} \arrow{e,t}{1\times
  \tilde{z}_{d_{k}(\bm{e})}} 
  \node{P_{\lambda_{k-1},\lambda_k}\times  
  ((D_{\lambda_{k-1}}\times\{0\})\ast (0,1))}
 \end{diagram}
\] 
 where we embed $D_{\lambda_{k-1}}$ into $D_{\lambda_{k-1}}\times\R$ as
 $D_{\lambda_{k-1}}\times\{0\}$ and $\tilde{z}_{d_k(\bm{e})}$ is defined by
 \[
  \tilde{z}_{d_k(\bm{e})}(\bm{p}, (1-t)\bm{s}+t\bm{v}_k) =
 (1-t)(z_{d_k(\bm{e})}(\bm{p},\bm{s}),0) +t(0,1).
 \]
 The map $\tilde{z}_{d_{k-1}d_j(\bm{e})}$ is defined analogously. We
 also have an extension of $b_{\lambda_{k-1},\lambda_{k}}$
 \[
 \tilde{b}_{\lambda_{k-1},\lambda_{k}} :
 P_{\lambda_{k-1},\lambda_{k}}\times
 ((D_{\lambda_{k-1}}\times\{0\})\ast (0,1)) \longrightarrow D_{\lambda_{k}}. 
 \]
 By definition, we have the commutative diagram
 \[
  \begin{diagram}
   \node{P_{\lambda_{k-1},\lambda_{k}}\times
   N(\pi)_{k-2}^{-1}(d_{k-1}d_j(\bm{e}))\times \Delta^{k-1}}
   \arrow{s,l}{1\times \tilde{z}_{d_{k-1}d_j(\bm{e})}} \arrow{e,=}
   \node{N(\pi)_{k-1}^{-1}(d_{j}(\bm{e}))\times \Delta^{k-1}}
   \arrow{s,r}{z_{d_j(\bm{e})}} \\    
   \node{P_{\lambda_{k-1},\lambda_k}\times
   ((D_{\lambda_{k-1}}\times\{0\})\ast (0,1))}
   \arrow{e,t}{\tilde{b}_{\lambda_{k-1},\lambda_k}} 
    \node{D_{\lambda_k}} \\
   \node{P_{\lambda_{k-1},\lambda_k}\times
   N(\pi)_{k-1}^{-1}(d_k(\bm{e}))\times\Delta^k} \arrow{n,l}{1\times
   \tilde{z}_{d_k(\bm{e})}}  \arrow{e,=}
   \node{N(\pi)_k^{-1}(\bm{e})\times\Delta^k.} \arrow{n,r}{z_{\bm{e}}}
  \end{diagram}
 \]
 This completes the proof of the commutativity of the diagram
 (\ref{dj}).
 And we obtain a map
 \[
  z_{k} : \overline{N}_{k}(C(X))\times\Delta^k \rarrow{} D(X)
 \]
 for all $k$. By composing with $\Phi:D(X)\to X$,
 we obtain a familty of continuous maps
 \[
  \{i_k : \overline{N}_k(C(X))\times\Delta^k \rarrow{} X\}_{k\ge 0}
 \]
 that are compatible with $d_i$, which induces a continuous map
 \[
  i_{X} : \Sd(X) = \left\|\overline{N}(C(X))\right\| \rarrow{} X.
 \]
\end{definition}

The next step is to prove that $i_{X}$ is an embedding.

\begin{proof}[Proof of Theorem \ref{embedding_Sd}]
 Let us show that $i_{X}: BC(X)\to \Ima(\iota_{X})$ is a bijective
 closed map, hence is a homeomorphism.
 The bijectivity is obvious from the construction. In order to show that
 $i_{X}$ is a closed map, consider the diagram
 \[
  \begin{diagram}
   \node{\coprod_{n,\bm{e}\in \overline{N}_{n}(P(X))} \{\bm{e}\}\times
   \overline{N}(\pi)_n^{-1}(\bm{e})\times\Delta^n} \arrow{s,l}{p} 
   \arrow{e,t}{\coprod_{\bm{e}} z_{\bm{e}}} 
   \node{D(X)} \arrow{s,r}{\Phi} \\
   \node{\left\|\overline{N}(C(X))\right\|} \arrow{e,b}{i_{X}} \node{X}
  \end{diagram}
 \]
 whose vertical arrows are quotient maps.
 For simplicity, let us denote $z = \coprod_{\bm{e}} z_{\bm{e}}$.
 For $A\subset \Sd(X)$, we show that
 \begin{equation}
  \Phi^{-1}(i_{X}(A)) = z(p^{-1}(A)),  
   \label{inverse_image_of_Phi}
 \end{equation}
 which immediately implies that $i_{X}$ is a closed map onto its image,
 since $\Phi$ is a quotient map and $z$ is an embedding. 
 
 The commutativity of the above diagram implies
 \[
  \Phi^{-1}(i_{X}(A)) \supset z(p^{-1}(A)).
 \]
 On the other hand, suppose $x\in \Phi^{-1}(i_{X}(A))$.
 Then there exist $a\in A$ and $\lambda\in\Lambda$ with
 $\Phi(x)=i_{X}(a)$ and $x\in D_{\lambda}$.
 Write $a=p(\bm{b},\bm{t}))$ for
 $\bm{b}\in \overline{N}(\pi)_{k}^{-1}(\bm{e})$ and
 $\bm{t}\in \Delta^{k}$.
 
 If $x\in\Int(D_{\lambda})$, we have
 \[
 \varphi_{\lambda}(x) = i_{X}(a) = i_{X}(p(\bm{b},\bm{t})) =
 \Phi(z_{\bm{e}}(\bm{b},\bm{t})). 
 \]
 Since $x\in \Int(D_{\lambda})$, $\bm{t}$ is of the form
 $\bm{t}=(1-t)\bm{s}+t\bm{v}_k$ for some $0<t<1$ and
 $\bm{s}\in\Delta^{k-1}$. This implies that
 $z_{\bm{e}}(\bm{b},\bm{t})\in\Int(D_{\lambda})$ and
 $\varphi_{\lambda}(x)=\varphi_{\lambda}(z_{\bm{e}}(\bm{b},\bm{t}))$.
 Thus $x=z_{\bm{e}}(\bm{b},\bm{t})$ with $p(\bm{b},\bm{t})=a\in A$.
 In other words, $x\in z(p^{-1}(A))$.
 
 Suppose $x\in \partial D_{\lambda}$ and let $e_{\mu}$ be a cell
 containing $\varphi_{\lambda}(x)$.
 By the cylindrical normality,
 $\partial D_{\lambda}$ is a stratified space and
 \[
 b_{\mu,\lambda} : P_{\mu,\lambda}\times D_{\mu} \rarrow{} \partial
 D_{\lambda} 
 \]
 is a strict morphism of stratified space. Choose
 $(b,y)\in P_{\mu,\lambda}\times D_{\mu}$
 with $b_{\mu,\lambda}(b,y)=x$. Then
 $\varphi_{\mu}(y)=\varphi_{\lambda}(x)=\Phi(x)\in i_{X}(A)$.
 Since $y\in \Int(D_{\mu})$, the above argument implies that there
 exists $(\bm{b}',\bm{t}')\in p^{-1}(A)$ with
 $y=z_{\pi(\bm{b}')}(\bm{b}',\bm{t}')$. Define $\bm{b}=(b,\bm{b}')$ and
 $\bm{t}=d^k(\bm{t}')$. Then the defining relation of the geometric
 realization of a $\Delta$-space implies that
 \[
  p(\bm{b},\bm{t}) = [\bm{b},d^k(\bm{t}')] 
   = [d_{k}(\bm{b}),\bm{t}'] \\
  =  [\bm{b}',\bm{t}'] = p(\bm{b}',\bm{t}')\in A.
 \]
 On the other hand, the commutativity of (\ref{dk}) implies that
 \[
  x=b_{\mu,\lambda}(b,y)=
 b_{\mu,\lambda}(b,z_{p(\bm{b}')}(\bm{b}',\bm{t}')) =
 z_{p(b,\bm{b}')}(p(b,\bm{b}),d^k(\bm{t}')) =
 z_{p(\bm{b})}(\bm{b},\bm{t}).  
 \]
 And we have $x\in z(p^{-1}(A))$. This complets the proof of
 (\ref{inverse_image_of_Phi}). 
 
 Finally when all cells are closed,
 each $z_k$ is surjective and thus $i_{X}$ is a homeomorphism.
\end{proof}

The next task is to show that the image of $i_{X}$ is a strong
deformation retract of $X$ under a suitable condition.
The idea of proof is essentially the same as Theorem
\ref{embedding_Sd}. We construct deformation retractions on each cell
and glue them together. In order to define cell-wise deformation
retraction, we assume the polyhedrality.

\begin{theorem}
 \label{deformation_theorem}
 For a polyhedral stellar stratified space $X$, the image of
 embedding $i_{X} : \Sd(X,\pi) \to X$ is a strong deformation
 retract of $X$. The deformation retraction can be taken to be natural
 with respect to strict morphisms of polyhedral stellar stratified
 spaces. 
\end{theorem}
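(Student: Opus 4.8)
The plan is to build the strong deformation retraction $H:X\times[0,1]\to X$ cell by cell, mirroring the inductive, skeleton-by-skeleton construction of $i_X$ in Theorem \ref{embedding_Sd}, and then to glue the cell-wise homotopies using the weak topology. First I would pin down the target inside each domain. Recall that $z_{\bm e}(\bm p,(1-t)\bm s+t\bm v_k)=(1-t)\,z_{d_k(\bm e)}(\bm p,\bm s)$, so the part of $\Ima(i_X)$ lying over $\overline{e_\lambda}$ lifts through $\varphi_\lambda$ to the cone $K_\lambda=\{0\}\ast K_{\partial\lambda}$, where $K_{\partial\lambda}\subset\partial D_\lambda$ is the image of the barycentric subdivision of the boundary. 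Since $D_\lambda$ is an aster, $K_\lambda$ is a sub-aster, and the goal within a single cell is to deformation retract $D_\lambda$ onto $K_\lambda$ rel $K_\lambda$.

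For the inductive step, assume we have compatible strong deformation retractions $r_\mu:D_\mu\times[0,1]\to D_\mu$ onto $K_\mu$ for all cells $e_\mu$ of dimension $<n$. I would first assemble a retraction of $\partial D_\lambda$ onto $K_{\partial\lambda}$. By cylindrical normality $\partial D_\lambda=\bigcup_{e_\mu\subset\partial e_\lambda}b_{\mu,\lambda}(P_{\mu,\lambda}\times\Int(D_\mu))$, and on each piece I would set the retraction to be $b_{\mu,\lambda}\circ(\id_{P_{\mu,\lambda}}\times r_\mu)$. The commutativity of the two $b$/$c$ diagrams in Definition \ref{cylindrical_structure_definition}, together with the associativity of the composition maps $c_{\lambda_0,\lambda_1,\lambda_2}$, is precisely what forces these to agree on overlaps, so that they patch to a continuous homotopy on $\partial D_\lambda$ (continuity following from the CW/weak-topology hypothesis, as in Lemma \ref{weak_topology_and_quotient_map}). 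I would then cone this off from the center $0$ using the star-shaped structure, extending the boundary homotopy radially so that it fixes $0$ and pushes each radial segment into $K_\lambda$, yielding $r_\lambda$. By construction the restriction of $r_\lambda$ to $b_{\mu,\lambda}(P_{\mu,\lambda}\times\Int(D_\mu))$ is again the product homotopy, so the inductive compatibility is preserved.

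Here is where polyhedrality is essential and where the main difficulty lies. For non-closed cells the radial ray from $0$ through a point of $D_\lambda$ need not meet $\partial D_\lambda$ inside $D_\lambda$, so a naive radial coordinate is undefined, and one must ensure the coned homotopy is continuous up to the missing boundary. I would resolve this through the polyhedral replacement $\alpha_\lambda:\widetilde F_\lambda\to\overline{D_\lambda}$: after subdivision $\overline{D_\lambda}$ carries a simplicial structure in which the closed spine $\{0\}\ast K_{\partial\lambda}$ is a subcomplex, so the coning can be chosen to be a PL self-map of $\overline{D_\lambda}$ fixing the spine. The unique extension of each $b_{\mu,\lambda}$ to $\overline{b_{\mu,\lambda}}$ on the full disks, together with the extension properties of PL maps, lets me define the homotopy on the closure $\overline{D_\lambda}$ and then restrict it to $D_\lambda$; the local cone-likeness of the parameter spaces guarantees the restriction remains continuous and lands in $D_\lambda$. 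The most delicate bookkeeping is verifying, simultaneously over all $\mu$, that these PL cone extensions restrict compatibly to $X$ and that the resulting global homotopy descends through the quotient $\Phi:D(X)\to X$ — this is the homotopy-level analogue of the relation (\ref{inverse_image_of_Phi}) established in the proof of Theorem \ref{embedding_Sd}, and I expect it to be the main obstacle.

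Finally, for naturality, each $r_\lambda$ is manufactured canonically from the aster structure, the maps $b_{\mu,\lambda}$, and the chosen polyhedral structure; a strict morphism of polyhedral stellar stratified spaces carries these data to those of the target via PL maps, so the induced cell-wise homotopies commute with the morphism and the glued retraction $H$ is natural. Assembling the $r_\lambda$ into $H:X\times[0,1]\to X$ once more uses the weak topology to obtain continuity, and the construction makes $H_0=\id_X$, $H_1(X)=\Ima(i_X)$, and $H_t$ fixing $\Ima(i_X)$ pointwise, completing the proof.
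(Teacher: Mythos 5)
Your overall strategy is the same as the paper's: induct on the dimension of cells, assemble the boundary homotopy on $\partial D_{\lambda}$ from the cylindrical structure maps $b_{\mu,\lambda}$ and the retractions $r_{\mu}$ already built on lower cells (using PL extendability, Lemma \ref{PL_map_is_extendalbe}, to push the homotopy to the images $b_{\mu,\lambda}(P_{\mu,\lambda}\times D_{\mu})$), extend over the cell, glue by the CW condition, and get naturality from the canonicity of the construction. The skeleton is correct; the problem is the one step you flag as "the main difficulty" but then dispose of too quickly, and it is not the step you think it is.

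The crux is extending the glued boundary homotopy $H_{\lambda}:\partial D_{\lambda}\times[0,1]\to\partial D_{\lambda}$ to a strong deformation retraction of the whole aster $D_{\lambda}$ onto the spine $i_{D_{\lambda}}(\Sd(D_{\lambda}))$. The paper isolates this as Lemma \ref{spherical_case}, a genuinely nontrivial statement whose proof (via simplicial topology) lives in Appendix A of \cite{1312.7368}; your sketch does not reprove it. Radial coning only defines the homotopy on $\{0\}\ast\partial D_{\lambda}$, which for a non-closed cell is a proper subset of $D_{\lambda}$ (you notice this), and your fallback — choose a PL self-homotopy of $\overline{D_{\lambda}}$ fixing the spine and restrict — must satisfy two constraints \emph{simultaneously}: (i) on $\partial D_{\lambda}$ it must agree with the already constructed $H_{\lambda}$, or else the compatibility diagram (\ref{compatibility_for_H}) fails and the cell-wise homotopies neither glue nor descend through $\Phi$; and (ii) at every intermediate time it must map $D_{\lambda}$ into $D_{\lambda}$, i.e.\ never push a point into the missing boundary $\overline{D_{\lambda}}\setminus D_{\lambda}$. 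You exhibit no homotopy doing both: the straight-line homotopy to a PL retraction onto the spine satisfies (ii) but violates (i), while the cone on $H_{\lambda}$ satisfies (i) but is not defined on all of $\Int(D^{N})$. Reconciling the two is exactly the content of Lemma \ref{spherical_case}, and your appeal to "local cone-likeness of the parameter spaces" is irrelevant here — that hypothesis is used to make the structure maps PL, not to keep a homotopy inside an aster. Finally, you locate the delicate point in the descent through $\Phi:D(X)\to X$ (the analogue of (\ref{inverse_image_of_Phi})); in fact that part is routine once (\ref{compatibility_for_H}) holds and the stratification is CW. The within-cell extension is where the real work lies, and your proposal leaves it open.
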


\begin{corollary}
 \label{subdivision_of_totally_normal_cellular_stratified_space}
 For a totally normal cellular stratified space $(X,\pi)$, the map
 $i_{X}$ embeds $\Sd(X,\pi)$ into $X$ as a strong deformation
 retract. 
\end{corollary}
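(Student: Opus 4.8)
The plan is to realize $(X,\pi)$ as a polyhedral stellar stratified space and then invoke Theorem \ref{deformation_theorem} directly; the corollary is thus essentially a matter of checking that the hypotheses of that theorem are met, all of the genuine work having been done earlier.

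First I would confirm that $i_X$ is already available as a natural embedding. By Example \ref{totally_normal_implies_cylindrically_regular}, total normality of $(X,\pi)$ is equivalent to strict cylindrical normality together with the requirement that each parameter space $P_{\mu,\lambda}$ be a finite set with the discrete topology. In particular $(X,\pi)$ is a cylindrically normal CW stellar stratified space: every cellular stratified space is a fortiori stellar, and a totally normal cellular one carries a totally normal stellar structure by the lemma immediately following the definition of total normality for stellar stratified spaces. Hence Theorem \ref{embedding_Sd} already supplies the natural embedding $i_X : \Sd(X,\pi) \hookrightarrow X$, and it remains only to promote it to a strong deformation retract.

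Next I would equip $X$ with a polyhedral structure, which is exactly the content of Lemma \ref{total_normality_implies_local_polyhedrality}: a CW totally normal cellular stratified space admits, by induction on the dimension of cells, homeomorphisms $\alpha_{\lambda} : \widetilde{F}_{\lambda} \to D^{\dim e_{\lambda}}$ from Euclidean polyhedral complexes (simplicial ones, obtained from the barycentric subdivision of the regular cell decomposition of the boundary sphere) under which every lift $b : D_{\mu} \to D_{\lambda}$ of a characteristic map becomes PL. Together with the parameter spaces $P_{\mu,\lambda}$, which are finite discrete and therefore trivially locally cone-like, these data satisfy every clause of Definition \ref{polyhedral_normality_definition}. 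Thus $(X,\pi)$ is a polyhedral stellar stratified space, and Theorem \ref{deformation_theorem} applies verbatim, yielding that the image of $i_X$ is a strong deformation retract of $X$, with the retraction natural in strict morphisms.

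The only point that genuinely needs attention is the verification that the structure produced by Lemma \ref{total_normality_implies_local_polyhedrality} really meets all the conditions of Definition \ref{polyhedral_normality_definition}, in particular that each $\alpha_{\lambda}$ is a subdivision of the cylindrical stratification on $D^{\dim e_{\lambda}}$ and that the PL compatibility holds simultaneously over all pairs $e_{\mu} < e_{\lambda}$; but this is precisely what that lemma establishes, so no new argument is required beyond assembling it. The corollary therefore reduces to the conjunction of Theorem \ref{deformation_theorem}, Lemma \ref{total_normality_implies_local_polyhedrality}, and Example \ref{totally_normal_implies_cylindrically_regular}, and I expect the writeup to be short once these citations are in place.
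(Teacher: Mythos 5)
Your proposal is correct and takes essentially the same route the paper intends: the corollary carries no separate proof precisely because it is the conjunction of Lemma \ref{total_normality_implies_local_polyhedrality} (a CW totally normal cellular stratified space admits a polyhedral structure, with the finite discrete parameter spaces trivially locally cone-like) with Theorem \ref{deformation_theorem}, the embedding itself coming from Theorem \ref{embedding_Sd}. Your assembly of these citations, including the appeal to Example \ref{totally_normal_implies_cylindrically_regular}, matches the paper's intended derivation (with the CW hypothesis implicitly assumed, exactly as in the paper's own statement of the corollary).
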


The following fact is essential.  

\begin{lemma}
 \label{spherical_case}
 Let $\pi$ be a regular cell decomposition of $S^{n-1}$ and
 $L \subset S^{n-1}$ be a stratified subspace. Let $\tilde{\pi}$ be the
 cellular stratification on $K=\Int D^n\cup L$ obtained by adding
 $\Int D^n$ as an $n$-cell.  
 Then there is a deformation retraction $H$ of $K$ to
 $i_{K}(\Sd(K,\tilde{\pi}))$. Furthermore if a deformation
 retraction $h$ of $L$ 
 onto $i_{L}(\Sd(L))$ is given, $H$ can be taken to be an extension of
 $h$. 
\end{lemma}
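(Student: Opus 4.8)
The plan is to first pin down the target $i_K(\Sd(K,\tilde\pi))$ and then realize the retraction as a cone of the boundary deformation $h$ composed with a collapse of the rest of the open cell, exploiting the piecewise-linear structure that the \emph{regular} decomposition $\pi$ provides for free.

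First I would identify the target. In $(K,\tilde\pi)$ there is a single top $n$-cell, whose characteristic map is the inclusion $D_\lambda=\Int D^n\cup L\hookrightarrow K$ with barycenter $\varphi_\lambda(0)=0$. Feeding this into the coning construction of $i_K$ from the proof of Theorem~\ref{embedding_Sd}, the nondegenerate chains of the face category that terminate at the top cell cone the image $i_L(\Sd L)$ to the apex $0$, while the chains supported on $L$ reproduce $i_L(\Sd L)$ unchanged. Hence $i_K(\Sd(K,\tilde\pi))=\{0\}\ast i_L(\Sd L)=:C$, the join (in the straight-line sense of Definition~\ref{aster_definition}) of the apex $0$ with $L':=i_L(\Sd L)\subseteq L$; since $L'\subseteq L$ and the apex lies in $\Int D^n$, we have $C\subseteq K$.

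Next I would set up the geometry. Because $\pi$ is regular, its barycentric subdivision is a simplicial complex with $\Sd(S^{n-1},\pi)\cong S^{n-1}$ (all cells are closed, so $i$ is a homeomorphism by Theorem~\ref{embedding_Sd}); coning with apex $0$ turns $D^n=\{0\}\ast S^{n-1}$ into a simplicial complex in which $L'$ and $C=\{0\}\ast L'$ are subcomplexes and $K$ is a union of open simplices. I would then cone the given $h$: setting $\hat h_t(s\,u)=s\,h_t(u)$ for $u\in L$, $s\in[0,1]$, and $\hat h_t(0)=0$ defines a strong deformation retraction of the full cone $CL:=\{0\}\ast L$ onto $C$ whose restriction to the base $L$ is exactly $h$. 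The homotopy $H$ is then obtained by first collapsing $K$ onto $CL$ relative to $CL$ by a deformation $G$, and then applying $\hat h$.

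The hard part is the collapse $G$, and its only real difficulty is the \emph{frontier} of a non-closed cell. When $L\subsetneq S^{n-1}$ the naive radial push toward the apex is discontinuous along directions in $\overline L\setminus L$: coning alone would assign such a frontier direction two different limits (this is visible already in the diagonals of the punctured-torus cell of Example~\ref{punctured_torus}). I would resolve this by carrying out $G$ \emph{linearly inside the simplicial structure} $\{0\}\ast\Sd(S^{n-1},\pi)$: the open simplices of $K$ not contained in $CL$ are pushed onto $CL$ by straight-line homotopies, organized by decreasing dimension and glued along common faces, so that the ``off-$L$'' region is routed onto the cone rather than crushed to the apex; convexity of each simplex keeps every push inside $K$. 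Concatenating $G$ with $\hat h$ yields a strong deformation retraction of $K$ onto $C$. Since $G$ is stationary on $L$, its restriction $H|_{L\times I}$ equals $h$ up to reparametrization, and because $L\hookrightarrow K$ is a closed cofibration the homotopy extension property lets me adjust $H$ rel $K\times\{0,1\}$ so that $H|_{L\times I}=h$ on the nose. Finally, the apex, the simplicial collapse, and the coning of $h$ are all determined by the face-category data $(P_{\mu,\lambda},b_{\mu,\lambda})$ and commute with strict morphisms, which gives the asserted naturality and is precisely what will allow these cell-wise retractions to be glued in the proof of Theorem~\ref{deformation_theorem}.
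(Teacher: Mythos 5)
Your identification of the target $i_K(\Sd(K,\tilde\pi))=\{0\}\ast i_L(\Sd(L))=:C$ and the coned homotopy $\hat h$ are correct, and the affine model $\{0\}\ast\Sd(S^{n-1},\pi)$ is the right setting (note the paper itself gives no argument for this lemma, deferring to ``good old simplicial topology'' in Appendix A of \cite{1312.7368}). The proposal nevertheless fails at its central step: there is, in general, \emph{no} deformation $G$ of $K$ onto $CL=\{0\}\ast L$ relative to $CL$, however the simplexwise pushes are organized. Indeed, suppose $G_t|_{CL}=\mathrm{id}$ and $G_1(K)\subset CL$. Pick $u_0\in\overline{L}\setminus L$ (nonempty exactly when $L$ is not closed in $S^{n-1}$, which is the case of interest) and $0<s<1$, and set $x=su_0$; then $x\in\Int D^n\subset K$ but $x\notin CL$, since $x\neq 0$ and $x/|x|=u_0\notin L$. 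Choosing $u_n\in L$ with $u_n\to u_0$, the points $su_n$ lie in $CL$ and converge to $x$, so continuity together with $G_1(su_n)=su_n$ forces $G_1(x)=x\notin CL$, contradicting $G_1(K)\subset CL$. (Equivalently: a retract of a Hausdorff space is closed, being the fixed-point set of the retraction, and $CL$ is not closed in $K$, since its closure contains the open rays toward every point of $\overline{L}\setminus L$.) This is exactly the situation on the diagonals of the punctured-torus cell you invoke (Example~\ref{punctured_torus}): the four open diagonals lie in $K\setminus CL$ but in $\overline{CL}$, so the obstruction is not a defect of the ``naive radial push'' that cleverer simplicial routing can avoid; it rules out \emph{every} homotopy rel $CL$.

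Consequently the factorization ``collapse onto $CL$ rel $CL$, then apply $\hat h$'' cannot be repaired by redesigning $G$ within your stated constraints. One must abandon relativity over $CL$ and demand stationarity only on $C\cup L$ (note $C$ is compact, hence closed in $K$, whereas $CL$ is not); but then the homotopy must genuinely move points of $CL$ itself: the sectors $\{0\}\ast\sigma$, for open simplices $\sigma\subset L$ adjacent to a ray toward $\overline{L}\setminus L$, have to be folded toward the apex or into $C$. Making these folds continuous across the simplices of $\{0\}\ast\Sd(S^{n-1},\pi)$, pointwise fixed on $C\cup L$, and compatible with the given $h$ on $L$ is precisely the nontrivial content of the lemma, and your description (straight-line pushes of only those open simplices of $K$ not contained in $CL$, glued along common faces) does not supply it, since those pushes were designed to leave $CL$ fixed. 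The closing appeal to the homotopy extension property also tacitly assumes that $(K,C\cup L)$ is a cofibered pair, which is asserted without proof, though that is minor next to the main gap.
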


This Lemma can be proved by using good old simplicial topology.
It was first proved in a joint work with Basabe, Gonz{\'a}lez, and
Rudyak \cite{1009.1851v5} but removed from the published version
\cite{1009.1851}. 
The proof is now contained in \cite{1312.7368} as Appendix A.
 
Now we are ready to prove Theorem
\ref{deformation_theorem}. 

\begin{proof}[Proof of Theorem
 \ref{deformation_theorem}] 
 Let us show the embedding $i_{X}$ constructed in the proof of
 Theorem \ref{embedding_Sd} has a homotopy inverse. 

 Since $X$ is polyhedral, each cell 
 $\varphi_{\lambda} : D_{\lambda}\to \overline{e_{\lambda}}$ has a
 polyhedral replacement 
 $\alpha_{\lambda} : \widetilde{F}_{\lambda} \to D^{\dim e_{\lambda}}$.
 For simplicity, we identify $D_{\lambda}$ with
 $\alpha_{\lambda}^{-1}(D_{\lambda})$. Thus $D_{\lambda}$ is a
 stratified subspace of a polyhedral complex and
 $b_{\mu,\lambda} : P_{\mu,\lambda}\times D_{\mu}\to D_{\lambda}$ is a
 PL map.

 We construct, by induction on $k$, a PL homotopy
 \[
  H_{\lambda} : D_{\lambda}\times [0,1] \longrightarrow D_{\lambda}
 \]
 for each $k$-cell $e_{\lambda}$ satisfying the following conditions: 
 \begin{enumerate}
  \item It is a strong deformation retraction of $D_{\lambda}$ onto
	$i_{D_{\lambda}}(\Sd(D_{\lambda}))$, 
	where the stratification on $D_{\lambda}$ is given by adding
	$\Int(D_{\lambda})$ as a unique $k$ cell to the 
	stratification on $\partial D_{\lambda}$.
  \item The diagram
	\begin{equation}
	 \begin{diagram}
	  \node{D_{\lambda}\times [0,1]} \arrow{e,t}{H_{\lambda}}
	  \node{D_{\lambda}} \\ 
	  \node{P_{\mu,\lambda}\times D_{\mu}\times [0,1]}
	  \arrow{n,l}{b_{\mu,\lambda}\times 1} 
	  \arrow{e,b}{1\times H_{\mu}}
	  \node{P_{\mu,\lambda}\times D_{\mu}.} \arrow{n,r}{b_{\mu,\lambda}}
	 \end{diagram}
	 \label{compatibility_for_H}
	\end{equation}
	 is commutative for any pair $e_{\mu}\subset\overline{e_{\lambda}}$.
 \end{enumerate}

 When $k=0$, the homotopy is the canonical projection.
 Suppose we have constructed $H_{\mu}$ for all $i$-cells $e_{\mu}$ with
 $i\le k-1$. We would like to extend them to $k$-cells.
 For a $k$-cell $e_{\lambda}$ with stellar structure map
 $\varphi_{\lambda} : D_{\lambda} \to X$ and a cell $e_{\mu}$ with
 $e_{\mu}\subset \overline{e_{\lambda}}$, consider the diagram
 \[
  \begin{diagram}
   \node{P_{\mu,\lambda}\times D_{\mu}\times [0,1]}
   \arrow{s,l}{b_{\mu,\lambda}\times 1}
   \arrow{e,t}{1\times 
   H_{\mu}} \node{P_{\mu,\lambda}\times D_{\mu}}
   \arrow{s,r}{b_{\mu,\lambda}} \\
   \node{b_{\mu,\lambda}(P_{\mu,\lambda}\times D_{\mu})\times [0,1]}
   \arrow{e,..} 
   \node{b_{\mu,\lambda}(P_{\mu,\lambda}\times D_{\mu}).} 
  \end{diagram}
 \]
 Since $b_{\mu,\lambda}$ is a homeomorphism when restricted to
 $P_{\mu,\lambda}\times \Int(D_{\mu})$ and $b_{\mu,\lambda}$ and
 $H_{\mu}$ are assumed to be PL, we have the dotted arrow making the
 diagram commutative by Lemma \ref{PL_map_is_extendalbe}. 

 The decomposition
 \[
  \partial D_{\lambda} = \bigcup_{e_{\mu}\subset
 \overline{e_{\lambda}}} b_{\mu,\lambda}(P_{\mu,\lambda}\times
 D_{\mu})
 \]
 allows us to glue these homotopies together. Thus we obtain a homotopy
 \[
 H_{\lambda} : \partial D_{\lambda}\times [0,1]
  \rarrow{} \partial D_{\lambda}. 
 \]
 By Lemma \ref{spherical_case}, this homotopy can be extended to a
 strong deformation retraction
 \[
  H_{\lambda} : D_{\lambda}\times [0,1] \longrightarrow D_{\lambda}
 \]
 of $D_{\lambda}$ onto $i_{D_{\lambda}}(\Sd(D_{\lambda}))$.
 This completes the inductive step.

 The second condition above (\ref{compatibility_for_H}) and the CW
 condition imply that 
 these deformation retractions can be assembled together to give a
 strong deformation retraction 
 \[
 H :  X \times[0,1] \longrightarrow X
 \]
 of $X$ onto $i_{X}(\Sd(X))$.
\end{proof}

The construction of the higher
order Salvetti complex in \cite{Bjorner-Ziegler92,DeConcini-Salvetti00}
can be regarded as a corollary to Theorem 
\ref{deformation_theorem}. 

\begin{example}
 \label{BZDS_stratification}
 Consider the stratification on $\R^n\otimes\R^{\ell}$
 \[
  \pi_{\mathcal{A}\otimes\R^{\ell}} : \R^{n}\otimes\R^{\ell}
 \rarrow{} \Map(L,S_{\ell}) 
 \]
 in Example \ref{stratification_by_arrangement}.
 As we have seen in Example
 \ref{cellular_stratification_by_arrangement}, this is a normal 
 cellular stratification. It is also regular and polyhedral.

 It contains
 \[
 \Lk(\mathcal{A}\otimes\R^{\ell})= \bigcup_{i=1}^{k}
 H_i\otimes\R^{\ell} 
 \]
 as a cellular stratified subspace. The complement
 \[
 M(\mathcal{A}\otimes\R^{\ell}) =
 \R^n\otimes\R^{\ell}\setminus\Lk(\mathcal{A}\otimes\R^{\ell})  
 \]
 is also a cellular stratified subspace. 
 Since $\pi_{\mathcal{A}\otimes\R^{\ell}}$ is regular, we have
 \[
 C(M(\mathcal{A}\otimes\R^{\ell})) = P(M(\mathcal{A}\otimes\R^{\ell})) = 
 P(\mathcal{A}\otimes\R^{\ell}) \setminus
 P(\Lk(\mathcal{A}\otimes\R^{\ell})). 
 \]

 By Theorem
 \ref{deformation_theorem},
 $BC(M(\mathcal{A}\otimes\R^{\ell}))$ can be embedded in the complement
 $M(\mathcal{A}\otimes \R^{\ell})$ 
 as a strong deformation retract.
 This simplicial complex $BC(M(\mathcal{A}\otimes\R^{\ell}))$ is nothing
 but the higher order Salvetti complex in
 \cite{Bjorner-Ziegler92,DeConcini-Salvetti00}. 
\end{example}

The next example is totally normal but not regular.

\begin{example}
 \label{model_for_configuration_space_of_graph}
 By Example \ref{graphs_are_totally_normal}, a graph $X$ can be regarded
 as a totally normal cellular stratified space. As is described in
 \cite{1312.7368}, we may define a totally normal cellular
 stratification on the $k$-fold product $X^k$ by using the braid
 arrangements, which can be 
 restricted to a totally normal 
 cellular stratification on $\Conf_k(X)$. By Corollary
 \ref{subdivision_of_totally_normal_cellular_stratified_space},
 $BC(\Conf_k(X))$ can be embedded in $\Conf_k(X)$ as a
 $\Sigma_k$-equivariant strong deformation retract.

 $BC(\Conf_k(X))$ is a regular CW complex model for $\Conf_k(X)$. This
 is better than Abrams' model in the sense that it works for all graphs.
\end{example}

The cellular stratification on $\CP^n$ in Example
\ref{moment_angle_complex} is cylindrically normal and 
Theorem \ref{embedding_Sd}
applies. 

\begin{example}
 Consider the cylindrically normal cellular stratification on $\CP^n$ in
 Example \ref{CP^2} and Example \ref{moment_angle_complex}.
 By Theorem \ref{embedding_Sd},
 $BC(\CP^n)$ is homeomorphic to $\CP^n$.
 Let us compare the description of $BC(\CP^n)$
 with the one in Example \ref{moment_angle_complex} as the
 Davis-Januszkiewicz construction $M(\lambda_n)$. 

 As we have
 seen in Example \ref{face_category_of_CP^2}, the
 cylindrical face category $C(\CP^n)$ can be obtained from the
 underlying poset $[n]$ of $C(\CP^n)$ by replacing the set of morphisms
 by $S^1$. Compositions of morphisms are given by the group structure of
 $S^1$. 

 By Lemma \ref{underlying_poset_nerve}, we have
 \[
  \overline{N}_k(C(\CP^n)) = \coprod_{\bm{e}\in
 \overline{N}_k([n])}\{\bm{e}\}\times(S^1)^k 
 \]
 and the classifying space of $C(\CP^n)$ can be described in the
 form 
 \begin{eqnarray*}
  BC(\CP^n) & = & \left\|\overline{N}(C(\CP^n))\right\| \\
  & = & \quotient{\coprod_k\coprod_{\bm{e}\in \overline{N}_k([n])}
   \{\bm{e}\}\times (S^1)^k\times\Delta^{k}}{\sim}. 
 \end{eqnarray*}

 Note that a nondegenerate $k$-chain $\bm{e}$ in the poset $[n]$
 can be described by a strictly increasing sequence of
 nonnegative integers $\bm{e}=(i_0,\ldots,i_k)$ with $i_k\le n$. For an
 element $(\bm{e};t_1,\ldots,t_k;p_0,\ldots,p_k)$ in 
 $\overline{N}_k(C(\CP^n))$, let $\tilde{\bm{t}}$ be an element of
 $(S^1)^n$ obtained from $(t_1,\ldots,t_k)$ by inserting $1$ in such a
 way that each $t_j$ is placed between $i_{j-1}$-th and $i_{j}$-th
 positions. Then there exists a face operator
 $d_I : N_n(C(\CP^n))\to N_k(C(\CP^n))$ such that 
 \[
  (\bm{e};t_1,\ldots,t_k) = (d_I(0,1,\ldots,n),d_{I}(\tilde{\bm{t}})).
 \]
 And we have
 \[
  (\bm{e};t_1,\ldots,t_k;p_0,\ldots,p_k) \sim
 (0,\ldots,n;\tilde{\bm{t}};d^I(p_0,\ldots, p_k)).
 \]
 Note that $d^I(p_0,\ldots, p_k)$ is obtained from $(p_0,\ldots,p_k)$ by
 inserting $0$ in appropriate coordinates.
 Thus any point in $BC(\CP^n)$ can be represented by a point in
 $(S^1)^n\times\Delta^n$ and $BC(\CP^n)$ can be written as
 \begin{equation}
  BC(\CP^n) = (S^1)^n\times \Delta^n/_{\sim}.
   \label{barycentric_subdivision_of_CP^n}
 \end{equation}
 The relation here is not exactly the same as the defining relation of
 $M(\lambda_n)$. 

 Define a map
 \[
  s_n : M(\lambda_n) \longrightarrow BC(\CP^n)
 \]
 by
 \[
  s_n([t_1,\ldots,t_n;p_0,\ldots,p_n]) =
 [t_1,t_1^{-1}t_2,\ldots,t_{n-1}^{-1}t_n; p_0,\ldots,p_n].
 \]
 It is left to the reader to verify that $s_n$ is a well-defined
 homeomorphism making the diagram
 \[
  \begin{diagram}
   \node{M(\lambda_n)} \arrow[2]{e,t}{s_n} \arrow{s} \node{}
   \node{BC(\CP^n)} \arrow{s,r}{B\pi_{C(\CP^n)}} \\
   \node{\Delta^n} \arrow{e,=} \node{B([n])} \arrow{e,=} \node{BP(\CP^n)}
  \end{diagram}
 \]
 commutative, where $\pi_{C(\CP^n)} : C(CP^n)\to P(\CP^n)$ is the
 canonical projection onto the underlying poset\footnote{Definition
 \ref{underlying_poset}}.  
 Thus we see that $BC(\CP^n)$ coincides with $M(\lambda_n)$ up to a
 homeomorphism. 
\end{example}

\section{Basic Constructions on Cellular Stratified Spaces}
\label{basic_operations}

We study the following operations on cellular and stellar stratified
spaces in this section: 
\begin{itemize}
 \item taking stratified subspaces, cellular stratified subspaces, and
       stellar stratified subspaces,
 \item taking products of cellular stratified spaces, and
 \item taking subdivisions of cellular and stellar stratified spaces.
\end{itemize}

\subsection{Stratified Subspaces}
\label{stratified_subspace}

In this section, we consider the problem of restricting cellular
stratifications to subspaces.
Obviously the category of stratified spaces is closed under taking
complements of stratified subspaces.

\begin{lemma}
 Let $X$ be a stratified space and $A$ be a stratified subspace. Then
 the complement $X\setminus A$ is also a stratified subspace of $X$.
\end{lemma}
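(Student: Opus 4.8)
The plan is to verify directly that the restricted indexing map is a stratification in the sense of Definition \ref{stratification_definition}. Write $B = X\setminus A$, equipped with the subspace topology, and consider $\pi|_B : B \to \Lambda$. Three things must be checked: that $\pi|_B$ is continuous, that each nonempty fiber is connected and locally closed, and that $\pi|_B$ is open onto its image. Continuity is immediate, being a restriction of a continuous map. Following the Remark after Definition \ref{stratification_definition}, I would take the indexing poset of $B$ to be $\Ima(\pi|_B)$ rather than all of $\Lambda$, and --- since deleting $A$ can disconnect a stratum --- refine each disconnected fiber into its connected components, so that the strata of $B$ are the components of the sets $e_\lambda \cap B = e_\lambda\setminus A$ for $\lambda \in \Ima\pi$.

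For local closedness, first note that $e_\lambda\cap B$ is locally closed in $B$: writing the locally closed set $e_\lambda = U\cap C$ with $U$ open and $C$ closed in $X$, one gets $e_\lambda\cap B = (U\cap B)\cap(C\cap B)$, an intersection of an open and a closed subset of $B$. A connected component $W$ of $e_\lambda\cap B$ is closed in $e_\lambda\cap B$, and a closed subset of a locally closed set is again locally closed; hence every stratum $W$ of $B$ is connected and locally closed, as required. This paragraph is the genuinely routine part.

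The remaining content is openness, which I would extract from the closure characterization proved earlier (a continuous map to a poset is open if and only if $e_\mu\subseteq\overline{e_{\mu'}}$ is equivalent to $\mu\le\mu'$, equivalently iff $\overline{\pi^{-1}(D)}=\pi^{-1}(\overline{D})$ for all $D$). Ordering the strata of $B$ by closure-inclusion inside $B$ and using that closures in $B$ are traces of closures in $X$, namely $\overline{W}^{\,B}=\overline{W}^{\,X}\cap B$, the openness of $\pi|_B$ onto its image becomes \emph{equivalent} to the two statements that this closure relation is a genuine partial order and that $\pi|_B$ is continuous for it; both I would deduce from the corresponding facts for the ambient open map $\pi$, since $W\subseteq e_\lambda$ forces $\overline{W}^{\,B}\subseteq\overline{e_\lambda}^{\,X}=\bigcup_{\rho\le\lambda}e_\rho$.

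The hard part will be reconciling connectivity with openness. Removing $A$ can both disconnect strata and erase whole strata, so the image poset genuinely shrinks and fibers genuinely break up; consequently openness must be read \emph{onto the image}, exactly as the surjectivity Remark licenses, and not into all of $\Lambda$. For instance, stratifying $\R^2$ by the axis $e_0=\{y=0\}$ and the open half-planes $e_1=\{y>0\}$, $e_2=\{y<0\}$ with $0<1$ and $0<2$, and taking $A=e_1\cup\{(x,0)\mid x\le 0\}$, one checks that $A$ is a stratified subspace while $\pi|_{X\setminus A}$ fails to be open \emph{into} $\Lambda$ yet is open onto its image $\{0,2\}$. The one delicate point is therefore that, after passing to components and to the image subposet, the closure relation remains antisymmetric; I expect this to follow from antisymmetry of the original order on $\Lambda$ together with the containment $\overline{W}^{\,B}\subseteq\overline{e_\lambda}^{\,X}$ displayed above, and this is where I would spend most of the effort.
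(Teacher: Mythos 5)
First, a point of comparison: the paper does not actually prove this lemma---it is prefaced by ``Obviously'' and is only ever applied to complements of unions of strata, as in Example~\ref{BZDS_stratification}---so your proposal has to be judged directly against Definitions~\ref{stratification_definition} and~\ref{stratified_subspace_definition}. Judged that way, it has a genuine gap, located exactly at the step you present as a harmless adjustment: refining disconnected fibers into their connected components. By Definition~\ref{stratified_subspace_definition}, $B=X\setminus A$ is a stratified subspace of $(X,\pi)$ if and only if the restriction $\pi|_B$ itself---the map to the \emph{original} poset $\Lambda$, whose fibers are the whole sets $e_\lambda\cap B$---is a stratification. Replacing those fibers by their components and re-indexing by a new closure-ordered poset produces a different map to a different poset; even if every remaining step were filled in, you would have shown only that $B$ admits some stratification subdividing $\pi|_B$, not the stated conclusion. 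And the refinement cannot be avoided, because the statement read literally is false: take $X=\R$ with the one-stratum stratification $\pi:\R\to\{\ast\}$ and $A=\{0\}$. Then $\pi|_A$ is open and continuous with connected, locally closed fiber, so $A$ is a stratified subspace, while $\pi|_{X\setminus A}$ has the disconnected fiber $\R\setminus\{0\}$ and hence is not a stratification. What is true---and suffices for every use in the paper---is the statement for \emph{strict} stratified subspaces: if $A$ is a union of strata, then so is $B$, each fiber of $\pi|_B$ is a full stratum of $X$ (hence connected, and locally closed in $B$ since $e_\lambda=U\cap C$ gives $e_\lambda=(U\cap B)\cap(C\cap B)$), and openness onto the image follows from the closure criterion exactly as in your third paragraph, using $\overline{e_{\lambda'}}^{\,B}=\overline{e_{\lambda'}}^{\,X}\cap B$ and $e_\lambda\subset B$.

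Two further remarks on the refined construction itself. You invoke the closure characterization of openness for the component-indexed map, but that lemma has continuity as a hypothesis, and continuity of the refined map is not automatic: the preimage of the downward closure of a component $W$ is the union of those components contained in $\overline{W}^{\,B}$, and a limit point of this union may lie in a component that merely meets $\overline{W}^{\,B}$ without being contained in it; ruling this out is a normality-type condition that the paper pointedly does not assume. On the other hand, the point you flag as the hard one---antisymmetry---is actually the easy part: if $W$ and $W'$ are disjoint, nonempty, locally closed, with $W\subset\overline{W'}$ and $W'\subset\overline{W}$, then $W\subset\overline{W'}\setminus W'$, which is closed because $W'$ is open in $\overline{W'}$; taking closures gives $\overline{W}\subset\overline{W'}\setminus W'$, contradicting $W'\subset\overline{W}$. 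So your effort is budgeted to the wrong place: the real obstructions are the definitional mismatch (indeed the outright failure of the general statement, which should instead be formulated for strict stratified subspaces) and the continuity of the refined indexing map, not antisymmetry.
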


This is one of the most useful facts when we work with stratifications
on configuration spaces and complements of arrangements. On the other
hand, when $X$ is a CW complex and $A$ is a subcomplex, $A$ is always
closed. This is no longer true for cellular stratified spaces. 
A typical example is the case of the complement
$M(\mathcal{A}\otimes\R^{\ell})=\R^n\otimes\R^{\ell}\setminus\Lk(\mathcal{A}\otimes\R^{\ell})$
of an arrangement in
Example \ref{BZDS_stratification}.

Let us consider cell structures on subspaces.
Suppose $X$ is a cellular stratified space and $A$ is a stratified
subspace. In order to incorporate cell
structures, we need to specify a cell structure for each cell contained
in $A$.

\begin{definition}
 \label{cell_in_subspace}
 Let $X$ be a topological space and $A$ be a subspace. For a
 hereditarily quotient\footnote{Definition
 \ref{hereditarily_quotient_definition}.} 
 $n$-cell structure $\varphi : D \to \overline{e} \subset X$ with
 $e\subset A$, 
 define an $n$-cell structure on $e$ in $A$ to be
 $(D_{A},\varphi|_{D_A})$ where
 $D_{A}=\varphi^{-1}(\overline{e}\cap A)$.
\end{definition}

The hereditarily-quotient assumption guarantees the restriction is a
quotient map by Lemma
\ref{hereditarily_quotient_map_can_be_restricted}. 

\begin{lemma}
 Let $X$ be a topological space and $A$ be a
 subset of $X$. If $(D,\varphi)$ 
 is a hereditarily quotient $n$-cell structure on $e\subset X$ and
 $e\subset A$, then 
 $(D_{A},\varphi|_{D_{A}})$ defined above is an $n$-cell structure on
 $e$ in $A$.
\end{lemma}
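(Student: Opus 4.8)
The plan is to verify directly that the pair $(D_A,\varphi|_{D_A})$ satisfies the two conditions of Definition \ref{cell_structure_definition} for an $n$-cell structure, now with the ambient space $A$ (endowed with the subspace topology) in place of $X$. All that is really at stake is the quotient-map requirement, and this is exactly what the hereditarily quotient hypothesis, via Lemma \ref{hereditarily_quotient_map_can_be_restricted}, is designed to deliver.

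First I would check that $D_A=\varphi^{-1}(\overline{e}\cap A)$ is a globular $n$-cell, i.e.\ that $\Int(D^n)\subseteq D_A\subseteq D^n$. The inclusion $D_A\subseteq D\subseteq D^n$ is immediate. For the other inclusion, note that $e\subseteq A$ gives $e=\varphi(\Int(D^n))\subseteq \overline{e}\cap A$, whence $\Int(D^n)\subseteq\varphi^{-1}(\overline{e}\cap A)=D_A$. Thus $D_A$ is again a globular $n$-cell. Since $\varphi|_{D_A}$ agrees with $\varphi$ on $\Int(D^n)$, the second cell-structure condition—that the restriction to $\Int(D^n)$ be a homeomorphism onto $e$—is inherited verbatim from $(D,\varphi)$.

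Next I would establish the first condition. The image is computed from surjectivity of $\varphi:D\to\overline{e}$: since $\overline{e}\cap A\subseteq\overline{e}=\varphi(D)$, we have $\varphi(D_A)=\varphi(\varphi^{-1}(\overline{e}\cap A))=\overline{e}\cap A$. On the other hand, by the standard description of closures in a subspace, the closure of $e$ computed inside $A$ is precisely $\overline{e}\cap A$ (where $\overline{e}$ denotes the closure in $X$). Hence $\varphi|_{D_A}(D_A)$ is exactly the closure of $e$ in $A$, as the definition of a cell structure on $e$ in $A$ requires.

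Finally, the quotient-map clause is the one genuinely load-bearing step, and it is where the hereditarily quotient assumption is used. Because $\varphi:D\to\overline{e}$ is hereditarily quotient, Lemma \ref{hereditarily_quotient_map_can_be_restricted} applied to the subset $\overline{e}\cap A\subseteq\overline{e}$ shows that the restriction $\varphi|_{\varphi^{-1}(\overline{e}\cap A)}:\varphi^{-1}(\overline{e}\cap A)\to\overline{e}\cap A$ is again a quotient map. This restriction is literally $\varphi|_{D_A}:D_A\to\overline{e}\cap A$, so the first cell-structure condition holds and the verification is complete. It is worth emphasizing that the hereditary quotient property is indispensable here: for a general quotient map $\varphi$ the restriction to $D_A$ need not be a quotient map, which is the sort of obstruction already visible in Example \ref{powdered_ball}.
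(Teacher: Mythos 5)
Your proof is correct and takes essentially the same route as the paper, whose entire justification is the remark that the hereditarily quotient hypothesis makes the restriction $\varphi|_{D_A}:D_A\to\overline{e}\cap A$ a quotient map via Lemma \ref{hereditarily_quotient_map_can_be_restricted}. The additional verifications you spell out (that $D_A$ is globular, that the image is the closure of $e$ in $A$, and that the interior condition is inherited) are the routine checks the paper leaves implicit, and they are all carried out correctly.
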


\begin{definition}
 \label{cellular_stratified_subspace_definition}
 Let $(X,\pi)$ be a cellular stratified space.
 A stratified subspace $(A,\pi|_{A})$ of $X$ is said to be
 a \emph{cellular stratified subspace}, provided
 cell structures on cells in $A$ are given as indicated
 in Definition \ref{cell_in_subspace}. When the inclusion
 $A\hookrightarrow X$ is a strict morphism, $A$ is said to be a
 \emph{strict cellular stratified subspace}.
\end{definition}


We need to take cell decompositions of domains of cells into account for
stellar stratified subspaces.

\begin{lemma}
 \label{stellar_cell_in_subspace}
 Let $X$ be a topological space, $A$ a subspace, and
 $\varphi : D \to \overline{e}$  a stellar 
 $n$-cell of $X$ with $e\subset A$. When
 $D_{A}=\varphi^{-1}(\overline{e}\cap A)$ is a strict stratified
 subspace of 
 $D$ and $\varphi$ is hereditarily quotient, the restriction
 $\varphi|_{D_A}: D_{A} \to A$ defines a stellar 
 $n$-cell structure on $e$ in $A$. 
\end{lemma}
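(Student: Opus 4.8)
The statement (Lemma \ref{stellar_cell_in_subspace}) asks us to verify that, under two hypotheses — that $D_A = \varphi^{-1}(\overline{e}\cap A)$ is a strict stratified subspace of $D$, and that $\varphi : D\to\overline{e}$ is hereditarily quotient — the restriction $\varphi|_{D_A} : D_A \to A$ is a stellar $n$-cell structure on $e$ in $A$. Recall (Definition \ref{stellar_structure_definition}) that this means we must exhibit $D_A$ as a stellar $n$-cell, check that $\varphi|_{D_A}$ is a quotient map onto $\overline{e}\cap A$ (the closure of $e$ taken in $A$), and verify that the restriction of $\varphi|_{D_A}$ to $\Int(D_A)$ is a homeomorphism onto $e$.

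**The plan.** First I would identify the target: since $\varphi(D)=\overline{e}$ (closure in $X$), the image $\varphi(D_A)=\varphi(\varphi^{-1}(\overline{e}\cap A))=\overline{e}\cap A$, which is the closure of $e$ in $A$ because $e\subset A$ and $A$ carries the subspace topology. Next, the quotient property: this is exactly where the hereditarily-quotient hypothesis is used. By Lemma \ref{hereditarily_quotient_map_can_be_restricted} (cited in the text immediately before the analogous cellular statement), the restriction of a hereditarily quotient map to the preimage of any subspace is again a quotient map; applying this to the subspace $\overline{e}\cap A\subset\overline{e}$ gives that $\varphi|_{D_A} : D_A\to\overline{e}\cap A$ is a quotient map, settling condition (1). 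For the interior condition, I would observe that $\Int(D_A)=D_A\cap\Int(D)$ maps into $e$ under $\varphi$, and since $e\subset A$ we have $\Int(D)\cap\varphi^{-1}(e)\subset D_A$; because $\varphi|_{\Int(D)} : \Int(D)\to e$ is already a homeomorphism by the stellar structure on $e$ in $X$, its restriction to the open subset $D_A\cap\Int(D)=\varphi^{-1}(e)$ is a homeomorphism onto $e$. The point is that $\varphi^{-1}(e)=\Int(D)$ as a set (the interior maps bijectively to $e$), so $\Int(D_A)$ recovers all of $\Int(D)$ and the interior restriction is unchanged.

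**Why $D_A$ is a stellar $n$-cell.** This is where the first hypothesis enters and is the most delicate point. To call $D_A$ a stellar $n$-cell I must check (a) $D_A$ is an aster in $D^N$, i.e. $\{0\}\ast\{x\}\subset D_A$ for every $x\in D_A$, and (b) the boundary $\partial D_A = D_A\cap\partial D^N$ admits a cellular stratification sitting inside a cellular stratification of $\partial D^N$, with globular dimension $n-1$ so that the stellar dimension is $n$. The aster property follows because $D_A$ is a \emph{strict} stratified subspace of the aster $D$: strictness means $D_A$ is a union of strata of $D$, and since the center $0$ lies in $\Int(D)\subset D_A$ and each point $x\in D_A$ lies in a stratum whose defining cell structure is compatible with the star-shaped geometry, the segment $\{0\}\ast\{x\}$ stays within $D_A$. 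The dimension bookkeeping — that $\partial D_A$ inherits a cellular stratification of globular dimension $n-1$ from the stellar structure on $D$ — is exactly what the strict-stratified-subspace hypothesis provides, since a strict stratified subspace of $\partial D$ is again cellularly stratified of the same or lower dimension, and the requirement $e\subset A$ being a genuine $n$-cell forces the interior to be full-dimensional.

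**Main obstacle.** I expect the real work to be verifying the aster condition and the dimension count for $D_A$, i.e. part (b) above, rather than the quotient-map bookkeeping which is handled cleanly by the cited hereditarily-quotient lemma. The subtlety is that cutting $D$ down to $D_A$ could in principle destroy the star-shapedness or drop the boundary dimension; the two hypotheses are precisely engineered to prevent this, and the proof amounts to checking that ``strict stratified subspace of an aster containing the center'' is again an aster of the same stellar dimension. Since the analogous cellular statement (Definition \ref{cell_in_subspace} and the lemma following it) is asserted with an essentially identical argument, I would model the proof on that case and simply record that the strict-stratified-subspace condition supplies the cellular stratification on $\partial D_A$ needed to read off the stellar dimension.
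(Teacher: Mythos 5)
The paper states this lemma without proof; the only hint it gives is the remark (made just before the cellular analogue) that the hereditarily-quotient hypothesis makes restrictions quotient maps via Lemma \ref{hereditarily_quotient_map_can_be_restricted}. Your overall plan matches that intended argument, and the routine parts of your write-up are correct: $\varphi(D_A)=\varphi(\varphi^{-1}(\overline{e}\cap A))=\overline{e}\cap A$ is the closure of $e$ in $A$; the restriction $\varphi|_{D_A}$ is (hereditarily) quotient by Lemma \ref{hereditarily_quotient_map_can_be_restricted}; and since $\Int(D)\subset\varphi^{-1}(e)\subset D_A$ while $\Int(D_A)=D_A\cap\Int(D^N)=D_A\cap\Int(D)$, one gets $\Int(D_A)=\Int(D)$, so the interior restriction is literally the same homeomorphism onto $e$. (One caution: your parenthetical claim that $\varphi^{-1}(e)=\Int(D)$ ``because the interior maps bijectively to $e$'' is not justified by bijectivity alone --- it needs the homeomorphism property together with a limit argument in the Hausdorff space $D^N$ --- but, as the computation just given shows, you never actually need that equality.)

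The genuine misstep is where you locate the ``main obstacle.'' The aster property of $D_A$ has nothing to do with strictness and is immediate: for $x\in D_A$, every point of $\{0\}\ast\{x\}$ other than $x$ has norm strictly less than $1$, hence lies in $D\cap\Int(D^N)=\Int(D)\subset D_A$ (using only that $D$ is an aster); so $\{0\}\ast\{x\}\subset D_A$. Your appeal to strata being ``compatible with the star-shaped geometry'' is not an argument, and it assigns the strictness hypothesis the wrong job. Strictness is used exactly once, for the boundary: it makes $\partial D_A=D_A\cap\partial D$ a union of whole cells of $\partial D$, each keeping its cell structure, so $\partial D_A$ is a cellular stratified subspace of the stratification on $\partial D^N$ furnished by the stellar structure of $D$. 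Finally, your dimension claim (``$e\subset A$ being a genuine $n$-cell forces the interior to be full-dimensional'') does not establish that $\partial D_A$ has globular dimension $n-1$, and in fact this can fail: if $\overline{e}\cap A=e$ then $D_A=\Int(D)$, $\partial D_A=\emptyset$, and by the paper's own convention the stellar dimension drops to $0$. So the ``$n$'' in the conclusion is a looseness of the statement itself (what one can prove is that $D_A$ is a stellar cell of stellar dimension at most $n$), not something your argument could repair; it is better to flag this than to paper over it.
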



\begin{definition}
 Let $(X,\pi)$ be a stellar stratified space and $A$ a subspace of
 $X$. If the assumption of 
 Lemma \ref{stellar_cell_in_subspace} is satisfied for each cell in $A$,
 $(A,\pi|_{A})$ is said to be a \emph{stellar stratified subspace} of $X$.
\end{definition}

\begin{remark}
 Let $X$ be a cellular or stellar stratified space and $A$ be a cellular
 or stellar stratified subspace. Then $A$ is a strict stratified
 subspace of $X$.
\end{remark}

The following fact can be regarded as a generalization of the fact that
any subcomplex of a regular cell complex is regular.

\begin{proposition}
 \label{cylindrically_normal_subspace}
 Any stellar stratified subspace $A$ of a cylindrically normal stellar
 stratified space $X$ is cylindrically normal. Furthermore the
 parameter space for a pair of cells $e_{\mu}<e_{\lambda}$ in
 $A$ can be identified with the parameter space for the same pair when
 regarded as cells in $X$.
\end{proposition}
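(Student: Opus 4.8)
The plan is to equip $A$ with the cylindrical structure obtained by restricting that of $X$, reusing the \emph{same} parameter spaces; the ``furthermore'' clause is then true by construction. Recall first that a stellar stratified subspace $A$ is automatically a \emph{strict} stratified subspace of $X$, so its strata are exactly those strata $e_\lambda$ of $X$ with $e_\lambda \subseteq A$, and each such cell carries the domain $D_\lambda^A = \varphi_\lambda^{-1}(\overline{e_\lambda}\cap A)$, a strict stratified subspace of $D_\lambda$ by Lemma \ref{stellar_cell_in_subspace}. For a pair $e_\mu < e_\lambda$ of cells of $A$ I set $P_{\mu,\lambda}^A := P_{\mu,\lambda}$, take $c^A := c$, and let $b_{\mu,\lambda}^A$ be the restriction of $b_{\mu,\lambda}$ to $P_{\mu,\lambda}\times D_\mu^A$. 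First I would observe that this restriction lands in $\partial D_\lambda^A$: for $(p,y)$ with $y \in D_\mu^A$ the compatibility square of Definition \ref{cylindrical_structure_definition} gives $\varphi_\lambda(b_{\mu,\lambda}(p,y)) = \varphi_\mu(y) \in \overline{e_\mu}\cap A \subseteq A$, so $b_{\mu,\lambda}(p,y) \in \partial D_\lambda \cap \varphi_\lambda^{-1}(A) = \partial D_\lambda^A$.

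The heart of the argument is the boundary decomposition. Since $\varphi_\lambda|_{\Int D_\lambda}$ is a homeomorphism onto $e_\lambda \subseteq A$, one has $\Int(D_\lambda^A) = \Int(D_\lambda)$, so passing to $A$ changes only the boundary. Given $x \in \partial D_\lambda$, condition (3) of Definition \ref{cylindrical_structure_definition} writes $x = b_{\mu,\lambda}(p,y)$ with $y \in \Int(D_\mu)$ for a unique $\mu$, whence $\varphi_\lambda(x) = \varphi_\mu(y) \in e_\mu$. Because $A$ is a strict stratified subspace, the stratum $e_\mu$ either lies in $A$ or is disjoint from $A$; hence $x \in \partial D_\lambda^A$ if and only if $e_\mu \subseteq A$. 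Combined with $\Int(D_\mu^A) = \Int(D_\mu)$ for such $e_\mu$, this yields
\[
 \partial D_\lambda^A = \bigcup_{\substack{e_\mu \subset \partial e_\lambda \\ e_\mu \subseteq A}} b_{\mu,\lambda}\bigl(P_{\mu,\lambda}\times \Int(D_\mu^A)\bigr),
\]
which is precisely condition (3) for $A$.

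It then remains to check the remaining axioms, all of which follow by restriction. Normality of $A$ holds because closures in $A$ are $\overline{e_\lambda}\cap A$ and normality of $X$ transports along this intersection. For the stratifications required by the first bullet of Definition \ref{cylindrical_structure_definition}, I would keep the normal stratification of $\partial D^N$ used for $e_\lambda$ in $X$; then $\partial D_\lambda^A$, being the union of exactly those strata $b_{\mu,\lambda}(P_{\mu,\lambda}\times\Int(D_\mu))$ with $e_\mu\subseteq A$, is a strict stratified subspace, and in the stellar case the coarsening condition is inherited from the one on $\partial D_\lambda$. Condition (1) is immediate since $\Int(D_\mu^A) = \Int(D_\mu)$, so $b^A_{\mu,\lambda}$ restricted to $P_{\mu,\lambda}\times\Int(D_\mu^A)$ is the same homeomorphism as for $X$; the commutativity of the three squares in condition (2) is inherited verbatim because $b^A$ and $c^A$ are restrictions of $b$ and $c$. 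That each $b^A_{\mu,\lambda}$ is a strict morphism follows from $D_\mu^A \hookrightarrow D_\mu$ and $\partial D_\lambda^A \hookrightarrow \partial D_\lambda$ being strict stratified subspaces, using Lemma \ref{product_stratification} for the product. The main obstacle is the bookkeeping in the boundary decomposition: one must verify that restricting to $A$ deletes whole image pieces $b_{\mu,\lambda}(P_{\mu,\lambda}\times\Int(D_\mu))$ rather than cutting into them, and this is exactly where strictness of the subspace --- each stratum wholly inside or wholly outside $A$ --- is indispensable.
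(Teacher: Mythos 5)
Your proof is correct and follows essentially the same route as the paper: keep the parameter spaces $P_{\mu,\lambda}$ and the compositions, and use the compatibility square $\varphi_\lambda\circ b_{\mu,\lambda}=\varphi_\mu\circ\pr_2$ together with $D_{A,\lambda}=\varphi_\lambda^{-1}(\overline{e_\lambda}\cap A)$ to see that $b_{\mu,\lambda}$ restricts to $P_{\mu,\lambda}\times D_{A,\mu}\to D_{A,\lambda}$. The paper's proof stops essentially at that restriction step, whereas you additionally spell out the boundary decomposition of $\partial D_{A,\lambda}$ and the remaining axioms via strictness of the subspace; this extra verification is sound and fills in what the paper leaves implicit.
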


\begin{proof}
 Let $e_{\mu}< e_{\lambda}$ be a pair of cells in $A$. Let
 $\varphi_{\mu} : D_{\mu} \to X$ and
 $\varphi_{\lambda} : D_{\lambda} \to X$ be the
 stellar structures for these cells in $X$. When regarded as cells in
 $A$, their stellar structures are denoted by 
 \begin{eqnarray*}
  \varphi_{A,\mu} & : & D_{A,\mu} \longrightarrow A \\
  \varphi_{A,\lambda} & : & D_{A,\lambda} \longrightarrow A,
 \end{eqnarray*}
 respectively.

 We need to show that the structure map
 \[
 b_{\mu,\lambda} : P_{\mu,\lambda}\times D_{\mu} \longrightarrow
 D_{\lambda} 
 \]
 of cylindrical structure for the pair in $X$ can be restricted to
 \[
 b_{\mu,\lambda}|_{P_{\mu,\lambda}\times D_{A,\mu}} :
 P_{\mu,\lambda}\times D_{A,\mu} \longrightarrow 
 D_{A,\lambda}. 
 \]

 This can be verified by the commutativity of the diagram
 \[
  \begin{diagram}
   \node{P_{\mu,\lambda}\times D_{A,\mu}} \arrow{se,J} \arrow[2]{e,..}
   \arrow[3]{s,l}{\pr_2} \node{} \node{D_{A,\lambda}} \arrow{s,J}
   \arrow[2]{e,t}{\varphi_{A,\lambda}} 
   \node{} \node{\overline{e_{\lambda}\cap A}} \arrow{sw,J} \\
   \node{} \node{P_{\mu,\lambda}\times D_{\mu}} \arrow{s,l}{\pr_2}
   \arrow{e,t}{b_{\mu,\lambda}} 
   \node{D_{\lambda}} \arrow{e,t}{\varphi_{\lambda}}
   \node{\overline{e}_{\lambda}} \node{} \\ 
   \node{} \node{D_{\mu}} \arrow[2]{e,b}{\varphi_{\mu}} \node{}
   \node{\overline{e}_{\mu}} \arrow{n,J}
   \node{} \\ 
   \node{D_{A,\mu}} \arrow{ne,J} \arrow[4]{e,b}{\varphi_{A,\mu}} \node{}
   \node{} \node{} 
   \node{\overline{e}_{\mu}\cap A} \arrow{nw,J} \arrow[3]{n,J}
  \end{diagram}
 \]
 and the definition of stellar structures on $A$. 
\end{proof}

\begin{example}
 Consider the stratification in Example \ref{BZDS_stratification}. The
 link 
 $\Lk(\mathcal{A}\otimes\R^{\ell})$ and the complement
 $M(\mathcal{A}\otimes\R^{\ell})$ are both cellular
 stratified subspaces 
 of $(\R^n\otimes\R^{\ell}, \pi_{\mathcal{A}\otimes\R^{\ell}})$, which
 is regular, hence cylindrically normal.
\end{example}

Thanks to Corollary \ref{characteristic_map_of_locally_polyhedral} and
Lemma \ref{biquotient_is_quotient},
cell structures in a polyhedral stellar stratified space
$X$ are hereditarily quotient. Thus any stellar stratified subspace $A$
inherits a cylindrically normal structure with structure maps satisfying
the PL conditions in the definition of polyhedral cellular
stratification. The problem is the CW condition.
It is easy to see that the closure finiteness condition can be restricted
freely. The question is when a stratified subspace of a CW stratified
subspace inherits the weak topology.

\begin{lemma}
 \label{subspace_of_CW_is_CW}
 A closed or an open stratified subspace $A$ of a CW stratified space
 $X$ is CW. 
\end{lemma}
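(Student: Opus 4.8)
The plan is to verify the two defining conditions of a CW stratification (Definition \ref{CW_stratification}) for $(A,\pi|_A)$ in turn: closure finiteness, which will be immediate, and the weak topology condition, which carries the real content. Throughout I would write $F_\mu=\overline{e_\mu}$ for the closure in $X$ of a stratum $e_\mu$, and $G_\lambda=\overline{e_\lambda\cap A}^A$ for the closure in $A$ of a stratum of $A$, indexed by $\lambda\in\Ima(\pi|_A)$. Since $A$ is assumed to be a stratified subspace, each $(\pi|_A)^{-1}(\lambda)=e_\lambda\cap A$ is a genuine stratum, so the $G_\lambda$ form a closed covering of $A$, and the goal is to show that $A$ carries the weak topology they determine.

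For closure finiteness I would fix a stratum $e_\lambda\cap A$ of $A$ and observe that its boundary in $A$ lies inside $(F_\lambda\cap A)\setminus(e_\lambda\cap A)=\partial e_\lambda\cap A$. By the closure finiteness of $X$ the set $\partial e_\lambda$ meets only finitely many strata $e_{\nu_1},\dots,e_{\nu_r}$ of $X$, so the boundary of $e_\lambda\cap A$ is covered by the finitely many strata $e_{\nu_k}\cap A$ of $A$. This step uses neither openness nor closedness of $A$.

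For the weak topology I would proceed in two stages. First I would show that $A$ carries the weak topology determined by the coarser covering $\{F_\mu\cap A\}_{\mu\in P(X)}$, and this is the only place where the hypothesis on $A$ is used. If $A$ is closed, then each $F_\mu\cap A$ is closed in $F_\mu$, so a set $C\subseteq A$ meeting every $F_\mu\cap A$ in a relatively closed set meets every $F_\mu$ in a closed set, using $C\cap F_\mu=C\cap(F_\mu\cap A)$; hence $C$ is closed in $X$ by the weak topology of $X$, and therefore closed in $A$. If $A$ is open, the dual argument with open sets applies, since then each $F_\mu\cap A$ is open in $F_\mu$. Second, I would upgrade from $\{F_\mu\cap A\}$ to $\{G_\lambda\}$ by a comparison principle for weak topologies, applied intrinsically inside $A$: if a space has the weak topology with respect to one closed covering, and a second closed covering has the property that every member of the first is contained in a finite union of members of the second, then the space also has the weak topology with respect to the second. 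The required finiteness is again supplied by closure finiteness of $X$, since $F_\mu\subseteq e_\mu\cup e_{\nu_1}\cup\dots\cup e_{\nu_r}$ yields $F_\mu\cap A\subseteq G_\mu\cup G_{\nu_1}\cup\dots\cup G_{\nu_r}$.

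The main obstacle is precisely the mismatch between $F_\mu\cap A=\overline{e_\mu}\cap A$ and $G_\mu=\overline{e_\mu\cap A}^A$: the former restricts cleanly from the weak topology of $X$, while the latter is what the CW condition for $A$ demands, and in general $G_\mu\subsetneq F_\mu\cap A$. The comparison principle bridges this gap; its verification is where the finiteness coming from the CW condition on $X$ is essential, and by phrasing it entirely within $A$ — where each $G_\lambda$ and each $F_\mu\cap A$ is closed — I can treat the open and closed cases through one and the same argument, the only divergence being the elementary first stage above.
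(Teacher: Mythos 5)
Your proof is correct. Its first stage --- transporting the weak topology of $X$ to a closed (resp.\ open) subspace via the covering $\{\overline{e_\mu}\cap A\}_{\mu}$ --- is exactly what the paper's one-line proof (``this follows from the corresponding property of weak topology'') invokes. Where you go beyond the paper is in recognizing that this is not yet the CW condition for $A$: that condition asks for the weak topology with respect to $\bigl\{\overline{e_\mu\cap A}^A\bigr\}_{\mu}$, the closures taken \emph{in} $A$ of the strata of $A$, and in general $\overline{e_\mu\cap A}^A$ can be a proper subset of $\overline{e_\mu}\cap A$. Your comparison principle for weak topologies, fed by the closure finiteness of $X$ (each $\overline{e_\mu}\cap A$ lies in a finite union of the sets $\overline{e_\nu\cap A}^A$), bridges exactly this gap; note it genuinely needs the finiteness, since an infinite union of the closed sets $C\cap G_{j_k}$ need not be closed, so closure finiteness is doing real work in the weak-topology step and not only in the (trivial) first condition. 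In short, your argument is the paper's argument made honest: the same route, plus the passage between the two coverings that the paper silently elides, with the added observation that the closure-finiteness verification requires no hypothesis on $A$ at all.
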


\begin{proof}
 This follows from the corresponding property of weak topology.
\end{proof}


%

By Lemma \ref{cellular_closure}, any polyhedral cellular
stratified space $X$ can be embedded in a CW complex $U(X)$. In general,
however, a cellular stratified subspace $A$ of $X$ is neither closed nor
open in $U(X)$ and it is not easy to 
verify the weak topology condition. One of the practical conditions is
the locally finiteness. The CW condition is guaranteed by Proposition
\ref{locally_finite_implies_CW}. 

\begin{proposition}
 \label{locally_polyhedral_subspace}
 Let $X$ be a polyhedral cellular stratified space. Any
 locally finite cellular stratified subspace $A$ is polyhedral.
\end{proposition}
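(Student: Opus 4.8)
The plan is to verify the three ingredients of Definition \ref{polyhedral_normality_definition} for $A$, namely that $A$ is a CW cylindrically normal stellar stratified space, that each of its cells carries a polyhedral replacement, and that the PL compatibility conditions hold. The discussion preceding the proposition already arranges most of this, so I would organize the argument so as to isolate the place where the local-finiteness hypothesis is genuinely needed, which is the CW condition.

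First I would record that a cellular stratified subspace is in particular a stellar stratified subspace, so Proposition \ref{cylindrically_normal_subspace} applies: $A$ is cylindrically normal, and the parameter space $P_{\mu,\lambda}$ of a pair $e_{\mu}<e_{\lambda}$ in $A$ coincides with the parameter space of the same pair regarded in $X$. In particular each $P_{\mu,\lambda}$ is locally cone-like, being one of the parameter spaces of the polyhedral space $X$. By Corollary \ref{characteristic_map_of_locally_polyhedral} together with Lemma \ref{biquotient_is_quotient}, the characteristic maps of $X$ are hereditarily quotient, so the restricted cell structures $\varphi_{\lambda}|_{D_{A,\lambda}}:D_{A,\lambda}\to A$ of Definition \ref{cell_in_subspace} are genuine stellar cell structures, with $D_{A,\lambda}=\varphi_{\lambda}^{-1}(\overline{e_{\lambda}}\cap A)$ a strict stratified subspace of $D_{\lambda}$.

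Next I would build the polyhedral replacements for $A$ by restriction. For a cell $e_{\lambda}$ of $A$, let $\alpha_{\lambda}:\widetilde{F}_{\lambda}\to\overline{D_{\lambda}}$ be the polyhedral replacement supplied by the polyhedral structure of $X$; by condition (1) of Definition \ref{polyhedral_normality_definition} it is a subdivision of the cylindrical stratification of $\overline{D_{\lambda}}$. Since $D_{A,\lambda}$ is a union of strata of that stratification, its closure pulls back under $\alpha_{\lambda}$ to a polyhedral subcomplex $\widetilde{F}_{A,\lambda}=\alpha_{\lambda}^{-1}(\overline{D_{A,\lambda}})$ of $\widetilde{F}_{\lambda}$, and I would set $\alpha_{A,\lambda}=\alpha_{\lambda}|_{\widetilde{F}_{A,\lambda}}$. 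This is a homeomorphism onto $\overline{D_{A,\lambda}}$ and a subdivision of the induced cylindrical stratification, giving condition (1) for $A$; condition (2) follows because, by Proposition \ref{cylindrically_normal_subspace}, the structure maps $b_{\mu,\lambda}$ of $A$ are restrictions of those of $X$, so the relevant compositions are restrictions of PL maps to subcomplexes and hence are PL.

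The main obstacle, and the only point where the hypothesis is used, is the CW condition. A cellular stratified subspace of $X$ is in general neither open nor closed in $X$ (nor in the cellular closure $U(X)$ of Definition \ref{cellular_closure_definition}), so one cannot invoke Lemma \ref{subspace_of_CW_is_CW} to transport the weak topology. Instead I would use that $A$ is locally finite by hypothesis, whence Proposition \ref{locally_finite_implies_CW} gives directly that $A$ is CW. Combining this with the inherited cylindrical normality and the polyhedral replacements constructed above shows that $A$ is a polyhedral cellular stratified space.
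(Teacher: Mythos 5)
Your proposal is correct and follows essentially the same route as the paper, which states this proposition without a separate proof because the argument is exactly the preceding discussion: cell structures of a polyhedral space are hereditarily quotient (Corollary \ref{characteristic_map_of_locally_polyhedral} and Lemma \ref{biquotient_is_quotient}), so a stellar stratified subspace inherits the cylindrically normal structure and the PL conditions by restriction (Proposition \ref{cylindrically_normal_subspace}), leaving only the CW condition, which is supplied by local finiteness via Proposition \ref{locally_finite_implies_CW}. Your write-up merely makes explicit the restriction of the polyhedral replacements $\alpha_{\lambda}$ to the subcomplexes $\alpha_{\lambda}^{-1}(\overline{D_{A,\lambda}})$, a detail the paper leaves implicit.
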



%
%
%

We end this section by an example which shows another difference between
cellular stratified subspaces and subcomplexes.
In the case of CW complexes, the colimit of an increasing sequence of
finite subcomplexes 
\[
 X_0 \subset X_1\subset \cdots \subset \colim_n X
\]
is automatically a CW complex. This is not true for cellular stratified
spaces. 

\begin{example}
 Consider the space
 \[
  X = \set{(x,y)\in\R^2}{y>0}\cup \Z\times\{0\}.
 \]
 The homeomorphism
 \[
  p : D^2\setminus\{(0,1)\} \longrightarrow \set{(x,y)\in\R^2}{y\ge 0}
 \]
 given by extending the stereographic projection
 $S^1\setminus\{(0,1)\} \to \R$, 
 \begin{center}
  \begin{tikzpicture}
   \shade (0,0) circle (1.2cm);
   \draw (0,0) circle (1.2cm);
   \draw (0,1.2) circle (2pt);
   \draw [fill,white] (0,1.2) circle (1pt);
   \draw (0,0) node {$D^2\setminus \{(0,1)\}$};

   \draw [->] (2,0) -- (3,0);
   \draw (2.5,0.3) node {$p$};

   \shade (3.8,-1.2) rectangle (6.2,1.2);
   \draw (5,0) node {$\set{(x,y)}{y\ge 0}$};
   \draw (3.8,-1.2) -- (6.2,-1.2);
  \end{tikzpicture}
 \end{center}
 defines a $2$-cell structure on 
 \[
  e^2 = \set{(x,y)\in\R^2}{y>0} \subset X
 \]
 by restricting $p$ to $D=p^{-1}(X)$.

 Each $\{(n,0)\} \subset \Z\times\{0\}$ can be regarded as a $0$-cell
 $e_n^0$. And we have a cellular stratification on $X$
 \[
  X = \left(\bigcup_{n\in\Z} e_n^0\right)\cup e^2.
 \]
 \begin{center}
  \begin{tikzpicture}
   \shade (0,0) circle (1.2cm);
   \draw [dotted] (0,0) circle (1.2cm);
   \draw (0,0) node {$D$};
   \draw [fill] (0,-1.2) circle (1pt);
   \draw [fill] (0.9,-0.78) circle (1pt);
   \draw [fill] (-0.9,-0.78) circle (1pt);
   \draw [fill] (1.2,0) circle (1pt);
   \draw [fill] (-1.2,0) circle (1pt);

   \draw [->] (2,0) -- (3,0);
   \draw (2.5,0.3) node {$p|_{D}$};

   \shade (3.8,-1.2) rectangle (6.2,1.2);
   \draw (5,0) node {$X$};
   \draw [dotted] (3.8,-1.2) -- (6.2,-1.2);
   \draw [fill] (4,-1.2) circle (1pt);
   \draw [fill] (4.5,-1.2) circle (1pt);
   \draw [fill] (5,-1.2) circle (1pt);
   \draw [fill] (5.5,-1.2) circle (1pt);
   \draw [fill] (6,-1.2) circle (1pt);
  \end{tikzpicture}
 \end{center}

 $X$ is a colimit of 
 \[
  X_n = \set{(x,y)\in\R^2}{y>0} \cup \set{i\in\Z}{|i|\le n}\times\{0\}.
 \]
 Each $X_n$ is a finite stratified subspace of $X$, hence is CW. But $X$
 is not CW. 
\end{example}

%

\subsection{Products}
\label{product}

In this section, we study products of stratifications, cell structures,
and cellular stratified spaces and deduce a couple of conditions under
which we may take products.

It is not difficult to define a stratification on the product of
two stratified spaces, as we have seen in Lemma
\ref{product_stratification}. 
We have to be careful, however, when we take products of cellular
stratified spaces. 
Even in the category of CW complexes, there is a well-known difficulty
in taking products. Given CW complexes $X$ and $Y$, we need to impose the 
local-finiteness on $X$ or $Y$ or to redefine a topology on $X\times Y$
in order to make $X\times Y$ into a CW complex.

In the case of cellular stratified spaces, we have another
difficulty because of our requirement on cell structures. The product of
two quotient maps may not be a quotient map.

\begin{definition}
 \label{product_cellular_stratification}
 Let $(X,\pi_X,\Phi_X)$ and $(Y,\pi_Y,\Phi_Y)$ be cellular stratified
 spaces and consider the product stratification
 \[
  \pi_X\times\pi_Y : X\times Y \longrightarrow P(X)\times P(Y)
 \]
 on $X\times Y$ in Lemma \ref{product_stratification}. 
 For a pair of cells $e_{\lambda}\subset X$ and $e_{\mu}\subset Y$,
 consider the composition 
 \[
 \varphi_{\lambda,\mu} : D \cong D_{\lambda}\times D_{\mu}
 \rarrow{\varphi_{\lambda}\times\varphi_{\mu}} 
 \overline{e_{\lambda}}\times \overline{e_{\mu}} =
 \overline{e_{\lambda}\times e_{\mu}} \subset X\times Y, 
 \]
 where $D$ is the subspace of $D^{\dim e_{\lambda}+\dim e_{\mu}}$
 defined by pulling back $D_{\lambda}\times D_{\mu}$ via the standard
 homeomorphism  
 \[
 D^{\dim e_{\lambda}+\dim e_{\mu}} \cong D^{\dim e_{\lambda}}\times
 D^{\dim e_{\mu}}. 
 \]
 If $\varphi_{\lambda,\mu}$ is a quotient map for each pair of cells,
 the resulting cellular stratification is called the
 \emph{product cellular stratification} on $X\times Y$. 
\end{definition}

The above definition is incomplete. Unless we have a
general criterion for $\varphi_{\lambda,\mu}$ to be a quotient map, this 
definition is useless. 

By Lemma \ref{biquotient_is_quotient} and Proposition
\ref{biquotients_are_closed_under_product}, we can take products of
bi-quotient cell structures. The question is when a cell
structure is a bi-quotient map. By Corollary \ref{compact_fiber} and
Corollary \ref{characteristic_map_of_locally_polyhedral}, we obtain the
following practical conditions under the assumption of cylindrical
normality. 

\begin{proposition}
 \label{relatively_compact_or_locally_polyhedral}
 Let $X$ and $Y$ be cylindrically normal cellular stratified spaces.
 If they satisfy one of the following conditions, any product
 $e_{\lambda}\times e_{\mu}$ has the product cell structure for
 $\lambda\in P(X)$ and $\mu\in P(Y)$:
 \begin{enumerate}
  \item All parameter spaces are compact.
  \item Polyhedral.
 \end{enumerate}
\end{proposition}

\begin{proof}
 If $X$ or $Y$ is  polyhedral, all cell structures are
 bi-quotient by Corollary
 \ref{characteristic_map_of_locally_polyhedral}. By Lemma
 \ref{compact_parameter_space}, each fiber of a cell structure in a
 cylindrically normal cellular stratified space can be identified with a
 parameter space. Thus, when all parameter spaces are compact, the cell
 structures are bi-quotient by Corollary \ref{compact_fiber}.
\end{proof}

If $X$ and $Y$ satisfy one of the above conditions, the product
$X\times Y$ has a structure of cellular stratified space. It is
reasonable to expect that $X\times Y$ is again cylindrically normal.

\begin{theorem}
 \label{product_cylindrical_structure}
 Let $X$ and $Y$ be cylindrically normal cellular stratified spaces
 satisfying one of the conditions in Proposition
 \ref{relatively_compact_or_locally_polyhedral}. 
 Then the product $X\times Y$ is a cylindrically normal cellular
 stratified space.
\end{theorem}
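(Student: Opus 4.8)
The plan is to construct the cylindrical structure on $X \times Y$ directly from the cylindrical structures on the factors, and then verify the axioms of Definition \ref{cylindrical_structure_definition} by reducing each to the corresponding axiom on $X$ and on $Y$. By Proposition \ref{relatively_compact_or_locally_polyhedral}, under either hypothesis every product cell $e_\lambda \times e_\mu$ carries the product cell structure $\varphi_{\lambda,\mu}$, with domain $D$ obtained from $D_\lambda \times D_\mu$ via the standard homeomorphism $D^{\dim e_\lambda + \dim e_\mu} \cong D^{\dim e_\lambda} \times D^{\dim e_\mu}$. So the objects of the would-be cylindrical structure are already in place; what remains is to supply the parameter spaces, the structure maps $b$, the composition maps $c$, a normal stratification on each $\partial D^{\dim e_\lambda + \dim e_\mu}$, and to check conditions (1)--(3).

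First I would define the parameter spaces. For a pair of product cells $e_{\mu_0} \times e_{\nu_0} \subset \partial(e_{\lambda} \times e_{\nu})$, the boundary of the product cell decomposes as $\partial(e_\lambda \times e_\nu) = (\partial e_\lambda \times \overline{e_\nu}) \cup (\overline{e_\lambda} \times \partial e_\nu)$, so a cell in the boundary lies either strictly below in the first coordinate, strictly below in the second, or in both. The natural definition is
\[
 P_{(\mu_0,\nu_0),(\lambda,\nu)} = P^X_{\mu_0,\lambda} \times P^Y_{\nu_0,\nu},
\]
where $P^X$ and $P^Y$ are the parameter spaces of $X$ and $Y$, and where $P^X_{\lambda,\lambda}$ is the one-point space (allowing the case where one coordinate does not drop in dimension). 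The structure map $b_{(\mu_0,\nu_0),(\lambda,\nu)}$ is then the product $b^X_{\mu_0,\lambda} \times b^Y_{\nu_0,\nu}$ transported through the homeomorphism identifying $D_\lambda \times D_\nu$ with the globular domain $D$; the composition maps $c$ are the products $c^X \times c^Y$ of the composition maps on the two factors. The normal stratification on $\partial D^{\dim e_\lambda + \dim e_\nu}$ is the product stratification (Lemma \ref{product_stratification}), which is normal since products of normal stratifications are normal.

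Next I would verify the three conditions. Condition (2), the commutativity of the three diagrams for $b$ and $c$, follows coordinatewise: each diagram for $X \times Y$ is the cartesian product of the corresponding diagram for $X$ with that for $Y$, so it commutes because both factor diagrams commute by hypothesis. Condition (3), the covering $\partial D = \bigcup b(P \times \Int(D))$, follows from the two boundary decompositions above together with the covering property for $X$ and for $Y$: the interior of a product cell is the product of interiors, and unwinding the union over all boundary cells of each factor reconstructs $\partial D$. Condition (1), that $b$ restricted to $P \times \Int(D_{\mu_0} \times D_{\nu_0})$ is a homeomorphism onto its image, is the product of two such homeomorphisms and hence a homeomorphism by elementary point-set topology. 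Finally I would confirm that $X \times Y$ is itself cylindrically normal as a cellular stratified space, i.e. that the resulting data genuinely forms a cellular stratification on the product; this is where Proposition \ref{relatively_compact_or_locally_polyhedral} is used essentially, since it guarantees the characteristic maps $\varphi_{\lambda,\mu}$ are quotient maps.

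The main obstacle I anticipate is the bookkeeping at the level of the globular domains rather than any conceptual difficulty: the cylindrical axioms are phrased in terms of $\partial D_\lambda$ for a single disk $D^n$, whereas the product cell has domain sitting inside $D^{\dim e_\lambda + \dim e_\nu}$ via a fixed homeomorphism with $D^{\dim e_\lambda} \times D^{\dim e_\nu}$. One must check that the product stratification and the product structure maps are compatible with this identification, in particular that the image of $b^X \times b^Y$ lands in $\partial D$ (not just in $D_\lambda \times D_\nu$) and that the join/aster structure used implicitly in later arguments is respected. This is routine but requires care, because the standard homeomorphism $D^m \times D^n \cong D^{m+n}$ does not send $\partial D^m \times D^n \cup D^m \times \partial D^n$ to $\partial D^{m+n}$ in an affine way; one should fix such a homeomorphism once and check that all the maps are continuous and, in the polyhedral case, PL with respect to it, so that the product again satisfies the polyhedrality hypotheses and the construction can be iterated.
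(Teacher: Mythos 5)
Your proposal is correct and follows essentially the same route as the paper: the paper likewise sets $P_{(\lambda,\mu),(\lambda',\mu')} = P^X_{\lambda,\lambda'}\times P^Y_{\mu,\mu'}$, defines the structure maps as $b^X_{\lambda,\lambda'}\times b^Y_{\mu,\mu'}$ transported through the identification $D_{\lambda',\mu'}\cong D_{\lambda'}\times D_{\mu'}$, takes products of the composition maps, and uses exactly your key observation that $(\lambda,\mu)<(\lambda',\mu')$ forces $\lambda<\lambda'$ or $\mu<\mu'$, so the image lies in the boundary. The paper leaves the remaining axiom checks as ``straightforward,'' which is the same coordinatewise verification you outline, and your closing remark about the implicit choice of homeomorphism $D^{m+n}\cong D^m\times D^n$ is acknowledged by the paper in the discussion immediately following the theorem.
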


\begin{proof}
 Let
 $\{\varphi_{\lambda}^{X}:D_{\lambda}\to\overline{e_{\lambda}}\}$
 and 
 $\{\varphi_{\mu}^{Y}: D_{\mu}\to \overline{e_{\mu}}\}$
 be cell structures on $X$ and $Y$, respectively. Cylindrical structures
 on $X$ and $Y$ are denoted by
 $\{b^X_{\lambda,\lambda'} : P^X_{\lambda,\lambda'}\times D_{\lambda}\to
 D_{\lambda'}\}$ 
 and
 $\{b^Y_{\mu,\mu'} : P^Y_{\mu,\mu'}\times D_{\mu}\to D_{\mu'}\}$, respectively.

 For a pair of cell structures
 $\varphi_{\lambda}^{X} : D_{\lambda}\to X$ and
 $\varphi_{\mu}^{Y} : D_{\mu}\to Y$,
 consider the product cell structure
 \[
  \varphi_{\lambda,\mu} : D_{\lambda,\mu} \cong D_{\lambda}\times
 D_{\mu} \rarrow{\varphi^X_{\lambda}\times\varphi^{Y}_{\mu}}
 \overline{e_{\lambda}}\times\overline{e_{\mu}}. 
 \]
 For $(\lambda,\mu)\le (\lambda',\mu')$, define
 \[
 P_{(\lambda,\mu),(\lambda',\mu')} = P^X_{\lambda,\lambda'}\times
 P^Y_{\mu,\mu'}. 
 \]
 When $(\lambda,\mu) < (\lambda',\mu')$, we have either 
 $\lambda<\lambda'$ or $\mu<\mu'$ and thus the image of the composition 
 \begin{eqnarray*}
  P_{(\lambda,\mu),(\lambda',\mu')}\times D_{\lambda,\mu} & \cong &
   P^X_{\lambda,\lambda'}\times P^Y_{\mu,\mu'}\times D_{\lambda}\times
   D_{\mu} \\ 
  & \rarrow{\cong} & P^X_{\lambda,\lambda'}\times D_{\lambda}\times
   P^Y_{\mu,\mu'}\times D_{\mu} \\
  & \rarrow{b^X_{\lambda,\lambda'}\times b^Y_{\mu,\mu'}} &
   D_{\lambda'}\times D_{\mu'} \\ 
  & \cong & D_{\lambda',\mu'} 
 \end{eqnarray*}
 lies in
 $\partial D_{\lambda,\mu}\cong (D_{\lambda}\times\partial D_{\mu})\cup (\partial D_{\lambda}\times D_{\mu})$.
 And we obtain a map
 \[
 b_{(\lambda,\mu),(\lambda',\mu')} :
 P_{(\lambda,\mu),(\lambda',\mu')}\times D_{\lambda,\mu} \rarrow{}
 \partial D_{\lambda',\mu'}.
 \]
 The composition operations
 \[
  P_{(\lambda_1,\mu_1),(\lambda_2,\mu_2)}\times
 P_{(\lambda_0,\mu_0),(\lambda_1,\mu_1)} \longrightarrow
 P_{(\lambda_0,\mu_0),(\lambda_2,\mu_2)} 
 \]
 are defined in an obvious way.

 It is straightforward to verify that these maps define a cylindrical
 structure on $X\times Y$ under the product stratification and the
 product cell structures.
\end{proof}

Let us consider the CW conditions on products next. The closure
finiteness condition is automatic.

\begin{lemma}
 If $X$ and $Y$ are stratified spaces satisfying the closure finiteness
 condition, then so is $X\times Y$.
\end{lemma}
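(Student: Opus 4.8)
The plan is to reduce the statement to the elementary topological fact that closure commutes with finite products, and then count. By Lemma \ref{product_stratification}, the strata of $X\times Y$ under the product stratification are exactly the products $e_{\lambda}\times e_{\mu}$ with $e_{\lambda}$ a stratum of $X$ and $e_{\mu}$ a stratum of $Y$. The first step is to record that $\overline{e_{\lambda}\times e_{\mu}}=\overline{e_{\lambda}}\times\overline{e_{\mu}}$, which is standard for products of topological spaces. From this I would compute the boundary: writing $\overline{e_{\lambda}}=e_{\lambda}\cup\partial e_{\lambda}$ and $\overline{e_{\mu}}=e_{\mu}\cup\partial e_{\mu}$ and expanding the product, the diagonal term $e_{\lambda}\times e_{\mu}$ is removed, leaving
\[
\partial(e_{\lambda}\times e_{\mu})=(e_{\lambda}\times\partial e_{\mu})\cup(\partial e_{\lambda}\times e_{\mu})\cup(\partial e_{\lambda}\times\partial e_{\mu}).
\]

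The second step is the counting. By the closure finiteness of $X$, the set $\partial e_{\lambda}$ is covered by finitely many strata, say $e_{\lambda_1},\ldots,e_{\lambda_p}$; similarly $\partial e_{\mu}$ is covered by finitely many strata $e_{\mu_1},\ldots,e_{\mu_q}$ by the closure finiteness of $Y$. Substituting these finite covers into the three pieces above, I would observe that $e_{\lambda}\times\partial e_{\mu}$ is covered by the $q$ strata $e_{\lambda}\times e_{\mu_j}$, that $\partial e_{\lambda}\times e_{\mu}$ is covered by the $p$ strata $e_{\lambda_i}\times e_{\mu}$, and that $\partial e_{\lambda}\times\partial e_{\mu}$ is covered by the $pq$ strata $e_{\lambda_i}\times e_{\mu_j}$. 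Hence $\partial(e_{\lambda}\times e_{\mu})$ is covered by at most $p+q+pq$ product strata, which verifies the closure finiteness condition for $X\times Y$.

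There is essentially no hard part here; the only point requiring mild care is the opening claim that the closure of a product stratum equals the product of closures, so that the boundary decomposes cleanly. Since this identity holds in complete generality for products of topological spaces, I would simply cite it (or note it in one line) rather than prove it, and the rest is the bookkeeping above. Accordingly I expect the write-up to be just a few lines, and no appeal to the Hausdorff, CW, or normality hypotheses elsewhere in the paper is needed for this particular lemma.
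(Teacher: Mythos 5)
Your proof is correct, and it is exactly the routine verification the paper has in mind: the paper states this lemma without proof, remarking only that "the closure finiteness condition is automatic," and your argument — identifying product strata via Lemma \ref{product_stratification}, using $\overline{e_{\lambda}\times e_{\mu}}=\overline{e_{\lambda}}\times\overline{e_{\mu}}$ to split the boundary into the three pieces $(e_{\lambda}\times\partial e_{\mu})\cup(\partial e_{\lambda}\times e_{\mu})\cup(\partial e_{\lambda}\times\partial e_{\mu})$, and covering each by finitely many product strata — is the intended filling-in of that omitted step. No gaps; your observation that no Hausdorff, CW, or normality hypotheses are needed is also accurate.
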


As is the case of CW complexes \cite{Dowker1952}, the product of two CW 
stratifications may not satisfy the weak topology condition. In
the case of CW complexes, the local finiteness of $X$ implies that
$X\times Y$ is CW for any CW complex $Y$. The author does not know if an
analogous fact holds for CW stratified spaces in general.
The following obvious fact is still useful in many cases. 

\begin{lemma}
 \label{product_of_CW_stratification}
 Let $X$ and $Y$ be CW stratified spaces. If both $X$ and $Y$ are
 locally finite, then $X\times Y$ is a CW stratified space. 
\end{lemma}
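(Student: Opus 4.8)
The plan is to bypass the weak-topology subtlety flagged in the preceding remark by observing that local finiteness passes to products, and then to invoke Proposition \ref{locally_finite_implies_CW}, which already packages the difficult implication that local finiteness forces the weak topology.

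First I would recall that, by Lemma \ref{product_stratification}, the map $\pi_X \times \pi_Y : X \times Y \to P(X) \times P(Y)$ is a stratification, whose strata are exactly the products $e_\lambda \times e_\mu$ of a stratum $e_\lambda$ of $X$ and a stratum $e_\mu$ of $Y$. So it remains only to check that this product stratified space is locally finite, after which the conclusion follows immediately.

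The key step is the verification of local finiteness. Given a point $(x,y) \in X \times Y$, local finiteness of $X$ and of $Y$ furnishes neighborhoods $U$ of $x$ and $V$ of $y$ such that $U$ meets only finitely many strata $e_{\lambda_1}, \dots, e_{\lambda_p}$ of $X$ and $V$ meets only finitely many strata $e_{\mu_1}, \dots, e_{\mu_q}$ of $Y$. Then $U \times V$ is a neighborhood of $(x,y)$, and since
\[
(U \times V) \cap (e_\lambda \times e_\mu) = (U \cap e_\lambda) \times (V \cap e_\mu),
\]
this intersection is nonempty only when both $U \cap e_\lambda \neq \emptyset$ and $V \cap e_\mu \neq \emptyset$, i.e.\ only for the finitely many index pairs $(\lambda_i, \mu_j)$. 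Hence $U \times V$ meets at most $pq$ strata of $X \times Y$, and the product stratification is locally finite.

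Finally I would apply Proposition \ref{locally_finite_implies_CW} to conclude that $X \times Y$ is a CW stratified space. I do not anticipate any genuine obstacle: the only substantive point is the elementary observation that a product of locally finite stratifications is locally finite, and the real work has been delegated to Proposition \ref{locally_finite_implies_CW}. In particular, this route sidesteps the possible failure of the weak-topology condition for general products of CW stratifications noted just before the statement, precisely because the hypothesis of local finiteness on both factors is what rescues it.
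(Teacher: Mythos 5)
Your proposal is correct and follows exactly the paper's own route: the paper's proof consists of the one-line observation that a product of locally finite stratified spaces is locally finite, followed by an appeal to Proposition \ref{locally_finite_implies_CW}. Your argument merely spells out that observation (via product neighborhoods $U \times V$), which the paper takes as evident.
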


\begin{proof}
 The product of locally finite stratified spaces is again 
 locally finite. The result follows from Proposition
 \ref{locally_finite_implies_CW}. 
\end{proof}

%

Thus, by Theorem \ref{product_cylindrical_structure}, the product
$X\times Y$ of locally finite cylindrically normal cellular stratified
spaces $X$ and $Y$ is a CW cylindrically normal cellular stratified
space. When $X$ and $Y$ are polyhedral, it is easy to verify
that $X\times Y$ inherits a polyhedral structure.

\begin{corollary}
 \label{product_of_locally_polyhedral}
 Let $X$ and $Y$ be polyhedral cellular stratified spaces.
 Suppose both $X$ and $Y$ are locally finite. Then
 the product $X\times Y$ is a polyhedral cellular stratified
 space.
\end{corollary}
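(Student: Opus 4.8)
The plan is to upgrade the cylindrically normal structure on the product, already produced in Theorem \ref{product_cylindrical_structure}, to a polyhedral one by taking products of the given polyhedral replacements. First I would record that the ambient hypotheses of Definition \ref{polyhedral_normality_definition} are met. Since $X$ and $Y$ are polyhedral, they satisfy condition (2) of Proposition \ref{relatively_compact_or_locally_polyhedral}, so Theorem \ref{product_cylindrical_structure} makes $X\times Y$ a cylindrically normal cellular stratified space with product parameter spaces $P_{(\lambda,\mu),(\lambda',\mu')}=P^X_{\lambda,\lambda'}\times P^Y_{\mu,\mu'}$. Local finiteness of $X$ and $Y$ makes $X\times Y$ locally finite, hence CW by Lemma \ref{product_of_CW_stratification} (equivalently Proposition \ref{locally_finite_implies_CW}). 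Thus only the polyhedral replacements and the two conditions of Definition \ref{polyhedral_normality_definition} remain to be supplied.

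For the construction, let $e_\lambda\subset X$ and $e_\mu\subset Y$ carry polyhedral replacements $\alpha_\lambda:\widetilde{F}_\lambda\to D^{\dim e_\lambda}$ and $\alpha_\mu:\widetilde{F}_\mu\to D^{\dim e_\mu}$. I would set $\widetilde{F}_{(\lambda,\mu)}=\widetilde{F}_\lambda\times\widetilde{F}_\mu$. A preliminary point, routine but needing a sentence, is that a product of Euclidean polyhedral complexes is again one in the sense of Definition \ref{polyhedral_complex_definition}: the generating polytopes are the products $F_i\times G_j$ of the respective generating polytopes, the structure maps are products of affine equivalences, the relative-interior covering factors as a product, and the proper-face condition is inherited coordinatewise. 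The polyhedral replacement is then $\alpha_{(\lambda,\mu)}=(\alpha_\lambda\times\alpha_\mu)$ followed by the standard identification $D^{\dim e_\lambda}\times D^{\dim e_\mu}\cong D^{\dim e_\lambda+\dim e_\mu}$ fixed in Definition \ref{product_cellular_stratification}, and $F_{(\lambda,\mu)}=\alpha_{(\lambda,\mu)}^{-1}(D_{(\lambda,\mu)})$ matches $F_\lambda\times F_\mu$ under this identification.

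Then I would verify the two conditions. For condition (1), the stratification on $\overline{D_{(\lambda,\mu)}}$ coming from the cylindrical structure built in the proof of Theorem \ref{product_cylindrical_structure} is precisely the product of the cylindrical stratifications on $\overline{D_\lambda}$ and $\overline{D_\mu}$; since $\alpha_\lambda$ and $\alpha_\mu$ are subdivisions and a product of subdivisions is a subdivision, $\alpha_{(\lambda,\mu)}$ is a subdivision. For condition (2), the parameter space $P^X_{\lambda,\lambda'}\times P^Y_{\mu,\mu'}$ is locally cone-like because a product of locally cone-like spaces is locally cone-like (a product of cones $cA\times cB\cong c(A\ast B)$ is again a cone on a compact space); I would either cite this from Definition \ref{locally_cone-like_space_definition} or include the short argument. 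Finally, because $b_{(\lambda,\mu),(\lambda',\mu')}$ was defined in Theorem \ref{product_cylindrical_structure} as $b^X_{\lambda,\lambda'}\times b^Y_{\mu,\mu'}$ precomposed with a factor swap, the composite demanded by condition (2) becomes, under the identifications above, the product of the two PL composites $P^X_{\lambda,\lambda'}\times F_\lambda\to F_{\lambda'}$ and $P^Y_{\mu,\mu'}\times F_\mu\to F_{\mu'}$ precomposed with an affine coordinate permutation; since products of PL maps and affine maps are PL, the composite is PL.

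The main obstacle I expect is bookkeeping rather than substance: carefully matching the factor-reshuffle hidden in the product structure map $b_{(\lambda,\mu),(\lambda',\mu')}$ with the product $F_\lambda\times F_\mu$ of the domain complexes, so that the composite really is a product of the two given PL maps. A clean alternative that sidesteps this is available when $X$ and $Y$ are Euclidean polyhedral (embedded in $\R^{N_1}$ and $\R^{N_2}$): then $X\times Y\subset\R^{N_1+N_2}$, and one can instead invoke Lemma \ref{Euclidean_and_PL_imply_locally_polyhedral} with the product complexes $\widetilde{F}_{(\lambda,\mu)}$, reducing the PL verification to the single composite landing in $\R^{N_1+N_2}$, which is manifestly a product of PL maps.
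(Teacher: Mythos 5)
Your proposal is correct and follows the paper's route exactly: the paper obtains the CW cylindrically normal structure on $X\times Y$ from Theorem \ref{product_cylindrical_structure} together with Lemma \ref{product_of_CW_stratification}, and then simply asserts that ``it is easy to verify'' that $X\times Y$ inherits a polyhedral structure, giving no further argument. Your product-of-replacements construction $\widetilde{F}_{(\lambda,\mu)}=\widetilde{F}_\lambda\times\widetilde{F}_\mu$, combined with the closure of Euclidean polyhedral complexes, locally cone-like spaces, and PL maps under finite products (the latter two recorded in Appendix \ref{PL}), is precisely the verification the paper leaves implicit, and your attention to the factor-swap bookkeeping in $b_{(\lambda,\mu),(\lambda',\mu')}$ is a worthwhile addition rather than a deviation.
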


Thus we may freely take finite products of Euclidean polyhedral
cellular stratified spaces.

\begin{corollary}
 \label{product_of_Euclidean_locally_polyhedral}
 For any Euclidean polyhedral cellular stratified spaces $X$ and
 $Y$, the product $X\times Y$ is polyhedral.
\end{corollary}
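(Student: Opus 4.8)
The plan is to deduce this from Corollary \ref{product_of_locally_polyhedral}, which already asserts that the product of two \emph{locally finite} polyhedral cellular stratified spaces is polyhedral. The only hypothesis of that corollary not built into the definition of a Euclidean polyhedral cellular stratified space is local finiteness, so the whole task reduces to producing local finiteness for free. First I would observe that a Euclidean polyhedral cellular stratified space is, by the assumption of Lemma \ref{Euclidean_and_PL_imply_locally_polyhedral}, a subspace of some $\R^N$ carrying a CW cellular stratification; in particular it is metrizable, being a subspace of a metric space. Lemma \ref{metrizable_implies_locally_finite} then forces it to be locally finite.

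Concretely, suppose $X \subset \R^N$ and $Y \subset \R^M$ are Euclidean polyhedral cellular stratified spaces. By the conclusion of Lemma \ref{Euclidean_and_PL_imply_locally_polyhedral} each of $X$ and $Y$ is polyhedral, and by the observation above each is a metrizable CW cellular stratified space, hence locally finite by Lemma \ref{metrizable_implies_locally_finite}. Thus $X$ and $Y$ satisfy both hypotheses of Corollary \ref{product_of_locally_polyhedral}, whence $X \times Y$ is a polyhedral cellular stratified space, as claimed.

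The only point that requires care---and the reason the naive analogue fails without the Euclidean hypothesis---is the weak-topology half of the CW condition for the product: as recalled around Lemma \ref{product_of_CW_stratification}, the product of two CW stratifications need not satisfy the weak-topology condition, and local finiteness of the factors is precisely what is invoked (through Lemma \ref{product_of_CW_stratification}, which rests on Proposition \ref{locally_finite_implies_CW}) to guarantee it. Here metrizability of the ambient Euclidean spaces supplies that local finiteness automatically, so there is no genuine obstacle beyond assembling the correct chain of cited results. I would close with the remark that, since $X \times Y$ carries the subspace topology of $\R^{N+M} = \R^N \times \R^M$ and is cylindrically normal by Theorem \ref{product_cylindrical_structure} (its factors satisfy a condition of Proposition \ref{relatively_compact_or_locally_polyhedral}), the product is in fact again Euclidean polyhedral; this is what licenses the preceding assertion that finite products of Euclidean polyhedral cellular stratified spaces may be formed freely.
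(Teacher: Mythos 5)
Your proof is correct and is exactly the paper's argument: the paper's proof consists of the single observation that, by Lemma \ref{metrizable_implies_locally_finite}, $X$ and $Y$ are locally finite (being metrizable as subspaces of Euclidean space), after which Corollary \ref{product_of_locally_polyhedral} applies. Your additional closing remark that $X\times Y$ is again \emph{Euclidean} polyhedral goes slightly beyond what the corollary asserts, but the core argument matches the paper.
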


\begin{proof}
 By Lemma \ref{metrizable_implies_locally_finite}, $X$ and
 $Y$ are locally finite.
\end{proof}

%
%
%
%
%
%

In Proposition \ref{product_cylindrical_structure} and Definition
\ref{product_cellular_stratification}, we 
made an implicit choice of a homeomorphism 
\[
 D^{m+n} \cong D^m\times D^n.
\]
This procedure can be avoided by using cubes as domains of
cell structures, in which case it is a reasonable idea to require
the structure maps to be compatible with
cubical structures. Thus we introduce the following variant of
polyhedral cellular stratified spaces.

\begin{definition}
 \label{cubical_structure_definition}
 Let $X$ be a CW cellular stratified space. We
 consider $I^n=(\Delta^1)^n$ as a stratified space (regular cell
 complex) under the product
 stratification of the simplicial stratification on $\Delta^1$. 
 A \emph{cubical structure} on $X$ consists of
 \begin{itemize}
  \item a cylindrical structure 
	$(\{b_{\mu,\lambda} : P_{\mu,\lambda}\times D_{\mu}\to\partial
	D_{\lambda}\}$, 
	$\{c_{\lambda_0,\lambda_1,\lambda_2}:
	P_{\lambda_1,\lambda_2}\times P_{\lambda_0,\lambda_1} \to
	P_{\lambda_0,\lambda_2}\})$, 
  \item a stratified subspace $Q_{\lambda}$ of $I^{\dim e_{\lambda}}$
	for each $\lambda\in P(X)$ under a suitable regular cellular
	subdivision of $I^{\dim e_{\lambda}}$, and
  \item a homeomorphism
	$\alpha_{\lambda} : Q_{\lambda}\to D_{\lambda}$ for each
	$\lambda\in P(X)$, 
 \end{itemize}
 satisfying the following conditions:
 \begin{enumerate}
  \item Each $P_{\mu,\lambda}$ is a stratified subspace of
	$I^{\dim e_{\lambda}-\dim e_{\mu}}$.
  \item The composition
	\[
	 \tilde{b}_{\mu,\lambda} : P_{\mu,\lambda}\times Q_{\mu}
	\rarrow{1_{P_{\mu,\lambda}}\times 
	\alpha_{\mu}} P_{\mu,\lambda}\times
	D_{\mu} \rarrow{b_{\mu,\lambda}} \partial D_{\lambda}
	\rarrow{\alpha_{\lambda}^{-1}} 
	Q_{\lambda} 
	\]
	is a strict morphism of stratified spaces.
  \item The map $\tilde{b}_{\mu,\lambda}$ is an affine embedding onto
	its image when restricted to each face.
 \end{enumerate}
 The family of maps
 $\{\alpha_{\lambda} : Q_{\lambda}\to D_{\lambda}\}_{\lambda\in P(X)}$
 is also called a \emph{cubical structure}. 

 A cylindrically normal CW cellular stratified space equipped with a 
 cubical structure is called a \emph{cubically normal cellular
 stratified space}.
\end{definition}

\begin{example}
 The minimal cell decomposition of $S^n$ is cubically normal. The
 radial expansion
 \[
  \alpha_n : I^n \longrightarrow D^n
 \]
 defines a cubical structure. The parameter space between the $0$-cell
 and the $n$-cell is $\partial I^n$ and is a stratified subspace of
 $I^n$. 
\end{example}

\begin{example}
 Recall that any $1$-dimensional cellular stratified space is totally
 normal, hence is cylindrically normal. They are cubically normal for
 an obvious reason, if they are CW.
\end{example}

\begin{example}
 \label{cubical_RP^2}
 Recall that $\RP^2$ can be obtained by gluing the edges of $I^2$ as
 follows. 
 \begin{center}
  \begin{tikzpicture}
   \draw (0,0) -- (0,2);
   \draw [->] (0,0) -- (0,1);
   \draw (0,0) -- (2,0);
   \draw [->>] (0,0) -- (1,0);
   \draw (0,2) -- (2,2);
   \draw [->>] (2,2) -- (1,2);
   \draw (2,0) -- (2,2);
   \draw [->] (2,2) -- (2,1);
  \end{tikzpicture}
 \end{center}
 This can be regarded as a description of a cell decomposition of
 $\RP^2$, consisting of two $0$-cells $e^0_1$, $e^0_2$, two $1$-cells
 $e^1_1$, $e^1_2$ and a $2$-cell $e^2$. This is obviously cubically
 normal. 
\end{example}

%
%

\begin{remark}
 When the domains of cells are not globular\footnote{Definition
 \ref{globular_definition}}, the situation is more 
 complicated and we do not discuss the product structure here. 
\end{remark}

\subsection{Subdivisions of Cells}
\label{subdivision_of_cell}

We defined the notion of cellular stratified subspace in Definition
\ref{cellular_stratified_subspace_definition}. It often happens that we
need to subdivide cells before we take a stratified subspace.
For example, the complement of an arrangement
$M(\mathcal{A}\otimes\R^{\ell})$ in Example 
\ref{BZDS_stratification} was defined first by taking a subdivision of
the trivial stratification on $\R^n\otimes\R^{\ell}$ and then by taking
the complement of $\Lk(\mathcal{A}\otimes\R^{\ell})$.

We have already defined subdivisions of stratified spaces in Definition
\ref{morphism_of_stratified_spaces}. 
We impose the following ``regularity condition'' on the
definition of subdivisions of cell structures.

\begin{definition}
 \label{cellular_subdivision_definition}
 Let $(\pi,\Phi)$ be a cellular stratification on $X$. A
 \emph{cellular subdivision} of $(\pi,\Phi)$ consists of
 \begin{itemize}
  \item a subdivision of stratified spaces
	\[
	\bm{s}=(1_X,\underline{s}) : (X,\pi')
	\longrightarrow (X,\pi) 
	\]
	and
  \item a regular cellular stratification
	$(\pi_{\lambda},\Phi_{\lambda})$ on 
	the domain $D_{\lambda}$ of each cell $e_{\lambda}$ in
	$(\pi,\Phi)$ containing $\Int(D_{\lambda})$ as a strict
	stratified subspace, 
 \end{itemize}
 satisfying the following conditions: 
 \begin{enumerate}
  \item For each $\lambda\in P(X,\pi)$, 
	the cell structure
	\[
	\varphi_{\lambda} : (D_{\lambda},\pi_{\lambda})
	\longrightarrow (X,\pi') 
	\]
	of $e_{\lambda}$ is a strict morphism of stratified
	spaces.
  \item The maps
	\[
	P(\varphi_{\lambda}) : P(\Int(D_{\lambda})) \longrightarrow
	P(X,\pi) 
	\]
	induced by the cell structures $\{\varphi_{\lambda}\}$ 
	give rise to a bijection
	\[
	P(\Phi)=\coprod_{\lambda\in P(X,\pi)}P(\varphi) :
	\coprod_{\lambda\in P(X,\pi)} 
	P(\Int(D_{\lambda}),\pi_{\lambda}) \longrightarrow P(X,\pi').
	\]
 \end{enumerate}
\end{definition}

\begin{remark}
 \label{cellular_subdivision_remark}
 The morphism $\bm{s}$ induces a surjective map 
 \[
  P(\bm{s}) : P(X,\pi') \longrightarrow P(X,\pi),
 \]
 which gives rise to a decomposition
 \[
  P(X,\pi') = \coprod_{\lambda\in P(X,\pi)} P(\bm{s})^{-1}(\lambda)
 \]
 of sets.
 Since $\varphi_{\lambda}:(D_{\lambda},\pi_{\lambda})\to (X,\pi')$ is a
 strict morphism, it induces an isomorphism of posets
 \[
  P(\varphi_{\lambda}) : P(\Int(D_{\lambda}),\pi_{\lambda})
 \rarrow{\cong} P(\bm{s})^{-1}(\lambda).  
 \]
 In other words, the restriction of the bijection $P(\Phi)$ in the
 second condition to each $P(\Int(D_{\lambda}),\pi_{\lambda})$ 
 is an embedding of posets, although $P(\Phi)$ itself is rarely an
 isomorphism of posets.  
\end{remark}

\begin{remark}
 Cellular subdivisions of stellar stratified spaces can be defined in a
 similar way. We may also define stellar subdivisions of cellular or
 stellar stratified spaces. We do not pursue such generalizations in
 this paper.
\end{remark}

It is easy to verify that the category of cellular stratified spaces is
closed under cellular subdivisions, when all cell structures are
hereditarily quotient.

\begin{lemma}
 \label{cellular_subdivision_is_cellular}
 Let $(X,\pi,\Phi)$ be a cellular stratified space whose cell structures
 are hereditarily quotient. Then any cellular subdivision of
 $(X,\pi,\Phi)$ defines a structure of cellular stratified space on $X$,
 under which, for $\lambda\in P(X,\pi)$ and 
 $\lambda'\in P(\Int(D_{\lambda}),\pi_{\lambda})$, the composition 
 \[
 D_{\lambda'} \rarrow{s_{\lambda'}} D_{\lambda}
 \rarrow{\varphi_{\lambda}} X
 \]
 is the cell structure of the cell $e_{\lambda'}$ in
 $P(X,\pi')$, where 
 $s_{\lambda'} : D_{\lambda'} \to D_{\lambda}$ is the
 cell structure of the cell in
 $P(D_{\lambda},\pi_{\lambda})$ indexed by $\lambda'$. 
\end{lemma}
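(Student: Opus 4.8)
The plan is to equip $(X,\pi')$ with the collection $\Phi'=\{\varphi_{\lambda'}:=\varphi_{\lambda}\circ s_{\lambda'}\}$, indexed by the cells $\lambda'\in P(\Int(D_{\lambda}),\pi_{\lambda})$ which, by the second condition of Definition \ref{cellular_subdivision_definition}, biject with $P(X,\pi')$, and to verify the three clauses of Definition \ref{definition_of_cellular_stratified_space}. That $\pi'$ is a stratification is given, since $\bm{s}$ is a subdivision. Since $\varphi_{\lambda}:(D_{\lambda},\pi_{\lambda})\to(X,\pi')$ is a strict morphism (Definition \ref{morphism_of_stratified_spaces}), the stratum of $\pi'$ carrying index $\lambda'$ is exactly $e_{\lambda'}=\varphi_{\lambda}(e'_{\lambda'})$, where $e'_{\lambda'}\subset\Int(D_{\lambda})$ is the cell of $\pi_{\lambda}$ indexed by $\lambda'$; moreover the indexing bijection together with strictness forces $\varphi_{\lambda}(\partial D_{\lambda})\subset\partial e_{\lambda}$. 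The easy half of the cell-structure axiom follows at once: regularity of $\pi_{\lambda}$ makes $s_{\lambda'}$ a homeomorphism onto $\overline{e'_{\lambda'}}$, hence a homeomorphism $\Int(D_{\lambda'})\to e'_{\lambda'}$, and as $e'_{\lambda'}\subset\Int(D_{\lambda})$ with $\varphi_{\lambda}|_{\Int(D_{\lambda})}$ a homeomorphism onto $e_{\lambda}$, the composite $\varphi_{\lambda'}$ restricts to a homeomorphism of $\Int(D_{\lambda'})$ onto $e_{\lambda'}$.

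Both remaining requirements on $\varphi_{\lambda'}$---that it map onto $\overline{e_{\lambda'}}$ and be a quotient map there---reduce to the single preimage--closure identity
\[
 \varphi_{\lambda}^{-1}(\overline{e_{\lambda'}}) = \overline{e'_{\lambda'}}. \qquad (\star)
\]
Granting $(\star)$, surjectivity of $\varphi_{\lambda}$ onto $\overline{e_{\lambda}}\supset\overline{e_{\lambda'}}$ gives $\varphi_{\lambda'}(D_{\lambda'})=\varphi_{\lambda}(\overline{e'_{\lambda'}})=\overline{e_{\lambda'}}$, while the hereditarily quotient hypothesis, through Lemma \ref{hereditarily_quotient_map_can_be_restricted}, makes the restriction $\varphi_{\lambda}|_{\varphi_{\lambda}^{-1}(\overline{e_{\lambda'}})}=\varphi_{\lambda}|_{\overline{e'_{\lambda'}}}$ a quotient map onto $\overline{e_{\lambda'}}$; composing with the homeomorphism $s_{\lambda'}$ then exhibits $\varphi_{\lambda'}$ as a quotient map onto $\overline{e_{\lambda'}}$.

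The inclusion $\overline{e'_{\lambda'}}\subset\varphi_{\lambda}^{-1}(\overline{e_{\lambda'}})$ is just continuity. The reverse inclusion is the crux and the step I expect to be the main obstacle. A point $x\in\varphi_{\lambda}^{-1}(\overline{e_{\lambda'}})$ lies in some cell $e'_{\mu'}$ of $\pi_{\lambda}$, and strictness gives $\varphi_{\lambda}(e'_{\mu'})=e_{\underline{\varphi_{\lambda}}(\mu')}$, a stratum of $\pi'$ meeting $\overline{e_{\lambda'}}$; one must upgrade this to $\mu'\le\lambda'$ in $P(\Int(D_{\lambda}),\pi_{\lambda})$, whence $x\in\overline{e'_{\lambda'}}$. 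For cells $e'_{\mu'}\subset\Int(D_{\lambda})$ this is transparent, since $\varphi_{\lambda}|_{\Int(D_{\lambda})}$ is a homeomorphism and carries the closure relation (the Lemma following Definition \ref{stratification_definition}) back and forth verbatim. The real work is at $\partial D_{\lambda}$, where $\varphi_{\lambda}$ need not be injective: there the matching of orders must be extracted from the order-preserving index map $\underline{\varphi_{\lambda}}$ of the strict morphism together with the closure characterization of strata, and this is the point where the way $\pi_{\lambda}$ meets $\partial D_{\lambda}$ enters.

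Finally, the dimension clause: for the $k$-cell $e_{\lambda'}$ with $k=\dim e'_{\lambda'}$, one has $\partial e_{\lambda'}=\varphi_{\lambda}(\partial e'_{\lambda'})$ by $(\star)$ and strictness, and $\partial e'_{\lambda'}$ is covered by cells of $\pi_{\lambda}$ of dimension at most $k-1$ since $\pi_{\lambda}$ is itself a cellular stratification. Each such cell maps onto a stratum of $\pi'$ of no larger dimension, because a continuous surjection between open strata---each homeomorphic to a Euclidean open disk---cannot raise dimension; hence $\partial e_{\lambda'}$ is covered by $\pi'$-cells of dimension at most $k-1$. The identification that $\varphi_{\lambda}\circ s_{\lambda'}$ is the cell structure of $e_{\lambda'}$ is built into the construction. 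In sum, once $(\star)$ is secured the verification is routine; establishing $(\star)$ at the boundary is the sole genuine difficulty.
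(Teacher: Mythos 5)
Your reduction of the whole lemma to the identity
\[
 (\star)\qquad \varphi_{\lambda}^{-1}\bigl(\overline{e_{\lambda'}}\bigr)=\overline{e'_{\lambda'}}
\]
is where the argument breaks: $(\star)$ is false in general, so the ``sole genuine difficulty'' you defer is not a difficulty but an impossibility. Take $X=S^1$ with the minimal cell decomposition of Example \ref{minimal_cell_decomposition_of_circle} and subdivide by adding the antipode $(-1,0)$ as a new $0$-cell; this is precisely the paper's barycentric subdivision of $\pi_1$ (Figure \ref{Sd_of_circle}), and it satisfies Definition \ref{cellular_subdivision_definition} with $\pi_{\lambda}$ on $D^1=[-1,1]$ given by the cells $\{-1\}$, $(-1,0)$, $\{0\}$, $(0,1)$, $\{1\}$. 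For the new $1$-cell $e_{\lambda'}=\varphi_{1}((0,1))$ one has $\overline{e'_{\lambda'}}=[0,1]$, while
\[
 \varphi_{1}^{-1}\bigl(\overline{e_{\lambda'}}\bigr)=\{-1\}\cup[0,1],
\]
because the boundary cell $\{-1\}$ of $\pi_{\lambda}$ also maps into $\overline{e_{\lambda'}}$ (onto the old $0$-cell). The point you miss is that the bijection condition in Definition \ref{cellular_subdivision_definition} only governs cells of $\Int(D_{\lambda})$; cells of $\partial D_{\lambda}$ may map onto cells of $\pi'$ that are already accounted for, and in every non-regular example they do --- this is exactly the multiple-lifts phenomenon ($b_{-1}$, $b_{1}$) of Example \ref{minimal_cell_decomposition_of_circle}. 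Consequently your use of Lemma \ref{hereditarily_quotient_map_can_be_restricted} is illegitimate: that lemma restricts a hereditarily quotient map to the \emph{full preimage} of a subset of the target, and $\overline{e'_{\lambda'}}$ is in general a proper closed subset of that preimage.

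Note that the failure of $(\star)$ does not refute the lemma, only your route to it: in the example above $\varphi_{1}|_{[0,1]}$ is still a quotient map, but by compactness, not by $(\star)$. The two conclusions you wanted to extract from $(\star)$ must be obtained differently. Surjectivity follows from a fiber argument: if some $y\in\overline{e_{\lambda'}}$ had $\varphi_{\lambda}^{-1}(y)\cap\overline{e'_{\lambda'}}=\emptyset$, then $U=D_{\lambda}\setminus\overline{e'_{\lambda'}}$ would be an open neighborhood of the fiber, so by hereditary quotient-ness $\varphi_{\lambda}(U)$ would be a neighborhood of $y$ and hence would meet $e_{\lambda'}$; but every preimage of a point of $e_{\lambda'}$ lies in $e'_{\lambda'}$ (points of $\partial D_{\lambda}$ map into $\partial e_{\lambda}$), a contradiction. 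Thus $\varphi_{\lambda}(\overline{e'_{\lambda'}})=\overline{e_{\lambda'}}$ holds even though the preimage identity fails. This is also how the paper proceeds: its (very terse) proof never asserts $(\star)$, but deduces the quotient property of $\varphi_{\lambda}\circ s_{\lambda'}$ directly from hereditary quotient-ness of $\varphi_{\lambda}$ --- restricting over the target $\overline{e_{\lambda'}}$ in the sense of Lemma \ref{hereditarily_quotient_map_can_be_restricted} --- and then only checks connectedness and local closedness of the new strata, the latter via the homeomorphism $\varphi_{\lambda}|_{\Int(D_{\lambda})}$ exactly as in your interior case. Finally, your dimension clause inherits the same defect, since it too invokes $(\star)$; it should instead be run through the corrected statement $\partial e_{\lambda'}=\varphi_{\lambda}(\overline{e'_{\lambda'}})\setminus e_{\lambda'}$.
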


\begin{proof}
 By assumption, each cell structure $\varphi_{\lambda}$ is hereditarily
 quotient and hence the composition 
 $\varphi_{\lambda}\circ s_{\lambda'}:D_{\lambda'}\to\overline{e_{\lambda'}}$
 is a quotient map. 
 Each new stratum is connected, since $e_{\lambda'}$ is connected. It is
 also locally closed, since $\varphi_{\lambda}|_{\Int(D_{\lambda})}$ is
 a homeomorphism onto its image.
 Other conditions can be verified immediately. 
\end{proof}

\begin{remark}
 \label{subdivision_of_domain}
 The reader might want to define a subdivision of a cellular stratified
 space $(X,\pi,\Phi)$ as a morphism
 \[
 \bm{s}=(1_{X},\underline{s},\{s_{\lambda'}\}_{\lambda'\in P(X,\pi')}) :
 (X,\pi',\Phi') \longrightarrow (X,\pi,\Phi)   
 \]
 of cellular stratified spaces satisfying the condition that,
 for each cell $e_{\lambda}$ in $(\pi,\Phi)$, the stratification of the
 interior of the domain $D_{\lambda}$ is indexed by
 $\underline{s}^{-1}(\lambda)$ 
 \[
  \Int(D_{\lambda}) = \bigcup_{\lambda'\in\underline{s}^{-1}(\lambda)}
 s_{\lambda'}(\Int(D_{\lambda'})). 
 \]
 However, we also need to specify the behavior of each cell structure
 on the boundary $\partial D_{\lambda}$.
\end{remark}

\begin{example}
 \label{symmetric_stratification_by_arrangement}
 Let $\mathcal{A} = \{H_1,\ldots,H_k\}$ be a hyperplane arrangement in
 $\R^n$ defined by 
 affine $1$-forms $L=\{\ell_1,\ldots,\ell_k\}$. The stratification
 $\pi_{\mathcal{A}\otimes\R^{\ell}}$ on $\R^n\otimes\R^{\ell}$ in
 Example \ref{BZDS_stratification} is one of the coarsest
 cellular stratifications containing
 $\Lk(\mathcal{A}\otimes\R^{\ell})=\bigcup_{i=1}^{k} H_k\otimes\R^{\ell}$
 as a stratified subspace.
 
 This efficiency is achieved by sacrificing symmetry. The stratification
 $\pi_{\mathcal{A}\otimes\R^{\ell}}$ is not compatible with the action
 of the symmetric group $\Sigma_{\ell}$ on $\R^{\ell}$. 

 As is stated by Bj{\"o}rner and
 Ziegler in \cite{Bjorner-Ziegler92}, we may subdivide
 $\pi_{\mathcal{A}\otimes\R^{\ell}}$ by using the product sign vector
 $S_1^{\ell}$. Define
 \[
  \pi_{\mathcal{A}\otimes\R^{\ell}}^s : \R^n\otimes\R^{\ell}
 \longrightarrow \Map(L,S_1^{\ell})
 \]
 by
 \[
  \pi_{\mathcal{A}\otimes\R^{\ell}}^s(x_1,\ldots,x_{\ell})(\ell_i) =
 (\sign(\ell_i(x_1)),\ldots,\sign(\ell_i(x_{\ell}))). 
 \]
 Define a map $c : S_1^{\ell}\to S_{\ell}$ of posets by
 \[
  c(\varepsilon_1,\ldots,\varepsilon_{\ell}) = \varepsilon_ie_i,
 \]
 where $i = \max\set{j}{\varepsilon_j\neq 0}$. This is surjective and
 the diagram
 \[
  \begin{diagram}
   \node{\R^n\otimes\R^{\ell}} \arrow{e,=}
   \arrow{s,l}{\pi^s_{\mathcal{A}\otimes\R^{\ell}}}
   \node{\R^n\otimes\R^{\ell}}
   \arrow{s,r}{\pi_{\mathcal{A}\otimes\R^{\ell}}} \\
   \node{\Map(L,S_1^{\ell})} \arrow{e,b}{\Map(L,c)}
   \node{\Map(L,S_{\ell})}
  \end{diagram}
 \]
 is commutative. Thus $\pi^s_{\mathcal{A}\otimes\R^{\ell}}$ is a
 subdivision of the stratification
 $\pi_{\mathcal{A}\otimes\R^{\ell}}$. The regularity of the cellular
 stratification $\pi_{\mathcal{A}\otimes\R^{\ell}}$ implies that this is
 a cellular subdivision. Note that
 $(\R^n\otimes\R^{\ell},\pi^s_{\mathcal{A}\otimes\R^{\ell}})$ is a
 $\Sigma_{\ell}$-cellular stratified space. 
\end{example}

\begin{example}
 \label{subdivision_of_S^2}
 Let $\pi_{\min}$ be the minimal cell decomposition of $S^2$. By
 dividing the $2$-cell by the equator, we obtain a subdivision $\pi$ of
 $\pi_{\min}$ as a stratified space. By dividing the domain of the
 cell structure of the $2$-cell in $\pi_{\min}$, this subdivision
 can be made into a subdivision of cellular stratified spaces, as is
 depicted in Figure \ref{subdivision_of_pi_min}.
 \begin{figure}[ht]
 \begin{center}
  \begin{tikzpicture}
   \draw (-3.5,1) arc (0:180:0.5cm);
   \draw [very thick] (-4.5,1) arc (180:360:0.5cm);
   \draw [->] (-3,1) -- (-2.5,0.8);

   \draw [thick] (-4.5,0) -- (-3.5,0);
   \draw [->] (-3,0) -- (-2.5,0);

   \draw [very thick] (-3.5,-1) arc (0:180:0.5cm);
   \draw (-4.5,-1) arc (180:360:0.5cm);
   \draw [->] (-3,-1) -- (-2.5,-0.8);

   \draw (-1,0) circle (1cm);
   \draw [very thick] (-2,0) -- (0,0);

   \draw [->] (0.5,0) -- (1.5,0);
   \draw (1,0.3) node {$\varphi$};

   \draw (3,0) circle (1cm);
   \draw (2,0) arc (180:360:1cm and 0.5cm); 
   \draw [dotted] (4,0) arc (0:180:1cm and 0.5cm); 
   \draw [fill] (4,0) circle (2pt);
  \end{tikzpicture}
 \end{center}
  \caption{A subdivision of the minimal cell decomposition of $S^2$.}
  \label{subdivision_of_pi_min}
 \end{figure}
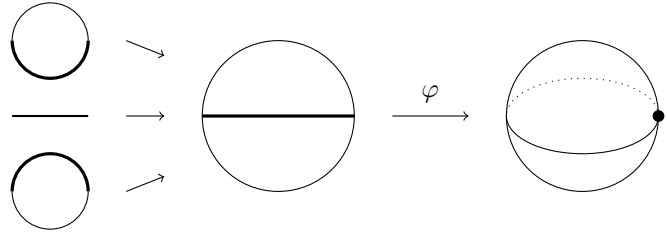

 There is another way to make the stratification $\pi$ into a cellular
 stratified space. Choose a small disk $D$ in $D^2$ touching 
 $\partial D^2$ at a point $p$.
 \begin{figure}[ht]
 \begin{center}
  \begin{tikzpicture}
   \draw (0,0) circle (1cm);
   \draw (0.5,0) circle (0.5cm);
   \draw [fill] (1,0) circle (2pt);
   \draw (1.2,0) node {$p$};
   \draw (0.5,0) node {$D$};

   \draw [->] (1.6,0) -- (2.5,0);
   \draw (2.1,0.3) node {$\varphi'$};

   \draw (4,0) circle (1cm);
   \draw (3,0) arc (180:360:1cm and 0.5cm); 
   \draw [dotted] (5,0) arc (0:180:1cm and 0.5cm); 
   \draw [fill] (5,0) circle (2pt);
  \end{tikzpicture}
 \end{center}
  \caption{Not a subdivision of $\pi_{\min}$.}
  \label{}
 \end{figure}
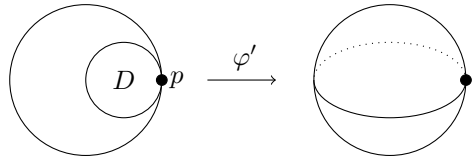
 The small disk $D$ is mapped homeomorphically onto the lower hemisphere
 via $\varphi'$. There is a map
 \[
  \psi : D^2 \longrightarrow D^2\setminus \Int(D)
 \]
 such that the composition
 $\varphi\circ\psi$ is a cell structure
 for the upper hemisphere. For example, such a map can be defined by
 identifying the two points in Figure \ref{definition_of_psi} to $p$.
 \begin{figure}[ht]
 \begin{center}
  \begin{tikzpicture}
   \draw (2,0) arc (0:90:2cm);
   \draw (2,0) arc (0:-90:2cm);
   \draw (0,2) arc (90:270:2cm);
   \draw [fill] (0,2) circle (2pt);
   \draw [fill] (0,-2) circle (2pt);

   \draw [->] (2.5,0) -- (3.5,0);
   \draw (3,0.5) node {$\psi$};

   \draw (6,0) circle (2cm);
   \draw (7,0) circle (1cm);
   \draw [fill] (8,0) circle (2pt);
   \draw (7.7,0) node {$p$};
   \draw (5,0) node {$D^2\setminus\Int(D)$};
  \end{tikzpicture}
 \end{center}
  \caption{A definition of $\psi$.}
  \label{definition_of_psi}
 \end{figure}
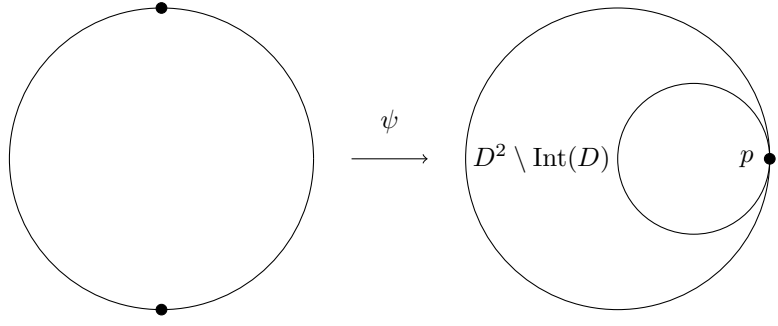
 The morphism of stratified spaces $\pi \to \pi_{\min}$ also becomes a
 morphism of cellular stratified spaces with this cellular
 stratification on $\pi$, but it is not a subdivision of cellular
 stratified spaces, since the above stratification on the domain $D^2$
 is not a regular cellular stratification.
\end{example}

We often define a cellular subdivision of a cellular stratified space
$(X,\pi,\Phi)$ by defining subdivisions of domains of cell structures.
The problem is of course the compatibility.

\begin{lemma}
 \label{compatibility_condition_for_subdivision}
 Let $(X,\pi,\Phi)$ be a cellular stratified space whose cell structures
 are hereditarily quotient. Suppose that a regular
 cellular stratification $\pi_{\lambda}$ on each domain
 $D_{\lambda}$ is given. Define a stratification $\pi'$ on $X$ by
 \[
 X = \bigcup_{\lambda\in P(X,\pi)} \bigcup_{\lambda'\in
 P(\Int(D_{\lambda}))} \varphi_{\lambda}(e_{\lambda'}). 
 \]
 Suppose further that the
 following conditions are satisfied:
 \begin{enumerate}
  \item For each $\lambda\in P(X)$, $\Int(D_{\lambda})$ is a strict
	stratified subspace of $D_{\lambda}$.
  \item For $\lambda'\in P(D_{\lambda})$ and $,\mu'\in P(D_{\mu})$, 
	 \[
	 \varphi_{\lambda}\circ
	 s_{\lambda'}\left(\Int(D_{\lambda'})\right) \cap
	 \varphi_{\mu}\circ s_{\mu'}\left(\Int(D_{\mu'})\right) \neq
	 \emptyset 
	 \]
	 implies
	 \[
	 \varphi_{\lambda}\circ s_{\lambda'}\left(\Int(D_{\lambda'})\right)
	 = \varphi_{\mu}\circ s_{\mu'}\left(\Int(D_{\mu'})\right).	 
	 \]
 \end{enumerate}

 For $\lambda'\in P(\Int(D_{\lambda}))$, define a map
 $\psi_{\lambda'}:D_{\lambda'}\to X$
 by the composition
 \[
  D_{\lambda'} \rarrow{s_{\lambda'}} D_{\lambda}
 \rarrow{\varphi_{\lambda}} X,
 \]
 where $s_{\lambda'}$ is the cell structure of $e_{\lambda'}$.
 Then these structures define a cellular stratification on $X$ with
 $\{\psi_{\lambda'}\}$ cell structure maps.
\end{lemma}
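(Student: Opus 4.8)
The plan is to verify directly that the pair $(\pi',\{\psi_{\lambda'}\})$ satisfies the axioms of a cellular stratified space (Definition \ref{definition_of_cellular_stratified_space}), namely that $\pi'$ is a genuine stratification and that each $\psi_{\lambda'}$ is a legitimate cell structure map whose boundary lands in lower-dimensional cells. The two hypotheses are tailored to exactly this: the first condition (that $\Int(D_\lambda)$ is a strict stratified subspace of $D_\lambda$) controls the behavior of the new strata that lie in the interior of each cell, while the second condition (the disjointness-or-equality of images of open strata) guarantees that the pieces $\varphi_\lambda\circ s_{\lambda'}(\Int(D_{\lambda'}))$ actually assemble into a well-defined partition of $X$ indexed by a poset.

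First I would check that $\psi_{\lambda'}=\varphi_\lambda\circ s_{\lambda'}$ is an $n$-cell structure on its image, where $n=\dim e_{\lambda'}$. Since the cell structures of $(X,\pi,\Phi)$ are assumed hereditarily quotient, Lemma \ref{hereditarily_quotient_map_can_be_restricted} (via the argument already used in Lemma \ref{cellular_subdivision_is_cellular}) shows that the composite of $\varphi_\lambda$ with the regular cell structure map $s_{\lambda'}$ is again a quotient map onto $\overline{\psi_{\lambda'}(\Int D_{\lambda'})}$. The restriction of $\psi_{\lambda'}$ to $\Int(D_{\lambda'})$ is a homeomorphism onto its image because $s_{\lambda'}$ is a homeomorphism on the open stratum (regularity of $\pi_\lambda$) and $\varphi_\lambda|_{\Int(D_\lambda)}$ is a homeomorphism onto $e_\lambda$; here the first hypothesis is used to ensure that $s_{\lambda'}(\Int D_{\lambda'})$ meets $\Int(D_\lambda)$ in a way compatible with the strict-subspace structure, so that the composite does not collapse the open stratum. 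This is essentially the content already extracted in Lemma \ref{cellular_subdivision_is_cellular}, so I would invoke that lemma's local computation rather than redo it.

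Next I would verify that $\pi'$ is a stratification, i.e.\ that each new stratum $e_{\lambda'}=\psi_{\lambda'}(\Int D_{\lambda'})$ is connected and locally closed, and that the decomposition is disjoint. Connectedness is immediate since $\Int(D_{\lambda'})$ is connected and $\psi_{\lambda'}$ is continuous; local closedness follows because $\psi_{\lambda'}$ restricted to the open stratum is a homeomorphism onto its image, exactly as in the proof of Lemma \ref{cellular_subdivision_is_cellular}. The second hypothesis is precisely what guarantees disjointness: if two open-stratum images meet, they coincide, so after indexing by the resulting poset we obtain a genuine partition. I would then topologize the index set by the closure order and confirm that the resulting map $\pi'\colon X\to P(X,\pi')$ is open and continuous, using the Alexandroff-topology criterion of the Lemma following Definition \ref{stratification_definition}.

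The main obstacle, and the step deserving the most care, will be the dimension/covering condition: for each new $n$-cell $e_{\lambda'}$ one must show $\partial e_{\lambda'}$ is covered by cells of $\pi'$ of dimension $\le n-1$. The subtlety is that $\partial D_{\lambda'}$ may be carried by $\varphi_\lambda$ partly into $\Int(D_\lambda)$ (landing in other new strata of the \emph{same} cell $e_\lambda$) and partly onto $\partial D_\lambda$ (landing in strata of lower-dimensional cells of the original $\pi$). For the interior part, the first hypothesis forces the image to consist of strata of $\pi_\lambda$ of strictly smaller dimension, since $\partial D_{\lambda'}\subset D_\lambda$ is covered by lower cells of the regular stratification $\pi_\lambda$. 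For the boundary part, one uses that $\partial e_\lambda$ is already covered by cells of dimension $\le \dim e_\lambda-1$ in the original $(X,\pi)$, each of which is subdivided into still-lower-or-equal dimensional new cells. Assembling these two cases and checking that no dimension is accidentally raised is where the argument must be written out carefully; the remaining verifications are routine and I would leave them to the reader, in the style of the preceding lemmas.
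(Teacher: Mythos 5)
Your proposal has a genuine gap, and it sits exactly at the step you defer to the reader. The paper's whole proof consists of a single idea: condition (2) forces each characteristic map $\varphi_{\lambda}$ to be a \emph{strict} morphism of stratified spaces $(D_{\lambda},\pi_{\lambda})\to(X,\pi')$. On $\Int(D_{\lambda})$ this is just condition (1); the substance is on $\partial D_{\lambda}$: a cell $e_{\lambda'}$ of $\pi_{\lambda}$ lying in $\partial D_{\lambda}$ has image inside $X\setminus e_{\lambda}$, hence meets the image of some interior cell of some other domain, and condition (2) forces these images to be \emph{equal}; connectedness then pins the image down to exactly one stratum of $\pi'$. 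Once strictness is known, the data form a cellular subdivision in the sense of Definition \ref{cellular_subdivision_definition}, and Lemma \ref{cellular_subdivision_is_cellular} yields the conclusion. Your proposal never proves (or even states) this strictness. Instead you spend condition (2) on ``disjointness of the partition,'' but disjointness of the new strata needs no hypothesis at all: strata coming from different original cells lie in the disjoint sets $e_{\lambda}$ and $e_{\mu}$, and strata coming from the same cell are disjoint because $\varphi_{\lambda}|_{\Int(D_{\lambda})}$ is injective. Condition (2) is automatic for pairs of interior cells; its entire content concerns cells of $\pi_{\lambda}$ lying in $\partial D_{\lambda}$, which your argument never examines.

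The omission is not cosmetic; it breaks the two verifications you leave open. First, to check that $\pi'$ is open (your Alexandroff criterion) you must compute closures of new strata, and $\overline{\varphi_{\lambda}(e_{\lambda'})}$ is controlled precisely by knowing that each cell of $\pi_{\lambda}$ meeting $\overline{e_{\lambda'}}\cap\partial D_{\lambda}$ is carried onto a single stratum of $\pi'$. Second, your dimension argument for the ``boundary part'' only produces covering cells of dimension $\le\dim e_{\lambda}-1$, which is insufficient for a new cell $e_{\lambda'}$ with $\dim e_{\lambda'}<\dim e_{\lambda}$ whose closure reaches $\partial e_{\lambda}$; what is actually needed is that $s_{\lambda'}(\partial D_{\lambda'})$ is covered by cells of $\pi_{\lambda}$ of dimension $\le\dim e_{\lambda'}-1$ (cellularity of $\pi_{\lambda}$), each of which --- including those in $\partial D_{\lambda}$ --- is sent by $\varphi_{\lambda}$ onto a single stratum of $\pi'$, which is again condition (2). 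You acknowledge that ``checking that no dimension is accidentally raised'' is where care is required and then leave it unwritten; that unwritten step is the paper's strictness argument, and without it the direct verification you outline cannot be completed.
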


\begin{proof}
 It suffices to verify that $\varphi_{\lambda}$ is a strict morphism of
 cellular stratified spaces from $(D_{\lambda},\pi_{\lambda})$ to
 $(X,\pi')$. 

 By assumption,
 \[
 \varphi_{\lambda}|_{\Int(D_{\lambda})} : \Int(D_{\lambda})
 \longrightarrow e_{\lambda}
 \]
 is an isomorphism of stratified spaces. We need to show that
 $\varphi_{\lambda}$ is a strict morphism of stratified spaces on
 $\partial D_{\lambda}$. For a cell
 $e_{\lambda'}\subset \partial D_{\lambda}$, $\varphi(e_{\lambda})$ is
 contained in $X\setminus e_{\lambda}$ and there exist $\mu\in P(X)$
 and $\mu'\in P(\Int(D_{\mu}))$ such that
 \[
 \varphi_{\lambda}\circ s_{\lambda'}(\Int(D_{\lambda'}))\cap
 \varphi_{\mu}\circ s_{\mu'}(\Int(D_{\mu'}))\neq \emptyset.
 \]
 By assumption, 
 \[
 \varphi_{\lambda}\circ s_{\lambda'}(\Int(D_{\lambda'})) =
 \varphi_{\mu}\circ s_{\mu'}(\Int(D_{\mu'}))
 \]
 and thus $\varphi_{\lambda} : (D_{\lambda},\pi_{\lambda})\to (X,\pi')$
 is a strict morphism of stratified spaces.
\end{proof}

\begin{definition}
 \label{induced_subdivision}
 We say that the above cellular stratification is \emph{induced} by the
 family $\{\pi_{\lambda}\}_{\lambda\in P(X)}$ of cellular stratifications
 on the domains of cells. 
\end{definition}

\begin{example}
 \label{tilted_subdivision}
 Consider the cellular stratification $A=e^1\cup e^2$ of an open annulus
 $A=S^1\times(0,1)$ in Figure \ref{stratification_on_annulus}.  
 \begin{figure}[ht]
 \begin{center}
  \begin{tikzpicture}
   \draw [dotted] (0,0) -- (2,0);
   \draw (0,0) -- (0,2);
   \draw [->] (0,0) -- (0,1);
   \draw (0,1) -- (0,2);
   \draw [dotted] (0,2) -- (2,2);
   \draw (2,0) -- (2,2);
   \draw [->] (2,0) -- (2,1);
   \draw (2,1) -- (2,2);

   \draw [->] (3,1) -- (4,1);
   \draw (3.5,1.3) node {$\varphi_2$};

   \draw [dotted] (6,2) ellipse (1cm and 0.5cm);
   \draw [dotted] (6,0) ellipse (1cm and 0.5cm);
   \draw (5,0) -- (5,2);
   \draw (7,0) -- (7,2);
   \draw [->] (6.5,-0.43) -- (6.5,0.57);
   \draw (6.5,0.57) -- (6.5,1.57);
  \end{tikzpicture}
 \end{center}
  \label{stratification_on_annulus}
  \caption{A cellular stratification on an open annulus.}
 \end{figure}
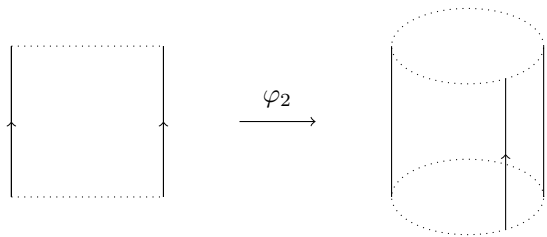

 We use $(0,1)$ and $(0,1)\times [0,1]$ as the domains of the
 cell structures for the $1$-cell and the $2$-cell, respectively.
 The subdivisions of $(0,1)$ in the middle and of $(0,1)\times[0,1]$
 by horizontal cut in the middle induce a subdivision of this
 stratification. 

 The following ``tilted subdivision'' of $(0,1)\times [0,1]$, however,
 does not induce a subdivision of the 
 annulus, since it does not satisfy 
 the second condition of cellular subdivision.
 \begin{figure}[ht]
 \begin{center}
  \begin{tikzpicture}
   \draw [dotted] (0,0) -- (2,0);
   \draw (0,0) -- (0,2);
   \draw [->] (0,0) -- (0,1);
   \draw (0,1) -- (0,2);
   \draw [dotted] (0,2) -- (2,2);
   \draw (2,0) -- (2,2);
   \draw [->] (2,0) -- (2,1);
   \draw (2,1) -- (2,2);
   \draw (0,0.5) -- (2,1.5);
   \draw [fill] (0,0.5) circle (2pt);
   \draw [fill] (2,1.5) circle (2pt);

   \draw [dotted] (3,0) -- (5,0);
   \draw (3,0) -- (3,2);
   \draw [->] (3,0) -- (3,1);
   \draw (3,1) -- (3,2);
   \draw [dotted] (3,2) -- (5,2);
   \draw (5,0) -- (5,2);
   \draw [->] (5,0) -- (5,1);
   \draw (5,1) -- (5,2);
   \draw (3,0.5) -- (5,1.5);
   \draw [fill] (3,0.5) circle (2pt);
   \draw [fill] (3,1.5) circle (2pt);
   \draw [fill] (5,0.5) circle (2pt);
   \draw [fill] (5,1.5) circle (2pt);
  \end{tikzpicture}
 \end{center}
  \caption{A tilted subdivision of an open annulus.}
 \end{figure}
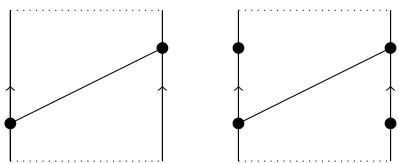

 By subdividing the boundary further, we obtain an induced subdivision,
 as is shown in the right figure. 
\end{example}

For totally normal cellular stratified spaces, we require the following
conditions.

\begin{definition}
 \label{totally_normal_subdivision}
 Let $(X,\pi,\Phi)$ be a totally normal cellular stratified space. We
 say a cellular subdivision $(X,\pi',\Phi')$ is a \emph{subdivision} of
 $(X,\pi,\Phi)$ as a totally normal cellular stratified space if the
 following conditions are satisfied:
 \begin{enumerate}
  \item For each $\lambda'\in P(X,\pi')$, there exists a structure of
	regular cell complex on $\partial D^{\dim e_{\lambda'}}$
	containing $\partial D_{\lambda'}$ as a strict stratified
	subspace. 
  \item For each $b\in C(X,\pi)(e_{\mu},e_{\lambda})$, the associated
	map on domains of cells is a strict morphism
	$b:(D_{\mu},\pi_{\mu})\to (D_{\lambda},\pi_{\lambda})$ of
	cellular stratified spaces.
  \end{enumerate}
\end{definition}

\begin{proposition}
 \label{subdivision_of_totally_normal_css}
 A cellular subdivision of a totally normal cellular
 stratified space satisfying the conditions in Definition
 \ref{totally_normal_subdivision} is totally normal. 
\end{proposition}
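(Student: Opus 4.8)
The plan is to verify the two defining conditions of total normality (from Definition \ref{total_normality_definition}) for the subdivided cellular stratified space $(X,\pi',\Phi')$, using the two conditions imposed in Definition \ref{totally_normal_subdivision} together with the total normality of the original space $(X,\pi,\Phi)$. The first condition of total normality asks for a regular cell complex structure on $S^{\dim e_{\lambda'}-1}$ containing $\partial D_{\lambda'}$ as a strict stratified subspace; this is granted \emph{verbatim} by the first condition of Definition \ref{totally_normal_subdivision}, so nothing further is needed there. The real content lies in establishing the second condition of total normality: for each cell $e'$ in $\partial D_{\lambda'}$, I must produce a cell $e_{\mu'}$ in $\partial e_{\lambda'}$ sharing the same domain, whose characteristic map factors through $D_{\lambda'}$ via the characteristic map of $e'$.

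First I would fix a cell $e_{\lambda'}$ in $(X,\pi')$. By the construction of a cellular subdivision (Definition \ref{cellular_subdivision_definition} and Lemma \ref{cellular_subdivision_is_cellular}), $e_{\lambda'}$ sits inside a unique cell $e_{\lambda}$ of the original stratification, with characteristic map $\varphi_{\lambda}\circ s_{\lambda'}$, where $s_{\lambda'}:D_{\lambda'}\to D_{\lambda}$ is the characteristic map of the subdividing regular cell $\lambda'\in P(\Int(D_\lambda),\pi_\lambda)$. Now take a cell $e'$ in $\partial D_{\lambda'}$. The key step is to track where the boundary cell $e'$ lands: its image under $s_{\lambda'}$ lies in $D_\lambda$, and I must determine whether it falls in $\partial D_\lambda$ or in the interior. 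I would split into these two cases. When $s_{\lambda'}(e')$ meets $\partial D_\lambda$, I invoke the total normality of $(X,\pi,\Phi)$ to obtain a cell $e_\mu\subset\partial e_\lambda$ whose characteristic map factors appropriately, and then pull back the subdivision $\pi_\mu$ on $D_\mu$ along the lift $b:D_\mu\to D_\lambda$ (which is a strict morphism of cellular stratified spaces by the \emph{second} condition of Definition \ref{totally_normal_subdivision}) to match $e'$ with a cell $e_{\mu'}$ inside $\partial e_{\lambda'}$. When $s_{\lambda'}(e')$ lands in $\Int(D_\lambda)$, the required cell $e_{\mu'}$ already lives among the subdividing cells of $e_\lambda$ itself, and the factorization is immediate from the regularity and strictness of the stratification $\pi_\lambda$ on $D_\lambda$.

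The compatibility glue is supplied precisely by the second condition of Definition \ref{totally_normal_subdivision}: because each lift $b\in C(X,\pi)(e_\mu,e_\lambda)$ is a strict morphism $(D_\mu,\pi_\mu)\to(D_\lambda,\pi_\lambda)$, the subdividing cell decompositions on the domains are respected by the lifts, so that the "same domain" and "factors through" requirements descend from $(X,\pi)$ to $(X,\pi')$ without obstruction. I would assemble the two cases into the commutative diagram demanded by Definition \ref{total_normality_definition}, using the equality $\varphi_{\lambda'}=\varphi_\lambda\circ s_{\lambda'}$ throughout.

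\textbf{The main obstacle} I anticipate is the bookkeeping in the boundary case: ensuring that a subdividing cell $e'$ of $\partial D_{\lambda'}$ whose $s_{\lambda'}$-image crosses into $\partial D_\lambda$ corresponds to a \emph{well-defined} cell $e_{\mu'}$ of $\partial e_{\lambda'}$ with the matching domain. This requires that the pullback of the subdivision $\pi_\mu$ along the lift $b$ agrees with the subdivision of $\partial D_\lambda$ induced by $\pi_\lambda$ on the overlap; strictness of $b$ as a morphism of cellular stratified spaces is exactly what makes this agreement automatic, but verifying that the induced characteristic map is genuinely a characteristic map for $e_{\mu'}$ (and not merely a quotient of one) uses the hereditary-quotient property of the cell structures, which holds here since totally normal CW cellular stratified spaces are polyhedral (Lemma \ref{total_normality_implies_local_polyhedrality}) and hence have bi-quotient, in particular hereditarily quotient, characteristic maps (Corollary \ref{characteristic_map_of_locally_polyhedral}).
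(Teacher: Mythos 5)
Your proposal follows essentially the same route as the paper's own proof: condition 1 of total normality is read off verbatim from Definition \ref{totally_normal_subdivision}, and condition 2 is established by the same case split on whether $s_{\lambda'}(e')$ lies in $\Int(D_{\lambda})$ (where regularity of $\pi_{\lambda}$ finishes the argument) or in $\partial D_{\lambda}$ (where total normality of $(X,\pi,\Phi)$ supplies $e_{\mu}$ and a lift $b$, strictness of $b$ from Definition \ref{totally_normal_subdivision} pulls the cell back to $(D_{\mu},\pi_{\mu})$, and regularity together with the fact that $b$ is an embedding produce $b'$). One peripheral caveat: your justification of the hereditarily-quotient property cites Lemma \ref{total_normality_implies_local_polyhedrality}, which requires the CW hypothesis not present in the statement; for totally normal spaces this property follows instead from finiteness of the parameter spaces via Lemma \ref{compact_parameter_space} and Corollary \ref{compact_fiber}, and the paper itself leaves it inside the hypotheses of Lemma \ref{cellular_subdivision_is_cellular}.
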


\begin{remark}
 An analogous statement was stated as Proposition 2.45 in
 \cite{1312.7368} without the first condition of Definition
 \ref{totally_normal_subdivision}. It was pointed out by Priyavrat
 Deshpande that this condition was missing. He also pointed out typos
 and errors in the proof, that are corrected in the following proof.
\end{remark}

\begin{proof}
 Let $(X,\pi,\Phi)$ be a totally normal cellular stratified space and
 $s: P(X,\pi')\to P(X,\pi)$ be a cellular subdivision of $(X,\pi)$
 satisfying the conditions in Definition
 \ref{totally_normal_subdivision}. Its cell structure is denoted by
 $\Phi'$. 
 
 We need to verify that, for each cell $e_{\lambda'}$ in
 $(X,\pi',\Phi')$ 
 and a cell $e'$ in $\partial D_{\lambda'}$, there exists a cell
 $e_{\mu'}$ in $(X,\pi',\Phi')$ and a map $b': D_{\mu'}\to D_{\lambda'}$
 making the diagram
 \[
  \begin{diagram}
   \node{D_{\lambda'}} \arrow{e,t}{\varphi'_{\lambda'}}
   \node{\overline{e_{\lambda'}}} \arrow{e,J} \node{X} \\
   \node{D_{\mu'}} \arrow{n,l,..}{b'} \arrow{e,b}{\varphi'_{\mu'}}
   \node{\overline{e_{\mu'}}} \arrow{ne,J}
  \end{diagram}
 \]
 commutative and satisfying $b'(\Int(D_{\mu'}))=e'$.

 Let $\lambda=s(\lambda')$. By the definition of cellular subdivision,
 $D_{\lambda}$ has a structure of regular cellular stratified space
 $(D_{\lambda},\pi_{\lambda},\Phi_{\lambda})$ under which $e'$ is a
 cell. Under the identification
 \[
  P(\Int(D_{\lambda}),\pi_{\lambda}) \cong P(\bm{s})^{-1}(\lambda), 
 \]
 there exists a cell in $(\Int(D_{\lambda}),\pi_{\lambda})$
 corresponding to $e_{\lambda'}$.
 Let $s_{\lambda'}: D_{\lambda'}\to D_{\lambda}$ be its characteristic
 map. Then the characteristic map for $e_{\lambda'}$ in $(X,\pi',\Phi')$
 is given by the composition $\varphi_{\lambda}\circ s_{\lambda'}$.

 On the other hand, there exists a cell $e$ in
 $\partial D_{\lambda}$, before subdivision, containing
 $s_{\lambda'}(e')$ in its interior.
 By the total normality of $(X,\pi,\Phi)$, there exists a cell $e_{\mu}$
 in $(X,\pi,\Phi)$ and a morphism 
 $b: e_{\mu}\to e_{\lambda}$ in $C(X,\pi)$ with $b(\Int(D_{\mu}))=e$ and
 $\varphi_{\lambda}\circ b=\varphi_{\mu}$.
 Since $b$ is a strict morphism of stratified spaces and
 $s_{\lambda'}(e')$ is contained in the interior of $e$, there exists a
 unique cell $e''$ 
 in $(D_{\mu},\pi_{\mu},\Phi_{\mu})$ such that
 $e'=b(e'')$. $\varphi_{\mu}$ is also a strict
 morphism of stratified spaces and thus $\varphi_{\mu}(e'')$ is a cell
 in $(X,\pi',\Phi')$. Let us denote this cell by $e_{\mu'}$.
 Then the characteristic map for $e_{\mu'}$ is given
 by the composition $\varphi_{\mu}\circ s_{\mu'}$, where
 $s_{\mu'}:D_{\mu'}\to D_{\mu}$ is the characteristic map for $e''$. 

 When $\mu=\lambda$, both $e$ and $e'$ are cells in
 $P(D_{\lambda},\pi_{\lambda})$ and $b$ is the identity map. Hence the
 regularity of $(D_{\lambda},\pi_{\lambda},\Phi_{\lambda})$
 implies the existence of a unique map $b': D_{\mu'}\to D_{\lambda'}$
 satisfying the required conditions.

 Suppose $\mu<\lambda$. 
 The relation among these cells is depicted in the following diagram.
 \[
  \begin{diagram}
   \node{D_{\lambda'}} \arrow{ese,t}{s_{\lambda'}}
   \arrow[4]{e,t}{\varphi'_{\lambda'}} \node{} \node{} \node{} 
   \node{\overline{e_{\lambda'}}} \arrow{sw,J} \\
   \node{} \node{} \node{D_{\lambda}} \arrow{e,t}{\varphi_{\lambda}} 
   \node{\overline{e_{\lambda}}} \\
   \node{} \node{\partial D_{\lambda'}} \arrow{nnw,J}
   \arrow{e,t}{s_{\lambda'}} 
   \node{\partial D_{\lambda}} \arrow{n,J} \\ 
   \node{} \node{\overline{e'}} \arrow{n,J} \arrow{e,t}{s_{\lambda'}}
   \node{\overline{e}} \arrow{n,J} \\
   \node{} \node{\overline{e''}} \arrow{n,r}{\exists} \arrow{e,J}
   \node{D_{\mu}} \arrow{n,r}{b} 
   \arrow{e,t}{\varphi_{\mu}} \node{\overline{e_{\mu}}}
   \arrow[3]{n,J} \\ 
   \node{D_{\mu'}} \arrow[5]{n,l,..}{\exists b'?} \arrow{ne,t}{s_{\mu'}} 
   \arrow[4]{e,b}{\varphi'_{\mu'}} \node{} \node{} \node{} 
   \node{\overline{e_{\mu'}}.} \arrow{nw,J} \arrow[5]{n,J}
  \end{diagram}
 \]
 The image of the composition $b\circ s_{\mu'}$ is contained in 
 the image of $s_{\lambda'}$, i.e.\ $\overline{e}$. The regularity of
 $(D_{\lambda},\pi_{\lambda},\Phi_{\lambda})$ and the fact that $b$ is
 an embedding imply that the above diagram can be completed by a map
 $b'$. 
\end{proof}

\begin{remark}
 A subdivision $(\bm{s},\{(\pi_{\lambda},\Phi_{\lambda})\})$ of a
 totally normal cellular stratified space 
 $(X,\pi,\Phi)$ gives rise to a functor
 \[
 P(\bm{s}) : C(X,\pi) \rarrow{} \category{Posets}
 \]
 in a canonical way.
 On objects it is defined by
 $P(\bm{s})(\lambda)= P(D_{\lambda},\pi_{\lambda})$. The second
 condition in Definition \ref{totally_normal_subdivision} guarantees
 that this extends to a functor.

 In general, for a functor $F: C\to \category{Cats}$ from a small
 category to the category $\category{Cats}$ of small categories, there
 is a construction of a single category $\Gr(F)$, called the
 Grothendieck construction of $F$. A definition can be found in
 Thomason's paper \cite{Thomason79-1}, 
 for example.

 It is easy to verify that, in the case of a subdivision of a totally
 normal cellular stratified space, we have an isomorphism categories
 \[
  \Gr(P(\bm{s})) \cong C(X,\pi').
 \]
\end{remark}

The next step would be to study subidivisions of cylindrical and
polyhedral structures but it is not easy to describe appropriate
conditions on parameter spaces.

\section{Duality}
\label{duality}

In \cite{Salvetti87}, Salvetti first constructed
a simplicial complex
$\Sd(M(\mathcal{A}\otimes\bbC))=BC(M(\mathcal{A}\otimes\bbC))$
modelling the homotopy type of the complement of the complexification of a
real hyperplane arrangement $\mathcal{A}$ and then defined a structure
of regular cell complex by gluing simplices. This process reduces the
number of cells and allows us to relate the combinatorics of the 
arrangement $\mathcal{A}$ and the topology of Salvetti's model
$\Sd(M(\mathcal{A}\otimes\bbC))$. The process is closely related to the
classical concept of duality in PL topology. 

In this section, we introduce an analogous process for
polyhedral stellar stratified spaces. With stellar 
stratifications, we are able to extend both Salvetti's construction and
the duality in PL topology.

\subsection{A Canonical Cellular Stratification on the Barycentric
  Subdivision} 
\label{cellular_structure_on_Sd}

Let us first study stratifications on the barycentric subdivision
$\Sd(X)$ of a cylindrically normal stellar stratified space $X$.

In the case of totally normal stellar stratified spaces, Lemma
\ref{face_category_of_CNCSS} and Lemma
\ref{classifying_space_of_acyclic_category} imply
the following.

\begin{proposition}
 \label{Sd_of_totally_normal}
 For a totally normal stellar stratified space $X$, $\Sd(X)$ has a
 structure of regular cell complex.
\end{proposition}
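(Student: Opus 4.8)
The plan is to reduce the statement to a purely combinatorial fact about classifying spaces of acyclic categories, and then to verify regularity by checking that the characteristic map of each simplex of $BC(X)$ is an embedding. First I would invoke Example \ref{totally_normal_implies_cylindrically_regular}: a totally normal stellar stratified space is strictly cylindrically normal and each parameter space $P_{\mu,\lambda}$ is a finite set with the discrete topology. By Lemma \ref{face_category_of_CNCSS} the cylindrical face category $C(X)$ is therefore an acyclic category whose morphism spaces $C(X)(e_{\mu},e_{\lambda})=P_{\mu,\lambda}$ are discrete. Consequently $\Sd(X)=BC(X)=\|\overline{N}(C(X))\|$ is the geometric realization of the $\Delta$-set of nondegenerate chains, whose cells are indexed by nondegenerate chains $\bm{c}\colon e_{\lambda_0}\to\cdots\to e_{\lambda_n}$ of non-identity morphisms, the cell of $\bm{c}$ carrying a characteristic map $\chi_{\bm{c}}\colon \Delta^n\to BC(X)$ induced by the inclusion of the corresponding simplex.

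The heart of the argument is to show that each $\chi_{\bm{c}}$ is a regular cell, i.e.\ an embedding. The crucial observation is that the objects $e_{\lambda_0},\ldots,e_{\lambda_n}$ occurring in a nondegenerate chain of an acyclic category are pairwise distinct: a repetition $e_{\lambda_i}=e_{\lambda_j}$ with $i<j$ would make the composite $e_{\lambda_i}\to\cdots\to e_{\lambda_j}$ a non-identity endomorphism, and a cycle of length greater than $1$ would force a nontrivial isomorphism, contradicting acyclicity. Granting this, I would argue injectivity of $\chi_{\bm{c}}$ face by face: the restriction of $\chi_{\bm{c}}$ to the relative interior of the face spanned by a vertex subset $I\subseteq\{0,\ldots,n\}$ is a homeomorphism onto the open cell of the subchain $d_I\bm{c}$, and since the vertices $e_{\lambda_i}$ are distinct, the ordered vertex tuple of $d_I\bm{c}$ recovers $I$. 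Hence distinct faces land in distinct cells and $\chi_{\bm{c}}$ is injective on all of $\Delta^n$. As $\Delta^n$ is compact and $BC(X)$ is Hausdorff, $\chi_{\bm{c}}$ is an embedding onto the closure of its cell, so the cell is regular. Lemma \ref{underlying_poset_nerve} provides the bookkeeping identifying the preimages of faces with the subchains.

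Finally I would record normality: the boundary of the cell of $\bm{c}$ is the image of $\partial\Delta^n$, which is precisely the union of the cells indexed by the proper subchains of $\bm{c}$, so each cell closure is a union of cells. Thus $BC(X)$ is a regular cell complex, regularity then also yielding normality exactly as for ordinary CW complexes by Theorem 2.1 of Chapter III of \cite{Lundell-Weingram}. Alternatively, the entire conclusion may be read off from Lemma \ref{classifying_space_of_acyclic_category} applied to the discrete acyclic category $C(X)$. The main obstacle is precisely the embedding property of $\chi_{\bm{c}}$: one must rule out the collapse of two distinct faces of $\Delta^n$ onto a single cell, and it is acyclicity, through the distinctness of the objects in a chain, that prevents this.
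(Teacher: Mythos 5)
Your proposal is correct and takes essentially the same route as the paper: the paper deduces the proposition directly from Lemma \ref{face_category_of_CNCSS} (acyclicity of $C(X)$, with discrete finite hom-sets by Example \ref{totally_normal_implies_cylindrically_regular}) together with Lemma \ref{classifying_space_of_acyclic_category} identifying $\Sd(X)=BC(X)$ with $\left\|\overline{N}(C(X))\right\|$. The only difference is that the paper leaves the regularity of the cells implicit, whereas you spell out the key verification — distinctness of the objects in a nondegenerate chain forces distinct faces to land in distinct cells, so each characteristic map $\Delta^n\to BC(X)$ is injective, hence an embedding by compactness — which is exactly the detail needed to pass from the $\Delta$-set structure of Example \ref{Delta-set_is_totally_normal} to a regular cell complex.
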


Let us extend this structure
to a cellular stratification on $\Sd(X)$ for a cylindrically normal
stellar stratified space $X$. 

Given a cylindrically normal stellar stratified space $X$, we have an
acyclic topological category $C(X)$. By forgetting the topology, Lemma
\ref{classifying_space_of_acyclic_category} and Lemma
\ref{Delta-set_is_totally_normal} give us a map
\[
 \pi_{\Sd(X)} : \Sd(X) = BC(X) = \left\|\overline{N}(C(X))\right\|
 \rarrow{\pi_{\overline{N}(C(X))}} \coprod_{k}\overline{N}_k(C(X)).   
\]
As we will see in Lemma \ref{Sd_of_acyclic_category}, the set
$\coprod_{k}\overline{N}_k(C(X))$ can be identified with the set of
objects of the barycentric
subdivision of the acyclic category
$C(X)$ and has a structure of poset. The partial order is defined as
follows.

\begin{lemma}
 For $\bm{b}\in \overline{N}_k(C(X))$ and
 $\bm{b}'\in\overline{N}_{\ell}(C(X))$, regarded as functors
 $\bm{b}:[k]\to C(X)$ and $\bm{b}':[\ell]\to C(X)$,
 define $\bm{b}\le \bm{b}'$ if and only if there exists a morphism
 $\varphi:[k]\to[\ell]$ in $\Delta_{\inj}$ such that
 $\bm{b}'\circ \varphi=\bm{b}$. 
 Then the relation $\le$ is a partial order.
\end{lemma}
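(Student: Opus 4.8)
The plan is to verify the three defining properties of a partial order---reflexivity, antisymmetry, and transitivity---for the relation $\le$ on $\coprod_k \overline{N}_k(C(X))$. Each of these reduces to a statement about the morphisms $\varphi$ in $\Delta_{\inj}$ together with the acyclicity of the topological category $C(X)$, which we have by Lemma \ref{face_category_of_CNCSS}.

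First I would establish reflexivity: for $\bm{b}\in\overline{N}_k(C(X))$ one takes $\varphi=\id_{[k]}$, which is injective and satisfies $\bm{b}\circ\id_{[k]}=\bm{b}$, so $\bm{b}\le\bm{b}$. Transitivity is equally direct: if $\bm{b}\le\bm{b}'$ via $\varphi:[k]\to[\ell]$ and $\bm{b}'\le\bm{b}''$ via $\psi:[\ell]\to[m]$, then $\psi\circ\varphi:[k]\to[m]$ is again a morphism in $\Delta_{\inj}$ (composites of injective order-preserving maps are injective and order-preserving), and $\bm{b}''\circ(\psi\circ\varphi)=(\bm{b}''\circ\psi)\circ\varphi=\bm{b}'\circ\varphi=\bm{b}$, giving $\bm{b}\le\bm{b}''$.

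The main obstacle is antisymmetry, which is where acyclicity is essential. Suppose $\bm{b}\le\bm{b}'$ and $\bm{b}'\le\bm{b}$, witnessed by $\varphi:[k]\to[\ell]$ and $\psi:[\ell]\to[k]$ in $\Delta_{\inj}$ with $\bm{b}'\circ\varphi=\bm{b}$ and $\bm{b}\circ\psi=\bm{b}'$. Comparing lengths, injectivity forces $k\le\ell$ and $\ell\le k$, hence $k=\ell$; an injective order-preserving self-map $\psi\circ\varphi$ of the finite totally ordered set $[k]$ must be the identity, and likewise $\varphi\circ\psi=\id$, so $\varphi$ and $\psi$ are mutually inverse order-isomorphisms of $[k]$, which are necessarily $\id_{[k]}$. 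Thus $\bm{b}=\bm{b}'\circ\id=\bm{b}'$, as required. Here the acyclicity of $C(X)$ guarantees that the chains $\bm{b},\bm{b}'$ are genuinely nondegenerate functors $[k]\to C(X)$, so that no degenerate collapsing can produce a spurious identification of distinct chains of the same length; this is precisely what makes the length-comparison argument valid. I would note that since each $\bm{b}$ is a nondegenerate chain, it is faithful on the poset underlying $[k]$, so the equality $\bm{b}=\bm{b}'$ of functors is exactly the equality of elements of $\overline{N}_k(C(X))$, completing the verification.
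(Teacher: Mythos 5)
Your proof is correct, but it takes a different route from the paper's. The paper handles this lemma by citation: the relation $\le$ is exactly ``there exists a morphism $\bm{b}\to\bm{b}'$ in the barycentric subdivision category $\Sd(C(X))$'' of Definition \ref{barycentric_subdivision_of_category} (note that any order-preserving $\varphi$ with $\bm{b}'\circ\varphi=\bm{b}$ is automatically injective because $\bm{b}$ is nondegenerate), and Lemma \ref{Sd_of_acyclic_category} states that $\Sd(C)$ is a poset for any acyclic small category $C$, the acyclicity of $C(X)$ being Lemma \ref{face_category_of_CNCSS}. You instead verify the axioms directly: reflexivity via $\id_{[k]}$, transitivity via composition in $\Delta_{\inj}$, and antisymmetry because injections $[k]\to[\ell]$ and $[\ell]\to[k]$ force $k=\ell$, after which the only injective order-preserving self-map of $[k]$ is the identity. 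This is more elementary and self-contained, and it actually supplies the antisymmetry step that the paper's cited proof leaves implicit (that proof only checks that nonempty hom-sets of $\Sd(C)$ collapse to single points). One caveat: your commentary misattributes the role of acyclicity. Your antisymmetry argument never uses it, and indeed the relation defined by strict equality $\bm{b}'\circ\varphi=\bm{b}$ on nondegenerate chains is a partial order for an arbitrary small category; nondegeneracy is part of the definition of $\overline{N}_k(C(X))$, not a consequence of acyclicity, so your closing sentences about ``spurious identifications'' and faithfulness do no logical work. Where acyclicity is genuinely needed is in the paper's stronger statement that $\Sd(C(X))$ is thin --- every nonempty hom-set is a single point --- which is the form of the result exploited later, e.g.\ in Theorem \ref{subdivision_is_subdivision}. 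In short, your route buys a shorter and slightly more general argument for the order itself; the paper's route buys the categorical statement about $\Sd(C(X))$ that the duality section actually uses.
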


\begin{proof}
 See Lemma \ref{Sd_of_acyclic_category}.
\end{proof}

Let us verify that the map $\pi_{\Sd(X)}$ defines a cellular
stratification on $\Sd(X)$. 

\begin{proposition}
 \label{stratification_on_Sd}
 For a cylindrically normal stellar stratified space $X$, $\pi_{\Sd(X)}$
 defines a cellular stratification. 
\end{proposition}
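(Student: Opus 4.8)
The plan is to realize the proposition as a continuous-family version of the fact (Example~\ref{Delta-set_is_totally_normal}) that the geometric realization of a $\Delta$-set is a cellular stratified space. By Lemma~\ref{face_category_of_CNCSS} the cylindrical face category $C(X)$ is acyclic, so $\Sd(X)=BC(X)=\|\overline{N}(C(X))\|$ is the realization of the $\Delta$-space $\overline{N}(C(X))$ of nondegenerate chains; as in any $\Delta$-space realization, every point has a unique representative $[\bm{b},\bm{t}]$ with $\bm{b}\in\overline{N}_k(C(X))$ and $\bm{t}\in\Int(\Delta^k)$. Consequently $\pi_{\Sd(X)}([\bm{b},\bm{t}])=\bm{b}$, and the stratum indexed by $\bm{b}$ is $e_{\bm{b}}=\{\bm{b}\}\times\Int(\Delta^k)$, using the decomposition of $\overline{N}_k(C(X))$ from Lemma~\ref{underlying_poset_nerve}. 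First I would fix a nondegenerate $k$-chain $\bm{b}$ and produce a cell structure on $e_{\bm{b}}$, and then verify the stratification axioms and the boundary condition.

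For the stratification property I would invoke the characterization (the lemma following Definition~\ref{stratification_definition}) that a continuous map to a poset is open exactly when $e_{\bm{b}}\subset\overline{e_{\bm{b}'}}$ is equivalent to $\bm{b}\le\bm{b}'$. The closure $\overline{e_{\bm{b}'}}$ is the image of the map $\varphi_{\bm{b}'}:\Delta^{\ell}\to\Sd(X)$, $\bm{t}\mapsto[\bm{b}',\bm{t}]$, which by the face identities of the realization is the union of the strata $e_{\bm{b}''}$ over the iterated faces $\bm{b}''$ of $\bm{b}'$. By the description of the order on $\coprod_k\overline{N}_k(C(X))$ recorded in the preceding lemma, these iterated faces are precisely the $\bm{b}''\le\bm{b}'$, which gives the required equivalence. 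Each stratum $e_{\bm{b}}$ is connected, being a continuous image of $\Int(\Delta^k)$, and is locally closed because $e_{\bm{b}}=\overline{e_{\bm{b}}}\setminus\bigcup_{\bm{b}'<\bm{b}}\overline{e_{\bm{b}'}}$ is open in its own closure.

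For the cell structure I would take the domain to be $\Delta^k\cong D^k$, so that every cell is \emph{closed}, with characteristic map $\varphi_{\bm{b}}$ as above. Its restriction to $\Int(\Delta^k)$ is a continuous bijection onto $e_{\bm{b}}$, and once $\varphi_{\bm{b}}$ is known to be a quotient map this restriction is automatically a homeomorphism, since $\Int(\Delta^k)=\varphi_{\bm{b}}^{-1}(e_{\bm{b}})$ is saturated and open; this yields an $k$-cell structure in the sense of Definition~\ref{cell_structure_definition}. The boundary condition of Definition~\ref{definition_of_cellular_stratified_space} is then immediate: $\partial e_{\bm{b}}=\varphi_{\bm{b}}(\partial\Delta^k)$ is covered by the images of the faces, the $i$-th of which is the stratum $e_{d_i\bm{b}}$ of the $(k-1)$-chain $d_i\bm{b}$. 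Acyclicity of $C(X)$ guarantees $d_i\bm{b}$ is again nondegenerate (a composite of non-identities between distinct objects cannot be an identity), so $\partial e_{\bm{b}}$ is covered by cells of dimension at most $k-1$.

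The main obstacle is the single point-set fact that $\varphi_{\bm{b}}:\Delta^k\to\overline{e_{\bm{b}}}$ is a quotient map; this is delicate because $\overline{N}_k(C(X))$ carries a nontrivial topology, so $\Sd(X)$ is not obviously Hausdorff and the usual ``compact-to-Hausdorff'' argument is not directly available. I would resolve it by observing that $\overline{e_{\bm{b}}}$ is the realization of the sub-$\Delta$-space generated by $\bm{b}$: since the iterated faces of $\bm{b}$ form a \emph{finite} set of points in the spaces $\overline{N}_j(C(X))$, this sub-$\Delta$-space is finite and discrete, and its realization is a compact Hausdorff finite complex of simplices. Thus $\varphi_{\bm{b}}$ is a continuous surjection from a compact space onto a Hausdorff space, hence closed and a quotient map, completing the construction exactly along the lines of the $\Delta$-set case.
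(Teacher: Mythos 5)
Your construction coincides with the paper's proof in all essentials: the strata are the sets $e_{\bm{b}}=\{\bm{b}\}\times\Int(\Delta^k)$ indexed by nondegenerate chains, the stratification axioms are checked against the openness criterion (closure inclusion $\Leftrightarrow$ face relation $\Leftrightarrow$ $\bm{b}\le\bm{b}'$), the characteristic map is $\varphi_{\bm{b}}:D^k\cong\{\bm{b}\}\times\Delta^k\to\Sd(X)$, and $\partial e_{\bm{b}}$ is covered by the cells of the faces $d_i\bm{b}$, which acyclicity keeps nondegenerate. The paper does exactly this, and like you it rests the cell-structure condition on a compactness statement: it simply asserts that compactness of $D^k$ makes $\varphi_{\bm{b}}$ a quotient map onto its image.

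Where you diverge is in trying to justify that last step, and there your patch is circular as written. You correctly observe that Hausdorffness of $\Sd(X)$ is not free, and you correctly identify the relevant object, the finite sub-$\Delta$-set $F_{\bm{b}}$ generated by $\bm{b}$, whose intrinsic realization is a compact Hausdorff finite complex. But what you actually possess is a continuous bijection from that compact complex onto the subspace $\overline{e_{\bm{b}}}\subset\Sd(X)$; promoting it to a homeomorphism (equivalently, showing $\varphi_{\bm{b}}$ is closed onto its image) requires the \emph{target} to be Hausdorff, which is precisely what is in question, so asserting that $\overline{e_{\bm{b}}}$ ``is'' $\|F_{\bm{b}}\|$ begs the question. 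The circle can be broken using the cylindrical structure itself: by condition 1 of Definition \ref{cylindrical_structure_definition}, $b_{\mu,\lambda}$ restricts to an embedding of $P_{\mu,\lambda}\times\Int(D_{\mu})$ into $\partial D_{\lambda}$, a subspace of a sphere, so every parameter space is metrizable; hence by Lemma \ref{underlying_poset_nerve} every chain space $\overline{N}_{\ell}(C(X))$ is Hausdorff, in particular T1. Consequently point-preimages under the face maps of $\overline{N}(C(X))$ are closed, so the saturation of the union of the simplices $\{\bm{b}'\}\times\Delta^{\ell}$ over the faces $\bm{b}'\le\bm{b}$ is, in each level, a finite union of closed sets of the form (point-preimage)$\times$(closed face), hence closed; a quotient map restricted to a closed saturated set is again a quotient map, and on each such piece the projection factors through the corresponding simplex of $F_{\bm{b}}$, which identifies the subspace topology on $\overline{e_{\bm{b}}}$ with that of $\|F_{\bm{b}}\|$. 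This supplement both repairs your argument and fills in the separation hypothesis that the paper's own one-line appeal to compactness leaves implicit (it is also what guarantees that the compact image of $\varphi_{\bm{b}}$ really is the closure of $e_{\bm{b}}$, a fact both you and the paper use).
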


\begin{proof}
 Let $X$ be a cylindrically normal stellar stratified space.
 For each nondegenerate $k$-chain $\bm{b}\in \overline{N}_k(C(X))$,
 $\pi_{\Sd(X)}^{-1}(\bm{b})$ is homeomorphic to $\Int(\Delta^k)$. The
 closure of $\pi_{\Sd(X)}^{-1}(\bm{b})$ in $\Sd(X)$ is an identification
 space of $\Delta^n$. Since the identification is defined only on the
 boundary, $\pi_{\Sd(X)}^{-1}(\bm{b})$ is open in its 
 closure and is locally closed. For nondegenerate chains $\bm{b}$ and
 $\bm{b}'$, 
 $\pi_{\Sd(X)}^{-1}(\bm{b})\subset\overline{\pi_{\Sd(X)}^{-1}(\bm{b}')}$
 if and only if $\pi_{\Sd(X)}^{-1}(\bm{b})$ is included in
 $\pi_{\Sd(X)}^{-1}(\bm{b}')$ as a face. In other words, this holds if
 and only if there exists a sequence of face operators mapping
 $\bm{b}'$ to $\bm{b}$, which is equivalent to saying
 $\bm{b}\le \bm{b}'$.
 Thus $\pi_{\Sd(X)}$ is a stratification
 in the sense of Definition \ref{stratification_definition}.

 Under the standard homeomorphism $\Delta^k\cong D^k$, we obtain a
 continuous map
 \[
  \varphi_{\bm{b}} : D^k \cong \{\bm{b}\}\times\Delta^k \hookrightarrow
 \coprod_{k} \overline{N}_k(C(X))\times\Delta^k \rarrow{}
 \|\overline{N}(C(X))\| \cong \Sd(X).
 \]
 By the compactness of $D^k$, $\varphi_{\bm{b}}$ is a quotient map onto
 its image, which is the closure of $\pi_{\Sd(X)}^{-1}(\bm{b})$. 
 The boundary of $\pi_{\Sd(X)}^{-1}(\bm{b})$ consists of points in
 $\Sd(X)$ of the form $[\bm{b},t]$ with $t\in\partial\Delta^k$. Under
 the defining relation, such a point is equivalent to a point in
 $\overline{N}_{\ell}(X)\times \Delta^{\ell}$ for $\ell<k$. Thus we
 obtain a cellular stratification.
\end{proof}

Note that this cellular stratification is rarely a CW stratification,
when parameter spaces have non-discrete topology.

\begin{example}
 \label{wired_sphere}
 Consider the minimal cell decomposition $\pi_n : S^n=e^0\cup e^n$ of the
 $n$-sphere. We have
 \begin{eqnarray*}
  \overline{N}_0(C(S^n)) & = & C(S^n)_0 = \{e^0,e^n\} \\
  \overline{N}_1(C(S^n)) & = & C(S^n)(e^0,e^n) =S^{n-1} \\
  \coprod_{k\ge 2} \overline{N}_k(C(S^n)) & = & \emptyset.
 \end{eqnarray*}

 Thus $\Sd(S^n;\pi_n)$ has only $0$-cells and $1$-cells. There are two
 $0$-cells $v(e_0)$ and $v(e^n)$ corresponding to $e^0$ and $e^n$. There
 are infinitely many 
 $1$-cells parametrized by the equator $S^{n-1}$. For $x\in S^{n-1}$,
 the $1$-cell $e^1_x$ corresponding to $x$ is given by the great half
 circle from the south pole to the north pole through $x$ under the
 identification $\Sd(S^n;\pi_n)\cong S^n$.
 \begin{figure}[ht]
 \begin{center}
  \begin{tikzpicture}
   \draw (0,0) circle (1);
   \draw [fill] (0,1) circle (2pt);
   \draw (0,1.3) node {$v(e^n)$};
   \draw [fill] (0,-1) circle (2pt);
   \draw (0,-1.3) node {$v(e^0)$};
   \draw (-1,0) arc (180:360:1cm and 0.5cm); 
   \draw [dotted] (1,0) arc (0:180:1cm and 0.5cm); 
   \draw (-2,0.7) node {$\Sd(S^n;\pi_n)$};

   \draw (0,-1) arc (270:450:0.5cm and 1cm);
   \draw (-0.3,-0.3) node {$x$};
   \draw (-0.2,0.3) node {$e^1_x$};
  \end{tikzpicture}
 \end{center}
  \caption{The canonical cellular stratification on $\Sd(S^n,\pi_n)$.}
 \end{figure}
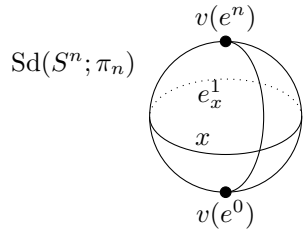

\end{example}


\subsection{Stars}
\label{star_in_acyclic_category}

As Example \ref{wired_sphere} shows, the cellular stratification on
$\Sd(X)$ defined in the previous section is not very useful. However,
Example \ref{wired_sphere} also suggests that by gluing cells in
$\Sd(X)$ together, we may construct a good cellular stratification.
In the classical PL topology, such a construction is called
star\footnote{See Definition \ref{star_definition} for the classical
definition.}. 

\begin{definition}
 Let $X$ be a stellar stratified space. For $x\in X$, define
 \[
  \St(x;X) = \bigcup_{x\in \overline{e_{\lambda}}} e_{\lambda}.
 \]
 For a subset $A\subset X$, define
 \[
  \St(A;X) = \bigcup_{x\in A} \St(x;X).
 \]
 The stratified subspace $\St(A;X)$ is called the \emph{star} of $A$ in
 $X$. 
\end{definition}

\begin{example}
 In the cellular stratification on $\Sd(S^n;\pi_n)$ in Example
 \ref{wired_sphere}, the star $\St(v(e^n);\Sd(S^n;\pi_n))$ coincides
 with $S^n\setminus\{v(e^0)\}$ and we recover the original cellular 
 stratification on $S^n$ as
 \[
  \Sd(S^n) = v(e^0) \cup \St(v(e^n);\Sd(S^n;\pi_n)).
 \]
 Note that we also have another cellular stratification
 \[
  \Sd(S^n) = v(e^n) \cup \St(v(e^0);\Sd(S^n;\pi_n)),
 \]
 which can be regarded as a dual to the original cellular
 stratification. 
\end{example}

We introduce analogous constructions for acyclic categories.

\begin{definition}
 \label{upper_and_lower_star}
 Let $C$ be an acyclic topological category and $x$ an object of
 $C$. The nondegenerate nerve $\overline{N}(x\downarrow C)$ of the
 comma category\footnote{$x\downarrow C$ is the category whose objects
 are morphisms $u$ in $C$ with $s(u)=x$. Morphisms are morphisms in $C$
 making the obvious triangle commutative.}
 $x\downarrow C$ is denoted by $\St_{\ge x}(C)$ and is called the
 \emph{upper star} of $x$ in $C$.

 The full subcategory of $x\downarrow C$ consisting of
 $(x\downarrow C)_{0}\setminus\{1_x\}$ is denoted by $C_{>x}$. 
 The nondegenerate nerve $\overline{N}(C_{>x})$ is 
 denoted by $\Lk_{>x}(C)$ and is called the \emph{upper link} of $x$
 in $C$. 

 The functor induced by the target map in $C$ is denoted by
 \[
  t_x : C_{>x}\subset x\downarrow C \longrightarrow C.
 \]
 The induced map of $\Delta$-spaces is also denoted by
 \[
 t_x : \Lk_{>x}(C) \subset \St_{\ge x}(C) \longrightarrow
 \overline{N}(C). 
 \]

 Dually the nondegenerate nerves of the comma category $C\downarrow x$
 and of its full subcategory $C_{<x}$ consisting of
 $(C\downarrow x)_0\setminus\{1_x\}$ are denoted by 
 $\St_{\le x}(C)$ and $\Lk_{<x}(C)$ and called the \emph{lower star} and
 \emph{lower link} of $x$ in $C$, respectively. We also have a functor
 and a map
 \begin{eqnarray*}
  s_x & : & C_{<x} \longrightarrow C \\
  s_x & : & \Lk_{<x}(C) \longrightarrow \overline{N}(C)
 \end{eqnarray*}
 induced by the source map.
\end{definition}

\begin{remark}
 The notation $C_{>x}$ and its definition is borrowed from Kozlov's book
 \cite{KozlovCombinatorialAlgebraicTopology}. Note that $\Lk_{>x}(C)$ is
 different from the usual link of $x$ in $\overline{N}(C)$ in general.
\end{remark}

We have the following description.

\begin{lemma}
 For an acyclic topological category $C$ and an object $x\in C_0$,
 we have the following identification
 \[
  \Lk_{>x}(C)_k \cong 
 \begin{cases}
  \coprod_{x\neq y} C(x,y), & k=0 \\
  \set{\bm{u}\in \overline{N}_{k+1}(C)}{s(u)=x}, & k>0,
 \end{cases} 
 \]
 under which the face operators
 \[
  d^{\Lk}_i : \Lk_{>x}(C)_k \longrightarrow \Lk_{>x}(C)_{k-1}
 \]
 are identified as $d^{\Lk}_{i} = d_{i+1}$, where $d_{i+1}$ is the
 $(i+1)$-st face operator in $\overline{N}(C)$. 
\end{lemma}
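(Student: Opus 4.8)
The plan is to exhibit, for each $k$, an explicit bijection between the two sides and then to check that it intertwines the face operators as claimed; the topological assertion will be essentially automatic once the underlying sets are matched.

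First I would unwind Definition \ref{upper_and_lower_star}. An object of $x\downarrow C$ is a morphism $u$ of $C$ with $s(u)=x$, and $C_{>x}$ is obtained by discarding the single object $1_x$; a morphism of $C_{>x}$ from $u$ to $u'$ is a morphism $v$ of $C$ with $vu=u'$, so it is determined by the pair $(u,v)$. Hence a $k$-chain in $C_{>x}$ is nothing but a tuple $(u_0,v_1,\dots,v_k)$ of morphisms of $C$ with $s(u_0)=x$ and $s(v_{i+1})=t(u_i)$, where $u_i=v_i\cdots v_1u_0$, and it is nondegenerate exactly when no $v_i$ is an identity. The assignment sending this tuple to the very same tuple $(u_0,v_1,\dots,v_k)$, now read as a chain $x\xrightarrow{u_0}t(u_0)\xrightarrow{v_1}\cdots\xrightarrow{v_k}t(v_k)$ in $C$, is the candidate map into $\{\bm u\in\overline N_{k+1}(C):s(u)=x\}$. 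For $k=0$ this recovers the set of non-identity morphisms out of $x$, which by acyclicity is $\coprod_{x\neq y}C(x,y)$, giving the first case of the statement.

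The key point, and the only place acyclicity is used, is that this map is a bijection onto the source-$x$ chains. Injectivity is clear, and for surjectivity I must check that for a nondegenerate chain $x=z_0\xrightarrow{g_1}z_1\to\cdots\to z_{k+1}$ in $C$ the partial composites $u_i=g_{i+1}\cdots g_1$ are genuine objects of $C_{>x}$, i.e. $t(u_i)=z_{i+1}\neq x$. Since $g_1$ is non-identity and an acyclic category has no non-identity endomorphisms, $z_1\neq x$; and if $z_j=x$ for some $j\ge 2$ we would have both a morphism $x\to z_1$ and a morphism $z_1\to z_j=x$ with $z_1\neq x$, contradicting acyclicity. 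Thus every such chain arises from a $C_{>x}$-chain and the map is a bijection. Because both sides carry the subspace topology from $\Mor(C)^{k+1}$ and the two subspaces literally coincide (the conditions $t(u_i)\neq x$ imposed on the left being automatic on the right), the bijection is a homeomorphism.

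Finally I would track the face operators by direct computation, writing a $C_{>x}$-chain as $(u_0,v_1,\dots,v_k)$ with image $(g_1,\dots,g_{k+1})=(u_0,v_1,\dots,v_k)$. The face $d_0^{\Lk}$ deletes the object $u_0$ and absorbs $v_1$, which on the $C$-side is the composition of $g_1$ and $g_2$, namely $d_1$; for $0<i<k$ the face $d_i^{\Lk}$ composes $v_i$ and $v_{i+1}$ at $u_i$, matching the composition of $g_{i+1}$ and $g_{i+2}$, namely $d_{i+1}$; and $d_k^{\Lk}$ deletes the terminal object, matching $d_{k+1}$. In every case $d_i^{\Lk}=d_{i+1}$, and the one face of $\overline N_{k+1}(C)$ that never occurs is $d_0$, which would delete the apex $x$ and land outside the link. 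The main obstacle is precisely the careful bookkeeping of this index shift together with the acyclicity argument guaranteeing that the intermediate objects avoid $x$; the rest is formal.
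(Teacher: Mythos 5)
Your proposal is correct. The paper states this lemma without proof (it is presented as an easy unwinding of Definition \ref{upper_and_lower_star}), and your argument is exactly the intended verification: identify a nondegenerate $k$-chain of $C_{>x}$ with the tuple $(u_0,v_1,\dots,v_k)$ read as a source-$x$ chain in $\overline{N}_{k+1}(C)$, use acyclicity to show the partial composites are non-identity (equivalently, that the intermediate objects never return to $x$, so the map is onto), and track the index shift giving $d^{\Lk}_i=d_{i+1}$ with $d_0$ never occurring.
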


\begin{example}
 Consider the poset $[2] = \{0<1<2\}$ regarded as a category
 $0\to 1\to 2$.
 The category $[2]_{>1}$ has a unique object $1\to 2$ and no nontrivial
 morphism. Thus $\Lk_{>1}([2])$ is a single point. Under the map
 $t_1 : \Lk_{>1}([2]) \to \overline{N}([2])\cong \Delta^2$,
 $\Lk_{>1}([2])$ can be identified with the vertex $2$ in $\Delta^2$. On
 the other hand, the usual link of $1$ in $\Delta^2$ is the $1$-simplex
 spanned by vertices $0$ and $2$.
\end{example}

\begin{example}
 Consider the face category $C(S^1;\pi_1)$ of the minimal cell
 decomposition of $S^1$. $C(S^1;\pi_1)_{>e^0}$ consists of two objects
 $(C(S^1;\pi_1)_{>e^0})_0 = C(S^1;\pi_1)(e^0,e^1)=\{b_{-},b_{+}\}$ and
 no nontrivial morphisms. Thus 
 $\St_{\ge e^0}(C(S^1;\pi_1))$ is the cell complex
 $[-1,1]=\{-1\}\cup(-1,0)\cup\{0\}\cup(0,1)\cup\{1\}$ and
 $\Lk_{>e^0}(C(S^1;\pi_1))$ is $S^0$. The map $t_{e^0}$ maps the
 boundary $\partial [-1,1]=S^0$ to $v(e^2)$ in $\Sd(S^1)$ and defines a
 $1$-cell structure.
\end{example}

Note that, the comma category $x\downarrow C$ has an
initial object $1_{x}$.

\begin{lemma}
 \label{upper_star_as_cone}
 For an acyclic topological category $C$ and an object $x\in C_0$, we
 have a homeomorphism 
 \[
  \|\St_{\ge x}(C)\| \cong \{1_{x}\}\ast \|\Lk_{>x}(C)\|. 
 \]
\end{lemma}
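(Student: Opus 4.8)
The plan is to exploit the observation, recorded just before the statement, that $1_x$ is an \emph{initial} object of the comma category $x\downarrow C$. Initiality, together with acyclicity of $C$, pins down exactly how $1_x$ can occur in a nondegenerate chain, and this will exhibit $\overline{N}(x\downarrow C)=\St_{\ge x}(C)$ as the cone, in the category of $\Delta$-spaces, on $\overline{N}(C_{>x})=\Lk_{>x}(C)$, with cone vertex $1_x$. The realization of that cone $\Delta$-space is precisely the straight-line join $\{1_x\}\ast\|\Lk_{>x}(C)\|$ of Definition \ref{join_definition}.

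First I would analyze the combinatorics of chains. Since $C$ is acyclic, the only morphism in $x\downarrow C$ with target $1_x$ is the identity; hence in any nondegenerate chain $a_0\to\cdots\to a_k$ the object $1_x$ can occur only as $a_0$. Consequently each nondegenerate $k$-chain is of exactly one of two types: either it lies entirely in the full subcategory $C_{>x}$, contributing $\overline{N}_k(C_{>x})$; or $a_0=1_x$, in which case the first leg $1_x\to a_1$ is the unique morphism determined by $a_1$ and $a_1\to\cdots\to a_k$ is a nondegenerate $(k-1)$-chain in $C_{>x}$. Because that first leg is uniquely, hence continuously, determined by its target, inserting it is a homeomorphism, and at each level $k$ one gets
\[
 \overline{N}_k(x\downarrow C)\;\cong\;\overline{N}_k(C_{>x})\ \sqcup\ \overline{N}_{k-1}(C_{>x}),
\]
with the convention $\overline{N}_{-1}(C_{>x})=\{1_x\}$ accounting for the apex. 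I would then check that these level homeomorphisms intertwine the face operators as the cone construction demands: on the first summand they are the face operators of $\overline{N}(C_{>x})$, while on the second summand $d_0$ forgets the apex (landing in the first summand of $\overline{N}_{k-1}(C_{>x})$) and $d_i$ for $i\ge 1$ is induced by $d_{i-1}$ of $\overline{N}(C_{>x})$. A one-line computation comparing $d_i$ of a chain $1_x\to a_1\to\cdots\to a_k$ with these formulas confirms the match. Thus $\St_{\ge x}(C)$ is isomorphic as a $\Delta$-space to the cone on $\Lk_{>x}(C)$, the cone vertex sitting in the $0$-th position of every simplex.

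Next I would realize this isomorphism geometrically. Using the join decomposition $\Delta^k=\{\bm{v}_0\}\ast\Delta^{k-1}$ of Definition \ref{join_definition} (coning off the initial vertex $\bm{v}_0$), I build a map $\|\St_{\ge x}(C)\|\to\{1_x\}\ast\|\Lk_{>x}(C)\|$ by sending the simplices of the first type via the canonical inclusion $\|\Lk_{>x}(C)\|\hookrightarrow\{1_x\}\ast\|\Lk_{>x}(C)\|$, and a simplex of the second type by the straight-line formula
\[
 \bigl(\bm{u},(1-t)\bm{v}_0+t\bm{s}\bigr)\longmapsto (1-t)\,1_x+t\,[\bm{u},\bm{s}],
\]
exactly the coning construction used for the maps $z_{\bm{e}}$ in the proof of Theorem \ref{embedding_Sd}. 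The levelwise compatibility established above guarantees this respects the defining identifications of both realizations, so it descends to a continuous bijection.

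The main obstacle will be upgrading this continuous bijection to a homeomorphism, since the morphism spaces $C(x,y)$, and hence the spaces of chains, may be neither discrete nor compact, so one cannot simply invoke compactness. I would handle this exactly as in Theorem \ref{embedding_Sd}: both target realizations are quotients of the common total space
\[
 \Bigl(\coprod_k\overline{N}_k(C_{>x})\times\Delta^k\Bigr)\ \sqcup\ \Bigl(\coprod_k\overline{N}_{k-1}(C_{>x})\times\Delta^k\Bigr),
\]
the left realization being the projection for $x\downarrow C$ and the right one the realization projection for the straight-line cone. Since the $\Delta$-space isomorphism matches the two equivalence relations and the coning map is a homeomorphism on each prism, the induced map on quotients is a homeomorphism onto its image, as in the closed-map argument proving that $i_X$ is an embedding. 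Naturality in $x$ and in $C$ is then automatic from the construction. I expect this final point-set step to be the only delicate part; everything preceding it is the formal bookkeeping of the cone construction.
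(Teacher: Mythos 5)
Your proposal is correct and follows essentially the same route as the paper's proof: the same decomposition of $\overline{N}_k(x\downarrow C)$ into chains beginning with $1_x$ and chains lying entirely in $C_{>x}$ (the paper writes this as $(\{1_x\}\times\Lk_{>x}(C)_{k-1})\amalg\Lk_{>x}(C)_k$), followed by the same straight-line coning map off the initial vertex. Your closing quotient-space argument is in fact more careful than the paper's, which merely asserts that an inverse to $h_x$ is easy to define; the only blemish is your phrase that the coning map is a ``homeomorphism on each prism'' (it is a quotient, not injective, on closed simplices), but this does not affect the validity of the common-total-space argument.
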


\begin{proof}
 Define a map
 $h_x : \|\St_{\ge x}(C)\| \to \{1_{x}\}\ast \|\Lk_{>x}(C)\|$
 as follows. For
 $[(\bm{u},\bm{t})]\in \|\St_{\ge x}(C)\|=\|\overline{N}(x\downarrow C)\|$,
 choose a representative
 $(\bm{u},\bm{t})\in \overline{N}_k(x\downarrow C)\times\Delta^k$. 
 Here we regard $\bm{u}$ as a sequence of
 composable $k+1$ morphisms in $C$ starting from $x$;
 \[
  \bm{u} : x\rarrow{u_0} x_0\rarrow{u_1} x_1\rarrow{u_2} \cdots
 \rarrow{u_{k}} x_k 
 \]
 with $u_1$, $\ldots$, $u_k$ non-identity morphisms.
 When $u_0$ is not the identity morphism, 
 $\bm{u}$ belongs to $\Lk_{>x}(C)_k$ and $[(\bm{u},\bm{t})]$ can be
 regarded as an 
 element of $\|\Lk_{>x}(C)\|$. Define 
 \[
  h_x([\bm{u},\bm{t}]) = 01_x+1[(\bm{u},\bm{t})].
 \]
 When $u_0=1_x$, write $\bm{t}=t_00+(1-t_0)\bm{t}'$ under the
 identification $\Delta^k\cong \{0\}\ast \Delta^{k-1}$ and define
 \[
  h_x([\bm{u},\bm{t}]) = t_01_x + (1-t_0)[(\bm{u}',\bm{t}')],
 \]
 where 
 \[
  \bm{u}' : x\rarrow{u_1} x_1 \rarrow{u_2} \cdots \rarrow{u_k} x_k
 \]
 is the $(k-1)$-chain obtained from $\bm{u}$ by removing $u_0$. Since
 $\bm{u}$ is a nondegenerate chain, $u_1$ is not the identity morphism
 and $\bm{u}'\in \Lk_{>x}(C)_{k-1}$.

 Since the set of objects $C_0$ has the discrete topology, the
 decomposition
 \begin{eqnarray*}
   & & \overline{N}_k(x\downarrow C) \\
  & = & \left(\coprod_{x<x_1<\cdots<x_k} C(x,x)\times
 C(x,x_1)\times\cdots\times C(x_{k-1},x_k)\right) \\
  & & \amalg
 \left(\coprod_{x<x_0<\cdots<x_k} C(x,x_0)\times\cdots\times
  C(x_{k-1},x_k)\right) \\
  & = & \left(\{1_x\}\times\Lk_{>x}(x\downarrow C)\right) \amalg
   \Lk_{>x}(x\downarrow C)
 \end{eqnarray*}
 is a decomposition of topological spaces. The map $h_x$ is continuous
 on each component of $\overline{N}_k(x\downarrow C)\times\Delta^k$ and
 defines a continuous map
 \[
 h_x : \|\St_{\ge x}(C)\| \rarrow{} \{1_{x}\}\ast \|\Lk_{>x}(C)\|.
 \]
 It is easy to define an inverse to $h_x$, and thus $h_x$ is a
 homeomorphism. 
\end{proof}

\subsection{Canonical Stellar Stratifications on the Barycentric
  Subdivision} 
\label{duality_of_totally_normal_cellular_stratified_space}

Let us now go back to the discussion of stellar stratifications on
$\Sd(X)$. 

\begin{definition}
 Let $X$ be a cylindrically normal stellar stratified
 space. Define a map 
 \[
  \pi_{X^{\op}} : \Sd(X) \longrightarrow P(X) 
 \]
 by the composition
 \[
 \Sd(X) \rarrow{\pi_{\Sd(X)}} \coprod_{k}\overline{N}_k(C(X)) \rarrow{s} 
 C(X)_0 = P(X),
 \]
 where $s$ is the map induced by the source map in the category
 $C(X)$ and is also called the \emph{source map}.  
\end{definition}

By definition\footnote{Definition \ref{morphism_of_stratified_spaces}},
$\pi_{\Sd(X)}$ is a subdivision of $\pi_{X^{\op}}$ if 
$\pi_{X^{\op}}$ defines a stratification on $\Sd(X)$. Each inverse image
$\pi_{X^{\op}}^{-1}(\lambda)$ can be described by using
the upper stars.

\begin{definition}
 For a cell $e_{\lambda}$ in a cylindrically normal stellar stratified
 space $X$, define
 \[
 D_{\lambda}^{\op} = \left\|\St_{\ge e_{\lambda}}(C(X))\right\|
 \]
 We also denote
 \[
 D_{\lambda}^{\op,\circ} = \left\|\St_{\ge
 e_{\lambda}}(C(X))\right\| \setminus
 \left\|\Lk_{>e_{\lambda}}(C(X))\right\|,
 \]
 where $\|\Lk_{>e_{\lambda}}(C(X))\|$ is regarded as the bottom subspace
 of the cone under the identification
 \[
 \left\|\St_{\ge e_{\lambda}}(C(X))\right\| \cong
 \{1_{e_{\lambda}}\}\ast\|\Lk_{>e_{\lambda}}(C(X))\| 
 \]
 in Lemma \ref{upper_star_as_cone}.
\end{definition}

\begin{lemma}
 \label{stratum_in_D(X)}
 Let $X$ be a cylindrically normal stellar stratified space. In the
 stratification $\pi_{X^{\op}}$, each stratum is given by
 the image of $D_{\lambda}^{\op,\circ}$ under the map
 $t_{\lambda}=t_{e_{\lambda}}:\|\St_{\ge e_{\lambda}}(C(X))\|\to\Sd(X)$
 defined in Definition \ref{upper_and_lower_star}.  Namely
 \[
 \pi_{X^{\op}}^{-1}(\lambda) =
 t_{\lambda}\left(D_{\lambda}^{\op,\circ}\right).  
 \]  
 Hence the closure of each stratum is given by 
 \[
 \overline{\pi_{X^{\op}}^{-1}(\lambda)} =
 t_{\lambda}\left(D_{\lambda}^{\op}\right). 
 \]
\end{lemma}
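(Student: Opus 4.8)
The plan is to reduce everything to the cellular stratification $\pi_{\Sd(X)}$ on $\Sd(X)$ from Proposition \ref{stratification_on_Sd}, using the source map together with the cone description of the upper star in Lemma \ref{upper_star_as_cone}. Recall that the open cells of $\pi_{\Sd(X)}$ are indexed by nondegenerate chains $\bm{v}\in\overline{N}(C(X))$, ordered by the face relation, and that $\pi_{X^{\op}}=s\circ\pi_{\Sd(X)}$ sends the cell of $\bm{v}$ to the index of its source $s(\bm{v})$. Thus $\pi_{X^{\op}}^{-1}(\lambda)=\bigcup_{s(\bm{v})=e_{\lambda}}\pi_{\Sd(X)}^{-1}(\bm{v})$, and the whole statement becomes a combinatorial claim about which chains arise in the image of $t_{\lambda}$. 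Since $C(X)$ is acyclic (Lemma \ref{face_category_of_CNCSS}), a nondegenerate chain $\bm{w}$ in the comma category $e_{\lambda}\downarrow C(X)$ is a composable sequence of nonidentity morphisms with initial object $(u_0:e_{\lambda}\to y_0)$, and the target functor $t_{\lambda}$ of Definition \ref{upper_and_lower_star} sends it to the nondegenerate chain $y_0\to\cdots\to y_k$ in $C(X)$, whose source is $y_0$.

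First I would prove the stratum equality $\pi_{X^{\op}}^{-1}(\lambda)=t_{\lambda}(D_{\lambda}^{\op,\circ})$. Using the decomposition of $\overline{N}(e_{\lambda}\downarrow C)$ into chains with $u_0=1_{e_{\lambda}}$ versus $u_0$ nonidentity that underlies the homeomorphism $h_{e_{\lambda}}$ of Lemma \ref{upper_star_as_cone}, the open cone $D_{\lambda}^{\op,\circ}$ consists exactly of the classes whose nondegenerate representative $(\bm{w},\bm{t})$ has $u_0=1_{e_{\lambda}}$. For such a chain the target chain is $e_{\lambda}\xrightarrow{u_1}y_1\to\cdots$, whose source is $e_{\lambda}$, giving $t_{\lambda}(D_{\lambda}^{\op,\circ})\subseteq\pi_{X^{\op}}^{-1}(\lambda)$; conversely every nondegenerate $\bm{v}$ with $s(\bm{v})=e_{\lambda}$ lifts canonically, by prepending $1_{e_{\lambda}}$, to such a $\bm{w}$ with $t_{\lambda}(\bm{w})=\bm{v}$, which yields surjectivity. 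The base $\|\Lk_{>e_{\lambda}}(C)\|$ consists of the classes with $u_0$ nonidentity, so by acyclicity $y_0\neq e_{\lambda}$ and hence $t_{\lambda}(\text{base})\subseteq\bigcup_{\mu>\lambda}\pi_{X^{\op}}^{-1}(\mu)$. Together these identify $t_{\lambda}(D_{\lambda}^{\op})=\bigcup_{\bm{v}:\,s(\bm{v})\ge e_{\lambda}}\pi_{\Sd(X)}^{-1}(\bm{v})$, where surjectivity onto each such cell again follows by prepending a morphism $e_{\lambda}\to s(\bm{v})$, which exists precisely because $e_{\lambda}\le s(\bm{v})$ in the underlying poset is equivalent to $C(X)(e_{\lambda},s(\bm{v}))\neq\emptyset$.

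For the closure statement I would argue entirely inside the face poset. The set $S=\{\bm{v}:s(\bm{v})\ge e_{\lambda}\}$ is downward closed under the face relation, since deleting objects from a chain replaces its source only by a later, hence larger, object: $\bm{b}\le\bm{v}'$ forces $s(\bm{b})\ge s(\bm{v}')$. By Proposition \ref{stratification_on_Sd} the closure order on strata of $\pi_{\Sd(X)}$ is exactly this face order, so the union $\bigcup_{\bm{v}\in S}\pi_{\Sd(X)}^{-1}(\bm{v})=t_{\lambda}(D_{\lambda}^{\op})$ is closed. On the other hand the smallest downward-closed set containing $\{\,\bm{v}:s(\bm{v})=\lambda\,\}$ is $S$ itself, because any $\bm{b}$ with $s(\bm{b})=\mu\ge\lambda$ is a face of the chain obtained by prepending a morphism $e_{\lambda}\to e_{\mu}$. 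Hence $\overline{\pi_{X^{\op}}^{-1}(\lambda)}=\bigcup_{\bm{v}\in S}\pi_{\Sd(X)}^{-1}(\bm{v})=t_{\lambda}(D_{\lambda}^{\op})$, as required.

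The genuinely technical point, and the one I would treat most carefully, is the level-wise verification that $t_{\lambda}$ maps \emph{onto} each open cell indexed by a chain in $S$ rather than merely meeting it, together with the matching of simplicial and cone coordinates under $h_{e_{\lambda}}$; in particular one must track the drop in simplex dimension when a leading identity $1_{e_{\lambda}}$ is collapsed. I would carry this out using the level decomposition of the nerve in the spirit of Lemma \ref{underlying_poset_nerve} applied to $e_{\lambda}\downarrow C(X)$. Everything else is formal once the identification $\pi_{X^{\op}}=s\circ\pi_{\Sd(X)}$ and the cone decomposition are in hand.
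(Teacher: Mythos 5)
Your first identity is proved exactly the way the paper proves it: both arguments use the cone decomposition of Lemma \ref{upper_star_as_cone} to characterize $D_{\lambda}^{\op,\circ}$ as the set of points whose canonical (interior-coordinate, nondegenerate) representatives are chains beginning with $1_{e_{\lambda}}$, observe that $t_{\lambda}$ sends such a chain to a chain with source $e_{\lambda}$, and get surjectivity by prepending $1_{e_{\lambda}}$. Where you genuinely diverge is the closure statement. The paper computes $\overline{t_{\lambda}(D_{\lambda}^{\op,\circ})}$ by hand: it writes $t_{\lambda}(D_{\lambda}^{\op,\circ})$ as the image under the projection $p$ of the pieces $C(X)(e_{\lambda_{k-1}},e_{\lambda_{k}})\times\cdots\times C(X)(e_{\lambda},e_{\lambda_{0}})\times\left(\Delta^{k}\setminus d^{0}(\Delta^{k-1})\right)$ and uses the weak topology on $\Sd(X)$ determined by this covering to show that the closure is obtained precisely by adjoining the $d^{0}$-faces, which constitute the image of the link. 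You instead identify $t_{\lambda}(D_{\lambda}^{\op})$ with $\pi_{\Sd(X)}^{-1}(S)$ for the downward-closed set $S=\set{\bm{v}}{s(\bm{v})\ge e_{\lambda}}$ and finish order-theoretically via Proposition \ref{stratification_on_Sd}. Your route is attractive because it recycles the already-established stratification structure on $\Sd(X)$ instead of redoing a point-set computation; the paper's route has the advantage of exhibiting concretely which points get added in the closure. One point you should make explicit: the statement ``closure order equals face order'' on pairs of strata does not by itself imply that a union of strata over a downward-closed set $S$ is closed, since for infinite $S$ the closure of the union is not controlled by the individual closures. What you actually need is that $\pi_{\Sd(X)}$ is an open \emph{continuous} map to the face poset with its Alexandroff topology, so that $\pi_{\Sd(X)}^{-1}(S)$ is closed because downward-closed sets are Alexandroff-closed. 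This is exactly what Proposition \ref{stratification_on_Sd} (combined with the paper's lemma characterizing open maps to posets) supplies, so your argument is complete once that invocation is stated correctly.
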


\begin{proof}
 Elements in $D_{\lambda}^{\op,\circ}$ are those which are represented
 by $(\bm{u},t_00+(1-t_0)\bm{t}')$, where $t_0<1$ and $\bm{u}$ begins
 with the identity morphism $1_{e_{\lambda}}$. Therefore
 $s(t_{\lambda}(\bm{u}))=e_{\lambda}$ and
 $t_{\lambda}\left(D_{\lambda}^{\op,\circ}\right)\subset\pi_{X^{\op}}^{-1}(\lambda)$.
 Conversely, by choosing a representative $(\bm{u},\bm{t})$ of
 $[(\bm{u},\bm{t})]\in \pi_{X^{\op}}^{-1}(\lambda)$ with
 $\bm{t}\in\Int(\Delta^k)$, we see
 $\pi_{X^{\op}}^{-1}(\lambda)\subset t_{\lambda}\left(D_{\lambda}^{\op,\circ}\right)$. 

 Let $p : \coprod_k \overline{N}_k(C(X))\times \Delta^k \to \Sd(X)$ be
 the projection. Then the topology on $\Sd(X)$ is the weak topology
 defined by the covering
 \[
 \left\{p(C(X)(e_{\lambda_{k-1}},e_{\lambda_k})\times\cdots\times
 C(X)(e_{\lambda_0},e_{\lambda_1})\times\Delta^k)\right\}   
 \]
 Thus the closure of
 \begin{multline*}
 t_{\lambda}\left(D_{\lambda}^{\op,\circ}\right) = \\
 p\left(\coprod_k\coprod_{\lambda<\lambda_0<\cdots<\lambda_k} 
 C(X)(e_{\lambda_{k-1}},e_{\lambda_{k}})\times\cdots\times
 C(X)(e_{\lambda},e_{\lambda_{0}})\times
 (\Delta^k \setminus d^0(\Delta^{k-1}))\right)
 \end{multline*}
 is given by adding
 $C(X)(e_{\lambda},e_{\lambda_1})\times\cdots\times
 C(X)(e_{\lambda_{k-1}},e_{\lambda_k})\times d^0(\Delta^{k-1})$. And we
 have 
 \[
  \overline{\pi_{X^{\op}}^{-1}(\lambda)} =
 \overline{t_{\lambda}\left(D_{\lambda}^{\op,\circ}\right)} =
 t_{\lambda}\left(D_{\lambda}^{\op}\right). 
 \]
\end{proof}

\begin{corollary}
 For a cylindrically normal stellar stratified space $X$,
 $\pi_{X^{\op}}$ is a stratification whose face poset is the opposite
 $P(X)^{\op}$ of $P(X)$.  
\end{corollary}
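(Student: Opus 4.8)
The plan is to read off the corollary from Lemma~\ref{stratum_in_D(X)}, which has already done the geometric work: it identifies each candidate stratum as $\pi_{X^{\op}}^{-1}(\lambda)=t_{\lambda}\bigl(D_{\lambda}^{\op,\circ}\bigr)$ and its closure as $\overline{\pi_{X^{\op}}^{-1}(\lambda)}=t_{\lambda}\bigl(D_{\lambda}^{\op}\bigr)$. What remains is to check the three requirements of Definition~\ref{stratification_definition}—that $\pi_{X^{\op}}$ is an open continuous map and that its fibers are connected and locally closed—and then to read the poset order off the closure relation. Since the substantive content is already absorbed into Lemma~\ref{stratum_in_D(X)} and Lemma~\ref{upper_star_as_cone}, the proof is essentially bookkeeping.

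For continuity and openness I would factor $\pi_{X^{\op}}=s\circ\pi_{\Sd(X)}$, where $\pi_{\Sd(X)}$ is already an open continuous map by Proposition~\ref{stratification_on_Sd} and $s$ is the source map on $\coprod_{k}\overline{N}_k(C(X))$, regarded as a map of Alexandroff posets into $P(X)^{\op}$. First I would check that $s$ is order preserving into $P(X)^{\op}$: if a nondegenerate chain $\bm{b}$ is a face of $\bm{b}'$ then $s(\bm{b})$ is a later object of $\bm{b}'$, so $s(\bm{b}')\le s(\bm{b})$ in $P(X)$, i.e.\ $s(\bm{b})\le s(\bm{b}')$ in $P(X)^{\op}$; this gives continuity. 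For openness I would note that any $\mu\le\lambda$ in $P(X)$ gives a morphism $e_{\mu}\to e_{\lambda}$ in $C(X)$, which can be prepended to a chain with source $e_{\lambda}$, so $s(\uparrow\bm{b})$ is exactly the principal up-set of $P(X)^{\op}$; hence $s$ is open and so is the composite. The closure formula of Lemma~\ref{stratum_in_D(X)} then pins down the order, since $D_{\lambda}^{\op}=\|\St_{\ge e_{\lambda}}(C(X))\|$ consists of chains with source $e_{\lambda}$, whose targets run over precisely the cells $e_{\nu}$ with $\lambda\le\nu$ in $P(X)$:
\[
\pi_{X^{\op}}^{-1}(\nu)\subset\overline{\pi_{X^{\op}}^{-1}(\lambda)}
\iff \lambda\le\nu \text{ in } P(X)
\iff \nu\le\lambda \text{ in } P(X)^{\op}.
\]
By the lemma following Definition~\ref{stratification_definition}, this equivalence reconfirms openness and identifies the face poset as $P(X)^{\op}$.

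It then remains to verify the fiber conditions. For connectivity I would invoke Lemma~\ref{upper_star_as_cone}: $D_{\lambda}^{\op,\circ}$ is the open cone $\{1_{e_{\lambda}}\}\ast\|\Lk_{>e_{\lambda}}(C(X))\|$ with its base removed, and every point of an open cone is joined to the apex by a segment, so $D_{\lambda}^{\op,\circ}$ and hence its continuous image $\pi_{X^{\op}}^{-1}(\lambda)$ are connected (nonempty even when the link is empty, reducing to the apex). For local closedness I would observe that $\overline{\pi_{X^{\op}}^{-1}(\lambda)}\setminus\pi_{X^{\op}}^{-1}(\lambda)=t_{\lambda}\bigl(\|\Lk_{>e_{\lambda}}(C(X))\|\bigr)$ is the image of the base of the cone, a union of strata of strictly larger $P(X)$-index, hence closed; so each stratum is open in its closure. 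The hard part here is not any single estimate but the consistent bookkeeping of the order reversal against the Alexandroff convention (closed sets are down-sets): the source map reverses the direction used by the target map in $\pi_{\Sd(X)}$, and a sign slip would invert the poset. Presenting the closure relation as the single displayed chain of equivalences above is the device I would rely on to keep the two orders straight.
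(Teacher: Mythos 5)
Your proposal is correct and follows essentially the same route as the paper: both arguments extract local closedness, connectedness, and the order compatibility directly from the description of strata and their closures in Lemma \ref{stratum_in_D(X)} (with the cone decomposition of Lemma \ref{upper_star_as_cone} supplying connectedness, which is what the paper's terse ``contains the barycenter $v(e_{\lambda})$'' implicitly relies on). The extra bookkeeping you supply---openness of the source map $s$ on the chain poset and the identification of $\overline{\pi_{X^{\op}}^{-1}(\lambda)}\setminus\pi_{X^{\op}}^{-1}(\lambda)$ with the preimage of an up-set of $P(X)$---is exactly the detail the paper's proof leaves implicit, and it is carried out correctly.
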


\begin{proof}
 The fact that $\pi_{X^{\op}}^{-1}(\lambda)$ is 
 locally closed for each $\lambda\in P(X)$ follows from the
 description in Lemma \ref{stratum_in_D(X)}.
 It also says that $\pi_{X^{\op}}^{-1}(\lambda)$ is connected, since it 
 contains the barycenter $v(e_{\lambda})$ of $e_{\lambda}$.

 The compatibility with the partial order in
 $P(X)^{\op}$ also follows from the description of the boundary in Lemma
 \ref{stratum_in_D(X)}. 
\end{proof}

\begin{definition}
 The stratification $\pi_{X^{\op}}$ is called the \emph{stellar dual} of
 $\pi_{X}$. 
\end{definition}

Thus when $\|\Lk_{>e_{\lambda}}(C(X))\|$ can be embedded in a sphere
$S^{N-1}$ in such a way that the closure
$\overline{\|\Lk_{>e_{\lambda}}(C(X))\|}$ is a finite cell complex
containing $\|\Lk_{>e_{\lambda}}(C(X))\|$ as a strict cellular
stratified subspace,
$\|\St_{\ge e_{\lambda}}(C(X))\|$ can be
regarded as an aster in $D^N$. 

\begin{proposition}
 \label{stellar_structure_on_Sd}
 Let $X$ be a finite polyhedral relatively compact cellular
 stratified space. Then
 $\Sd(X)$ has a structure of stellar stratified space whose underlying
 stratification is $\pi_{X^{\op}}$ and the face poset is
 $P(\Sd(X),\pi_{X^{\op}}) = P(X,\pi_{X})^{\op}$.
\end{proposition}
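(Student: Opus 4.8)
The plan is to upgrade the stratification $\pi_{X^{\op}}$, which the preceding Corollary already shows to be a stratification with face poset $P(X)^{\op}$, into a stellar stratification by attaching a stellar cell structure to each stratum and then checking the covering condition. For a cell $e_{\lambda}$ of $X$ the natural domain of the dual cell is $D_{\lambda}^{\op}=\|\St_{\ge e_{\lambda}}(C(X))\|$, with characteristic map the target-induced map $t_{\lambda}\colon \|\St_{\ge e_{\lambda}}(C(X))\|\to \Sd(X)$ of Definition \ref{upper_and_lower_star}. Lemma \ref{stratum_in_D(X)} already tells us that $t_{\lambda}$ carries $D_{\lambda}^{\op,\circ}$ onto the stratum $\pi_{X^{\op}}^{-1}(\lambda)$ and $D_{\lambda}^{\op}$ onto its closure, so this is the correct assignment. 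First I would record that $t_{\lambda}$ restricts to a homeomorphism on $D_{\lambda}^{\op,\circ}$ (again from Lemma \ref{stratum_in_D(X)}) and is a quotient map onto the closed stratum; the latter follows because the finiteness and relative compactness hypotheses force $D_{\lambda}^{\op}$ to be compact. Indeed, by Lemma \ref{compact_parameter_space} relative compactness makes every parameter space $P_{\mu,\lambda}$ compact, and $\Sd(X)=BC(X)$ of the finite category $C(X)$ with compact morphism spaces is then a quotient of a finite union of products of compact spaces; a continuous surjection from a compact space onto a (Hausdorff) subspace of $\Sd(X)$ is closed, hence a quotient map.

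Next I would exhibit $D_{\lambda}^{\op}$ as an aster. Lemma \ref{upper_star_as_cone} supplies a homeomorphism $\|\St_{\ge e_{\lambda}}(C(X))\|\cong \{1_{e_{\lambda}}\}\ast\|\Lk_{>e_{\lambda}}(C(X))\|$, and placing the cone point $1_{e_{\lambda}}$ at the origin realizes $D_{\lambda}^{\op}$ as a thin aster in the sense of Definition \ref{aster_definition}, with $\partial D_{\lambda}^{\op}=\|\Lk_{>e_{\lambda}}(C(X))\|$ and interior $D_{\lambda}^{\op,\circ}$. This identifies the boundary of the prospective stellar cell as the upper link.

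The crux, and the step I expect to be the main obstacle, is to realize this boundary as a cellular stratified subspace of a sphere, so that $D_{\lambda}^{\op}$ genuinely qualifies as a stellar cell. This is exactly where the three hypotheses act together: finiteness makes $C_{>e_{\lambda}}$ a finite acyclic category; relative compactness (Lemma \ref{compact_parameter_space}) makes every parameter space compact; and polyhedrality makes each parameter space a locally cone-like compact polyhedron with PL gluing maps. Under the canonical cellular decomposition of the nerve, whose cells are indexed by nondegenerate chains issuing from $e_{\lambda}$ and have domains built from products of parameter spaces and simplices (cf. Lemma \ref{underlying_poset_nerve} and Proposition \ref{stratification_on_Sd}), these ingredients present $\|\Lk_{>e_{\lambda}}(C(X))\|$ as a finite compact polyhedron. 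I would then PL-embed this polyhedron in a sphere $S^{N-1}$ as a subpolyhedron and extend it to a finite cell decomposition of $S^{N-1}$ containing it as a strict cellular stratified subspace in the sense of Definition \ref{cellular_stratified_subspace_definition} --- precisely the embeddability condition anticipated in the paragraph preceding the statement. This displays $D_{\lambda}^{\op}$ as a stellar cell with characteristic map $t_{\lambda}$, of stellar dimension one greater than the globular dimension of the link.

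Finally I would verify the defining covering/dimension condition of a stellar stratification. The cone description shows that $t_{\lambda}$ maps the base $\|\Lk_{>e_{\lambda}}(C(X))\|$ onto points whose source lies strictly above $\lambda$; that is, the boundary of the dual cell of $e_{\lambda}$ is covered by the dual strata $\pi_{X^{\op}}^{-1}(\mu)$ with $\mu>\lambda$ in $P(X)$, equivalently $\mu<\lambda$ in $P(X)^{\op}$, and by the previous step these have strictly smaller stellar dimension, since the corresponding links appear as faces of $\|\Lk_{>e_{\lambda}}(C(X))\|$ of one lower dimension. Together with the Corollary identifying the face poset with $P(X)^{\op}$, this completes the verification that $(\Sd(X),\pi_{X^{\op}})$ is a stellar stratified space with $P(\Sd(X),\pi_{X^{\op}})=P(X,\pi_X)^{\op}$.
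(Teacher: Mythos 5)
Your proposal is correct and follows essentially the same route as the paper: the dual cell of $e_{\lambda}$ has domain $\|\St_{\ge e_{\lambda}}(C(X))\|$ with characteristic map $t_{\lambda}$, the stratum and closure identifications come from Lemma \ref{stratum_in_D(X)}, the cone description of Lemma \ref{upper_star_as_cone} exhibits the domain as an aster, and the three hypotheses (finiteness, relative compactness via Lemma \ref{compact_parameter_space}, polyhedrality) are used exactly as in the paper to present $\|\Lk_{>e_{\lambda}}(C(X))\|$ as a finite complex that can be embedded in a sphere $S^{N-1}$, realizing $D_{\lambda}^{\op}$ inside $D^N$. Your treatment is if anything slightly more complete, since you justify the quotient-map property of $t_{\lambda}$ by a compactness argument (where the paper asserts it by definition) and you explicitly check the boundary-covering and dimension conditions of a stellar stratification.
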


\begin{proof}
 For $\lambda\in P(X)$, consider the upper star
 $\St_{\ge e_{\lambda}}(C(X))$ and the upper link
 $\Lk_{>e_{\lambda}}(C(X))$ of $\lambda$ in $C(X)$. 
 Since compact locally cone-like spaces can be expressed as a union of
 a finite number of simplices\footnote{Theorem 2.11 in
 \cite{Rourke-SandersonBook}}, each parameter space has a structure of a 
 finite polyhedral complex. And the comma category
 $e_{\lambda}\downarrow C(X)$  is a cellular category whose morphism 
 spaces are finite cell complexes. By Lemma
 \ref{cellular_stratification_on_BC} and the finiteness assumption,  
 $\|\St_{\ge e_{\lambda}}(C(X))\|=B(e_{\lambda}\downarrow C(X))$ is a
 finite cell complex and 
 $\|\Lk_{>e_{\lambda}}(C(X))\|$ is a subcomplex. Choose an 
 embedding 
 $\|\Lk_{>e_{\lambda}}(C(X))\|\hookrightarrow S^{N-1}$. Then
 $D_{\lambda}^{\op}=\{1_{e_{\lambda}}\}\ast\|\Lk_{>e_{\lambda}}(C(X))\|$
 is embedded in $D^N$.

 By definition,
 \[
  t_{e_{\lambda}} : D_{\lambda}^{\op} \longrightarrow
 \overline{\pi_{X^{\op}}^{-1}(\lambda)} \subset \Sd(X)
 \]
 is a quotient map. The fact that $t_{e_{\lambda}}$ is a homeomorphism
 onto $\pi_{X^{\op}}^{-1}(\lambda)$ when restricted to
 $\Int\left(D_{\lambda}^{\op}\right)=D_{\lambda}^{\op,\circ}$ follows
 easily from the description of elements in $D_{\lambda}^{\op,\circ}$
 in the proof of Lemma \ref{stratum_in_D(X)}.
\end{proof}

\begin{remark}
 The three assumptions, i.e.\ local-polyhedrality, finiteness, and
 relative-compactness, are imposed only for the purpose of
 the existence of an embedding of $\|\Lk_{>e_{\lambda}}(C(X))\|$ in a
 sphere. If we relax the definition of a stellar cell
 $\varphi_{\lambda} : D_{\lambda}\to \overline{e_{\lambda}}$ by dropping
 the embeddability of the domain $D_{\lambda}$ in a disk, we do not need
 to require these conditions.
\end{remark}

The next problem is to define a cylindrical structure for the stellar
dual $(\Sd(X),\pi_{X^{\op}})$.

\begin{theorem}
 \label{stellar_stratification_on_D(X)}
 Let $X$ be a finite polyhedral stellar stratified
 space. Suppose all parameter spaces $P_{\mu,\lambda}$ are 
 compact. For
 $\lambda \le^{\op}\mu$ in $P(\Sd(X),\pi_{X^{\op}})= P(X,\pi_X)^{\op}$,
 define $P^{\op}_{\lambda,\mu}= P_{\mu,\lambda}$.
 Then the stellar structure in Proposition \ref{stellar_structure_on_Sd}
 and parameter spaces 
 $\{P_{\lambda,\mu}^{\op}\}$ make $(\Sd(X),\pi_{X^{\op}})$ 
 into a polyhedral stellar stratified space.
\end{theorem}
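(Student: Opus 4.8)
The plan is to build everything on top of Proposition \ref{stellar_structure_on_Sd}, which already furnishes the underlying stellar stratification of $(\Sd(X),\pi_{X^{\op}})$, with domains $D^{\op}_{\lambda}=\|\St_{\ge e_{\lambda}}(C(X))\|$ and, via Lemma \ref{upper_star_as_cone}, the cone description $D^{\op}_{\lambda}\cong\{1_{e_{\lambda}}\}\ast\|\Lk_{>e_{\lambda}}(C(X))\|$ with $\partial D^{\op}_{\lambda}=\|\Lk_{>e_{\lambda}}(C(X))\|$. The two things left to produce are (i) a cylindrical structure realizing the prescribed parameter spaces $P^{\op}_{\lambda,\mu}=P_{\mu,\lambda}$, and (ii) a polyhedral replacement of each dual cell. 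The organizing observation is that the cylindrical face category of the dual is $C(X)^{\op}$, and that all structure maps arise from \emph{precomposition} functors between comma categories.

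First I would construct the cylindrical structure. For a pair $\lambda\le^{\op}\mu$, i.e. $e_{\mu}\subset\overline{e_{\lambda}}$ in $X$, a point $p\in P_{\mu,\lambda}=C(X)(e_{\mu},e_{\lambda})$ is a morphism $p\colon e_{\mu}\to e_{\lambda}$, and precomposition by $p$ defines a functor $(e_{\lambda}\downarrow C(X))\to(e_{\mu}\downarrow C(X))$, $u\mapsto u\circ p$. Passing to nondegenerate nerves and realizations gives a map $D^{\op}_{\lambda}\to D^{\op}_{\mu}$; since $p$ is a non-identity morphism of the acyclic category $C(X)$, every composite $u\circ p$ is non-identity, so the image lies in $\partial D^{\op}_{\mu}=\|\Lk_{>e_{\mu}}(C(X))\|$. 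Letting $p$ vary and using the continuity of the composition maps $c$ of $C(X)$ to glue, I obtain $b^{\op}_{\lambda,\mu}\colon P_{\mu,\lambda}\times D^{\op}_{\lambda}\to\partial D^{\op}_{\mu}$. The composition maps of the dual are those of the opposite category, $c^{\op}_{\lambda,\mu,\nu}(q,p)=c_{\nu,\mu,\lambda}(p,q)$, and I stratify each $\partial D^{\op}_{\mu}$ by the index of the first object of a chain, which is exactly the restriction of $\pi_{X^{\op}}$ and hence normal.

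The verification of Definition \ref{cylindrical_structure_definition} then proceeds axiom by axiom. The first commuting triangle, with characteristic maps $t_{e_{\mu}}$ from Lemma \ref{stratum_in_D(X)}, holds because precomposition leaves the target of every morphism unchanged, so $t_{e_{\mu}}\circ b^{\op}_{\lambda,\mu}=t_{e_{\lambda}}\circ\pr_2$. The two remaining diagrams reduce to functoriality and associativity of composition in $C(X)$: precomposing by $p$ and then by $q$ equals precomposing by $p\circ q$, which is precisely the identity $b^{\op}_{\mu,\nu}\circ(1\times b^{\op}_{\lambda,\mu})=b^{\op}_{\lambda,\nu}\circ(c^{\op}\times1)$ once $c^{\op}$ is the swapped composition. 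The homeomorphism-onto-its-image and covering conditions I would read off from Lemma \ref{underlying_poset_nerve}: the first-object stratum of $\partial D^{\op}_{\mu}$ indexed by $\lambda$ decomposes as $P_{\mu,\lambda}\times(\text{chains starting at }1_{e_{\lambda}})$, which is exactly $P_{\mu,\lambda}\times\Int(D^{\op}_{\lambda})$, and $b^{\op}_{\lambda,\mu}$ restricts on it to this product identification.

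The main obstacle is the polyhedrality (ii). Finiteness of $X$ together with compactness of all $P_{\mu,\lambda}$ makes $\Sd(X)=BC(X)$ a finite, hence locally finite, hence CW (Proposition \ref{locally_finite_implies_CW}) cylindrically normal stellar stratified space, and the dual parameter spaces $P^{\op}_{\lambda,\mu}=P_{\mu,\lambda}$ are locally cone-like because $X$ is polyhedral. Each compact locally cone-like $P_{\mu,\lambda}$ is triangulable (Theorem 2.11 of \cite{Rourke-SandersonBook}, as already used in Proposition \ref{stellar_structure_on_Sd}), so by Lemma \ref{cellular_stratification_on_BC} each $D^{\op}_{\lambda}=B(e_{\lambda}\downarrow C(X))$ acquires the structure of a finite polyhedral complex $\widetilde{F}^{\op}_{\lambda}$ with $\alpha^{\op}_{\lambda}\colon\widetilde{F}^{\op}_{\lambda}\to\overline{D^{\op}_{\lambda}}$ a subdivision of the cylindrical stratification. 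The delicate point is to choose the triangulations of the parameter spaces coherently, by induction over the finite poset $P(X)$, so that every structure map $b^{\op}_{\lambda,\mu}$—which is assembled from the composition operation $c$ of $C(X)$—becomes simultaneously PL; this is where the compatibility of the polyhedral structure of $X$ with the maps $c$ must be exploited. Once such a coherent system is fixed, I would conclude by the criterion of Lemma \ref{Euclidean_and_PL_imply_locally_polyhedral}, applied to a realization of $\Sd(X)$ in a Euclidean space, which yields the polyhedral stellar structure and completes the proof.
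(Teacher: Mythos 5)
Your proposal is correct and takes essentially the same route as the paper's own proof: both define the structure maps $b^{\op}_{\lambda,\mu}$ by (pre)composition in $C(X)$ acting on the classifying spaces $B(e_{\lambda}\downarrow C(X))$, define $\circ^{\op}$ as the swapped composition of $C(X)$, inherit the cylindrical axioms from the compatibilities in $X$, and obtain polyhedrality from finiteness of $X$ and compactness (hence triangulability) of the parameter spaces. If anything, your axiom-by-axiom verification and your treatment of the PL coherence are more detailed than the paper's, which simply asserts that the compatibilities follow and that the resulting structure is ``obviously'' polyhedral.
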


\begin{proof}
 It remain to construct PL structure maps
 \begin{eqnarray*}
  b^{\op}_{\lambda,\mu} & : & P^{\op}_{\lambda,\mu}\times
   D^{\op}_{\lambda} \longrightarrow D^{\op}_{\mu} \\ 
  \circ^{\op} & : & P^{\op}_{\lambda_1,\lambda_0}\times
   P^{\op}_{\lambda_2,\lambda_1} \longrightarrow
   P^{\op}_{\lambda_2,\lambda_0}.  
 \end{eqnarray*}
 for $\lambda\le^{\op}\mu$ and
 $\lambda_{2}\le^{\op}\lambda_{1}\le^{\op}\lambda_{0}^{\op}$.

 The composition map $\circ^{\op}$ is obviously given by the composition
 in $X$ under the identification 
 \[
 P^{\op}_{\lambda_1,\lambda_0}\times P^{\op}_{\lambda_2,\lambda_1} =
 P_{\lambda_0,\lambda_1}\times P_{\lambda_1,\lambda_2} \cong  
 P_{\lambda_1,\lambda_2}\times P_{\lambda_0,\lambda_1}.
 \]
 The action $b^{\op}_{\lambda,\mu}$ of the parameter space is given by
 the following composition
 \begin{eqnarray*}
  P^{\op}_{\lambda,\mu}\times D_{\lambda}^{\op} & = & P_{\mu,\lambda}\times
   B(e_{\lambda}\downarrow C(X)) \\
  & = & C(X)(e_{\mu},e_{\lambda}) \times B(e_{\lambda}\downarrow
   C(X)) \\
  & \cong & B(e_{\lambda}\downarrow C(X))\times
   C(X)(e_{\mu},e_{\lambda}) \\ 
  & \longrightarrow & B(e_{\mu}\downarrow C(X)) = D_{\mu}^{\op},
 \end{eqnarray*}
 where the last arrow is given by the composition in $C(X)$.

 The compatibilities of $b_{\mu,\lambda}$ and $\circ$ in $X$ implies
 that $b_{\lambda,\mu}^{\op}$ and $\circ^{\op}$ satisfy the conditions
 for cylindrical structure. Obviously this is polyhedral.
\end{proof}

\begin{definition}
 The stratified space $(\Sd(X),\pi_{X^{\op}})$ equipped with the stellar
 and polyhedral structures defined in Theorem
 \ref{stellar_stratification_on_D(X)} is
 called the \emph{stellar dual of $X$} and is denoted by $X^{\op}$.
\end{definition}

The following fundamental example shows that cells in $X^{\op}$ are usually
stellar and not cellular.

\begin{example}
 Consider the standard regular cellular stratification on $\Delta^2$ as
 a simplicial complex. The barycentric subdivision $\Sd(\Delta^2)$ is a
 simplicial complex depicted in Figure \ref{Sd(Delta^2)}
 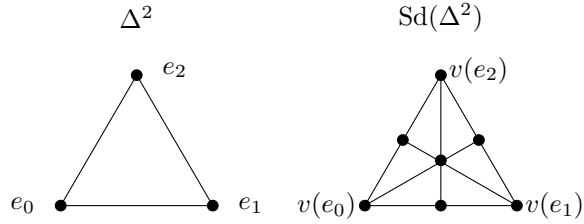
\begin{figure}[ht]
 \begin{center}
  \begin{tikzpicture}
   \draw (1,2.5) node {$\Delta^2$};
   \draw (0,0) -- (1,1.73) -- (2,0) -- (0,0);
   \draw [fill] (0,0) circle (2pt);
   \draw (-0.5,0) node {$e_0$};
   \draw [fill] (2,0) circle (2pt);
   \draw (2.5,0) node {$e_1$};
   \draw [fill] (1,1.73) circle (2pt);
   \draw (1.5,1.8) node {$e_2$};

   \draw (5,2.5) node {$\Sd(\Delta^2)$};
   \draw (4,0) -- (5,1.73) -- (6,0) -- (4,0);
   \draw [fill] (4,0) circle (2pt);
   \draw (3.5,0) node {$v(e_0)$};
   \draw [fill] (6,0) circle (2pt);
   \draw (6.5,0) node {$v(e_1)$};
   \draw [fill] (5,1.73) circle (2pt);
   \draw (5.5,1.8) node {$v(e_2)$};
   \draw (4.5,0.87) -- (6,0);
   \draw (5.5,0.87) -- (4,0);
   \draw (5,1.73) -- (5,0);
   \draw [fill] (4.5,0.87) circle (2pt);
   \draw [fill] (5.5,0.87) circle (2pt);
   \draw [fill] (5,0) circle (2pt);
   \draw [fill] (5,0.6) circle (2pt);   
  \end{tikzpicture}
 \end{center}
  \caption{$\Delta^2$ and its barycentric subdivision.}
  \label{Sd(Delta^2)}
 \end{figure}

 The stellar stratification on $\Sd(\Delta^2)$, i.e.\ $(\Delta^2)^{\op}$
 is given by Figure \ref{stellar_structure_on_Delta^op}.
 \begin{figure}[ht]
 \begin{center}
  \begin{tikzpicture}
   \draw (0,2.5) node {$D^{\op}_{0}$};
   \draw [dotted] (0,1) circle (1);
   \draw (1,1) arc (0:90:1);
   \draw (0,1) -- (1,1);
   \draw (0,1) -- (0,2);
   \draw [fill] (0.71,1.71) circle (2pt);

   \draw (2.9,1.5) node {$t_{0}$};
   \draw [->] (1.5,1.2) -- (3.5,0.7);

   \draw (5,2.5) node {$(\Delta^2)^{\op}$};
   \draw (4,0) -- (5,1.73) -- (6,0) -- (4,0);
   \draw (4.5,0.87) -- (5,0.6);
   \draw (5.5,0.87) -- (5,0.6);
   \draw (5,0.6) -- (5,0);
   \draw [fill] (5,0.6) circle (2pt);   
  \end{tikzpicture}
 \end{center}
  \caption{The stellar structures on $(\Delta^2)^{\op}$.}
  \label{stellar_structure_on_Delta^op}
 \end{figure}
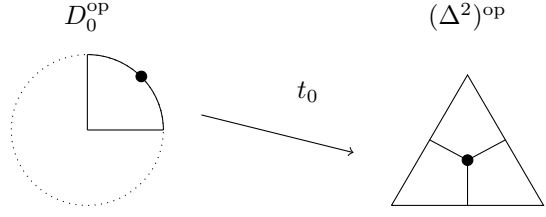
 $(\Delta^2)^{\op}$ consists of one stellar $0$-cell, three stellar
 $1$-cells, and three stellar $2$-cells. The stellar structure for 
 the $2$-cell at the left bottom in $D(\Delta^2)$ is indicated in 
 Figure \ref{stellar_structure_on_Delta^op} as $t_0$.

 Note that $\|\Lk_{>e_0}(C(\Delta^2))\|$ consists of two
 $1$-simplices. It is embedded in $S^1$. The domain for this stellar
 structure map is the circular sector in the left.
 The middle point in the arc of the circular sector in
 $D_0^{\op}$ is mapped to the barycenter of $\Delta^2$ and two
 radii in $D_0^{\op}$ are mapped to the half edges touching the
 lower left vertex of $\Delta^2$.

 The barycentric subdivision of $(\Delta^2)^{\op}$ can be easily seen to
 be isomorphic to $\Sd(\Delta^2)$ as simplicial complexes. And we have
 $((\Delta^2)^{\op})^{\op}\cong \Delta^2$ as simplicial complexes.
 \begin{figure}[ht]
 \begin{center}
  \begin{tikzpicture}
   \draw (1,2.5) node {$\Sd((\Delta^2)^{\op})$};
   \draw (0,0) -- (1,1.73) -- (2,0) -- (0,0);
   \draw [fill] (0,0) circle (2pt);
   \draw [fill] (2,0) circle (2pt);
   \draw [fill] (1,1.73) circle (2pt);
   \draw (0.5,0.87) -- (2,0);
   \draw (1.5,0.87) -- (0,0);
   \draw (1,1.73) -- (1,0);
   \draw [fill] (0.5,0.87) circle (2pt);
   \draw [fill] (1.5,0.87) circle (2pt);
   \draw [fill] (1,0) circle (2pt);
   \draw [fill] (1,0.6) circle (2pt);   

   \draw (5,2.5) node {$((\Delta^2)^{\op})^{\op}$};
   \draw (4,0) -- (5,1.73) -- (6,0) -- (4,0);
   \draw [fill] (4,0) circle (2pt);
   \draw [fill] (6,0) circle (2pt);
   \draw [fill] (5,1.73) circle (2pt);
  \end{tikzpicture}
 \end{center}
  \caption{The dual of the dual of $\Delta^2$.}
 \end{figure}
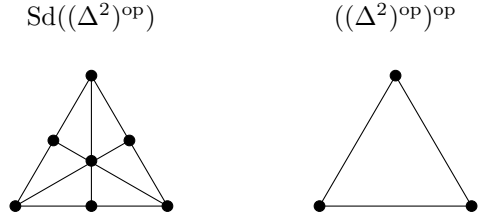
 For example, the $2$-cell corresponding to the unique $0$-cell in
 $(\Delta^2)^{\op}$ is the whole triangle.
\end{example}

The next example shows that, when $X$ contains non-closed cells, the
process of taking the double dual $((X)^{\op})^{\op}$ slims down
$X$ while retaining the stratification. 

\begin{example}
 Consider the $1$-dimensional stellar stratified space $Y$ in Figure
 \ref{circle_with_tail}. 
 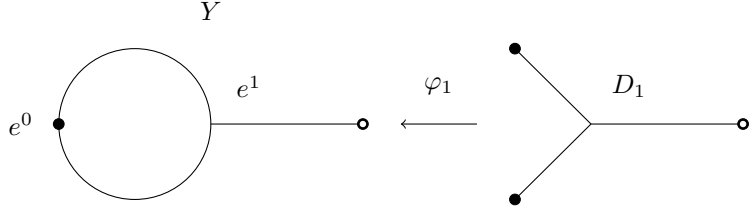
\begin{figure}[ht]
 \begin{center}
  \begin{tikzpicture}
   \draw (-1.5,0) node {$e^0$};
   \draw (0,0) circle (1cm);
   \draw [fill] (-1,0) circle (2pt);
   \draw (1.5,0.5) node {$e^1$};
   \draw (1,0) -- (3,0);
   \draw [fill] (3,0) circle (2pt);
   \draw [fill,white] (3,0) circle (1pt);
   \draw (1,1.5) node {$Y$};

   \draw [<-] (3.5,0) -- (4.5,0);
   \draw (4,0.5) node {$\varphi_1$};

   \draw (5,1) -- (6,0);
   \draw [fill] (5,1) circle (2pt);
   \draw (5,-1) -- (6,0);
   \draw [fill] (5,-1) circle (2pt);
   \draw (6,0) -- (8,0);
   \draw [fill] (8,0) circle (2pt);
   \draw [fill,white] (8,0) circle (1pt);
   \draw (6.5,0.5) node {$D_1$};
  \end{tikzpicture}
 \end{center}
  \caption{A $1$-dimensional stellar stratified space $Y$.}
  \label{circle_with_tail}
 \end{figure}
 It consists of a $0$-cell $e^0$ and a stellar $1$-cell $e^1$ whose
 domain $D_1$ is a graph of the shape of $Y$ with one vertex
 removed. The barycentric subdivision $\Sd(Y)$ is the minimal regular
 cell decomposition of $S^1$, as is shown in Figure
 \ref{dual_of_Y}. Both 
 $Y^{\op}$ and $(Y^{\op})^{\op}$ are the minimal 
 cell decomposition of $S^1$. 
 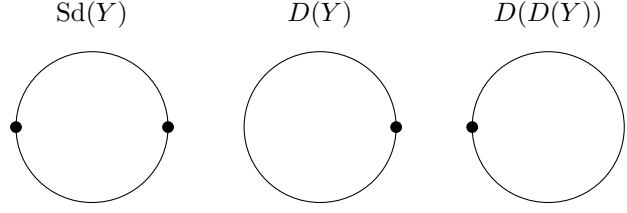
\begin{figure}[ht]
 \begin{center}
 \begin{tikzpicture}
   \draw (1,0) circle (1cm);
   \draw [fill] (0,0) circle (2pt);
   \draw [fill] (2,0) circle (2pt);
   \draw (1,1.5) node {$\Sd(Y)$};

   \draw (4,0) circle (1cm);
   \draw [fill] (5,0) circle (2pt);
   \draw (4,1.5) node {$D(Y)$};
  
   \draw (7,0) circle (1cm);
   \draw [fill] (6,0) circle (2pt);
   \draw (7,1.5) node {$D(D(Y))$};
 \end{tikzpicture}
 \end{center}
  \caption{The stellar dual of $Y$.}
  \label{dual_of_Y}
 \end{figure}
 Note that the embedding $i_{Y}$ in Theorem
 \ref{embedding_Sd} embeds $(Y^{\op})^{\op}$ in $Y$ as a stellar
 stratified space. 
\end{example}

As the above example suggests, the embedding $i_{X}$ in Theorem
\ref{embedding_Sd} is an embedding of stellar stratified
spaces if the domain is regarded as $(X^{\op})^{\op}$. Furthermore, when
all cells are closed, we can always recover $X$ from this stellar 
structure.

Note that we may define this stellar stratification on
$\Sd(X)$ directly without using $(-)^{\op}$.

\begin{definition}
 Let $X$ be a cylindrically normal stellar stratified space. For
 $\lambda\in P(X)$, define 
 \begin{eqnarray*}
  D_{\lambda}^{\Sal(X)} & = & \|\St_{\le e_{\lambda}}(C(X))\| \\  
  D_{\lambda}^{\Sal(X),\circ} & = & \|\St_{\le e_{\lambda}}(C(X))\|
   \setminus \|\Lk_{<e_{\lambda}}(C(X))\|. 
 \end{eqnarray*}
\end{definition}

The following fact is dual to Lemma \ref{stratum_in_D(X)}. The proof is
omitted. 

\begin{lemma}
 \label{stellar_structure_for_Sal(X)}
 Let $X$ be a cylindrically normal stellar stratified space. Define a map
 \[
  \pi_{\Sal(X)} : \Sd(X) \longrightarrow P(X)
 \]
 by the composition 
 \[
  \Sd(X) \rarrow{\pi_{\Sd(X)}} \coprod_{k} \overline{N}_k(C(X))
 \rarrow{t} C(X)_0 = P(X),
 \]
 where $t$ is the target map. Then $\pi_{\Sal(X)}$ is a stratification
 whose strata and their closures are given by
 \begin{eqnarray*}
  \pi_{\Sal(X)}^{-1}(\lambda) & = &
   s_{\lambda}\left(D_{\lambda}^{\Sal(X),\circ}\right) \\ 
  \overline{\pi_{\Sal(X)}^{-1}(\lambda)} & = &  
   s_{\lambda}\left(D_{\lambda}^{\Sal(X)}\right),
 \end{eqnarray*}
 where $s_{\lambda}=s_{e_{\lambda}}$ is the map defined in Definition
 \ref{upper_and_lower_star}. 
\end{lemma}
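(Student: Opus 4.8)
The plan is to dualize the proof of Lemma~\ref{stratum_in_D(X)} essentially line by line, interchanging the roles of source and target throughout. The stratification $\pi_{\Sal(X)}$ records the \emph{target} of a chain rather than its source, so everywhere that the earlier proof used the comma category $e_\lambda\downarrow C(X)$, the identity $1_{e_\lambda}$ as an \emph{initial} object, the target functor $t_\lambda$, and the leading vertex of the simplex, I would instead use $C(X)\downarrow e_\lambda$, the identity $1_{e_\lambda}$ as a \emph{terminal} object, the source functor $s_\lambda$, and the trailing vertex. First I would record the dual of Lemma~\ref{upper_star_as_cone}, namely the homeomorphism $\|\St_{\le e_\lambda}(C(X))\| \cong \{1_{e_\lambda}\}\ast\|\Lk_{<e_\lambda}(C(X))\|$, under which $D_\lambda^{\Sal(X),\circ}$ is the cone with its base $\|\Lk_{<e_\lambda}(C(X))\|$ deleted; this follows from Lemma~\ref{upper_star_as_cone} by the evident opposite-category argument, so no new work is needed.

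For the first equality I would argue in both directions. An element of $D_\lambda^{\Sal(X),\circ}$ is represented by a pair $(\bm u,\bm t)$ with $\bm u$ a nondegenerate chain in $C(X)\downarrow e_\lambda$ whose terminal object is $1_{e_\lambda}$ and with the join coordinate of the cone point positive; viewing $\bm u$ as a chain in $C(X)$ that ends at $e_\lambda$, the source functor sends it to a chain of $C(X)$ still ending at $e_\lambda$, so its target is $e_\lambda$ and $s_\lambda(D_\lambda^{\Sal(X),\circ})\subseteq\pi_{\Sal(X)}^{-1}(\lambda)$. Conversely, given $[(\bm w,\bm t)]\in\pi_{\Sal(X)}^{-1}(\lambda)$ with $\bm w$ nondegenerate and $\bm t\in\Int(\Delta^k)$, the chain $\bm w$ has target $e_\lambda$, and I would lift it canonically into $C(X)\downarrow e_\lambda$ by appending the terminal object $1_{e_\lambda}$; since $\bm t\in\Int(\Delta^k)$ the cone coordinate is positive, so this lift lies in $D_\lambda^{\Sal(X),\circ}$ and maps to $[(\bm w,\bm t)]$ under $s_\lambda$. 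This yields $\pi_{\Sal(X)}^{-1}(\lambda)=s_\lambda(D_\lambda^{\Sal(X),\circ})$.

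For the closure statement I would use, exactly as in Lemma~\ref{stratum_in_D(X)}, that the projection $p:\coprod_k\overline{N}_k(C(X))\times\Delta^k\to\Sd(X)$ is a quotient map and that $\Sd(X)$ carries the weak topology determined by the images of the cells $\overline{N}_k(C(X))\times\Delta^k$. Writing $s_\lambda(D_\lambda^{\Sal(X),\circ})$ via Lemma~\ref{underlying_poset_nerve} as the $p$-image of $\coprod_k\coprod_{\lambda_0<\cdots<\lambda_{k-1}<\lambda} C(X)(e_{\lambda_{k-1}},e_\lambda)\times\cdots\times C(X)(e_{\lambda_0},e_{\lambda_1})\times(\Delta^k\setminus d^k(\Delta^{k-1}))$, its closure is obtained by restoring the face $d^k(\Delta^{k-1})$; on that face the terminal object $1_{e_\lambda}$ is dropped, which is precisely the passage from $D_\lambda^{\Sal(X),\circ}$ to the full cone $D_\lambda^{\Sal(X)}$. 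Hence $\overline{\pi_{\Sal(X)}^{-1}(\lambda)}=s_\lambda(D_\lambda^{\Sal(X)})$, and the auxiliary facts that each stratum is connected (it contains the barycenter $v(e_\lambda)$) and locally closed transcribe from the corollary to Lemma~\ref{stratum_in_D(X)}.

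The main bookkeeping obstacle, and the only place where the duality is not purely formal, is matching the cone base against the correct face of $\Delta^k$: because the cone point $1_{e_\lambda}$ is now the \emph{terminal} object of $C(X)\downarrow e_\lambda$, it sits at the last vertex, so the deleted base is the face $d^k(\Delta^{k-1})$ (where $t_k=0$) rather than the face $d^0(\Delta^{k-1})$ used in the original proof. I would double-check this against the identification of face operators for the lower link dual to the displayed lemma following Definition~\ref{upper_and_lower_star}, to confirm that applying the source functor and passing to geometric realization carries the cone structure of $\|\St_{\le e_\lambda}(C(X))\|$ onto the expected open simplices of $\Sd(X)$; everything else then follows the proof of Lemma~\ref{stratum_in_D(X)} verbatim.
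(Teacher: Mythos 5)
Your proposal is correct and is exactly the proof the paper intends: the paper states this lemma as ``dual to Lemma \ref{stratum_in_D(X)}'' and omits the proof, and your argument is precisely that dualization (comma category $C(X)\downarrow e_{\lambda}$ in place of $e_{\lambda}\downarrow C(X)$, source functor $s_{\lambda}$ in place of $t_{\lambda}$, terminal in place of initial object), with the two inclusions and the weak-topology closure computation carried out as in the original. You also correctly pin down the only genuine bookkeeping point --- that the deleted cone base now sits on the face $d^{k}(\Delta^{k-1})$ (where $t_{k}=0$) rather than $d^{0}(\Delta^{k-1})$ --- and your chain/simplex indexing in the closure step is self-consistent, so nothing is missing.
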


\begin{definition}
 The stellar stratified space $(\Sd(X),\pi_{\Sal})$ defined above is
 called the \emph{Salvetti complex} of $X$ and is denoted by $\Sal(X)$.
\end{definition}

\begin{remark}
 When the three assumptions in Proposition \ref{stellar_structure_on_Sd}
 are satisfied, we have 
 ${\Sal(X)}=(X^{\op})^{\op}$ as stellar stratified spaces.
\end{remark}

\begin{theorem}
 \label{Salvetti}
 Let $(X,\pi_X)$ be a cylindrically normal stellar
 stratified space. Then $\Sal(X)$ has a structure of cylindrically
 normal stellar stratified space.
 When $X$ is relatively compact, the embedding
 $i_{X} : \Sal(X)\hookrightarrow X$ is an embedding of 
 cylindrically normal stellar stratified spaces. 
 When all cells in $X$ are closed, $i_{X}$ is an isomorphism
 of cylindrically normal stellar stratified spaces. When $X$ is a finite
 polyhedral stellar stratified space satisfying the assumptions
 of Proposition \ref{stellar_structure_on_Sd}, we have
 $\Sal(X)=(X^{\op})^{\op}$ 
 as stellar stratified spaces. 
\end{theorem}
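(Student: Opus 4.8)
The plan is to establish the four assertions as successive refinements, dualizing the constructions already carried out for the stellar dual $X^{\op}$. Throughout, recall that $\Sal(X)=(\Sd(X),\pi_{\Sal})$ is built from the \emph{target} map and the \emph{lower} stars of $C(X)$, exactly opposite to the way $X^{\op}=(\Sd(X),\pi_{X^{\op}})$ was built from the source map and the upper stars; consequently, by Lemma \ref{stellar_structure_for_Sal(X)} the face poset of $\Sal(X)$ is $P(X)$ itself, with the same order. First I would equip $\Sal(X)$ with stellar and cylindrical structure. By Lemma \ref{stellar_structure_for_Sal(X)}, $\pi_{\Sal}$ is a stratification whose $\lambda$-stratum is $s_{\lambda}(D^{\Sal(X),\circ}_{\lambda})$, where $D^{\Sal(X)}_{\lambda}=\|\St_{\le e_{\lambda}}(C(X))\|$. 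Since $C(X)\downarrow e_{\lambda}$ has the terminal object $1_{e_{\lambda}}$, the argument dual to Lemma \ref{upper_star_as_cone} gives $\|\St_{\le e_{\lambda}}(C(X))\|\cong\|\Lk_{<e_{\lambda}}(C(X))\|\ast\{1_{e_{\lambda}}\}$, so $D^{\Sal(X)}_{\lambda}$ is a cone and hence an aster, and $s_{\lambda}$ is a stellar cell structure map exactly as in Proposition \ref{stellar_structure_on_Sd}, with source/target and upper/lower interchanged. Setting $P^{\Sal}_{\mu,\lambda}=P_{\mu,\lambda}$ for $e_{\mu}\le e_{\lambda}$ and defining the structure maps $b^{\Sal}_{\mu,\lambda}$ and the composition maps by the composition in $C(X)$, I obtain a cylindrical structure by the verification of Theorem \ref{stellar_stratification_on_D(X)}, read opposite. (As in the remark after Proposition \ref{stellar_structure_on_Sd}, realizing each $D^{\Sal(X)}_{\lambda}$ as an aster in a genuine disk requires embedding the lower link $\|\Lk_{<e_{\lambda}}(C(X))\|$ in a sphere, which is where finiteness, polyhedrality, and relative-compactness would enter; without them one uses the relaxed notion of stellar cell.)

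Next I would analyze the embedding $i_{X}:\Sd(X)=BC(X)\hookrightarrow X$ of Theorem \ref{embedding_Sd}. By the explicit inductive description of the maps $z_{\bm e}$, a barycentric point whose nondegenerate chain ends at $e_{\lambda}$ is sent into $e_{\lambda}$, and $i_{X}$ carries the $\lambda$-stratum of $\Sal(X)$ homeomorphically onto its image in $e_{\lambda}$. When $X$ is relatively compact, Lemma \ref{compact_parameter_space} makes all $P_{\mu,\lambda}$ compact, so the stellar structure of the first step is defined; moreover $i_{X}(\Sal(X))$ is then a stellar stratified subspace of $X$, and by Proposition \ref{cylindrically_normal_subspace} its induced cylindrical structure has parameter spaces identified with those of $X$, i.e.\ with $P_{\mu,\lambda}=P^{\Sal}_{\mu,\lambda}$. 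The compatibility of $i_{X}$ with the two families of structure maps is precisely the commutativity of the diagram (\ref{dk}) from the construction of $i_{X}$, which says that $b_{\mu,\lambda}$ is applied along each lower star. Hence $i_{X}$ is an isomorphism of cylindrically normal stellar stratified spaces onto the subspace $i_{X}(\Sal(X))\subset X$, which is the asserted embedding. When in addition all cells of $X$ are closed, Theorem \ref{embedding_Sd} gives that $i_{X}$ is already a homeomorphism onto $X$, so $i_{X}(\Sal(X))=X$ and $i_{X}$ is an isomorphism of cylindrically normal stellar stratified spaces.

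For the final assertion, assume the hypotheses of Proposition \ref{stellar_structure_on_Sd}, so that $X^{\op}$ is a finite polyhedral stellar stratified space by Theorem \ref{stellar_stratification_on_D(X)}, with face poset $P(X)^{\op}$, parameter spaces $P^{\op}_{\lambda,\mu}=P_{\mu,\lambda}$, and composition given by that of $C(X)$ reversed. These data exhibit its cylindrical face category as $C(X^{\op})=C(X)^{\op}$. Applying $(-)^{\op}$ a second time therefore uses the source map of $C(X)^{\op}$, which is the target map of $C(X)$, together with the upper stars of $C(X)^{\op}$, which are the lower stars of $C(X)$ --- exactly the data defining $\Sal(X)$ in Lemma \ref{stellar_structure_for_Sal(X)}. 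Using the canonical homeomorphism $\|\overline{N}(C)\|\cong\|\overline{N}(C^{\op})\|$ to identify $\Sd(X^{\op})=\|\overline{N}(C(X)^{\op})\|\cong\Sd(X)$, I would then check stratum by stratum, and parameter space by parameter space, that $(X^{\op})^{\op}$ and $\Sal(X)$ agree as stellar stratified spaces.

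I expect the main obstacle to be the compatibility in the second step: verifying that $i_{X}$, assembled cell-by-cell through the intricate induction defining the maps $z_{\bm e}$ in Theorem \ref{embedding_Sd}, intertwines $b^{\Sal}_{\mu,\lambda}$ with $b_{\mu,\lambda}$. This means unwinding the join formula for $z_{\bm e}$ and matching it against the composition in $C(X)$ defining $b^{\Sal}$, and is essentially why the diagram (\ref{dk}) was recorded. A secondary subtlety, flagged above, is delimiting the exact class of spaces for which $\Sal(X)$ is genuinely stellar in the unqualified first assertion, which hinges on embedding the lower links into spheres.
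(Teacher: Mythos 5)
Your proposal is correct and follows essentially the same route as the paper: the paper's proof also equips $\Sal(X)$ with stellar cells $D_{\lambda}^{\Sal(X)}$ and cylindrical structure maps $b^{\Sal(X)}_{\mu,\lambda}$ defined by composition in $C(X)$ (Lemma \ref{embedding_stellar_cells}), identifies the cell-wise restrictions of the maps $z_{\bm{e}}$ from Theorem \ref{embedding_Sd} as the comparison maps, deduces the isomorphism in the closed-cell case from Theorem \ref{embedding_Sd}, and obtains $\Sal(X)=(X^{\op})^{\op}$ from $C(X^{\op})\cong C(X)^{\op}$ and the comma-category identification $B(e_{\lambda}\downarrow C(X)^{\op})\cong B(C(X)\downarrow e_{\lambda})$. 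Your observation that the intertwining of $b^{\Sal}_{\mu,\lambda}$ with $b_{\mu,\lambda}$ reduces to diagram (\ref{dk}) (together with the associativity axiom of the cylindrical structure) makes explicit precisely the verification the paper declares "omitted" in Lemma \ref{embedding_stellar_cells}, and your flagging of where relative compactness and polyhedrality enter (embedding lower links into spheres) is consistent with Proposition \ref{stellar_structure_on_Sd} and its following remark.
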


\begin{definition}
 When we regard $\Sd(X)$ as $\Sal(X)$, the embedding
 $i_{X}$ is denoted by $i_{\Sal(X)} : \Sal(X)\hookrightarrow X$.
\end{definition}

In order to prove Theorem \ref{Salvetti}, we use the following
reformulation of the construction of $i_{X}$.

\begin{lemma}
 \label{embedding_stellar_cells}
 Let $X$ be a CW cylindrically normal stellar stratified space $X$. For each
 stellar cell $e_{\lambda}$, there exists an embedding
 \[
  z_{\lambda} : D_{\lambda}^{\Sal(X)} \rarrow{} D_{\lambda}
 \]
 making the diagram
 \begin{equation}
  \begin{diagram}
   \node{P_{\mu,\lambda}\times D_{\mu}^{\Sal(X)}}
   \arrow{s,l}{1\times z_{\mu}} \arrow{e,t}{b^{\Sal(X)}_{\mu,\lambda}}   
   \node{D_{\lambda}^{\Sal(X)}} \arrow{s,r}{z_{\lambda}} \\
   \node{P_{\mu,\lambda}\times D_{\mu}} \arrow{e,b}{b_{\mu,\lambda}}
   \node{D_{\lambda}} 
  \end{diagram}
  \label{z_respects_cylindrical_structure}
 \end{equation}
 commutative, where $b^{\Sal(X)}_{\mu,\lambda}$ is defined by the
 composition 
 \begin{eqnarray*}
  P_{\mu,\lambda}\times D_{\mu}^{\Sal(X)} & = & P_{\mu,\lambda}\times
   B(C(X)\downarrow e_{\mu}) \\
  & = & C(X)(e_{\mu},e_{\lambda})\times B(C(X)\downarrow e_{\mu}) \\
  & \longrightarrow & B(C(X)\downarrow e_{\lambda}).
 \end{eqnarray*}
 Here the last map is induced by the composition in $C(X)$.

 These maps $\{z_{\lambda}\}$ can be glued together to define an
 embedding $\Sal(X)\hookrightarrow X$, which can be identified with
 $i_{X}$ in Theorem \ref{embedding_Sd}
\end{lemma}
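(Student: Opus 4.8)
The plan is to construct the embeddings $z_\lambda$ cell-by-cell by induction on the dimension (or more precisely on the stellar dimension) of $e_\lambda$, exactly paralleling the inductive construction of $z_k$ in the proof of Theorem \ref{embedding_Sd}, and then to identify the glued-together map with $i_X$. The key observation is that $D_\lambda^{\Sal(X)} = \|\St_{\le e_\lambda}(C(X))\|$ is, by the dual of Lemma \ref{upper_star_as_cone}, a cone $\{1_{e_\lambda}\}\ast\|\Lk_{<e_\lambda}(C(X))\|$, and the apex $1_{e_\lambda}$ should map to the center $0$ of the aster $D_\lambda$, while the base $\|\Lk_{<e_\lambda}(C(X))\|$ must land in $\partial D_\lambda$. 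This cone structure is precisely what allows the extension step: since $D_\lambda$ is an aster, any map from the base into $\partial D_\lambda$ extends over the cone by the join/scaling formula $z_\lambda(\bm p,(1-t)\bm s + t\,\bm v) = (1-t)\,(\text{base value})$ used verbatim in Theorem \ref{embedding_Sd}.

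First I would set up the induction. For a $0$-cell $e_\lambda$, the category $C(X)\downarrow e_\lambda$ has $1_{e_\lambda}$ as its only object (acyclicity forbids nontrivial morphisms into a minimal cell), so $D_\lambda^{\Sal(X)}$ is a point and $z_\lambda$ sends it to $0\in D_\lambda$. For the inductive step at a cell $e_\lambda$, I would use the decomposition of the lower link coming from Lemma \ref{underlying_poset_nerve} and the dual of the description in Lemma \ref{stellar_structure_for_Sal(X)}: the base $\|\Lk_{<e_\lambda}(C(X))\|$ is covered by the images of $b^{\Sal(X)}_{\mu,\lambda}\colon P_{\mu,\lambda}\times D_\mu^{\Sal(X)}\to D_\lambda^{\Sal(X)}$ over cells $e_\mu < e_\lambda$. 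By the inductive hypothesis each $z_\mu$ is defined, so composing $1\times z_\mu$ with $b_{\mu,\lambda}\colon P_{\mu,\lambda}\times D_\mu \to \partial D_\lambda$ gives a map $P_{\mu,\lambda}\times D_\mu^{\Sal(X)}\to \partial D_\lambda$. The commutativity of the cylindrical-structure diagrams (the second condition in Definition \ref{cylindrical_structure_definition}) guarantees these agree on overlaps, so they glue to a single continuous map on the base, which I then cone off to obtain $z_\lambda$. The square \eqref{z_respects_cylindrical_structure} commutes by construction, since $b^{\Sal(X)}_{\mu,\lambda}$ is built from the same composition in $C(X)$ that controls the gluing.

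The main obstacle will be verifying that each $z_\lambda$ is genuinely an embedding rather than merely continuous, and that the maps glue to a global embedding $\Sal(X)\hookrightarrow X$. On the base, injectivity follows from the inductive injectivity of the $z_\mu$ together with the fact that $b_{\mu,\lambda}$ restricted to $P_{\mu,\lambda}\times\Int(D_\mu)$ is a homeomorphism onto its image (condition 1 of Definition \ref{cylindrical_structure_definition}) and that these images are disjoint as $e_\mu$ ranges over the strata of $\partial D_\lambda$; the coning construction preserves injectivity because distinct rays of the cone go to distinct rays of the aster. The closedness (hence embedding) assertion is the delicate point, and here I would reuse the topological argument from the proof of Theorem \ref{embedding_Sd} almost verbatim: assemble the $z_\lambda$ into a single map out of $\coprod_{n,\bm e}\{\bm e\}\times\overline N(\pi)_n^{-1}(\bm e)\times\Delta^n$, exploit that $\Phi\colon D(X)\to X$ is a quotient map (Lemma \ref{quotient_of_D(X)}) and that the glued $z=\coprod z_{\bm e}$ is an embedding, and prove the identity $\Phi^{-1}(i_X(A)) = z(p^{-1}(A))$ controlling closed sets.

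Finally I would identify the result with $i_X$. Both constructions produce a map $\Sd(X)\to X$ through the same cone-extension formula over asters, and both are characterized by the compatibility squares with the $b_{\mu,\lambda}$; I would argue that on each simplex $\{\bm e\}\times\Delta^n$ of $\overline N(C(X))$ the two maps are given by the identical join formula, so they coincide. Since $i_X$ is already known to be an embedding (Theorem \ref{embedding_Sd}), this identification both confirms that $z_\lambda$ glue to an embedding and transports all the naturality and closed-cell properties of $i_X$ to the present setting, completing the proof of the lemma.
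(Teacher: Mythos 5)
Your proof is correct, but it is organized quite differently from the paper's, which is essentially a two-line observation. The paper does not rebuild anything: it notes the decomposition $\overline{N}_{k}(C(X)) = \coprod_{\lambda\in P(X)} \overline{N}_{k}(C(X)\downarrow e_{\lambda})$ obtained by grouping nondegenerate chains according to their terminal object, observes that the maps $z_{k}\colon \overline{N}_{k}(C(X))\times\Delta^{k}\to D(X)$ already constructed in the proof of Theorem \ref{embedding_Sd} carry the piece indexed by $\lambda$ into $D_{\lambda}$, and simply \emph{defines} $z_{\lambda}$ to be the resulting restriction $\|\overline{N}(C(X)\downarrow e_{\lambda})\| = D_{\lambda}^{\Sal(X)}\to D_{\lambda}$; the commutativity of diagram (\ref{z_respects_cylindrical_structure}), the embedding property, and the identification of the glued map with $i_{X}$ are then inherited from Theorem \ref{embedding_Sd} with no further work. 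You instead run a fresh induction on cells: express $D_{\lambda}^{\Sal(X)}$ as the cone $\{1_{e_{\lambda}}\}\ast\|\Lk_{<e_{\lambda}}(C(X))\|$, cover the lower link by the images of the $b^{\Sal(X)}_{\mu,\lambda}$, glue the maps $b_{\mu,\lambda}\circ(1\times z_{\mu})$ (consistency coming from the associativity axioms of Definition \ref{cylindrical_structure_definition}), and cone off into the aster $D_{\lambda}$. This is in effect an unwinding of the same induction that produced the $z_{k}$ in the first place, so the mathematical content coincides, but your route obliges you to re-verify two things the restriction approach gets for free: that the glued map is an embedding (you correctly flag this as the delicate point and defer to the closed-map argument of Theorem \ref{embedding_Sd}), and that the result agrees with $i_{X}$ (your simplex-by-simplex comparison of the join formulas is the right argument). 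The trade-off: the paper's proof is more economical and avoids duplication, while yours actually supplies the details hidden behind the paper's phrase ``can be verified by investigating the construction of $z_{k}$'' --- in particular the covering of $\|\Lk_{<e_{\lambda}}(C(X))\|$ and the overlap consistency --- so it would serve well as an expanded version of the omitted verification.
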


\begin{proof}
 Note that we have the following decomposition
 \[
  \overline{N}_{k}(C(X)) = \coprod_{\lambda\in P(X)}
 \overline{N}_{k}(C(X)\downarrow e_{\lambda}).
 \]
 The restrictions of the maps
 $z_{k}:\overline{N}_{k}(C(X))\times\Delta^k\to D(X)$ to
 $\overline{N}_{k}(C(X)\downarrow e_{\lambda})$ define a map
 \[
 z_{\lambda}: \|\overline{N}(C(X)\downarrow e_{\lambda})\| =
 D_{\lambda}^{\op} \rarrow{} D_{\lambda}.
 \] 
 
 The fact that these maps make the diagram
 (\ref{z_respects_cylindrical_structure}) commutative can be verified by
 investigating the construction of $z_{k}$ and is omitted. 
\end{proof}

\begin{proof}[Proof of Theorem \ref{Salvetti}]
 Lemma \ref{embedding_stellar_cells} says that $i_{X}$ is an embeddding
 of stellar stratified spaces.
 
 When all cells in $X$ are closed, $i_{X} : \Sd(X)\to X$ is a
 homeomorphism. The above argument 
 implies that this map defines an 
 isomorphism $i_{\Sal(X)} : \Sal(X) \rarrow{\cong} X$ of cylindrically
 normal stellar stratified spaces. 

 Suppose that $X$ satisfies the assumption of Proposition
 \ref{stellar_structure_on_Sd}. By Theorem
 \ref{stellar_stratification_on_D(X)}, we have an isomorphism of
 categories $C(X^{\op})\cong C(X)^{\op}$. Thus 
 \[
 D_{\lambda}^{(X^{\op})^{\op}}=B(e_{\lambda}\downarrow C(X^{\op})) \cong
 B(e_{\lambda}\downarrow C(X)^{\op}) \cong
 B(C(X) \downarrow e_{\lambda}) 
 \]
 and the stellar structure map
 $t_{\lambda} : D_{\lambda}^{(X^{\op})^{\op}} \to \Sd(X^{\op})$
 can be identified with 
 $s_{\lambda} : D_{\lambda}^{(\Sal(X))} \to \Sd(X)$
 under the identification
 $\Sd(D(X)) \cong B(C(X)^{\op})\cong B(C(X))=\Sd(X)$.
 And we have an identification $D(D(X))\cong \Sal(X)$.
\end{proof}

The following example justifies the name for $\Sal(X)$.

\begin{example}
 For a real hyperplane arrangement $\mathcal{A}$, the structure of
 regular cell complex on the Salvetti complex $\Sal(\mathcal{A})$ for
 the complexification of $\mathcal{A}$ defined in
 \cite{Salvetti87} is nothing but $D(D(M(\mathcal{A}\otimes\bbC)))$. For
 example, in the case of the arrangement $\mathcal{A}=\{\{0\}\}$ in
 $\R$, the stratification on $\R$ is
 \[
 \R = \{0\} \cup (-\infty,0)\cup (0,\infty)
 \]
 and the associated stratification on the complexification is 
 \[
 \bbC = \{(0,0)\}\cup (-\infty,0)\times\{0\} \cup (0,\infty)\times\{0\} 
 \cup \set{x+iy\in\bbC}{y>0} \cup \set{x+iy\in\bbC}{y<0}.
 \]
 Then $\Sd(M(\mathcal{A}\otimes\bbC))$,
 $M(\mathcal{A}\otimes\bbC)^{\op}$, and 
 $(M(\mathcal{A}\otimes\bbC)^{\op})^{\op}$ are given in Figure \ref{}.
 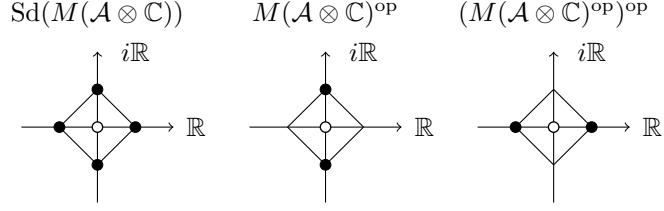
\begin{figure}[ht]
 \begin{center}
  \begin{tikzpicture}
   \draw (0,1.5) node {$\Sd(M(\mathcal{A}\otimes\bbC))$};
   \draw [->] (-1,0) -- (1,0);
   \draw (1.3,0) node {$\R$};
   \draw [->] (0,-1) -- (0,1);
   \draw (0.5,1) node {$i\R$};
   \draw [fill] (0,0) circle (2pt);
   \draw [white,fill] (0,0) circle (1.5pt);
   \draw [fill] (0.5,0) circle (2pt);
   \draw [fill] (0,0.5) circle (2pt);
   \draw [fill] (-0.5,0) circle (2pt);
   \draw [fill] (0,-0.5) circle (2pt);
   \draw (0.5,0) -- (0,0.5);
   \draw (0,0.5) -- (-0.5,0);
   \draw (-0.5,0) -- (0,-0.5);
   \draw (0,-0.5) -- (0.5,0);

   \draw (3,1.5) node {$M(\mathcal{A}\otimes\bbC)^{\op}$};
   \draw [->] (2,0) -- (4,0);
   \draw (4.3,0) node {$\R$};
   \draw [->] (3,-1) -- (3,1);
   \draw (3.5,1) node {$i\R$};
   \draw [fill] (3,0) circle (2pt);
   \draw [white,fill] (3,0) circle (1.5pt);
   \draw [fill] (3,0.5) circle (2pt);
   \draw [fill] (3,-0.5) circle (2pt);
   \draw (3.5,0) -- (3,0.5);
   \draw (3,0.5) -- (2.5,0);
   \draw (2.5,0) -- (3,-0.5);
   \draw (3,-0.5) -- (3.5,0);

   \draw (6,1.5) node {$(M(\mathcal{A}\otimes\bbC)^{\op})^{\op}$};
   \draw [->] (5,0) -- (7,0);
   \draw (7.3,0) node {$\R$};
   \draw [->] (6,-1) -- (6,1);
   \draw (6.5,1) node {$i\R$};
   \draw [fill] (6,0) circle (2pt);
   \draw [white,fill] (6,0) circle (1.5pt);
   \draw [fill] (6.5,0) circle (2pt);
   \draw [fill] (5.5,0) circle (2pt);
   \draw (6.5,0) -- (6,0.5);
   \draw (6,0.5) -- (5.5,0);
   \draw (5.5,0) -- (6,-0.5);
   \draw (6,-0.5) -- (6.5,0);
  \end{tikzpicture}
 \end{center}
  \caption{Stellar duals for $M(\mathcal{A}\otimes\bbC)$}  
 \end{figure}
\end{example}

\subsection{The Barycentric Subdivision of Face Categories}
\label{subdivision_of_category}

We conclude this paper by proving that the barycentric subdivision
of a totally normal cellular stratified space corresponds to the
barycentric subdivision of the face category.

Let us first recall the definition of the barycentric subdivision of a
small category. We use a definition in Noguchi's papers
\cite{1004.2547,1104.3630}. See also the paper \cite{0707.1718} by del
Hoyo. 

\begin{definition}
 \label{barycentric_subdivision_of_category}
 For a small category $C$, the \emph{barycentric subdivision} $\Sd(C)$
 is a small category defined by
 \begin{eqnarray*}
  \Sd(C)_0 & = & \coprod_{n} \overline{N}_{n}(C), \\
  \Sd(C)(f,g) & = & \set{\varphi : [m]\to [n]}{g\circ\varphi =
   f}/_{\sim} 
 \end{eqnarray*}
 for $f : [m] \to C$ and $g : [n] \to C$, where $\sim$ is the
 equivalence relation generated by the following relation: for functors 
 $\varphi,\psi : [m]\to [n]$ with $g\circ\varphi = f$ and
 $g\circ\psi=f$, $\varphi\sim\psi$ if and only if the morphism
 $g\left(\min\{\varphi(i),\psi(i)\}\le\max\{\varphi(i),\psi(i)\}\right)$ 
 in $C(g(\min\{\varphi(i),\psi(i)\}),g(\max\{\varphi(i),\psi(i)\}))$ 
 is an identity morphism in $C$ for any $i$ in $[m]$. 
\end{definition}

The description can be simplified for 
acyclic categories as follows.

\begin{lemma}
 \label{Sd_of_acyclic_category}
 Let $C$ be an acyclic small category. For $f,g\in \Sd(C)_0$, the
 set of morphisms $\Sd(C)(f,g)$ consists of a single point, if
 there exists $\varphi$ with $f=g\circ\varphi$, and an empty set
 otherwise. 

 Therefore $\Sd(C)$ is a poset.
\end{lemma}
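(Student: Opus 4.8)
The plan is to unwind Definition \ref{barycentric_subdivision_of_category} using acyclicity to show that (i) any two witnesses $\varphi,\psi:[m]\to[n]$ with $f=g\circ\varphi=g\circ\psi$ are automatically equivalent under $\sim$, so that $\Sd(C)(f,g)$ has at most one element, and (ii) the single-element hom-sets compose associatively and unitally and admit no nontrivial isomorphisms, so that $\Sd(C)$ is a poset. The key input is that in an acyclic category the only endomorphisms are identities and there are no nontrivial invertible morphisms; I would also use the standard fact (Lemma \ref{classifying_space_of_acyclic_category}, available in the excerpt) that a morphism $f:[m]\to C$ whose underlying data is a \emph{nondegenerate} chain in $\overline{N}_m(C)$ sends each atom $i<i+1$ of $[m]$ to a non-identity morphism of $C$.

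First I would prove (i). Fix $f:[m]\to C$ and $g:[n]\to C$ and suppose $\varphi,\psi:[m]\to[n]$ both satisfy $g\circ\varphi=f=g\circ\psi$. For each $i\in[m]$ I must check that the morphism
\[
g\bigl(\min\{\varphi(i),\psi(i)\}\le\max\{\varphi(i),\psi(i)\}\bigr)
\]
is an identity in $C$. Applying $g$ to the inequalities $\varphi(i)\le\max$ and $\psi(i)\le\max$ and using $g(\varphi(i))=f(i)=g(\psi(i))$, I get a commuting triangle in $C$ whose two legs out of $f(i)$ agree; because $g(\min\le\max)$ factors the identity-typed comparison and lands in an endo-hom of $C$ after composing around the triangle, acyclicity forces it to be the identity. (Concretely: the comparison morphism from $g(\min)$ to $g(\max)$ is an endomorphism of the object $f(i)=g(\varphi(i))=g(\psi(i))$ once we account for both factorizations, hence is $1$ by acyclicity.) This holds for every $i$, so $\varphi\sim\psi$ by the defining relation. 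Therefore $\Sd(C)(f,g)$ is a single point when some witness exists and is empty otherwise, establishing the "at most one morphism" clause.

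Next I would assemble the poset structure. Define $f\le g$ iff there exists $\varphi:[m]\to[n]$ with $g\circ\varphi=f$; by (i) this is exactly the condition that $\Sd(C)(f,g)\ne\emptyset$. Reflexivity uses $\varphi=\mathrm{id}_{[m]}$, and transitivity follows by composing witnesses $\varphi$ and $\varphi'$ in $\Delta_{\inj}$, which matches the composition law of $\Sd(C)$. The only remaining point is antisymmetry: if $f\le g$ and $g\le f$ via $\varphi:[m]\to[n]$ and $\psi:[n]\to[m]$, then $f\circ(\psi\varphi)=f$ and $g\circ(\varphi\psi)=g$; since $f$ and $g$ are \emph{nondegenerate} chains, the witnesses must be injective and, by acyclicity, degree considerations force $m=n$ and $\varphi,\psi$ mutually inverse order-isomorphisms of $[n]$, i.e.\ $\varphi=\mathrm{id}$, whence $f=g$. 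I expect the main obstacle to be exactly this antisymmetry step: one must argue carefully that a pair of witnesses in both directions cannot shuffle a nondegenerate chain onto itself nontrivially, which is where the nondegeneracy hypothesis and the absence of nontrivial automorphisms in an acyclic category (together with the fact that $[n]$ has no nontrivial order-automorphisms) do the real work. Once antisymmetry is in place, combining it with the single-point hom-sets from (i) shows $\Sd(C)$ is a poset, completing the proof.
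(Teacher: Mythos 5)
Your proof is correct, and its core coincides with the paper's argument: both observe that for witnesses $\varphi,\psi$ with $g\circ\varphi=f=g\circ\psi$ one has $g(\varphi(i))=f(i)=g(\psi(i))$ for every $i$, so the comparison morphism $g\bigl(\min\{\varphi(i),\psi(i)\}\le\max\{\varphi(i),\psi(i)\}\bigr)$ is an endomorphism, hence an identity since $C(x,x)=\{1_x\}$ in an acyclic category; thus all witnesses are $\sim$-equivalent and $\Sd(C)(f,g)$ is a point or empty. Where you go beyond the paper: its written proof stops there and simply asserts ``Therefore $\Sd(C)$ is a poset,'' whereas you verify the poset axioms, in particular antisymmetry. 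That step is not vacuous — a category with at most one morphism between any two objects is in general only a preorder — and your argument for it is correct: nondegeneracy of $f$ forces any witness $\varphi$ with $g\circ\varphi=f$ to be strictly order-preserving (a collapse $\varphi(i)=\varphi(i+1)$ would make $f(i\le i+1)$ an identity), so witnesses in both directions give $m=n$, $\varphi=\psi=\mathrm{id}_{[n]}$, and $f=g$. Two small tidying remarks: acyclicity is not actually needed in that antisymmetry step (injectivity already follows from nondegeneracy via adjacent collapse, and an injective order-preserving self-map of $[n]$ is the identity); and in your part (i) there is no need to ``compose around the triangle'' — the comparison morphism is already an endomorphism of $f(i)$ because its source and target objects coincide, which is exactly what your parenthetical says.
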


\begin{proof}
 Since $C$ is acyclic, $C(x,x)=\{1_x\}$ for any objects $x\in C_0$. This
 implies that for
 $\varphi,\psi : [m]\to[n]$ with $f=g\circ\varphi=g\circ\psi$,
 $\varphi\sim\psi$ if and only if $g(\varphi(i))=g(\psi(i))$ for all
 $i\in [m]$. In other words, all elements in
 $\set{\varphi:[m]\to[n]}{g\circ\varphi=f}$ are equivalent to each
 other. Hence $\Sd(C)(f,g)$ is a single point if the above set is
 nonempty. 
\end{proof}

In order to compare $C(\Sd(X))$ and $\Sd(C(X))$ for a totally normal
stellar stratified space $X$, we need to understand
the cellular stratification on $\Sd(X)$. By Corollary
\ref{Sd_of_totally_normal}, we know that $\Sd(X)$ is a totally normal
cell complex when $X$ is a totally normal stellar stratified
space. Cells are parametrized by elements in $\overline{N}(C(X))$. Let
us denote the cell corresponding to $\bm{b}\in \overline{N}(C(X))$ by
$e_{\bm{b}}$. Cell structure maps are given as follows.

\begin{lemma}
 \label{characteristic_map_for_e_b}
 For each $k$, fix a homeomorphism $D^k\cong \Delta^k$.
 Let $X$ be a totally normal stellar stratified space. For
 $\bm{b}\in \overline{N}_k(C(X))$, the composition
 \[
  D^k \cong \Delta^k = B[k] \rarrow{B\bm{b}} BC(X) = \Sd(X)
 \]
 defines a cell structure on the cell corresponding to $\bm{b}$, where
 we regard $\bm{b}$ as a functor $\bm{b} : [k] \to C(X)$.
\end{lemma}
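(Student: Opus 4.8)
The plan is to verify directly that the composite $D^k \cong \Delta^k = B[k] \xrightarrow{B\bm{b}} BC(X) = \Sd(X)$ satisfies the two defining conditions of a cell structure map (Definition \ref{cell_structure_definition}): that it is a quotient map onto the closure of the cell $e_{\bm{b}}$, and that its restriction to the interior is a homeomorphism onto $e_{\bm{b}}$. Since $\Delta^k$ is compact and $\Sd(X)$ is Hausdorff, any continuous map out of $\Delta^k$ is automatically a closed map onto its image, hence a quotient map onto its image; so the first condition reduces to identifying the image of $B\bm{b}$ with $\overline{e_{\bm{b}}}$.

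First I would recall the concrete model of $\Sd(X) = BC(X) = \|\overline{N}(C(X))\|$ as the quotient of $\coprod_k \overline{N}_k(C(X))\times\Delta^k$ under the $\Delta$-space relations, together with the cellular stratification $\pi_{\Sd(X)}$ from Proposition \ref{stratification_on_Sd}, whose stratum indexed by $\bm{b}\in\overline{N}_k(C(X))$ is $\pi_{\Sd(X)}^{-1}(\bm{b})$, homeomorphic to $\Int(\Delta^k)$ via the canonical map $\varphi_{\bm{b}}\colon \Delta^k\hookrightarrow \coprod \overline{N}_k(C(X))\times\Delta^k \to \Sd(X)$ already constructed in the proof of that Proposition. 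The key observation is that the functor $B\bm{b}\colon B[k]\to BC(X)$ induced by $\bm{b}\colon[k]\to C(X)$ agrees on the nose with this canonical inclusion $\varphi_{\bm{b}}$: the nondegenerate nerve of $[k]$ has its top nondegenerate simplex the chain $(0<1<\cdots<k)$, and $\overline{N}(\bm{b})$ sends this to $\bm{b}$ itself, so $B\bm{b}$ restricted to the top cell is precisely $\varphi_{\bm{b}}$. Thus I would reduce the statement to showing $\varphi_{\bm{b}}$ is a cell structure map, which is exactly what Proposition \ref{stratification_on_Sd} provides: there $\varphi_{\bm{b}}$ is shown to be a quotient map onto $\overline{\pi_{\Sd(X)}^{-1}(\bm{b})}$ and a homeomorphism from $\Int(\Delta^k)$ onto the stratum. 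Identifying $\Int(\Delta^k)$ with $\Int(D^k)$ under the fixed homeomorphism $D^k\cong\Delta^k$ then gives condition (2) directly.

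The remaining point to check is that the stratum $\pi_{\Sd(X)}^{-1}(\bm{b})$ is genuinely the cell $e_{\bm{b}}$ and that its closure is the image of all of $\Delta^k$; both follow from the description of faces in the $\Delta$-space realization, since the boundary $\partial\Delta^k$ maps into lower-dimensional nondegenerate chains $\bm{b}'<\bm{b}$, so $B\bm{b}(\partial\Delta^k)\subset \partial e_{\bm{b}}$ and $B\bm{b}(\Delta^k)=\overline{e_{\bm{b}}}$. Because $X$ is totally normal, Proposition \ref{Sd_of_totally_normal} guarantees $\Sd(X)$ is a \emph{regular} cell complex, so in fact each $\varphi_{\bm{b}}$ is not merely a cell structure map but a regular one; this is a convenient consistency check but not strictly needed for the weaker claim of the Lemma.

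The main obstacle I anticipate is the bookkeeping in identifying $B\bm{b}$ with $\varphi_{\bm{b}}$ rigorously — that is, confirming that the functorial map on classifying spaces, built from the simplicial map $\overline{N}(\bm{b})\colon \overline{N}([k])\to\overline{N}(C(X))$, lands compatibly with the quotient projection $p$ used to define $\varphi_{\bm{b}}$. Concretely one must track how the degeneracy-free nerve $\overline{N}([k])$ (whose realization is $\Delta^k$ since $[k]$ is a finite totally ordered set, hence acyclic) has its characteristic top simplex carried by $\bm{b}$, and then invoke naturality of geometric realization of $\Delta$-spaces to see that the two maps $\Delta^k\to\Sd(X)$ coincide. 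Once this identification is made the rest is an immediate appeal to Proposition \ref{stratification_on_Sd}, so I would state the identification carefully and leave the routine diagram-chase implicit.
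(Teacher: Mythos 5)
Your proposal is correct and follows essentially the same route as the paper: both identify $B\bm{b}$ with the canonical map $\varphi_{\bm{b}}$ by noting that $\overline{N}_k([k])$ consists of the single top nondegenerate chain, which $\overline{N}(\bm{b})$ carries to $\bm{b}$ itself, and then invoke Proposition \ref{stratification_on_Sd} for the cell-structure properties. The extra checks you add (closure of the stratum, regularity via Proposition \ref{Sd_of_totally_normal}) are consistent with, but not needed beyond, what the paper's proof records.
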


\begin{proof}
 The map $B\bm{b} : B[k]\to BC(X)$ is induced by the map
 $\overline{N}\bm{b} : \overline{N}([k])\to \overline{N}(C(X))$.
 As we have seen in the proof of Proposition \ref{stratification_on_Sd},
 a cell structure map on the cell corresponding to $\bm{b}$ is given by
 the composition 
 \[
  \Delta^k \cong \{\bm{b}\}\times\Delta^k \hookrightarrow \coprod_{k}
 \overline{N}_k(C(X))\times\Delta^k \rarrow{} \|\overline{N}(C(X))\|
 \cong BC(X).
 \]
 Since $\overline{N}_k([k])$ consists of a single point, the above
 composition can be identified with
 \[
 \Delta^k \cong \overline{N}_k([k])\times\Delta^k
 \rarrow{\overline{N}(\bm{b})\times 1} 
 \coprod_{k}
 \overline{N}_k(C(X))\times\Delta^k \rarrow{} \|\overline{N}(C(X))\|
 \cong BC(X)
 \]
 and the result follows.
\end{proof}

For simplicity, we use the standard simplices $\Delta^k$ as the domains
of cells in $\Sd(X)$. The cell structure map for $e_{\bm{b}}$ is
identified with $B\bm{b}$ by Lemma \ref{characteristic_map_for_e_b}.

\begin{theorem}
 \label{subdivision_is_subdivision}
 For any totally normal stellar stratified space $X$, we have an
 isomorphism of categories
 \[
  \Sd(C(X)) \cong C(\Sd(X)).
 \]
\end{theorem}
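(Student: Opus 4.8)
The plan is to show that both categories are posets, exhibit a canonical bijection on objects, and then check that this bijection identifies the two partial orders; an order isomorphism between posets is automatically an isomorphism of categories. For the left-hand side, the face category $C(X)$ is acyclic by Lemma \ref{face_category_of_CNCSS}, so Lemma \ref{Sd_of_acyclic_category} tells us that $\Sd(C(X))$ is a poset whose objects are the nondegenerate chains $\coprod_n \overline{N}_n(C(X))$, with $f \leq g$ precisely when there is some $\varphi \colon [m] \to [n]$ in $\Delta_{\inj}$ satisfying $g \circ \varphi = f$. For the right-hand side, Proposition \ref{Sd_of_totally_normal} gives that $\Sd(X)$ is a regular cell complex; by Example \ref{totally_normal_implies_cylindrically_regular} it is then strictly cylindrically normal with finite discrete parameter spaces, and in the regular case every nonempty parameter space $P_{\mu,\lambda}$ is a single point. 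Hence $C(\Sd(X))$ is also a poset, namely the face poset $P(\Sd(X))$ of the regular complex $\Sd(X)$.

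Next I would set up the object correspondence. By Proposition \ref{stratification_on_Sd} the cells of $\Sd(X)$ are indexed by the nondegenerate chains $\bm{b} \in \overline{N}(C(X))$, via $\bm{b} \mapsto e_{\bm{b}}$, and this gives a bijection $\Sd(C(X))_0 \to C(\Sd(X))_0$. The essential geometric input, supplied by Lemma \ref{characteristic_map_for_e_b}, is that the cell structure map of $e_{\bm{b}}$ for $\bm{b} \in \overline{N}_k(C(X))$ is the realization $B\bm{b} \colon \Delta^k \cong B[k] \to BC(X) = \Sd(X)$ of the functor $\bm{b} \colon [k] \to C(X)$. This identifies $\Sd(X)$ with the realization of the $\Delta$-set $\overline{N}(C(X))$ in a way compatible with the cell structure, which is what lets me translate the face relation into a statement about chains.

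The heart of the argument is then the identification of closures of cells. Writing $g \in \overline{N}_n(C(X))$, the closed cell $\overline{e_g}$ is the image $B g(\Delta^n)$, and since $\Sd(X)$ is regular this map is a homeomorphism onto $\overline{e_g}$ carrying the simplicial faces of $\Delta^n$ onto the cells of $\overline{e_g}$. Each face of $\Delta^n$ is $d^{\varphi}(\Delta^m)$ for an injective order-preserving $\varphi \colon [m] \to [n]$, and the restriction of $Bg$ to that face is exactly $B(g \circ \varphi)$; moreover $g \circ \varphi$ is again nondegenerate because the composite from $g(\varphi(i))$ to $g(\varphi(i+1))$ is a non-identity morphism in the acyclic category $C(X)$ whenever $\varphi(i) < \varphi(i+1)$. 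Thus $\overline{e_g} = \bigcup_{\varphi \in \Delta_{\inj}} e_{g \circ \varphi}$, so $e_f \subset \overline{e_g}$ (i.e. $e_f \leq e_g$ in $P(\Sd(X))$) holds if and only if $f = g \circ \varphi$ for some injective $\varphi$ — which is precisely the condition $f \leq g$ in $\Sd(C(X))$ given by Lemma \ref{Sd_of_acyclic_category}. Combining the object bijection with this equivalence of orders yields the isomorphism of posets, hence of categories, as claimed.

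I expect the main obstacle to be the careful bookkeeping in this last step: verifying that the simplicial face operators on $\overline{N}(C(X))$ match the geometric faces of $\overline{e_g}$ under $Bg$, and in particular that $Bg$ restricted to the $\varphi$-face equals $B(g\circ\varphi)$ with $g \circ \varphi$ nondegenerate. This is where acyclicity of $C(X)$ and regularity of $\Sd(X)$ are both used, and it is the point at which one must be sure the realization-of-a-$\Delta$-set description of $\Sd(X)$ is exploited consistently rather than merely invoked. Once that compatibility is established, order-preservation and order-reflection are formal.
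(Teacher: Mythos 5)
Your proof is correct, and it handles the crucial morphism-level comparison by a genuinely different mechanism than the paper. The paper works directly with the morphism sets of $C(\Sd(X))$, viewed as sets of continuous lifts $f\colon \Delta^m\to\Delta^k$ with $B\bm{b}'=B\bm{b}\circ f$: using injectivity of $B\bm{b}$ on $\Int(\Delta^k)$ and a PL rigidity argument, it shows any such lift is unique and of the form $B\varphi$ for a poset map $\varphi\colon[m]\to[k]$, so that the classifying space functor $B$ itself is exhibited as the isomorphism. In particular, the paper proves inline that $C(\Sd(X))$ is a poset, which is why the remark following the theorem can deduce regularity of $\Sd(X)$ as a corollary. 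You go the other way: you import the poset property of $C(\Sd(X))$ from Proposition \ref{Sd_of_totally_normal} (regularity of $\Sd(X)$) together with Example \ref{totally_normal_implies_cylindrically_regular}, which reduces the theorem to matching two partial orders, and you match them via the closure decomposition $\overline{e_g}=\bigcup_{\varphi} e_{g\circ\varphi}$, which needs only functoriality of $B$, nondegeneracy of $g\circ\varphi$ (a consequence of acyclicity, as you correctly note), and the object bijection. What each approach buys: yours avoids the PL argument entirely and is purely combinatorial, at the cost of making the theorem logically dependent on the previously established regularity of $\Sd(X)$ (no circularity, since Proposition \ref{Sd_of_totally_normal} is proved independently, but your route could not re-derive regularity); the paper's argument is independent of that input, regenerates regularity as a by-product, and identifies the isomorphism explicitly on morphisms rather than merely as an order isomorphism. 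One small point you assert without justification --- that regularity plus strict cylindricity forces every nonempty parameter space of $\Sd(X)$ to be a singleton --- is true but deserves a line: regularity of $\varphi_\lambda$ determines $b_{\mu,\lambda}(p,-)=\varphi_\lambda^{-1}\circ\varphi_\mu$ independently of $p$, and strictness (injectivity of $b_{\mu,\lambda}$ on $P_{\mu,\lambda}\times D_\mu$) then forces $P_{\mu,\lambda}$ to be a point.
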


\begin{proof}
 By definition, objects in $\Sd(C(X))$ are elements of the nondegenerate 
 nerve of $C(X)$. On the other hand, objects in $C(\Sd(X))$ are in
 one-to-one correspondence with cells 
 in $\Sd(X)=BC(X)$. Under the stratification in Proposition
 \ref{stratification_on_Sd}, we obtain a bijection
 $C(\Sd(X))_0\cong\Sd(C(X))_0$. 

 For $\bm{b}\in \overline{N}_k(C(X))$ and
 $\bm{b}'\in\overline{N}_{m}(C(X))$, we have 
 \[
  C\left(\Sd(X);\pi_{\Sd(X)}\right)(e_{\bm{b}'},e_{\bm{b}}) =
 \set{f : \Delta^m\to\Delta^k}{B\bm{b}'=B\bm{b}\circ f}. 
 \]
 Since $B\bm{b}'|_{\Int(\Delta^m)}$ is injective, $f|_{\Int(\Delta^m)}$
 is also injective. The condition $B\bm{b}'=B\bm{b}\circ f$ implies that
 $f|_{\Int(\Delta^m)}$ is a PL map and hence $f$ is a PL map.
 Since $B\bm{b}|_{\Int(\Delta^k)}$ is injective, such a PL map is unique
 if it were to exist.
 It is given by $f=B\varphi$ for some poset map $\varphi : [m]\to[k]$.

 On the other hand, by Lemma \ref{Sd_of_acyclic_category},
 $\Sd(C(X))(\bm{b}',\bm{b})$ is
 nonempty (and a single point set) if and only if there exists a poset
 map $\varphi : [m]\to[k]$ with 
 $\bm{b}'=\bm{b}\circ\varphi$. Thus the classifying space functor $B(-)$
 induces an isomorphism of categories
 \[
  B : \Sd(C(X)) \longrightarrow C(\Sd(X)).
 \]
\end{proof}

\begin{remark}
 Note that we obtained an isomorphism of categories instead of an
 equivalence. Since $\Sd(C(X))$ is a poset, it implies that $C(\Sd(X))$
 is also a poset. Thus the barycentric subdivision $\Sd(X)$ of a totally
 normal stellar stratified space is a regular cell complex.
\end{remark}

\appendix

\section{Generalities on Quotient Maps}
\label{quotient_map}

In our definition of cell structures, we required the cell structure map
$\varphi : D \to \overline{e}$ of a cell $e$ to be a quotient map.
In order to perform operations on cellular stratified
spaces, such as taking products and subspaces, we need to understand
basic properties of quotient maps. 

\subsection{Definitions}

It is well-known that the quotient topology does not behave well with
respect to certain operations of topological spaces, such as taking
products and subspaces. We need to impose stronger conditions.

\begin{definition}
 \label{bi-quotient_definition}
 A surjective continuous map $f : X\to Y$ is called \emph{bi-quotient},
 if, for any $y\in Y$ and any open covering $\mathcal{U}$ of
 $f^{-1}(y)$, there exists finitely many $U_1,\ldots, U_k\in\mathcal{U}$
 such that $\bigcup_{i=1}^k f(U_i)$ contains a neighborhood of $y$ in
 $Y$. 
\end{definition}

Another important class of maps are hereditarily quotient maps.

\begin{definition}
 \label{hereditarily_quotient_definition}
 A surjective continuous map $f : X\to Y$ is called \emph{hereditarily
 quotient} if, for any $y\in Y$ and any neighborhood $U$ of $f^{-1}(y)$,
 $f(U)$ is a neighborhood of $y$.
\end{definition}

\subsection{Properties}

This quotient topology condition imposes some restrictions on the
topology of $\overline{e}$, especially when $e$ is closed.  
For example, $\overline{e}$ is metrizable for any closed cell $e$. A
proof can be found in a book \cite{Lundell-Weingram} by
Lundell and Weingram. Their proof can be modified to obtain the following
extension of this fact. 

\begin{lemma}
 \label{overlinee_is_metrizable}
 Suppose $\varphi : D\to \overline{e}\subset X$ is an $n$-cell structure
 with $\varphi^{-1}(y)$ compact for each $y\in \overline{e}$.
 Then $\overline{e}$ is metrizable. In particular,
 it is Hausdorff and paracompact\footnote{Theorem 41.4 in
 \cite{MunkresTopology2nd}}. 
\end{lemma}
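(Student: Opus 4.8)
The plan is to follow the hint and adapt the Lundell--Weingram argument for closed cells, the only new input being that the compactness of the ambient disk $D^n$ must be replaced everywhere by the hypothesis that each fiber $\varphi^{-1}(y)$ is compact. First I would record the elementary facts: since $D$ is a globular $n$-cell it is a subspace of $D^n\subset\R^n$ and hence carries the restriction $d$ of the Euclidean metric, so $D$ is a separable metric space; and since $X$ is Hausdorff and $\overline{e}$ is a subspace of $X$, the space $\overline{e}$ is Hausdorff. The goal is then to produce a metric on $\overline{e}$ inducing the quotient topology determined by $\varphi$ (recall that $\varphi$ is a quotient map by condition (1) of Definition \ref{cell_structure_definition}).

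In the closed case $D=D^n$ is compact, so $\varphi : D^n\to\overline{e}$ is automatically a closed, hence perfect, surjection onto a compact Hausdorff space, and metrizability is immediate from the classical theorem that a Hausdorff continuous image of a compact metric space is compact metric. For a general globular $D$ this shortcut is unavailable, since $\varphi$ need be neither proper nor closed, so the compactness must be localised. The key structural observation I would exploit is that $\varphi$ restricts to a homeomorphism of $\Int(D^n)$ onto the open stratum $e$, so the only delicate behaviour occurs over $\partial e=\varphi(\partial D)$, and there each point $y$ has a compact fiber $\varphi^{-1}(y)\subseteq\partial D$. Using this I would build a neighbourhood basis at each $y\in\overline{e}$ out of images of saturated $d$-neighbourhoods of the compact set $\varphi^{-1}(y)$: compactness of the fiber is exactly what allows a saturated open tube around it to be shrunk, and this is where the appendix machinery on compact-fiber maps (Corollary \ref{compact_fiber}, together with the notions of hereditarily quotient and bi-quotient maps in Definitions \ref{hereditarily_quotient_definition} and \ref{bi-quotient_definition}) should be invoked to guarantee that these images are genuine neighbourhoods. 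Having such bases I would verify that $\overline{e}$ is regular and has a countable basis and conclude via Urysohn's metrization theorem, or equivalently exhibit the metric directly and check that it induces the quotient topology.

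The main obstacle is precisely this last verification. Because $D$ need not be closed in $D^n$ (its boundary $\partial D$ can be a proper, even badly behaved, subset of $\partial D^n$, as in Example \ref{powdered_ball}), a $d$-convergent sequence in $D$ may run out toward a deleted boundary point while its images converge in $\overline{e}$, so the candidate metric (and the regularity and second-countability estimates) cannot be read off naively from $d$; in particular the Hausdorff distance between fibers, computed in $D^n$, fails to induce the quotient topology. Controlling this escape is where the compactness of $\varphi^{-1}(y)$ must be used essentially, to show that image-convergence to $y$ forces the preimages to accumulate on the compact fiber rather than drift to the missing part of $\partial D^n$. I expect that once this upper-semicontinuity-type estimate is established at every point, the remaining separation and countability checks go through exactly as in the closed-cell case.
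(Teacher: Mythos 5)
Your proposal has a genuine gap, and it sits exactly at the step you yourself flag as crucial. To know that images of saturated $d$-neighbourhoods of a fiber are genuine neighbourhoods of $y$ (equivalently, that $\varphi$ is hereditarily quotient), you invoke Corollary \ref{compact_fiber} and the bi-quotient machinery of Appendix \ref{quotient_map}. This is circular: the paper's proof of Corollary \ref{compact_fiber} begins by citing Lemma \ref{overlinee_is_metrizable} to get that $\overline{e}$ is first countable and Hausdorff, and then applies Lemma \ref{Lindeloef_fiber}, whose hypotheses are precisely first countability and Hausdorffness of the target --- the things you do not yet know. You offer no independent route to this property. Worse, even if you had it, the conclusion would not follow from the ingredients you list: the interval with two origins, obtained from $[0,1]\times\{0,1\}$ by identifying $(x,0)\sim(x,1)$ for $x>0$, is an \emph{open} (hence bi-quotient, hence hereditarily quotient) quotient of a compact metric space with fibers of at most two points, yet it is neither Hausdorff nor metrizable. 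So ``quotient map with compact fibers from a metric space, plus hereditarily quotient'' cannot yield regularity or second countability; the checks you defer with ``I expect \dots go through exactly as in the closed-cell case'' are where the entire content of the lemma lies, and they cannot be inherited from the closed case, whose proof rests on compactness of the whole domain. Note also that you take Hausdorffness of $\overline{e}$ for granted from Hausdorffness of $X$, whereas the lemma is stated for cell structures in an arbitrary topological space (Definition \ref{cell_structure_definition}) and lists Hausdorffness among its conclusions.

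The paper's proof is self-contained and avoids this machinery altogether (indeed, in the paper the bi-quotient results are \emph{consequences} of this lemma, not inputs to it). It defines an explicit candidate metric
\[
 \overline{d}(y,y') = \min\set{d(x,x')}{x\in\varphi^{-1}(y),\ x'\in\varphi^{-1}(y')},
\]
well defined because the fibers are compact, and then compares topologies directly: continuity of $\varphi$ shows every $\overline{d}$-open set is quotient-open, and conversely, given $y\in U$ with $U$ quotient-open, a Lebesgue number $\delta$ of the covering $\{\varphi^{-1}(U)\}$ of the compact metric space $\varphi^{-1}(y)$ gives $U_{\delta}(y;\overline{d})\subset U$. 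That Lebesgue-number argument is the one place where compactness of the fibers does its work. Your alternative suggestion --- ``exhibit the metric directly and check that it induces the quotient topology'' --- is indeed the paper's strategy, but your proposal never says what the metric is, and producing one is not a formality: even for a fiberwise distance such as $\overline{d}$ above, verifying the metric axioms (in particular the triangle inequality) is a delicate point, not something that can be left to an analogy with the closed-cell case.
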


\begin{proof}
 For $y,y'\in \overline{e}$, define 
 \[
 \overline{d}(y,y') = \min\lset{d(x,x')}{x\in \varphi^{-1}(y),
 x'\in\varphi^{-1}(y')}, 
 \]
 where $d$ is the metric on $D^n$.
 By assumption, $\varphi^{-1}(y)$ and $\varphi^{-1}(y')$ are compact and
 $\overline{d}(y,y')$ is defined. The compactness of $\varphi^{-1}(y)$
 and $\varphi^{-1}(y')$ also implies that $\overline{d}$ is a metric on
 $\overline{e}$. 

 Let us verify that the topology defined by $\overline{d}$ coincides
 with the quotient topology by $\varphi$. The continuity of $\varphi$
 with respect to the metric topologies on $D$ and $\overline{e}$ implies
 that open subsets in the $\overline{d}$-metric topology are open in the
 quotient topology. Conversely let $U$ be an open subset of
 $\overline{e}$ with respect to the quotient topology. We would like to
 show that, for each $y\in U$, there exists $\delta>0$ such that the
 open disk $U_{\delta}(y;\overline{d})$ around $y$ with radius $\delta$
 with respect to the metric $\overline{d}$ is contained in $U$. Let
 $\delta$ be a Lebesgue number of the open covering
 $\{\varphi^{-1}(U)\}$ of the 
 compact metric space $\varphi^{-1}(y)$. For
 $y'\in U_{\delta}(y;\overline{d})$, 
 there exist $x\in\varphi^{-1}(y)$ and $x'\in\varphi^{-1}(y')$ such that
 $d(x,x')<\delta$. Thus
 $x'\in U_{\delta}(x;d)\subset\varphi^{-1}(U)$ by the definition of
 Lebesgue number. Or $y'=\varphi(x')\in U$. And we have
 $U_{\delta}(y';\overline{d})\subset U$.
\end{proof}

\begin{definition}
 \label{relatively_compact_cell_definition}
 We say a cell structure $\varphi : D \to \overline{e}$ is
 \emph{relatively compact}
 if $\varphi^{-1}(y)$ is compact for each $y\in \overline{e}$. We also
 say that the cell $e$ is 
 \emph{relatively compact}. 
\end{definition}

In particular, when $\varphi : D\to\overline{e}$ is proper (i.e.\ closed
and each $\varphi^{-1}(y)$ is compact),
$\overline{e}$ is metrizable. On the other hand, the properness of
$\varphi$ implies that $\varphi$ is a bi-quotient map. 

It is straight forward to verify that a hereditarily quotient map can be
restricted freely.

\begin{lemma}
 \label{hereditarily_quotient_map_can_be_restricted}
 Any hereditarily quotient map $f : X\to Y$ is a quotient map. More
 generally, for any subspace $A\subset Y$, the restriction
 $f|_{f^{-1}(A)} : f^{-1}(A)\to A$ is hereditarily quotient, hence a
 quotient map.  
\end{lemma}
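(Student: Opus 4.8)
The plan is to argue entirely at the level of point-set topology, using the defining property of hereditarily quotient maps directly. I will prove the two assertions in the order stated, deriving the ``hence a quotient map'' clause of the second assertion from the first.

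First I would show that a hereditarily quotient map $f\colon X\to Y$ is a quotient map. Since $f$ is continuous and surjective, it suffices to check that a subset $U\subset Y$ with $f^{-1}(U)$ open in $X$ is itself open. Fix $y\in U$. Then $f^{-1}(y)\subset f^{-1}(U)$, and $f^{-1}(U)$ is an open set containing $f^{-1}(y)$, hence a neighborhood of $f^{-1}(y)$; the hereditarily quotient hypothesis makes $f(f^{-1}(U))$ a neighborhood of $y$. Surjectivity of $f$ gives $f(f^{-1}(U))=U$, so $U$ is a neighborhood of each of its points and therefore open.

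Next I would treat the restriction $g=f|_{f^{-1}(A)}\colon f^{-1}(A)\to A$, writing $B=f^{-1}(A)$. Continuity and surjectivity of $g$ are immediate: any $a\in A$ has a preimage under $f$, which necessarily lies in $B$. To verify the hereditarily quotient property, fix $a\in A$ and a neighborhood $V$ of $g^{-1}(a)=f^{-1}(a)$ in $B$. I would choose an open set $U$ of $X$ with $f^{-1}(a)\subset U\cap B\subset V$; such $U$ exists by the definition of the subspace topology on $B$, and since $f^{-1}(a)\subset B$ it follows that $f^{-1}(a)\subset U$, so $U$ is a neighborhood of $f^{-1}(a)$ in $X$. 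Applying the hereditarily quotient property of $f$, the set $f(U)$ is a neighborhood of $a$ in $Y$, whence $f(U)\cap A$ is a neighborhood of $a$ in $A$. The key containment $f(U)\cap A\subset g(V)$ then finishes the argument: any point $a'=f(x)\in f(U)\cap A$ has $x\in U$ and $f(x)\in A$, forcing $x\in U\cap B\subset V$ and hence $a'=g(x)\in g(V)$. Thus $g(V)$ contains a neighborhood of $a$, so $g$ is hereditarily quotient, and by the first part it is a quotient map.

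There is no serious obstacle here; the only points requiring care are bookkeeping ones. One must remember that ``hereditarily quotient'' includes surjectivity, so that both the identity $f(f^{-1}(U))=U$ and the surjectivity of $g$ are available; and one must correctly translate a neighborhood in the subspace $B$ into an open set of the ambient space $X$ before invoking the hypothesis on $f$. The containment $f(U)\cap A\subset g(V)$ is the heart of the second part, and it rests precisely on the observation that a point $x$ with $f(x)\in A$ automatically lies in $B=f^{-1}(A)$.
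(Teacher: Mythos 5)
Your proposal is correct and follows essentially the same route as the paper: the first part (open $f^{-1}(U)$ implies $U$ is a neighborhood of each of its points via $f(f^{-1}(U))=U$) is word-for-word the paper's argument. For the restriction, the paper simply asserts that the hereditarily quotient property is ``local'' and hence inherited; your careful verification --- pass from a neighborhood $V$ in $f^{-1}(A)$ to an ambient open $U$, apply the hypothesis to $f$, and use the containment $f(U)\cap A\subset g(V)$, which rests on $f(x)\in A\Rightarrow x\in f^{-1}(A)$ --- is precisely the content of that one-line appeal, spelled out.
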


\begin{proof}
 Suppose $f$ is hereditarily quotient. For a
 subset $U\subset Y$, suppose that $f^{-1}(U)$ is open in
 $X$. For a point $y\in U$, $f^{-1}(U)$ is an neighborhood of
 $f^{-1}(y)$. Since $f$ is hereditarily quotient, $f(f^{-1}(U))=U$ is a
 neighborhood of $y$ in $Y$. Thus $y$ is an interior point of $U$ and it
 follows that $U$ is an open subset of $Y$.

 Since the definition of hereditarily quotient map is local,
 $f|_{f^{-1}(A)}$ is hereditarily quotient for any $A\subset Y$.
\end{proof}

\begin{remark}
 See also Arhangel$'$skii's paper \cite{Arhangelskii1963} for
 hereditarily quotient maps.
\end{remark}

\begin{lemma}
 \label{biquotient_is_quotient}
 Any bi-quotient map is hereditarily quotient. In particular, it is a
 quotient map. 
\end{lemma}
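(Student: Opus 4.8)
The plan is to unwind the two definitions and observe that a bi-quotient map satisfies the hereditary-quotient condition by applying the bi-quotient property to the simplest possible open cover of a fiber, namely a single open set. Concretely, to show $f\colon X\to Y$ is hereditarily quotient I would fix $y\in Y$ and a neighborhood $U$ of the fiber $f^{-1}(y)$, and aim to prove that $f(U)$ is a neighborhood of $y$. Since $U$ is a neighborhood of $f^{-1}(y)$, there is an open set $V$ with $f^{-1}(y)\subseteq V\subseteq U$.

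The key step is that the one-element family $\mathcal{U}=\{V\}$ is an open covering of $f^{-1}(y)$. Feeding this covering into Definition \ref{bi-quotient_definition}, the only available finite subfamily is $\{V\}$ itself, so the bi-quotient hypothesis yields that $f(V)$ contains a neighborhood of $y$ in $Y$; equivalently $f(V)$ is itself a neighborhood of $y$. Because $f(V)\subseteq f(U)$, it follows that $f(U)$ is a neighborhood of $y$ as well. As $y$ and $U$ were arbitrary, this establishes that $f$ is hereditarily quotient in the sense of Definition \ref{hereditarily_quotient_definition}.

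For the final assertion, I would simply invoke Lemma \ref{hereditarily_quotient_map_can_be_restricted}, which already records that every hereditarily quotient map is a quotient map; this gives the ``in particular'' clause immediately.

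There is essentially no analytic obstacle here: the entire content is the observation that ``contains a neighborhood of $y$'' is the same as ``is a neighborhood of $y$,'' together with the fact that a single open set enveloping the fiber already constitutes a legitimate open cover, so the finite-subfamily clause is vacuously satisfied. The only point requiring a moment's care is the bookkeeping between a general neighborhood $U$ and the open set $V$ it contains, which is needed so that $\{V\}$ is genuinely an \emph{open} covering as Definition \ref{bi-quotient_definition} demands.
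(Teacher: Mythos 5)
Your proof is correct and is exactly the argument the paper intends: the paper's own proof is simply ``By definition,'' and your single-set open cover $\{V\}$ together with the observation that a set containing a neighborhood of $y$ is itself a neighborhood of $y$ is the standard way to unwind Definitions \ref{bi-quotient_definition} and \ref{hereditarily_quotient_definition}. Your appeal to Lemma \ref{hereditarily_quotient_map_can_be_restricted} for the ``in particular'' clause also matches the paper's logical structure.
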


\begin{proof}
 By definition.
\end{proof}

Michael \cite{Michael1968-2} proved that bi-quotient maps are abundant.

\begin{lemma}
 \label{sufficient_conditions_for_bi-quotient}
 Any one of the following conditions implies that a map $f : X\to Y$ is
 bi-quotient:
 \begin{enumerate}
  \item $f$ is open.
  \item $f$ is proper.
  \item $f$ is hereditarily quotient and the boundary
	$\partial f^{-1}(y)$ of each fiber is compact.
 \end{enumerate}
\end{lemma}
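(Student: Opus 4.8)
The plan is to treat the three conditions separately, since each produces the required finite subfamily by a different mechanism; conditions (1) and (2) are routine, while condition (3) carries the real content. Throughout I use that $f$ is surjective, so every fiber $f^{-1}(y)$ is nonempty, as is implicit in the definition of bi-quotient.

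First I would dispose of the open case. Fixing $y\in Y$ and an open cover $\mathcal{U}$ of $f^{-1}(y)$, I choose any $x\in f^{-1}(y)$ and any $U\in\mathcal{U}$ with $x\in U$; openness of $f$ makes $f(U)$ an open set containing $y=f(x)$, so a single member of $\mathcal{U}$ already suffices. Next I would handle the proper case, where each fiber is compact, so $\mathcal{U}$ admits a finite subcover $U_1,\dots,U_k$ of $f^{-1}(y)$. Setting $U=\bigcup_{i=1}^k U_i\supset f^{-1}(y)$, closedness of $f$ makes $f(X\setminus U)$ closed, and it misses $y$ because $f^{-1}(y)\subset U$; hence $Y\setminus f(X\setminus U)$ is an open neighborhood of $y$, and every point of it has its whole fiber inside $U$, so it lies in $\bigcup_{i=1}^k f(U_i)$.

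The heart of the argument is condition (3). The key idea is to manufacture, from a given open cover $\mathcal{U}$ of $f^{-1}(y)$, an open neighborhood of the \emph{entire} fiber using only finitely many members of $\mathcal{U}$ together with $\Int(f^{-1}(y))$, and then to invoke hereditary quotientness to push it down to a neighborhood of $y$. Concretely, the compact boundary $\partial f^{-1}(y)$ is covered by finitely many $U_1,\dots,U_k\in\mathcal{U}$, and I also select a single $U_0\in\mathcal{U}$ meeting the fiber, say $x_0\in U_0\cap f^{-1}(y)$. Writing $V=\Int(f^{-1}(y))\cup U_0\cup U_1\cup\dots\cup U_k$, I would check that $V$ is open and, using $f^{-1}(y)\subseteq \Int(f^{-1}(y))\cup\partial f^{-1}(y)$, that $V\supset f^{-1}(y)$. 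Since $\Int(f^{-1}(y))\subset f^{-1}(y)$ maps into $\{y\}$ and $y=f(x_0)\in f(U_0)$, this yields $f(V)=\bigcup_{i=0}^k f(U_i)$; hereditary quotientness then makes $f(V)$ a neighborhood of $y$, as required.

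The main obstacle — and the reason for inserting the auxiliary set $U_0$ — is the degenerate bookkeeping at fibers with empty boundary (clopen fibers): there $\Int(f^{-1}(y))$ contributes only the single point $y$ to the image, so without $U_0$ one is left with $f(V)=\{y\}$ and must argue separately that $\{y\}$ is open. Including $U_0$ absorbs this case uniformly, so no splitting into subcases is needed. Beyond this, the only points requiring care are the elementary identity $f^{-1}(y)\subseteq \Int(f^{-1}(y))\cup\partial f^{-1}(y)$ and the fact that $f$ carries a neighborhood of the full fiber to a neighborhood of $y$, which is precisely the defining property of a hereditarily quotient map.
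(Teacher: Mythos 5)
Your arguments for (1) and (2) are correct, and note that the paper itself offers no argument at all here: its ``proof'' is a citation to Proposition 3.2 of \cite{Michael1968-2}. So a self-contained proof like yours is genuinely different from what the paper provides. Your mechanisms for (1) and (2) (a single open image; the closed-map complement $Y\setminus f(X\setminus U)$ plus surjectivity) are clean, and your overall strategy for (3) — build an open set $V=\Int(f^{-1}(y))\cup U_0\cup U_1\cup\dots\cup U_k$ containing the whole fiber from finitely many cover elements, then let hereditary quotientness push $f(V)=\bigcup_{i=0}^k f(U_i)$ down to a neighborhood of $y$ — is the right idea, and the $U_0$ device for fibers with $f^{-1}(y)\cap\partial f^{-1}(y)=\emptyset$ is fine.

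There is, however, one genuine gap in (3), at its first step. You extract $U_1,\dots,U_k\in\mathcal{U}$ covering the compact set $\partial f^{-1}(y)$, but $\mathcal{U}$ is only assumed to cover the \emph{fiber}, and the topological boundary $\partial f^{-1}(y)=\overline{f^{-1}(y)}\setminus\Int(f^{-1}(y))$ is contained in $f^{-1}(y)$ only when the fiber is closed — essentially, when $Y$ is $T_1$. The lemma as stated carries no separation hypothesis, and the step can actually fail: for the quotient map $f:[0,1]\to[0,1]/[0,1)$ onto the two-point Sierpi\'nski space, $f$ is hereditarily quotient and both fiber boundaries are compact, yet the open cover $\left\{[0,1-\tfrac{1}{n}) : n\ge 2\right\}$ of the fiber $[0,1)$ has no member containing the boundary point $1$, so no finite subfamily covers $\partial f^{-1}(y)$. (That particular map is still bi-quotient, but by a different mechanism — the fiber is open — so your argument does not apply as written.) Nor can you retreat to covering only $f^{-1}(y)\cap\partial f^{-1}(y)$: the hypothesis gives compactness of $\partial f^{-1}(y)$, and this intersection can be a non-closed, hence non-compact, subset of it. The gap is harmless in the scope the paper actually intends: Michael's paper carries a standing Hausdorff assumption, and every use of this lemma in the paper has target a metrizable space $\overline{e}$ (Lemma \ref{overlinee_is_metrizable}), so fibers are closed and your proof goes through verbatim. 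But your write-up should state the closed-fiber (or $T_1$) hypothesis explicitly, since your argument genuinely uses it.
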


\begin{proof}
 Proposition 3.2 in \cite{Michael1968-2}.
\end{proof}

%

Recall that a product of quotient maps may not be a quotient map.
There exist a space $X$ and a quotient map $f : Y\to Z$ such that
the product $1_X\times f : X\times Y\to X\times Z$ is 
not a quotient map. The following fact is a well-known result of
J.H.C.~Whitehead \cite{J.H.C.Whitehead1948}. 

\begin{lemma}
 For a locally compact Hausdorff space $X$, $1_X\times f$ is a quotient
 map for any quotient map $f$.
\end{lemma}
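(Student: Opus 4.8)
The plan is to verify the quotient-map condition directly. Writing $p=1_X\times f:X\times Y\to X\times Z$, I must show that whenever $A\subseteq X\times Z$ satisfies the property that $p^{-1}(A)$ is open, then $A$ itself is open. So I would fix such an $A$ together with an arbitrary point $(x_0,z_0)\in A$, and aim to produce a basic open neighborhood of $(x_0,z_0)$ contained in $A$. Since quotient maps are surjective, I may choose $y_0\in Y$ with $f(y_0)=z_0$, so that $(x_0,y_0)\in\Omega:=p^{-1}(A)$, which is open by hypothesis.

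First I would bring in local compactness. Choosing a basic open set $U_1\times V_1\subseteq\Omega$ around $(x_0,y_0)$ and invoking local compactness of the Hausdorff space $X$, I can find an open $U\ni x_0$ whose closure $C=\overline{U}$ is compact and contained in $U_1$; then $C\times V_1\subseteq\Omega$, giving me a compact slice in the $X$-direction.

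The crux of the argument is the set
\[
 V=\{\,y\in Y : C\times\{y\}\subseteq\Omega\,\},
\]
which I claim is both open and saturated with respect to $f$. Openness is exactly the tube lemma: since $C$ is compact and $\Omega$ is open, for each $y\in V$ there is an open $V'\ni y$ with $C\times V'\subseteq\Omega$, whence $V'\subseteq V$. Saturation is the key observation that renders the possible non-compactness of the fibers of $f$ harmless: the condition $y\in V$ requires $(c,f(y))\in A$ for every $c\in C$, and this depends only on $f(y)$; thus $f(y)=f(y')$ forces $y\in V\iff y'\in V$, so $V=f^{-1}(f(V))$.

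Finally I would exploit that $f$ is a quotient map. Because $V$ is saturated and open, $W:=f(V)$ is open in $Z$. One has $y_0\in V$ (since $C\times\{y_0\}\subseteq C\times V_1\subseteq\Omega$), so $z_0=f(y_0)\in W$; and for any $(x',z')\in U\times W$, writing $z'=f(y')$ with $y'\in V$ yields $(x',f(y'))\in A$ because $x'\in U\subseteq C$. Hence $U\times W$ is an open neighborhood of $(x_0,z_0)$ inside $A$, proving $A$ open and therefore that $p$ is a quotient map. The one genuinely delicate point — and the reason local compactness is indispensable — is the simultaneous use of the tube lemma (which needs the compact slice $C$) and the saturation observation (which lets the quotient property of $f$ convert $V$ into an open subset $W$ of $Z$), allowing the proof to proceed without ever having to control the whole, possibly non-compact, fiber $f^{-1}(z_0)$.
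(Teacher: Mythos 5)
Your proof is correct. The paper gives no argument of its own for this lemma --- it simply cites it as a classical result of J.H.C.~Whitehead --- so there is no internal proof to compare against; what you have written is the standard tube-lemma argument for that theorem, carried out accurately. The two pivotal points are exactly where you place them: the set $V=\{y\in Y: C\times\{y\}\subseteq\Omega\}$ is open because the slice $C\times\{y\}$ is compact (tube lemma, which is where local compactness of $X$ enters via the compact closure $C=\overline{U}$), and $V$ is $f$-saturated because membership of $y$ in $V$ depends only on $f(y)$, so the quotient property of $f$ makes $W=f(V)$ open and $U\times W$ is the required basic neighborhood inside $A$. Your closing remark is also the right diagnosis: it is precisely the saturation of $V$ that lets the argument bypass any control over the possibly non-compact fiber $f^{-1}(z_0)$.
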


Unfortunately the domain of cell structures may not be locally compact.

\begin{example}
 \label{nonlocally_compact}
 $D^2-\{(1,0)\}$ is locally compact, while $\Int(D^2)\cup\{(1,0)\}$ is
 not locally compact.
\end{example}

The domain $D$ of an $n$-cell structure $\varphi : D\to \overline{e}$ is
often a stratified subspace of $D^n$ under a normal cell decomposition
of $D^n$. In other words, $D$ is obtained from $D^n$ by removing cells.
In the Example \ref{nonlocally_compact}, $D^2$ is regarded as a cell
complex $D^2=e^0\cup e^1\cup e^2$. $D^2\setminus e^0$ is locally
compact, while $D^2\setminus e^1$ is not. 
More generally we have the following criterion of locally compact
subspaces in a CW complex.
 
\begin{proposition}
 \label{complement_of_subcomplex}
 Let $X$ be a locally finite CW complex and $A$ be a subcomplex, then
 $X\setminus A$ is locally compact.
\end{proposition}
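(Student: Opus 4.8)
The plan is to reduce the statement to the standard topological fact that an open subspace of a locally compact Hausdorff space is again locally compact. The key observation is that the hypothesis that $A$ is a \emph{subcomplex} — rather than an arbitrary union of cells — is exactly what makes $A$ closed in $X$: a subcomplex contains the closure of each of its cells, hence is closed. Consequently $X\setminus A$ is an \emph{open} subspace of $X$, which is the crucial structural input. This is precisely the distinction illustrated by Example \ref{nonlocally_compact}: deleting the $1$-cell $e^1$ from $D^2$ fails to give a locally compact space, but $\{e^1\}$ is not a subcomplex since $\overline{e^1}\ni e^0$, whereas deleting the subcomplexes $\{e^0\}$ or $\{e^0,e^1\}=S^1$ does yield locally compact spaces.

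First I would establish that $X$ itself is locally compact Hausdorff. Hausdorffness is part of the definition of a CW complex. For local compactness, given $x\in X$, local finiteness provides an open neighborhood $U$ of $x$ meeting only finitely many open cells $e_1,\ldots,e_k$; then $U\subseteq e_1\cup\cdots\cup e_k$, so $\overline{U}\subseteq\overline{e_1}\cup\cdots\cup\overline{e_k}$, which by closure finiteness lies in a finite subcomplex. A finite subcomplex is compact, and $\overline{U}$ is a closed subset of it, hence compact. Thus $\overline{U}$ is a compact neighborhood of $x$, and $X$ is locally compact.

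Next I would invoke the local regularity available in a locally compact Hausdorff space: for any point $x$ and any open neighborhood $W$ of $x$, there is an open set $V$ with $x\in V\subseteq\overline{V}\subseteq W$ and $\overline{V}$ compact. Applying this with $W=X\setminus A$ (open by the first paragraph) and an arbitrary $x\in X\setminus A$ produces a compact neighborhood $\overline{V}$ of $x$ lying entirely inside $X\setminus A$. Since $x$ was arbitrary, $X\setminus A$ is locally compact, completing the proof.

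The argument is short, and the only genuine obstacle is conceptual rather than technical: one must resist the temptation to argue cell-by-cell using the characteristic maps — which is exactly where local compactness genuinely fails for non-subcomplex deletions, cf. Example \ref{nonlocally_compact} — and instead recognize that the subcomplex hypothesis collapses everything to the clean open-subspace statement. The one point requiring care is the passage from ``$X$ is locally compact Hausdorff'' to the stronger ``every neighborhood contains a neighborhood with compact closure,'' but this is the standard local regularity of locally compact Hausdorff spaces and requires no special feature of CW complexes.
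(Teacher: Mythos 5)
Your proof is correct and takes essentially the same route as the paper's: both establish that $X$ is locally compact Hausdorff (from local finiteness) and that the subcomplex $A$ is closed, and then invoke the standard fact that an open subspace of a locally compact Hausdorff space is locally compact (the paper phrases this via Dugundji's characterization of locally compact subsets as differences of closed sets, of which the open-subspace statement is the special case $F_2=X$, $F_1=A$). The only difference is that you spell out the two ingredients the paper merely asserts, which is a reasonable elaboration rather than a different approach.
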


This is an immediate corollary to the following fact, which can be
found, for example, in Chapter XI of Dugundji's book
\cite{DugundjiTopology} as Theorem 6.5. 

\begin{lemma}
 Let $X$ be a locally compact Hausdorff space. A subspace $A\subset X$
 is locally compact if and only if there exist closed subsets
 $F_1,F_2\subset X$ with $A=F_2\setminus F_1$.
\end{lemma}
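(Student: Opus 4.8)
The plan is to prove the two implications separately, after first observing that the stated condition is exactly the assertion that $A$ is \emph{locally closed}. Indeed, writing $U = X \setminus F_1$, the hypothesis $A = F_2 \setminus F_1$ with $F_1, F_2$ closed is the same as $A = F_2 \cap U$ with $F_2$ closed and $U$ open. So the lemma is the familiar statement that, inside a locally compact Hausdorff space, local compactness of a subspace is equivalent to local closedness.

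For the direction ($\Leftarrow$), I would assume $A = F_2 \cap U$ with $F_2$ closed and $U$ open, and argue in two short steps using standard facts about local compactness. First, $F_2$ is a closed subspace of the locally compact space $X$, hence is itself locally compact; being a subspace of the Hausdorff space $X$, it is also Hausdorff. Second, $A = F_2 \cap U$ is an open subspace of $F_2$, and an open subspace of a locally compact Hausdorff space is locally compact. This direction is therefore essentially immediate.

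For the direction ($\Rightarrow$), I would show that a locally compact subspace $A$ of the Hausdorff space $X$ is open in its closure $\overline{A}$. This suffices: it gives $A = \overline{A} \cap U$ for some open $U$, so that $A = \overline{A} \setminus (X \setminus U)$ exhibits $A$ as a difference of the closed sets $F_2 = \overline{A}$ and $F_1 = X \setminus U$. To prove $A$ is open in $\overline{A}$, I would fix $x \in A$ and use local compactness to choose a compact neighborhood $K$ of $x$ in $A$; then there is an open set $W$ in $X$ with $x \in W \cap A \subseteq K$, and since $X$ is Hausdorff the compact set $K$ is closed in $X$.

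The crux is the elementary point-set fact that for any open $W$ and any subset $A$ one has $W \cap \overline{A} \subseteq \overline{W \cap A}$. Granting this, the computation $W \cap \overline{A} \subseteq \overline{W \cap A} \subseteq \overline{K} = K \subseteq A$ shows that $W \cap \overline{A}$ is a neighborhood of $x$ in $\overline{A}$ contained in $A$; as $x \in A$ was arbitrary, $A$ is open in $\overline{A}$. The main (and really only) obstacle is the interplay of this closure inclusion with the Hausdorff hypothesis, which is what guarantees $\overline{K} = K$; the rest is bookkeeping. I would therefore state and verify the closure inclusion first, since the entire forward implication rests on it.
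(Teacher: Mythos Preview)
Your proof is correct and is the standard argument for this classical fact. Note, however, that the paper does not actually prove this lemma: it simply cites it as Theorem 6.5 in Chapter XI of Dugundji's \emph{Topology}, and then uses it to deduce Proposition~\ref{complement_of_subcomplex}. So there is no proof in the paper to compare against; your argument supplies what the paper leaves to the reference.
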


\begin{proof}[Proof of Proposition \ref{complement_of_subcomplex}]
 Since $X$ is locally finite, it is locally compact. The CW condition
 implies that $A$ is closed in $X$.
\end{proof}

Let us go back to the discussion on products of quotient maps. The main
motivation of Michael for introducing bi-quotient maps is that 
they behave well with respect to products.

\begin{proposition}
 \label{biquotients_are_closed_under_product}
 For any family of bi-quotient maps $\{f_i : X_i\to Y_i\}_{i\in I}$, the
 product
 \[
  \prod_{i\in I} f_i : \prod_{i\in I} X_i \longrightarrow \prod_{i\in I}
 Y_i 
 \]
 is a bi-quotient map.
\end{proposition}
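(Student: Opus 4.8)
The statement to prove is Proposition~\ref{biquotients_are_closed_under_product}: for any family of bi-quotient maps $\{f_i : X_i\to Y_i\}_{i\in I}$, the product $\prod_{i\in I} f_i : \prod_{i\in I} X_i \to \prod_{i\in I} Y_i$ is bi-quotient.

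\textbf{Approach.} The plan is to verify the defining condition of Definition~\ref{bi-quotient_definition} directly. So fix a point $\bm{y}=(y_i)_{i\in I}\in\prod_i Y_i$ and an open covering $\mathcal{U}$ of the fiber $(\prod_i f_i)^{-1}(\bm{y})=\prod_i f_i^{-1}(y_i)$, and I must produce finitely many members of $\mathcal{U}$ whose images cover a neighborhood of $\bm{y}$. The key observation is that basic open sets in the product topology are products $\prod_i U_i$ where $U_i$ is open in $X_i$ and $U_i=X_i$ for all but finitely many $i$; so I may as well refine $\mathcal{U}$ to a covering by such basic boxes, since each member of $\mathcal{U}$ is a union of boxes and it suffices to cover the fiber by boxes each contained in some member of $\mathcal{U}$ (replacing a chosen box by the member of $\mathcal{U}$ containing it only enlarges images).

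\textbf{Key steps.} First I would reduce to the case where $\mathcal{U}$ consists of basic boxes $\prod_i U_i$ as above. Second, the heart of the argument is a ``tube lemma'' style compactness step, but note the fibers need not be compact, so I cannot invoke compactness of the whole fiber. Instead I would exploit that only finitely many coordinates are ``active'': for each box $B=\prod_i U_i$ in the refined cover, let $S(B)\subset I$ be the finite set of indices where $U_i\neq X_i$. The difficulty is that different boxes may have different active sets, so I would first choose, for the single point $\bm{y}$, a finite subfamily whose active indices are controlled. Concretely, I would argue coordinatewise: apply the bi-quotient hypothesis to each $f_i$ with the covering of $f_i^{-1}(y_i)$ induced by the projections of the chosen boxes, extract a finite subcover in each active coordinate, and then assemble a finite subfamily of boxes whose images cover a basic neighborhood $\prod_i V_i$ of $\bm{y}$ (with $V_i=Y_i$ off a finite set). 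The combinatorial bookkeeping — showing that the finitely many boxes obtained by combining the per-coordinate finite selections genuinely cover a product neighborhood of $\bm{y}$ — is the technical core.

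\textbf{Main obstacle.} The hard part is exactly this assembly step: the naive product of per-coordinate finite subcovers can fail to cover, because a box is a product and covering it requires matching choices across coordinates simultaneously, not independently. I expect the cleanest route is induction on the number of coordinates that are active among the boxes needed to cover $f_i^{-1}(y_i)$, reducing the infinite product to a finite product of the active factors times a single projection that is surjective (hence harmless), and then proving the finite-product case by iterating the two-factor case. The two-factor case $f_1\times f_2$ is where the bi-quotient definition does real work: given a cover of $f_1^{-1}(y_1)\times f_2^{-1}(y_2)$ by boxes, I fix the $X_1$-slice, use bi-quotientness of $f_2$ fiberwise, then use bi-quotientness of $f_1$ to absorb the finitely many resulting $X_1$-neighborhoods, producing a product neighborhood of $(y_1,y_2)$ covered by finitely many images. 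Once the two-factor case and the finite induction are in place, the passage to the arbitrary product is immediate since all but finitely many projections $f_i$ are surjective and contribute the full factor $Y_i$ to the neighborhood.
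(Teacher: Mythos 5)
Your slicing argument for two factors is correct, and it does give the case of finitely many factors: covering $f_1^{-1}(y_1)\times f_2^{-1}(y_2)$ by boxes, applying bi-quotientness of $f_2$ along each slice $\{x\}\times f_2^{-1}(y_2)$, intersecting the finitely many resulting first factors to get a neighborhood $U^x$ of $x$, and then applying bi-quotientness of $f_1$ to the cover $\{U^x\}_x$ produces finitely many boxes whose images cover a product neighborhood of $(y_1,y_2)$. The gap is your final sentence: the passage to an infinite product is not ``immediate'', and it is precisely the part of the statement your argument does not reach. Your proposed induction runs on the number of coordinates active among the boxes needed to cover the fiber, but this number need not be finite, because the fiber $\prod_i f_i^{-1}(y_i)$ need not be compact and no subfamily of $\mathcal{U}$ covering it need have finitely many active coordinates in total. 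Concretely, take $I=\mathbb{N}$, let $f_i\colon \mathbb{N}\times Y_i\to Y_i$ be the projections (open, hence bi-quotient by Lemma \ref{sufficient_conditions_for_bi-quotient}), so the fiber is $\mathbb{N}^{\mathbb{N}}$, and cover it by the boxes $B_{n,m}=\{\bm{x} : x_1=n,\ x_{n+1}=m\}$ for $n,m\in\mathbb{N}$: a fiber point with $x_1=n$ and $x_{n+1}=m$ lies in $B_{n,m}$ and in no other member, so the only subcover is the whole family, whose total active set is infinite. There is thus no finite subproduct to which the problem restricts and no finite parameter for your induction. (In this example the proposition holds trivially because the maps are open and a single box already has full image; but that rescue uses openness, which your argument never invokes and which is unavailable for general bi-quotient maps.)

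For comparison, the paper does not prove the statement at all; it cites Theorem 1.2 of Michael \cite{Michael1968-2}, whose proof goes through a different characterization of bi-quotient maps, and that characterization is exactly what makes infinitely many coordinates tractable: $f$ is bi-quotient if and only if, for every ultrafilter $\mathcal{G}$ on $Y$ converging to $y$, the filter on $X$ generated by $\{f^{-1}(G) : G\in\mathcal{G}\}$ has a cluster point in $f^{-1}(y)$. Granting this, the product case is proved coordinatewise, in exact parallel with the ultrafilter proof of Tychonoff's theorem: push the given ultrafilter forward along each projection, choose a cluster point $x_i\in f_i^{-1}(y_i)$ in each coordinate, and check that $(x_i)_i$ is a cluster point of the preimage filter, using that an ultrafilter contains either $p_i^{-1}(f_i(U_i))$ or its complement, the latter being incompatible with $x_i$ being a cluster point. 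If you want a self-contained proof, you should reproduce this filter argument; the covering definition alone does not localize to finitely many coordinates. Alternatively, note that your finite-product argument already suffices for every use of this proposition in the paper (Proposition \ref{relatively_compact_or_locally_polyhedral}, Theorem \ref{product_cylindrical_structure}, and Lemma \ref{cellular_stratification_on_BC} only ever take products of two, or finitely many, cell structure maps), so a statement restricted to finite families would serve the paper's purposes, though it is strictly weaker than what is claimed.
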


\begin{proof}
 Theorem 1.2 in \cite{Michael1968-2}.
\end{proof}

The following property is also useful when we study cell structures.

\begin{lemma}
 \label{Lindeloef_fiber}
 Let $f : X \to Y$ be a quotient map. Suppose that $Y$ is first
 countable and Hausdorff and that, for each $y\in Y$,
 $\partial f^{-1}(y)$ is Lindel{\"o}f. Then $f$ is bi-quotient.
\end{lemma}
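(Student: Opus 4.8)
The plan is to verify the bi-quotient condition (Definition \ref{bi-quotient_definition}) directly, by contradiction, using Lindel\"ofness to pass to a countable subcover and first countability to realize a failure as an honest convergent sequence in $Y$ whose total preimage is closed.

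First I would fix $y \in Y$ and an open cover $\mathcal{U}$ of $f^{-1}(y)$. Since $Y$ is Hausdorff, $\{y\}$ is closed, so $f^{-1}(y)$ is closed and $f^{-1}(y) = \Int(f^{-1}(y)) \cup \partial f^{-1}(y)$ is the disjoint union of its interior and boundary. The boundary is Lindel\"of by hypothesis, so I would extract a countable subfamily $U_1,U_2,\ldots \in \mathcal{U}$ covering $\partial f^{-1}(y)$, and set $V_N = U_1\cup\cdots\cup U_N$ together with the open set $P = \Int(f^{-1}(y)) \cup \bigcup_n U_n \supseteq f^{-1}(y)$. (If $\partial f^{-1}(y)=\emptyset$ the fiber is clopen, $y$ is isolated, and the condition is immediate, so I assume the boundary is nonempty; then $y \in f(V_N)$ for every $N$.) It suffices to prove that $f(V_N)$ is a neighborhood of $y$ for some $N$, since $U_1,\ldots,U_N$ is then the desired finite subfamily.

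Suppose no $f(V_N)$ is a neighborhood of $y$. Using a decreasing countable neighborhood basis $\{B_N\}$ at $y$ (first countability) and the $T_1$ property, I would choose \emph{distinct} points $y_N \in B_N \setminus f(V_N)$; these satisfy $y_N \to y$, $y_N \neq y$, and $f^{-1}(y_N) \cap V_N = \emptyset$. The key step is to show that the saturated set $F = \bigcup_N f^{-1}(y_N) = f^{-1}(\{y_N\})$ is closed in $X$. Continuity of $f$ and Hausdorffness of $Y$ give $\overline{F}\setminus F \subseteq f^{-1}(y)$, since any limit point of $F$ maps into $\overline{\{y_N\}} = \{y_N\}\cup\{y\}$. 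It then remains to check that no $x^\ast \in f^{-1}(y)$ is a limit point of $F$: if $x^\ast \in \Int(f^{-1}(y))$, that interior is a neighborhood of $x^\ast$ disjoint from $F$; if $x^\ast \in U_m$, then because $U_m \subseteq V_N$ forces $f^{-1}(y_N)\cap U_m = \emptyset$ for all $N \geq m$, one gets $U_m \cap F \subseteq \bigcup_{N<m} f^{-1}(y_N)$, a finite union of closed fibers not containing $x^\ast$, so removing it from $U_m$ yields a neighborhood of $x^\ast$ disjoint from $F$. Hence $F$ is closed.

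Finally, since $f$ is a quotient map and $F = f^{-1}(\{y_N\})$ is closed and saturated, $\{y_N\}$ must be closed in $Y$; but $y \in \overline{\{y_N\}}\setminus\{y_N\}$, a contradiction. Thus some $f(V_N)$ is a neighborhood of $y$, and $f$ is bi-quotient. The main obstacle is exactly the closedness of $F$ at boundary points of the fiber: a priori a point of $\partial f^{-1}(y)$ could be accumulated by the fibers $f^{-1}(y_N)$, and it is precisely the monotone bookkeeping $f^{-1}(y_N)\cap V_N=\emptyset$ — which confines the intersection of each $U_m$ with $F$ to the finitely many closed fibers $f^{-1}(y_1),\dots,f^{-1}(y_{m-1})$ — that rules this out. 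The two hypotheses enter exactly once each: Lindel\"ofness makes the index set countable so that this bookkeeping is available, and first countability converts the assumed failure into a genuine sequence $y_N \to y$.
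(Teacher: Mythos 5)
Your proof is correct, but note that the paper itself gives no argument here: its entire proof of Lemma \ref{Lindeloef_fiber} is the citation ``Proposition 3.3(d) in \cite{Michael1968-2}''. So you have supplied, in full, the argument the paper outsources to Michael, and it is essentially the classical one: Lindel{\"o}fness of $\partial f^{-1}(y)$ produces the countable family $\{U_n\}$, first countability converts the assumed failure of the neighborhood condition into a sequence $y_N\to y$ with $f^{-1}(y_N)\cap V_N=\emptyset$, that monotone bookkeeping makes the saturated set $F=f^{-1}(\{y_N\})$ closed (the boundary case being exactly the crux, which you handle correctly), and the quotient property then forces $\{y_N\}$ to be closed, contradicting $y\in\overline{\{y_N\}}\setminus\{y_N\}$. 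One detail you should make explicit: when extracting the countable subfamily, discard any $U_n$ that misses $\partial f^{-1}(y)$; only then does each $V_N$ meet the fiber, giving $y\in f(V_N)$ and hence $y_N\neq y$, without which the final contradiction evaporates (if some $y_N$ equals $y$, the set $\{y_N\}$ can perfectly well be closed). Conversely, the distinctness of the $y_N$ that you arrange via $T_1$ is never used: $y_N\neq y$ and $y_N\to y$ already give $\overline{\{y_N\}}=\{y_N\}\cup\{y\}$ (a convergent sequence together with its limit is compact, hence closed in a Hausdorff space) and the contradiction. As for what each approach buys: the paper's citation is economical, while your argument makes Appendix \ref{quotient_map} self-contained and displays exactly where each hypothesis --- quotient, Hausdorff, first countable, Lindel{\"o}f boundary --- enters.
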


\begin{proof}
 Proposition 3.3(d) in \cite{Michael1968-2}.
\end{proof}


\begin{corollary}
 \label{compact_fiber}
 Let $\varphi : D \to \overline{e}$ be a relatively compact cell.
 Then $\varphi$ is bi-quotient.
\end{corollary}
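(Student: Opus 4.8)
The plan is to recognize Corollary \ref{compact_fiber} as an immediate consequence of Lemma \ref{Lindeloef_fiber}, whose three hypotheses (quotient map, first-countable Hausdorff target, Lindel\"of fiber-boundaries) are each supplied either by the definition of a cell structure or by the relative-compactness assumption together with Lemma \ref{overlinee_is_metrizable}. So the proof should simply verify these hypotheses in order and then invoke the lemma.

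First I would record that $\varphi : D \to \overline{e}$ is a quotient map: this is part of the very definition of an $n$-cell structure (condition (1) of Definition \ref{cell_structure_definition}), so nothing needs to be proved here. Next, since $\varphi$ is relatively compact, each fiber $\varphi^{-1}(y)$ is compact for $y \in \overline{e}$; this is exactly the hypothesis of Lemma \ref{overlinee_is_metrizable}, which therefore tells us that $\overline{e}$ is metrizable. A metrizable space is in particular first countable and Hausdorff, which is precisely the condition Lemma \ref{Lindeloef_fiber} requires on the target space $Y = \overline{e}$.

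It then remains to check the fiber-boundary condition. For each $y \in \overline{e}$, the boundary $\partial\varphi^{-1}(y)$ is a closed subset of the compact set $\varphi^{-1}(y)$, hence is itself compact, and every compact space is Lindel\"of. With all three hypotheses of Lemma \ref{Lindeloef_fiber} now verified, I would conclude that $\varphi$ is bi-quotient, completing the proof.

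I do not expect any genuine obstacle: the substantive topological work has already been isolated into Lemma \ref{overlinee_is_metrizable} and Lemma \ref{Lindeloef_fiber}, and the corollary is a bookkeeping assembly of these facts. The only point worth stating carefully is that relative compactness feeds into \emph{both} the metrizability of $\overline{e}$ (via compactness of the whole fiber) and the Lindel\"of property of $\partial\varphi^{-1}(y)$ (via compactness of the closed fiber-boundary), so that a single hypothesis discharges two of the three conditions at once.
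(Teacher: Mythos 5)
Your proof is correct and follows exactly the paper's own argument: metrizability of $\overline{e}$ via Lemma \ref{overlinee_is_metrizable} supplies the first-countable Hausdorff hypothesis, compactness of the fiber boundaries supplies the Lindel\"of hypothesis, and Lemma \ref{Lindeloef_fiber} finishes. The only difference is that you spell out explicitly that $\varphi$ is a quotient map by Definition \ref{cell_structure_definition}, which the paper leaves implicit.
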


\begin{proof}
 By Lemma \ref{overlinee_is_metrizable}, $\overline{e}$ is first
 countable and Hausdorff. By assumption each fiber $\varphi^{-1}(y)$ is
 compact and so is the boundary $\partial \varphi^{-1}(y)$.
 The result follows from Lemma \ref{Lindeloef_fiber}.
\end{proof}

\section{Simplicial Topology}
\label{PL}

In this second appendix, we recall basic definitions and theorems in PL
(Piecewise Linear)
topology and simplicial homotopy theory used in this paper. Our
references are
\begin{itemize}
 \item the book \cite{Rourke-SandersonBook} of Rourke and
       Sanderson for PL topology, and
 \item the book \cite{Goerss-Jardine} by Goerss and Jardine for
       simplicial homotopy theory.
\end{itemize}

\subsection{Simplicial Complexes, Simplicial Sets, and Simplicial
  Spaces} 
\label{simplicial_complex}

Let us fix notation and terminology for simplicial complexes first. 
Good references are Dwyer's monograph \cite{Dwyer-Henn} and Friedman's
survey article \cite{0809.4221}.

\begin{definition}
 For a set $V$, the power set of $V$ is denoted by $2^V$.
\end{definition}

\begin{definition}
 Let $V$ be a set. An \emph{abstract simplicial complex} on $V$ is a
 family of subsets $K \subset 2^V$ satisfying the following condition:
 \begin{itemize}
  \item $\sigma\in K$ and $\tau\subset \sigma$ imply $\tau\in K$.
 \end{itemize}

 $K$ is called \emph{finite} if $V$ is a finite set.
\end{definition}

\begin{definition}
 An \emph{ordered simplicial complex} $K$ is an abstract simplicial
 complex whose vertex set $P$ is partially ordered in such a way that 
 the induced ordering on each simplex is a total order.

 An $n$-simplex $\sigma\in K$ with vertices $v_0<\cdots< v_n$ is denoted
 by $\sigma = [v_0,\ldots,v_n]$.
\end{definition}

There are several ways to define the geometric realization of an
abstract simplicial complex.

\begin{definition}
 For an abstract simplicial complex $K$ with vertex set $V$, define a
 space $\|K\|$ by
 \[
  \|K\| = \set{f\in \Map^f(V,\R)}{\sum_{v\in \sigma}f(v)=1, f(v)\ge 0,
 \sigma\in K},
 \]
 where $\Map^f(V,\R)$ is the set of maps from $V$ to $\R$ whose values
 are $0$ except for a finite number of elements. It is equipped with the
 compact-open topology.
 The space $\|K\|$ is called the \emph{geometric realization} of $K$.
\end{definition}

\begin{lemma}
 Suppose the vertex set $V$ of an abstract simplicial complex $K$ is
 finite. Choose an embedding 
 \[
  i : V \hookrightarrow \R^{N}
 \]
 for a sufficiently large $N$ so that the $i(V)$ is affinely
 independent. Then we have a homeomorphism
 \[
 \|K\| \cong \set{\sum_{v\in V} a_v i(v)}{\sum_{v\in \sigma} a_v=1, a_v\ge
 0, \sigma\in K}. 
 \]
\end{lemma}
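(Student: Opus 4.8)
The plan is to exhibit an explicit homeomorphism between the abstract geometric realization $\|K\|$ and the concrete ``affine'' realization sitting inside $\R^N$, and then to invoke compactness to upgrade the evident continuous bijection into a homeomorphism. First I would fix the finite vertex set $V=\{v_1,\ldots,v_m\}$ and the chosen embedding $i:V\hookrightarrow\R^N$ with $i(V)$ affinely independent. Denote by $\|K\|_{\mathrm{aff}}$ the right-hand side, the subspace of $\R^N$ consisting of convex combinations $\sum_{v\in V}a_v\, i(v)$ whose support $\{v: a_v\neq 0\}$ is a simplex of $K$. Define a map
\[
 \Psi : \|K\| \longrightarrow \|K\|_{\mathrm{aff}}, \qquad
 \Psi(f) = \sum_{v\in V} f(v)\, i(v).
\]
This is well defined because the conditions $\sum_{v\in\sigma}f(v)=1$, $f(v)\ge 0$, and $\supp(f)\subseteq\sigma\in K$ translate exactly into the defining conditions of $\|K\|_{\mathrm{aff}}$, and it is clearly continuous since each coordinate of $\Psi(f)$ is a finite linear combination of the values $f(v)$.

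Next I would verify that $\Psi$ is a continuous bijection. Surjectivity is immediate from the definitions. For injectivity, suppose $\Psi(f)=\Psi(g)$, i.e.\ $\sum_v f(v)\,i(v)=\sum_v g(v)\,i(v)$ with $\sum_v f(v)=\sum_v g(v)=1$ (using that both are supported on simplices). Then $\sum_v\bigl(f(v)-g(v)\bigr)i(v)=0$ with $\sum_v\bigl(f(v)-g(v)\bigr)=0$, and the affine independence of $i(V)$ forces $f(v)=g(v)$ for all $v$. Thus $\Psi$ is a continuous bijection, and its inverse sends a point $\sum_v a_v\,i(v)$ to the function $v\mapsto a_v$, the barycentric-coordinate map, which is again visibly the inverse of $\Psi$.

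The main obstacle is the topological comparison: a priori $\|K\|$ carries the compact-open topology as a subspace of $\Map^f(V,\R)$, while $\|K\|_{\mathrm{aff}}$ carries the subspace topology from $\R^N$, so I must argue that $\Psi$ is not merely a continuous bijection but a homeomorphism. Since $V$ is finite, $\Map^f(V,\R)=\Map(V,\R)\cong\R^m$ with the product (equivalently compact-open) topology, and $\|K\|$ is a closed bounded subset of this $\R^m$: it is the finite union over $\sigma\in K$ of the closed simplices $\{f : \supp(f)\subseteq\sigma,\ \sum_{v\in\sigma}f(v)=1,\ f(v)\ge 0\}$, each of which is compact, so $\|K\|$ is compact. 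The target $\|K\|_{\mathrm{aff}}$ is Hausdorff, being a subspace of $\R^N$. A continuous bijection from a compact space to a Hausdorff space is automatically a homeomorphism (it is a closed map). Hence $\Psi$ is the desired homeomorphism. I would close by remarking that both descriptions are therefore independent of the auxiliary embedding $i$ up to canonical homeomorphism, which is why one writes $\|K\|$ for either.
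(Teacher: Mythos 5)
Your proof is correct. The paper states this lemma in Appendix~\ref{PL} without any proof (it is a recalled standard fact), and your argument is the standard one: the barycentric-coordinate map $\Psi(f)=\sum_{v\in V}f(v)\,i(v)$ is a continuous bijection by affine independence, the finiteness of $V$ identifies $\Map^f(V,\R)$ with $\R^{|V|}$ and makes $\|K\|$ a finite union of compact simplices, and a continuous bijection from a compact space to a Hausdorff space is a homeomorphism.
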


\begin{example}
 Consider $2^{V}\setminus\{\emptyset\}$ for
 $V=\{0,\ldots,n\}$. This is an abstract simplicial complex.
 Then we have a homeomorphism 
 \[
 \left\|2^{V} \setminus \{\emptyset\}\right\| \cong
 \set{(t_0,\ldots,t_n)\in\R^{n+1}}{\sum_{i}t_i=1,t_i\ge 0}
 = \Delta^n.
 \]
 $\Delta^n$ is a convex polytope having $(n+1)$ codimension $1$
 faces. Each codimension $1$ face can be realized as the image of the map
 \[
  d^i : \Delta^{n-1} \longrightarrow \Delta^{n}
 \]
 defined by
 \[
  d^i(t_0,\ldots,t_{n-1}) = (t_0,\ldots, t_{i-1},0,t_{i},\ldots,t_{n-1}).
 \]
 We also have maps
 \[
  s^i : \Delta^{n} \longrightarrow \Delta^{n-1}
 \]
 defined by
 \[
  s^i(t_0,\ldots,t_n) = (t_0,\ldots, t_{i}+t_{i+1},t_{i+2},\ldots, t_n).
 \]
\end{example}

For an ordered simplicial complex $K$, we may forget the ordering and
apply the above construction. However, there is another construction.

\begin{definition}
 For an ordered simplicial complex $K$ with vertex set $V$ Let $K_n$ be
 the set of $n$-simplices in $K$. Each element $\sigma$ in $K_n$ can be
 written as $\sigma=(v_0,\ldots,v_n)$ with $v_0<\cdots<v_n$. Under such
 an expression, define
 \[
  d_i(\sigma) = (v_0,\ldots, v_{i-1},v_{i+1},\ldots,v_n).
 \]
 Define
 \[
  \|K\| = \quotient{\coprod_n K_n\times \Delta^n}{\sim},
 \]
 where the relation $\sim$ is generated by
 \[
  (\sigma,d^i(\bm{t})) \sim (d_i(\sigma),\bm{t}).
 \]
 This is called the \emph{geometric realization} of $K$.
\end{definition}

\begin{lemma}
 For a finite ordered simplicial complex, the above two constructions of 
 the geometric realization coincide.
\end{lemma}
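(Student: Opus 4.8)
The plan is to build an explicit continuous bijection from the second model onto the first and then upgrade it to a homeomorphism using compactness. Write $\|K\|_{1}$ for the first construction (the space of functions $f\in\Map^f(V,\R)$ with $\sum_{v\in\sigma}f(v)=1$, $f(v)\ge 0$, $\sigma\in K$) and $\|K\|_{2}$ for the quotient $\left(\coprod_n K_n\times\Delta^n\right)/\!\sim$. First I would define a map $\Theta:\coprod_n K_n\times\Delta^n\to\|K\|_{1}$ by sending a pair $(\sigma,\bm{t})$, with $\sigma=(v_0,\ldots,v_n)\in K_n$ and $\bm{t}=(t_0,\ldots,t_n)\in\Delta^n$, to the function $f_{\sigma,\bm{t}}$ determined by $f_{\sigma,\bm{t}}(v_i)=t_i$ and $f_{\sigma,\bm{t}}(v)=0$ for $v\notin\{v_0,\ldots,v_n\}$. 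Since $\sum_i t_i=1$ and $\sigma\in K$, the image indeed lies in $\|K\|_{1}$, and the restriction of $\Theta$ to each summand $\{\sigma\}\times\Delta^n$ is affine, hence continuous.

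Next I would check that $\Theta$ is constant on $\sim$-classes. Unravelling the defining relation $(\sigma,d^i(\bm{t}))\sim(d_i(\sigma),\bm{t})$ with $\sigma=(v_0,\ldots,v_n)$ and $\bm{t}=(t_0,\ldots,t_{n-1})$: on the left the inserted zero in $d^i(\bm{t})=(t_0,\ldots,t_{i-1},0,t_i,\ldots,t_{n-1})$ forces $f(v_i)=0$ while assigning the surviving coordinates $t_0,\ldots,t_{n-1}$ to $v_0,\ldots,v_{i-1},v_{i+1},\ldots,v_n$ in order; on the right $v_i$ is deleted from $\sigma$, so it lies outside the support of $d_i(\sigma)$, again giving $f(v_i)=0$ with the same values on the remaining vertices. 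The two functions coincide, so $\Theta$ descends to a continuous map $\theta:\|K\|_{2}\to\|K\|_{1}$ by the universal property of the quotient topology.

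Then I would prove $\theta$ is bijective via the support decomposition. For $f\in\|K\|_{1}$ the support $\tau(f)=\set{v}{f(v)>0}$ is contained in some simplex of $K$, hence is itself a simplex of $K$ by the downward-closure axiom; ordering its vertices and reading off the values of $f$ produces the unique pair $(\tau(f),\bm{t})$ with $\bm{t}\in\Int(\Delta^{\dim\tau(f)})$ that maps to $f$ under $\Theta$. Dually, every point of $\|K\|_{2}$ has a unique representative $(\sigma,\bm{t})$ with $\bm{t}\in\Int(\Delta^{\dim\sigma})$, obtained by collapsing the zero coordinates through the relations. Matching these two descriptions shows $\theta$ is a bijection.

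Finally, since $V$ is finite and $K$ has finitely many simplices, $\coprod_n K_n\times\Delta^n$ is a finite disjoint union of compact simplices, hence compact, so $\|K\|_{2}$ is compact; and by the immediately preceding lemma $\|K\|_{1}$ is homeomorphic to a subspace of some $\R^N$, hence Hausdorff. A continuous bijection from a compact space onto a Hausdorff space is a homeomorphism, so $\theta$ is the desired homeomorphism. The only delicate point is keeping the face-operator indices straight in the compatibility check with $\sim$ and verifying uniqueness of the nondegenerate representatives; both are elementary bookkeeping, so I expect no real obstacle.
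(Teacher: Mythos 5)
Your proof is correct and complete: the map $(\sigma,\bm{t})\mapsto f_{\sigma,\bm{t}}$ descends to a continuous bijection (the relation check, the support/interior-representative argument for bijectivity, and the compact-to-Hausdorff upgrade are all sound, and Hausdorffness of the function-space model also follows directly since for finite $V$ the space $\Map^f(V,\R)$ with the compact-open topology is just $\R^V$). The paper states this lemma without any proof, and your argument is precisely the standard one that fills that omission, so there is nothing in the paper to contrast it with.
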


The above construction can be extended to simplicial sets and simplicial
spaces.

\begin{definition}
 A \emph{simplicial set} $X$ consists of
 \begin{itemize}
  \item a sequence of sets $X_0,X_1,\ldots$, 
  \item a family of maps $d_i : X_n \to X_{n-1}$ for $0\le i\le n$,
  \item a family of maps $s_i : X_n \to X_{n+1}$ for $0\le i\le n$
 \end{itemize}
 satisfying the following relations
  \[
  \begin{cases}
   d_i\circ d_j = d_{j-1}\circ d_i, & i<j \\
   d_i\circ s_j = s_{j-1}\circ d_i, & i<j \\
   d_j\circ s_j  = 1 = d_{j+1}\circ s_j \\
   d_i\circ s_j  = s_j\circ d_{i-1}, & i>j+1 \\
   s_i\circ s_j = s_{j+1}\circ s_i, & i\le j.
  \end{cases}
 \]
 The maps $d_i$'s and $s_j$'s are called \emph{face operators} and
 \emph{degeneracy operators}, respectively.

 When each $X_n$ is a topological space and maps $d_i, s_j$ are
 continuous, $X$ is called a \emph{simplicial space}.
\end{definition}

\begin{remark}
 It is well known that defining a simplicial set $X$ is equivalent to
 defining a functor
 \[
  X : \Delta^{\op} \longrightarrow \Sets,
 \]
 where $\Delta$ is the full subcategory of the category of posets
 consisting of $[n]=\{0<1<\cdots<n\}$ for $n=0,1,2,\ldots$.
\end{remark}

\begin{example}
 \label{singular_simplicial_space}
 For a topological space $X$, define
 \[
  S_n(X) = \Map(\Delta^n,X).
 \]
 The operators $d^i$ and $s^i$ on $\Delta^n$ induce
 \begin{eqnarray*}
  d_i & : & S_n(X) \longrightarrow S_{n-1}(X) \\
  s_i & : & S_{n-1}(X) \longrightarrow S_n(X).
 \end{eqnarray*}
 When each $S_n(X)$ is equipped with the compact-open topology, these
 maps are continuous and we obtain a simplicial space $S(X)$. This is
 called the \emph{singular simplicial space}.

 Usually $S_n(X)$'s are merely regarded as sets and $S(X)$ is regarded
 as a simplicial set, in which case we denote it by $S^{\delta}_n(X)$.
\end{example}

\begin{example}
 \label{ordered_simplicial_complex_as_simplicial_set}
 Let $K$ be an ordered simplicial complex on the vertex set $V$. Define 
 \[
 s(K)_n = \lset{[\underbrace{v_0,\ldots,v_0}_{i_0},
 \underbrace{v_1,\ldots,v_1}_{i_1}, \ldots,
 \underbrace{v_k,\ldots,v_k}_{i_k}]}{\sigma=[v_0,\ldots,v_k]\in K,
 \sum_{j=0}^k i_j=n}. 
 \]
 Then the collection $s(K)=\{s(K)_n\}$ becomes a simplicial set. This is
 called the \emph{simplicial set generated} by $K$.
\end{example}

\begin{definition}
 The \emph{geometric realization} of a simplicial space $X$ is defined by
 \[
  |X| = \quotient{\coprod_{n} X_n\times \Delta^n}{\sim}
 \]
 where the relation $\sim$ is generated by
 \begin{eqnarray*}
  (x,d^i(\bm{t})) & \sim & (d_i(x),\bm{t}), \\
  (x,s^i(\bm{t})) & \sim & (s_i(x),\bm{t}).
 \end{eqnarray*}
 The map induced by the evaluation maps
 \[
  \Delta^n\times S_n(X) \longrightarrow X
 \]
 is denoted by
 \[
  \ev : |S(X)| \longrightarrow X.
 \]
\end{definition}

Note that the geometric realization of an ordered simplicial complex is
defined only by using face operators.

\begin{definition}
 \label{Delta-set_definition}
 A \emph{$\Delta$-set} $X$ consists of
 \begin{itemize}
  \item a sequence of sets $X_0,X_1,\ldots$, and
  \item a family of maps $d_i : X_n\to X_{n-1}$ for $0\le i\le n$,
 \end{itemize}
 satisfying the following relations
 \[
 d_i\circ d_j = d_{j-1}\circ d_i,
 \]
 for $i<j$. 

 When each $X_n$ is equipped with a topology under which $d_i$'s are
 continuous, $X$ is called a \emph{$\Delta$-space}.
\end{definition}

\begin{remark}
 A $\Delta$-set $X$ can be identified with a functor
 \[
  X : \Delta_{\inj}^{\op} \longrightarrow \Sets,
 \]
 where $\Delta_{\inj}$ is the subcategory of $\Delta$ consisting of
 injective maps. In particular, any simplicial set can be regarded as a
 $\Delta$-set. 
\end{remark}

\begin{definition}
 The \emph{geometric realization} of a $\Delta$-space $X$ is defined by
 \[
  \|X\| = \quotient{\coprod_n X_n\times \Delta^n}{\sim},
 \]
 where the relation $\sim$ is generated by
 \[
  (x,d^i(\bm{t})) \sim (d_i(x),\bm{t}).
 \]
\end{definition}

\begin{remark}
 Note that any simplicial space $X$ can be regarded as a
 $\Delta$-space. However the geometric realization of $X$ as a
 $\Delta$-space, $\|X\|$, is much larger than that of $X$ as a
 simplicial space. $\|X\|$ is often called the fat realization.
\end{remark}

In order to study the homotopy type of simplicial complexes, and more
generally, regular cell complexes, the notion of regular neighborhood is
useful. Let us recall the definition.

\begin{definition}
 \label{star_definition}
 Let $K$ be a cell complex. For $x\in K$, define
 \[
  \St(x;K) = \bigcup_{x\in\overline{e}} e.
 \]
 This is called the \emph{open star} around $x$ in $K$. For a subset
 $A \subset K$, define 
 \[
  \St(A;K) = \bigcup_{x\in A} \St(x;K).
 \]
 When $K$ is a simplicial complex and $A$ is a subcomplex,
 $\St(A;K)$ is called the \emph{regular neighborhood} of $A$ in $K$. 
\end{definition}

The regular neighborhood of a subcomplex is often defined in terms of
vertices.

\begin{lemma}
 Let $A$ be a subcomplex of a simplicial complex $K$. Then
 \[
  \St(A;K) = \bigcup_{v\in \sk_0(A)} \St(v;K).
 \]
\end{lemma}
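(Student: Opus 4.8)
The plan is to prove the two inclusions separately. The inclusion $\bigcup_{v\in\sk_0(A)}\St(v;K)\subseteq\St(A;K)$ is immediate: every vertex of $A$ is in particular a point of $A$, so $\sk_0(A)\subseteq A$, and the left-hand union is then a subunion of $\St(A;K)=\bigcup_{x\in A}\St(x;K)$ by the definition of the star of a subset.

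The substance is the reverse inclusion. First I would record the basic combinatorial fact about simplicial complexes that the open simplices partition $K$: every point lies in a unique open simplex, and for a point $y$ lying in the open simplex $e$ one has $y\in\St(v;K)$ precisely when $v$ is a vertex of $e$, equivalently $v\in\overline{e}$. Now take $y\in\St(A;K)$. By definition there is a point $x\in A$ and an open simplex $e$ with $y\in e$ and $x\in\overline{e}$. Let $\sigma$ be the unique open simplex containing $x$; since $A$ is a subcomplex, i.e.\ a family of simplices closed under passage to faces, $\sigma$ is a simplex of $A$ and hence so are all of its vertices, which therefore lie in $\sk_0(A)$.

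The key step is to observe that $x\in\overline{e}$ together with $x\in\sigma$ forces $\sigma$ to be a face of $e$. This uses that $\overline{e}$ is the union of the open simplices spanned by the nonempty subsets of the vertex set of $e$, together with the disjointness of these open faces: since $x$ lies in the open face $\sigma$ and simultaneously in $\overline{e}$, the face $\sigma$ must occur among the faces of $e$. Once $\sigma$ is a face of $e$, each vertex $v$ of $\sigma$ satisfies $v\in\overline{\sigma}\subseteq\overline{e}$, so $y\in e\subseteq\St(v;K)$ with $v\in\sk_0(A)$, exhibiting $y$ in the right-hand union. I expect the only real obstacle to be making this last combinatorial step fully precise, namely the identification of $\St(v;K)$ in terms of the unique open simplex through a point and the claim that a point of $\overline{e}$ lying in an open face $\sigma$ forces $\sigma\le e$; the remainder is routine bookkeeping with the definitions.
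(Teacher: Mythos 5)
Your proof is correct. The paper states this lemma in Appendix B without any proof (it is recorded there as a standard fact about regular neighborhoods), and your argument --- partitioning $|K|$ into disjoint open simplices, noting that the open simplex $\sigma$ through the witness point $x\in A$ belongs to $A$, and using disjointness of the open faces of $\overline{e}$ to force $\sigma$ to be a face of $e$ --- is precisely the standard argument the paper leaves implicit, so there is nothing to diverge from.
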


\subsection{Locally Cone-like Spaces}
\label{locally_cone-like_space}

Let us first define the operation $\ast$ on subsets of a Euclidean space.

\begin{definition}
 \label{join_definition}
 For subspaces $P, Q \subset \R^n$, define
 \[
  P\ast Q = \lset{(1-t)p+tq}{p\in P, q\in Q, 0\le t\le 1}.
 \]
 This is called the \emph{convex sum} or \emph{join} of $P$ and $Q$

 When $P$ is a single point $v$, $v\ast Q$ is called the \emph{cone} on
 $Q$ with vertex $v$.
\end{definition}

\begin{remark}
 When $P$ and $Q$ are closed and ``in general position'', $P\ast Q$
 agrees with the join operation.
\end{remark}

\begin{definition}
 \label{locally_cone-like_space_definition}
 We say a subspace $P \subset \R^n$ is \emph{locally cone-like}, if, for
 any $a\in P$, there exists a compact subset $L\subset P$ such that the
 cone $a\ast L$ is a neighborhood of $a$.
\end{definition}

\begin{remark}
 Locally cone-like spaces are called polyhedra in the
 Rourke-Sanderson book \cite{Rourke-SandersonBook}. 
\end{remark}

\begin{example}
 The half space $\R^n_{+}=\lset{\bm{x}\in\R^n}{x_n\ge 0}$ is locally
 cone-like. 
\end{example}

\begin{example}
 A vector (or affine) subspace of $\R^n$ is locally cone-like.
\end{example}

\begin{example}
 A convex polytope in $\R^n$ is locally cone-like.
\end{example}

\begin{lemma}
 The class of locally cone-like spaces is closed under the following
 operations:
 \begin{itemize}
  \item finite intersections, 
  \item finite products, and
  \item locally finite unions.
 \end{itemize}
\end{lemma}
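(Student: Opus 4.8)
The plan is to reduce all three statements to one elementary observation about the cone operation of Definition \ref{join_definition}: if $K\subset\R^N$ is compact and \emph{star-shaped} with respect to a point $a\in K$, meaning $[a,x]\subset K$ for every $x\in K$, then $a\ast K=K$. Indeed star-shapedness gives $a\ast K\subset K$ (each segment $[a,x]$ lies in $K$), while $K\subset a\ast K$ is the $t=1$ case. Consequently, to verify local cone-likeness (Definition \ref{locally_cone-like_space_definition}) at a point $a$ it suffices to exhibit a compact subset $N\subset P$ that is star-shaped with respect to $a$ and is a neighbourhood of $a$ in $P$; one may then simply take the link $L=N$. I would also use repeatedly that for compact $L$ the cone $a\ast L$ is itself compact (it is the image of $L\times[0,1]$ under $(\ell,t)\mapsto(1-t)a+t\ell$) and star-shaped with respect to $a$.

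First I would treat intersections and products, both by induction reducing to two factors. For $P,Q\subset\R^n$ locally cone-like and $a\in P\cap Q$, choose cone neighbourhoods $a\ast L_P\subset P$ and $a\ast L_Q\subset Q$; their intersection $N=(a\ast L_P)\cap(a\ast L_Q)$ is compact, is star-shaped with respect to $a$ (an intersection of sets star-shaped about the common centre $a$ is again such), and is a neighbourhood of $a$ in $P\cap Q$ (an intersection of neighbourhoods), so $N$ itself is the required cone neighbourhood. For $P\subset\R^m$, $Q\subset\R^n$ and $(a,b)\in P\times Q$, the product $N=(a\ast L_P)\times(b\ast L_Q)\subset\R^{m+n}$ is compact, is a neighbourhood of $(a,b)$ in $P\times Q$, and is star-shaped with respect to $(a,b)$ because each factor is star-shaped with respect to its own vertex; hence it is again a cone on itself.

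Finally the locally finite union, which I expect to be the main obstacle. Given a locally finite family $\{P_i\}$ of locally cone-like spaces with union $P$ and a point $a\in P$, local finiteness supplies an open $W\ni a$ meeting only finitely many members $P_{i_1},\dots,P_{i_k}$. For each $l$ with $a\in P_{i_l}$ I take a cone neighbourhood $a\ast L_l\subset P_{i_l}$; a finite union of compact sets that are star-shaped with respect to the common vertex $a$ is again compact and star-shaped about $a$, so $N=\bigcup_{l:\,a\in P_{i_l}}(a\ast L_l)$ is a cone on itself. The delicate point is the members $P_{i_l}$ with $a\notin P_{i_l}$: these contribute no vertex at $a$, and if $a$ lay in $\overline{P_{i_l}}$ such a piece could place points of $P$ arbitrarily close to $a$ that are not swept out by the segments of $N$. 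The plan is to dispose of them by shrinking $W$ so as to discard every piece whose closure avoids $a$ — this is where local finiteness is used in earnest, since only finitely many closures are in play — leaving $N$ as a genuine neighbourhood of $a$ in $P$. Verifying that the surviving pieces are exactly those needed, so that this shrinking is legitimate, is the crux of the argument and the step I would write out most carefully; the fact that the closure of a locally finite union is the union of the closures is the tool I would invoke here, and throughout I would follow the treatment in \cite{Rourke-SandersonBook}.
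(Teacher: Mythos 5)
The paper states this lemma without proof (it is background material in Appendix B, deferred to Rourke--Sanderson), so there is no argument in the text to compare against; your proposal must stand on its own. Its first two parts do: the reduction to the observation that a compact set which is star-shaped about $a$ and is a neighbourhood of $a$ in $P$ equals its own cone, and hence can serve as its own link, is correct, and the intersection and product arguments built on it are complete as written.

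The union case, however, has a genuine gap, and it sits exactly at the step you defer as ``the crux.'' Shrinking $W$ can only discard pieces $P_{i_l}$ with $a\notin\overline{P_{i_l}}$; a piece with $a\in\overline{P_{i_l}}\setminus P_{i_l}$ meets every neighbourhood of $a$, so no shrinking removes it, and its points near $a$ need not lie in $N=\bigcup_{l:\,a\in P_{i_l}}(a\ast L_l)$. This is not a technicality that more care can fix, because the statement is false as literally given. Concretely, in $\R$ take $P_1=\{0\}$ and $P_2=\{1/n \mid n\ge 1\}$. Each is locally cone-like: $P_2$ is discrete in the subspace topology, so for $a=1/n$ the singleton $L=\{a\}$ gives the cone neighbourhood $a\ast L=\{a\}$. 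The two-member family is trivially locally finite. But $P_1\cup P_2$ is not locally cone-like at $0$: if $L\subset P_1\cup P_2$ is compact and $0\ast L\subset P_1\cup P_2$, then $L$ contains no point $1/n$ (the segment $[0,1/n]$ is not contained in $P_1\cup P_2$), so $0\ast L=\{0\}$, which is not a neighbourhood of $0$ since the points $1/n$ accumulate there. The fact you planned to invoke, that the closure of a locally finite union is the union of the closures, is true but does not exclude this configuration.

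The repair is to add the hypothesis that each $P_i$ is closed in the union $P$ (equivalently, to read ``locally finite union'' as a locally finite family of closed pieces). Then $a\notin P_{i_l}$ forces $a\notin\overline{P_{i_l}}$, your shrinking discards every piece not containing $a$, and $N$ becomes a genuine cone neighbourhood, completing your argument. This stronger hypothesis holds in the only place the paper uses the union clause, namely the corollary that Euclidean polyhedral complexes are locally cone-like, where the pieces are finitely many compact convex polytopes and hence closed. So keep your intersection and product arguments verbatim, state the closedness assumption in the union clause, and your sketch becomes a correct proof.
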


\begin{corollary}
 Euclidean polyhedral complexes are locally cone-like.
\end{corollary}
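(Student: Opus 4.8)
The plan is to realize the underlying space $K$ as a finite union of convex polytopes sitting inside $\R^N$ and then invoke the closure properties of the class of locally cone-like spaces recorded in the preceding Lemma. Concretely, I would argue that $K = \bigcup_{i=1}^{n} \varphi_i(F_i)$, that each $\varphi_i(F_i)$ is a convex polytope and hence locally cone-like, and that a finite union of such spaces is locally finite, so the Lemma applies directly.

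First I would check the two routine ingredients. Since each $\varphi_i$ is an affine equivalence onto its image, the image $\varphi_i(F_i)$ of the convex polytope $F_i$ is again a convex polytope in $\R^N$; this is immediate, because an affine map carries the convex hull of a finite point set to the convex hull of the images of those points. By the example already noted (a convex polytope in $\R^n$ is locally cone-like), each $\varphi_i(F_i)$ is therefore locally cone-like. Because there are only finitely many generating polytopes, the family $\{\varphi_i(F_i)\}_{i=1}^{n}$ is (trivially) locally finite, and the closure of the class of locally cone-like spaces under locally finite unions then yields the conclusion.

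The one point that needs care — and the only real obstacle — is to justify the equality $K = \bigcup_{i=1}^{n} \varphi_i(F_i)$, since the defining condition only asserts $K = \bigcup_{i=1}^{n} \varphi_i(\Int F_i)$ in terms of the relative interiors. Here I would use that a Euclidean polyhedral complex is a regular cell complex, so that the closure in $K$ of each open cell $\varphi_i(\Int F_i)$ is contained in $K$; as this closure is exactly $\varphi_i(F_i)$, we obtain $\varphi_i(F_i) \subseteq K$ for every $i$, and combining with $\Int F_i \subseteq F_i$ gives the desired identification. Equivalently, one may invoke the standard convention that the collection of generating polytopes is closed under passage to faces, so that every face of each $F_i$ occurs among the $F_j$. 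Once this identification is in hand, the argument of the previous paragraph closes the proof immediately.
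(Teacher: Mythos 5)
Your proof is correct and is essentially the paper's intended argument: the corollary is stated as an immediate consequence of the preceding Example (convex polytopes are locally cone-like) and the Lemma's closure of the class under (locally) finite unions, which is exactly the route you take. Note only that the point you flag as needing care is automatic: in the definition of a Euclidean polyhedral complex each $\varphi_i$ is a map $F_i \to K$, so $\varphi_i(F_i)\subseteq K$ holds by definition, and $K=\bigcup_{i=1}^{n}\varphi_i(F_i)$ then follows trivially from $\Int(F_i)\subseteq F_i$, with no appeal to regularity of the cell structure or to closure of the generating family under faces.
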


The following theorem characterizes compact locally cone-like spaces.

\begin{theorem}
 Any compact locally cone-like space can be expressed as a union of
 finite number of simplices.
\end{theorem}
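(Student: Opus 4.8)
The final statement claims that any compact locally cone-like space $P \subset \R^n$ can be written as a finite union of simplices. This is Theorem 2.11 of Rourke--Sanderson, cited elsewhere in the paper, so the plan is to reconstruct its proof using the tools already available.

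The plan is to proceed by induction on the dimension $n$ of the ambient Euclidean space, or more efficiently by induction on the diameter of a suitable covering together with a compactness argument. First I would use the defining property of local cone-likeness: for each point $a \in P$ there is a compact subset $L_a \subset P$ such that the cone $a \ast L_a$ is a neighborhood of $a$ in $P$. By shrinking, one may arrange that $L_a$ is itself a compact locally cone-like space of strictly lower dimension (contained in a small sphere around $a$), so that the inductive hypothesis applies to $L_a$: it admits a finite triangulation. Then the cone $a \ast L_a$, being a cone on a finite simplicial complex, is itself triangulable as a finite simplicial complex with apex $a$, after subdividing so that the cone structure is compatible with the triangulation of $L_a$.

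The next step is to invoke compactness of $P$. The interiors of the neighborhoods $a \ast L_a$ (or slightly smaller open cones around each $a$) cover $P$, so finitely many of them, say with apexes $a_1, \dots, a_k$, already cover $P$. At this point $P$ is covered by finitely many triangulable pieces $a_i \ast L_{a_i}$, and the remaining task is to glue these local triangulations into a single global triangulation of $P$. The standard device is to pass to a common subdivision: one takes all the simplices arising from the finitely many local cone triangulations, intersects them pairwise, and shows that the resulting collection of cells can be simplicially subdivided so that any two pieces meet in a common subcomplex. This uses that the intersection of two locally cone-like (indeed, simplicially triangulated) pieces is again locally cone-like and of lower complexity, so one can triangulate intersections first and extend outward.

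The main obstacle, and the step requiring genuine care, will be precisely this gluing: ensuring that the finitely many local triangulations can be made mutually compatible so that they assemble into an honest simplicial complex structure on all of $P$, rather than merely covering $P$ by triangulated patches that overlap incompatibly. The key technical lemma needed is that the intersection of two simplices in general position (or after a small perturbation of the vertices) is again a simplex or a face, and that one can inductively triangulate the ``frontier'' cells before filling in the top-dimensional cells. I would handle this by first establishing that locally cone-like spaces are closed under finite intersection (already recorded in the preceding lemma) and then running the subdivision inductively on dimension, triangulating the lower-dimensional skeleta first and coning off to the apexes. Since all the data are finite after the compactness reduction, this process terminates and yields the desired finite simplicial decomposition of $P$.
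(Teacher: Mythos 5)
The paper never proves this statement: it is quoted verbatim from Rourke--Sanderson (Theorem 2.11 there, as the paper's own footnote in the proof of its Proposition on stellar structures indicates), so your attempt has to stand on its own, and it has a genuine gap at the inductive step. You take the link $L_a$ to be ``contained in a small sphere around $a$'' and then apply the inductive hypothesis to conclude that $L_a$ is a finite union of simplices. But a round sphere is strictly convex, hence contains no nondegenerate line segment, so any simplex contained in it is a single point; the inductive conclusion would force $L_a$ to be a finite set, which is absurd whenever $P$ has dimension at least $2$. The standard argument avoids exactly this trap by taking the link in the boundary of a small \emph{cube} (or simplex) $B$ centered at $a$: each face of $\partial B$ lies in a hyperplane, so $L_a = P\cap\partial B$ breaks into finitely many pieces each sitting inside a copy of $\R^{n-1}$, each piece is again compact and locally cone-like by the intersection lemma, and the induction is on the \emph{ambient} dimension $n$ --- which is well founded, unlike your alternative of inducting ``on the diameter of a suitable covering.'' The cube also guarantees that $a$ lies outside the affine hull of every simplex $\sigma\subset\partial B$, so $a\ast\sigma$ is a simplex, $P\cap B = a\ast L_a$ is a finite union of simplices, and compactness of $P$ finishes the proof.

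The second problem is that most of your effort goes into gluing the local triangulations into a single simplicial complex, which the statement does not ask for: it only claims $P$ is a \emph{union} of finitely many simplices, with no face-to-face compatibility required, so once each cone neighborhood $a_i\ast L_{a_i}$ is a finite union of simplices and finitely many of them cover $P$, you are done. Moreover, the key lemma you propose for the gluing is false as stated: the intersection of two simplices in general position is a convex polytope, not ``a simplex or a face,'' and perturbing vertices is not permissible, since you must express the given set $P$ rather than an approximation of it. (The stronger triangulation theorem is true, but it is proved by passing to a common refinement by convex cells and triangulating those cells without introducing general position --- this is the content of the separate subdivision theorem the paper cites as Theorem 2.14 of Rourke--Sanderson.)
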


\subsection{PL Maps Between Polyhedral Complexes}
\label{PL_map}

In PL topology, we study triangulated spaces. Following Rourke and
Sanderson, let us consider locally cone-like spaces. We also consider
polyhedral complexes in a Euclidean 
space.

\begin{definition}
 \label{PL_map_definition}
 Let $P$ and $Q$ be locally cone-like spaces. A map
 \[
  f : P \longrightarrow Q
 \]
 is said to be \emph{piecewise-linear (PL)} if, for each $a\in P$, there
 exists a cone neighborhood $N=a\ast L$ such that
 \[
  f(\lambda a+\mu x) = \lambda f(a)+\mu f(x)
 \]
 for $x\in L$ and $\lambda,\mu\ge 0$ and $\lambda+\mu=1$.
\end{definition}

\begin{lemma}
 PL maps are closed under the following operations:
 \begin{itemize}
  \item products,
  \item compositions,
  \item the cone construction.
 \end{itemize}
\end{lemma}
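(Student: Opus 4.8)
The plan is to verify piecewise-linearity directly from Definition \ref{PL_map_definition}, which is a purely local condition: for each of the three operations it suffices to produce, around each point of the source, a cone neighborhood on which the relevant map is cone-linear. The engine behind all three verifications is an elementary observation I would record first. If $f$ is cone-linear on a cone neighborhood $N=a\ast L$ in the sense of Definition \ref{PL_map_definition}, then in fact $f(\lambda a+\mu z)=\lambda f(a)+\mu f(z)$ holds for \emph{every} $z\in N$ (not merely $z\in L$) and all $\lambda,\mu\ge 0$ with $\lambda+\mu=1$. Indeed, writing $z=(1-r)a+r\ell$ with $\ell\in L$ and substituting gives $\lambda a+\mu z=(1-\mu r)a+\mu r\ell$, and applying cone-linearity to both sides yields the claim. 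Thus ``cone-linear along rays to $L$'' upgrades to ``affine on each segment emanating from $a$ inside $N$,'' which is exactly the flexibility needed to compose cones.

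For compositions, let $f:P\to Q$ and $g:Q\to R$ be PL and fix $a\in P$; set $b=f(a)$. I would choose a cone neighborhood $M=b\ast L_b$ on which $g$ is cone-linear and a cone neighborhood $N=a\ast L$ on which $f$ is cone-linear. Since $f(N)=b\ast f(L)$ need not lie in $M$, I would shrink $N$: for small $\epsilon>0$ the scaled link $L_\epsilon=\{a+\epsilon(\ell-a):\ell\in L\}$ gives a cone neighborhood $N_\epsilon=a\ast L_\epsilon\subseteq N$ whose image $f(N_\epsilon)=b\ast\{b+\epsilon(f(\ell)-b):\ell\in L\}$ shrinks toward $b$ and hence lies in $M$ once $\epsilon$ is small. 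For $x\in L_\epsilon$ the upgraded linearity gives $f(\lambda a+\mu x)=\lambda b+\mu f(x)$, a point of the segment $[b,f(x)]\subseteq M$; applying the upgraded linearity of $g$ on $M$ then yields $g(\lambda b+\mu f(x))=\lambda g(b)+\mu g(f(x))$, so $g\circ f$ is cone-linear on $N_\epsilon$.

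For products, let $f:P\to Q$, $g:P'\to Q'$ be PL and fix $(a,a')$. With cone neighborhoods $N=a\ast L$ and $N'=a'\ast L'$ as above, the key geometric point is that the product of two cones is again a cone: every ray from $(a,a')$ meets the set $K=(L\times N')\cup(N\times L')$ exactly once (the ray leaves $N\times N'$ at parameter $1/\max(\mu,\nu)$, where $\mu,\nu$ are the two cone coordinates), whence $N\times N'=(a,a')\ast K$ is a cone neighborhood of $(a,a')$ with $K$ compact, using compactness of $L,L'$ from Definition \ref{locally_cone-like_space_definition} and of the cones $N,N'$. On this cone, cone-linearity of $f\times g$ follows coordinatewise from the upgraded linearity of $f$ on $N$ and of $g$ on $N'$.

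For the cone construction, given PL $f:P\to Q$ and apexes $v,w$, I would show the induced map $Cf:v\ast P\to w\ast Q$ with $Cf(\lambda v+\mu p)=\lambda w+\mu f(p)$ is PL by splitting into two cases. Away from the apex, a neighborhood of a point $\lambda v+\mu p$ with $\mu>0$ is modeled on the join of the ray through $v$ with a cone neighborhood of $p$ in $P$, and cone-linearity of $Cf$ there combines the built-in linearity in the $v$-direction with the PL-ness of $f$ at $p$; at the apex $v$ one uses a cone neighborhood $v\ast L$ (with $L$ a compact link of $P$) on which $Cf$ is cone-linear by its very definition. The main obstacle is the product case: one must establish cleanly that the product of cone neighborhoods is a cone neighborhood with compact link and that cone-linearity survives the product, and the cone-construction apex analysis requires analogous care when $P$ fails to be compact. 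All remaining compatibilities are routine substitutions once the upgraded-linearity observation is in hand.
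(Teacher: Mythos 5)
The paper never proves this lemma: it sits in Appendix \ref{PL} as a recollection of standard facts, with Rourke--Sanderson \cite{Rourke-SandersonBook} as the reference, so your direct verification has to stand entirely on its own. Much of it does. The upgraded-linearity observation is correct and is the right engine. The product case is in fact \emph{complete} as you wrote it: the ray computation showing $N\times N'=(a,a')\ast\bigl((L\times N')\cup(N\times L')\bigr)$ with compact link, plus coordinatewise upgraded linearity, is precisely the clean argument, so your closing worry that the product case is ``the main obstacle'' is misplaced. The cone case is also fine once one adopts the standard convention (as in Rourke--Sanderson) that cones are taken on \emph{compact} bases: then at the apex the whole cone $v\ast P$ is a cone neighborhood on which $Cf$ is cone-linear by definition, and away from the apex your description works because for $z$ interior to $[v,p]$ one has $v\ast(p\ast L_p)=z\ast\bigl((v\ast L_p)\cup(p\ast L_p)\bigr)$, so the substitution you indicate goes through. (For non-compact $P$ the cone $v\ast P$ fails to be locally cone-like at $v$ at all, so the compact-base convention is not optional; it disposes of your caveat rather than leaving it open.)

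The genuine gap is in the composition case, at the step you pass over in a single clause: that the scaled cone $N_\epsilon=a\ast L_\epsilon$ is again a \emph{neighborhood} of $a$ in $P$. Under the paper's literal definitions this is false. Definition \ref{join_definition} imposes no joinability (unique-expression) condition on $\ast$, so Definition \ref{locally_cone-like_space_definition} allows the compact link $L$ to accumulate at the apex. Concretely, take $a=0$ in $\R^2$, $p_n=(1/n,1/n^2)$, and $P=\{a\}\cup\bigcup_n[a,p_n]$; this $P$ is compact and locally cone-like in the paper's sense (at $a$ take $L=\{a\}\cup\{p_n\mid n\ge 1\}$, which is compact with $a\ast L=P$; every other point has a neighborhood inside a single segment). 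For any $\epsilon<1$ the scaled cone $a\ast L_\epsilon=\{a\}\cup\bigcup_n[a,\epsilon p_n]$ misses every $p_n$, hence contains no $P$-neighborhood of $a$: your $N_\epsilon$ does not exist, and no cone neighborhood of $a$ in this $P$ admits your scaling. What makes the step true in the intended setting is that a genuine (joinable, Rourke--Sanderson) cone cannot have its compact link meet the apex --- if $\ell_n\in L$ and $\ell_n\to a$, closedness forces $a\in L$, destroying uniqueness of join expressions --- so $m=d(a,L)>0$; then for $U$ open with $a\in U\subseteq N$, any $x\in U\cap B(a,\epsilon m)$ written as $x=a+s(\ell-a)$ has $s\le|x-a|/m<\epsilon$, whence $U\cap B(a,\epsilon m)\subseteq N_\epsilon$ and $N_\epsilon$ is a neighborhood. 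So your composition argument is correct once you (i) invoke the joinability convention implicit in the cited reference and (ii) insert this short justification; as written, against the paper's definitions taken at face value, the crucial step fails.
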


Another important property of PL maps is the extendability.

\begin{lemma}
 \label{PL_map_is_extendalbe}
 Let $P$ be a convex polytope and 
 \[
  f : \Int P \longrightarrow \R^n
 \]
 be a PL map. Then it has a PL extension
 \[
  \tilde{f} : P \longrightarrow \R^n.
 \]
\end{lemma}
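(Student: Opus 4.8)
The plan is to extend $f$ across the boundary one closed cell at a time, exploiting that a PL map is, locally, a collection of affine maps. First I would reduce to the case where $P$ is full-dimensional: replacing $\R^N$ by the affine hull of $P$, I may assume $P$ is an $m$-dimensional convex polytope in $\R^m$ with $m=\dim P$, so that $\Int P$ is the (topological) interior. The goal is then to produce a \emph{finite} convex subdivision $\{C_1,\ldots,C_k\}$ of the \emph{closed} polytope $P$ that is adapted to $f$, meaning $f$ restricts to an affine map on $\Int P\cap \Int C_j$ for each $j$; granting such a subdivision, the extension and gluing are formal.

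The core step is constructing this adapted subdivision. By Definition \ref{PL_map_definition}, each $a\in\Int P$ has a cone neighborhood $a\ast L_a$ on which $f$ is conically affine. The set of points where $f$ coincides with a single affine map on a whole neighborhood is open, and its complement (the ``singularity locus'' of $f$) is, by the cone structure, a locally finite union of convex polyhedral pieces of dimension $<m$. These pieces cut $\Int P$ into relatively open convex regions on each of which $f$ is affine. I would then invoke the compactness of $P$ together with the cited fact that a compact locally cone-like space is a finite union of simplices, to extract from this a finite convex subdivision of $P$ refining the decomposition of $\Int P$.

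With the finite adapted subdivision $\{C_j\}$ in hand, the extension is routine. On each $C_j$ whose interior meets $\Int P$ in an $m$-dimensional set, $f$ agrees with the restriction of a unique affine map $A_j$, and I set $\tilde f|_{C_j}=A_j|_{C_j}$. If $C_i$ and $C_j$ share a face $F$, then approaching $F$ through $\Int P$ from either side shows $A_i$ and $A_j$ agree on $F$ by continuity of $f$, so the pieces $\tilde f|_{C_j}$ glue to a well-defined continuous map $\tilde f:P\to\R^n$ with $\tilde f|_{\Int P}=f$. Finally $\tilde f$ is PL: since $P$ is locally cone-like, every point has a cone neighborhood on which $\tilde f$ is affine along each cell of the finite subdivision, hence conically affine; closure of PL maps under the cone construction then gives PL-ness of $\tilde f$.

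The main obstacle is the second step: obtaining a \emph{finite} convex subdivision of the \emph{closed} polytope on which $f$ is affine, i.e.\ ruling out that the affine pieces of $f$ accumulate as one approaches $\partial P$ (as they could for a merely continuous piecewise-affine function on an interval). This is precisely where the compactness of $P$ must be combined with the conical — and therefore locally finite — local structure forced by Definition \ref{PL_map_definition}; once the finiteness near the boundary is secured, the affine extension and gluing described above present no difficulty.
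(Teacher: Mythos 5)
The paper never actually proves this lemma: it is stated in Appendix \ref{PL} as a background fact from PL topology (the neighboring Theorem \ref{simplicial_subdivision_of_PL_map} is cited to \cite{Rourke-SandersonBook}), so your proposal can only be checked against the statement and the paper's definitions. Checked that way, it has a genuine gap, located exactly at the step you yourself flag as the main obstacle: producing a \emph{finite} convex subdivision of the closed polytope adapted to $f$. Your claim that this finiteness follows from ``compactness of $P$ combined with the conical local structure'' cannot be made to work. Definition \ref{PL_map_definition} supplies cone neighborhoods only at points of $\Int P$; these neighborhoods cover the non-compact set $\Int P$ and say nothing about points of $\partial P$, so compactness of $P$ yields no finite subcover and no control of $f$ near the boundary. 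Local finiteness of the affine pieces inside $\Int P$ is perfectly compatible with those pieces accumulating at $\partial P$.

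Worse, the accumulation you need to rule out can genuinely occur, so this step does not just lack a proof --- it is false under the stated hypotheses. Take $P=[0,1]$ and let $f:(0,1)\to\R$ be affine on each interval $[1/(n+1),1/n]$ with $f(1/n)=(-1)^n$ for $n\ge 2$, and constant on $[1/2,1)$. Every point of $(0,1)$ has a cone neighborhood on which $f$ is conical, so $f$ is PL in the sense of Definition \ref{PL_map_definition}; yet $\limsup_{x\to 0}f(x)=1$ while $\liminf_{x\to 0}f(x)=-1$, so $f$ admits no continuous extension to $[0,1]$, let alone a PL one. Even a damped variant with $f(1/n)=(-1)^n/n$, which does extend continuously, is not conical at $0$ on any neighborhood, so the extension is not PL. Consequently any correct argument must use input beyond Definition \ref{PL_map_definition} --- for instance that $f$ is affine on each stratum of a \emph{finite} polyhedral decomposition of $\Int P$, or that the closure of the graph of $f$ in $P\times\R^n$ is a compact locally cone-like set (this is the situation in the paper's actual applications, where the maps being extended are restrictions of maps already defined and controlled on compact polyhedral complexes). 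Your appeal to the theorem that compact locally cone-like spaces are finite unions of simplices would have to be applied to that graph closure, and nothing in the hypotheses as stated guarantees the closure is locally cone-like.
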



\begin{theorem}
 \label{simplicial_subdivision_of_PL_map}
 Let $K$ and $L$ be Euclidean polyhedral complexes. For any PL map 
 \[
  f : K \longrightarrow L,
 \]
 there exist simplicial subdivisions $K'$ and $L'$ of $K$ and $L$,
 respectively, such that the induced map
 \[
  f : K' \longrightarrow L'
 \]
 is simplicial.
\end{theorem}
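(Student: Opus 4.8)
Theorem \ref{simplicial_subdivision_of_PL_map} is the classical Simplicial Approximation Theorem for PL maps, and I plan to prove it following the strategy of Rourke and Sanderson. The overall idea is to reduce to the simplex-by-simplex situation: a PL map is affine on each piece of some decomposition of its source, so the obstruction is purely one of finding a common subdivision on which both source and target become simplicial and on which $f$ carries simplices affinely to simplices.

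First I would replace the given polyhedral structures on $K$ and $L$ by simplicial ones: since $K$ and $L$ are Euclidean polyhedral complexes, each generating polytope can be triangulated without introducing new vertices in the interiors of faces (for instance by coning from a chosen vertex, or by the standard pulling triangulation), and these triangulations can be chosen compatibly on shared faces so as to produce genuine simplicial subdivisions $K_1$ of $K$ and $L_1$ of $L$. This reduces the problem to the case where $K$ and $L$ are already simplicial complexes. Next, using the definition of PL map (Definition \ref{PL_map_definition}) together with the fact that PL maps are closed under the listed operations, I would observe that $f$ is affine on the pieces of a sufficiently fine subdivision of $K_1$; concretely, the key point is that for each simplex $\sigma$ of $K_1$, the restriction $f|_\sigma$ is a PL map into $L_1$, so by compactness $\sigma$ can be cut into finitely many cells on each of which $f$ is affine.

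The heart of the argument is then to arrange that the images of these affine pieces land \emph{within} simplices of a subdivision $L'$ of $L_1$, and that the preimages of the simplices of $L'$ give a subdivision $K'$ of $K_1$ on which $f$ is simplicial. I would build $L'$ first as a subdivision fine enough that $f(\sigma)$ meets the open simplices of $L'$ in a controlled way, then define $K'$ by pulling back: the cells of $K'$ are the nonempty intersections $\tau \cap f^{-1}(\rho)$ as $\tau$ ranges over simplices of (a subdivision of) $K_1$ and $\rho$ over simplices of $L'$. Because $f$ is affine on each $\tau$ and each $\rho$ is convex, these intersections are convex polytopes, hence can be triangulated; and by construction $f$ maps each resulting simplex affinely into a single simplex of $L'$, i.e. $f : K' \to L'$ is simplicial. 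The compatibility of the triangulations across faces must be checked so that $K'$ and $L'$ are honest simplicial subdivisions rather than mere cell decompositions, and this is where one uses the finiteness of the polyhedral complexes.

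The main obstacle I anticipate is the bookkeeping in the mutual-refinement step: one cannot simply subdivide $L$ and pull back, because the pullback cells need not be simplices and because a subdivision of $K$ may force a further refinement of $L$, potentially in a non-terminating loop. The standard way around this is to exploit that $f$ is affine on each piece, so that the pullback of a \emph{linear} subdivision is again linear and no infinite regress occurs; making this precise — i.e. showing that after finitely many refinements the process stabilizes and yields genuine simplicial structures on both ends — is the technical crux. Since this is exactly the content of the simplicial approximation results in \cite{Rourke-SandersonBook}, I would invoke their triangulation machinery for the convexity-to-simplex step and for the claim that compact locally cone-like spaces are finite unions of simplices, and otherwise assemble the pieces as above.
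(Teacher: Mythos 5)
The paper does not actually prove this statement: its entire proof reads ``By Theorem 2.14 in \cite{Rourke-SandersonBook}'', so the theorem is quoted background, and your sketch has to be judged against the classical argument rather than against anything in the paper. Your overall strategy --- triangulate the polyhedral structures, use the fact that a PL map on a finite complex is affine on the cells of some decomposition, refine the target so the images of those cells are respected, pull back, and triangulate --- is the standard route, and it is essentially how the relevant sub-lemma in Rourke--Sanderson is proved. (Their own write-up reduces further to a globally linear map via the graph trick: the graph $\Gamma_f \subset K \times L$ is a polyhedron, one triangulates it so that the linear projection to $L$ becomes simplicial, and then transports the triangulation of $\Gamma_f$ along the PL homeomorphism $\Gamma_f \to K$.)

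However, as written there is one genuine gap, and it is not the one you flag. You conclude from ``$f$ maps each resulting simplex affinely \emph{into} a single simplex of $L'$'' that $f : K' \to L'$ is simplicial. That inference is false: simpliciality requires each simplex of $K'$ to map \emph{onto} a face of a simplex of $L'$ with vertices going to vertices, and an affine embedding of a simplex into the interior of a target simplex is not a simplicial map. To close the gap one must order the construction as follows: (i) refine $L$ so that the $f$-images of all cells of the affine decomposition of $K$, and of all their faces, are subcomplexes; (ii) triangulate this refinement of $L$ first, and only then triangulate the pullback cells $\tau \cap f^{-1}(\rho)$, in both cases \emph{without introducing new vertices} (e.g.\ by pulling triangulations); and (iii) observe that, because of (i), every vertex of a pullback cell is sent by $f$ to a vertex of $L'$, so each simplex of the resulting $K'$ maps affinely to a simplex of $L'$ carrying vertices to vertices --- and such a map is automatically simplicial. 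Once the steps are ordered this way (target refined first, source pulled back second), the ``infinite regress'' you identify as the technical crux never starts: there are exactly two refinements followed by two triangulations, not a potentially non-terminating loop, so the stabilization problem you propose to solve is not where the difficulty actually lies.
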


\begin{proof}
 By Theorem 2.14 in \cite{Rourke-SandersonBook}.
\end{proof}

\section{Topological Categories}
\label{topological_category}

In this third appendix, we recall basics of topological categories. Our
references are Segal's paper \cite{SegalBG} and the article by Dwyer in
\cite{Dwyer-Henn}. 

\subsection{Topological Acyclic Categories}
\label{topological_acyclic_category}

\begin{definition}
 \label{quiver_definition}
 A \emph{topological quiver} $Q$ is a diagram of spaces of the form
 \[
  s,t : Q_1 \longrightarrow Q_0.
 \]
 For a topological quiver $Q$, define
 \[
 N_n(Q)= \lset{(u_n,\ldots,u_1)}{s(u_n)=t(u_{n-1}), \ldots,
 s(u_{2})=t(u_{1})}. 
 \]
 An element of $N_n(Q)$ is called an \emph{$n$-chain} of $Q$.
\end{definition}

\begin{definition}
 \label{topological_category_definition}
 A \emph{topological category} $C$ is a topological quiver equipped with
 two more maps
 \begin{eqnarray*}
  i & : & C_0 \longrightarrow C_1 \\
  \circ & : & N_2(C) \longrightarrow C_1
 \end{eqnarray*}
 making the following diagrams commutative
 \begin{enumerate}
  \item 
	\[
	 \begin{diagram}
	  \node{N_3(C)} \arrow{e,t}{\circ\times 1}
	  \arrow{s,l}{1\times\circ} 
	  \node{N_2(C)} \arrow{s,r}{\circ} \\ 
	  \node{N_2(C)} \arrow{e,b}{\circ} \node{C_1,}
	 \end{diagram}
	\]
  \item 
	\[
	 \begin{diagram}
	  \node{C_1} \arrow{se,=} \arrow{e,t}{i\times 1}
	  \node{N_2(C)} \arrow{s,r}{\circ} 
	  \node{C_1} \arrow{w,t}{1\times i} \arrow{sw,=} \\
	  \node{} \node{C_1}
	 \end{diagram}
	\]
 \end{enumerate}
 Elements of $C_0$ are called \emph{objects}. An element $u\in C_1$ with 
 $s(u)=x$ and $t(u)=y$ is called a \emph{morphism} from $x$ to $y$ and
 is denoted by $u : x \to y$. The subspace of morphisms from $x$ to $y$
 is denoted by $C(x,y)$, i.e.\ $C(x,y)= s^{-1}(x)\cap t^{-1}(y)$. For
 $x\in C_0$, $i(x)$ is called the \emph{identity morphism} on $x$ and is
 denoted by $1_x : x \to x$.

 When $C_0$ has the discrete topology, $C$ is called a
 \emph{top-enriched category}.
\end{definition}

\begin{definition}
 \label{topological_acyclic_category_definition}
 An \emph{acyclic topological category} is a top-enriched category $C$
 in which, for any pair of
 distinct objects $x,y\in C_0$, either $C(x,y)$ or $C(y,x)$ is empty and,
 for any object $x\in C_0$, $C(x,x)$ consists of the identity morphism.
\end{definition}

\begin{remark}
 When the topology of $C_{0}$ is not discrete, we need to assume that
 $C_1$ decomposes into a disjoint union of $i(C_0)$ and its complement.
\end{remark}

The following fact is well known.

\begin{lemma}
 For an acyclic topoloical category $C$, define a relation $\le$ on
 $C_0$ as follows: 
 \begin{quote}
  $x\le y$ if and only if $C(x,y)\neq\emptyset$.
 \end{quote}
 Then the relation $\le$ is a partial order on $C_0$.
\end{lemma}

\begin{definition}
 \label{underlying_poset}
 For an acyclic topological category $C$, the poset $(C_0,\le)$ is
 called the \emph{underlying poset} of $C$ and is denoted by $P(C)$. The
 canonical functor from $C$ to $P(C)$ is denoted by
 \[
 \pi_{C} : C \longrightarrow P(C).
 \]
\end{definition}

In this paper, we are concerned with cellular structures.

\begin{definition}
 \label{cellular_category_definition}
 A topological category $C$ is said to be \emph{cellular} if both $C_0$
 and $C_1$ have structures of cellular stratified spaces and the
 structure maps $s,t,i,\circ$ are morphisms of cellular stratified
 spaces. 
\end{definition}

\begin{remark}
 In the above definition, we assume that each $C(y,z)\times C(x,y)$ is a
 cellular stratified space under the product stratification. See
 Proposition \ref{relatively_compact_or_locally_polyhedral} and
 discussions in \S\ref{product} for conditions under which the
 bi-quotient assumption in Lemma \ref{cellular_stratification_on_BC} is
 satisfied. 

 When $C_0$ is not discrete it is not easy to define cellularness, since
 the pullback of stratified spaces over a stratified space 
 may not be a stratified space in general.
\end{remark}

Functors between topological categories are always assumed to be
continuous.

\begin{definition}
 A \emph{continuous functor} $f$ from a topological category $C$ to
 another $D$ is a pair 
 $f=(f_0,f_1)$ of continuous map
 \begin{eqnarray*}
  f_0 & : & C_0 \rarrow{} D_0 \\
  f_1 & : & C_1 \rarrow{} D_1
 \end{eqnarray*}
 that are compatible with all structure maps of topological category.
\end{definition}

Topological categories defined in Definition
\ref{topological_category_definition} are usually called small
topological categories, in the sense that collections of objects and
morphisms form sets. In this paper the only category which is not small
is the category $\Spaces$ of topological 
spaces and continuous maps. The continuity of a functor to
$\Spaces$ is defined as follows.

\begin{definition}
 \label{continuous_functor_to_Top}
 Let $C$ be a (small) top-enriched category. A functor
 $f: C\to \Spaces$ is said to be \emph{continuous} if, for each
 pair $x,y\in C_0$ of objects, the adjoint
 \[
  C(x,y)\times f(x) \rarrow{} f(y)
 \]
 to the map $f(x,y): C(x,y)\to \Map(f(x),f(y))$ is continuous.

 For a continuous functor $f: C\to \Spaces$,
 define $\tot(f)=\coprod_{x\in C_0} f(x)$. The canonical projection onto
 $C_0$ is denoted by $\pi_{f}: \tot(f)\to C_0$. Define
 $C_1\Box_{C_0}\tot(f)$ by the following pullback diagram
 \[
  \begin{diagram}
   \node{C_1\Box_{C_0}\tot(f)} \arrow{s} \arrow{e,t}{s\Box 1}
   \node{\tot(f)} 
   \arrow{s,r}{\pi_{f}} \\ 
   \node{C_1} \arrow{e,b}{s} \node{C_0.}
  \end{diagram}
 \]
 Then the \emph{colimit} of $f$, $\colim_{C} f$, is defined by the
 following coequalizer diagram 
 \[
 \xymatrix{
 C_{1}\Box_{C_0}\tot(f)
 \ar@<1ex>[r]^(.55){s\Box 1} \ar@<-1ex>[r]_(.55){\mu_{f}}
 & \tot(f) \ar[r]^{p_f} & \displaystyle\colim_{C} f,
 }
 \]
 where $\mu_{f}$ is defined on each component by the adjoint
 $C(x,y)\times f(x) \rarrow{} f(y)$ to $f(x,y): C(x,y)\to\Map(f(x),f(y))$.
\end{definition}
\subsection{Nerves and Classifying Spaces}
\label{nerve}

One of the most fundamental facts is the collection
$N(X)=\{N_n(X)\}_{n\ge 0}$ defined in Definition \ref{quiver_definition}
forms a simplicial space.

\begin{lemma}
 For a topological category $X$, we have
 \[
  N_n(X) = \Funct([n],X),
 \]
 where the poset $[n]=\{0,\ldots,n\}$ is regarded as a topological
 category (with discrete topology). Thus $N(X)$ defines a functor
 \[
  N(X) : \Delta^{\op} \longrightarrow \Spaces.
 \]
 In other words, $N(X)$ is a simplicial space for any topological
 category $X$.
\end{lemma}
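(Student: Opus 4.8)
The statement to prove is that for any topological category $X$, the collection $N(X) = \{N_n(X)\}_{n \ge 0}$ assembles into a simplicial space, equivalently a functor $N(X) : \Delta^{\op} \to \Spaces$, with the identification $N_n(X) = \Funct([n], X)$.

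The plan is to proceed in two stages. First I would establish the bijection $N_n(X) \cong \Funct([n], X)$ at the level of the underlying sets (and then observe it is a homeomorphism). An element of $N_n(X)$ is by Definition \ref{quiver_definition} a composable chain $(u_n, \ldots, u_1)$ with $s(u_{k}) = t(u_{k-1})$. On the other hand, a functor $F : [n] \to X$ from the totally ordered set $[n] = \{0 < 1 < \cdots < n\}$, regarded as a category with a unique morphism $i \to j$ whenever $i \le j$, is determined by the images $F(k) \in X_0$ of the objects together with the images of the generating morphisms $(k-1 \to k)$. Setting $u_k = F(k-1 \to k)$ gives a composable chain, and conversely a composable chain determines a functor by sending the morphism $i \to j$ to the composite $u_j \circ \cdots \circ u_{i+1}$ (and $i \to i$ to the identity $1_{F(i)}$). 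Functoriality of this assignment is exactly the associativity and unit axioms packaged in Definition \ref{topological_category_definition}. I would note that $N_n(X)$ is topologized as a subspace of $X_1^n$ via the fiber-product description, and that this subspace topology agrees with the natural topology on $\Funct([n], X)$, so the bijection is a homeomorphism; since $[n]$ carries the discrete topology, $\Funct([n], X)$ inherits its topology from the spaces $X_0, X_1$ in the same way.

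Second, I would construct the simplicial structure by specifying the face and degeneracy operators and checking the simplicial identities. The cleanest route is to use the functor description: any order-preserving map $\theta : [m] \to [n]$ induces, by precomposition, a map $\theta^* : \Funct([n], X) \to \Funct([m], X)$, and this assignment is contravariantly functorial because $(\theta \circ \theta')^* = \theta'^* \circ \theta^*$ and $(\id_{[n]})^* = \id$. Concretely the cofaces $d^i : [n-1] \to [n]$ and codegeneracies $s^i : [n+1] \to [n]$ in $\Delta$ induce the face operators $d_i = (d^i)^*$ and degeneracy operators $s_i = (s^i)^*$; on a chain $(u_n, \ldots, u_1)$ the inner faces compose adjacent morphisms via $\circ$, the outer faces delete the first or last morphism, and the degeneracies insert an identity via $i : X_0 \to X_1$. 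I would verify these are continuous (they are built from the continuous structure maps $s, t, i, \circ$) and that the simplicial identities hold; but rather than checking each identity by hand, I would invoke the fact that they follow automatically from the cosimplicial identities among the $d^i, s^i$ in $\Delta$ together with contravariant functoriality of $\theta \mapsto \theta^*$.

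The main obstacle — really the only nontrivial point — is confirming that the functor-precomposition description genuinely produces \emph{continuous} operators whose targets land in $N_m(X)$, and that the topology matches. This amounts to checking that precomposition with a fixed $\theta$, when written out in terms of the composable-chain coordinates, is a polynomial expression in the continuous maps $s, t, i, \circ$ and hence continuous, and that the resulting tuple again satisfies the fiber-product (composability) constraints defining $N_m(X)$. Once the identification $N_n(X) = \Funct([n], X)$ is set up carefully, the simplicial identities are formal and require no computation, so I would state them as a consequence of functoriality and omit the routine verification. This reduces the whole statement to the bijection of the first stage plus the observation that $N(-) = \Funct(-, X) \circ (\text{inclusion } \Delta \hookrightarrow \Cat)$ is manifestly a functor $\Delta^{\op} \to \Spaces$.
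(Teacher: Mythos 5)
Your proposal is correct. Note that the paper itself offers no proof of this lemma: it is stated in Appendix \ref{nerve} as a well-known fundamental fact, so there is no argument of the author's to compare against. Your proof is the standard one — identifying $N_n(X)$ with $\Funct([n],X)$ via the observation that the poset $[n]$ is freely generated under composition by the morphisms $k-1\to k$, then obtaining the face and degeneracy operators by precomposition along the cofaces and codegeneracies of $\Delta$, so that the simplicial identities are inherited formally from the cosimplicial identities; the only point requiring inspection is continuity, which you correctly reduce to the fact that each operator, written in chain coordinates, is assembled from the continuous structure maps $s$, $t$, $i$, $\circ$ and projections, and lands in the fiber product $N_m(X)\subset X_1^m$ because continuity into a subspace of a product is checked coordinatewise.
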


\begin{definition}
 \label{nerve_definition}
 The simplicial space $N(X)$ is called the \emph{nerve} of $X$.
 The source and the
 target maps on $X$ can be extended to
 \[
  s,t : N_k(X) \longrightarrow X_0
 \]
 by $s(\bm{f})=f(0)$ and $t(\bm{f})=f(k)$, respectively. These are also
 called the \emph{source and target maps}.
\end{definition}

\begin{definition}
 The geometric realization of $N(X)$ is called the \emph{classifying
 space} of $X$ and is denoted by $BX$.
\end{definition}

When we form the geometric realization of a simplicial space,
nondegenerate chains are essential.

\begin{definition}
 For a topological category $X$, define
 \[
  \overline{N}_n(X) = N_n(X)\setminus \bigcup_i s_i(N_{n-1}(X)).
 \]
 Elements of $\overline{N}_n(X)$ are called \emph{nondegenerate}
 $n$-chains. 
\end{definition}

\begin{lemma}
 When $P$ is a poset regarded as a small category,
 $\overline{N}(P) = \left\{\overline{N}_n(P)\right\}$ is an ordered
 simplicial complex and we have
 \[
  BP = \left\|\overline{N}(P)\right\|.
 \]
\end{lemma}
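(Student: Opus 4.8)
The plan is to establish the two assertions of the lemma in turn, first identifying the nondegenerate nerve $\overline{N}(P)$ with the order complex of $P$, and then showing that its realization recovers $BP$. To begin I would compute the nondegenerate chains explicitly. Viewing $P$ as a category, an element of $N_n(P) = \Funct([n],P)$ is a chain $x_0 \le x_1 \le \cdots \le x_n$, and the degeneracy operator $s_i$ sends such a chain to the one obtained by repeating $x_i$. Hence $\bm{x} \in N_n(P)$ lies in the image of some $s_i$ if and only if $x_i = x_{i+1}$ for some $i$, so $\overline{N}_n(P)$ consists precisely of the strictly increasing chains $x_0 < x_1 < \cdots < x_n$.

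Next I would exhibit the ordered simplicial complex structure. Take the vertex set to be the underlying set of $P$ with its partial order $\le$, and declare a finite subset of $P$ to be a simplex exactly when it is a chain. Every subset of a chain is a chain, so this family is closed under passage to subsets, and the order induced on each simplex is total by construction; thus it is an ordered simplicial complex, namely the order complex of $P$. Sending a strictly increasing chain $x_0 < \cdots < x_n$ to the set $\{x_0,\ldots,x_n\}$ gives a bijection of $\overline{N}_n(P)$ onto the set of $n$-simplices, and under it the face operator $d_i$ (deletion of $x_i$) matches the simplicial face operator. This identifies the $\Delta$-set $\overline{N}(P)$ with the order complex, proving the first assertion; in particular the two readings of $\|\overline{N}(P)\|$, as a $\Delta$-set and as an ordered simplicial complex, agree.

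Finally, for the homeomorphism, the inclusions $\overline{N}_n(P) \hookrightarrow N_n(P)$ induce a natural continuous map $\|\overline{N}(P)\| \to |N(P)| = BP$, and I would show it is a homeomorphism. The key input is the Eilenberg--Zilber lemma, valid in any simplicial set: each $\bm{x} \in N_n(P)$ factors uniquely as $\bm{x} = \eta^{*}(\bm{y})$ with $\eta : [n] \twoheadrightarrow [m]$ a surjection in $\Delta$ and $\bm{y} \in \overline{N}_m(P)$ nondegenerate. Together with the fact that every point of $\Delta^n$ lies in the interior of a unique face, this yields for each point of $BP$ a unique normal-form representative $(\bm{y},\bm{s})$ with $\bm{y}$ nondegenerate and $\bm{s}$ an interior point, so that the degeneracy relations in $|N(P)|$ collapse exactly the degenerate simplices and the underlying sets of $BP$ and $\|\overline{N}(P)\|$ are identified, the latter being by definition the face-only quotient of $\coprod_n \overline{N}_n(P)\times\Delta^n$. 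This is where I expect the real work to lie: to promote the continuous bijection to a homeomorphism I would compare the two realizations skeleton by skeleton and check that, at each stage, attaching the image of $\overline{N}_n(P)\times\Delta^n$ is the same pushout in both, so that the weak topologies agree. The displayed identity $BP = \|\overline{N}(P)\|$ then follows, compatibly with the ordered simplicial complex structure obtained above.
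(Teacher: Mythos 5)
Your proof is correct, but there is no argument in the paper to compare it with: the paper states this lemma without proof, treating it (like the more general Lemma \ref{classifying_space_of_acyclic_category} that follows it) as a standard fact, so your write-up fills in exactly the details the paper omits. Your route is the standard one: identify $\overline{N}_n(P)$ with the strictly increasing chains, hence $\overline{N}(P)$ with the order complex of $P$, then use the Eilenberg--Zilber factorization to get a normal form for points of $|N(P)|$ and a skeleton-by-skeleton pushout comparison to upgrade the resulting continuous bijection $\|\overline{N}(P)\| \to BP$ to a homeomorphism. The one point worth making explicit is the property your order-complex identification silently supplies: for a poset, every face of a nondegenerate chain is again nondegenerate (a strictly increasing chain stays strictly increasing after deleting an entry), which is what makes $\overline{N}(P)$ closed under the face operators, hence a genuine $\Delta$-set, and what makes the attaching maps in your skeletal comparison literally agree on both sides; this is precisely the property that fails for general simplicial sets, which is why the lemma is stated for posets (and, in the next lemma, acyclic categories) rather than arbitrary small categories.
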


More generally, we have the following description.

\begin{lemma}
 \label{classifying_space_of_acyclic_category}
 When $X$ is a topological acyclic category, the simplicial structure on
 $N(X)$ can be restricted to give a
 structure of $\Delta$-space on $\overline{N}(X)$. Furthermore the
 composition
 \[
  \left\|\overline{N}(X)\right\| \rarrow{} \|N(X)\| \rarrow{} |N(X)| =
 BX 
 \]
 is a homeomorphism.
\end{lemma}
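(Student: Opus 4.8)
The plan is to isolate the single categorical fact that makes acyclic categories special and then feed it into the standard skeletal analysis of a geometric realization.

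First I would establish the decisive combinatorial input: in an acyclic category the composite $v\circ u$ of two non-identity morphisms is never an identity. Indeed, if $u:x\to y$ and $v:y\to z$ satisfy $v\circ u=1_x$, then $z=x$, so both $C(x,y)$ and $C(y,x)$ are nonempty; acyclicity (Definition \ref{topological_acyclic_category_definition}) forces $x=y$, whence $C(x,x)=\{1_x\}$ makes $u=1_x$, a contradiction. Consequently every inner face operator $d_i$ ($0<i<n$), which composes two adjacent morphisms of a chain, sends a chain with no identity entries to another such chain, while $d_0$ and $d_n$ merely delete an outer morphism; hence all $d_i$ restrict to $\overline{N}_\bullet(X)$. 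The identities $d_id_j=d_{j-1}d_i$ are inherited, and each restricted $d_i$ is continuous as the restriction of a continuous map, so $\overline{N}(X)$ is a $\Delta$-space (Definition \ref{Delta-set_definition}). This disposes of the first assertion.

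Next I would record a clopen splitting and the categorical Eilenberg--Zilber decomposition. Since $X$ is acyclic and top-enriched, the identity morphisms form a clopen subset $i(X_0)=\coprod_x C(x,x)$ of $X_1$ (each $C(x,x)$ is a whole hom-component equal to $\{1_x\}$); therefore the non-identity morphisms are clopen, and inside $N_n(X)\subset X_1^{\times n}$ the nondegenerate chains $\overline{N}_n(X)$ are clopen, with clopen complement $\bigcup_i s_i(N_{n-1}(X))$, namely the chains containing at least one identity. I would then note that deleting the identity entries of a chain $x\in N_n(X)$ yields a unique nondegenerate $\bar{x}\in\overline{N}_m(X)$ and a unique surjection $\sigma:[n]\twoheadrightarrow[m]$ with $x=s_\sigma(\bar{x})$. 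Under the defining relations of $|N(X)|=BX$ this gives $p(x,\bm{t})=p(\bar{x},\sigma_*(\bm{t}))$ for the induced affine surjection $\sigma_*:\Delta^n\twoheadrightarrow\Delta^m$, so every point of $BX$ has a representative with nondegenerate chain coordinate, and two such representatives with interior barycentric parameter agree only when their chains and parameters coincide. Hence the canonical map $f:\|\overline{N}(X)\|\to BX$ (the inclusion $\overline{N}(X)\hookrightarrow N(X)$ followed by the quotient $p$, i.e. the stated composition) is a continuous bijection.

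Finally I would upgrade $f$ to a homeomorphism through the skeletal filtrations $\|\overline{N}(X)\|^{(n)}$ and $F_n=p(\coprod_{k\le n}N_k\times\Delta^k)$, each carrying the weak topology. The standard pushout presentation of the $n$-th stage of a simplicial-space realization (see \cite{Goerss-Jardine}) builds $F_n$ from $F_{n-1}$ by gluing $N_n\times\Delta^n$ along $(N_n\times\partial\Delta^n)\cup(\bigcup_i s_iN_{n-1}\times\Delta^n)$; here the clopen splitting is decisive, for the degenerate part $\bigcup_i s_iN_{n-1}\times\Delta^n$ already maps into $F_{n-1}$, so the pushout reduces to attaching only the nondegenerate summand $\overline{N}_n(X)\times\Delta^n$ along $\overline{N}_n(X)\times\partial\Delta^n$ --- exactly the pushout describing $\|\overline{N}(X)\|^{(n)}$ from $\|\overline{N}(X)\|^{(n-1)}$. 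By induction $f$ restricts to a homeomorphism $\|\overline{N}(X)\|^{(n)}\rarrow{\cong}F_n$, and passing to the colimit gives the result. The main obstacle is precisely this topological step rather than the set-level bijection: I expect the delicate part to be verifying that the degenerate simplices split off as a genuine clopen summand at every filtration level, so that the thin realization's skeletal pushout collapses \emph{on the nose} to the fat ($\Delta$-space) pushout rather than merely up to homotopy. This is where acyclicity is used twice --- to make $i(X_0)$ clopen in $X_1$ and to keep composites of non-identities nondegenerate --- and I would take care to state the simplicial-space skeletal pushout with the clopenness of $\overline{N}_n(X)$ in $N_n(X)$ in hand, from which continuity of the inverse follows formally from the colimit topology.
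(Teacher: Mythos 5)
The paper never proves Lemma \ref{classifying_space_of_acyclic_category}: it is stated in Appendix \ref{topological_category} as a known fact, with no argument supplied. So there is nothing of the author's to compare against, and your proposal has to stand on its own --- which it does. Your two uses of acyclicity are exactly the right ones: first, the composite of two non-identity morphisms is never an identity (your argument via $C(x,y)$ and $C(y,x)$ both nonempty forcing $x=y$ is correct), so the inner face operators preserve nondegenerate chains and $\overline{N}(X)$ is a genuine $\Delta$-space; second, because an acyclic category is top-enriched with $C(x,x)=\{1_x\}$, the identities $i(X_0)=\coprod_x C(x,x)$ form a union of whole hom-components of $X_1=\coprod_{x,y}C(x,y)$, hence are clopen, so $\overline{N}_n(X)$ is clopen in $N_n(X)$. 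You are also right that the set-level bijectivity of $\|\overline{N}(X)\|\to BX$ is the easy half (Eilenberg--Zilber normal forms, which are purely set-theoretic), and that the real content is continuity of the inverse. Your skeletal induction works, and you have correctly located the precise point where clopenness is needed: for a general simplicial space the attaching map $\bigl(\bigcup_i s_i(N_{n-1})\bigr)\times\Delta^n\to F_{n-1}$ need not be continuous, because the union $\bigcup_i s_i(N_{n-1})$ need not carry the weak topology of its pieces. Here it does: the degenerate locus decomposes as a disjoint clopen union indexed by the set $S$ of positions carrying identity entries, and on each such piece the normal-form map (delete the identity entries) is continuous with continuous inverse, since objects are discrete and $i$ is continuous. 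One remark worth adding: once that clopen decomposition is made explicit, you can bypass the skeletal induction entirely --- the normal-form maps assemble into a continuous map $\coprod_n N_n(X)\times\Delta^n\to\coprod_n\overline{N}_n(X)\times\Delta^n$ compatible with the defining relations of both realizations, and it descends directly to a continuous inverse $BX\to\|\overline{N}(X)\|$.
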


When an acyclic topological category $C$ is cellular in the sense of
Definition \ref{cellular_category_definition}, the classifying space
$BC$ has a canonical cell decomposition.

\begin{lemma}
 \label{cellular_stratification_on_BC}
 Let $C$ be an acyclic topological category in which each morphism space
 $C(x,y)$ is equipped with a structure of cellular stratified space
 whose cell structures are bi-quotient. Then the classifying space $BC$
 has a structure of cellular stratified space.

 In particular when all morphism spaces are cell complexes, the
 classifying space $BC$ has a structure of cell complex.
\end{lemma}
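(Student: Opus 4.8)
The plan is to describe $BC$ explicitly as the realization of the nondegenerate nerve and to read off its cells from the cells of the spaces of nondegenerate chains. Since $C$ is acyclic, Lemma~\ref{classifying_space_of_acyclic_category} identifies $BC$ with $\|\overline{N}(C)\|$, and acyclicity also yields the decomposition
\[
\overline{N}_n(C) = \coprod_{x_0<\cdots<x_n} C(x_{n-1},x_n)\times\cdots\times C(x_0,x_1)
\]
over strictly increasing chains in $P(C)$, together with the fact that every face $d_i$ of a nondegenerate chain is again nondegenerate (a composite of non-identity morphisms is non-identity in an acyclic category). Each factor is a cellular stratified space whose cells are bi-quotient, so by Lemma~\ref{biquotient_is_quotient}, Proposition~\ref{biquotients_are_closed_under_product}, and Definition~\ref{product_cellular_stratification} the finite products, hence each $\overline{N}_n(C)$, are cellular stratified spaces.

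First I would define the candidate cells of $BC$. Writing $q:\coprod_m \overline{N}_m(C)\times\Delta^m\to BC$ for the realization (quotient) map, for each $n$ and each cell $\epsilon$ of $\overline{N}_n(C)$, with characteristic map $\varphi_\epsilon:D_\epsilon\to\overline{\epsilon}$ and $\dim\epsilon=d$, set
\[
 e_{(n,\epsilon)} = q\bigl(\epsilon\times\Int(\Delta^n)\bigr),\qquad
 \Psi_{(n,\epsilon)} = q\circ(\varphi_\epsilon\times\id_{\Delta^n}) : D_\epsilon\times\Delta^n \longrightarrow BC,
\]
regarding $D_\epsilon\times\Delta^n\subset D^{d}\times\Delta^{n}\cong D^{d+n}$ as a globular $(d+n)$-cell. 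The standard fact that the canonical map $\coprod_m \overline{N}_m(C)\times\Int(\Delta^m)\to BC$ is a continuous bijection (which uses that faces of nondegenerate chains stay nondegenerate) shows that the $e_{(n,\epsilon)}$ partition $BC$, that each is connected and locally closed, and that $\Psi_{(n,\epsilon)}$ is injective on $\Int(D^d)\times\Int(\Delta^n)$; combined with $\varphi_\epsilon|_{\Int(D^d)}$ being a homeomorphism onto $\epsilon$, this restriction is a homeomorphism onto $e_{(n,\epsilon)}$, exactly as in Proposition~\ref{stratification_on_Sd}. To see that each $\Psi_{(n,\epsilon)}$ is a quotient map onto the closure of its stratum I would use the bi-quotient hypothesis: $\varphi_\epsilon$ is bi-quotient, so $\varphi_\epsilon\times\id_{\Delta^n}$ is bi-quotient and hence a quotient map; combined with the quotient structure of $q$ and the local compactness of $\Delta^n$ (Whitehead's lemma, Appendix~\ref{quotient_map}) this gives the quotient-map property, and hence a genuine cell structure. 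The delicate point here is that the morphism-space cells need not be closed, so one cannot appeal to compactness of the domain; the argument must be routed through the hereditary-quotient results of Appendix~\ref{quotient_map} and the gluing principle of Lemma~\ref{weak_topology_and_quotient_map}.

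The main obstacle is the dimension condition of Definition~\ref{definition_of_cellular_stratified_space}. The boundary of the domain is
\[
 \partial(D_\epsilon\times\Delta^n) = \bigl(\partial D_\epsilon\times\Delta^n\bigr)\cup\bigl(D_\epsilon\times\partial\Delta^n\bigr).
\]
On the first piece $\partial D_\epsilon$ is covered by cells of $\overline{N}_n(C)$ of dimension $<d$, so its image meets only strata $e_{(n,\epsilon')}$ of dimension $<d+n$. On the second piece the realization relation gives $q(\sigma,d^i(t))=q(d_i\sigma,t)$, so the image lands in strata $e_{(n-1,\epsilon'')}$ indexed by cells $\epsilon''$ meeting $d_i(\overline{\epsilon})$; to keep these below dimension $d+n$ I must show $\dim\epsilon''\le d$, i.e.\ that the face operators $d_i$ do not raise cell dimension. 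This is precisely where the category structure enters: the $d_i$ are built from the source, target, and composition maps, and these are morphisms of cellular stratified spaces (Definition~\ref{cellular_category_definition}), hence strict and dimension non-increasing, so each open cell is carried into an open cell of no larger dimension. Granting this, $\partial e_{(n,\epsilon)}$ is covered by cells of dimension at most $d+n-1$, completing the stratification. I expect this compatibility of the composition with the cell structures — not the realization bookkeeping — to be the crux, since without it simple examples (a composable pair whose product cell is sent into a higher-dimensional cell of the target morphism space) make the proposed cells violate the dimension condition.

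Finally, the ``in particular'' statement is immediate: when every $C(x,y)$ is a cell complex, all cells $\epsilon$ are closed, each domain $D_\epsilon\times\Delta^n$ is a closed disk $D^{d+n}$, and the maps $\Psi_{(n,\epsilon)}$ become ordinary characteristic maps with compact domains, so the preceding analysis exhibits $BC$ as a CW complex. The only remaining routine checks — that the index set of pairs $(n,\epsilon)$ carries the partial order $e_{(m,\epsilon')}\subset\overline{e_{(n,\epsilon)}}$, that $\pi_{BC}$ is open and continuous, and that $BC$ is Hausdorff — follow from the closure descriptions above together with the metrizability of closed cells (Lemma~\ref{overlinee_is_metrizable}), and I would relegate them to a remark.
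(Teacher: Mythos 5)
Your construction is the same as the paper's: identify $BC$ with $\left\|\overline{N}(C)\right\|$, put the product cellular stratification on each $\overline{N}_k(C)$ via the bi-quotient hypothesis, and take the cells $e\times\Int(\Delta^k)$ with characteristic maps $q\circ(\varphi\times\id_{\Delta^k})$; the paper's proof in fact stops essentially there. The gap is in the step you yourself call the crux. To keep the image of $\overline{\epsilon}\times\partial\Delta^n$ inside cells of dimension at most $\dim\epsilon+n-1$, you invoke Definition \ref{cellular_category_definition}, i.e.\ that $s,t,i,\circ$ are morphisms of cellular stratified spaces. But this is not among the lemma's hypotheses: the lemma equips each morphism space $C(x,y)$ separately with a cellular stratified structure whose cells are bi-quotient, and it imposes no compatibility at all between the composition maps and these cell structures. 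So your argument proves a statement about cellular topological categories, not the lemma as stated. Moreover, the parenthetical claim that such morphisms are ``strict and dimension non-increasing'' is false: strictness is a special property singled out in Definition \ref{morphism_of_stratified_spaces}, and a morphism of cellular stratified spaces may send a $0$-cell into the interior of a $2$-cell, which is exactly the dimension-raising behaviour you need to exclude.

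Your diagnosis of where the danger lies is nevertheless sharp, and it exposes a genuine defect that the paper's own proof passes over silently (it never verifies the dimension axiom of Definition \ref{definition_of_cellular_stratified_space}). Concretely: take three objects $x<y<z$, $C(x,y)=\{u\}$, $C(y,z)=\{v\}$, $C(x,z)=S^2$ with its minimal cell decomposition, and let $v\circ u$ lie in the open $2$-cell of $S^2$. All morphism spaces are cell complexes with proper, hence bi-quotient, characteristic maps, so the stated hypotheses hold; yet the boundary of the $2$-cell $\{(v,u)\}\times\Int(\Delta^2)$ of $BC$ contains points of the $3$-cell $e^2\times\Int(\Delta^1)$, coming from the identification of the middle face with $v\circ u$. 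Hence the natural decomposition is not a cellular stratification, and no argument from the stated hypotheses can close your gap: some cellularity of $\circ$ --- strictness or a skeletal/dimension condition, which is what actually holds in the paper's application in Proposition \ref{stellar_structure_on_Sd}, where compositions come from the strict morphisms of a cylindrical structure --- must be added, both to your proof and to the lemma itself.
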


\begin{proof}
 Under the identification $BC \cong\left\|\overline{N}(C)\right\|$, any
 point in $BC$ can be represented by a pair
 $(x,t)\in \overline{N}_k(C)\times\Int(\Delta^k)$ uniquely. In other
 words, we have a decomposition
 \[
  BC = \coprod_{k=0}^{\infty} \overline{N}_k(C)\times \Int(\Delta^k)
 \]
 as sets.
 The bi-quotient assumption on cell structures on $C(x,y)$ implies that
 $\overline{N}_k(C)$ has a structure of cellular stratified
 space\footnote{See Definition \ref{product_cellular_stratification} for
 product cellular stratifications}. Thus the above decomposition and the
 cellular stratification on 
 each $\overline{N}_k(C)$ define a stratification on $BC$.

 For each cell
 $\varphi:D\to\overline{e}\subset\overline{N}_k(C)$, let us denote the
 cell in $BC$ corresponding to $e\times\Int(\Delta^k)$ by $e\times e^k$.
 The composition 
 \[
  D\times \Delta^k \rarrow{\varphi\times 1_{\Delta^k}}
 \overline{N}_k(C)\times\Delta^k \rarrow{}
 \left\|\overline{N}(C)\right\|=BC 
 \]
 defines a cell structure on $e\times e^k$.
\end{proof}

\bibliographystyle{halpha}
\bibliography{%
preamble,%
mathA,%
preprintA,%
mathB,%
preprintB,%
mathC,%
preprintC,%
mathD,%
preprintD,%
mathE,%
preprintE,%
mathF,%
preprintF,%
mathG,%
preprintG,%
mathH,%
preprintH,%
mathI,%
preprintI,%
mathJ,%
preprintJ,%
mathK,%
preprintK,%
mathL,%
preprintL,%
mathM,%
preprintM,%
mathN,%
preprintN,%
mathO,%
preprintO,%
mathP,%
preprintP,%
mathQ,%
preprintQ,%
mathR,%
preprintR,%
mathS,%
preprintS,%
mathT,%
preprintT,%
mathU,%
preprintU,%
mathV,%
preprintV,%
mathW,%
preprintW,%
mathX,%
preprintX,%
mathY,%
preprintY,%
mathZ,%
preprintZ}

\end{document}